\def\Id{\mathrm{Id}}
\def\into{\hookrightarrow}
\def\onto{\twoheadrightarrow}
\def\om{\omega}
\def\C{\ensuremath{\mathbbm{C}}}
\def\Z{\mathbbm{Z}}
\def\R{\mathbbm{R}}
\def\N{\mathbbm{N}}
\def\sl{\mathfrak{sl}}
\def\eps{\varepsilon}
\def\ii{\mathrm{i}}
\DeclareMathOperator{\Ker}{\mathrm{Ker}}
\DeclareMathOperator{\Imm}{\mathrm{Im}}
\DeclareMathOperator{\codim}{\mathrm{codim}}
\def\dlog{\mathrm{dlog}}
\def\GL{\mathrm{GL}}
\def\RC{\mathsf{R}}
\def\lcm{\mathrm{lcm}}
\DeclareMathOperator{\Div}{Div}
\DeclareMathOperator{\PC}{PC}
\theoremstyle{plain}
\newtheorem{theorem}{Theorem}[section]
\newaliascnt{lemma}{theorem}
\newtheorem{lemma}[lemma]{Lemma}
\newaliascnt{proposition}{theorem}
\newtheorem{proposition}[proposition]{Proposition}
\newaliascnt{corollary}{theorem}
\newtheorem{corollary}[corollary]{Corollary}
\newaliascnt{conjecture}{theorem}
\newaliascnt{lemme}{theorem}
\theoremstyle{remark}
\newaliascnt{claim}{theorem}
\newaliascnt{remark}{theorem}
\newtheorem{remark}[remark]{Remark}
\newaliascnt{remarq}{theorem}
\newaliascnt{question}{theorem}
\theoremstyle{definition}
\newaliascnt{definition}{theorem}
\newtheorem{definition}[definition]{Definition}
\newaliascnt{example}{theorem}
\newtheorem{example}[example]{\it Example\/}
\newaliascnt{notation}{theorem}
\numberwithin{equation}{section}
\date{October 22, 2025}
\author{Juan Gonz\'alez-Meneses}
\address{Departemento de \'algebra, Universidad de Sevilla, Spain}
\email{meneses@us.es}
\author{Ivan Marin}
\address{LAMFA, UMR CNRS 7352, Universit\'e de Picardie Jules Verne, Amiens, France}
\email{ivan.marin@u-picardie.fr}
\title[]{Parabolic subgroups of complex braid groups}
\begin{document}

\maketitle

{\bf Abstract.}
In this paper we introduce a class of `parabolic' subgroups for the generalized braid group
associated to an arbitrary irreducible complex reflection group, which maps onto the collection of parabolic subgroups of the reflection group. Except for one case, which is proven separately elsewhere, we prove that this collection forms a lattice, so that intersections of parabolic subgroups are parabolic subgroups. In particular, every element admits a parabolic closure, which is the smallest parabolic subgroup containing it. We furthermore prove that it provides a simplicial complex which generalizes the curve complex of the usual braid group. In the case of real reflection groups, this complex generalizes the one previously introduced by Cumplido, Gebhardt, Gonz\'alez-Meneses and Wiest for Artin groups of spherical type. We show that it shares similar properties, and similarly conjecture its hyperbolicity, with a few additional results in this direction.\footnote{First author partially supported by PID2020-117971GB-C21 funded by MCIN/AEI/10.13039/501100011033, US-1263032 (US/JUNTA/FEDER, UE), and P20\_01109 (JUNTA/FEDER, UE). This paper is partially based upon work supported by the National Science Foundation under Grant No. DMS-1929284 while the first author was in residence at the Institute for Computational and Experimental Research in Mathematics in Providence, RI, during the Braids semester program.}

\tableofcontents

\section{Introduction}

Let $W$ be a complex reflection group, and $B$ the generalized braid group associated to
it, as in \cite{BMR}. In the case $W$ is a real reflection group, $B$ can be described as an Artin group, and it was shown in~\cite{CGGW} how to generalize the curve complex of the usual braid group in this setting: curves are replaced with irreducible parabolic subgroups, which are defined algebraically as conjugates of the irreducible standard parabolic subgroups of the Artin group. Actually, these standard parabolic subgroups are the standard parabolic subgroups of the corresponding Garside structure, as defined in \cite{GODELLE2007}.
A disadvantage of this construction is however that it heavily depends on the presentation chosen for $B$, or equivalently on its construction as the group of fractions of some specific monoid.

Here we provide a generalization of this work to the general setting of a complex reflection group. For this we first provide, in Section 2, a purely topological definition of a parabolic subgroup of $B$, which is a
subgroup isomorphic to the braid group of some parabolic subgroup
of $W$. This
topological definition as some kind of \emph{local fundamental group} implies `on the nose' two main properties. First of all it implies that, when two reflection groups are 'isodiscriminantal' (see the
end of \autoref{sect:parabdefinition}), then the collections of the parabolic subgroups for their braid groups are the same. It also enables us to identify, when we have two equally likeable Garside monoids with group of fractions $B$, the parabolic subgroups obtained algebraically from each of these structures: we do so by proving their coincidence with the parabolic subgroups defined topologically in this paper.

In this paper we prove the following 3 main theorems for all irreducible complex reflection groups \emph{but the
exceptional group $G_{31}$}. Dealing with this group indeed requires a thorough generalization of several of the main ideas
of the present paper to the setting of Garside categories, and has been settled in O. Garnier's PhD thesis, written under
the guidance of the second author, see \cite{PARAB2}.
Let $W$ be an irreducible complex reflection group 
and let $B$ be its braid group.

\begin{theorem}
\label{maintheo:parabolicclosure}

For every element
$x \in B$, there exists a unique minimal parabolic subgroup $\PC(x)$ of $B$ containing it,
and we have $\PC(x^m) = \PC(x)$ for every $m \neq 0$.
\end{theorem}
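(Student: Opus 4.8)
The plan is to derive both assertions from the lattice property of the collection of parabolic subgroups of $B$ (the main structural theorem of this paper, for all cases but $G_{31}$) together with the basic properties of parabolic subgroups established earlier in the paper: each parabolic subgroup $P \le B$ carries a well-defined rank $\rk(P)$, equal to the rank of the parabolic subgroup of $W$ onto which it maps; any conjugate $gPg^{-1}$ is again parabolic, of the same rank as $P$; and an inclusion $P \subseteq P'$ of parabolic subgroups with $\rk(P)=\rk(P')$ forces $P=P'$.

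\emph{Existence and uniqueness of $\PC(x)$.} The set $\mathcal{P}_x$ of parabolic subgroups of $B$ containing $x$ is nonempty, since $B$ itself is parabolic. Choose $P \in \mathcal{P}_x$ of minimal rank. For any $Q \in \mathcal{P}_x$, the intersection $P \cap Q$ is parabolic (lattice property), it still contains $x$, and it is contained in $P$; hence $\rk(P) \le \rk(P \cap Q) \le \rk(P)$, so $P \cap Q$ has the same rank as $P$ and therefore equals $P$. Thus $P \subseteq Q$ for every $Q \in \mathcal{P}_x$, so $P$ is the least element of $\mathcal{P}_x$; it is then unique, and we set $\PC(x) := P$.

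\emph{Reduction of $\PC(x^m)=\PC(x)$.} Since a parabolic subgroup contains $x$ if and only if it contains $x^{-1}$, we may assume $m>0$, and $m=1$ being trivial we take $m \ge 2$. One inclusion is immediate: $\PC(x)$ contains $x^m$, and $\PC(x^m)$ is by definition the smallest parabolic subgroup with that property, so $\PC(x^m) \subseteq \PC(x)$. For the reverse inclusion, set $P := \PC(x^m)$. Then $xPx^{-1}$ is a parabolic subgroup containing $x\, x^m x^{-1}=x^m$, so minimality of $P$ yields $P \subseteq xPx^{-1}$, and since these two parabolic subgroups have equal rank they coincide: $x$ normalizes $P$. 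Hence the class of $x$ in $N_B(P)/P$ is a torsion element, its $m$-th power being trivial. If $N_B(P)/P$ is torsion-free, this class is trivial, so $x \in P = \PC(x^m)$ and therefore $\PC(x) \subseteq \PC(x^m)$, completing the proof.

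I expect the torsion-freeness of $N_B(P)/P$ to be the main obstacle, and it is the point where earlier results must be invoked. The extreme cases are easy: for $P=B$ the quotient is trivial, and for $P=\{1\}$ it is $B$ itself, torsion-free by Bessis' $K(\pi,1)$ theorem (so there $x^m=1$ would force $m=0$). For a proper nontrivial $P$, corresponding to a parabolic subgroup $W' \le W$, one should use the structure of the normalizer: Springer and Lehrer--Springer theory identify $N_W(W')/W'$ with a complex reflection group acting on the fixed space of $W'$, and this should lift to a description of $N_B(P)/P$ as, up to a torsion-free central contribution, the braid group of that reflection group, which is torsion-free by the $K(\pi,1)$ property. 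This structural input on normalizers is what replaces, in the complex setting, the Garside-theoretic arguments available in the real case of \cite{CGGW}; establishing it uniformly across the non-$G_{31}$ cases is the substantive work. The analogous statement for mapping class groups, namely the equality of the canonical reduction systems of $\phi$ and $\phi^m$, suggests the same conclusion and gives confidence in this approach.
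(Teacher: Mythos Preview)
Your argument has two issues, one structural and one substantive.

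\textbf{Circularity for existence.} You derive the existence of $\PC(x)$ from Theorem~\ref{maintheo:intersectparabolics}. The rank argument is correct, and the paper even uses this very implication in Section~6.3 for the groups $G(de,e,n)$ with $d>1$. But for the core cases (Artin groups, $G(e,e,n)$, well-generated $2$-reflection groups), the paper's proof of Theorem~\ref{maintheo:intersectparabolics} in Section~6.1 \emph{uses} parabolic closures: it exhibits $B_1\cap B_2$ as $\PC(\alpha)$ for a suitable $\alpha$, invoking Theorems~\ref{T:parabolic_closure_for_recurrent}, \ref{T:parabolic_closure_exists} and \ref{T:parabolic_closure_of_powers}. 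So you cannot appeal to the intersection theorem as already established; the paper proves $\PC$ first, via Garside theory (recurrent elements, supports, swaps), and deduces the lattice property afterwards.

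\textbf{The torsion-freeness gap.} Your reduction of $\PC(x^m)=\PC(x)$ to torsion-freeness of $N_B(P)/P$ is elegant and the normalization step is fine (it uses only Proposition~\ref{prop:basicpropsparabs}(2) and Corollary~\ref{cor:B0inclusB1egal}). But the torsion-freeness itself is exactly what is at stake: the paper proves the equivalent statement ``$x^m\in P\Rightarrow x\in P$'' as Corollary~4.21, \emph{as a consequence of} Theorem~\ref{T:parabolic_closure_of_powers}, not as a tool for it. Your proposed route via Lehrer--Springer theory is speculative: one would need that $N_B(P)/P$ embeds in the braid group of $N_W(W_0)/W_0$ acting on $V^{W_0}$, and while this is plausible it is not established in the paper, nor straightforward in the complex (non-real) setting. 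The paper instead proves $\PC(x^m)=\PC(x)$ directly in Theorem~\ref{T:parabolic_closure_of_powers} by simultaneously applying swap to $x^m$ and tracking $x$ inside $\RC(x)$ via convexity (Proposition~\ref{P:convexity}), then comparing supports. That is genuinely different machinery, and it is what makes the argument go through.
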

This unique minimal parabolic subgroup of $B$ containing $x$ is called the \emph{parabolic closure} of $x$.

\begin{theorem}
\label{maintheo:intersectparabolics}
 If $B_1,B_2$ are two parabolic subgroups of $B$, then $B_1 \cap B_2$ is
a parabolic subgroup. More generally, every intersection of a family of parabolic subgroups is a parabolic subgroup.
\end{theorem}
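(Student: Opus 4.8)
The plan is to reduce the general (possibly infinite) intersection statement to the case of two parabolic subgroups, and then to prove the binary case by a downward induction on the rank of the ambient parabolic. For the reduction: given an arbitrary family $(B_i)_{i\in I}$ of parabolic subgroups, note that each $B_i$ is the braid group of a parabolic subgroup $W_i$ of $W$, hence has finite "rank" (the rank of $W_i$), and ranks are bounded by $\rk(W)$. If the binary case is known, then any finite intersection is again parabolic, so among all finite subintersections $\bigcap_{i\in F} B_i$ ($F\subseteq I$ finite) we may pick one, say $B_F$, of minimal rank. I claim $B_F = \bigcap_{i\in I} B_i$: for any $j\in I$, $B_F \cap B_j$ is parabolic (binary case), is contained in $B_F$, and is a finite subintersection, so by minimality it has the same rank as $B_F$; a parabolic subgroup of $B_F$ of the same rank as $B_F$ must equal $B_F$ (this last point needs the fact that a parabolic subgroup of $B$ contained in a parabolic subgroup $B'$ is a parabolic subgroup of $B'$ of no greater rank, with equality of rank forcing equality — which should follow from the topological description of parabolics in Section 2 together with the corresponding statement for the reflection groups $W_i \subseteq W$). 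Hence $B_F \subseteq B_j$ for all $j$, so $B_F \subseteq \bigcap_i B_i \subseteq B_F$, giving equality.

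For the binary case $B_1 \cap B_2$, the strategy is to work inside a minimal parabolic subgroup containing both $B_1$ and $B_2$. Using \autoref{maintheo:parabolicclosure} applied appropriately — or rather, its natural extension to subgroups, namely the existence of a parabolic closure $\PC(\langle B_1, B_2\rangle)$ of the subgroup they generate (which exists because the poset of parabolics containing a fixed subset, being a sub-poset of a rank-bounded poset with well-defined intersections once the binary case is in hand for smaller ranks, has a minimum) — we may assume without loss of generality that $B_1$ and $B_2$ are not both contained in a common \emph{proper} parabolic subgroup of $B$, i.e. that the only parabolic subgroup containing $B_1 \cup B_2$ is $B$ itself. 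Under this assumption the goal becomes to show $B_1 \cap B_2$ is itself parabolic; the natural guess is that it is the braid group of $W_1 \cap W_2$, where $W_1, W_2$ are the parabolic subgroups of $W$ underlying $B_1, B_2$. The containment $B(W_1 \cap W_2) \subseteq B_1 \cap B_2$ (after identifying $B(W_1\cap W_2)$ with its image) should be immediate from the topological definition, since a small loop in the complement of the hyperplane arrangement localized at a point of the intersection of the two fixators lies in both local fundamental groups. The reverse inclusion is the substance.

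For the reverse inclusion I would argue by induction on $\rk(W)$, the base case $\rk(W)\le 1$ being trivial. Project via $B \onto W$; since $W_1 \cap W_2$ is a parabolic subgroup of $W$ (the lattice property for parabolic subgroups of complex reflection groups, due to Steinberg, is known), the image of $B_1\cap B_2$ in $W$ lies in $W_1\cap W_2$. The kernel of $B \onto W$ is the pure braid group $P$, and one reduces to understanding $(B_1\cap B_2)\cap P = P_1 \cap P_2$ where $P_i = B_i \cap P$ is the pure braid group of $W_i$; here one can invoke whatever structural description of $P$ as an iterated extension / fundamental group of a hyperplane complement is available, and the fact (from the topological setup of Section 2) that $P_i$ is generated by meridians around the reflecting hyperplanes of $W_i$. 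Then $P_1\cap P_2$ should be generated by meridians around hyperplanes lying in both arrangements, i.e. around reflecting hyperplanes of $W_1\cap W_2$, which is exactly $P(W_1\cap W_2)$; combined with the surjection onto $W_1\cap W_2$ this yields $B_1 \cap B_2 = B(W_1\cap W_2)$. I expect the main obstacle to be precisely this last meridian-tracking step: controlling $P_1 \cap P_2$ inside $P$ requires genuine input about the topology of complements of reflection arrangements (fibration/deletion arguments in the spirit of those needed to set up the topological definition of parabolics), and it is presumably here that the exceptional group $G_{31}$ resists, forcing the separate treatment via Garside categories mentioned in the introduction. A secondary subtlety is the well-posedness of the "parabolic closure of a subgroup" used in the first reduction, which must be established without circularity — most safely by the same rank-minimality argument as above, bootstrapping from the binary case in lower rank.
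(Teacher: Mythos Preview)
Your reduction from an arbitrary family to the binary case is correct and matches the paper's argument in \S6.3 essentially verbatim (using \autoref{cor:B0inclusB1egal} for the rank comparison).

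However, your attack on the binary case has a genuine gap that cannot be repaired along these lines. The central claim --- that $B_1\cap B_2$ is the braid group of $W_1\cap W_2$ --- is \emph{false}. Take $B=\mathcal B_4$, $B_1=\langle\sigma_1,\sigma_2\rangle$, and $B_2=\sigma_3^{-2}B_1\sigma_3^2$. Since $\sigma_3^2$ is a pure braid, $B_1$ and $B_2$ have the same image $W_1=W_2=\mathfrak S_{\{1,2,3\}}$ in $\mathfrak S_4$, so $W_1\cap W_2$ has rank~2. One checks (for instance via linking numbers, or by invoking the Corollary after \autoref{maintheo:curvecomplex}) that $\sigma_3^2$ does not normalize $B_1$, so $B_1\neq B_2$; and since $B_1$ is maximal proper parabolic, no proper parabolic contains both. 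By \autoref{cor:B0inclusB1egal} the intersection $B_1\cap B_2\subsetneq B_1$ has rank at most~1, so its image in $\mathfrak S_4$ is strictly smaller than $W_1\cap W_2$. In particular ``the image of $B_1\cap B_2$ lies in $W_1\cap W_2$'' is the only inclusion you actually have, and the reverse containment $B(W_1\cap W_2)\subseteq B_1\cap B_2$ fails: there is no canonical copy of $B(W_0)$ inside $B$ once a specific normal ray is forgotten, and different parabolics over the same $W_0$ are merely $P$-conjugate (\autoref{prop:conjparabs}), not equal. Consequently the meridian-tracking step for $P_1\cap P_2$ was never going to succeed either.

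The paper's proof is entirely different and does not pass through $W$ at all. It first establishes \autoref{maintheo:parabolicclosure} via Garside theory (swaps, recurrent elements, and the notion of a \emph{support-preserving LCM-Garside structure}), and then, for the binary intersection, picks a nontrivial $\alpha\in B_1\cap B_2$ maximizing an integer invariant $\varphi(\alpha)$ (roughly, the size of the support of a recurrent conjugate) and proves directly that $B_1\cap B_2=\PC(\alpha)$. The argument is an intricate Garside-theoretic computation exploiting homogeneity and square-freeness of simple elements; it is this machinery, not the topology of $P$, that drives the proof, and the reduction to $W_1\cap W_2$ plays no role.
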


Defining the join of two parabolic subgroups as the intersection of all the parabolic subgroups containing
them then defines, as in \cite{CGGW}, a \emph{lattice structure} on the collection of parabolic
subgroups.

The proof goes as follows. In Section 3 we prove that the main Garside monoids used for dealing with the complex braid groups are well adapted to our study of parabolic subgroups. More precisely, we prove in these cases that every parabolic subgroup is a conjugate of some subgroup generated by certain specific subsets of the generators of the given presentation, and that the Garside-theoretical notion and our topological notion of parabolic subgroups coincide. When there are no available Garside monoids, but $B$ is a normal subgroup of finite index of another complex braid group endowed with a Garside structure, we show how to determine the collection of parabolic subgroups of the former from the one of the latter. This covers all cases of complex braid groups except $G_{31}$.

In Section 4 we develop the needed Garside machinery. Under suitable assumptions on the
Garside monoid, we prove (see \autoref{T:parabolic_closure_exists}) that parabolic closures exist for arbitrary elements. In Section 5 we prove that these conditions are satisfied for the Garside monoids which appeared
in Section 3. In so doing, we determine the standard parabolic subgroups for the Garside structure, and find
the same `standard' subgroups determined in Section 3, so that the Garside-theoretic notion of a parabolic subgroup
and the topological notion coincide in each case.
 This proves that every element admits a parabolic closure. We then prove in Section 6,
using additional properties of our Garside structures, that the intersection of two parabolic subgroups
are parabolic subgroups. Using additional methods, we show in the same section how to conclude the
proofs for the other groups.

From this construction we build an analogue of the curve complex for the usual braid groups, generalizing
the ideas of \cite{CGGW}. Its vertices
are the irreducible parabolic subgroups of $B$ as defined above, and two distinct vertices $B_1,B_2$ are connected
if and only if either $B_1 \subset B_2$, or $B_2 \subset B_1$, or $B_1\cap B_2 = [B_1,B_2]=1$. This forms
the \emph{curve graph} $\Gamma$ for $B$. The associated simplicial
complex is then
constructed as the
clique complex of $\Gamma$, namely the flag complex
made of all the simplices whose edges belong to the curve graph. It is easy to
prove (see \autoref{prop:actionfidele}) that the group $B/Z(B)$ acts faithfully on it.

As in \cite{CGGW} for the case of real reflection groups, we prove that the graph admits a simpler description. For this, recall from \cite{BMR} and \cite{BESSIS} (see also \cite{DMM})  that
the center of any irreducible complex braid group $B$ admits a canonical positive generator $z_B$. Here positive means that its image under
the natural map $B \to \Z$ is positive. From this, a well-defined element $z_{B_0} \in B$ is associated to every irreducible parabolic subgroup $B_0$ of $B$,
that is every parabolic subgroup whose associated (parabolic)
reflection subgroup is irreducible.

\begin{theorem}
\label{maintheo:curvecomplex}
If $B_0$ is an irreducible parabolic subgroup of $B$ we have $B_0 = \PC(z_{B_0})$.
Moreover, if $B_1$ and $B_2$ are two distinct irreducible parabolic subgroups of the irreducible complex braid group $B$, an element $g \in B$ satisfies $(B_1)^g = B_2$ if and only if $(z_{B_1})^g = z_{B_2}$, and $B_1,B_2$ are adjacent if and only if $z_{B_1}z_{B_2}=z_{B_2}z_{B_1}$.
\end{theorem}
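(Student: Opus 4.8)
The plan is to deduce \autoref{maintheo:curvecomplex} from the two previous main theorems together with the structural results on parabolic subgroups established in Sections 3--6. The three assertions are logically linked, so I would prove them in the following order.

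\medskip

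\textbf{Step 1: $B_0 = \PC(z_{B_0})$ for an irreducible parabolic $B_0$.}
By construction $z_{B_0} \in B_0$, so $\PC(z_{B_0}) \subseteq B_0$ by minimality (\autoref{maintheo:parabolicclosure}). For the reverse inclusion, note that $z_{B_0}$ is central in $B_0$, hence $B_0$ normalizes $\langle z_{B_0}\rangle$ and therefore normalizes $\PC(z_{B_0})$ (uniqueness of the parabolic closure makes it characteristic under any automorphism of $B$ preserving the ambient structure; in particular it is stable under conjugation by $B_0$). So $P := \PC(z_{B_0})$ is a parabolic subgroup of $B_0$ normalized by $B_0$. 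I would then invoke the classification of parabolic subgroups inside an irreducible complex braid group from Section 3 (they are, up to conjugacy, the `standard' ones attached to subsets of generators), together with the irreducibility of the reflection subgroup associated to $B_0$: a proper parabolic subgroup of $B_0$ corresponds to a proper parabolic reflection subgroup of an irreducible reflection group, and such a subgroup cannot be normal (the normalizer computations / the fact that the reflection subgroup associated to $P$ would have to be a nonzero proper $W_0$-invariant `flat', contradicting irreducibility, unless it is everything). Hence $P = B_0$. The case-by-case nature of the classification in Section 3, and checking that no proper standard parabolic of an irreducible $B_0$ is normal, is where the real work sits; for $G_{31}$ one cites \cite{PARAB2}.

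\medskip

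\textbf{Step 2: $(B_1)^g = B_2 \iff (z_{B_1})^g = z_{B_2}$.}
The implication $(\Leftarrow)$ is immediate from Step 1: if $(z_{B_1})^g = z_{B_2}$ then $B_2 = \PC(z_{B_2}) = \PC((z_{B_1})^g) = \PC(z_{B_1})^g = (B_1)^g$, using that the parabolic closure commutes with the automorphism of $B$ given by conjugation by $g$ (again by uniqueness in \autoref{maintheo:parabolicclosure}). For $(\Rightarrow)$: if $(B_1)^g = B_2$ then conjugation by $g$ is an isomorphism $B_1 \xrightarrow{\sim} B_2$, and I need it to carry $z_{B_1}$ to $z_{B_2}$. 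Since both $B_1$ and $B_2$ are irreducible complex braid groups, each has a cyclic center generated canonically by $z_{B_i}$ (this is the canonical positive generator recalled before the theorem, valid because the reflection subgroups are irreducible). An isomorphism maps center to center, so $(z_{B_1})^g = z_{B_2}^{\pm 1}$. To rule out the $-1$, I would use positivity: both $B_1, B_2$ sit in $B$, conjugation by $g$ preserves the abelianization map $B \to \Z$ up to the relevant normalization, and $z_{B_1}, z_{B_2}$ have positive image while $z_{B_2}^{-1}$ has negative image — so the sign must be $+1$. (One must be a little careful that the ambient length function restricts compatibly; this follows from the topological description in Section 2 and the fact that the inclusion $B_i \hookrightarrow B$ is `positive' on the canonical generators, cf. \cite{BESSIS, DMM}.)

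\medskip

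\textbf{Step 3: adjacency $\iff z_{B_1} z_{B_2} = z_{B_2} z_{B_1}$.}
Recall $B_1, B_2$ distinct irreducible parabolics are adjacent iff $B_1 \subset B_2$, or $B_2 \subset B_1$, or $B_1 \cap B_2 = [B_1,B_2] = 1$. For $(\Rightarrow)$: in the first case $z_{B_2}$ is central in $B_2 \supseteq B_1 \ni z_{B_1}$, so they commute; symmetrically in the second; in the third case $[B_1,B_2]=1$ forces every element of $B_1$ to commute with every element of $B_2$, in particular $z_{B_1}$ with $z_{B_2}$. For $(\Leftarrow)$, suppose $z_{B_1} z_{B_2} = z_{B_2} z_{B_1}$. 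Then $z_{B_2}$ normalizes $\langle z_{B_1}\rangle$, hence $z_{B_2}$ normalizes $\PC(z_{B_1}) = B_1$ (Step 1); similarly $z_{B_1}$ normalizes $B_2$. Consider $B_1 \cap B_2$, which is parabolic by \autoref{maintheo:intersectparabolics}, and its parabolic closure-theoretic position. I would argue via the join $B_1 \vee B_2$: since $z_{B_1}, z_{B_2}$ commute, the subgroup $\langle z_{B_1}, z_{B_2}\rangle$ is abelian, so its parabolic closure $P := \PC(\langle z_{B_1}, z_{B_2}\rangle)$ (i.e.\ the join of $\PC(z_{B_1})=B_1$ and $\PC(z_{B_2})=B_2$, using the lattice structure) has a parabolic reflection subgroup in which the images of (the reflection-group analogues of) $B_1, B_2$ have commuting central elements. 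Now I descend to the reflection group: in $W$, two irreducible parabolic reflection subgroups $W_1, W_2$ with $W_1 \subseteq W_2$, $W_2 \subseteq W_1$, or $W_1 \perp W_2$ (orthogonal supports) are exactly the ones whose `central' elements — here the relevant full-twist or regular elements — commute, and this is a statement one checks inside the (finite, well-understood) irreducible complex reflection groups using the geometry of their parabolic subgroups (fixed spaces / flats) and the structure of centralizers. Transporting back through the faithfulness of the action and the compatibility of $z_{B_i}$ with the reflection-group data gives one of the three alternatives, i.e.\ adjacency.

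\medskip

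The main obstacle, I expect, is Step 1 (equivalently the heart of Step 3's descent): pinning down that a proper parabolic subgroup of an \emph{irreducible} complex braid group is never normal, and more generally translating commutation of the canonical central elements into the containment-or-orthogonality trichotomy at the level of reflection groups. Both rely essentially on the case-by-case classification of parabolic (reflection) subgroups and their normalizers in irreducible complex reflection groups assembled in Sections 3 and 5, and on the exceptional group $G_{31}$ being handled in \cite{PARAB2}. The remaining sign/positivity bookkeeping in Step 2 is routine given the topological definition of Section 2.
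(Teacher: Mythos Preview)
Your Steps 1 and 2 are sound. In fact, your argument for Step 1 is cleaner than you give it credit for: the claim that an irreducible complex reflection group has no proper nontrivial normal parabolic subgroup needs no classification at all. If $W_1 \lhd W_0$ is parabolic, its fixed-point set is $W_0$-stable, hence (by irreducibility of $W_0$ on $V/V^{W_0}$) equals $V^{W_0}$ or $V$, forcing $W_1 = W_0$ or $W_1 = \{1\}$; then \autoref{cor:B0inclusB1egal} lifts this to $B_0$. The paper instead proves $\PC(z_{B_0}) = B_0$ directly via Garside theory (\autoref{P:PC(z_H)}, using that $z_{G_X}=(\Delta_X)^e$ has support exactly $X$ and \autoref{T:parabolic_closure_of_powers}), but your route works just as well. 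Step 2 is essentially the paper's argument (\autoref{P:conjugate_H_and_z_H}).

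Step 3, however, has a real gap. Your ``descent to the reflection group'' cannot work as stated: the image of $z_{B_i}$ in $W$ is a generator of $Z(W_i)$, and for many irreducible parabolic reflection subgroups this center is trivial (already $Z(\mathfrak{S}_k)=1$ for $k\geq 3$). So commutation of the images in $W$ is vacuous and cannot yield the trichotomy --- for instance, in $\mathcal{B}_4$ take $B_1=\langle\sigma_1,\sigma_2\rangle$ and $B_2=\langle\sigma_2,\sigma_3\rangle$: the images of $z_{B_1},z_{B_2}$ in $\mathfrak{S}_4$ are both trivial, yet $B_1,B_2$ are not adjacent. The paper's key step (\autoref{theo:zzcommW}) extracts something much stronger from $[z_{B_1},z_{B_2}]=1$ in $B$: via the monodromy of the holonomy 1-form and Chen's iterated integrals, it deduces that the \emph{sums of reflections} $u_i=\sum_{s\in\mathcal{R}_i}s$ commute in the group algebra $\Z W$, and then a combinatorial argument (\autoref{prop:complparabsengendre} and its corollary) converts this into the trichotomy for $W_1,W_2$. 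You are missing this entire mechanism. There is a second gap as well: even granting the $W$-level trichotomy, lifting $W_1\subset W_2$ back to $B_1\subset B_2$ is not automatic (two parabolics with the same image in $W$ need not coincide, only be $P$-conjugate by \autoref{prop:conjparabs}); the paper handles this by first simultaneously conjugating $z_{B_1}$ and $z_{B_2}$ to positive elements using swaps and \autoref{P:convexity}, so that both $B_i$ become standard, after which the inclusion of atom sets does the job. Your outline does not address this standardization step.
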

Of course this theorem admits the following immediate corollary.
\begin{corollary}
If $B_0$ is an irreducible parabolic subgroup of $B$, then the normalizer of $B_0$ is equal to the
centralizer of $z_{B_0}$.
\end{corollary}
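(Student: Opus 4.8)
The plan is to deduce the corollary directly from \autoref{maintheo:curvecomplex} together with \autoref{maintheo:parabolicclosure}. The key observation is that the normalizer $N_B(B_0)$ and the centralizer $C_B(z_{B_0})$ are each characterized by a conjugation condition, and the theorem says these two conditions coincide.

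\medskip\noindent\emph{Proof.} Let $g\in B$. First suppose $g\in C_B(z_{B_0})$, i.e. $(z_{B_0})^g=z_{B_0}$. By \autoref{maintheo:parabolicclosure}, the parabolic closure $\PC(\cdot)$ is well defined and satisfies $\PC(y^g)=\PC(y)^g$ for all $y,g$ (conjugation is an automorphism of $B$, hence carries the minimal parabolic containing $y$ to the minimal parabolic containing $y^g$). Applying this with $y=z_{B_0}$ and using the first assertion of \autoref{maintheo:curvecomplex}, namely $B_0=\PC(z_{B_0})$, we get
\[
(B_0)^g=\PC(z_{B_0})^g=\PC\bigl((z_{B_0})^g\bigr)=\PC(z_{B_0})=B_0,
\]
so $g\in N_B(B_0)$. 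Conversely, suppose $g\in N_B(B_0)$, so $(B_0)^g=B_0$. If $B_0$ is a proper parabolic subgroup, apply the second assertion of \autoref{maintheo:curvecomplex} with $B_1=B_2=B_0$: since $(B_0)^g=B_0$, we obtain $(z_{B_0})^g=z_{B_0}$, i.e. $g\in C_B(z_{B_0})$. (In the degenerate case $B_0=B$ one has $z_{B_0}=z_B\in Z(B)$ and $N_B(B)=B=C_B(z_B)$, so the statement holds trivially; alternatively this case is excluded if "parabolic" is taken to mean proper.) This proves $N_B(B_0)=C_B(z_{B_0})$. \qed

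\medskip The only subtlety, and the one point that warrants care rather than a one-line citation, is that the second assertion of \autoref{maintheo:curvecomplex} is stated for \emph{two distinct} irreducible parabolic subgroups, so to run the converse direction I must be allowed to take $B_1=B_2$. The natural fix is to note that the "if and only if" there is really the statement "for $g\in B$, $(B_1)^g=B_2 \iff (z_{B_1})^g=z_{B_2}$", whose proof plainly does not use $B_1\neq B_2$ (distinctness is only invoked for the adjacency clause, which we do not need); so one either invokes the relevant internal lemma from Section 6 directly, or simply remarks that the case $B_1=B_2$ follows a fortiori. I expect no real obstacle here — this corollary is purely formal given the main theorem and the conjugation-equivariance of parabolic closure.
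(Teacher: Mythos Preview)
Your proof is correct and is exactly the argument the paper has in mind when it says the corollary is immediate from \autoref{maintheo:curvecomplex}. Your concern about the distinctness hypothesis is well spotted but harmless: the relevant internal result, \autoref{P:conjugate_H_and_z_H}, is stated for arbitrary (not necessarily distinct) parabolic subgroups, so the case $B_1=B_2=B_0$ is covered.
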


 The above results allow us to redefine the curve graph associated to $B$ as the graph whose vertices correspond to the elements $z_{B_0}$, for $B_0$ an irreducible parabolic subgroup of $B$, and where two vertices are adjacent if and only if their corresponding elements commute. The curve complex is defined from the graph in the same way as above.

Since the simplicial complex constructed here generalizes the usual curve complex, it is natural to conjecture that the associated simplicial complex is hyperbolic in the sense of Gromov (as in e.g. \cite{GHYSHARPE} or \cite{BOWDITCH}). We prove that, in addition to the classical case, it is indeed the case
for the following additional families of complex reflection groups.

\begin{theorem}
\label{maintheo:hyperbolic} If $W$ is an irreducible complex reflection group of rank $2$, or
of type $G(de,e,n)$ for $d > 1$, then the curve complex is hyperbolic.
\end{theorem}

The case of $G(de,e,n)$ for $d > 1$ is an extension of the work of Calvez and Cineros in \cite{CALVEZCISNEROS} for $G(2,1,n)$, which is a real reflection group.
We shall prove this result in the final Section \ref{sect:proofhyperbolic} of the paper.
We also mention, as pointed to us by L. Paris, that a generalization of \cite{CGGW} in a different direction has been done in \cite{DAVIS}, where
the authors consider fundamental groups of simplicial hyperplane complement.

We end this introduction by a few comments on the Garside-theoretic aspects. The machinery used to prove the main theorems is established in the setting of a general Garside group satisfying two main properties, and the parabolic subgroups being defined in this case are the ones introduced by Godelle~\cite{GODELLE2003}. We introduce a new kind of conjugation (called `swap') and a new kind of elements (`recurrent elements'), which turn out to be very useful tools to study conjugacy in Garside groups, simplifying the usual techniques. We have a notion of `support' of an element (using recurrent elements) which generalizes the one in~\cite{CGGW}, and we provide in several cases much simpler proofs of the results that were given in~\cite{CGGW} for Artin groups of spherical type. Notably, the proof that the intersection of two parabolic subgroups is a parabolic subgroup is significantly shorter, and is valid for the more general case of the Garside monoids involved in the study of complex braid groups.

The extension of our results to the single remaining case of $G_{31}$
requires specific work, including a deeper understanding of the
parabolic subgroups of centralizers of regular elements, in terms
of Garside groupoids. This is the theme of the
companion paper \cite{PARAB2}. We thank his author O. Garnier for numerous comments on the present paper.

\section{Parabolic subgroups}

In this section we define a purely topological concept of a local fundamental group which is suitable
for our purposes, and then define a parabolic subgroup of a complex braid group as
such a local fundamental group with respect to a topological pair $(X/W,V/W)$. We then
establish its basic properties.

\subsection{Normal rays}

Let $Y$ be a topological space and $X \subset Y$ an open subset. A \emph{closed normal ray} in the topological pair $(X,Y)$ is a (continuous) path $\gamma : [0,1] \to Y$ such that
\begin{enumerate}
\item $\eta([0,1[) \subset X$
\item $\eta(1) \not\in X$
\item there exists $\alpha_0 \in ]0,1[$ such that $\eta([1-\alpha,1[)$ is simply connected for
every $\alpha \in ]0,\alpha_0]$.
\end{enumerate}
Such a closed normal ray is said to \emph{terminate at} $\eta(1)$ and to be \emph{based at} $\eta(0) \in X$. An \emph{open normal ray} is defined similarly to be a map
$\eta : ]0,1] \to Y$ with $\eta(]0,1[)\subset X$, fulfilling the same two conditions (2) and (3). Of course,
if $\eta$ is a closed normal ray, then its restriction $\check{\eta} = \eta_{|_{]0,1]}}$ is an open normal ray and, if $\eta$ is an open
normal ray, one can build a closed normal ray via $\hat{\eta} : u \mapsto \eta( \frac{1+u}{2} )$.

Probably every person reading this definition first wonders
why it is not sufficient to check condition (3) for $\alpha = \alpha_0$, therefore we provide an example. Consider the case $Y = \C$, $X = \C \setminus \{ 0 \}$ with
basepoint $1$, and consider the following path $\gamma : [0,1] \to \C$. From $0$
to $1/4$, $\gamma$ is a straight path from $1$ to $1/2 \in \C$, from
$3/4$ to $1$ it is a straight path from $1/2$ to $0$, from $1/2$ to
$3/4$ it is a circle tangent at $1/2$, and from $1/4$ to $1/2$ a Peano
curve filling the corresponding disc. Then $\gamma([0,1[)$
is simply connected but $\gamma([1/2,1[)$ is not. Therefore $\gamma$
is \emph{not} a normal ray.

\medskip

\begin{center}

\begin{tikzpicture}[scale=2]
\begin{scope}
\draw (0,0) -- (0.5,0);
\draw (1,0) -- (0.5,0);
\fill (0.5,0.25) circle (0.25);
\draw (0,-.15) node {$0$};
\draw (1,-.15) node {$1$};
\draw (0.5,-.45) node {$\gamma([0,1[)$};
\end{scope}
\begin{scope}[shift={(2,0)}]
\draw (0,0) -- (0.5,0);
\fill (0.5,0.25) circle (0.25);
\draw (0,-.15) node {$0$};
\draw (1,-.15) node {$1$};
\draw (0.5,-.45) node {$\gamma([1/4,1[)$};
\end{scope}
\begin{scope}[shift={(4,0)}]
\draw (0,0) -- (0.5,0);
\draw (0.5,0.25) circle (0.25);
\draw (0,-.15) node {$0$};
\draw (1,-.15) node {$1$};
\draw (0.5,-.45) node {$\gamma([1/2,1[)$};
\end{scope}
\begin{scope}[shift={(6,0)}]
\draw (0,0) -- (0.5,0);
\draw (0,-.15) node {$0$};
\draw (1,-.15) node {$1$};
\draw (0.5,-.45) node {$\gamma([3/4,1[)$};
\end{scope}
\end{tikzpicture}

\end{center}

A special situation which is sometimes more handy and usually sufficient for applications is when $\eta$ satisfies the stronger condition that there exists $\alpha_0 > 0$ such that the restriction of $\gamma$ to $[1-\alpha_0,1[$ is
a homeomorphism $[1-\alpha_0,1[ \to \gamma([1-\alpha_0,1[)$. We call this a \emph{straight normal ray}.

Recall that, if $A \subset X$ is path-connected and simply-connected, then $\pi_1(X,A)$ is well-defined: either as the groups $\pi_1(X,a)$ for $a \in A$ canonically identified with each other via an arbitrary path joining the base points inside $A$, or as the set of classes of paths from some point of $A$ to some other one, up to a homotopy leaving the endpoints inside $A$.

Because of the defining conditions of a normal ray,
the fundamental groups $\pi_1(X,\eta([1-\alpha,1[))$ for $\alpha \in ]0,\alpha_0]$ can be canonically identified under the natural maps $\pi_1(X,\eta([1-\alpha_2,1[)) \to \pi_1(X,\eta([1-\alpha_1,1[))$
for $\alpha_0 \geq \alpha_1 > \alpha_2 > 0$. We denote it $\pi_1(X,\eta)$.
Of course, if $x \in \eta(]1-\alpha_0,1[)$, then the natural morphism
$\pi_1(X,x) \to \pi_1(X,\eta)$ is an isomorphism.

From this definition it is readily checked that, if $\eta$ is closed, then we have a natural isomorphism $\pi_1(X,\check{\eta}) \to \pi_1(X,\eta)$ and,
if $\eta$ is open, we have similarly $\pi_1(X,\hat{\eta}) \stackrel{\simeq}{\to} \pi_1(X,\eta)$.

If $\eta$ is a closed normal ray, an isomorphism $\pi_1(X,\eta(0)) \to \pi_1(X,\eta)$ can be
defined via
$$
\pi_1(X,\eta) \simeq \pi_1(X,\eta([1-\alpha_0,1[)  \simeq \pi_1(X,\eta(1-\alpha_0))  \to \pi_1(X,\eta(0)),
$$
the map $\pi_1(X,\eta(1-\alpha_0)) \to \pi_1(X,\eta(0))$ being $[\gamma] \mapsto [ \eta_{|_{[0,1-\alpha_0]}}^{-1}*\gamma *\eta_{|_{[0,1-\alpha_0]}}]$, where the concatenated path $\alpha*\beta$ means $\beta$ followed by $\alpha$.

We extend in an obvious way the notation $\pi_1(X,\eta)$ to
$\pi_1(U\cap X,\eta)$ where $U \subset Y$ is some open neighborhood of $\eta(1)$,
and
$\pi_1(U\cap X,\eta)$ means
$\pi_1(U\cap X,\eta')$ where
$\eta'$ is the restriction of
$\eta$ to some $]1-\alpha,1[$ with $\alpha < \alpha_0$ and $\eta(]1-\alpha,1[) \subset U$. By the above observations, this does not depend on the choice of such an $\alpha$.

\subsection{Local fundamental groups}

The topological pair $(X,Y)$ is said to admit a \emph{local fundamental group} at $\eta$ if the
image of the obvious map $\pi_1(X\cap U , \eta) \to \pi_1(X,\eta)$ for $U \subset Y$ an open neighborhood of $\eta(1)$ does not depend on $U$ for $U$ small enough. This means that
there exists an open neighborhood $U_0$ such that, for any other $U \subset U_0$, the composite
of the maps $\pi_1(X\cap U,\eta) \to \pi_1(X\cap U_0,\eta) \to \pi_1(X,\eta)$ has the
same image as $\pi_1(X\cap U_0,\eta) \to \pi_1(X,\eta)$.
It is obviously equivalent to saying that this image is the same for $U$ belonging to some
fundamental system of neighborhoods of $\eta(1)$. If it is the case, we denote $\pi_1^{loc}(X,\eta)$ this image.

Notice that, if $\eta$ is closed, then $(X,Y)$ admits a local fundamental fundamental group at $\eta$ if and only if it admits one at the open normal ray $\check{\eta}$, and conversely it admits a local fundamental group
at the open normal ray $\eta$ if and only if it admits one at the closed normal ray $\hat{\eta}$.

\begin{proposition} \label{prop:parabsbmr}
If $W < \GL(V)$ is a complex reflection group and $X$ its hyperplane complement, then the
topological pair $(X/W,V/W)$ admits a local fundamental group at every normal ray.
\end{proposition}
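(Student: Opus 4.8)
The goal is to show that for $W<\GL(V)$ a complex reflection group with hyperplane complement $X$, the pair $(X/W, V/W)$ admits a local fundamental group at every normal ray $\eta$. The strategy is to use the stratification of $V$ by the flats of the arrangement (intersections of reflecting hyperplanes), together with the fact — proven by Steinberg — that the pointwise stabilizer $W_x$ of a point $x\in V$ is itself a (parabolic) complex reflection group, and that a neighborhood of $x$ in $V$ is $W_x$-equivariantly modeled on a product $\C^k \times (\text{cone over the flat's transverse slice})$. More precisely, I would invoke the local structure of $V/W$ near the image of $\eta(1)$: if $\eta(1)$ projects to a point whose preimage is $W\cdot x$ with flat $L = \mathrm{Fix}(W_x)$, then a suitable small open neighborhood $U$ of $\eta(1)$ in $V/W$ is homeomorphic to $(L^0/W_x')\times (D/W_x)$ where $L^0$ is a small disc in $L$, $D$ a small disc in a $W_x$-stable complement, and $W_x$ acts on $D$ as a reflection group with its own hyperplane complement.

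**Key steps.** First I would fix $\eta$ and pass to the terminal point $p = \eta(1)\in V/W$, choosing a $W$-fixed lift and the associated parabolic subgroup $W_x$. Second, using the conical local model, I would identify, for $U$ ranging over a cofinal system of such product-neighborhoods, the space $X/W\cap U$ with $(L^0/W_x')\times((D\setminus \mathcal{A}_{W_x})/W_x)$ up to homotopy equivalence, where $\mathcal{A}_{W_x}$ is the arrangement of $W_x$ — the point being that shrinking $U$ only shrinks the contractible factor $L^0/W_x'$ and rescales $D$, hence does not change the homotopy type of the intersection, and in particular does not change the image of $\pi_1(X\cap U,\eta)$ in $\pi_1(X,\eta)$. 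Third, I would check that the normal ray $\eta$, for $\alpha$ small, lies in such a neighborhood and that the canonical identification of $\pi_1(X\cap U,\eta)$ for different $U$ (as set up in the preceding subsection) is compatible with these homotopy equivalences, so that the image stabilizes; this image is then by definition $\pi_1^{loc}(X,\eta)$. One should also record that $\pi_1(D\setminus\mathcal{A}_{W_x})/W_x$ is exactly the braid group of the parabolic $W_x$, which is what makes this proposition the foundation of the definition of parabolic subgroup, though strictly for the statement only the stabilization of the image is needed.

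**Main obstacle.** The delicate point is the third step: making the conical/product local model precise enough to guarantee that the maps $\pi_1(X\cap U,\eta)\to\pi_1(X\cap U_0,\eta)$ are isomorphisms (not merely that images agree) and that the whole inverse system is compatible with the canonical base-point identifications along $\eta$. This requires knowing that $V/W$ is locally a cone in a way that is compatible with the hyperplane complement — essentially the statement that $(V/W, X/W)$ is, near $p$, a product of a euclidean factor with the cone on the link $(S/W_x, (S\setminus\mathcal{A}_{W_x})/W_x)$ — and that the normal ray is, up to the equivalence defining $\pi_1(X,\eta)$, transverse to the stratum, so that the simple-connectivity condition (3) in the definition of a normal ray matches the link decomposition. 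Lefschetz-type local triviality for the discriminant (or the analogous statement for $V/W$ via the finite map $V\to V/W$ branched along the discriminant) is what I would cite to close this gap; once local conical triviality is in hand, the stabilization of the image is formal.
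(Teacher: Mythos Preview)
Your approach is essentially that of the paper: both pass to the parabolic stabilizer $W_x$ of a lift of $\eta(1)$ and use the resulting local product structure of $(V/W,X/W)$ near that point to show that the image of $\pi_1(X\cap U,\eta)$ in $\pi_1(X,\eta)$ stabilizes as $U$ shrinks. The paper resolves your ``main obstacle'' concretely by taking for $U$ the images of small balls $\Omega_\eps$ for a $W$-invariant norm centered at the lift (so that $\Omega_\eps$ meets only the hyperplanes of $W_x$) and citing \cite{BMR}, Proposition~2.29, for the fact that $\pi_1((\Omega_\eps\cap X)/W_x)\to\pi_1(X_0/W_x)$ is an isomorphism; note also that your factor $L^0/W_x'$ should simply be $L^0$, since $W_x$ fixes $L$ pointwise and, for $\eps$ small, no other element of $W$ identifies points of $\Omega_\eps$.
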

\begin{proof}
Let $\eta$ be such a normal ray. By the above remark we can assume that it is closed. Let $x_0 \in X$ such that $W.x_0 = \eta(0)$.
By definition $\eta_{|_{[0,1[}}$ is a path inside $X/W$, so since $X \to X/W$ is a covering map it can be lifted to a path $\tilde{\eta} : [0,1[ \to X$ such that $\tilde{\eta}(0)=x_0$. Now $\eta(1) = W.v$ for some $v \in V$. Since $W.v \subset V$
is finite and $V$ is Hausdorff, $W.v$ admits an open neighborhood $U \subset V$ such that
each connected component $U_0$ of $U$ meets $W.v$ at exactly one point. Since $[1-\alpha,1[$ is connected
we get that, for $\alpha$ small enough, it is contained in such a connected component $U_0$. Defining $v_0$
by $\{ v_0 \} = U_0 \cap W.v$ and setting $\tilde{\eta}(1) = v_0 \in V \setminus X$ one gets a lifting $\tilde{\eta} : [0,1] \to V$
of $\eta$.
Let $W_0$ the parabolic subgroup of $W$ fixing $v_0$, $L_0$ the intersection of its
reflecting hyperplanes, and $X_0 \supset X$ the complement of the reflecting hyperplanes
of $W_0$.
We denote $\eta_0 : [0,1] \to V/W_0$ the composite of $\tilde{\eta}$ with the natural
projection map $V \to V/W_0$. Since $\eta_0([0,1[) \subset X/W_0$, the restriction
$(\eta_0)_{|[0,1[}$ is equal to the unique lifting of $\eta : [0,1[ \to X/W$ under
the covering map $X/W_0 \to X/W$ starting at $W_0.x_0$. Since $\eta$ is a normal ray
this implies that each $\eta([1-\alpha,1[)$ is simply connected for $\alpha$ small enough
and therefore each $\eta_0([1-\alpha,1[)$ is simply connected; indeed, the image inside $\eta([1-\alpha,1[)$
of any
non-trivial loop of $\eta_0([1-\alpha,1[)$ could be homotopied to a constant loop, and the corresponding
homotopy then lifted to a homotopy of the original loop. This proves that $\eta_0$ is also a (closed) normal ray.

 Choosing some $W$-invariant norm on $V$,
for some $\eps_0 > 0$
the open balls $\Omega_{\eps}$ of radius $\eps> 0$ and center $v_0$
do not cross any other reflecting hyperplanes
than the ones of $W_0$ for $0 < \eps \leq \eps_0$. This provides morphisms
$\pi_1(\Omega_{\eps}\cap X/W_0,\eta_0) \to \pi_1(X/W,\eta)$, which do not depend on $\eps \in ]0,\eps_0[$
as, for $0 < \eps' < \eps < \eps_0$, the composite
$\pi_1(\Omega_{\eps'}\cap X/W_0,\eta_0) \to \pi_1(\Omega_{\eps}\cap X/W_0,\eta_0)
\to \pi_1(X_0/W_0,\eta_0)$ is known to be an isomorphism (see \cite{BMR}  Proposition 2.29), whence
$\pi_1(\Omega_{\eps'}\cap X/W_0,\eta_0) \stackrel{\simeq}{\to} \pi_1(\Omega_{\eps}\cap X/W_0,\eta_0)$.
Since the $\Omega_{\eps}\cap X/W_0$ for $\eps < \eps_0$ are mapped homeomorphically into $V/W$ with images forming a fundamental system of open neighborhoods
of $\eta(1)$, this proves that $\pi_1^{loc}(X/W,\eta)$ is well-defined.

\end{proof}

Actually, this proof shows also that $\pi_1^{loc}(X/W,\eta) \simeq \pi_1(X_0/W_0,\eta_0)$
where the parabolic subgroup $W_0$ has been defined
as the stabilizer of $\eta(1)$ and $X_0$ is its hyperplane complement. It also shows that these local fundamental groups are unchanged if $\eta$ is modified near $t=1$ while staying inside some neighborhood of its endpoint, so in particular all such local fundamental groups can be afforded by \emph{straight} normal rays.
From these arguments,
the proof of the following proposition is straightforward.

\begin{proposition} Let $\tilde{\eta}$ be a normal ray inside $(X,\C^n)$ where $X$ is the hyperplane complement of the complex reflection group $W < \GL_n(\C)$. Then the composite $\eta$ of $\tilde{\eta}$ with $X \to X/W$
is a normal ray. Moreover
\begin{enumerate}
\item If $\tilde{\eta}(1)$ belongs to the intersection
of all the reflecting hyperplanes
of $W$, then $\pi_1^{loc}(X/W,\eta) \simeq \pi_1(X/W,\eta)$.
\item If $W_0$ is the stabilizer of $\tilde{\eta}(1)$ and $X_0 \supset X$ is its hyperplane complement,
then $\pi_1^{loc}(X/W,\eta) \simeq \pi_1^{loc}(X_0/W_0,\eta_0)$, where $\eta_0$ is the composite of
$\tilde{\eta}$ with $X_0 \to X_0/W_0.$
\end{enumerate}
\end{proposition}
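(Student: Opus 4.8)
The plan is to deduce everything from the proof of \autoref{prop:parabsbmr} and the remark following it, so that almost no new work is needed. First I would observe that the hypotheses of the present proposition place us exactly in the situation of the second proposition of the excerpt (the one whose proof relies on \autoref{prop:parabsbmr}): we start with a normal ray $\tilde\eta$ inside the topological pair $(X,\C^n)$, where $X = \C^n \setminus \bigcup_{H} H$ is the hyperplane complement. The first thing to check is that $\eta := p \circ \tilde\eta$, with $p : X \to X/W$ the quotient map, is again a normal ray. Conditions (1) and (2) are immediate since $p(X) = X/W$ and $p$ maps $\C^n \setminus X$ into $(\C^n/W) \setminus (X/W)$. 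For condition (3), the point is that $p$ restricted to a small enough neighborhood of $\tilde\eta(1)$ is, on each connected component meeting the orbit, a homeomorphism onto an open subset of $\C^n/W$ (this is the argument already used in the proof of \autoref{prop:parabsbmr}, using that $W.\tilde\eta(1)$ is finite and $\C^n$ Hausdorff); hence $\tilde\eta([1-\alpha,1[)$ and $\eta([1-\alpha,1[)$ are homeomorphic for $\alpha$ small, and simple connectivity is preserved. So $\eta$ is a normal ray, and by \autoref{prop:parabsbmr} the pair $(X/W, \C^n/W)$ admits a local fundamental group at $\eta$; we may also assume $\tilde\eta$, and hence $\eta$, to be straight by the remark after that proposition.

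Next I would handle (1). Suppose $\tilde\eta(1)$ lies in the intersection $L = \bigcap_H H$ of all reflecting hyperplanes of $W$. Then the stabilizer $W_0$ of $\tilde\eta(1)$ is all of $W$: an element of $W$ fixing a point of the generic stratum of $L$ must, being generated by reflections each fixing a hyperplane containing that point, be trivial only if $L=\{0\}$, but in general $W_0 = W$ because every reflection of $W$ fixes $\tilde\eta(1) \in L \subseteq H$. With $W_0 = W$ we have $X_0 = X$, and the last sentence of the paragraph following \autoref{prop:parabsbmr}'s proof says precisely that $\pi_1^{loc}(X/W,\eta) \simeq \pi_1(X_0/W_0,\eta_0)$; here $X_0/W_0 = X/W$ and $\eta_0 = \eta$, giving $\pi_1^{loc}(X/W,\eta) \simeq \pi_1(X/W,\eta)$ as claimed. (Concretely: a small ball $\Omega_\eps$ around $\tilde\eta(1) \in L$ already meets \emph{no} hyperplane not through $\tilde\eta(1)$, i.e.\ none that it does not contain, so $\Omega_\eps \cap X/W \hookrightarrow X/W$ is already a homotopy equivalence onto $X/W$ in the relevant range, cf.\ \cite{BMR} Prop.~2.29.)

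For (2), let $W_0$ be the stabilizer of $\tilde\eta(1)$ — a parabolic subgroup of $W$ by Steinberg's theorem — and $X_0 \supseteq X$ its hyperplane complement. The proof of \autoref{prop:parabsbmr} already constructs exactly the commutative setup we need: it lifts $\eta_{|[0,1[}$ to $\tilde\eta$, projects to $\eta_0 : [0,1] \to \C^n/W_0$ via $\C^n \to \C^n/W_0$, checks $\eta_0$ is a normal ray, and then identifies $\pi_1^{loc}(X/W,\eta) \simeq \pi_1(X_0/W_0,\eta_0)$. Applying the \emph{same} argument to the pair $(X_0, \C^n)$ with the group $W_0$ in place of $W$ — noting that $W_0$ is itself a complex reflection group on $\C^n$, that $X_0$ is its hyperplane complement, and that the stabilizer of $\tilde\eta(1)$ in $W_0$ is again $W_0$ — gives $\pi_1^{loc}(X_0/W_0,\eta_0) \simeq \pi_1(X_0/W_0, \eta_0)$, where this last identification uses that $\tilde\eta(1)$ lies in the intersection of all reflecting hyperplanes of $W_0$ (by definition of $W_0$ as a parabolic, its reflecting hyperplanes are exactly those through $\tilde\eta(1)$). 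Hence $\pi_1^{loc}(X/W,\eta) \simeq \pi_1(X_0/W_0,\eta_0) \simeq \pi_1^{loc}(X_0/W_0,\eta_0)$, which is (2).

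The only genuine point requiring care — the ``main obstacle'', though it is light — is the compatibility of the various lifts and base-point identifications: one must check that the isomorphism $\pi_1^{loc}(X/W,\eta) \simeq \pi_1(X_0/W_0,\eta_0)$ furnished by the proof of \autoref{prop:parabsbmr} is compatible, under the covering $X_0/W_0 \to X/W$ and the inclusion $\Omega_\eps \cap X_0/W_0 \hookrightarrow X_0/W_0$, with the local-to-global map defining $\pi_1^{loc}(X_0/W_0,\eta_0)$, so that the two occurrences of $\eta_0$ and of the neighborhood systems match. This is exactly the content of the sentence ``these local fundamental groups are unchanged if $\eta$ is modified near $t=1$ while staying inside some neighborhood of its endpoint'', so invoking that remark closes the gap; everything else is the bookkeeping of pulling back the argument of \autoref{prop:parabsbmr} along $\C^n \to \C^n/W_0 \to \C^n/W$.
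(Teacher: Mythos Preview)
Your overall strategy, and your arguments for parts (1) and (2), match the paper's: it too simply declares the proof ``straightforward'' from \autoref{prop:parabsbmr} and the remarks following it.

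There is, however, a genuine error in your verification that $\eta$ is a normal ray. You assert that ``$p$ restricted to a small enough neighborhood of $\tilde\eta(1)$ is, on each connected component meeting the orbit, a homeomorphism onto an open subset of $\C^n/W$''. This is false: since $\tilde\eta(1)\notin X$, its stabilizer $W_0$ is nontrivial, and near $\tilde\eta(1)$ the map $V\to V/W$ is the quotient by $W_0$, not a local homeomorphism. The argument from the proof of \autoref{prop:parabsbmr} that you invoke only separates the \emph{distinct} points of the orbit $W\cdot\tilde\eta(1)$ from one another; it says nothing about injectivity of $p$ near any single orbit point. In fact the claim ``$\eta$ is a normal ray'' can fail in this generality: take $W$ the group of order $2$ acting on $\C$ by $z\mapsto -z$, so that $X=\C\setminus\{0\}$ and $p(z)=z^2$, and let $\tilde\eta$ be an embedded arc in $\C\setminus\{0\}$ converging to $0$ which contains, for every $k\geq 0$, a full semicircle of radius $2^{-k}$ (alternately upper and lower, joined by short real segments). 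Then each $\tilde\eta([1-\alpha,1[)$ is an arc, so $\tilde\eta$ is a normal ray --- even a straight one in the paper's sense --- yet $\eta([1-\alpha,1[)$ contains the full circles $|w|=4^{-k}$ for all large $k$ and, being one--dimensional, is never simply connected. The paper's one-line proof glosses over this as well. The assertion \emph{does} hold when $\tilde\eta$ is a genuine affine segment near $t=1$, say $\tilde\eta(t)=\tilde\eta(1)+(1-t)v$: then $w\cdot\tilde\eta(t_1)=\tilde\eta(t_2)$ with $w\in W_0$ forces $(1-t_1)\,w(v)=(1-t_2)\,v$, and $|w(v)|=|v|$ gives $t_1=t_2$; so $p\circ\tilde\eta$ is injective near $1$, and this affine case is the only one actually used downstream.
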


\subsection{Parabolic subgroups of complex braid groups}
\label{sect:parabdefinition}

From the results above we can define the following concept.

\begin{definition} Let $W < \GL(V)$ be a complex reflection group and $X$ be its hyperplane complement.
Let $v_0 \in X/W$ and $B = \pi_1(X/W,v_0)$ the braid group of $W$. A \emph{parabolic subgroup} of
$B$ is the image of $\pi_1^{loc}(X/W,\eta)$ for some closed (straight) normal ray $\eta$ based at $v_0$
under the maps $\pi_1^{loc}(X/W,\eta) \to \pi_1(X/W,\eta) \simeq \pi_1(X/W,v_0) = B$.
\end{definition}

Choosing a preimage $\tilde{v}_0$ of $v_0$ inside $X$, the covering map $X \to X/W$ defines a projection map $B \to W$ with kernel $P = \pi_1(X,\tilde{v}_0)$.
It is readily checked from e.g. \cite{BMR} that the image of the parabolic
subgroup attached to $\eta$ under the projection map $\pi : B = \pi_1(X/W,v_0) \to W$
is a parabolic subgroup of $W$. More precisely, it is the parabolic subgroup fixing
the collection of reflecting hyperplanes containing $\tilde{\eta}(1)$, where $\tilde{\eta}$
is the unique lift of $\eta$ with $\tilde{\eta}(0) = \tilde{v}_0$. In this context, we shall need the following lemma.

\begin{lemma}\label{lem:avoidinghyperplanes} Let $H_1,\dots,H_r$ be a collection of hyperplanes
such that $v_0 \not\in W.H_i$, $i=1,\dots,r$. Let $B_0$ be a parabolic subgroup of $B = \pi_1(X/W,v_0)$, obtained as the image of $\pi_1^{loc}(X/W,\eta)$ for some $\eta$, and
let $L_0$ the intersection of the reflecting hyperplanes containing $\tilde{\eta}(1)$, for $\tilde{\eta} : [0,1] \to V$ lifting $\eta$. Assume that $\forall i\ L_0 \not\subset H_i$. Then $B_0$ can be obtained as the image of $\pi_1^{loc}(X/W,\eta')$
with
  $\eta'$ such that $\forall i \ \eta'([0,1]) \cap W.H_i = \varnothing$.
\end{lemma}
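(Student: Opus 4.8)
The plan is to show that the normal ray $\eta$ can be deformed, relative to its endpoint $\tilde\eta(1)$ and keeping the endpoint's stabilizer (hence $L_0$) fixed, so that it avoids the finite union $\bigcup_i W.H_i$ away from its endpoint, while leaving $\pi_1^{loc}$ unchanged. The key point, already established in the discussion following \autoref{prop:parabsbmr}, is that $\pi_1^{loc}(X/W,\eta)$ depends only on the germ of $\eta$ near $t=1$, and is unchanged if $\eta$ is modified near $t=1$ inside a neighbourhood of its endpoint; moreover $\pi_1^{loc}(X/W,\eta)\simeq\pi_1(X_0/W_0,\eta_0)$ where $W_0$ is the stabilizer of $\tilde\eta(1)$ and $X_0$ is its hyperplane complement. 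So it suffices to produce a normal ray $\eta'$ with the same germ-data (same endpoint, same stabilizer) and with $\eta'([0,1])\cap W.H_i=\varnothing$ for all $i$.

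First I would work upstairs with the lift $\tilde\eta:[0,1]\to V$. Pick $\eps>0$ small enough that the ball $\Omega_\eps$ around $\tilde\eta(1)$ meets no reflecting hyperplane of $W$ other than those of $W_0$, and (crucially, using the hypothesis $L_0\not\subset H_i$) so small that each $H_i$ either contains $\tilde\eta(1)$ --- impossible, since $L_0\not\subset H_i$ forces $\tilde\eta(1)\notin H_i$ --- or misses $\Omega_\eps$ entirely; similarly shrink so that $\Omega_\eps$ meets $W.H_i$ only in those translates of $H_i$ passing through... but again none do, so for $\eps$ small $\Omega_\eps\cap W.H_i=\varnothing$ for every $i$. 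Thus the tail of $\tilde\eta$, say $\tilde\eta|_{[1-\alpha,1]}$ with $\tilde\eta([1-\alpha,1])\subset\Omega_\eps$, already avoids all the $W.H_i$. It remains to fix the initial segment $\tilde\eta|_{[0,1-\alpha]}$, which is a path in $X$ from $\tilde v_0$ to $\tilde\eta(1-\alpha)\in\Omega_\eps\cap X$, and may cross some $W.H_i$. Since $X$ is the complement of the hyperplanes of $W$ and the additional set $\bigcup_i W.H_i$ is a finite union of affine hyperplanes, its complement in $V$ (equivalently in $X$) is connected --- indeed path-connected and, being a hyperplane-arrangement complement, even has connected intersection with the convex region near any basepoint not on those hyperplanes. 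Using $v_0\notin W.H_i$ we have $\tilde v_0\notin\bigcup_i W.H_i$, and similarly, after possibly shrinking $\eps$, $\tilde\eta(1-\alpha)\notin\bigcup_i W.H_i$; hence there is a path $\delta$ in $X\setminus\bigcup_i W.H_i$ from $\tilde v_0$ to $\tilde\eta(1-\alpha)$. Replace $\tilde\eta|_{[0,1-\alpha]}$ by $\delta$ and keep the tail: this defines a new lift $\tilde\eta'$, and its image $\eta'$ in $X/W$ is the required normal ray (it is still a normal ray since only the part away from $t=1$ was changed, and condition (3) involves only the germ near $1$). Because $\eta$ and $\eta'$ agree near $t=1$ up to a path in $X$, they determine the same element of $\pi_1(X/W,v_0)$ up to conjugacy — but in fact, tracking base points carefully via the isomorphism $\pi_1(X/W,\eta)\simeq\pi_1(X/W,v_0)$ built from the chosen path, one sees the \emph{subgroup} $B_0$ obtained is literally the same, since changing the ray by a path in $X$ from $\tilde v_0$ to itself only changes things by conjugation, and choosing $\delta$ to be homotopic in $X$ (not in $X\setminus\bigcup W.H_i$) to the original $\tilde\eta|_{[0,1-\alpha]}$ --- which we may arrange, because $\delta$ and the original initial segment have the same endpoints in $X$ and $\pi_1(X)$-homotopy classes of such paths are acted on freely and transitively by $\pi_1(X,\tilde v_0)$, so we can precompose $\delta$ with a loop to land in the right class --- makes the two base-point identifications coincide.

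The main obstacle is this last bookkeeping point: ensuring not merely that we get \emph{a} parabolic subgroup conjugate to $B_0$, but exactly $B_0$ itself. The clean way is: (i) deform the tail to avoid $\bigcup W.H_i$ as above, noting this changes nothing; (ii) then, among all paths in $X$ from $\tilde v_0$ to $\tilde\eta(1-\alpha)$ that are homotopic rel endpoints \emph{in} $X$ to the original initial segment, show at least one avoids $\bigcup_i W.H_i$. Step (ii) is where the geometry enters: a generic path in $X$ homotopic to a given one can be pushed off any finite collection of affine subspaces of positive codimension (here the $H_i$, which have real codimension $2$ inside $V\cong\C^n$ viewed as $\R^{2n}$), by a transversality/general-position argument — two real dimensions of codimension give enough room to homotope across them rel endpoints without ever touching them, exactly as one slides a loop off a point in $\R^2$... wait, codimension $2$ in a space of real dimension $\ge 2$: a path (dimension $1$) plus a homotopy (dimension $2$) meets a codimension-$2$ set in expected dimension $2+1-2n\le 0$ generically within the relevant region; the honest statement is that $X$ and $X\setminus\bigcup_i W.H_i$ have the same fundamental groupoid on the relevant base points after the quotient — but more simply, since removing further \emph{complex} hyperplanes from a complex-hyperplane-complement keeps it path-connected and the inclusion $X\setminus\bigcup W.H_i\hookrightarrow X$ is $\pi_0$- and (on the nose here, by choosing $\delta$ appropriately) surjective on the relevant homotopy classes of paths. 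I would write step (ii) as: pick any path $\delta_0$ in $X\setminus\bigcup W.H_i$ from $\tilde v_0$ to $\tilde\eta(1-\alpha)$ (exists by path-connectedness of this hyperplane complement); the concatenation $\tilde\eta|_{[0,1-\alpha]}^{-1}*\delta_0$ is a loop in $X$ at $\tilde v_0$, representing some $w\in P=\pi_1(X,\tilde v_0)$; choose a loop $\lambda$ in $X\setminus\bigcup W.H_i$ at $\tilde v_0$ in the class $w$ (possible because $X\setminus\bigcup W.H_i\hookrightarrow X$ induces a surjection $\pi_1(X\setminus\bigcup W.H_i,\tilde v_0)\onto\pi_1(X,\tilde v_0)$ — the standard fact that adding hyperplanes back only kills loops, never creates new obstructions to lifting); then $\delta:=\lambda^{-1}*\delta_0$ avoids $\bigcup W.H_i$ and is homotopic rel endpoints in $X$ to $\tilde\eta|_{[0,1-\alpha]}$. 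With $\delta$ in hand, setting $\tilde\eta'=\delta*(\text{tail of }\tilde\eta)$ and $\eta'$ its image in $X/W$ gives a normal ray with $\eta'([0,1])\cap W.H_i=\varnothing$ and, by construction, the very same parabolic subgroup $B_0$.
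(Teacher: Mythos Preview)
Your argument contains a genuine gap at the very first step. You claim that $L_0\not\subset H_i$ forces $\tilde\eta(1)\notin H_i$, but this implication is false: $\tilde\eta(1)$ is merely \emph{some} point of $L_0$ (one whose stabilizer is exactly $W_0$), and nothing prevents it from lying in $H_i\cap L_0$, which is a proper but possibly nonempty subspace of $L_0$. If $\tilde\eta(1)\in H_i$, then every ball $\Omega_\eps$ around it meets $H_i$, and no tail of $\tilde\eta$ with that fixed endpoint can avoid $H_i$; worse, $\eta'(1)$ itself would lie in $W.H_i$. Your subsequent claim that ``after possibly shrinking $\eps$, $\tilde\eta(1-\alpha)\notin\bigcup_i W.H_i$'' fails for the same reason: shrinking $\eps$ pushes $\tilde\eta(1-\alpha)$ \emph{toward} $\tilde\eta(1)$, not away from $H_i$.

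The paper's fix is to allow the endpoint to move: since $L_0\not\subset H_i$, the complement of $\bigcup_i H_i$ is dense in $L_0$, so one may perturb $\eta$ near $t=1$ (changing its endpoint to a nearby generic point of $L_0$) while remaining inside a ball $\Omega_\eps$ and hence preserving the image $\pi_1(\Omega_\eps\cap X/W_0)\to B$, i.e.\ preserving $B_0$. Once the endpoint is generic in $L_0$, your tail argument goes through. For the initial segment, the paper uses the direct general-position fact that a path in $\C^n$ can be homotoped rel endpoints off a finite union of real-codimension-$2$ subspaces; your more elaborate route via $\pi_1$-surjectivity of the inclusion $X\setminus\bigcup W.H_i\hookrightarrow X$ is correct and achieves the same end, but is heavier than needed.
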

\begin{proof} Let $W_0$ be the pointwise stabilizer of $L_0$, which is the parabolic subgroup of $W$ obtained as the image of $B_0$ under $\pi : B \onto W$.
From the assumptions we get that the complement of the $H_i$'s is dense inside $L_0$.
From this and the construction as the image of $\pi_1(\Omega_{\eps}\cap X/W_0,\eta)$ for some $\eps > 0$ in the proof of
\autoref{prop:parabsbmr} it is then clear that $\eta$ can be slightly modified near $\eta(1)$ in such a way
that it remains the same on some $[0,1-\beta]$, does not cross any of the $H_i$'s on $[1-\beta,1]$,
and still provide the same image inside $B_0$. But now $\eta_{|_{[0,1-\beta]}}$ is a path inside some open set of $\C^n$ joining two elements not belonging to $F = H_1\cup \dots \cup H_r$. Since $F$ is a finite union of (real) codimension 2 subspaces, $\eta_{_{[0,1-\beta]}}$ is homotopically equivalent
to a path which avoids $F$, so we can modify $\eta$ accordingly on $[0,1-\beta]$ and get the
same image inside $B = \pi_1(X/W,v_0)$.
\end{proof}

We then have the following proposition, which summarizes the basic properties of parabolic subgroups of braid groups. Informally, (1) says that the concept of a parabolic is
stable after reducing to essential reflection arrangements; (2) says that conjugates of parabolic subgroups are parabolic subgroups ; (3) says that a parabolic subgroup of $B$ is naturally isomorphic to the braid group of a parabolic subgroup of $W$ ; (4) says that, under this identification,
 parabolic subgroups of parabolic subgroups are parabolic subgroups.

\begin{proposition} \label{prop:basicpropsparabs}
Let $\mathcal{A}$ denote the hyperplane arrangement of $W$,
and $B= \pi_1(X/W,v_0)$. Then the following hold.
\begin{enumerate}
\item Let $V_+$
be a linear subspace of $V$ contained in all the reflecting hyperplanes. Let $\bar{V} = V/V_+$ and let
$\bar{v}_0$, $\bar{X}$ be the images of $v_0$ and $X$ inside $\bar{V}/W$ and $\bar{V}$, respectively.
Then the natural isomorphism $\pi_1(X/W,v_0) \to \pi_1(\bar{X}/W,\bar{v}_0)$ induces a bijection
between their sets of parabolic subgroups.
\item If $B_0 < B$ is a parabolic subgroup, then $g B_0 g^{-1}$ is a parabolic subgroup
for every $g \in B$
\item If $B_0 < B$ is a parabolic subgroup, then $B_0 \simeq \pi_1(X_0/W_0,v'_0)$
where $v'_0$ is a preimage of $v_0$ under $X/W_0 \to X/W$, and $X_0$ is the hyperplane complement for the image $W_0$ of $B_0$ under $B \to W$.
\item If $B_0 < B$ is a parabolic subgroup of $B$ and $v'_0, W_0, X_0$ as above, then every parabolic subgroup
of $\pi_1(X_0/W_0,v'_0)$ is identified with a parabolic subgroup of $B$ via
 $\pi_1(X_0/W_0,v'_0) \simeq B_0 < B$.
\end{enumerate}
\end{proposition}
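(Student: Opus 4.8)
The plan is to prove the four assertions by systematically unwinding the topological definition of a parabolic subgroup, using the structural information extracted in the proof of \autoref{prop:parabsbmr}: namely, that a parabolic subgroup attached to a normal ray $\eta$ is, after lifting to a normal ray $\tilde\eta$ in $V$ terminating at a point $v_0'$, canonically isomorphic to $\pi_1(X_0/W_0,\eta_0)$ where $W_0=\mathrm{Stab}_W(v_0')$ is parabolic in $W$, $X_0$ is the hyperplane complement of $W_0$, and $\eta_0$ is the image of $\tilde\eta$ in $X_0/W_0$. For (1), the key point is that collapsing the common subspace $V_+$ does not change the hyperplane arrangement combinatorics: the map $V\to\bar V$ induces a homeomorphism $X/W\to\bar X/W$ (since every reflecting hyperplane contains $V_+$, so $X$ is a union of $V_+$-cosets), hence also $V/W\to\bar V/W$, and this homeomorphism of topological pairs carries normal rays to normal rays and local fundamental groups to local fundamental groups on the nose; so the induced isomorphism on braid groups matches up the two collections of parabolic subgroups bijectively.

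For (2), given a parabolic $B_0$ attached to a closed straight normal ray $\eta$ based at $v_0$ and an element $g\in B=\pi_1(X/W,v_0)$ represented by a loop $\ell$ at $v_0$, I would form the concatenated path $\eta*\ell$ (first traverse $\ell$, then $\eta$): this is again a closed normal ray based at $v_0$, since conditions (1)–(3) only constrain behaviour near $t=1$ and there $\eta*\ell$ coincides with $\eta$. Chasing through the canonical identification $\pi_1(X/W,\eta*\ell)\simeq\pi_1(X/W,v_0)$, which differs from the one for $\eta$ precisely by conjugation by $[\ell]=g$, shows that the image of $\pi_1^{loc}(X/W,\eta*\ell)$ is $gB_0g^{-1}$ (or $g^{-1}B_0g$, depending on the concatenation convention — I would fix the convention so it reads as stated). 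This gives (2).

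Assertion (3) is essentially a restatement of the remark following the proof of \autoref{prop:parabsbmr}, namely $\pi_1^{loc}(X/W,\eta)\simeq\pi_1(X_0/W_0,\eta_0)$, combined with the observation that the straight normal ray $\eta_0$ in $(X_0/W_0,V/W_0)$ terminates at a point whose $W_0$-stabilizer is all of $W_0$, so that the basepoint identification $\pi_1(X_0/W_0,\eta_0)\simeq\pi_1(X_0/W_0,v_0')$ is canonical once we choose $v_0'$ above $v_0$; one then checks the resulting composite $B_0\hookrightarrow B$ agrees with the inclusion, using that $\eta_0$ restricted to $[0,1[$ is the lift of $\eta_{|[0,1[}$ under $X_0/W_0\to X/W$. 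For (4), the main obstacle — and the step I expect to require the most care — is transitivity of the construction: a parabolic subgroup of $\pi_1(X_0/W_0,v_0')$ is given by a normal ray $\delta$ in $(X_0/W_0,V/W_0)$, and I must produce a normal ray $\eta'$ in $(X/W,V/W)$ realizing the same subgroup of $B$. Here I would use \autoref{lem:avoidinghyperplanes} to replace $\delta$ by one whose lift to $V$ avoids all the reflecting hyperplanes of $W$ not containing $L_0$ (these are finitely many $W$-orbits of hyperplanes, none containing the relevant flat, since the flat lies in $L_0$ and we are inside $X_0$), so that $\delta$ actually takes values in $X/W$ away from its endpoint; then $\delta$ descends to a normal ray in $(X/W,V/W)$, and its stabilizer-parabolic in $W$ is the one attached to the chosen parabolic of $W_0$, while the two local fundamental groups agree because near the endpoint the ambient neighborhoods in $V/W$ and $V/W_0$ can be taken homeomorphic (again as in the end of the proof of \autoref{prop:parabsbmr}). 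Composing this identification with the inclusion from (3) yields the claim.
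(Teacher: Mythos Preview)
Your overall strategy matches the paper's proof closely: (2) is done exactly as in the paper by concatenating with a loop, (3) is indeed the remark following \autoref{prop:parabsbmr}, and (4) is handled by the same mechanism of applying \autoref{lem:avoidinghyperplanes} to push a normal ray in $(X_0/W_0,V/W_0)$ off the extra hyperplanes so that it descends to a normal ray in $(X/W,V/W)$.

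There is, however, a genuine slip in your argument for (1). The projection $V\to\bar V=V/V_+$ does \emph{not} induce a homeomorphism $X/W\to\bar X/W$: its fibers are affine copies of the positive-dimensional space $V_+$, so the map is only a homotopy equivalence, and your ``on the nose'' transport of normal rays and local fundamental groups does not go through as stated. The paper's fix is to choose a $W$-invariant complement to $V_+$, giving a $W$-equivariant product decomposition $V\simeq\bar V\times V_+$ and hence $X/W\simeq(\bar X/W)\times V_+$; this furnishes an explicit section allowing one to \emph{lift} any normal ray in $(\bar X/W,\bar V/W)$ to one in $(X/W,V/W)$, and to check via a commutative square of $\pi_1$'s and $\pi_1^{loc}$'s that the two collections of parabolic subgroups match. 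A smaller point in (4): your parenthetical ``the flat lies in $L_0$'' has the inclusion the wrong way round. If $W_{00}\subset W_0$ is the parabolic of $W_0$ fixing the endpoint, then $L_{00}=V^{W_{00}}\supset L_0$, and the reason a hyperplane $H$ of $W$ not in $\mathcal A_0$ cannot contain $L_{00}$ is that $H\supset L_{00}\supset L_0$ would force $H\in\mathcal A_0$.
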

\begin{proof}
We first prove (1).
The isomorphism $\pi_1(X/W,v_0) \to \pi_1(\bar{X}/W,\bar{v}_0)$ induces a bijection
between their sets of subgroups.
The fact that parabolic subgroups of $\pi_1(X/W,v_0)$ are mapped to
parabolic subgroups of $\pi_1(\bar{X}/W,\bar{v}_0)$ is an immediate consequence of the commutativity
of the diagram
\begin{center}
\begin{tikzcd}
\pi_1^{loc}(X/W,\eta) \arrow[r] \arrow[d] & \pi_1^{loc}(\bar{X}/W,\bar{\eta}) \arrow[d] \\
\pi_1(X/W,\eta) \arrow[r, "\simeq"] & \pi_1(\bar{X}/W,\bar{\eta})
\end{tikzcd}
\end{center}
In order to prove that, conversely, every parabolic subgroup of $\pi_1(\bar{X}/W,\bar{v}_0)$
is obtained from a parabolic subgroup of $\pi_1(X/W,v_0)$, it then remains to show that
every normal ray inside $\bar{X}/W$ can be lifted to some normal ray inside $X/W$, and
this is clear from the existence of $W$-equivariant homeomorphisms $V \simeq \bar{V} \times V_+$
given by choosing some orthogonal complement of $V_+$ inside $V$ for some $W$-invariant orthogonal
form on $V$. This proves (1).

If $B_0$ is the image of $\pi_1^{loc}(X/W,\eta) \to \pi_1(X/W,\eta) \simeq \pi_1(X/W,v_0) = B$,
and $g = [\gamma]$ for some $\gamma : [0,1] \to X/W$ with $\gamma(0)=\gamma(1) = v_0$, then
$g B_0 g^{-1}$ is the image of $\pi_1^{loc}(X/W,\eta*\gamma^{-1})$,
as $\eta*\gamma^{-1}$ is easily checked to be another normal ray based at $v_0$. This proves (2). Part (3) was already noticed after \autoref{prop:parabsbmr} as a consequence of
its proof.

We prove (4). Let us consider a parabolic subgroup given as the image of $\pi_1^{loc}(X_0/W_0,\tau)\to \pi_1(X_0/W_0,v'_0)$ for some closed normal ray $\tau$ based at $v'_0$, and let $\tilde{\tau} : [0,1] \to V$ be a lift of $\tau$. Let $W_{00}$ be the
stabilizer of $\tau(1)$ inside $W_0$, and $L_{00}$ its fixed point set. If $H$ is a reflecting
hyperplane for $W$ but not for $W_0$, it does not contain $L_{00}$, for otherwise it would contain
the fixed point set of $W_0$ and would be a reflecting hyperplane for $W_0$. Therefore we can apply
\autoref{lem:avoidinghyperplanes} and assume that $\tau$ does not cross any of these (orbits of) hyperplanes.
It follows that $\tau$ defines a normal ray inside $X/W_0$.  Without changing the parabolic subgroup it defines, we know that
we can modify its end, provided that it stays inside a suitable neighborhood $\Omega_{\eps}\cap X/W_0$ of its endpoint.
From this we can assume that its image inside $X/W$ remains a normal ray.
Therefore the subgroup we are interested in is the image of the map
$$
\pi_1\left(\frac{X_0 \cap \Omega_{\eps}}{W_0},\tau\right) =
\pi_1\left(\frac{X \cap \Omega_{\eps}}{W_0},\tau\right)
\to \pi_1\left(\frac{X}{W},v_0\right)
$$
which is a parabolic subgroup of $B$ by the proof of \autoref{prop:parabsbmr}.

\bigskip

\end{proof}

We now relate conjugacy classes of parabolic subgroups with their images inside $W$.

\begin{proposition} \label{prop:conjparabs}
Let $B_1,B_2$ be two parabolic subgroups of $B=\pi_1(X/W,v_0)$, and $W_1,W_2$ their images inside $W$.
We set $P = \Ker(B \onto W)$.
\begin{enumerate}
\item $W_1=W_2$ if and only if $B_1$ and $B_2$ are $P$-conjugates.
\item $W_1$ and $W_2$ are $W$-conjugates if and only if $B_1$ and $B_2$ are $B$-conjugates.
\end{enumerate}
\end{proposition}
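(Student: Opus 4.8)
The two ``if'' implications are immediate by applying $\pi\colon B\onto W$: if $B_2=pB_1p^{-1}$ with $p\in P=\Ker(B\onto W)$ then $W_2=\pi(p)W_1\pi(p)^{-1}=W_1$, and if $B_2=gB_1g^{-1}$ with $g\in B$ then $W_2=\pi(g)W_1\pi(g)^{-1}$. I would also reduce (2) to (1): given $W_2=wW_1w^{-1}$, lift $w$ to some $g\in B$; then $gB_1g^{-1}$ is a parabolic subgroup by \autoref{prop:basicpropsparabs}(2) whose image in $W$ is $W_2$, so once (1) is known it is $P$-conjugate to $B_2$, whence $B_1$ and $B_2$ are $B$-conjugate. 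Thus the whole content is the nontrivial direction of (1): if $W_1=W_2$ then $B_1$ and $B_2$ are $P$-conjugate.

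For this, write $W_0=W_1=W_2$, let $L_0$ be the intersection of the reflecting hyperplanes of $W_0$, and choose normal rays $\eta_1,\eta_2$ based at $v_0$ affording $B_1,B_2$, with lifts $\tilde\eta_i\colon[0,1]\to V$ through $\tilde v_0$. From the description of the image of a parabolic inside $W$ (recalled just before \autoref{lem:avoidinghyperplanes}) together with Steinberg's theorem, $W_0$ is the \emph{full} stabilizer of $v_i:=\tilde\eta_i(1)$, so each $v_i$ lies in the open stratum $L_0^\circ:=L_0\setminus\bigcup_{H\not\supseteq L_0}H$; as $L_0$ is a complex subspace and the $H\cap L_0$ are proper complex subspaces, $L_0^\circ$ is path-connected. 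I would fix a $W$-invariant norm, join $v_1$ to $v_2$ by a path $\delta$ inside $L_0^\circ$, and take a tubular neighbourhood $T$ of $\delta([0,1])$ small enough that it meets no reflecting hyperplane of $W$ other than those of $W_0$. Then $T$ is $W_0$-stable, $T\cap X=T\cap X_0$ (with $X_0\supset X$ the complement of the hyperplanes of $W_0$), and a $W_0$-stable splitting $V=L_0\oplus V'$ gives a product decomposition of $X_0$ showing that $T\cap X_0$, hence $T\cap X$ and $T\cap X/W_0$, are path-connected, while the inclusion of a small ball $\Omega_\eps$ around any $v_i$ induces an isomorphism $\pi_1(\Omega_\eps\cap X/W_0,u_i)\to\pi_1(T\cap X/W_0,u_i)$.

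Next I would re-run the proof of \autoref{prop:parabsbmr} to get an explicit model of $B_i$. Writing $q\colon X/W_0\to X/W$ for the covering and $u_i\in X/W_0$ for the image of $\tilde\eta_i(1-\eps)$ (for small $\eps$), that proof presents $B_i$ as the $q$-image of the subgroup of $\pi_1(X/W_0,u_0)$ obtained from the ``local'' subgroup $S_i:=\Imm\big(\pi_1(T\cap X/W_0,u_i)\to\pi_1(X/W_0,u_i)\big)$ by changing the base point from $u_i$ to $u_0$ along the path $\mu_i$ in $X/W_0$ which is the image of $\tilde\eta_i|_{[0,1-\eps]}$ (the $\Omega_\eps$- and $T$-descriptions of $S_i$ coinciding by the isomorphism above). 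Since $T\cap X$ is path-connected I can pick a path $\nu'$ \emph{inside $X$} from $\tilde\eta_1(1-\eps)$ to $\tilde\eta_2(1-\eps)$; its image $\nu$ in $T\cap X/W_0$ carries $S_1$ onto $S_2$, so that $B_2=gB_1g^{-1}$ with $g=q_*[\kappa]$, where $\kappa$ is the loop at $u_0$ going out along $\mu_2$, then along $\nu^{-1}$, then back along $\mu_1^{-1}$. Lifting $\kappa$ to $X$ from $\tilde v_0$ it runs successively along $\tilde\eta_2|_{[0,1-\eps]}$, then $(\nu')^{-1}$, then $(\tilde\eta_1|_{[0,1-\eps]})^{-1}$, and therefore ends at $\tilde v_0$; hence $\pi(g)=1$, i.e. $g\in P$. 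This proves (1), and with it the proposition.

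The crux, and the step I expect to be the main obstacle, is this last twist: forcing the conjugator into $P$ rather than merely into the preimage of $W_0$. It works precisely because the connecting path can be chosen \emph{inside $X$}, not just inside $X/W_0$, which requires $T\cap X$ — and not only $T\cap X/W_0$ — to be path-connected; and that rests on the endpoints $v_i$ lying in the \emph{open} stratum $L_0^\circ$, so that near $\delta$ the arrangement of $W$ genuinely reduces to that of $W_0$ and one has the product structure $X_0\simeq L_0\times(\text{hyperplane complement of }W_0\text{ in }V')$. Once these local connectedness and product-structure facts are in place and the model of $B_i$ is correctly read off from the proof of \autoref{prop:parabsbmr}, the remaining base-point bookkeeping is routine.
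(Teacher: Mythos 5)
Your proposal is correct and follows essentially the same route as the paper: the reduction of (2) to (1) via \autoref{prop:basicpropsparabs}(2), and then, for the hard direction of (1), joining the two ray endpoints by a path in the open stratum $L_0'$ of $L_0$, thickening it to a tubular neighbourhood $T$ meeting no hyperplanes outside $\mathcal{A}_0$, identifying both local groups with the image of $\pi_1(X\cap T/W_0,\cdot)$ via the homotopy equivalences $X\cap T^i(\eps)\hookrightarrow X\cap T$, and observing that the resulting conjugator, being assembled from the two ray segments and a connecting path chosen inside $X\cap T$ itself, lifts to a closed loop at $\tilde v_0$ and hence lies in $P$. The only cosmetic differences are that you invoke Steinberg's theorem explicitly to place the endpoints in the open stratum and use a small ball rather than the paper's tube slices $T^i(\eps)$ for the local identification.
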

\begin{proof}
The 'if' parts are clear, so we shall prove only the other direction. Moreover, assuming that
(1) is true, if $W_1$ and $W_2=w W_1 w^{-1}$ are $W$-conjugates we can find $b \in B$ with image
$w$, so that $b B_1 b^{-1}$ is still a parabolic subgroup by \autoref{prop:basicpropsparabs} (2), with image $W_2$, so that it is a $P$-conjugate of $B_2$, which proves (2).
Therefore it only remains to prove that if $W_1=W_2$, then $B_1,B_2$ are $P$-conjugates.

In order to do this, we set $W_0 = W_1=W_2$, denote $L_0=V^{W_0}$ its fixed point set, and choose some lift $\tilde{v}_0 \in X$ of $v_0$.
We fix a $W$-invariant hermitian scalar product on the ambient space.
The parabolic subgroups $B_i$ are given as the image of $\pi_1^{loc}(X/W_i,\eta_i)$
inside $\pi_1(X/W,v_0)$,
for some closed normal rays $\eta_i$ with $\eta_i(0) = v_0$.

 We set $\tilde{\eta}_i : [0,1] \to V$ the lift of $\eta_i$ such
that $\tilde{\eta}_i(0) = \tilde{v}_0$, so that $\tilde{\eta}_i(1) = x_i^0  \in L_0$. Then, $P_i = P  \cap B_i$ is the image
of $\pi_1^{loc}(X,\tilde{\eta}_i)$ inside $\pi_1(X,\tilde{v}_0)$, and we need to prove in particular that $P_1,P_2$
are conjugates inside $P = \pi_1(X,\tilde{v}_0)$.

 Since the complement $L'_0$ inside $L_0$ of the
union of the set $\mathcal{A} \setminus \mathcal{A}_0$ of the reflecting hyperplanes of $W$ not containing $L_0$ is open and path connected, one can choose a $C^{\infty}$ simple curve $\eta_0 : [0,1] \to L'_0$ with $\eta_0(\eps_0) = x_1^0$, $\eta_0(1-\eps_0) = x_2^0$, for some $\eps_0 > 0$ close to $0$, and we can assume $\eta_0'(t) \neq 0$ for every $t$. Then, $\eta_0(]0,1[)$ is a 1-dimensional submanifold of $L'_0$, and is contained in some normal tube $T_0 = \{ \eta_0(t)+D(t,r) ; t \in
]0,1[ \}$, where $D(t,r)$ is the disc of radius $r$ centered at $0$ inside the subspace orthogonal to $\gamma'(t)$ inside $L_0$.
Moreover, $r$ can be chosen small enough to be a tubular neighborhood, that is $T_0$ is
homeomorphic to $]0,1[ \times D$ for some $(\dim_{\R} L_0-1)$-disc $D$ via $(t,v) \mapsto \eta_0(t) + h(t,v)$ with
$h(t,v) \in D(t,r)$. Then we set $T = T_0+F$, where $F = \{  y \in L_0^{\perp} , \| y \| \leq r \}$.
Up to possibly reducing $r$, one can assume that $T$ does not cross any hyperplane of $\mathcal{A}\setminus \mathcal{A}_0$. We then have a homeomorphism $\Psi : ]0,1[\times D \times F \to T$, $(t,v,f) \mapsto \eta_0(t) + h(t,v)+f$, with the property that the inverse image of $X \cap T$ is $]0,1[\times D \times F'$
with $F'$ the complement inside $F$ of the union of the hyperplanes of $\mathcal{A}_0$.

Then, notice that, for $0 < \eps < \eps_0$,  $T^1(\eps) = \{ \eta_0(t) + h(t,v) + y ;\ t\in ]-\eps+\eps_0,\eps_0+\eps [, v\in D, y \in L_0^{\perp} , \| y \| \leq r \}$ is an open neighborhood of $x_1^0$, and
similarly $T^2(\eps) = \{ \eta_0(t) + h(t,v) + y ; t\in ]-\eps+1-\eps_0,1-\eps_0+\eps [, v\in D, y  \in L_0^{\perp} , \| y \| \leq r \}$ is an open neighborhood of $x_2^0$. Therefore, one can choose $\eps$ and $x_i =\eta_i(\alpha_i) \in T^i(\eps)$ with $\alpha_i<1$ so that
$\pi_1^{loc}(X,\eta_i) \simeq \pi_1(X\cap T^i(\eps),x_i)$ for $i=1,2$. Then,
the map $ \pi_1(X\cap T^i(\eps),x_i) \to \pi_1(X,\tilde{v}_0)$, whose image is $P_i$,
factors as $ \pi_1(X\cap T^i(\eps),x_i) \to  \pi_1(X\cap T,x_i) \to \pi_1(X,\tilde{v}_0)$.
Now, the inclusion map $N_i : X\cap T^i(\eps)\to X \cap T$ is a homotopy equivalence. Indeed,
if $\Psi^{(i)}$ denotes the restriction of $\Psi$ to $H_i\times D \times F \to T$,
with $H_1 = ]\eps_0 - \eps,\eps_0 + \eps[$ and $H_2 = ]1-\eps_0 - \eps,1-\eps_0 + \eps[$,
then $\Psi^{-1} \circ N_i\circ  \Psi^{(i)}$ is the inclusion map $H_i\times D \times F \to T\subset ]0,1[\times D \times F$, which is a homotopy equivalence. It follows that each $\pi_1(X\cap T^i(\eps),x_i) \to  \pi_1(X\cap T,x_i)$ is an isomorphism, and therefore each $P_i$ is the image inside of $\pi_1(X,\tilde{v}_0)$ of $\pi_1(X\cap T,x_i)$.
From this, and noticing that each $T_i(\eps)$ and $T$ are $W_0$-invariant, one
gets that the natural morphism $\pi_1(X \cap T_i(\eps)/W_0,x_i) \to \pi_1(X \cap T/W_0,x_i)$ is also an isomorphism, and therefore $B_i$ is the image inside $\hat{B}= \pi_1(X/W_0,\tilde{v}_0)$ of $\pi_1(X\cap T/W_0,x_i)$.
But then, letting $\hat{\eta}_i$ denote the restriction of $\eta_i$ to $[0,\alpha_i]$, and $h$ some path $x_1 \leadsto x_2$
inside $X \cap T$, we check immediately that the loop $\hat{\eta}_2^{-1}*h*\hat{\eta}_1$ based at $\tilde{v}_0$
conjugates the two images. Identifying $\hat{B}$ with the subgroup $\pi^{-1}(W_0)$
of $B$, this concludes the proof of the proposition.
 \end{proof}

Our definition of a parabolic subgroup as depending only on the \emph{topological pair} $(X/W,V/W)$ has the following consequence. Assume that we are given a choice of (homogeneous) basic invariants $f_1,\dots,f_n \in \C[V]$ for $W$,
that is, $W$-invariant homogeneous polynomials such that
$\C[V]^W = \C[f_1,\dots,f_n]$,
and denote $f = (f_1,\dots,f_n) : V \to \C^n$ the corresponding polynomial map.
The union of the reflecting hyperplanes is mapped under $f$ onto some hypersurface $\mathcal{H}$.
Then $f$ induces a map $\hat{f} : V/W \to \C^n$ which maps $X/W$ to the hypersurface complement $\C^n \setminus \mathcal{H}$.
This map is known to be (continuous and) bijective (see e.g. \cite{LEHRERTAYLOR} Proposition 9.3), and easily seen to be proper, because $\C[V]$ is an integral extension of $\C[V]^W$ (see e.g. \cite{LEHRERTAYLOR}, Lemma 3.11). Therefore it is a homeomorphism, and the topological pair
$(X/W,V/W)$ is homeomorphic to the topological pair $(\C^n \setminus \mathcal{H},\C^n)$, and the concept of a
parabolic subgroup depends only on the hypersurface $\mathcal{H}$.

\begin{proposition} \label{prop:parabolicsdiscri}
Assume that $W,W' < \GL(V)$ are two complex reflection groups with two collections of basic invariants
$f_1,\dots,f_n$ and $f'_1,\dots,f'_n$ for which the
associated discriminant hypersurfaces $\mathcal{H} \subset \C^n$ are the same. Then the parabolic subgroups for $W$ and $W'$ define the same collection of parabolic subgroups of their common braid group $B = \pi_1(\C^n \setminus \mathcal{H},z)$.
\end{proposition}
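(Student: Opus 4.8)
The plan is to exploit the observation, recorded in the paragraph preceding the statement, that the notion of a parabolic subgroup was deliberately set up to depend only on the ambient topological pair, together with the fact that the basic–invariant maps realise both pairs in question as homeomorphic copies of one and the same pair $(\C^n\setminus\mathcal H,\C^n)$. So the whole argument will reduce to checking that a homeomorphism of pairs transports the notion of a parabolic subgroup, and then invoking it twice.

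First I would record the functoriality. Let $\phi$ be a homeomorphism of topological pairs, i.e.\ a homeomorphism of the ambient spaces restricting to a homeomorphism of the distinguished open subsets, as in Subsection~2.1. Then $\phi$ sends a (closed) normal ray $\eta$ to a (closed) normal ray $\phi\circ\eta$, because conditions (1)--(3) of the definition are formulated purely in terms of the topology of the ambient space and of its distinguished open subset; it sends a fundamental system of neighbourhoods of $\eta(1)$ to one of $(\phi\circ\eta)(1)$, hence matches the groups $\pi_1(U\cap X,\eta)$ with $\pi_1(\phi(U)\cap X_1,\phi\circ\eta)$ and therefore identifies $\pi_1^{loc}(X,\eta)$ with $\pi_1^{loc}(X_1,\phi\circ\eta)$ compatibly with the isomorphism induced by $\phi$ on fundamental groups; and conversely every (closed) normal ray on the target is of this form. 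Hence $\phi$ induces a bijection between the parabolic subgroups of $\pi_1(X,v_0)$ and those of $\pi_1(X_1,\phi(v_0))$, matching them through $\phi_*$. (If one prefers to use only \emph{straight} normal rays in the definition of a parabolic subgroup, one invokes the remark after \autoref{prop:parabsbmr}: on either side every parabolic subgroup is already afforded by a straight normal ray, so the two collections are unaffected.)

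Next I would fix the basepoints and apply this. By the discussion before the statement, $\hat f\colon V/W\to\C^n$ is a homeomorphism carrying $X/W$ onto $\C^n\setminus\mathcal H$, and likewise $\hat{f'}\colon V/W'\to\C^n$ carries $X'/W'$ onto $\C^n\setminus\mathcal H'=\C^n\setminus\mathcal H$ (here the hypothesis $\mathcal H=\mathcal H'$ is used), where $X'$ denotes the hyperplane complement of $W'$. Choose the unique $v_0\in X/W$ with $\hat f(v_0)=z$ and the unique $v_0'\in X'/W'$ with $\hat{f'}(v_0')=z$. Applying the functoriality to $\phi=\hat f$ shows that $\hat f_*\colon\pi_1(X/W,v_0)\xrightarrow{\ \sim\ }\pi_1(\C^n\setminus\mathcal H,z)=B$ carries the set of parabolic subgroups of $\pi_1(X/W,v_0)$ bijectively onto the set of subgroups of $B$ of the form $\mathrm{Im}\bigl(\pi_1^{loc}(\C^n\setminus\mathcal H,\eta)\to\pi_1(\C^n\setminus\mathcal H,\eta)\simeq B\bigr)$ for $\eta$ a closed normal ray in $(\C^n\setminus\mathcal H,\C^n)$ based at $z$; applying it to $\phi=\hat{f'}$ yields the same description of the parabolic subgroups coming from $W'$. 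Since this last description refers only to the pair $(\C^n\setminus\mathcal H,\C^n)$ and the point $z$, the two collections of subgroups of $B$ coincide, as claimed.

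I do not anticipate a genuine obstacle: the only substantial input is that $\hat f$ and $\hat{f'}$ are homeomorphisms of pairs, which is precisely what was established just before the statement, and the transport of normal rays, of local fundamental groups, and hence of parabolic subgroups under a homeomorphism of pairs is immediate from the definitions in Section~2. The one spot that merits a careful sentence is the compatibility between the ``closed'', ``open'' and ``straight'' flavours of normal rays, which is handled by the canonical isomorphisms $\pi_1(X,\check\eta)\simeq\pi_1(X,\eta)\simeq\pi_1(X,\hat\eta)$ together with the remark, recorded after \autoref{prop:parabsbmr}, that straight normal rays already afford all these local fundamental groups.
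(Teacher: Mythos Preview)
Your proposal is correct and follows exactly the approach of the paper: the proposition has no separate proof in the paper, being treated as an immediate consequence of the preceding paragraph, which establishes that $\hat f$ is a homeomorphism of pairs and observes that the definition of parabolic subgroup depends only on the topological pair. You have simply spelled out in detail the functoriality under homeomorphisms of pairs that the paper leaves implicit.
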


The term \emph{isodiscriminantal} was coined by Bessis
in order to describe such pair of complex reflection groups. This applies in particular to the so-called Shephard groups. Indeed, it is immediately checked in rank 2 and for the infinite series, and it was shown by Orlik and Solomon (see \cite{ORLIKSOLOMON}) that the exceptional Shephard groups of higher rank, namely $G_{25}$, $G_{26}$ and $G_{32}$, have the same discriminant as
the Coxeter groups of type $A_3,B_3$ and $A_4$, respectively.

\subsection{Subgroups of parabolic subgroups and products}
\label{sect:subgroupsproducts}

In this section we shall use systematically the construction of parabolic subgroups
from the choice of convenient balls inside the hyperplane complement, due to \cite{BMR}, as in
the proof of \autoref{prop:parabsbmr}.
We have the following companion to part (4) of \autoref{prop:basicpropsparabs}.

\begin{proposition} \label{prop:parabsherite}
Under the previous notations, if $B_0,B_1$ are two parabolic subgroups of $B=\pi_1(X/W,W.\tilde{v}_0)$,
and $B_1<B_0$, then $B_1$ is a parabolic subgroup of $B_0 \simeq \pi_1(X_0/W_0,W_0.\tilde{v}_0)$.
\end{proposition}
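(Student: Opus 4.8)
The statement to prove is \autoref{prop:parabsherite}: if $B_1 < B_0$ are both parabolic subgroups of $B$, then $B_1$ is a parabolic subgroup of $B_0 \simeq \pi_1(X_0/W_0, W_0.\tilde{v}_0)$, where $W_0$ is the image of $B_0$ in $W$ and $X_0$ its hyperplane complement. Let me think about how to approach this.

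So we have $B_1 \subseteq B_0 \subseteq B = \pi_1(X/W, v_0)$, both parabolic. The image of $B_0$ in $W$ is $W_0$, the pointwise stabilizer of $L_0 = V^{W_0}$. The image of $B_1$ in $W$ is $W_1$, the pointwise stabilizer of $L_1 = V^{W_1}$. Since $B_1 \subseteq B_0$, we have $W_1 \subseteq W_0$, so $L_1 \supseteq L_0$... wait, no. $W_1 \subseteq W_0$ means $W_1$ fixes at least as much as... hmm. Actually if $W_1 \subseteq W_0$ then $V^{W_1} \supseteq V^{W_0}$, i.e., $L_1 \supseteq L_0$. And $W_1$ being a parabolic subgroup of $W$ (hence of $W_0$), it's the pointwise stabilizer in $W$ (and in $W_0$) of $L_1$.

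Now $B_0 \simeq \pi_1(X_0/W_0, v'_0)$ where $v'_0$ is a preimage of $v_0$ under $X/W_0 \to X/W$, by \autoref{prop:basicpropsparabs}(3). I want to show $B_1$ corresponds to a parabolic subgroup of $\pi_1(X_0/W_0, v'_0)$. A parabolic subgroup of $\pi_1(X_0/W_0, v'_0)$ is the image of $\pi_1^{loc}(X_0/W_0, \tau)$ for some normal ray $\tau$ in $(X_0/W_0, V/W_0)$ based at $v'_0$ terminating somewhere on the union of reflecting hyperplanes of $W_0$.

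Here's the difficulty: the natural thing is that $B_1$ arises from a normal ray $\eta$ in $(X/W, V/W)$, and I want to "lift" or "realize" this ray as a normal ray in $(X_0/W_0, V/W_0)$. The ray $\eta$ terminates at some $W.v$ with $W_1$ the stabilizer of a suitable preimage. If $v$ lies on a hyperplane of $W$ not coming from $W_0$, the ray $\eta$ doesn't obviously stay in $X_0/W_0$. But the key observation (as used in the proof of \autoref{prop:basicpropsparabs}(4) and via \autoref{lem:avoidinghyperplanes}) is: the endpoint $L_1$ of the ray, being $V^{W_1}$ with $W_1 \subseteq W_0$, contains $L_0$; and a hyperplane $H$ of $W$ that is not a hyperplane of $W_0$ cannot contain $L_1$ — because if $H \supseteq L_1 \supseteq L_0$, and $H$ is a reflecting hyperplane of $W$, then the reflection through $H$ fixes $L_0$ pointwise, hence lies in $W_0$, contradicting that $H$ is not a hyperplane of $W_0$. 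Wait — I need $H \supseteq L_0$ to conclude the reflection fixes $L_0$; yes, if $H \supseteq L_0$ then $\mathrm{Fix}$ of the reflection $\supseteq L_0$, so the reflection is in the pointwise stabilizer $W_0$ of $L_0$. Good. So the hyperplanes of $W$ not in $W_0$ do not contain $L_1$, hence $L_1 \not\subseteq H$ for those.

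So the plan: First, present $B_1$ as the image of $\pi_1^{loc}(X/W, \eta)$ for a closed normal ray $\eta$ based at $v_0$, with lift $\tilde\eta$ terminating at $x_1 \in L_1$ and $W_1 = \mathrm{Stab}_W(x_1)$ the pointwise stabilizer of $L_1$. Second, apply \autoref{lem:avoidinghyperplanes} with $\{H_i\}$ the reflecting hyperplanes of $W$ not containing $L_0$ (equivalently, the hyperplanes in $\mathcal{A} \setminus \mathcal{A}_0$) — checking the hypotheses $v_0 \notin W.H_i$ (true since $v_0 \in X/W$) and $L_1 \not\subseteq H_i$ (the observation above) — to replace $\eta$ by an $\eta'$ with the same image $B_1$ such that $\eta'([0,1])$ avoids all $W$-orbits of hyperplanes not coming from $W_0$. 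Then $\eta'$ lifts under $X_0/W_0 \to X/W$: indeed the restriction $\tilde{\eta}'_{|[0,1[}$ takes values in $X$, hence in $X_0 \subseteq$ ... wait, rather I lift the based path to $X_0/W_0$ via the covering $X_0/W_0 \to X/W$ (this is a covering since $\eta'$ avoids the extra hyperplanes — actually $X/W_0 \to X/W$ is always a covering of degree $[W_0 : 1]$... hmm, $X/W_0 \to X/W$ is a covering of degree $|W_0|$... no, $|W|/|W_0|$... let me not worry, $X \to X/W$ and $X \to X/W_0$ are both coverings so $X/W_0 \to X/W$ is a covering). So $\eta'$ lifts to a path $\eta'_0: [0,1[ \to X_0/W_0$ starting at $v'_0$; one checks $\eta'_0$ extends to a (straight) normal ray in $(X_0/W_0, V/W_0)$ terminating on a hyperplane of $W_0$ (this uses that $\tilde{\eta}'(1) = x_1$ lies on a hyperplane of $W_0$ — it lies on all hyperplanes of $W_0$ containing $L_1$, and there is at least one unless $W_1 = W_0$, in which case $B_1 = B_0$ and we're trivially done, since $B_0$ is a parabolic subgroup of itself via the improper normal ray... actually need to handle this edge case: is $B_0$ a parabolic subgroup of $B_0$? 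Hmm, the definition via normal rays — a ray terminating at a point in the intersection of ALL hyperplanes of $W_0$ gives $\pi_1^{loc} \simeq \pi_1$ itself by the first proposition after \autoref{prop:parabsbmr}; so yes).

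Third and finally, identify the image of $\pi_1^{loc}(X_0/W_0, \eta'_0)$ inside $\pi_1(X_0/W_0, v'_0)$ with $B_1 \subseteq B_0$ under the isomorphism $\pi_1(X_0/W_0, v'_0) \simeq B_0 < B$. This follows because, by the explicit ball construction in the proof of \autoref{prop:parabsbmr}, both are computed as the image of $\pi_1(\Omega_\varepsilon \cap X_0/W_0, \eta'_0) = \pi_1(\Omega_\varepsilon \cap X/W_0, \eta'_0)$ (equality since $\Omega_\varepsilon$ avoids the extra hyperplanes near $x_1$), compatibly with the maps to $\pi_1(X/W, v_0)$ and $\pi_1(X_0/W_0, v'_0) \hookrightarrow \pi_1(X/W,v_0)$. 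The commuting diagram of covering maps $\Omega_\varepsilon \cap X/W_0 \to X_0/W_0 \to X/W$ factoring $\Omega_\varepsilon \cap X/W_0 \to X/W$ is what glues everything.

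\textbf{Main obstacle.} The conceptual core is routine once the right picture is in place — it is essentially \autoref{lem:avoidinghyperplanes} plus the ball construction of \autoref{prop:parabsbmr}. The genuinely delicate point I expect to spend care on is verifying that the lifted path $\eta'_0$ in $(X_0/W_0, V/W_0)$ is actually a \emph{normal ray} there: the simple-connectedness condition (3) near $t=1$ must be checked in the quotient $X_0/W_0$, not just in $X/W$. This should go through by the same covering-space argument used in the proof of \autoref{prop:parabsbmr} to show $\eta_0$ is a normal ray when $\eta$ is — lifting loops and their nullhomotopies through the covering $X_0/W_0 \to X/W$ — but one must be careful that the germ of $\eta'$ at its endpoint sees only the hyperplanes of $W_0$, which is exactly what the application of \autoref{lem:avoidinghyperplanes} guarantees. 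The edge case $B_1 = B_0$ (equivalently $W_1 = W_0$) should be disposed of at the start, noting that any parabolic subgroup is a parabolic subgroup of itself.
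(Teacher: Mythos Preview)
Your approach is sound and can be completed, but it differs from the paper's. You work entirely with normal rays: modify the ray for $B_1$ via \autoref{lem:avoidinghyperplanes} to avoid $\mathcal{A}\setminus\mathcal{A}_0$, lift it through the covering $X/W_0\to X/W$ to obtain a normal ray in $(X_0/W_0,V/W_0)$, and then identify the resulting local fundamental group with $B_1$. This is essentially the argument of \autoref{prop:basicpropsparabs}(4) run in reverse. The paper instead first conjugates so that $B_0$ is given by a fixed ball $\Omega_0$, then introduces the decomposition $V=V_0\oplus U$ with $U=V^{W_0}$ and a non-Euclidean $W_0$-invariant norm $\|(y_1,y_2)\|=\max(\|y_1\|,\|y_2\|)$, so that the ball $\Omega_1$ realizing $B_1$ splits as a Cartesian product $\Omega_1^0\times\Omega_1'$. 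This makes the identity $\Omega_1\cap X=\Omega_1\cap X_0=(\Omega_1^0\cap Y_0)\times U$ transparent and yields a concrete commutative diagram relating $J_\gamma$ (defining $B_1$ in $B$) and $J_\gamma^0$ (defining a parabolic in $\pi_1(X_0/W_0)$). Your route reuses existing lemmas and is more conceptual; the paper's is more hands-on and makes the final identification explicit via the product structure.

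One point in your sketch needs correction: there is no map $X_0/W_0\to X/W$ in general (the $W$-action does not preserve $X_0$), so your ``commuting diagram of covering maps $\Omega_\varepsilon\cap X/W_0\to X_0/W_0\to X/W$'' does not exist as written. The correct gluing goes through $\pi_1(X/W_0,v'_0)$, which maps to $B$ via the genuine covering $X/W_0\to X/W$ (image $\pi^{-1}(W_0)$) and surjects onto $\pi_1(X_0/W_0,v'_0)$ via the inclusion $X\hookrightarrow X_0$. The point to check is that the restriction of this surjection to $B_0\subset\pi^{-1}(W_0)$ coincides with the isomorphism $B_0\simeq\pi_1(X_0/W_0,v'_0)$ coming from the ball $\Omega_0$; this holds because both are induced by the same inclusion $\Omega_0\cap X\hookrightarrow X_0$. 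Once this is in place, your argument goes through: the loop in $\Omega_\varepsilon\cap X/W_1$ defining an element of $B_1$ lies in $X/W_0$ (since $\gamma$ and $\Omega_\varepsilon\cap X$ lie in $X$), hence maps compatibly to both $B$ and $\pi_1(X_0/W_0)$, and the two images agree under the iso.
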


\begin{proof} In order to lighten notations in this proof, as no ambiguity should occur we allow ourselves to write $\pi_1(E/G,x)$ for $x \in E$ instead of $\pi_1(E/G,G.x)$ whenever $E$ is a $G$-space.
Since $B_1 < B_0$ we have $W_1 = \pi(B_1) < \pi(B_0) = W_0$ ; since $W_1$ is a parabolic subgroup of $W$, if follows that $W_1$ is a parabolic subgroup of $W_0$. Now
let $\mathcal{A}_0,\mathcal{A}_1,\mathcal{A}$ denote
the hyperplane arrangements of $W_0,W_1$ and $W$, respectively. We have $\mathcal{A}_1 \subset \mathcal{A}_0$.
As a $\C W_0$-module, $V$ can be canonically decomposed as
$V = V_0 \oplus U$, with
$U= V^W$ being the isotypic component of the trivial representation. Then, every reflecting hyperplane inside $\mathcal{A}_0$ can be written as $H \oplus U$ for $H$ a hyperplane of $V_0$. We denote $\mathcal{B}_0$ the
collection of such hyperplanes, and $\mathcal{B}_1$ the subset of those which originate from $\mathcal{A}_1$.

Then, setting $Y_0 = V_0 \setminus \bigcup\mathcal{B}_0$, we have $X_0 = Y_0 \times U$ and
$Y_0$ can be identified as $\overline{X}_0$ defined as in (1) of \autoref{prop:basicpropsparabs}.
Writing $\tilde{v}_0 = \tilde{v}_0'+u_0 \in V_0 \oplus U$, this provides an identification of $B_0 \simeq \pi_1(X_0/W_0,\tilde{v}_0)$ as a parabolic subgroup of $B$
with the braid group $\pi_1(Y_0/W_0,\tilde{v}_0')$ which preserves the collection of parabolic subgroups.

We endow $V$ with a $W_0$-invariant norm of the form $\| (y_1,y_2)\| =
\max( \| y_1 \|, \| y_2 \|)$ for $(y_1,y_2) \in V_0 \times U = V$, for some arbitrary $W_0$-invariant norm on $V_0$ and some arbitrary norm on $U$. We choose it non-Euclidean in order to get a convenient Cartesian decomposition
of the balls.

Since the statement is invariant under conjugation, we can assume that $B_0$ is the image of
$$
j_0 : \pi_1\left( \frac{\Omega_0 \cap X}{W_0},x_0 \right) \to B = \pi_1\left( \frac{X}{W},x_0 \right)
$$
induced by the natural inclusion and quotient by $W$,
for some $x_0 \in \Omega_0 \setminus \bigcup\mathcal{A}$
and $\Omega_0$ some open ball. Since $B_1$ is a parabolic subgroup of $B$ admitting no other hyperplanes than the ones of $\mathcal A_1\subset \mathcal A_0$, there is a $W_1$-invariant ball $\Omega_1$ containing
some $x_1 \in \Omega_1 \setminus \bigcup \mathcal{A}$
and meeting no other hyperplane than the ones of $\mathcal{A}_0$.

Then, a path $\gamma : x_0 \leadsto x_1$ inside $X$
such that $b \mapsto (\pi(b).\gamma^{-1})* b * \gamma$ defines an embedding
$$
J_{\gamma} : \pi_1\left( \frac{\Omega_1 \cap X}{W_1},x_1 \right) \to
\pi_1\left( \frac{X}{W},x_0 \right)
$$
whose image is $B_1$. Because of the specific properties of the chosen norm, under
$V = V_0 \oplus U$ we have a decomposition $\Omega_1 = \Omega_1^0 \times \Omega_1'$ with $\Omega_1^0$ a $W_1$-invariant ball inside $V_0$. Then
$$
\Omega_1 \cap X = \Omega_1 \cap X_0 = (\Omega_1^0 \times \Omega_1') \cap (Y_0 \times U)
= (\Omega_1^0 \setminus \bigcup \mathcal{B}_0) \times U \subset Y_0 \times U
$$
is naturally homotopically equivalent to $\Omega_1^0 \setminus \bigcup \mathcal{B}_0 = \Omega_1^0 \cap Y_0$.
Therefore, the same formula $b \mapsto (\pi(b).\gamma^{-1})* b * \gamma$ defines an embedding
$$
J^0_{\gamma} : \pi_1\left(\frac{\Omega_1 \cap X}{W_1},x_1\right) = \pi_1\left(\frac{\Omega_1^0\setminus \bigcup \mathcal{B}_0}{W_1}\times U ,x_1\right) \to
\pi_1\left(\frac{Y_0}{W_0}\times U,x_0\right)
$$

 The natural inclusions together with the morphisms $J_{\gamma},J_{\gamma}^0$ induce
the following commutative diagram, which proves that
$B_1$ is indeed the image under $j_0$ of a parabolic subgroup of $B_0 = \pi_1(X_0/W_0,x_0)$
attached to $W_1<W_0< \GL(V)$.
$$
\xymatrix{
\pi_1\left( \frac{X}{W},x_0  \right) & \ar[l]_{j_0} \pi_1\left(\frac{\Omega\cap X}{W_0},x_0 \right) \ar[r]_{\simeq}  & \pi_1\left( \frac{X_0}{W_0},x_0\right)\ar@{=}[r] & \pi_1\left(\frac{Y_0}{W_0}\times U,x_0 \right) \\
 & \pi_1\left( \frac{\Omega_1 \cap X}{W_1},x_1 \right) \ar[ul]^{J_{\gamma}} \ar@{.>}[u]_{\ } \ar@{=}[rr] & &  \pi_1\left(\frac{\Omega_1^0\cap Y_0}{W_1} \times \Omega_1',x_1 \right)\ar[u]_{J_{\gamma}^0}
}
$$

\end{proof}

An immediate consequence of the proposition and its proof is the following.

\begin{corollary}\label{cor:B0inclusB1egal} If $B_0,B_1$ are two parabolic subgroups of $B$ with $B_1 < B_0$, then $B_1$ is equal to $B_0$ if and only if their images inside $W$ are the same, if and only if they have the same rank.
\end{corollary}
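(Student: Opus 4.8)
The plan is to deduce this from \autoref{prop:parabsherite} together with \autoref{prop:conjparabs} and basic facts about parabolic subgroups of the reflection group $W$. The easy implications are immediate: if $B_1 = B_0$ then of course their images $W_1 = \pi(B_1)$ and $W_0 = \pi(B_0)$ in $W$ coincide and, by part (3) of \autoref{prop:basicpropsparabs}, the ranks of $B_0$ and $B_1$ (defined as the ranks of the associated parabolic reflection subgroups, equivalently the number of generators of the corresponding braid group) agree. Similarly, if $W_1 = W_0$ then the ranks agree since the rank of a parabolic subgroup of $W$ is the codimension of its fixed space. So the content is in the two reverse directions, and it suffices to prove: (a) if $B_1 < B_0$ and $W_1 = W_0$, then $B_1 = B_0$; and (b) if $B_1 < B_0$ have the same rank, then $W_1 = W_0$ (whence $B_1 = B_0$ by (a)).

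For (a): by \autoref{prop:parabsherite}, $B_1$ is a parabolic subgroup of $B_0 \simeq \pi_1(X_0/W_0, W_0.\tilde{v}_0)$, and its image under the projection $\pi_0 : B_0 \onto W_0$ is $W_1 = W_0$ itself. Applying \autoref{prop:conjparabs}(1) inside the braid group $B_0$ (whose associated reflection group is $W_0$, with $P_0 = \Ker(\pi_0)$): since $B_1$ and the whole group $B_0$ are two parabolic subgroups of $B_0$ with the same image $W_0$ in $W_0$, they are $P_0$-conjugate, say $B_1 = g B_0 g^{-1} = B_0$ for $g \in P_0$. Hence $B_1 = B_0$. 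Here I am using that $B_0$ is itself a parabolic subgroup of $B_0$ (the improper one, attached to the terminal normal ray at a point of the intersection of all reflecting hyperplanes of $W_0$, using \autoref{prop:basicpropsparabs}(1)), and that \autoref{prop:conjparabs} applies verbatim with the pair $(X_0/W_0, V_0/W_0)$ in place of $(X/W, V/W)$.

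For (b): again \autoref{prop:parabsherite} makes $W_1$ a parabolic subgroup of $W_0$ with $W_1 < W_0$; since the rank of a parabolic subgroup of a complex reflection group equals $\codim V^{W_1}$, equality of ranks forces $V^{W_1} = V^{W_0}$ (the inclusion $V^{W_0} \subseteq V^{W_1}$ is automatic from $W_1 < W_0$, and equal dimension of subspaces in an inclusion gives equality), and a parabolic subgroup of $W$ is recovered as the pointwise stabilizer of its fixed space, so $W_1 = W_0$. Then apply (a).

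The only genuine subtlety — and the point I would check most carefully — is that $B_0$ counts as a parabolic subgroup \emph{of itself}, so that \autoref{prop:conjparabs}(1) can be invoked with one of the two subgroups being the ambient group. This is handled by \autoref{prop:basicpropsparabs}(1): after quotienting $V_0$ by $V_0^{W_0} = V^{W_0}$ (the subspace contained in all reflecting hyperplanes of $W_0$), the group $W_0$ acts with trivial fixed space on the quotient, and the constant-then-terminal normal ray ending at $0$ in that quotient has local fundamental group equal to the full group $\pi_1(\bar X_0/W_0,\cdot)$ by part (1) of the second Proposition after \autoref{prop:parabsbmr}; transporting back via \autoref{prop:basicpropsparabs}(1) shows $B_0$ is a parabolic subgroup of itself. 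Everything else is a formal consequence of the cited statements.
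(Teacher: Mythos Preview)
Your proof is correct and follows the approach the paper intends: the paper simply records the corollary as an immediate consequence of \autoref{prop:parabsherite} and its proof, and your argument unpacks exactly that, using \autoref{prop:parabsherite} to view $B_1$ as a parabolic of $B_0$ attached to $W_1 \leq W_0$ and then finishing with the standard facts about parabolic subgroups of $W$. Your detour through \autoref{prop:conjparabs}(1) to conclude $B_1 = B_0$ when $W_1 = W_0$ is valid but slightly roundabout --- the proof of \autoref{prop:parabsherite} already identifies $B_1$ with the parabolic of $B_0$ associated to $W_1$, so when $W_1 = W_0$ this is directly all of $B_0$.
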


Assume now that $V =V_1\oplus V_2$ and $W = W_1 \times W_2$
as reflection groups, that is $W_i < \GL(V)$
acting trivially on $V_j$ for $\{i,j\}=\{1,2 \}$.
The hyperplane arrangement associated to $W$ is $\mathcal{A} = \mathcal{A}_1 \cup \mathcal{A}_2$, where $\mathcal{A}_i$ denotes the hyperplane arrangement attached to $W_i$. Of course $V_j \subset H$ for all $H \in \mathcal{A}_i$, $\{i,j\}=\{1,2 \}$.
Let us choose $y = (y_1,y_2) \in V \setminus \bigcup \mathcal{A}$ as basepoint,
$$
B = \pi_1\left(\frac{V\setminus \bigcup \mathcal{A}}{W},(y_1,y_2)\right),\ \ \
B_1 = \pi_1\left(\frac{V_1\setminus \bigcup \mathcal{A}_1}{W_1},y_1\right),\ \ \
B_2 = \pi_1\left(\frac{V_2\setminus \bigcup \mathcal{A}_2}{W_2},y_2\right)
$$
There is a natural embedding $B_1 \into B$ given by $[\gamma] \mapsto [t \mapsto (\gamma(t),y_2)]$ and a natural
projection $B \onto B_1$, and similarly for $B_2$. These maps provide a natural isomorphism $B \simeq B_1 \times B_2$. For $i = 1,2$, let us choose $W_i$-invariant norms on $V_i$, and choose the norm on $V = V_1 \oplus V_2$ defined by
$\| (z_1,z_2) \| = \max( \| z_1 \|, \| z_2 \|)$.
Then, a ball $\Omega$ centered at $(y_1^0,y_2^0)$ is the cartesian product $\Omega_1 \times \Omega_2$ of the balls of the same radius centered at $y_1^0 \in V_1$ and $y_2^0 \in V_2$. Let $W'_i<W_i$ the stabilizer of $y_i^0$.
The stabilizer of $(y_1^0,y_2^0)\in V$ is $W_1^0 \times W_2^0 \subset W_1 \times W_2 = W$,
and we have $\Omega \setminus\bigcup \mathcal{A} =
(\Omega_1 \setminus \bigcup \mathcal{A}_1) \times
(\Omega_2 \setminus \bigcup \mathcal{A}_2)$. If $\check{B}$, $\check{B}_1$ and $\check{B}_2$ denote the parabolic subgroups of $B$, $B_1$ and $B_2$, respectively, attached to these
data, we get from this that $\check{B} = \check{B}_1 \times \check{B}_2$, where $\check{B}_i$ is identified with its image under the natural embedding $B_i \into B$.
The case of an arbitrary parabolic subgroup, which is conjugated to one attached to such a ball, is immediately deduced from it, as
$$(\check{B}_1 \times \check{B}_2)^{([\gamma_1],[\gamma_2])} = \check{B}_1^{[\gamma_1]} \times \check{B}_2^{[\gamma_2]}
$$

\subsection{The curve graph and the curve complex}
\label{sect:defcurvegraph}

As in the introduction, we define the (nonoriented) curve graph $\Gamma$ of $B$ with vertices the irreducible
parabolic subgroups of $B$, and edges the pairs $\{ B_1,B_2 \}$ with $B_1 \neq B_2$ such that,
either $B_1 \subset B_2$, or $B_2 \subset B_1$, or $B_1 \cap B_2 = [B_1,B_2]=\{ 1 \}$. We call rank of a given
vertex the rank of the corresponding irreducible subgroup.

Similarly, we
recall that, for $B_0$ an irreducible parabolic subgroup,
$z_{B_0} \in B_0$ is defined as the unique positive generator of
$Z(B_0) \simeq \Z$, where positive means the following. Consider
the standard map $X \to \C^*$ obtained by taking the product
of a $W$-invariant collection of defining linear forms for the
hyperplane arrangement $\mathcal{A}$. The induced morphism
$B = \pi_1(X/W) \to \pi_1(\C^*) \simeq \Z$ identifies $Z(B_0)$ with a
subgroup of $\Z$, and such a subgroup admits a unique positive generator. By definition this generator is $z_{B_0}$. The positive generators attached to rank 1 parabolic subgroups are by definition the so-called \emph{distinguished braided reflections}.

 The group $B$
obviously acts by conjugation on $\Gamma$, and this action factorizes through $B/Z(B)$.
We have the following
general fact.

\begin{proposition} \label{prop:actionfidele} The action of $B/Z(B)$ on $\Gamma$ is faithful. It is actually already faithful on the
set of vertices of rank $1$.
\end{proposition}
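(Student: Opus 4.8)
The plan is to show that if an element $g \in B$ acts trivially by conjugation on all rank-$1$ irreducible parabolic subgroups, then $g \in Z(B)$. Since the action of $B$ on $\Gamma$ factors through $B/Z(B)$, this immediately gives faithfulness on the vertices of rank $1$, hence a fortiori on all of $\Gamma$. So the whole content is in the rank-$1$ statement.

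First I would collect enough rank-$1$ parabolic subgroups to generate $B$. Recall that $B$ is generated by ``braided reflections'' (generators-of-the-monodromy) $s_H$ around the reflecting hyperplanes $H \in \mathcal{A}$: picking a braided reflection $s_H$ associated with a hyperplane $H$, the cyclic subgroup it generates is (conjugate to) the braid group of the rank-$1$ parabolic reflection subgroup $W_0 = $ pointwise stabilizer of $H$, and is therefore an irreducible parabolic subgroup $B_H$ of rank $1$; moreover $s_H$ is a power of — in fact equals, after the canonical normalization — the positive generator $z_{B_H}$, and $B_H = \langle s_H\rangle \cong \Z$. Since $B$ is generated by such braided reflections (by the very construction of complex braid groups in \cite{BMR}), it suffices to find, for each hyperplane $H$, a rank-$1$ parabolic $B_H = \langle s_H \rangle$; then $B$ is generated by $\bigcup_H B_H$ as $H$ ranges over $\mathcal{A}$.

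Now suppose $g$ normalizes every rank-$1$ irreducible parabolic subgroup, i.e. $g B_H g^{-1} = B_H$ for all $H$ — wait, we need to be careful: acting trivially on the \emph{set of vertices} only says $g B_H g^{-1} = B_H$ for each $H$, not that $g$ centralizes $s_H$. So the next step is to upgrade ``$g$ normalizes $B_H$'' to ``$g$ centralizes $s_H$''. Since $B_H = \langle s_H\rangle \cong \Z$, conjugation by $g$ acts on $B_H$ as an automorphism of $\Z$, hence sends $s_H$ to $s_H^{\pm 1}$. To rule out $s_H \mapsto s_H^{-1}$ I would use the abelianization (length) map $B \to \Z$ of \autoref{sect:defcurvegraph}: it sends $s_H$ to $1$ and is invariant under conjugation, so $g s_H g^{-1}$ has length $1$, forcing $g s_H g^{-1} = s_H$ (the exponent $-1$ would give length $-1$). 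Hence $g$ commutes with every braided reflection $s_H$, and since these generate $B$, we conclude $g \in Z(B)$. This proves the action of $B/Z(B)$ is faithful already on rank-$1$ vertices.

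The main obstacle I anticipate is the bookkeeping in the second step: one must know that each braided reflection $s_H$ literally generates an irreducible parabolic subgroup of rank $1$ (so that it is a vertex of $\Gamma$), which requires matching the topological definition of a parabolic subgroup (image of some $\pi_1^{loc}$) with the concrete ``small ball around a generic point of $H$'' picture of \cite{BMR} — this is exactly the construction recalled before \autoref{prop:parabsbmr} and used throughout \autoref{sect:subgroupsproducts}, so it is available, but it needs to be invoked carefully. A secondary point is that ``$g B_H g^{-1} = B_H$'' is the correct reading of ``$g$ fixes the vertex $B_H$'': this is immediate since the vertices of $\Gamma$ \emph{are} the subgroups themselves and $B$ acts by conjugation. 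Everything else — the length argument and ``generated by braided reflections'' — is routine.
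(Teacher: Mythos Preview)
Your proposal is correct and follows essentially the same argument as the paper's proof: braided reflections generate rank-$1$ irreducible parabolics, fixing such a vertex means $\langle \sigma\rangle^g=\langle \sigma\rangle$, the automorphism of $\Z$ sends $\sigma$ to $\sigma^{\pm 1}$, and the length homomorphism $B\to\Z$ rules out the sign $-1$; since braided reflections generate $B$, one concludes $g\in Z(B)$. The only difference is presentation: the paper compresses all of this into four lines, while you spell out the bookkeeping (and your worries about it) more explicitly.
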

\begin{proof}
Let $b \in B$ acting trivially on the vertices of rank $1$. For any distinguished braided reflection $\sigma$, the
group $\langle \sigma \rangle$ is an irreducible parabolic subgroup of rank $1$, so that
$\langle \sigma^b \rangle = \langle \sigma \rangle \ \simeq \Z$.
But since $\sigma$ and $\sigma^b$ have the same image under the
homomorphism $B \to \Z$ defined above, this proves $\sigma^b = \sigma$.
Since $B$ is generated by its distinguished braided reflections this implies
$ b \in Z(B)$ and this proves the claim.
\end{proof}

The curve complex $\mathcal{K}$ associated to $B$ can then be defined as the collection of finite
sets $\{ B_1,\dots, B_r \}$ of irreducible parabolic subgroups such that each $B_i$ is adjacent to each $B_j$
in $\Gamma$. By the above proposition, the natural conjugacy action of $B/Z(B)$ on $\mathcal{K}$ is faithful, too.

The statement of \autoref{maintheo:curvecomplex} says that the adjacency relation between
two irreducible parabolic subgroups $B_1$ and $B_2$ is detected by the relation $[z_{B_1},z_{B_2}]=1$, that is $z_{B_1}z_{B_2} = z_{B_2} z_{B_1}$. In order to prove this,
we shall later need the following result, which we can prove without using the
classification of complex reflection groups.

\begin{theorem} \label{theo:zzcommW}
 Let $W_1,W_2$ be two irreducible parabolic subgroups of the 2-reflection group $W$, and $B_1,B_2$ two (irreducible) parabolic subgroups of its braid group $B$ with image
$W_1,W_2$. If $[z_{B_1},z_{B_2}] = 1$, then
either $W_1 \subset W_2$, or $W_2 \subset W_1$, or $W_1 \cap W_2 = [W_1,W_2] = \{ 1 \}$
\end{theorem}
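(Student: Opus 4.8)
The plan is to reduce the statement, which is about an arbitrary $2$-reflection group $W$, to a rank-$3$ situation by working inside the (parabolic) closure of $W_1 \cup W_2$ in $W$, and then to invoke the known commutation properties of the central elements $z_{B_0}$ in small braid groups. First I would reduce to the case where $W_1$ and $W_2$ generate a parabolic subgroup of $W$ of rank at most $3$: if $W_1$ and $W_2$ each have rank $\leq 1$ the statement is essentially trivial (two distinct rank-$1$ parabolics either commute and intersect trivially, or not), and if both have rank $\geq 1$ then the reflecting hyperplane arrangements meet in a subspace whose codimension is at most $\mathrm{rk}(W_1)+\mathrm{rk}(W_2)$; since $W$ is a $2$-reflection group the only genuinely new phenomenon occurs when $\{\mathrm{rk}(W_1),\mathrm{rk}(W_2)\}$ is $\{1,2\}$ or $\{2,2\}$ and the span has rank $3$. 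Using \autoref{prop:basicpropsparabs}(1) and (4), together with the compatibility of the $z_{B_0}$ with passing to parabolic subgroups, I can replace $B$ by the braid group of the rank-$\leq 3$ parabolic subgroup $\langle W_1,W_2\rangle$ of $W$ without changing the hypothesis $[z_{B_1},z_{B_2}]=1$ or the conclusion. The point of insisting on $2$-reflection groups is that their irreducible rank-$2$ and rank-$3$ parabolic subgroups are precisely the dihedral groups $I_2(k)$ and the Coxeter groups of type $A_3$, $B_3$, $H_3$ (and rank-$2$ parabolics of these), so one has a short explicit list.

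Next I would analyze, case by case on this short list of ambient rank-$\leq 3$ $2$-reflection groups $W' = \langle W_1,W_2\rangle$, which pairs of irreducible parabolic subgroups $W_1, W_2$ of $W'$ have the property that some (hence, by \autoref{prop:conjparabs}, any up to $P$-conjugacy) lifts $B_1,B_2$ have commuting central elements. The key input here is that in each of these small braid groups the element $z_{B_0}$ associated to an irreducible parabolic subgroup $B_0$ is a well-understood element — a power of the Garside element of the corresponding standard parabolic submonoid — and its centralizer can be computed: $z_{B_0}$ is central in $B_0$, so if $W_1 \subset W_2$ or $W_2 \subset W_1$ then trivially $z_{B_1}$ and $z_{B_2}$ can be taken to commute, and if $W_1\cap W_2 = [W_1,W_2]=1$ then $B_1 \times B_2$ is a parabolic subgroup by the product construction of \autoref{sect:subgroupsproducts} and again $z_{B_1}z_{B_2}=z_{B_2}z_{B_1}$. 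The content is the converse: if $W_1, W_2$ are in "general position" (neither contained in the other, and not orthogonal with trivial commutator), then no conjugates have commuting central elements. For this I would use that $z_{B_1}$ generates $Z(B_1)$ and that $B_1$ is irreducible, so that by the corollary following \autoref{maintheo:curvecomplex} (valid a priori only for the already-treated cases, but that is exactly the inductive setup since $\mathrm{rk}\, W' \leq 3$) the centralizer of $z_{B_1}$ is the normalizer of $B_1$; commuting with $z_{B_2}$ would force $z_{B_2}$, hence $B_2 = \mathrm{PC}(z_{B_2})$, to normalize $B_1$, and a direct inspection of the normalizers of irreducible parabolic subgroups in the braid groups of $I_2(k)$, $A_3$, $B_3$, $H_3$ shows this is impossible unless we are in one of the three listed configurations.

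The main obstacle I anticipate is making the rank reduction genuinely clean: one must be careful that "$\langle W_1,W_2\rangle$" — meaning the smallest parabolic subgroup of $W$ containing both — really does have rank at most $3$, which relies on the $2$-reflection hypothesis through the fact that $W_1$ and $W_2$ are generated by true reflections (order $2$) and that the parabolic closure of a union of two irreducible parabolics of ranks $a,b$ meeting along a common flat has rank $\leq a+b-c$ where $c$ is the rank of their intersection; combined with the assumption $[z_{B_1},z_{B_2}]=1$, which already forces a lot of commutation among the underlying reflections, one should get rank $\leq 3$. The second delicate point is the explicit normalizer computation in the braid groups of $B_3$ and $H_3$: here I would lean on the Garside-theoretic description of these braid groups and of the elements $z_{B_0}$ developed in the later sections, reducing the computation of $N_B(B_0)$ to an identification of the germ of the ribbon/summit set, which is finite and checkable. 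The dihedral case $W' = I_2(k)$ with $W_1, W_2$ distinct rank-$1$ parabolics is the base case and is elementary: there $z_{B_1}, z_{B_2}$ are the two standard generators of the braid group of $I_2(k)$, which commute if and only if $k=2$, i.e. if and only if $W_1 \cap W_2 = [W_1,W_2]=1$.
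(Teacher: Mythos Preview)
Your rank-reduction step is not sound, and it appears to rest on a misreading of the hypothesis. A \emph{$2$-reflection group} is a complex reflection group all of whose reflections have order $2$; it says nothing about the rank of $W$, $W_1$, or $W_2$. There is no reason for $\operatorname{rk}(W_1)$ or $\operatorname{rk}(W_2)$ to be at most $2$: take for instance $W=\mathfrak S_{10}$, $W_1=\mathfrak S_{\{1,\dots,5\}}$, $W_2=\mathfrak S_{\{3,\dots,7\}}$, both irreducible parabolic of rank $4$, with parabolic closure of rank $6$. Your sentence ``since $W$ is a $2$-reflection group the only genuinely new phenomenon occurs when $\{\operatorname{rk}(W_1),\operatorname{rk}(W_2)\}$ is $\{1,2\}$ or $\{2,2\}$'' is simply false, so the reduction to an ambient group of rank $\leq 3$ collapses. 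Even granting rank $\leq 3$, your enumeration of irreducible rank-$3$ $2$-reflection groups as $A_3$, $B_3$, $H_3$ omits the genuinely complex ones, namely the groups $G(e,e,3)$ for $e\geq 3$ and the exceptional groups $G_{24}$ and $G_{27}$, so the case analysis would be incomplete even then. Finally, invoking the normaliser/centraliser corollary to \autoref{maintheo:curvecomplex} in your inductive step is circular: that corollary is proved in the paper \emph{using} \autoref{theo:zzcommW}, not before it.

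The paper's proof is entirely different and avoids any reduction on rank or any case list. The idea is infinitesimal: from $[z_{B_1},z_{B_2}]=1$ one passes to the pure braid group and applies the monodromy representation into the completed holonomy Lie algebra $\widehat{\mathsf U\mathfrak g_X}$ via Chen's iterated integrals; reading off the degree-$1$ term shows that $u_1u_2=u_2u_1$ in $\Z W$, where $u_i$ is the sum of all reflections of $W_i$. A direct analysis of this commutator in the group algebra, projecting onto the span of $W_Z^+$ for codimension-$2$ flats $Z$, shows that every $s_1\in\mathcal R_1\setminus\mathcal R_2$ commutes with every $s_2\in\mathcal R_2\setminus\mathcal R_1$. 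A separate structural lemma (\autoref{prop:complparabsengendre}: for $W$ irreducible and $W_0\subsetneq W$ a proper parabolic, the reflections \emph{outside} $W_0$ already generate $W$) then forces the trichotomy. This argument is uniform over all $2$-reflection groups and classification-free.
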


This theorem is an immediate consequence of the forthcoming
auxiliary results. The general idea of the proof is that we look at the Taylor expansion of degree
$1$ of the images of the $z_{B_i}$'s in the Hecke algebra representation of $B$ -- which
is defined in the general case by a monodromy construction. This yields the following lemma.

\begin{lemma} Under the assumptions of \autoref{theo:zzcommW},
let $u_i\in \Z W$ denote the sum of the reflections of $W_i< W$ inside the group algebra of $W$. If $[z_{B_1},z_{B_2}] = 1$, then $u_1u_2 = u_2u_1$.
\end{lemma}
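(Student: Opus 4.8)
The plan is to reduce the commutation statement $u_1 u_2 = u_2 u_1$ in $\Z W$ to the hypothesis $[z_{B_1}, z_{B_2}] = 1$ via the Hecke algebra of $W$, using the degree-$1$ part of a deformation parameter. First I would recall the construction of the generic Hecke algebra $H = H_q$ over $\Z[q^{\pm 1}]$ (or rather over the ring of Laurent polynomials in the square roots of the parameters, as in \cite{BMR}), which comes with a canonical surjection $\Z[q^{\pm 1}] B \onto H$; since $W$ is a $2$-reflection group, each braided reflection $\sigma$ satisfies a quadratic relation, which after a suitable normalization can be written $\sigma^2 = (q-1)\sigma + q$, so that $\sigma = 1 + (q-1)\sigma' + O((q-1)^2)$ after reduction, where $\sigma'$ maps to a reflection of $W$ under the specialization $q = 1$ giving $H/(q-1) \simeq \Z W$. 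The key point is that $H$ is free of rank $|W|$ over $\Z[q^{\pm 1}]$, so it makes sense to write elements of $H$ in a Taylor expansion in $t := q-1$: any $h \in H$ can be written $h = h_0 + t\, h_1 + O(t^2)$ with $h_0, h_1 \in \Z W$ after identifying $H/(t) \simeq \Z W$ and $(t)/(t^2) \otimes_{\Z[q]} \Z W \simeq \Z W$.

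Next I would compute the image of $z_{B_i}$ in $H$ up to order $t$. Since $B_i$ is an irreducible parabolic subgroup with image $W_i$, and the isomorphism $B_i \simeq$ (braid group of $W_i$) from \autoref{prop:basicpropsparabs}(3) is compatible with the Hecke algebra constructions, $z_{B_i}$ maps to the canonical central element $\pi_i$ of the parabolic Hecke subalgebra $H_i \subset H$ attached to $W_i$. The full twist $\pi_i$ specializes at $q = 1$ to $1 \in \Z W$ (it lies in the identity component, being a pure braid), and its degree-$1$ term in $t$ is, up to a nonzero rational scalar depending only on $W_i$, equal to $u_i$ — the sum of the reflections of $W_i$. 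This is the heart of the matter: one shows that $\partial_t(\pi_i)|_{t=0} = c_i\, u_i$ for some nonzero constant $c_i$. The cleanest way is to use that $z_{B_i}$ is, up to center, the Garside element / full twist whose image in $\GL$ of a faithful representation differentiates to a scalar times $\sum(\text{reflections})$; concretely, for the braid group of a rank-$\ell$ reflection group, the full twist acts on the reflection representation's Hecke deformation with logarithmic derivative proportional to $\sum_{s} s$, a standard computation with the KZ / Dunkl connection. Alternatively, and perhaps more self-contained, one can use the one-dimensional linear-form representation $B \to \Z$ and the second-order refinement: $z_{B_i}$ projects to $h(W_i) \cdot (\text{number of reflections})$... but the reflection-sum identity really needs the $2$-dimensional-or-larger piece, so the Dunkl/monodromy description mentioned in the excerpt ("Taylor expansion of degree $1$ of the images of the $z_{B_i}$'s in the Hecke algebra representation") is the intended route.

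Granting $z_{B_i} \mapsto 1 + t\, c_i u_i + O(t^2)$ in $H$ with $c_i \neq 0$, the conclusion is immediate: the hypothesis $z_{B_1} z_{B_2} = z_{B_2} z_{B_1}$ in $B$ forces the same identity among their images in $H$, hence
\[
(1 + t c_1 u_1)(1 + t c_2 u_2) \equiv (1 + t c_2 u_2)(1 + t c_1 u_1) \pmod{t^2},
\]
which gives $c_1 c_2 (u_1 u_2 - u_2 u_1) \equiv 0 \pmod{t^2}$, i.e. $u_1 u_2 = u_2 u_1$ in $\Z W$ since $c_1 c_2 \neq 0$ and $\Z W$ is torsion-free. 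The main obstacle I anticipate is making the identification "$\partial_t(z_{B_i}) = c_i u_i$ with $c_i \neq 0$" precise and classification-free: one must (a) pin down the normalization of the Hecke parameters so that the specialization at $t=0$ is $\Z W$ and the linear term lands in $\Z W$ canonically, and (b) verify that the linear term of the central full twist of a parabolic is genuinely the reflection sum of that parabolic (not merely supported on it), including the non-vanishing of the scalar — this is where the monodromy/Dunkl-operator description of the Hecke representation, as alluded to in the text, does the real work, and one should check it is insensitive to which faithful Hecke module one uses (e.g. the regular module $H$ itself), so that compatibility with the parabolic inclusion $H_i \into H$ is automatic.
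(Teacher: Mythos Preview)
Your overall strategy --- extracting the commutation of $u_1,u_2$ from a first-order deformation argument --- matches the paper's, but there is a genuine error and the execution differs from the paper in a way that matters.

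The error: you assert that $z_{B_i}$ is a pure braid and hence specializes to $1\in\Z W$ at $q=1$. This is false in general. The image of $z_{B_i}$ under $B\to W$ is the canonical generator of $Z(W_i)$, which has order $N_i=|Z(W_i)|$ and is typically nontrivial (e.g.\ $-1$ for many irreducible $W_i$). Consequently your expansion $z_{B_i}\mapsto 1+t\,c_iu_i+O(t^2)$ cannot hold as stated, and the $t^2$-coefficient comparison collapses: the degree-zero piece is a nontrivial element of $W$, and the cross-terms at order $t^2$ are no longer purely $c_1c_2[u_1,u_2]$. The paper avoids exactly this by first replacing $z_{B_i}$ with $z_{B_i}^{N_i}\in P$; only then does the constant term become $1$. (A secondary slip: your displayed congruence should be modulo $t^3$, not $t^2$; modulo $t^2$ the two products agree trivially.)

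On the approach: after passing to $z_{B_i}^{N_i}$, the paper does \emph{not} use the Hecke algebra but rather the monodromy representation of the pure braid group $P$ into the completed enveloping algebra $\widehat{\mathsf{U}\mathfrak g_X}$ of the holonomy Lie algebra, built from the $1$-form $\omega=\sum_H t_H\omega_H$. The advantage is that the degree-$1$ term is then an honest integral $\int_\gamma\omega=\sum_H t_H\int_\gamma\omega_H$, and for a simple loop around the hyperplanes of $W_i$ one reads off $\sum_{H\in\mathcal A_i}t_H$ directly from the winding numbers --- no ``standard computation with the KZ/Dunkl connection'' is left unperformed. One then pushes forward along the Lie algebra map $\mathfrak g_X\to\Z W$, $t_H\mapsto s_H$, to land on $u_i$. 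Your Hecke-algebra route could in principle be made to work after the $N_i$-th power fix, but you would still owe the precise identification of the linear term with a nonzero multiple of $u_i$, which is exactly the step you flag as ``the heart of the matter'' and leave unproven; the holonomy-Lie-algebra framework makes that step transparent.
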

\begin{proof}
Let $N_i$ denote the order of $Z(W_i)$. Then $z_{B_i}^{N_i}$ belongs to the pure
braid group of $W$, and we have $[z_{B_1}^{N_1},z_{B_2}^{N_2}] = 1$. Let $X$ be
the hyperplane complement for $W$ and $\mathfrak{g}_X$ its holonomy Lie algebra,
as in \cite{KOHNONAGOYA,KOHNOINVENT}. We refer to \cite{ORLIKTERAO} and to \cite{KRAMCRG} \S 2 for
basic results on it and for its use in the construction of linear representations of the braid group. It is a graded Lie algebra generated by elements $t_H, H \in \mathcal{A}$ of degree $1$, where $\mathcal{A}$ is the hyperplane arrangement of $W$. Let $\omega_H$ be the logarithmic 1-form
associated to $H$, that is $\omega_H = (1/2 \pi \ii) \dlog \alpha_H$ where
$\alpha_H$ is some linear form with kernel $H$. The monodromy of the 1-form $\om = \sum_H t_H \om_H$ provides a morphism from the pure braid group $P = \pi_1(X; x_0)$ to the invertible elements of the completed Hopf algebra
$\widehat{\mathsf{U} \mathfrak{g}_X}$. This morphism can be described via Chen's iterated integrals, as in \cite{CHEN1,CHEN2}. In particular, the image of $[\gamma]$ for $\gamma$ a loop
based at $x_0$ is equal to $1 + \int_{\gamma} \om$ plus terms of higher degree.
Now, $z_{B_i}^{N_i}$ is the homotopy class of a simple loop $\gamma_i$
based at $x_0$ around the reflecting hyperplanes of $W_i$, and therefore it is mapped to
$$
1 + \int_{\gamma} \om + \dots = 1 + \sum_{H} t_H \int_{\gamma} \omega_H + \dots
= 1 + \sum_{H \in \mathcal{A}_i} t_H  + \dots
$$
where $\mathcal{A}_i\subset \mathcal{A}$ is the reflection arrangement for $W_i < W$, and the dots represent
terms of higher degree inside $\widehat{\mathsf{U} \mathfrak{g}_X}$. It follows that  $[z_{B_1}^{N_1},z_{B_2}^{N_2}] = 1$ implies
 $t_1 t_2=t_2 t_1$ where $t_i = \sum_{H \in \mathcal{A}_i} t_H$.

 We then endow the group algebra $\Z W$ of $W$ with the Lie bracket $[a,b]=ab-ba$.
There is a Lie algebra morphism $\mathfrak{g}_X \to \Z W$,
 which has been investigated together with its image in \cite{IH,LIETRANSP,IH2},
 mapping each $t_H$ to the reflection around $H$. Each $t_i$ is obviously mapped
 to the sum $u_i$ of the reflections of $W_i$, and the conclusion follows immediately
 from $[t_1,t_2]=0$.
\end{proof}

Then, we translate this equation holding inside the infinitesimal Hecke algebra of \cite{IH,IH2} into
a commutation relations between some of the reflections involved here.

\begin{lemma} Let $W_1,W_2$ be two parabolic subgroups of the 2-reflection group $W$, and let $u_i\in \Z W$ denote the sum of the set $\mathcal{R}_i$ of the reflections of $W_i$. Then $u_1u_2 = u_2u_1$
if and only if,
for every $s_1 \in \mathcal{R}_1 \setminus \mathcal{R}_2$
and $s_2 \in \mathcal{R}_2 \setminus \mathcal{R}_1$, we have $s_1s_2=s_2s_1$.
\end{lemma}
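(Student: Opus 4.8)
The statement concerns an equation $u_1u_2 = u_2u_1$ in the group algebra $\Z W$, where $u_i = \sum_{s \in \mathcal{R}_i} s$ is the sum of reflections in a parabolic subgroup $W_i$ of a $2$-reflection group $W$. The "if" direction is the easy one: if every $s_1 \in \mathcal{R}_1 \setminus \mathcal{R}_2$ commutes with every $s_2 \in \mathcal{R}_2 \setminus \mathcal{R}_1$, then I would split $u_i = u_i' + v$ where $v = \sum_{s \in \mathcal{R}_1 \cap \mathcal{R}_2} s$ is the common part and $u_i'$ is the sum over $\mathcal{R}_i \setminus \mathcal{R}_j$. Expanding $u_1 u_2 = (u_1' + v)(u_2' + v)$ and comparing with $u_2 u_1$, the cross terms $u_1' u_2'$ and $u_2' u_1'$ agree termwise by hypothesis, the terms $u_1' v$, $v u_2'$, etc. need care — but in fact the cleanest route is to observe that $u_1$ commutes with $v$ (since $v \in \Z W_1$ is central-ish... no), so I should instead argue directly: $u_1 u_2 - u_2 u_1 = \sum_{s_1 \in \mathcal{R}_1, s_2 \in \mathcal{R}_2} (s_1 s_2 - s_2 s_1)$, and pairs with $s_1 = s_2$ contribute zero, pairs with $s_1, s_2$ both in $\mathcal{R}_1 \cap \mathcal{R}_2$ cancel in the double sum (by symmetry of summation over $\mathcal{R}_1 \cap \mathcal{R}_2 \times \mathcal{R}_1 \cap \mathcal{R}_2$), leaving only the cross-type pairs, which vanish by hypothesis — with the caveat that pairs $(s_1, s_2)$ with $s_1 \in \mathcal{R}_1\cap\mathcal{R}_2$, $s_2 \in \mathcal{R}_2 \setminus \mathcal{R}_1$ must also be controlled. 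So the "if" direction, while elementary, already requires the key structural input that a reflection commuting with... Actually the real content is that these residual pairs also commute, which should follow because $W_1$ is parabolic: if $s_1 \in W_1$ and $s_2$ fixes (pointwise) the fixed space of $W_1$... I would need to check that $\mathcal{R}_1 \cap \mathcal{R}_2 = \mathcal{R}(W_1 \cap W_2)$ and use that $W_1 \cap W_2$ is parabolic in each.

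For the "only if" direction — the substantive one — I would expand $u_1 u_2 - u_2 u_1 = \sum_{s_1 \in \mathcal{R}_1}\sum_{s_2 \in \mathcal{R}_2} (s_1 s_2 - s_2 s_1) = 0$ in $\Z W$ and exploit the \emph{linear independence of group elements}. The element $s_1 s_2 s_1$ (a conjugate reflection, when $s_1 \neq s_2$) or more precisely the products $s_1 s_2$ for $s_1 \neq s_2$ are elements of $W$ of determinant... well, in a $2$-reflection group every reflection is an involution of determinant $-1$, so $s_1 s_2$ has determinant $1$ and $s_1 s_2 \neq 1$ when $s_1 \neq s_2$. The plan is: fix a pair $(s_1, s_2)$ with $s_1 \in \mathcal{R}_1 \setminus \mathcal{R}_2$, $s_2 \in \mathcal{R}_2 \setminus \mathcal{R}_1$ and $s_1 s_2 \neq s_2 s_1$; I want to show the coefficient of some specific group element in $u_1u_2 - u_2u_1$ is nonzero, a contradiction. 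The natural candidate is the element $g = s_1 s_2$ itself. Its coefficient in $u_1 u_2$ counts pairs $(t_1, t_2) \in \mathcal{R}_1 \times \mathcal{R}_2$ with $t_1 t_2 = s_1 s_2$; its coefficient in $u_2 u_1$ counts pairs $(t_2', t_1') \in \mathcal{R}_2 \times \mathcal{R}_1$ with $t_2' t_1' = s_1 s_2$. I would need to show these two counts differ. Here I expect to use: if $t_1 t_2 = s_1 s_2$ with all four reflections involutions, then $t_2 t_1 = (t_1 t_2)^{-1} = (s_1 s_2)^{-1} = s_2 s_1 \neq s_1 s_2$, so the "reversed" factorization gives a \emph{different} group element. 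This is the crux: in a $2$-reflection group, a product of two distinct reflections is never equal to its own inverse unless... $s_1 s_2 = s_2 s_1$ iff $(s_1 s_2)^2 = 1$, precisely the case we excluded.

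So the \textbf{main obstacle} is the bookkeeping: showing that the map "reverse the factorization" is a bijection between factorizations of $g$ as $(\mathcal{R}_1)(\mathcal{R}_2)$ and factorizations of $g^{-1}$ as $(\mathcal{R}_1)(\mathcal{R}_2)$ — no wait, reversing $t_1 t_2 = g$ gives $t_2 t_1 = g^{-1}$, a factorization of $g^{-1}$ of type $(\mathcal{R}_2)(\mathcal{R}_1)$. The clean accounting is: the coefficient of $g$ in $u_1 u_2$ equals the coefficient of $g^{-1}$ in $u_2 u_1$ (via $t_1 t_2 = g \Leftrightarrow t_2 t_1 = g^{-1}$). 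So $u_1 u_2 = u_2 u_1$ forces: for all $g$, $[\text{coeff of } g \text{ in } u_1 u_2] = [\text{coeff of } g \text{ in } u_2 u_1] = [\text{coeff of } g^{-1} \text{ in } u_1 u_2]$ — i.e. $u_1 u_2$ is invariant under $g \mapsto g^{-1}$. Meanwhile $u_1 u_2$ is supported on $\{t_1 t_2 : t_i \in \mathcal{R}_i\}$, and I'd isolate the pair $(s_1, s_2)$ by choosing it so that $g = s_1 s_2$ admits \emph{only} the factorization $t_1 = s_1, t_2 = s_2$ within $\mathcal{R}_1 \times \mathcal{R}_2$; this uniqueness is where I'd invoke properties of parabolic subgroups of $2$-reflection groups (e.g. rank/fixed-space arguments: $s_1$ and $g$ together determine $s_2 = s_1^{-1} g = s_1 g$, and then forcing $t_1 = s_1$ requires that no other reflection of $W_1$ times a reflection of $W_2$ hits $g$ — this may \emph{not} hold in general and is the genuinely delicate point, possibly requiring a more global argument summing over an orbit rather than isolating one coefficient). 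I would therefore fall back, if needed, on applying a suitable linear functional (e.g. projection onto the span of a conjugacy class, or pairing against a linear character / the sign-type homomorphism to $\Z/2$ composed with something), turning $[u_1, u_2] = 0$ into a scalar identity that directly yields $s_1 s_2 = s_2 s_1$ for each offending pair.
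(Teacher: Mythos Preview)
Your outline has real gaps in both directions.

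For the ``if'' direction, your suggestion that the residual pairs $(s_1,s_2)$ with $s_1 \in \mathcal{R}_1 \cap \mathcal{R}_2$ and $s_2 \in \mathcal{R}_2 \setminus \mathcal{R}_1$ individually commute is false in general; what is true is that their commutators \emph{sum} to zero. You actually had the right idea and then discarded it: the element $u_i = \sum_{s \in \mathcal{R}_i} s$ is genuinely central in $\Z W_i$, because the set $\mathcal{R}_i$ of reflections of $W_i$ is closed under $W_i$-conjugation. Hence $[s,u_i]=0$ for every $s \in \mathcal{R}_0 := \mathcal{R}_1 \cap \mathcal{R}_2$. Splitting $u_i = \sum_{\mathcal{R}_i'} + \sum_{\mathcal{R}_0}$ and using this centrality twice reduces $[u_1,u_2]$ to $[\sum_{\mathcal{R}_1'},\sum_{\mathcal{R}_2'}]$, which vanishes by hypothesis.

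For the ``only if'' direction you correctly locate the crux --- uniqueness of the factorization $g = s_1 s_2$ within $\mathcal{R}_1 \times \mathcal{R}_2$ --- and correctly suspect your fallback ideas (characters, conjugacy-class projections) are too blunt. The missing tool is geometric. The paper projects $[u_1,u_2]$ not onto the coefficient of a single group element but onto the span of $W_Z^+$, the rotations in the pointwise stabilizer of the codimension-$2$ flat $Z = \Ker(s_1 s_2 - 1)$. The parabolic hypothesis then enters through fixed spaces: with $E_i = V^{W_i}$, a reflection $s$ lies in $W_i$ iff $\Ker(s-1) \supset E_i$. One first checks that any contributing pair $(s_1',s_2')$ with $K(s_1' s_2') = Z$ must have $s_1' \notin \mathcal{R}_2$ and $s_2' \notin \mathcal{R}_1$, and then that $\Ker(s_1'-1) = Z + E_1$ (the inclusion is clear; equality holds because $E_1 \subset Z$ would force $s_2 \in \mathcal{R}_1$). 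Since order-$2$ reflections are determined by their hyperplanes, this forces $s_1' = s_1$ and $s_2' = s_2$. Thus the projection of $[u_1,u_2]$ onto $\Z W_Z^+$ is exactly $s_1 s_2 - s_2 s_1$, and vanishing of $[u_1,u_2]$ gives the conclusion. Without this fixed-space rigidity --- which is precisely where ``parabolic'' is used --- your coefficient-of-$g$ argument cannot rule out other factorizations.
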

\begin{proof} For $w \in W$, we set
$K(w) = \Ker(w-1)$. Then $E_i = \bigcap_{s \in \mathcal{R}_i} K(s)$ is the fixed point set of $W_i$ and,
for $w \in W$, $w \in W_i \Leftrightarrow w_{|E_i} = \Id_{E_i}$. We endow again the algebra $\Z W$ with the Lie
bracket $[a,b] = ab-ba$, so by asumption we have $[u_1,u_2]= 0$.

Let $\mathcal{A}$ be the hyperplane arrangement of $W$. We denote $\eps : W \to \{ \pm 1 \}$ the determinant map
mapping each reflection to $-1$, and $W^+ = \Ker \eps$ its rotation subgroup. More generally,
for $G < W$, we set $G^+ = G \cap W^+ = \Ker \eps_{| G}$.
Let $\mathcal{Z}$ denote the collection of codimension 2 subspaces of the form $H \cap H'$ for $H,H' \in \mathcal{A}$, $H \neq H'$ and, for $F \subset E$, let $W_F$ be the pointwise stabilizer of $F$. If $Z \in \mathcal{Z}$ and $w \in W_Z^+ \setminus \{ 1 \}$, we have $K(w) = Z$, as $K(w)$ necessarily has codimension
$1$ or $2$, and if it had codimension $1$ it would be a reflection, contradicting $w \in W^+$.
From this one gets that, if $Z_1,Z_2 \in \mathcal{Z}$ with $Z_1\neq Z_2$, then $W_{Z_1}^+ \cap W_{Z_2}^+ = \{ 1\}$.

For $s,u \in \mathcal{R}$, if $[s,u] \neq 0$ we have in particular $s\neq u$ whence $K(su) = K(s) \cap K(u) \in \mathcal{Z}$, and $su \in W_{K(su)}^+$. From this, it is easily checked that $[u_1,u_2]$ belongs to the linear span of $\{W_Z^+,\ Z \in
\mathcal{Z}\}$, hence to $\bigoplus_{Z \in \mathcal{Z}} \Z W_Z^+$.

Let $s_1 \in \mathcal{R}_1 \setminus \mathcal{R}_2$,
and $s_2 \in \mathcal{R}_2 \setminus \mathcal{R}_1$.
We first consider the projection of
$[u_1,u_2]$ onto $\Z W_Z^+$ for $Z = K(s_1s_2)$. Given $s'_1 \in \mathcal{R}_1$ and $s'_2 \in \mathcal{R}_2$, suppose that the projection of $[s'_1,s'_2]\neq 0$. This is only possible if $K(s'_1s'_2)=Z=K(s_1s_2)$. If $s'_1 \in \mathcal{R}_1\cap\mathcal{R}_2$, then $K(s'_1)$ contains $E_1 + E_2$ and therefore $K(s_1s_2) = K(s'_1s'_2)$ contains $E_2$,
whence $K(s_1) \supset E_2$ contradicting $s_1 \not\in \mathcal{R}_2$. Therefore $s'_1 \in \mathcal{R}_1 \setminus
\mathcal{R}_2$ and similarly one proves $s'_2 \in \mathcal{R}_2 \setminus
\mathcal{R}_1$.

Now, we have $K(s_1) \supset K(s_1s_2) + E_1$, and this inclusion is strict only
if $E_1 \subset K(s_1s_2)$, which implies $E_1 \subset K(s_2)$ and $s_2 \in \mathcal{R}_1$,
a contradiction. Therefore $K(s_1)= K(s_1s_2) + E_1$ and similarly $K(s_2) = K(s_1s_2) + E_2$.
Since $(s'_1,s'_2)$ satisfies the same properties as $(s_1,s_2)$, one gets
$K(s'_1)= K(s'_1s'_2)+E_1= K(s_1s_2)+E_1 = K(s_1)$ and similarly $K(s'_2)=K(s_2)$, so this
proves $s'_1=s_1$ and $s'_2=s_2$. This proves that the projection onto $\Z W_Z^+$ of $[u_1,u_2]$ is equal to $[s_1,s_2]$.

Therefore, if $[u_1,u_2]=0$, this implies that $s_1s_2=s_2s_1$ for every $s_1 \in \mathcal{R}_1 \setminus \mathcal{R}_2$
and $s_2 \in \mathcal{R}_2\setminus \mathcal{R}_1$.

Conversely, suppose that $s_1s_2=s_2s_1$ for every $s_1 \in \mathcal{R}_1 \setminus \mathcal{R}_2$ and $s_2 \in \mathcal{R}_2\setminus \mathcal{R}_1$. Set $\mathcal{R}_0 = \mathcal{R}_1\cap \mathcal{R}_2$ and $\mathcal{R}'_i = \mathcal{R}_i \setminus \mathcal{R}_0$. For short, let $\sum_S$ denote $\sum_{s \in S} s$. Since $\sum_{\mathcal{R}_i}$ belongs to
the center of $\Z W_i$, we have $[s,\sum_{\mathcal{R}_i}]=0$ for every $s \in \mathcal{R}_0$. Therefore,
since we know by hypothesis that $[\sum_{\mathcal{R}'_1},\sum_{\mathcal{R}'_2}]=0$, we get
that $[\sum_{\mathcal{R}_1},\sum_{\mathcal{R}_2}]$ is equal to
$$
[\sum_{\mathcal{R}'_1} +\sum_{\mathcal{R}_0} ,\sum_{\mathcal{R}_2}]=
[\sum_{\mathcal{R}'_1},\sum_{\mathcal{R}_2}]=
[\sum_{\mathcal{R}'_1},\sum_{\mathcal{R}_0}]+[\sum_{\mathcal{R}'_1},\sum_{\mathcal{R}'_2}]=
[\sum_{\mathcal{R}_1} - \sum_{\mathcal{R}_0},\sum_{\mathcal{R}_0}] + [\sum_{\mathcal{R}'_1},\sum_{\mathcal{R}'_2}]=
[\sum_{\mathcal{R}'_1},\sum_{\mathcal{R}'_2}]=0
$$
and this proves the converse statement.

\end{proof}

Finally, in order to use the irreducibility assumptions we have, we use the
following group-theoretic result.
\begin{proposition}\label{prop:complparabsengendre} Let $W < \GL(E)$ be an \emph{irreducible} complex 2-reflection group with set of reflections $\mathcal{R}$,
and $W_0 \subsetneq W$ a nontrivial parabolic subgroup. Then $W$ is generated by $\mathcal{R} \setminus
\mathcal{R}_0$, for $\mathcal{R}_0 = W_0 \cap \mathcal{R}$.
\end{proposition}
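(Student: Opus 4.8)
\noindent\emph{Proof strategy.}
Put $\mathcal{R}_1=\mathcal{R}\setminus\mathcal{R}_0$ and $W'=\langle\mathcal{R}_1\rangle$. Since a parabolic subgroup is generated by the reflections it contains, $W_0=\langle\mathcal{R}_0\rangle$ and $W=\langle\mathcal{R}\rangle$, so it suffices to prove $\mathcal{R}_0\subseteq W'$: then $\mathcal{R}=\mathcal{R}_0\cup\mathcal{R}_1\subseteq W'$ gives $W'=W$. Note that $\mathcal{R}_0\neq\varnothing$ (as $W_0\neq 1$) and $\mathcal{R}_1\neq\varnothing$ (otherwise $W=W_0$); write $L_0=E^{W_0}$. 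I would establish two claims: (i) every $s\in\mathcal{R}_0$ that fails to commute with some $t\in\mathcal{R}_1$ lies in $W'$; (ii) every $s\in\mathcal{R}_0$ that commutes with all of $\mathcal{R}_1$ lies in $W'$. Claim (i) uses neither the irreducibility of $W$ nor the classification, while claim (ii) is where irreducibility enters and is the main difficulty.

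For (i): two non-commuting involutions generate a finite dihedral group $D=\langle s,t\rangle$ of order $\geq 6$, acting faithfully, hence irreducibly, on the plane $[E,D]$; thus $F:=E^D$ has codimension $2$ and every reflecting hyperplane of $D$ contains $F$. Passing to $E/F\cong\C^2$, a reflection $r\in D$ lies in $W_0=W_{L_0}$ iff the image of $L_0$ is contained in the image of $H_r$. From $s\in D\cap\mathcal{R}_0$ the image of $L_0$ has dimension $\leq 1$, and from $t\notin\mathcal{R}_0$ it is nonzero, hence a line; since $W$ is a $2$-reflection group (one reflection per hyperplane), $D\cap\mathcal{R}_0=\{s\}$. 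So every reflection of $D$ other than $s$ lies in $\mathcal{R}_1\subseteq W'$, and a dihedral group of order $\geq 6$ is generated by its reflections after deleting any single one of them; therefore $s\in D\subseteq W'$.

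For (ii): suppose for contradiction that the set $S$ of $s\in\mathcal{R}_0$ commuting with all of $\mathcal{R}_1$ but not lying in $W'$ is nonempty, and put $W''=\langle S\rangle\leq W_0$. The crux is the sublemma: \emph{if an order-$2$ reflection $r$ of $W$ centralizes a reflection subgroup $G\leq W$ and $r\notin G$, then $r$ fixes $[E,G]$ pointwise}. To prove it, decompose $[E,G]=\bigoplus_j V_j$ into the reflection representations of the irreducible factors of $G$; the $V_j$ are pairwise non-isomorphic absolutely irreducible $G$-modules, so by Schur's lemma $r$ acts on $[E,G]$ as $\bigoplus_j\lambda_j\,\Id_{V_j}$. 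As $r-1$ has rank $1$, either all $\lambda_j=1$ (the claim), or $r$ fixes $E^G$ pointwise and acts by $-1$ on a single $1$-dimensional $V_{j_0}$; but then, since $W$ is a $2$-reflection group, $r$ coincides with the generating reflection of the corresponding rank-$1$ factor of $G$, which lies in $G$, contradicting $r\notin G$. Now apply the sublemma with $(r,G)=(s,W')$ for $s\in S$ (licit since $S\cap W'=\varnothing$) to get $[E,W']\subseteq E^{W''}$, and with $(r,G)=(t,W'')$ for $t\in\mathcal{R}_1$ (licit since $t\notin W_0\supseteq W''$, and $t$ centralizes $S$ hence $W''$) to get $[E,W'']\subseteq E^{W'}$. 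Writing $V'=[E,W']$, $V''=[E,W'']$, $Z=E^{W'}\cap E^{W''}$, these two inclusions together with $E=E^{W'}\oplus V'$ force $E=Z\oplus V'\oplus V''$, a decomposition into $W$-submodules on which $W=\langle W',W''\rangle$ acts trivially on $Z$, as $W'$ on $V'$ and trivially on $V''$, and as $W''$ on $V''$ and trivially on $V'$. Since $W$ is irreducible and $V'\neq 0$ (as $W'\neq 1$), we get $V''=0$, hence $W''=1$, hence $S=\varnothing$, a contradiction. Combining (i) and (ii) yields $\mathcal{R}_0\subseteq W'$, so $W'=W$.

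The main obstacle is claim (ii): proving the sublemma (in particular checking that the summands $V_j$ are pairwise non-isomorphic, so Schur's lemma forces $r$ diagonal, and that the degenerate branch cannot occur), and then the bookkeeping with $V'$, $V''$, $Z$. The hypothesis that $W$ is a $2$-reflection group is used exactly to exclude the degenerate branch of the sublemma, and the irreducibility of $W$ is used only in the very last decomposition step.
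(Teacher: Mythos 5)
Your proof is correct, but it routes around the paper's key first step and consequently spends most of its effort on a case that is in fact empty. The paper opens by observing that $F=\bigcap_{s\in\mathcal{R}\setminus\mathcal{R}_0}\Ker(s-1)$ is stable under $W_0$ (conjugation by $W_0$ permutes $\mathcal{R}\setminus\mathcal{R}_0$) and is fixed pointwise by $\mathcal{R}\setminus\mathcal{R}_0$, hence is a $W$-submodule and must vanish by irreducibility; since the root line of an order-two reflection $s_0$ commuting with all of $\mathcal{R}\setminus\mathcal{R}_0$ would have to lie in $F$, no element of $\mathcal{R}_0$ commutes with all of $\mathcal{R}\setminus\mathcal{R}_0$ --- that is, your case (ii) is vacuous. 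The remaining (non-commuting) case is then settled by a single conjugation: with $Z=\Ker(s_0-1)\cap\Ker(s-1)$ one shows $\mathcal{R}_0\cap W_Z=\{s_0\}$ (two distinct $\mathcal{R}_0$-hyperplanes through $Z$ would force $Z\supseteq E^{W_0}$, hence $s\in W_0$), so $ss_0s^{-1}\in\mathcal{R}\setminus\mathcal{R}_0$ and $s_0=s^{-1}(ss_0s^{-1})s$. Your claim (i) is this same argument in dihedral form --- your line count in $E/F$ is the paper's uniqueness of $s_0$, and generation of a dihedral group of order at least $6$ by all but one of its reflections replaces the explicit conjugation --- while your claim (ii), with its Schur-lemma sublemma and the decomposition $E=Z\oplus V'\oplus V''$, is a correct but much heavier way of wringing the same contradiction out of irreducibility. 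Both arguments use irreducibility exactly once; placing that use up front, as the paper does, eliminates the commuting case before it needs any machinery of its own.
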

\begin{proof}

We assume $W_0 \neq \{ 1 \}$, for otherwise the statement is trivial.
 Let us set $\mathcal{R}' = \mathcal{R}\setminus \mathcal{R}_0$ and $W' = \langle \mathcal{R}'
\rangle$. We first prove that $\bigcap_{s \in \mathcal{R}'} K(s) = \{ 0 \}$, where we denote $K(s) = \Ker(s-1)$.
For this, set $F = \bigcap_{s \in \mathcal{R}'} K(s)$.
Then
$E = F \oplus F^{\perp}$ and, since $w K(s) = K(wsw^{-1})$ we get that $F$ is $W_0$-stable. Since $\mathcal{R}'$
acts trivially on $F$ and $W$ is generated by $\mathcal{R} \subset W_0 \cup \mathcal{R}'$ it follows
that $F$ is $W$-stable. Since $W$ is irreducible this proves $F = \{ 0 \}$.

Now consider $s_0 \in \mathcal{R}_0$. We want to prove $s_0 \in W'$. We have $\{ 0 \} \neq K(s_0)^{\perp}
\not\subset  \bigcap_{s \in \mathcal{R}'} K(s) = \{ 0 \}$ so there exists $s \in \mathcal{R}'$ such
that $K(s_0)^{\perp} \not\subset K(s)$. Let $Z = K(s_0)\cap K(s)$ and $W_Z$ the parabolic subgroup
of $W$ fixing $Z$. It has rank $2$. Assume there exists $s'_0 \in \mathcal{R}_0 \cap W_Z$ with $K(s'_0)\neq K(s_0)$.
Then $Z = K(s_0)\cap K(s'_0)$ and $W_Z \subset W_0$ whence $s \in W_0$, a contradiction. Therefore
$\mathcal{R}_0 \cap W_Z = \{ s_0 \}$. Now, $K(s_0)^{\perp} \not\subset K(s)$ implies $ss_0 \neq s_0 s$,
hence $s s_0 s^{-1} \in (\mathcal{R} \cap W_Z) \setminus \{ s_0 \} \subset \mathcal{R}'$. But then
$s_0 = s^{-1}.s s_0 s^{-1}.s$ is a product of elements of $\mathcal{R}'$ hence belongs
to $\langle \mathcal{R}' \rangle = W'$. It follows that $\mathcal{R}_0 \subset W'$
hence $\mathcal{R} \subset W'$ and $W' = \langle\mathcal{R}\rangle = W$. This concludes the proof.
\end{proof}

The above statement with the same proof holds more generally for an irreducible complex reflection group $W$
having reflections of arbitrary order, and the set of reflections can even be replaced by the set of distinguished
ones, but we shall not need this here.

This statement then proves that our provisional result $u_1u_2=u_2u_1$ actually
means the following, and this completes the proof of \autoref{theo:zzcommW}.

\begin{corollary}
Let $W_1,W_2$ be two irreducible parabolic subgroups of the 2-reflection group $W$, with sets of reflections
$\mathcal{R}_i, i=1,2$. Then the condition that, for every $s_1 \in \mathcal{R}_1 \setminus \mathcal{R}_2$
and $s_2 \in \mathcal{R}_2 \setminus \mathcal{R}_1$, we have $s_1s_2=s_2s_1$, is equivalent to
saying that, either $W_1 \subset W_2$, or $W_2 \subset W_1$, or $W_1 \cap W_2 = [W_1,W_2] = \{ 1 \}$.
\end{corollary}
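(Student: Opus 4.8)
The plan is to prove the two implications of the equivalence separately: the reverse implication is elementary, and the direct implication is where \autoref{prop:complparabsengendre} does the essential work.

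For the reverse implication, suppose first $W_1\subseteq W_2$. Then every reflection of $W_1$ is a reflection of $W$ lying in $W_2$, hence a reflection of $W_2$, so $\mathcal{R}_1\subseteq\mathcal{R}_2$ and the set $\mathcal{R}_1\setminus\mathcal{R}_2$ is empty; the commutation condition is then vacuously true, and the case $W_2\subseteq W_1$ is symmetric. If instead $W_1\cap W_2=[W_1,W_2]=\{1\}$, then for any $s_1\in\mathcal{R}_1$ and $s_2\in\mathcal{R}_2$ the commutator $[s_1,s_2]$ lies in $[W_1,W_2]=\{1\}$, so $s_1$ and $s_2$ commute, a fortiori when $s_1\in\mathcal{R}_1\setminus\mathcal{R}_2$ and $s_2\in\mathcal{R}_2\setminus\mathcal{R}_1$.

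For the direct implication, assume the commutation condition, and set $\mathcal{R}_0=\mathcal{R}_1\cap\mathcal{R}_2$ and $\mathcal{R}'_i=\mathcal{R}_i\setminus\mathcal{R}_0=\mathcal{R}_i\setminus\mathcal{R}_{3-i}$. If $\mathcal{R}'_1=\varnothing$ then $\mathcal{R}_1\subseteq\mathcal{R}_2$, hence $W_1=\langle\mathcal{R}_1\rangle\subseteq\langle\mathcal{R}_2\rangle=W_2$ (a parabolic subgroup of a complex reflection group being generated by the reflections it contains), which is the first alternative; symmetrically $\mathcal{R}'_2=\varnothing$ yields the second. So we may assume $\mathcal{R}'_1$ and $\mathcal{R}'_2$ are both nonempty. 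The subgroup $W_1\cap W_2$ is the pointwise stabiliser inside $W_1$ of the fixed subspace of $W_2$, hence a parabolic subgroup of $W_1$ by Steinberg's fixed-point theorem, whose set of reflections is exactly $\mathcal{R}_0$; moreover it is \emph{proper} in $W_1$, since a reflection $s_1\in\mathcal{R}'_1$ cannot lie in $W_1\cap W_2\subseteq W_2$ without belonging to $\mathcal{R}_2$. Applying \autoref{prop:complparabsengendre} to the irreducible $2$-reflection group $W_1$ and its proper parabolic subgroup $W_1\cap W_2$, we obtain $W_1=\langle\mathcal{R}_1\setminus\mathcal{R}_0\rangle=\langle\mathcal{R}'_1\rangle$, and likewise $W_2=\langle\mathcal{R}'_2\rangle$. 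Since, by hypothesis, every element of $\mathcal{R}'_1$ commutes with every element of $\mathcal{R}'_2$, and these sets generate $W_1$ and $W_2$, it follows that $[W_1,W_2]=\{1\}$.

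It then remains to upgrade this to $W_1\cap W_2=\{1\}$, which gives the third alternative. As $W_1\cap W_2\subseteq W_2$ and $[W_1,W_2]=\{1\}$, the group $W_1\cap W_2$ is central in $W_1$. If it contained a reflection $s_0$, then by Schur's lemma $s_0$ would act as a scalar on the (non-zero) space underlying the faithful irreducible reflection representation of $W_1$; being of order $2$ this scalar is $-1$, and since the reflection $s_0$ has a fixed hyperplane, this forces that space to be one-dimensional, i.e. $W_1=\langle s_0\rangle$ and $\mathcal{R}_1=\{s_0\}\subseteq\mathcal{R}_2$, contradicting $\mathcal{R}'_1\neq\varnothing$. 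Hence $W_1\cap W_2$ contains no reflection; being a parabolic subgroup of $W$, it is generated by the reflections it contains, so it is trivial, and $W_1\cap W_2=[W_1,W_2]=\{1\}$. I expect no serious obstacle here: the substantive ingredient is \autoref{prop:complparabsengendre} itself, and the only points requiring care are the verification that $W_1\cap W_2$ is a proper parabolic subgroup of $W_1$ (so that the proposition applies with the right set of reflections removed) and the disposal of the rank-one case in the final centrality argument.
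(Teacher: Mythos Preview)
Your proof is correct and follows essentially the same approach as the paper: reduce to the case where both $\mathcal{R}'_i$ are nonempty, apply \autoref{prop:complparabsengendre} to conclude $W_i=\langle\mathcal{R}'_i\rangle$ and hence $[W_1,W_2]=1$, then use irreducibility to force $W_1\cap W_2=\{1\}$. Your treatment is slightly more explicit than the paper's (invoking Steinberg's theorem for the parabolic property of $W_1\cap W_2$ and spelling out the Schur's-lemma argument for the final step), but the structure and key ideas are identical.
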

\begin{proof}
One implication being obvious, we prove the other one.
Assume $s_1s_2 = s_2s_1$ for every $s_1 \in \mathcal{R}_1 \setminus \mathcal{R}_2$
and $s_2 \in \mathcal{R}_2 \setminus \mathcal{R}_1$.
If $\mathcal{R}_1 \subset \mathcal{R}_2$ we have
$W_1\subset W_2$ and similarly $\mathcal{R}_2 \subset \mathcal{R}_1 \Rightarrow W_2 \subset W_1$. So
we can assume that $W_0 = W_1\cap W_2$ is a proper parabolic subgroup of both $W_1$ and $W_2$.
Let $\mathcal{R}_0 = \mathcal{R} \cap W_0$. We have $\mathcal{R}_1 \setminus \mathcal{R}_2 =
\mathcal{R}_1 \setminus \mathcal{R}_0$, so by \autoref{prop:complparabsengendre}
we know that $W_1$ is generated by $\mathcal{R}_1 \setminus \mathcal{R}_2$. It follows that $W_1$ commutes with $\mathcal{R}_2 \setminus \mathcal{R}_1$.
But since $\mathcal{R}_2 \setminus \mathcal{R}_1 = \mathcal{R}_2 \setminus \mathcal{R}_0$ generates
$W_2$ it follows that $[W_1,W_2] = \{ 1 \}$. Now, $W_0 \subset W_2$ commutes
with all $W_1$ ; since it is a parabolic subgroup of $W_1$ this contradicts the irreducibility of $W_1$ unless $W_0 = \{ 1 \}$, and this concludes the proof.
\end{proof}

\section{Description of the parabolic subgroups}

The goal of this section is to describe the parabolic subgroups
of the braid groups of every irreducible complex reflection group up to conjugacy.
The subsections below cover all of them, except for the group $G_{31}$.

\subsection{Real reflection groups and Shephard groups}
\label{sect:descrArtinShephard}

By \autoref{prop:parabolicsdiscri}, Shephard groups have the same collection of parabolic subgroups as their
associated real reflection group, so we can concentrate on the case where
$W$ is a real reflection group. It admits a
Coxeter system $(W,S)$, and its braid group $B$ admits a presentation as an Artin group,
with set of generators $\Sigma$, called the Artin-Brieskorn generators of $B$, which are in natural 1-1 correspondence with $S$. More precisely, picking a Weyl chamber $C \subset X$, and choosing some
base-point $x \in C$, the element of $\Sigma$  in $B = \pi_1(X/W,\bar{x})$ corresponding to $s \in S$ is obtained by joining
$x$ to its image under $s$, viewed as an element of $W$, by a straight line, only modified at
the intersection with the wall $\Ker(s-1)$ so that it makes a positive half turn in the normal complex
direction (see \cite{BRIESKORN}).

If $S_0 \subset S$,
there is a standard parabolic subgroup $W_0 < W$
generated by $S_0$, and the subgroup of $B$ generated by the corresponding copy $\Sigma_0 \subset \Sigma$ is a parabolic subgroup of $B$. Indeed, $C$ is included inside a unique Weyl chamber for $W_0$, denoted $C_0$. Let $L_0$ denote the intersection
of the reflecting hyperplanes for $W_0$, and $L'_0$ the complement of all the other reflecting hyperplanes of $W$
inside $L_0$. Let us choose as normal ray a straight path $\eta$ from $x$ to some point in $L'_0 \cap \overline{C}$.
It is then immediately checked that $\pi_1^{loc}(X/W,\eta)$ can be identified with the Artin subgroup
associated to $S_0$, the Artin-Brieskorn generators of $B$ corresponding to the walls of $C_0$ being mapped to the Artin-Brieskorn generators of the braid group $B_0$ of $W_0$. This proves that every such subgroup $\langle \Sigma_0 \rangle$ is a parabolic subgroup of $B$. We call it a standard parabolic subgroup of $B$.

\begin{proposition} \label{prop:parabArtin}
For a braid group $B$ associated to a real reflection group or a Shephard group,
the parabolic subgroups are the conjugates of the standard parabolic subgroups of $B$, viewed as an Artin group by the above construction. \end{proposition}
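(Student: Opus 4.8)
The plan is to show that every parabolic subgroup $B_0$ of $B$ is conjugate to a standard one, the converse inclusion having already been established in the discussion preceding the statement. Let $B_0$ be a parabolic subgroup, realized as the image of $\pi_1^{\mathrm{loc}}(X/W,\eta)$ for some closed normal ray $\eta$ based at $v_0$, and let $\tilde\eta$ be its lift to $X$. By the analysis after \autoref{prop:parabsbmr}, $B_0$ depends (up to the $P$-conjugacy of \autoref{prop:conjparabs}(1)) only on the endpoint $\tilde\eta(1)$, and in fact only on the flat $L_0 = V^{W_0}$ it lies on, where $W_0 = \pi(B_0)$ is the parabolic subgroup of $W$ stabilizing $\tilde\eta(1)$. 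First I would use \autoref{prop:conjparabs}(2): since $W$ is a real reflection group, every parabolic subgroup $W_0$ of $W$ is $W$-conjugate to a \emph{standard} parabolic subgroup $W_{S_0} = \langle S_0\rangle$, $S_0 \subseteq S$; so after conjugating $B_0$ by a suitable lift of that element of $W$ (which keeps it parabolic by \autoref{prop:basicpropsparabs}(2)), we may assume $W_0 = W_{S_0}$ is standard, with fixed flat $L_0$ the intersection of the walls $\Ker(s-1)$, $s \in S_0$.

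Next I would reduce to a normal ray of the specific shape used to define the standard parabolic: a straight segment from a basepoint in the open chamber $C$ to a point of $L_0' = L_0 \setminus \bigcup\{H \in \mathcal{A} : L_0 \not\subset H\}$ lying in $\overline C$. The key point is that $L_0'$ is open, connected and nonempty inside $L_0$, and that $C_0$ (the unique $W_0$-chamber containing $C$) has the property that $L_0 \cap \overline{C_0}$ meets $L_0'$ in an open dense subset. Using \autoref{lem:avoidinghyperplanes} I can move the tail of $\eta$ so that near $t=1$ it avoids all hyperplanes $H$ with $L_0 \not\subset H$ and approaches $L_0$ transversally from inside a small ball $\Omega_\varepsilon$ around its endpoint, without changing $B_0$; and using the invariance of $\pi_1^{\mathrm{loc}}$ under such modifications (the paragraph after \autoref{prop:parabsbmr}) I may further assume the endpoint lies in $L_0' \cap \overline{C_0}$ and that the approach takes place inside $\Omega_\varepsilon \cap X/W_0$, the neighbourhood of the endpoint figuring in the proof of \autoref{prop:parabsbmr}. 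At this stage $\pi_1^{\mathrm{loc}}(X/W,\eta) \simeq \pi_1(X_0/W_0,\eta_0)$ with $X_0$ the hyperplane complement of $W_0$ (again by the remark after \autoref{prop:parabsbmr}), so $B_0$ is the image in $B$ of the braid group $B_{S_0}$ of $W_0$, embedded via the tail path $\eta|_{[0,1-\beta]}$ inside $X/W$.

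Then I would identify this image with $\langle \Sigma_0 \rangle$. The tail $\eta|_{[0,1-\beta]}$ is a path inside $X/W$ from $v_0$ to the (quotient of the) entry point of $\Omega_\varepsilon$; since $X/W$ is path-connected this loop-with-a-path datum gives an embedding $B_{S_0} = \pi_1(X_0/W_0,\eta_0) \hookrightarrow \pi_1(X/W, v_0) = B$, and by the very definition of the standard parabolic subgroup recalled before the statement — where exactly this kind of straight normal ray from $x \in C$ to $L_0' \cap \overline C$ is used — the Artin–Brieskorn generators of $B_{S_0}$ corresponding to the walls of $C_0$ are carried to the corresponding Artin–Brieskorn generators of $B$. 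Hence the image is $\langle \Sigma_0 \rangle$ up to the ambiguity of the connecting path, and two choices of connecting path differ by an element of $\pi_1(X/W,v_0)$, i.e. give $B$-conjugate subgroups; so $B_0$ is conjugate to $\langle \Sigma_0\rangle$. Combined with \autoref{prop:conjparabs}(1) to absorb the residual $P$-conjugacy (all parabolics with the same image $W_0$ are $P$-conjugate, and $\langle\Sigma_0\rangle$ is one of them), this shows $B_0$ is conjugate to the standard parabolic $\langle \Sigma_0\rangle$.

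The main obstacle I anticipate is the bookkeeping in the reduction step: verifying carefully that after conjugating $W_0$ to a \emph{standard} parabolic one can simultaneously arrange the normal ray to be (homotopic rel endpoints to) a \emph{straight} path landing in $L_0' \cap \overline{C}$ while controlling its behaviour inside the Briešorn–BMR ball $\Omega_\varepsilon$, so that the resulting embedding of $B_{S_0}$ really is the one built from a genuine Weyl chamber in the definition of $\langle\Sigma_0\rangle$. All the ingredients — \autoref{lem:avoidinghyperplanes}, the isomorphism $\pi_1^{\mathrm{loc}}(X/W,\eta)\simeq\pi_1(X_0/W_0,\eta_0)$, and \autoref{prop:conjparabs} — are in place; the work is in threading them together and matching the Artin–Brieskorn generators on the nose, using that for a \emph{real} reflection group the chamber geometry makes the map $S_0 \mapsto \Sigma_0$ canonical.
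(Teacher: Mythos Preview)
Your proposal is correct, but it does far more work than necessary, and the paper's proof is essentially a one-liner that you have buried inside your own argument. The paper simply observes: given a parabolic $B_0$ with image $W_0$ in $W$, conjugate $W_0$ to a standard parabolic $W_{S_0}$ of the Coxeter system; the standard parabolic $\langle\Sigma_0\rangle$ of $B$ (already constructed before the statement) has image $W_{S_0}$; since $W_0$ and $W_{S_0}$ are $W$-conjugate, \autoref{prop:conjparabs}(2) gives directly that $B_0$ and $\langle\Sigma_0\rangle$ are $B$-conjugate. That is the entire proof.

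You invoke exactly this proposition at both ends of your argument --- \autoref{prop:conjparabs}(2) in your first paragraph to reduce to $W_0$ standard, and \autoref{prop:conjparabs}(1) in your last paragraph to absorb the residual $P$-conjugacy --- but between these two invocations you insert an elaborate geometric reduction (straightening the normal ray into $\overline C$, invoking \autoref{lem:avoidinghyperplanes}, matching Artin--Brieskorn generators) that is entirely redundant: once you know $W_0 = W_{S_0}$ after your first conjugation, \autoref{prop:conjparabs}(1) already tells you that $B_0$ and $\langle\Sigma_0\rangle$ are $P$-conjugate, since both are parabolic with image $W_{S_0}$. Your paragraphs 2 and 3 can be deleted. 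The ``main obstacle'' you anticipate in your final paragraph is therefore not an obstacle at all --- the bookkeeping you worry about was already packaged into the proof of \autoref{prop:conjparabs}, and there is no need to redo it.
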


\begin{proof}
We can focus on the case of $W$ being a real reflection group.
We already saw that conjugates of standard parabolic subgroups are parabolic subgroups.
Conversely, let $B_0$ be a parabolic subgroup of $B$, and let $W_0 < W$ denote its image under the natural projection $B \to W$. Then $W_0$ is a parabolic subgroup of $W$, hence there exists $g \in W$ such that $g W_0 g^{-1}$ is a standard parabolic subgroup of the chosen Coxeter system $(W,S)$. Let $B_0'$ be the standard parabolic subgroup associated to $g W_0 g^{-1}$. By \autoref{prop:conjparabs}, $B_0$ and $B_0'$ are conjugate in $B$, as their images under the natural projection are conjugate in $W$. Hence $B_0$ is conjugate to a standard parabolic subgroup.\end{proof}

\subsection{The groups $G(de,e,n)$, $d > 1$}

Recall that $G(de,e,n)$ is the group of $n\times n$ monomial matrices with coefficients inside $\mu_{de}(\C)$ such that the product of their nonzero entries belongs to $\mu_d(\C)$, where $\mu_k(\C)$ denotes the group of complex $k$-th roots of $1$.
In this section we set $X_n(r) = \{ (z_1,\dots,z_n) \in \C^n \ | \ z_i \neq
 0, z_i/z_j \not\in \mu_r(\C) \}$. It is the hyperplane complement for all the subgroups
$W = G(de,e,n)$ of the group $\hat{W} = G(r,1,n)$ with $r = de$ and $d>1$. We choose some
basepoint $x \in X_n(r)$ in order to define the braid groups $B$ and $\hat{B}$ of $W$ and $\hat{W}$,
respectively. Since $\hat{W}$ is a Shephard group, the parabolic subgroups of $\hat{B}$ are already known up to conjugacy.
Here we show how to deduce the parabolic subgroups of $B$ from the ones of $\hat{B}$.

  Let $\bar{\varphi} : G(r,1,n) \onto \mu_{r}(\C)/\mu_d(\C)$ denote the composite of the morphism
$G(r,1,n) \onto \mu_{r}(\C)$ given by the product of the (nonzero) monomial entries with the natural
projection morphism. Composed with the projection map $\hat{B} \onto \hat{W}= G(r,1,n)$ it provides a surjective morphism $\varphi : \hat{B} \onto
\mu_{de}(\C)/\mu_d(\C) \simeq \Z/e\Z$ whose kernel is naturally identified with $B$.

\begin{proposition}\label{prop:parabsGdeen} The parabolic subgroups of $B$ are exactly the kernels of the
restriction of $\varphi$ to the parabolic subgroups of $\hat{B}$. Moreover, if $B_0$
is an irreducible parabolic subgroup of $B$ associated to the parabolic subgroup $W_0$, then
$B_0
 = \hat{B}_0 \cap B$ with
$\hat{B}_0$ an irreducible parabolic subgroup of $\hat{B}$ associated to the irreducible parabolic subgroup $\hat{W}_0$
of $\hat{W}$,
and we have $z_{B_0} = z_{\hat{B}_0}^{|Z(\hat{W}_0)|/|Z(W_0)|}$.
\end{proposition}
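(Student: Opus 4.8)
The plan is to exploit that, since $d>1$, the groups $W=G(de,e,n)$ and $\hat W=G(r,1,n)$ have literally the same reflection arrangement, hence the same hyperplane complement $X=X_n(r)$; so $X/W\to X/\hat W$ is a connected Galois covering with deck group $\hat W/W\cong\mu_r/\mu_d$, realised on $\pi_1$ by the exact sequence $1\to B\to\hat B\xrightarrow{\varphi}\mu_r/\mu_d\to1$. The same holds locally: for $v_0\in V\setminus X$, setting $\hat W_0=\mathrm{Stab}_{\hat W}(v_0)$ and $W_0=\mathrm{Stab}_W(v_0)=\hat W_0\cap W$, we have $W_0\triangleleft\hat W_0$ (as $W\triangleleft\hat W$) and, again because $d>1$, $W_0$ and $\hat W_0$ share a common hyperplane complement $X_0$; so $X_0/W_0\to X_0/\hat W_0$ is connected Galois with group $\hat W_0/W_0\hookrightarrow\hat W/W$, and the sequence $1\to\pi_1(X_0/W_0)\to\pi_1(X_0/\hat W_0)\to\hat W_0/W_0\to1$ is compatible with $\varphi$ (which factors through $\hat W$ into an abelian group, hence is insensitive to the connecting paths used to realise the parabolic $\hat B_0\le\hat B$ attached to $v_0$).

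For the first assertion: every parabolic of $\hat B$ comes, via \autoref{prop:parabsbmr}, from a closed normal ray in $(X/\hat W,V/\hat W)$ based at the common base point, and likewise for $B$; since the composite of a normal ray in $(X,V)$ with a quotient map $X\to X/G$ is again a normal ray (the proposition following \autoref{prop:parabsbmr}), lifting a ray to $X$ and re-projecting produces from each parabolic $\hat B_0$ of $\hat B$ a parabolic $B_0$ of $B$, and every parabolic of $B$ arises this way. I would then show $B_0=\hat B_0\cap B=\ker(\varphi|_{\hat B_0})$ using the commuting square
\[
\begin{array}{ccc}
\pi_1\bigl((\Omega\cap X)/W_0\bigr) & \longrightarrow & \pi_1(X/W)=B\\
\downarrow & & \downarrow\\
\pi_1\bigl((\Omega\cap X)/\hat W_0\bigr) & \longrightarrow & \pi_1(X/\hat W)=\hat B
\end{array}
\]
($\Omega$ the small $v_0$-ball of \autoref{prop:parabsbmr}, vertical maps from the coverings, horizontal ones from inclusion–quotient with a common connecting path): the left vertical map is injective with image the kernel of the covering monodromy $\pi_1((\Omega\cap X)/\hat W_0)\to\hat W_0/W_0$, which corresponds under the injective lower map to $\varphi|_{\hat B_0}$, while the upper map has image the parabolic $B_0\le B$ attached to $v_0$. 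Hence the parabolics of $B$ are exactly the $\ker(\varphi|_{\hat B_0})$.

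For the ``moreover'' part, recall (Steinberg) that every parabolic of $\hat W$ is conjugate to some $G(r,1,p)\times S_{m_1}\times\cdots\times S_{m_\ell}$, and then $W_0=\hat W_0\cap W=G(de,e,p)\times S_{m_1}\times\cdots\times S_{m_\ell}$, since each $S_{m_i}$, built from off-diagonal reflections, already lies in $W$. Thus $W_0$ is irreducible iff $\hat W_0$ is, and either $\hat W_0\cong S_{m_1}=W_0$ (then $\varphi(\hat B_0)=1$, so $\hat B_0=B_0$, $z_{B_0}=z_{\hat B_0}$, $|Z(\hat W_0)|=|Z(W_0)|$, and the formula is trivial), or $\hat W_0\cong G(r,1,p)$ and $W_0\cong G(de,e,p)$ with $p\ge1$, where $|Z(\hat W_0)|=de$ and $|Z(W_0)|=\#\{\zeta\in\mu_{de}:\zeta^p\in\mu_d\}=d\gcd(p,e)$, so $|Z(\hat W_0)|/|Z(W_0)|=e/\gcd(p,e)=:m_0$. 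It remains to prove $z_{B_0}=z_{\hat B_0}^{m_0}$ in this case. First, $z_{B_0}\in Z(\hat B_0)$: since $B_0\triangleleft\hat B_0$ (from $B_0=\hat B_0\cap B$ and $B\triangleleft\hat B$), the group $\hat B_0$ acts on the characteristic subgroup $Z(B_0)\cong\Z$ through $\{\pm1\}$; but the degree homomorphism $\hat B_0\to\Z$ is conjugation-invariant and positive on $z_{B_0}$ (a positive product of braided reflections of $B_0$, each of which is, inside $\hat B_0$, a positive power of a braided reflection of $\hat B_0$, hence of positive degree), so the action is trivial and $z_{B_0}=z_{\hat B_0}^{m_1}$ with $m_1\ge1$. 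Second, the image of $z_{\hat B_0}$ in $\hat W_0\cong G(r,1,p)$ generates $Z(\hat W_0)=\mu_r\cdot\mathrm{Id}_p$ (\cite{BMR,BESSIS,DMM}), so it equals $\zeta\,\mathrm{Id}_p$ with $\zeta$ a primitive $de$-th root of $1$, whose product of entries $\zeta^p$ has order $e/\gcd(p,e)=m_0$ in $\mu_r/\mu_d$; hence $z_{\hat B_0}^{k}\in\ker(\varphi|_{\hat B_0})=B_0$ iff $m_0\mid k$. Third, $z_{B_0}=z_{\hat B_0}^{m_1}\in B_0$ forces $m_1=m_0c$; then $z_{\hat B_0}^{m_0}\in Z(\hat B_0)\cap B_0\subseteq Z(B_0)=\langle z_{B_0}\rangle$, so $z_{\hat B_0}^{m_0}=z_{B_0}^{c'}$ with $c'\ge1$, whence $z_{B_0}=z_{B_0}^{c'c}$ and $c=c'=1$; thus $z_{B_0}=z_{\hat B_0}^{m_0}=z_{\hat B_0}^{|Z(\hat W_0)|/|Z(W_0)|}$.

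I expect the main obstacle to be the first part: making the identification $B_0=\hat B_0\cap B$ rigorous uniformly in $v_0$, i.e.\ reconciling the ``local fundamental group'' construction of parabolics with the covering-theoretic sequence $1\to B\to\hat B\to\mu_r/\mu_d\to1$ and its local analogues — the commuting-square argument is simple in spirit but delicate with base points, connecting paths, and the (crucial) use of $d>1$ to give $W_0$ and $\hat W_0$ the same arrangement. In the second part the only non-formal inputs are the degree/positivity argument for $z_{B_0}\in Z(\hat B_0)$ and the standard description of the image of $z_{\hat B_0}$ in $\hat W_0$.
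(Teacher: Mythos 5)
Your proof of the first assertion ($B_0=\hat{B}_0\cap B=\ker(\varphi|_{\hat{B}_0})$) is essentially the paper's: both arguments rest on the fact that, because $d>1$, the groups $W$ and $\hat{W}$ share the hyperplane complement $X_n(r)$, so the local fundamental groups defining $B_0$ and $\hat{B}_0$ sit in a commutative diagram of covering maps with cartesian squares; the paper simply writes out the full diagram rather than your single square, and the identification of the deck transformation with $\varphi$ is handled the same way.

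For the formula $z_{B_0}=z_{\hat{B}_0}^{|Z(\hat{W}_0)|/|Z(W_0)|}$ you take a genuinely different route, and it is correct. The paper's argument is topological and very short: it chooses a point $y_0+v$ near the endpoint of the normal ray and observes that the explicit loops $t\mapsto W_0.(y_0+v\,e^{2\pi \ii t/|Z(W_0)|})$ and $t\mapsto \hat{W}_0.(y_0+v\,e^{2\pi \ii t/|Z(\hat{W}_0)|})$ represent $z_{B_0}$ and $z_{\hat{B}_0}$, so the power relation is read off from the angles. Your argument is algebraic: normality of $B_0$ in $\hat{B}_0$ together with invariance and positivity of the degree map forces $z_{B_0}\in Z(\hat{B}_0)=\langle z_{\hat{B}_0}\rangle$, and then the known image of $z_{\hat{B}_0}$ in $Z(\hat{W}_0)$ plus the equivalence $z_{\hat{B}_0}^{k}\in B_0\Leftrightarrow m_0\mid k$ pins down the exponent. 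This costs you more input (that $Z(\hat{B}_0)$ is infinite cyclic generated by $z_{\hat{B}_0}$, the explicit classification of parabolics of $G(r,1,n)$, and the computation $|Z(G(de,e,p))|=d\gcd(p,e)$), but it avoids manipulating loops and is robust. Two minor remarks: positivity of the degree of $z_{B_0}$ is immediate from its definition as the positive generator of $Z(B_0)$ under $B\to\pi_1(\C^*)$ (and the degree maps of $B$ and $\hat{B}$ are compatible since the arrangements coincide), so your parenthetical about products of braided reflections is unnecessary; and the identity $|Z(\hat{W}_0)|/|Z(W_0)|=m_0$ can be seen without the $\gcd$ computation, since $Z(W_0)=Z(\hat{W}_0)\cap W$ when $W_0$ is irreducible (both being the scalars in the respective groups), so $m_0$ is the index of $Z(W_0)$ in $Z(\hat{W}_0)$.
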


\begin{proof}
Since the hyperplane arrangements of $W$ and $\hat{W}$ are the same, their parabolic subgroups
are in natural 1-1 correspondance with the elements of the intersection lattice of this hyperplane arrangement. Let $E_0$ be some element of this intersection lattice, and $W_0$, $\hat{W}_0$
their pointwise stabilizer in $W$ and $\hat{W}$, respectively. It is sufficient to prove
that, for $\eta : x \leadsto y_0$ some normal ray  inside $(X_n(r),\C^n)$ and a ball $\Omega$ of center $y_0 \in E_0$
and radius $\epsilon > 0$ with respect to some $\hat{W}$-invariant norm, then the corresponding parabolic subgroups $B_0, \hat{B}_0$ of $B$ and $\hat{B}$ satisfy $B_0 = \hat{B}_0 \cap B$.

We set $\Omega^*=\Omega \cap X_n(r)$. By definition, $B_0$ and $\hat{B}_0$ are then the images of $\pi_1(\Omega^*/W_0,W_0.\eta) \to \pi_1(\Omega^*/W,W.\eta)$ and $\pi_1(\Omega^*/\hat{W}_0,\hat{W}_0.\eta) \to \pi_1(\Omega^*/\hat{W},\hat{W}.\eta)$, respectively.
Because of the covering map properties, the natural maps between the topological spaces involved induce the following commutative diagrams of groups, with the additional property that all commutative squares are cartesian.
\begin{center}
\begin{tikzcd}
\pi_1(\frac{\Omega^*}{W_0},W_0.\eta) \arrow[r, hookrightarrow] \arrow[d, hookrightarrow]
 & \pi_1(\frac{X_n(r)}{W_0},W_0.\eta) \arrow[r, hookrightarrow] \arrow[d, hookrightarrow]
  & \pi_1(\frac{X_n(r)}{W},W.\eta) \arrow[d, hookrightarrow] \arrow[r, "\simeq"] & \pi_1(\frac{X_n(r)}{W},W.x)\arrow[d, hookrightarrow] \\
\pi_1(\frac{\Omega^*}{\hat{W}_0},\hat{W}_0.\eta) \arrow[r, hookrightarrow] &
\pi_1(\frac{X_n(r)}{\hat{W}_0},\hat{W}_0.\eta) \arrow[r] & \pi_1(\frac{X_n(r)}{\hat{W}},\hat{W}.\eta)
\arrow[r, "\simeq"] &  \pi_1(\frac{X_n(r)}{\hat{W}},\hat{W}.x)
\end{tikzcd}
\end{center}
This implies that $B_0$ is identified inside $\hat{B}$ with $\hat{B}_0 \cap B = \hat{B}_0 \cap \Ker \varphi$, which proves the first claim. Finally, if $W_0$ is irreducible then $\hat{W}_0$ is also irreducible, as
it acts on the same space and contains $W_0$. We choose some $y_0+v \in \Omega^* \cap \eta(]1-\alpha,1[)$ for $\alpha$ small enough, and naturally identify $B_0$ and $\hat{B}_0$ with fundamental groups based at $W_0.(y_0+v)$
and $\hat{W}_0.(y_0+v)$, respectively. Then, the paths $\gamma : t \mapsto y_0 + v \exp(2\pi \ii t/|Z(W_0)|)$
and $\hat{\gamma} : t \mapsto y_0 + v \exp(2\pi \ii t/|Z(\hat{W}_0)|)$ provide loops $t \mapsto W_0.\gamma(t)$
and $t \mapsto \hat{W}_0.\gamma(t)$ whose homotopy classes are $z_{B_0}$ and $z_{\hat{B}_0}$, respectively.
It is then immediately checked that $t \mapsto \hat{W}_0.\gamma(t)$ has for homotopy class $z_{\hat{B}_0}^{|Z(\hat{W}_0)|/|Z(W_0)|}$, which proves the last claim.
\end{proof}

\subsection{Interval monoids}

The presentations we are going to use are strongly connected with monoids
with strong properties. These turn out to be \emph{interval monoids}, so we define this
concept first.

One first considers a finite group, which in our case will always be the reflection group $W$, and
some generating set $S$ for $W$.

The \emph{length with respect to $S$} of $w \in W$ is defined as the minimal $\ell_S(w) = r \geq 0$ so that one can write $w = s_1\dots s_r$ where $s_i \in S$, with $\ell_S(1) = 0$. An expression $(s_1,\dots,s_r) \in S^r$ such that $w=s_1s_2\dots s_r$ with $r = \ell_S(w)$ is called a reduced expression and the set of such reduced expressions is denoted $Red(w)$.

From this one can define two partial orderings on $W$, setting $a \prec b$ if $\ell_S(a) + \ell_S(a^{-1} b) = \ell(b)$
and $b \succ a$ if $\ell_S(ba^{-1}) + \ell_S(a) = \ell_S(b)$.
An element $c \in W$ is then said to be \emph{balanced} if $[1,c] = \{ a \in W; 1 \prec a \prec c \}  =
\{ a \in W; c \succ a \succ 1 \}$, that is if
$\{ u \in W; \ell_S(u) + \ell_S(u^{-1}c) = \ell_S(c) \}
= \{ u \in W; \ell_S(cu^{-1}) + \ell_S(u) = \ell_S(c) \}$. The \emph{interval monoid} $M$ attached to the
data $(W,S,c)$ is defined by
taking for generators a copy of $[1,c]$,
denoting $\mathbf{u}$ the generator corresponding to $u \in [1,c]$, and for relations
$$
\mathbf{w} = \mathbf{u}\mathbf{v} \ \mbox{when} \ w = uv, \ell_S(w) = \ell_S(u) + \ell_S(v)
$$

Such monoids with their main properties have been basically introduced in \cite{MICHEL} and we refer to \cite{DGKM} for
a modern treatment of the subject.
A general important result is that, if the partial orderings $\prec$ and $\succ$ restricted to the set $[1,c]$
are \emph{lattices}, then $M$ together with $\mathbf{c}$ provides a Garside structure on the
group $G$ defined by the same presentation as $M$, where the concept of a Garside structure will
be recalled in detail in \autoref{sect:parabclosuresgarsidegroups} below.

Another important property is that such an interval monoid is homogeneous, and more precisely there
exists a monoid morphism $\ell : M \to \N = \Z_{\geq 0}$ such that $M$ is generated by its elements
of degree $1$ -- these are the \emph{atoms} of the monoid $M$, and they are exactly the $\mathbf{s}$ for $s \in S
\cap [1,c]$. Then, another general property is that, for $w \in [1,c]$, one has $\ell(\mathbf{w}) = \ell_S(w)$.

The archetypical example of interval monoids providing Garside structures is given by
the Artin monoids associated to Artin groups: there, the set $S$ is given by
the simple reflections of the Coxeter presentation of the real reflection group. The complex braid groups for
 $G(e,e,n)$ and for many exceptional groups can also be described as the group of fractions of
 interval monoids.

In our case, the set $S$ will contain only elements of order $2$. This has the following consequence,
which will be of crucial importance in the proof of our main theorems (see \autoref{sect:intersectparabs}).

\begin{proposition} \label{prop:intervalissquarefree} If all the elements of $S$ have order $2$ then,
For every $u \in [1,c]$, $\mathbf{u} \in M$ is squarefree, that is no expression as a product of
atoms may contain $\mathbf{a}\mathbf{a}$ for some $a \in S$.
\end{proposition}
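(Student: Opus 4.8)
The plan is to argue by contradiction, using the homogeneity of the interval monoid together with the length function $\ell_S$ on $W$. Suppose some $\mathbf{u}$ with $u \in [1,c]$ admits an expression as a product of atoms containing a consecutive pair $\mathbf{a}\mathbf{a}$ with $a \in S$. Since the defining relations of $M$ preserve the total degree, the number of atoms in any expression of $\mathbf{u}$ equals $\ell(\mathbf{u}) = \ell_S(u)$; write this common length as $r$. So $\mathbf{u} = \mathbf{s}_1 \cdots \mathbf{s}_r$ with $s_i \in S \cap [1,c]$ and, for some index $i$, $s_i = s_{i+1} = a$.

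First I would isolate a minimal counterexample: among all $u \in [1,c]$ and all expressions $\mathbf{u} = \mathbf{s}_1\cdots\mathbf{s}_r$ containing an adjacent repeated atom, pick one with $r$ minimal. The key structural fact I want to exploit is that two atoms $\mathbf{s}, \mathbf{s}'$ can only be rewritten past each other via a defining relation of $M$, and such a relation comes from an identity $st = \mathbf{w}$ in $W$ with $\ell_S(w) = 2$; conversely, the submonoid relations applied to a length-$2$ prefix are governed entirely by the interval $[1,c]$ and the multiplication in $W$. Concretely, I would look at the prefix $\mathbf{s}_1\cdots\mathbf{s}_i$ (of length $i$), which represents $\mathbf{v}$ for $v = s_1\cdots s_i \in [1,c]$ with $\ell_S(v) = i$; then $\mathbf{v}\mathbf{a} = \mathbf{s}_1\cdots\mathbf{s}_i\mathbf{s}_{i+1}$ has length $i+1$, so $\ell_S(va) = i+1$, i.e. $va > v$ and hence $a \notin$ the "left descent set" configuration — but now consider instead the subword $\mathbf{s}_i\mathbf{s}_{i+1} = \mathbf{a}\mathbf{a}$. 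The element it represents in $M$ is the image of $aa = 1 \in W$; but $1$ has length $0 \ne 2$, so $\mathbf{a}\mathbf{a}$ is \emph{not} of the form $\mathbf{w}$ for any $w$, and the only relations that apply to $\mathbf{a}\mathbf{a}$ are trivial. The point is that in an interval monoid one has a solution to the word problem via reduced words in $[1,c]$: every element of $M$ has a normal form, and the length-additivity $\ell(\mathbf{w}) = \ell_S(w)$ on generators propagates to show that any product $\mathbf{s}_1\cdots\mathbf{s}_r$ equal in $M$ to some $\mathbf{u}$, $u\in[1,c]$, must be obtainable from a reduced $S$-word for $u$ by the relations — and since reduced $S$-words for $u$ never repeat a generator adjacently (as $a^2 = 1$ would shorten them, contradicting reducedness only if $a$ appears consecutively, which is exactly the forbidden pattern), no expression of $\mathbf{u}$ can contain $\mathbf{a}\mathbf{a}$.

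More carefully, the clean route is: (i) recall that in the interval monoid $M$ attached to a \emph{balanced} $c$, left-divisibility is well behaved and each $\mathbf{u}$ for $u \in [1,c]$ has the property that its left divisors among the atoms are exactly the $\mathbf{s}$ with $s \in S$, $s \prec u$; (ii) if $\mathbf{u} = \mathbf{a}\mathbf{a}\mathbf{w}'$ then $\mathbf{a}^2$ left-divides $\mathbf{u}$, so $\mathbf{a}$ left-divides $\mathbf{a}^{-1}\mathbf{u} =: \mathbf{u}'$, which again lies over $[1,c]$ (as a suffix of a reduced-over-$W$ expression), forcing $a \prec u'$; (iii) combine $a \prec u$ and $a \prec u'= a^{-1}u$ (in the monoid sense) and translate back to $W$: $\ell_S(u) = 1 + \ell_S(a^{-1}u) = 1 + \ell_S(u')$ and $\ell_S(u') = 1 + \ell_S(a^{-1}u') = 1 + \ell_S(a^{-2}u) = 1 + \ell_S(u)$, which is absurd. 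This last numeric contradiction is what rules out the repeated atom. The main obstacle I expect is step (ii)/(iii): justifying rigorously that cancelling one $\mathbf{a}$ from $\mathbf{a}^2\mathbf{w}'$ yields an element still represented by an $S$-reduced word over $W$ (so that the interval-monoid machinery — normal forms, the dictionary between monoid-divisibility and the orderings $\prec,\succ$ — applies to it), and that these two divisibility statements can be read simultaneously as length identities in $W$. Once that bookkeeping is in place, the contradiction $\ell_S(u) = \ell_S(u) + 2$ is immediate, so no $\mathbf{u}$ with $u\in[1,c]$ can be non-squarefree.
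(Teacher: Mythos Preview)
Your core idea is correct and is exactly the paper's argument, but you have buried it under a lot of unnecessary machinery and then talked yourself into doubting it. The complete proof is already contained in your first paragraph together with one sentence of your second: since the relations of $M$ are homogeneous, any expression $\mathbf{u}=\mathbf{s}_1\cdots\mathbf{s}_r$ in atoms has $r=\ell(\mathbf{u})=\ell_S(u)$; applying the natural projection $\pi:M\to W$ (which sends $\mathbf{s}_i\mapsto s_i$) yields $u=s_1\cdots s_r$ in $W$, a word of length exactly $\ell_S(u)$, hence a reduced expression. If $s_i=s_{i+1}=a$ then $a^2=1$ gives $u=s_1\cdots s_{i-1}s_{i+2}\cdots s_r$, a word of length $r-2<\ell_S(u)$, contradicting the definition of $\ell_S$. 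That is the whole argument, and it is what the paper does in one sentence.

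Your detour through minimal counterexamples, normal forms, and the divisibility chain in (i)--(iii) is not needed, and the step you flagged as the ``main obstacle'' is indeed problematic as written: showing that $a^{-1}u\in[1,c]$ (so that the cancelled element is again of the form $\mathbf{v}$ for some $v$ in the interval) requires knowing that divisors of simples are simple, which in the interval-monoid setting is exactly the lattice/Garside hypothesis you would rather avoid invoking for such an elementary fact. The point is that you never need to stay inside the monoid for this argument: the projection $\pi$ to $W$ and the equality $r=\ell_S(u)$ do all the work. Drop everything after ``the number of atoms in any expression of $\mathbf{u}$ equals $\ell_S(u)$'' and finish with the reducedness observation.
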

\begin{proof}
This is the consequence of the fact that, for $u \in [1,c]$, if $\mathbf{u}=\mathbf{a_1a_2\dots a_r}$
with $a_i \in S$ and $r = \ell(\mathbf{u}) = \ell_S(u)$, then $a_1a_2\dots a_r$ has to be a reduced
decomposition, which excludes the possibility $a_i a_{i+1} = a a = 1$ for some $i$, as every element of
$S$ has order $2$.
\end{proof}

\subsection{Groups $G(e,e,n)$}\label{sect:G(e,e,n)}

Let $W = G(e,e,n)$, with $e \geq 1$. The group $B$ admits the following ('standard') presentation
with generating set
$S = \{ t_0,t_1,\dots,t_{e-1}, s_3, s_4, \dots, s_n \}$ and relations
\begin{enumerate}
\item $t_i t_{i+1} = t_j t_{j+1}$, with the convention $t_e = t_0$,
\item $s_3 t_i s_3 = t_i s_3 t_i$
\item $s_k t_i  = t_i s_k$ for $k \geq 4$
\item $s_k s_{k+1} s_k = s_{k+1} s_k s_{k+1} $ for $k \geq 3$
\item $s_k s_l = s_l s_k$ when $| l-k | \geq 2$.
\end{enumerate}
We refer to \cite{CORPIC,CALMAR,NEAIME} for general results on this presentation and the corresponding monoid, which was first introduced
by Corran and Picantin in \cite{CORPIC}. We call it the \emph{standard} monoid for $G(e,e,n)$.
 When $e=1$ (resp. $e=2$) one recovers the Artin monoid of type $A_{n-1}$ (resp. $D_n$). For
products of reflection groups of these types, we take for presentation of $B$ the obvious direct product presentations. This covers in particular the case of the reflection subgroups of $W$ of the form
$G(e,e,n_1)\times G(1,1,n_2)\times \dots \times G(1,1,n_k) \subset W$ with $n_1+n_2+\dots+n_k= n$,
which are actually parabolic subgroups of $W$.

We recall that the reflecting hyperplanes for $W$ are given by the equations $z_i = \zeta z_j$
for $i \neq j$ and $\zeta \in \mu_e(\C)$. We also
recall from \cite{CORPIC,CALMAR,NEAIME} that, under the natural morphism $B \onto W$, the action
on $\C^n$ of the above generators of $B$ is given
by the permutation matrices $(k-1,k)$ for $s_{k}$ and, for $t_k$, by
$$
\begin{pmatrix}
0 & \zeta_e^k \\
\zeta_e^{-k} & 0
\end{pmatrix} \oplus \Id_{n-2}
 \ \mbox{ where }\zeta_e = \exp(2 \ii \pi/e).
$$
A nontrivial important result from \cite{NEAIME} is the following

\begin{theorem}\label{theo:neaime} (Neaime)
The standard monoid for $G(e,e,n)$ is an interval monoid with respect to the image of $S$ under $B \onto W$.
\end{theorem}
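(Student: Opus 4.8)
The plan is to identify the standard monoid $M$ for $G(e,e,n)$ with an interval monoid $M(W,\overline S,c)$, where $W=G(e,e,n)$, $\overline S$ is the image of the generating set $S$ under $\pi\colon B\onto W$, and $c$ is a suitable balanced element. Recall from \cite{CORPIC} (see also \cite{CALMAR}) that $M$ is a Garside monoid; fix its Garside element $\Delta$. Since $M$ is homogeneous it carries a length function $\ell_M\colon M\to\N$ under which the atoms have length $1$, and the atoms of $M$ are exactly the elements of $S$. Let $\theta\colon M\into B\onto W$ be the resulting morphism, set $\overline S=\theta(S)$ and $c=\theta(\Delta)$. As recalled in the excerpt, the $\theta(s)$, $s\in S$, are the order-$2$ reflections $(k{-}1,k)$ and the ``diagonal swaps'' attached to the $t_i$, and they generate $W$; so the data $(W,\overline S)$ is fixed once and for all, and the only freedom left is the choice of $c$, which $\theta(\Delta)$ now pins down. (For $e=1,2$ this recovers the classical situation: $M$ is the Artin monoid of type $A_{n-1}$, resp.\ $D_n$, and $c$ is the longest element; the content is the case $e\ge 3$.)

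The core is the following claim: \emph{$\theta$ restricts to a bijection $\Div(\Delta)\xrightarrow{\sim}[1,c]$ from the set of divisors of $\Delta$ in $M$ onto the interval $[1,c]=\{a\in W\ :\ a\prec c\}$, it preserves length $(\ell_M(u)=\ell_{\overline S}(\theta(u))$ for every simple $u)$, and under it the divisibility order on $\Div(\Delta)$ corresponds to $\prec$ on $[1,c]$ in both directions.} Granting this, the theorem follows formally. First, $c$ is balanced: $\theta$ maps $\Div(\Delta)$ onto $\{a\prec c\}$, and by the same length computation applied to right divisors it maps $\Div(\Delta)$ into $\{a\ :\ c\succ a\}$; since $|\{a\prec c\}|=|\{a\ :\ c\succ a\}|$ via $a\mapsto a^{-1}c$ and $\theta$ is injective on $\Div(\Delta)=\Div_L(\Delta)=\Div_R(\Delta)$, these two subsets of $W$ coincide. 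Second, a homogeneous Garside monoid is presented by its set of simple elements, with relations $\mathbf u\cdot\mathbf v=\mathbf w$ whenever $u,v,w$ are simples and $w=uv$ in $M$; transporting this presentation along the length- and order-preserving bijection of the claim turns it verbatim into the defining presentation of $M(W,\overline S,c)$ — generators a copy of $[1,c]$, with relations $\mathbf w=\mathbf u\mathbf v$ when $w=uv$ and $\ell_{\overline S}(w)=\ell_{\overline S}(u)+\ell_{\overline S}(v)$. Hence $M\cong M(W,\overline S,c)$. (In particular $[1,c]$ is then automatically a lattice for $\prec$ and $\succ$, as $\Div(\Delta)$ is.)

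It remains to prove the claim, and this is where the specific combinatorics of $G(e,e,n)$ must be faced; I expect it to be the main obstacle. The easy halves — the inequality $\ell_{\overline S}(\theta(u))\le\ell_M(u)$, surjectivity of $\theta$, and the implication ``$u$ divides $v$ in $M$ $\Rightarrow$ $\theta(u)\prec\theta(v)$ in $W$ for simples $u,v$'' — are immediate from $\theta$ being a length-non-increasing monoid morphism. For the reverse inequalities, for injectivity of $\theta$ on simples, for surjectivity of $\Div(\Delta)\to[1,c]$, and for the reflecting direction of the order statement, one needs the explicit combinatorial model of the simple elements of the Corran--Picantin monoid for $G(e,e,n)$ (as in \cite{CORPIC}), an explicit description of $c$ as a monomial matrix, and a matching description of $[1,c]\subseteq W$: concretely, for each $a\in W$ with $a\prec c$ one exhibits a reduced $\overline S$-word representing $a$, checks that it lifts to a divisor of $\Delta$ of the same $M$-length, and then the cardinality identity $|\Div(\Delta)|=|\{a\in W\ :\ a\prec c\}|$ — both sides computed from the model — forces the lift to be a length- and order-preserving bijection. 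Equivalently, and this is perhaps the cleanest bookkeeping, one shows that the relations (1)--(5) are \emph{precisely} the identities ``any two reduced $\overline S$-expressions of one and the same element of $[1,c]$ are equal'': each of (1)--(5) is visibly of that type (specializing to the braid relations of types $A_{n-1}$ and $D_n$ when $e=1,2$), and the genuinely computational point is the converse — that every reduced-word identity inside $[1,c]$ is a consequence of (1)--(5) — which is what \cite{NEAIME} establishes. The difficulty is intrinsic: for $e\ge 3$, $\overline S$ is a \emph{redundant} generating set of $W$ (it has $e+n-2$ reflections while $\rk W=n$, and the $t_i$ overlap strongly), so neither $\ell_{\overline S}$ nor the reduced-word relations on $[1,c]$ are governed by a Coxeter mechanism, and both must be analyzed directly.
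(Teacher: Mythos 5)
The paper does not prove this statement at all: it is imported wholesale from Neaime's work, with the single sentence ``A nontrivial important result from \cite{NEAIME} is the following'' serving as the entire justification. So there is no in-paper argument to compare yours against.

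That said, your formal reduction is correct and is essentially the standard way such a result is organized (and is how \cite{NEAIME} and \cite{CORPIC} proceed). The skeleton is sound: a Garside monoid is presented by its simple elements with relations $\mathbf{u}\mathbf{v}=\mathbf{w}$ among simples, so once one has a bijection $\Div(\Delta)\to[1,c]$ that preserves length and reflects divisibility, the presentation transports verbatim into the interval-monoid presentation; your derivation of balancedness of $c$ by the counting argument $a\mapsto a^{-1}c$ is also fine, and the backward direction of the relation-matching (if $\theta(u)\theta(v)=\theta(w)$ with lengths adding then $uv=w$ in $M$) does follow from order-reflection plus injectivity on simples, as you indicate. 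The one thing to flag is that your ``proof'' is not self-contained: the entire substance of the theorem lives in your boxed claim (surjectivity of $\theta$ onto $[1,c]$, the reverse length inequality, injectivity on simples, and order-reflection), and you explicitly defer exactly that to \cite{NEAIME} --- which is the very result being stated. So, read as an independent proof, the proposal has a gap precisely at its core claim; read as an account of how the theorem is established in the literature, it is an accurate and honest unpacking, and it outsources no more than the paper itself does.
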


The basepoint corresponding to this presentation is $\underline{b} = (b_1,\dots,b_n) \in \R^n\subset \C^n$ with $0<b_1 < b_2 < \dots < b_n$, when needed with the condition that $b_{i+1}/b_i$ is big enough. Notice that the set of such basepoints is convex and therefore simply connected and also that, when $e \in \{1,2 \}$, it lies
inside a natural Weyl chamber.

It was proven in \cite{CALMAR} Proposition 6.2 that, with respect to such a basepoint, the generators we have correspond to local fundamental groups as follows. For $s \in S$, consider the straight line segment $[\underline{b},s.\underline{b}]$, where $s.\underline{b}$ is given by the action of $B$ on $\C^n$ described above. It crosses only one reflecting hyperplane,
at its middle point $\beta(s) = (\underline{b}+s.\underline{b})/2$. Then
$s$ is a generator of the local fundamental group for $X/W$ w.r.t. the normal ray $[\underline{b},\beta(s)]$.
The same property holds, with the same proof, if $W$ is replaced by one of its reflection subgroups of the form
$G(e,e,k)\times G(1,1,n-k)$ for $0 \leq k < n$. Actually these subgroups are
 maximal parabolic subgroups of $G(e,e,n)$. More
 generally, as a consequence of \cite{TAYLORREFL} Theorem 3.11,
every maximal parabolic subgroup of $W$ is a conjugate
of the stabilizer of either $\underline{b}_0(\zeta) = (\zeta,1,\dots,1)$ for $\zeta \in \mu_e(\C)$
or of
$$\underline{a}_k = (0,\dots,0,\underbrace{1,\dots,1}_{n-k}) \mbox{ for } 1 \leq k \leq n-1.
$$
The former are isomorphic to $\mathfrak{S}_n=G(1,1,n)$ while the latter are isomorphic
to $G(e,e,k)\times G(1,1,n-k)$ for $1 \leq k < n$. Notice that, since $b_{k+1}>0$, the stabilizer of $\underline{a}_k$ is equal to
the stabilizer of $b_{k+1} \underline{a}_k = (0,\dots,0,b_{k+1},\dots,b_{k+1})$.

From this we deduce
the following\footnote{Part of this statement was already
claimed in \cite{CALMAR} Prop. 6.3, but the proof was incomplete, because of the erroneous statement given there that
the stabilizer of $(\zeta , 1, 1, \dots , 1)$ is
conjugate inside $W$ to the stabilizer of $(1, 1, \dots , 1)$.}

\begin{proposition} \label{prop:parabsCPmax}
Let $W = G(e,e,n)$, and $S_0 \subsetneq S =\{ s_k, t_i, 3 \leq k \leq n,0 \leq i < e \}$ be such that $\{ i ; t_i \in S_0 \}$
has cardinality $1$ or $e$, and which is maximal for this property. Then $\langle S_0 \rangle \subset B$
is a maximal parabolic subgroup, and every maximal parabolic subgroup of $B$ is a conjugate of such a subgroup.
\end{proposition}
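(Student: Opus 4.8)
The plan is to match the two classifications of maximal parabolic subgroups that are now available: on the one hand the group-theoretic list from \cite{TAYLORREFL} recalled just above (every maximal parabolic subgroup of $W=G(e,e,n)$ is conjugate to the stabilizer of some $\underline{b}_0(\zeta)$ with $\zeta\in\mu_e(\C)$, or of some $\underline{a}_k$, $1\le k\le n-1$), and on the other hand the subgroups $\langle S_0\rangle$ described in the statement. First I would note that the subsets $S_0\subsetneq S$ with $|\{i:t_i\in S_0\}|\in\{1,e\}$ that are maximal for this property are, up to the obvious symmetry coming from relation~(1), exactly two families: either all the $t_i$ together with all but one of the $s_k$ (this is the case ``cardinality $e$''), or exactly one $t_i$ together with $s_4,\dots,s_n$ (the case ``cardinality $1$'', where maximality forces us to take as many $s_k$'s as possible, i.e. all of them, but the presence of $s_3$ with a single $t_i$ would generate a further $t_j$ via relation~(2), so $s_3$ must be dropped). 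In the first family, removing $s_k$ yields $\langle S_0\rangle$ which, by the block structure of the standard presentation, is the standard parabolic subgroup corresponding to the reflection subgroup $G(e,e,k-1)\times G(1,1,n-k+1)$; in the second family, $\langle t_i,s_4,\dots,s_n\rangle$ is (a conjugate of) the standard parabolic subgroup corresponding to $G(1,1,2)\times G(1,1,n-2)$ sitting inside $\mathfrak{S}_n$, hence to $\mathfrak{S}_n=G(1,1,n)$ once we account for the ambient identification — more precisely $\langle t_i\rangle\cong\langle s\rangle$ and together with $s_4,\dots,s_n$ it generates the braid group of a type $A_{n-2}$-sub-diagram whose image in $W$ is the stabilizer of $\underline{b}_0(\zeta)$ for the appropriate $\zeta=\zeta_e^{\,i}$.

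The key step is then to show that each such $\langle S_0\rangle$ is genuinely a \emph{parabolic subgroup of $B$} in our topological sense, not merely a subgroup whose image in $W$ is parabolic. For this I would use the explicit geometric description recalled from \cite{CALMAR} Prop.~6.2: relative to the convex (hence simply connected) basepoint set around $\underline{b}=(b_1,\dots,b_n)$, each generator $s\in S$ is the generator of the local fundamental group attached to the straight normal ray $[\underline b,\beta(s)]$, and the same holds for the reflection subgroups $G(e,e,k)\times G(1,1,n-k)$. Concretely, for the first family one takes a straight normal ray from $\underline b$ towards a point of the form $b_{k+1}\underline a_{k}$ (which by the remark in the excerpt has the same stabilizer as $\underline a_k$) inside the closure of the relevant chamber; the local fundamental group there is generated exactly by those $s\in S$ whose wall $\beta(s)$ the ray does not need to cross to reach a small neighbourhood of $b_{k+1}\underline a_k$, and a direct check against the matrices listed for $s_k$ and $t_i$ shows this set is precisely $S_0$. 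For the second family one instead aims a straight normal ray at $\underline b_0(\zeta)=(\zeta,1,\dots,1)$ (or a positive real scaling of a point with the same stabilizer, chosen in the right chamber), and the surviving generators are $t_i$ together with $s_4,\dots,s_n$. In both cases \autoref{prop:parabsbmr} guarantees the local fundamental group is well defined, and the identification of $\pi_1^{loc}$ with the corresponding Artin-type subgroup of $B$ follows as in the real case (Section~\ref{sect:descrArtinShephard}) and in \cite{CALMAR}.

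Finally I would close the argument for the converse inclusion: if $B_0$ is an arbitrary maximal parabolic subgroup of $B$, its image $W_0=\pi(B_0)$ is a maximal parabolic subgroup of $W$ (maximality is preserved because $\pi$ restricted to a parabolic subgroup is surjective onto a parabolic subgroup of the same rank, by \autoref{prop:basicpropsparabs}(3) and \autoref{cor:B0inclusB1egal}), hence by \cite{TAYLORREFL} Theorem~3.11 it is $W$-conjugate to the stabilizer of one of the $\underline b_0(\zeta)$ or $\underline a_k$, i.e. to the image of one of the $\langle S_0\rangle$ constructed above. By \autoref{prop:conjparabs}(2), $B_0$ is then $B$-conjugate to that $\langle S_0\rangle$, which completes the proof. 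The main obstacle I anticipate is the bookkeeping in the ``cardinality $1$'' case: one must be careful that a single $t_i$ with $s_3$ present would, via relation~(2) $s_3t_is_3=t_is_3t_i$, force additional $t_j$'s into $\langle S_0\rangle$, so the maximality condition interacts nontrivially with the relations, and one must verify that the honest maximal such $S_0$ really is $\{t_i\}\cup\{s_4,\dots,s_n\}$ and that its image in $W$ is the full $\mathfrak S_n$-type stabilizer (this is exactly the point where the erroneous claim of \cite{CALMAR} Prop.~6.3 — that the stabilizer of $(\zeta,1,\dots,1)$ is $W$-conjugate to that of $(1,\dots,1)$ — must be avoided, the two being only abstractly isomorphic, not conjugate, when $\zeta\ne1$).
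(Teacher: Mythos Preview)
Your identification of the maximal $S_0$ in the ``cardinality $1$'' case is incorrect, and this error propagates through the rest of the argument. You claim that relation~(2), $s_3 t_i s_3 = t_i s_3 t_i$, forces additional $t_j$'s into $\langle S_0\rangle$, so that $s_3$ must be dropped. But this is an ordinary braid relation: it does \emph{not} express any $t_j$ with $j\neq i$ as a word in $t_i$ and $s_3$. The condition in the statement is on $S_0$ as a \emph{set}, and the maximal $S_0$ with $|\{j:t_j\in S_0\}|=1$ is simply $\{t_i,s_3,s_4,\dots,s_n\}$, containing all the $s_k$. This has $n-1$ generators satisfying the type $A_{n-1}$ braid relations, so $\langle S_0\rangle$ is the braid group of $\mathfrak S_n$, matching the stabilizer of $\underline b_0(\zeta_e^i)$. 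Your proposed $\{t_i,s_4,\dots,s_n\}$ has only $n-2$ generators and yields a rank $n-2$ parabolic, which cannot be maximal.

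Even with the correct $S_0$, there is a second gap: you assert that the identification of $\pi_1^{loc}$ with $\langle S_0\rangle$ ``follows as in the real case'', but for $\underline b_0(\zeta)$ with $\zeta\neq 1$ this is precisely the delicate point. The paper handles it by applying the linear change of coordinates $p:(z_1,\dots,z_n)\mapsto(\zeta^{-1}z_1,z_2,\dots,z_n)$ to reduce to a $\mathfrak S_n$-arrangement, and then constructs an explicit homotopy $A(t,u)$ interpolating between the images of the generators in $S_0$ and the standard Artin generators of the braid group of $\mathfrak S_n$; one must verify that this homotopy stays inside the hyperplane complement, which requires a careful coordinate check. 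The cases $\underline b_0(1)$ and $b_{k+1}\underline a_k$ are indeed immediate (the generators in $S_0$ already \emph{are} the standard generators for the braid group of the stabilizer), but the case $\zeta\neq 1$ needs this extra work --- and this is exactly where the footnoted gap in \cite{CALMAR} Prop.~6.3 lies, so it cannot be waved away.
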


\begin{proof}

By the description of the maximal parabolic subgroups given above and \autoref{prop:conjparabs} it is sufficient to prove that, for every $\underline{b}_0$ equal to $\underline{b}_0(\zeta)$
or $b_{k+1}\underline{a}_k$ as above,
 there exists a normal ray $\eta : \underline{b} \leadsto \underline{b}_0$ such that the
parabolic subgroup $W_0$ stabilizing $\underline{b}_0$ is generated by the subset $S_0 \subset S$ given by $\{ s \in S; s.\underline{b}_0 = \underline{b}_0 \}$. Indeed, it is readily checked that these collections $S_0$
are the ones of the statement.

Under our assumptions, up to possibly increasing the value of the ratios $b_{i+1}/b_i$, we can
assume that the straight line segment $[\underline{b},\underline{b}_0]$ crosses the hyperplane arrangement only at $\underline{b}_0$, and we take $\eta = [\underline{b},\underline{b}_0]$ for normal ray.

We prove that $\langle S_0 \rangle$ is the corresponding parabolic subgroup by completing the following
commutative diagram, whose plain arrows are the natural ones and $X_0$ is the hyperplane complement of $W_0$.
\begin{center}
\begin{tikzcd}
\pi_1(\frac{X}{W},W.\eta) & \pi_1^{loc}(\frac{X}{W},W.\eta) \arrow[l, hookrightarrow] \arrow[dr, "\simeq"] &  \\
 & \langle S_0 \rangle \arrow[u, dashed ] \arrow[r, dashed, "\simeq"] & \pi_1(\frac{X_0}{W_0},W_0.\eta)
\end{tikzcd}
\end{center}
We have a natural embedding
$$\langle S_0 \rangle \subset B = \pi_1(X/W,W.\underline{b})= \pi_1(X/W,W.\eta(0)) \simeq
\pi_1(X/W,W.\eta),
$$
which provides our first 'dashed' arrow $\langle S_0 \rangle \into \pi_1^{loc}(\frac{X}{W},W.\eta)$. The second one is obtained as its composite with the natural
map $\pi_1^{loc}(\frac{X}{W},W.\eta)\to  \pi_1(\frac{X_0}{W_0},W_0.\eta)$. It remains to prove
that this composite map is surjective.

When $\underline{b}_0 = b_{k+1} \underline{a}_k$ or when $\underline{b}_0 = \underline{b}_0(1)$
this is immediate because
in these two cases the elements of $S_0$ are mapped to the generators
of the corresponding braid group
of $\pi_1(\frac{X_0}{W_0},W_0.\eta)$.
In case $\underline{b}_0=\underline{b}_0(\zeta)$
with $\zeta\neq 1$, we consider the map $p : (z_1,z_2,\dots,z_n) \mapsto (\zeta^{-1}z_1,z_2,\dots, z_n)$. It maps $X_0$ to the hyperplane complement $X'_0$
associated to
$W'_0 = \mathfrak{S}_n$,
and we have $p(\underline{b}_0) =\underline{1}=(1,\dots,1)$.
Without loss of generality we can replace the path $\eta= [\underline{b},\underline{b}_0]$ by some basepoint
inside $[\underline{b},\underline{b}_0]$ close enough to $\underline{b}_0$, and thus the path $p\circ \eta$ by
$$\underline{c}_0(\eps)
= p(\underline{b}_0 + \eps(\underline{b}-\underline{b}_0))
=\underline{1}+\eps(\zeta^{-1}b_1-1,b_2-1,\dots,b_n-1)
=(1-\eps)\underline{1}+\eps(\zeta^{-1}b_1,b_2,\dots,b_n)
$$ for $\eps > 0$ small enough.
We want to prove that $\pi_1(X'_0/W'_0,W'_0.\underline{c}_0(\eps))$
is generated by the images of $S_0$. For fixed $k \in \{1,\dots,n-1\}$, we introduce $\underline{d}_0(\eps) =
(1-\eps)\underline{1}+\eps\underline{b}$, $\underline{c}_0(\eps,u) =
(1-u)\underline{c}_0(\eps)+u\underline{d}_0(\eps)$
and, for $\alpha>0$ small enough,
$$
A(t,u)=(1-t) \underline{c}_0(\eps,u)+t \sigma_k .\underline{c}_0(\eps,u) +\alpha  \ii t(1-t)(\sigma_k-1).\underline{c}_0(\eps,u)
$$
where $\sigma_k = (k,k+1) \in \mathfrak{S}_n$.
The images of $S_0$ are the homotopy classes of the maps $t \mapsto A(t,0)$ for $t \in [0,1]$.
The maps $t \mapsto A(t,1)$ provide homotopy classes inside $\pi_1(X'_0/W'_0,W'_0.\underline{d}_0(\eps))$ which are the Artin generators of the braid group of $W'_0 \simeq \mathfrak{S}_n$ ; indeed, it
is readily checked that, provided $b_1 > 1$, $\underline{d}_0(\eps)$ belongs to the Weyl chamber
associated to the Coxeter system $\{ \sigma_1,\dots,\sigma_{n-1} \}$. In order to get that
$\pi_1(X'_0/W'_0,W'_0.\underline{c}_0(\eps))$
is generated by the images of $S_0$ it is then sufficient to check that the map $A$ is
a homotopy map with values in $X'_0$.

We have that $A(0,\bullet)$ describes the line segment $[\underline{c}_0(\eps),
\underline{d}_0(\eps)]$, and $A(1,u) =\sigma_k.A(0,u)$, so we only need to prove that
$A(t,u) \in X'_0$ for every $(t,u) \in [0,1]^2$.
Writing $A(t,u) = (1-\eps)\underline{1}+\eps \check{A}(t,u)$,
it is equivalent to proving that $\check{A}(t,u) \in X'_0$.
Now we compute that
$$
\underline{c}_0(\eps,u)
=(1-\eps)\underline{1}+\eps \check{c}_0(\eps,u),\ \
\underline{\check{c}}_0(\eps,u) =
\left(
((1-u)\zeta^{-1}+u)b_1,b_2,\dots,b_n\right)
$$
and $\check{A}(t,u) =
(1-t) \underline{\check{c}}_0(\eps,u)+t \sigma_k .\underline{\check{c}}_0(\eps,u) +\alpha  \ii t(1-t)(\sigma_k-1).\underline{\check{c}}_0(\eps,u)$.
Let $\beta_i$ be the linear
form $\underline{z} \mapsto z_{i+1}-z_i$.
If $i \geq 2$, then the real part of $\beta_i(\check{A}(t,u))$ is positive, except
when $i=k$ ; in this case, it is still nonzero, except for $t = 1/2$, but then
its imaginary part is nonzero.
So it only remains to consider the case $i =1$.
For $k\geq 2$, the real part of $\beta_1(\check{A}(t,u))$ is positive,
so we can assume $k=1$.
We have
$$
\beta_1(\check{A}(t,u))=
\left( (1-2t)-2\alpha \ii t(1-t) \right)
\beta_1(\underline{\check{c}}_0(\eps,u))
$$
and we have that $\beta_1(\underline{c}_0(\eps,u))$ has positive real part for every $u \in [0,1]$
so that $\beta_1(\check{A}(t,u)) \neq 0$ for every $t,u$, which concludes the proof.
\end{proof}

Using the compatibility of parabolic subgroups with products established in \autoref{sect:subgroupsproducts},
this statement is immediately extended to the case of an arbitrary parabolic subgroup of $G(e,e,n)$.
 From this one gets a complete description of the
parabolics as follows.

\begin{corollary} \label{cor:parabsCP}
Let $W = G(e,e,n)$, and $S_0 \subsetneq S =\{ s_k, t_i, 3 \leq k \leq n,0 \leq i < e \}$ be such that $\{ i ; t_i \in S_0 \}$
has cardinality $0$, $1$ or $e$. Then $\langle S_0 \rangle \subset B$
is a parabolic subgroup, that we call a standard parabolic subgroup, and every parabolic subgroup of $B$ is a conjugate of such a subgroup.
\end{corollary}
\begin{proof} We prove that $\langle S_0 \rangle \subset B$ is a parabolic subgroup by descending induction on $|S_0|$. If $S_0$ is maximal for this property, we have the conclusion by \autoref{prop:parabsCPmax}.
Otherwise, we have $S_0 \subsetneq S_1 \subsetneq S$ with $\{ i ; t_i \in S_1 \}$
of cardinality $0$, $1$ or $e$, and we can assume that $S_1$ is minimal for this property. Then, by the induction
assumption, $\langle S_1 \rangle \subset B$ is a parabolic subgroup $B_1$, attached to the parabolic subgroup $W_1$ of $W$. On the other hand, $S_1$ can be identified with the generators of the standard presentation for the corresponding
braid group of $B_1$, and $S_0 \subsetneq S_1$ is maximal for the property that $\{ i ; t_i \in S_0\}$ has cardinality $0$, $1$ or $e$. From \autoref{prop:parabsCPmax} it then follows that $\langle S_0 \rangle \subset B_1 \subset B$ is a parabolic subgroup of $B_1$, and therefore of $B$ by \autoref{prop:basicpropsparabs} (4).

Conversely, assume that $B_0$ is a parabolic subgroup of $B$ mapped to the parabolic subgroup $W_0$ of $W$. Then it is included inside a maximal parabolic subgroup $B_1$, with corresponding maximal parabolic subgroup $W_1$. By \autoref{prop:parabsCPmax} we can assume that $B_1 = \langle S_1 \rangle$ with
$S_1 \subsetneq S$ maximal such that $\{ i ; t_i \in S_1 \}$
has cardinality $0$, $1$ or $e$. Then $B_0$ is a parabolic subgroup of $B_1$ by \autoref{prop:parabsherite}, and by induction on the rank of $W$ one gets that $B_0$ is a conjugate inside $B_1$ (hence inside $B$) of $\langle S_0 \rangle$, where $\{ i ; t_i \in S_0 \}$
has cardinality $0$, $1$ or $e$.
\end{proof}

\subsection{Well-generated 2-reflection groups}
\label{sect:wellgen2refl}

Our main references are \cite{BESSIS} and
\cite{RIPOLL} -- see also \cite{THEO}. Let $W<\GL_n(\C)$ be a well-generated complex reflection group, that is a group that can be generated by at most $n$ reflections. All the exceptional groups of rank at least $3$ fall into this category except $G_{31}$. We denote $\mathcal{R}$ the set of all reflections, and assume that all of them have order 2. This is also the case for all well-generated irreducible complex reflection groups, except for some of them which are Shephard groups, so that we already know their parabolic subgroups.

By definition the Coxeter number $h = h(W)$ of $W$ is its largest reflection degree. We have $h =2 |\mathcal{R}|/n$. We construct an interval monoid by considering the generating set $\mathcal{R}$. When $W$ is irreducible, a \emph{Coxeter element} for $W$ is a regular element for the eigenvalue $\exp(2 \pi \ii/h)$, in the sense of Springer. When it is not, this notion is also defined, by taking the direct sum of Coxeter elements for the irreducible constituents of $W$. We refer to \cite{BESSIS}, Definition 7.1, for more details. Let $c$ be such a Coxeter element for $W$. Such an element is balanced, and then the `dual braid monoid' $M(c) = M_W(c)$ in the sense of \cite{BESSIS} is defined as the interval monoid attached to $(W,\mathcal R, c)$.
We denote $G(c)$ the group with the same presentation.

The following two propositions are proven in \cite{BESSIS} and in the references there.
\begin{proposition}  \ 
\begin{enumerate}
\item If $W$ is essential (that is $\bigcap_{w \in W} \Ker(w- \Id) = \{ 0 \}$), then $\ell(c)= n$.
\item The poset $[1,c]$ is a lattice.
\end{enumerate}
\end{proposition}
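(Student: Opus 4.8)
The plan is to recall the two classical facts that make up this proposition, both due to Bessis (and Springer, Lehrer–Springer for the regularity input), and to assemble them rather than to reprove them from scratch.

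\medskip

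For part (1), I would argue as follows. Assume $W<\GL_n(\C)$ is essential, so that a Coxeter element $c$ is a regular element for the primitive $h$-th root of unity $\zeta_h=\exp(2\pi\ii/h)$ in the sense of Springer, meaning $c$ has an eigenvector lying outside all reflecting hyperplanes. By the Lehrer–Springer theory of regular elements, the dimension of the $\zeta_h$-eigenspace of $c$ equals $a(h)$, the number of degrees $d_i$ of $W$ divisible by $h$; since $h$ is the largest degree, exactly one degree (namely $h$ itself) is divisible by $h$ when $W$ is irreducible, so this eigenspace is a line, and $c$ is in particular of \emph{maximal order} among regular elements. The key point is the formula relating reflection length (i.e. length $\ell_{\mathcal R}$) to the fixed-space dimension: for any $w\in W$ one has $\ell_{\mathcal R}(w)=n-\dim\Ker(w-\Id)=\codim V^w$. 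For a regular element for $\zeta_h$ of order $h$, $V^c=\Ker(c-\Id)$ is the $1$-eigenspace, which is trivial precisely because $W$ is essential and $c$ acts with no nonzero fixed vector (its eigenvalues are the $\zeta_h^{1-d_i}$ over the degrees, and none of these is $1$ since no $d_i\equiv 1\ (\mathrm{mod}\ h)$ as $1<d_i\le h$). Hence $\dim V^c=0$, so $\ell_{\mathcal R}(c)=n$, and since $\ell(\mathbf c)=\ell_{\mathcal R}(c)$ for the interval monoid by the general property recalled just before \autoref{prop:intervalissquarefree}, we get $\ell(c)=n$. For reducible well-generated $W$ one applies this to each irreducible constituent and adds up. I would cite \cite{BESSIS} for the statement that $c$ is balanced and for these facts, and \cite{LEHRERTAYLOR} or \cite{RIPOLL} for the reflection-length/fixed-space identity.

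\medskip

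For part (2), the assertion that $[1,c]$ with either of the orders $\prec$, $\succ$ is a lattice is exactly the lattice property of the \emph{non-crossing partition lattice} $\NCP(W,c)$ associated to a Coxeter element, which is one of the central results of \cite{BESSIS} (established there case-by-case for the exceptional groups, after the uniform parts, and resting also on \cite{ATHANASIADISREINER} and \cite{BESSISCORRAN} for the infinite series). Concretely, the interval $[1,c]$ coincides with the set of elements $w$ with $w\preccurlyeq c$ in absolute order, the two orders $\prec$ and $\succ$ coincide on $[1,c]$ because $c$ is balanced, and this common poset is a lattice. So the proof of (2) is: observe that $[1,c]$ is the poset $\NCP(W,c)$, and invoke \cite{BESSIS} (Fact 2.2.1 / Theorem 7.x there, together with the references for the classical types). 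I would phrase this as ``both statements are contained in \cite{BESSIS}; we indicate the arguments for the reader's convenience'' and keep the write-up to the two short paragraphs above.

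\medskip

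The main obstacle, such as it is, is purely expository: part (2) is genuinely a deep theorem whose full proof is a classification-dependent verification in \cite{BESSIS}, so I would not attempt to reprove it and would simply give a precise citation; the only real content to get right is the eigenvalue bookkeeping in part (1) (checking that $1$ is not among the eigenvalues $\zeta_h^{1-d_i}$ of a Coxeter element precisely when $W$ is essential, and handling the reducible case correctly), which is routine once the regularity of $c$ and the fixed-space formula for reflection length are in hand.
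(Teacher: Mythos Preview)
Your approach matches the paper's exactly: the paper gives no proof of this proposition at all, merely stating that both parts ``are proven in \cite{BESSIS} and in the references there.'' Your proposal to cite Bessis and briefly indicate the arguments is therefore entirely in line with (indeed, more generous than) what the paper does.

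One small imprecision in your sketch of (1): the identity $\ell_{\mathcal R}(w)=\codim V^w$ does \emph{not} hold for arbitrary $w$ in an arbitrary complex reflection group---in general one only has the inequality $\ell_{\mathcal R}(w)\ge\codim V^w$. What is true here is that your eigenvalue argument gives the lower bound $\ell(c)\ge n$, while the upper bound $\ell(c)\le n$ comes from the well-generated hypothesis itself: Bessis shows that a suitable product of $n$ generating reflections is a Coxeter element. You should separate these two halves rather than invoke a general equality. (Also, the precise eigenvalue list for a regular element in the complex case involves the codegrees rather than simply $\zeta_h^{1-d_i}$; but since you only need $V^c=0$, which is standard, this does not affect the conclusion.) For part (2) your assessment is exactly right: the lattice property of $[1,c]$ is a deep, classification-dependent result of \cite{BESSIS}, and a citation is all that is appropriate here.
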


\begin{proposition} \
\label{prop:dualmonoidpropbase2}
\begin{enumerate}
\item $M(c)$ is a Garside monoid, with fundamental element $\mathbf{c}$, and
$u \mapsto \mathbf{u}$ is an isomorphism of lattices from $[1,c]$ to
the set of divisors of $\mathbf{c}$, for the partial ordering induced by left divisibility.
\item Let $\mathcal{R}_c = \mathcal{R} \cap [1,c]$. Then
$M(c)$ has a presentation with generators $\mathbf{u}, u \in \mathcal{R}_c$
and relations $\mathbf{u} \mathbf{v} = \mathbf{v}\mathbf{w}$
whenever $u,v, w = v^{-1} u v \in \mathcal{R}_c$. These generators are the
atoms of $M(c)$.
\item For $u \in [1,c]$, $\ell(u)$ is equal to the length of $\mathbf{u}$ with respect to the atoms of $M(c)$.
\item The Garside group $G(c)$ associated to $M(c)$ is isomorphic to the braid
group $B$ of $W$, in such a way that each atom $\mathbf{u}, u \in \mathcal{R}_c$
is sent to a distinguished braided reflection associated to $u$.
\end{enumerate}
\end{proposition}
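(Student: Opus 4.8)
The plan is to deduce all four assertions from the general theory of interval monoids recalled above, together with Bessis's construction and analysis of the dual braid monoid. Since a Coxeter element $c$ is balanced and, by the preceding proposition, the poset $[1,c]$ ordered by $\prec$ is a lattice (hence so is $[1,c]$ ordered by $\succ$, by balancedness), the general criterion for interval monoids of \cite{MICHEL} (see also \cite{DGKM}) applies verbatim: it yields that $M(c)$ is a Garside monoid with Garside element $\mathbf{c}$ and that $u \mapsto \mathbf{u}$ restricts to an isomorphism of lattices from $[1,c]$ onto the set of left-divisors of $\mathbf{c}$, which is (1). As we have taken $S = \mathcal{R}$, the length $\ell_S$ coincides with the reflection (absolute) length $\ell$, and the general identity $\ell(\mathbf{u}) = \ell_S(u)$ for $u \in [1,c]$ is exactly (3); in particular $\mathbf{u}$ is an atom of $M(c)$ if and only if $\ell(u)=1$ and $u \in [1,c]$, i.e. $u \in \mathcal{R}\cap[1,c] = \mathcal{R}_c$, which is the last assertion of (2).

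There remains, in (2), the passage from the defining presentation of the interval monoid --- one relation $\mathbf{w} = \mathbf{u}\mathbf{v}$ for each factorization $w = uv$ in $[1,c]$ with $\ell(w) = \ell(u)+\ell(v)$ --- to the ``dual'' presentation on the generators $\mathcal{R}_c$ with relations $\mathbf{u}\mathbf{v} = \mathbf{v}\mathbf{w}$, $w = v^{-1}uv$. One inclusion is immediate, each such relation being an instance of an interval relation read on $w = uv = v(v^{-1}uv)$. For the converse one expresses an arbitrary length-additive factorization as a product of atoms and then compares two atomic decompositions of a fixed element of $[1,c]$; the mechanism is the Hurwitz action of the classical braid group on reduced reflection decompositions, the essential input being the transitivity of this action on the set of reduced decompositions of $c$ --- a theorem of Bessis --- which propagates to every $w \in [1,c]$ and shows precisely that the interval relations are generated by the elementary dual relations. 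All of this is carried out in \cite{BESSIS}.

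Finally, (4) identifies the combinatorially defined Garside group $G(c)$ with the topological braid group $B = \pi_1(X/W)$, the atoms going to braided reflections; this is the deepest point and the main obstacle. For $W$ a real reflection group it is the main theorem of Bessis's dual braid monoid paper, proved by recognizing the dual presentation as an alternative presentation of the Artin group together with a geometric description of the atoms (equivalently, via the $K(\pi,1)$ property of the real reflection arrangement). For the remaining well-generated complex reflection groups one appeals to Bessis's theorem that the discriminant complement is a $K(\pi,1)$, and to the analysis of the Lyashko--Looijenga morphism, which furnishes a presentation of $\pi_1(X/W)$ isomorphic to $M(c)$ sending each atom $\mathbf{u}$ to a braided reflection around the wall fixed by $u$; tracking the choice of basepoint then gives the asserted normalization. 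We refer to \cite{BESSIS} and the references therein for complete proofs.
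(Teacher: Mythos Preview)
Your proposal is correct and matches the paper's treatment: the paper does not give an independent proof of this proposition, but simply states that it, together with the preceding proposition, ``are proven in \cite{BESSIS} and in the references there.'' Your write-up is essentially an expanded roadmap through those references---explaining which parts come from the general interval-monoid machinery (\cite{MICHEL}, \cite{DGKM}), which from Hurwitz transitivity, and which from Bessis's $K(\pi,1)$ theorem---so it is more informative than the paper's bare citation while pointing to the same sources.
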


From this we get immediately the following. The first two items are classical and the last one is immediate.
\begin{proposition}\label{prop:dual_simple} \
\begin{enumerate}
\item The length with the respect to the atoms of $M(c)$ defines a monoid morphism $\ell : M(c) \to (\N, +)$,
that is $\ell(m_1m_2) = \ell(m_1)+\ell(m_2)$ for every $m_1,m_2 \in M(c)$.
\item For every $u \in [1,c]$, $\mathbf{u} \in M(c)$ is balanced, that is its set of left and right divisors are
the same.
\item For every $u \in [1,c]$, $\mathbf{u} \in M(c)$ is squarefree, that is no expression as a product of
atoms may contain $\mathbf{a}\mathbf{a}$ for some $a \in \mathcal{R}_c$.
\end{enumerate}
\end{proposition}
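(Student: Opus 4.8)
The plan is to read all three items off the interval-monoid structure of $M(c)$ recorded in \autoref{prop:dualmonoidpropbase2}, together with the general facts on interval monoids collected above. For (1) I would invoke homogeneity: by \autoref{prop:dualmonoidpropbase2}(2), $M(c)$ has a presentation whose generators are its atoms $\mathbf u$, $u\in\mathcal R_c$, and whose defining relations $\mathbf u\mathbf v=\mathbf v\mathbf w$ all involve the same number of atoms (namely two) on each side. Hence the number of atoms occurring in a word representing an element of $M(c)$ is invariant under the relations, so it defines a monoid morphism $M(c)\to(\N,+)$; since $M(c)$ is generated by its atoms this morphism is precisely $\ell$, which gives $\ell(m_1m_2)=\ell(m_1)+\ell(m_2)$. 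Equivalently, this is the instance for $M(c)$ of the homogeneity statement for interval monoids recalled earlier, combined with \autoref{prop:dualmonoidpropbase2}(3).

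For (2), recall from \autoref{prop:dualmonoidpropbase2}(1) that $u\mapsto\mathbf u$ is an isomorphism of lattices from $[1,c]$ onto the set of left divisors of the fundamental element $\mathbf c$; in particular every $\mathbf u$ with $u\in[1,c]$ is a simple element of the Garside monoid $M(c)$, and both its left divisors and its right divisors are again simple, hence of the form $\mathbf v$ with $v\in[1,c]$. Under this identification, $\mathbf v$ left-divides $\mathbf u$ exactly when $v\prec u$ in $[1,c]$, that is $\ell(v)+\ell(v^{-1}u)=\ell(u)$, while $\mathbf v$ right-divides $\mathbf u$ exactly when $u\succ v$, that is $\ell(uv^{-1})+\ell(v)=\ell(u)$. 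Since $\mathcal R$ is stable under $W$-conjugation, the reflection length $\ell$ is a class function on $W$, and the identity $uv^{-1}=u(v^{-1}u)u^{-1}$ gives $\ell(uv^{-1})=\ell(v^{-1}u)$; therefore the two conditions coincide, the sets of left and of right divisors of $\mathbf u$ agree, and $\mathbf u$ is balanced. Alternatively one may simply cite \cite{BESSIS}.

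Item (3) is immediate: by assumption every reflection of $W$ has order $2$, so every element of the generating set $\mathcal R$ of the interval monoid $M(c)$ has order $2$, and \autoref{prop:intervalissquarefree} applies verbatim to give that $\mathbf u$ is squarefree for each $u\in[1,c]$. None of the three steps is a genuine obstacle — the whole statement is bookkeeping on top of results already proved — and the only point deserving a moment of care is the identification in (2) of the left and right divisor sets of a simple $\mathbf u$ with the prefix and suffix intervals of $u$ inside $[1,c]$, which is exactly where the lattice isomorphism of \autoref{prop:dualmonoidpropbase2}(1) is used; once that is granted, conjugation-invariance of reflection length finishes the argument.
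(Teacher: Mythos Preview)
Your proposal is correct and follows essentially the same route as the paper: (1) via homogeneity of the presentation, (3) via \autoref{prop:intervalissquarefree}, and (2) via conjugation-invariance of reflection length (the paper writes the same identity as $u=ab=(aba^{-1})a$ rather than $uv^{-1}=u(v^{-1}u)u^{-1}$). The only place you are slightly looser than the paper is in asserting that $\mathbf v$ right-divides $\mathbf u$ iff $u\succ v$; the lattice isomorphism in \autoref{prop:dualmonoidpropbase2}(1) is stated for \emph{left} divisibility only, so this step needs one extra line (e.g.\ using (1) together with $\pi$), but it is routine and causes no difficulty.
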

\begin{proof} 
(1) is immediate for instance from the homogeneity of the presentation. For (2), if $a \prec u$
then $ab = u$ and with $\ell(a)+\ell(b)= \ell(u)$. But since $\mathcal{R}$
is a union of conjugacy classes we have $\ell(b) = \ell(aba^{-1})$, hence $(aba^{-1})a = u$ with $\ell(aba^{-1}) + \ell(a) = \ell(u)$, so $a$ divides $u$ on the right too, and $\mathbf{u}$ is balanced.
(3) is a special case of \autoref{prop:intervalissquarefree}.
\end{proof}

In the sequel we shall need the following classical lemma. We denote by $\pi : M(c) \to W$
the natural monoid morphism mapping each $\mathbf{u}$ to $u \in [1,c]$, and we denote by $j: [1,c] \to M(c)$ the map sending $u$ to $\mathbf u$.

\begin{lemma}
\label{lem:caractsimplesintervalles}
 For $m \in M(c)$ with $\pi(m) \in [1,c]$, $m$ divides $\mathbf{c}$ if and only if $\ell(m) = \ell(\pi(m))$,
where $\ell(m)$ is the length of $m$ with respect to the atoms of $M(c)$.
\end{lemma}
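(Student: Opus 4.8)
The plan is to prove both implications using the homogeneity morphism $\ell : M(c) \to \N$ from \autoref{prop:dual_simple}(1) together with the lattice isomorphism $u \mapsto \mathbf{u}$ from $[1,c]$ onto the set of left-divisors of $\mathbf{c}$ in \autoref{prop:dualmonoidpropbase2}(1), and the fact that $\ell(\mathbf u) = \ell_{\mathcal R}(u) = \ell(u)$ for $u \in [1,c]$ by \autoref{prop:dualmonoidpropbase2}(3). The easy direction is ($\Rightarrow$): if $m$ divides $\mathbf c$, then by the lattice isomorphism $m = \mathbf v$ for some $v \in [1,c]$, and $\pi(m) = \pi(\mathbf v) = v$; hence $\ell(m) = \ell(\mathbf v) = \ell(v) = \ell(\pi(m))$ by the length compatibility just recalled. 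So the content is in the converse.

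For ($\Leftarrow$), suppose $\pi(m) = u \in [1,c]$ and $\ell(m) = \ell(u)$. First I would write a geodesic word for $m$ in the atoms: $m = \mathbf{a}_1 \cdots \mathbf{a}_k$ with $a_i \in \mathcal R_c$ and $k = \ell(m) = \ell(u)$. Applying $\pi$ gives $u = a_1 \cdots a_k$ in $W$, an expression of length $k = \ell_{\mathcal R}(u)$, hence a reduced (i.e. $\mathcal R$-geodesic) expression for $u$. The key step is then to show that any atom-product $\mathbf{a}_1\cdots\mathbf{a}_k$ whose image is a reduced factorization of an element of $[1,c]$ must itself be a left-divisor of $\mathbf c$, equivalently that the corresponding partial products $a_1, a_1a_2, \dots, a_1\cdots a_k$ all lie in $[1,c]$ and the element $\mathbf{a}_1\cdots\mathbf{a}_k$ equals $j(u)$. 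This is precisely the statement that reduced $\mathcal R$-factorizations of elements of $[1,c]$ lift to subwords of $\mathbf c$ in $M(c)$ — i.e. that $[1,c]$ with its prefix order embeds into the divisor lattice of $\mathbf c$ compatibly with factorizations — which is built into the construction of the dual braid monoid as the interval monoid of $(W,\mathcal R, c)$: the defining relations $\mathbf w = \mathbf u\mathbf v$ for $w = uv$, $\ell(w) = \ell(u)+\ell(v)$ let one reassociate $\mathbf{a}_1\cdots\mathbf{a}_k$ into $j(a_1\cdots a_k) = j(u) = \mathbf u$, using at each step that a prefix of a reduced word for $u \in [1,c]$ is again an element of $[1,c]$ (because $\prec$ is transitive on $[1,c]$ and $[1,c]$ is an interval). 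Hence $m = \mathbf u$ divides $\mathbf c$.

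The main obstacle is making the reassociation argument of the previous paragraph fully rigorous: one must verify that each partial product $u_i = a_1 \cdots a_i$ genuinely belongs to $[1,c]$ so that the atom $\mathbf{a}_{i+1}$ and $\mathbf{u}_i$ can be merged via an interval-monoid relation into $\mathbf{u}_{i+1}$. This follows because $(a_1,\dots,a_k) \in Red(u)$ forces $u_i \prec u \prec c$, and $\prec$ restricted to $[1,c]$ is transitive (indeed a lattice order by the cited proposition), so $u_i \in [1,c]$; then $u_{i+1} = u_i a_{i+1}$ with $\ell(u_{i+1}) = \ell(u_i) + 1 = \ell(u_i) + \ell(a_{i+1})$ is exactly the hypothesis of a defining relation of $M(c)$, giving $\mathbf{u}_i \mathbf{a}_{i+1} = \mathbf{u}_{i+1}$. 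Iterating from $i = 1$ to $k-1$ collapses the word to $\mathbf{u}_k = \mathbf{u} = j(\pi(m))$, and since $j(u)$ is a left-divisor of $\mathbf c$ by \autoref{prop:dualmonoidpropbase2}(1), we conclude $m \mid \mathbf c$. One should also note the degenerate case $k = 0$: then $\pi(m) = 1$, $m = 1$, which trivially divides $\mathbf c$.
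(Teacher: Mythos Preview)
Your proposal is correct and is essentially the same argument as the paper's. The only cosmetic difference is that the paper phrases the reassociation step as an induction on $\ell(m)$, peeling off one atom $\mathbf{s}$ from the right (after first observing that any factorization $m=m_1m_2$ forces $\ell(m_i)=\ell(\pi(m_i))$), whereas you unroll that induction explicitly, building up the prefixes $\mathbf{u}_i$ from the left; both rely on the same two ingredients, namely transitivity of $\prec$ to keep the partial products inside $[1,c]$ and the defining interval-monoid relation $\mathbf{u}\mathbf{v}=\mathbf{uv}$ to collapse the word to $j(\pi(m))$.
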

\begin{proof} By the previous proposition we have $\ell(m) = \ell(\pi(m))$ when $m$ divides $\mathbf{c}$,
so we only need to prove the converse. Notice that one always has $\ell(m) \geq \ell(\pi(m))$,
and assume $m \in M(c)$ satisfies $\ell(m) = \ell(\pi(m))$. Write $m$ as $m_1m_2$ with $m_1,m_2 \in M(c)$,
and notice that
$$
\ell(m_1)+\ell(m_2)  = \ell(m) = \ell(\pi(m))=\ell(\pi(m_1)\pi(m_2)) \leq \ell(\pi(m_1))+\ell(\pi(m_2)) \leq \ell(m_1)+ \ell(m_2)
$$
so that we have $\ell(m_i) = \ell(\pi(m_i))$ for $i=1,2$. Then by induction on $\ell(m)$ one can assume $m=m_1m_2$ with $m_1$ diving $\mathbf{c}$
and $m_2$ an atom, that is $m_2 = \mathbf{s}$ for $s=\pi(m_2) \in [1,c]$ a reflection. But then
$\ell(\pi(m)) = \ell(m) = \ell(m_1) + 1 = \ell(\pi(m_1)) + \ell(s)$. But from the very definition
of $M(c)$ this implies that
 $j(\pi(m)) = m_1\mathbf{s}$, that is
  $j(\pi(m)) =m$.
This proves that $m$ divides $\mathbf{c}$ by \autoref{prop:dualmonoidpropbase2} (1).  

\end{proof}

In addition, we will need the following proposition, which combines results from
\cite{BESSIS} and \cite{RIPOLL}.
\begin{proposition} \label{prop:parabsreflwellgens}\ 
\begin{enumerate}
\item Every parabolic subgroup of $W$ is well-generated.
\item Let $W_0$ be a parabolic subgroup of $W$, and $c_0$ a Coxeter element of $W_0$.
Then $W_0$ is the pointwise stabilizer of $\Ker(c_0 - \Id)$, and there
exists $g \in W$ such that $gc_0g^{-1} \in [1,c]$. 
Moreover, the length inside the interval $[1,c_0]$ of $W_0$ is the same when computed with respect to $\mathcal{R}$ and with respect fo $\mathcal{R}_0 = 
  \mathcal{R} \cap W_0$.
\item Let $c_0 \in [1,c]$. Then $c_0$ is a Coxeter element for the pointwise stabilizer $W_0$ of $\Ker(c_0-\Id)$. Moreover, if $(s_1,\dots,s_k) \in Red(c_0)$, then $\langle s_1,\dots, s_k \rangle = W_0$.
\end{enumerate}
\end{proposition}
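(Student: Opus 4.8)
The three assertions are, in substance, assembled from \cite{BESSIS} and \cite{RIPOLL}, and my plan is to make this assembly explicit while keeping track of the dual braid monoid data recalled above. Throughout I would fix the $W$-invariant Hermitian form, so that every reflection is unitary and every $w \in W$ satisfies $\Ker(w-\Id) = \Imm(w-\Id)^{\perp}$; I write $L(w) = \Ker(w-\Id)$. The key observation is that both (1) and the harder half of (2) fall out once (3) is in place, so I would prove (3) first, then (2), then (1).

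\emph{Proof of (3).} Let $c_0 \in [1,c]$ and $L = L(c_0)$. Since $c_0$ is a product of reflections, $L$ is an intersection of reflecting hyperplanes, hence a flat of the arrangement, so by Steinberg's theorem (see e.g.\ \cite{LEHRERTAYLOR}) its pointwise stabiliser $W_0$ satisfies $V^{W_0} = L$ and $\rk W_0 = \codim L$. I would then invoke the standard structural fact about $[1,c]$, contained in \cite{BESSIS} (and compatible with the material recalled above), that $\dim \Imm(c_0-\Id) = \ell(c_0)$ and that for every $(s_1,\dots,s_k) \in Red(c_0)$ one has $\Imm(c_0-\Id) = \bigoplus_{i=1}^k \Imm(s_i-\Id)$; combined with the always-valid inclusion $\Imm(c_0-\Id) \subseteq \sum_i \Imm(s_i-\Id)$ this gives $k = \ell(c_0) = \codim L = \rk W_0$, that each $\Imm(s_i-\Id) \subseteq \Imm(c_0-\Id)$, hence $L \subseteq \Ker(s_i-\Id)$, and therefore $\langle s_1,\dots,s_k\rangle \subseteq W_0$. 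The reverse inclusion $W_0 \subseteq \langle s_1,\dots,s_k\rangle$, together with the claim that $c_0$ is a Coxeter element of $W_0$, is precisely the theory of \emph{parabolic Coxeter elements}: $c_0$ is $\mathrm{e}^{2\pi\ii/h(W_0)}$-regular in $W_0$ by \cite{BESSIS} (via the Lyashko--Looijenga morphism), and $\langle Red(c_0)\rangle = W_0$ also follows from the transitivity of the Hurwitz action on $Red(c_0)$ established in \cite{RIPOLL}. Granting these, (3) follows; in particular $\langle s_1,\dots,s_k\rangle$ does not depend on the reduced decomposition.

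\emph{Proof of (2).} Let $W_0 \leq W$ be a parabolic subgroup and $c_0$ a Coxeter element of $W_0$. On the essential part of $W_0$ one has $\ell_{\mathcal{R}_0}(c_0) = \rk W_0$ and $c_0$ is elliptic (Springer's eigenvalue formula; cf.\ the proposition on well-generated groups recalled above), so $L(c_0) = V^{W_0}$; since a parabolic subgroup is the pointwise stabiliser of its own fixed space, this identifies $W_0$ with the pointwise stabiliser of $L(c_0)$, which is the first assertion. For the second, I would use that every conjugacy class of parabolic subgroups of $W$ contains a representative of the form $\langle Red(c_1)\rangle$ with $c_1 \in [1,c]$ — equivalently, that the flats $L(c_1)$, $c_1 \in [1,c]$, meet every $W$-orbit of flats; this is in \cite{BESSIS}. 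Writing $g W_0 g^{-1} = \langle Red(c_1)\rangle$ for suitable $g \in W$, both $g c_0 g^{-1}$ and $c_1$ are Coxeter elements of $g W_0 g^{-1}$ by part (3), hence conjugate inside it (regular elements of a given order form a single conjugacy class, $h$ being a regular number; in the reducible case one uses that Coxeter elements are direct sums of Coxeter elements of the irreducible constituents), so a suitable conjugate of $c_0$ lies in $[1,c]$.

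\emph{Length compatibility, and proof of (1).} That $\ell_{\mathcal{R}}(w) = \ell_{\mathcal{R}_0}(w)$ for $w \in W_0$ is the length-compatibility of parabolic inclusions, which I would cite from \cite{RIPOLL}; the representative case, $W_0 = \langle Red(c_1)\rangle$ with $c_1 \in [1,c]$ and $w \prec c_1$, is immediate since any reflection $r$ in a reduced $\mathcal{R}$-factorisation of $w$ has $\Imm(r-\Id) \subseteq \Imm(w-\Id) \subseteq \Imm(c_1-\Id)$, so $V^{W_0} = L(c_1) \subseteq \Ker(r-\Id)$, i.e.\ $r \in W_0$, whence $\ell_{\mathcal{R}}(w) \geq \ell_{\mathcal{R}_0}(w)$, the reverse inequality being trivial. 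Finally (1) is a consequence of (2) and (3): every parabolic of $W$ is conjugate to $\langle Red(c_0)\rangle$ for some $c_0 \in [1,c]$, and the opening of the argument for (3) shows this is generated by $\ell(c_0) = \rk W_0$ reflections, hence is well-generated (this is also stated directly in \cite{BESSIS}). The only genuinely non-formal ingredient in all of this is the parabolic-Coxeter-element package invoked in (3) — that $c_0 \in [1,c]$ is a regular element of the right order in the stabiliser of its fixed space and that its reduced reflection factorisations generate that stabiliser — and this is exactly where the Lyashko--Looijenga techniques of \cite{BESSIS} and the Hurwitz-transitivity of \cite{RIPOLL} are indispensable; everything else is bookkeeping with fixed and moving subspaces and with the lattice structure of $[1,c]$.
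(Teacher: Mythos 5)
Your proposal is correct, and for parts (2) and (3) it is in substance the same as the paper's proof, which simply defers to Proposition 1.36 of \cite{RIPOLL} (and its proof, which rests on the Bessis/Ripoll ``parabolic Coxeter element'' package: additivity of the moving space along reduced factorisations, regularity of $c_0\in[1,c]$ of order $h(W_0)$ in the stabiliser $W_0$ of its fixed space, and Hurwitz transitivity giving $\langle Red(c_0)\rangle=W_0$). Your contribution there is to make the assembly explicit, including the elementary deduction $\Ker(c_0-\Id)=\bigcap_i\Ker(s_i-\Id)$ from $\Imm(c_0-\Id)=\bigoplus_i\Imm(s_i-\Id)$ and the reduction of the conjugacy statement in (2) to Springer's single-conjugacy-class theorem; one small presentational remark is that ``$L$ is a flat of the arrangement'' is not automatic for an arbitrary product of reflections and only becomes available once the additivity fact is invoked, so that sentence should come after, not before, the citation. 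Where you genuinely diverge from the paper is part (1): the paper disposes of it by a direct check on the classification tables (Appendix C of \cite{ORLIKTERAO}), whereas you derive it from (3) together with the fact that every $W$-orbit of flats contains $\Ker(c_1-\Id)$ for some $c_1\in[1,c]$, so that every parabolic is conjugate to $\langle Red(c_1)\rangle$ and hence generated by $\ell(c_1)=\rk W_0$ reflections. This buys a uniform, table-free deduction of (1) from the dual-monoid machinery (at the price of leaning on a further nontrivial input from \cite{BESSIS}, which in turn is not classification-free), while the paper's table check is self-contained and independent of the rest of the proposition. Both routes are valid; just be careful to order the logic as (3), then the orbit-representative fact and (1), then (2), since (2) presupposes that $W_0$ admits a Coxeter element.
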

\begin{proof}
For (1), this is a direct check on the tables, for instances the ones of Appendix C in \cite{ORLIKTERAO}.
 For (2) and (3), see
Proposition 1.36 of \cite{RIPOLL} and its proof.
\end{proof}

An immediate consequence of this proposition is that the inclusion $[1,c_0] \subset [1,c]$ is an injective
morphism of posets.
Moreover,
we have a well-defined
homomorphism $\varphi : M(c_0) \to M(c)$ extending the inclusion map $[1,c_0] \subset [1,c]$,
for any $c_0 \in [1,c]$ and $W_0$ the corresponding parabolic subgroup.

For the sequel, we shall need the following additional property.
\begin{equation}
\label{eq:hypdual} \forall c_0 \in [1,c] \ \ \ W_0 \cap [1,c] = [1,c_0]
\end{equation}

We were unable to find a reference for it -- nor a proof avoiding the classification of complex reflection groups.
For the groups we are interested in it is easily checked
by computer -- for this notice that, since all Coxeter elements are conjugates, it is sufficient to
check this for one of them. Then, for each $c_0 \in [1,c]$, it is sufficient to get a decomposition of $c_0$
in order to determine $W_0$ as the group generated by the reflections it contains. However, it actually holds true in general, as shows the following proposition. The main idea of using \cite{LEWISMORALES} in order to prove the non-real case was communicated to us by Theodosios Douvropoulos. We give his proof here.

\begin{proposition}
\label{prop:prophypdual} Property (\ref{eq:hypdual}) holds true.
\end{proposition}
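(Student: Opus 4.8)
The goal is to prove that for every well-generated complex reflection group $W$ with Coxeter element $c$, and every $c_0 \in [1,c]$, one has $W_0 \cap [1,c] = [1,c_0]$, where $W_0$ is the pointwise stabilizer of $\Ker(c_0 - \Id)$. The inclusion $[1,c_0] \subset W_0 \cap [1,c]$ is easy: if $a \in [1,c_0]$ then $a \prec c_0$ so $a \in [1,c]$ by the poset embedding noted above, and $a$ lies in $\langle s_1,\dots,s_k\rangle = W_0$ for any reduced word of $c_0$ through $a$ (using \autoref{prop:parabsreflwellgens}(3)), so $a \in W_0$. The substance is the reverse inclusion: if $w \in W_0$ and $w \prec c$, then $w \prec c_0$.

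**The real case.** When $W$ is a real reflection group, the poset $[1,c]$ is the poset of elements below $c$ in absolute order (reflection length), and this is the lattice of noncrossing partitions of type $W$. Here there is a clean combinatorial/geometric argument: for $w \prec c$, the fixed space $\Ker(w-\Id)$ together with the condition $w \prec c$ forces a compatibility of the "noncrossing" structure. The cleanest route is via reflection length: $\ell_{\mathcal R}(w) + \ell_{\mathcal R}(w^{-1}c) = \ell_{\mathcal R}(c) = n$, and since $W_0$ is parabolic, $\ell_{\mathcal R}(w) = \ell_{\mathcal R_0}(w)$ by \autoref{prop:parabsreflwellgens}(2). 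One then shows $w^{-1}c_0 \in [1,c]$ as well and that $\ell_{\mathcal R}(w) + \ell_{\mathcal R}(w^{-1}c_0) = \ell_{\mathcal R}(c_0)$, i.e.\ $w \prec c_0$. For real groups this is a known property of noncrossing partition lattices (the interval $[1,c_0]$ in $\mathrm{NC}(W)$ coincides with $\mathrm{NC}(W_0)$); I would cite Brady--Watt or the relevant statement in Bessis--Corran / Armstrong.

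**The non-real case (Douvropoulos's argument via \cite{LEWISMORALES}).** For the remaining well-generated groups that are not real (the genuinely complex ones among $G_{23},\dots$ and the infinite family $G(e,e,n)$, etc.), the idea is to invoke the results of \cite{LEWISMORALES} on the geometry of the factorization poset / the intersection lattice. The key input is that for $w \in [1,c]$ the fixed space $\Ker(w-\Id)$ is an element of the intersection lattice $\mathcal L$ of $W$, the map $w \mapsto \Ker(w-\Id)$ from $[1,c]$ to $\mathcal L$ is injective (by \autoref{prop:parabsreflwellgens}: $w$ is a Coxeter element of the parabolic fixing $\Ker(w-\Id)$, hence determined by that space once we know it lies below $c$), and—this is the content drawn from \cite{LEWISMORALES}—the image is exactly the set of $L \in \mathcal L$ that can be written as $\Ker(c_1-\Id) \cap \dots$ in a way "compatible with $c$". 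Concretely: given $w \in W_0 \cap [1,c]$, one has $\Ker(c_0-\Id) \subseteq \Ker(w-\Id)$ since $w$ fixes $\Ker(c_0-\Id)$ pointwise (as $w \in W_0$); then one shows $w \prec c_0$ by exhibiting a reduced $\mathcal R$-factorization of $c$ of the form $c = w \cdot (w^{-1}c_0) \cdot (c_0^{-1}c)$ with all three factors in $[1,c]$ and lengths adding up, which reduces to the statement that $[1,c_0] = \{u \in [1,c] : \Ker(c_0-\Id) \subseteq \Ker(u-\Id)\}$—precisely the "lattice-of-parabolics" description that \cite{LEWISMORALES} supplies for well-generated groups. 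I would write this out as: take $w \in W_0 \cap [1,c]$; since $w \preceq c$ and $c_0 \preceq c$ and $W_0 = \langle \text{reflections below } c_0\rangle$ is a parabolic, the interval $[1,c]$ localizes to $[1,c_0]$ on the subset fixing $\Ker(c_0-\Id)$, whence $w \in [1,c_0]$.

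**The main obstacle.** The hard part is the non-real case: making precise and citing the exact statement from \cite{LEWISMORALES} that identifies, for a well-generated $W$, the interval $[1,c_0]$ below a Coxeter element $c_0$ of a parabolic $W_0$ with $W_0 \cap [1,c]$, without circularly invoking property~(\ref{eq:hypdual}) itself. The delicate point is that in the complex (non-Coxeter) setting the fibers of $w \mapsto \Ker(w - \Id)$ on all of $W$ are not singletons, so one genuinely needs the global structure of $[1,c]$ (its being a lattice, and the "parabolic recursion" $[1,c_0]^{W_0} \hookrightarrow [1,c]^W$) rather than a purely local fixed-space argument; this is exactly where Douvropoulos's observation that \cite{LEWISMORALES} already contains the needed enumeration/structure result is essential, and I would present his proof with the citation made explicit rather than re-deriving it.
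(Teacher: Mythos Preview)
Your plan has the right geometric idea—using the containment of fixed spaces $\Ker(c_0-\Id)\subseteq \Ker(w-\Id)$ for $w\in W_0$, together with a result from \cite{LEWISMORALES}—and this is indeed the approach the paper takes. However, your execution is vague at the decisive step, and you are missing the tool that makes it work cleanly.

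The paper's proof does not split into real and non-real cases. It works uniformly by introducing the \emph{Brady--Watt order} $\prec_{BW}$ on the full unitary group $U(V)$, defined by $a\prec_{BW} b$ iff $\codim\Ker(a-1)+\codim\Ker(a^{-1}b-1)=\codim\Ker(b-1)$. Brady and Watt's theorem says that $[1,u]_{BW}$ is order-isomorphic to the lattice of subspaces containing $\Ker(u-1)$, via $u'\mapsto\Ker(u'-1)$. The precise input from \cite{LEWISMORALES} (their Corollary~6.6) is the single identity
\[
[1,c]\;=\;W\cap[1,c]_{BW},
\]
valid for every well-generated $W$ (classical in the real case, checked via the classification otherwise). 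With this in hand the argument is two lines: for $w_0\in W_0\cap[1,c]$ one has $\Ker(w_0-1)\supset\Ker(c_0-1)$; since $w_0,c_0\in[1,c]_{BW}$, Brady--Watt gives $w_0\prec_{BW} c_0$; applying the displayed identity to $W_0$ yields $w_0\in W_0\cap[1,c_0]_{BW}=[1,c_0]$.

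Your sketch gestures at the equality $[1,c_0]=\{u\in[1,c]:\Ker(c_0-\Id)\subseteq\Ker(u-\Id)\}$ and attributes it to \cite{LEWISMORALES} as a ``lattice-of-parabolics description'', but that is not quite what is stated there; it is the combination of their $[1,c]=W\cap[1,c]_{BW}$ with the Brady--Watt theorem that yields this. Without naming the Brady--Watt order you have no mechanism to pass from the fixed-space containment to $w\prec c_0$, and your proposed route via ``exhibiting a reduced factorization $c=w\cdot(w^{-1}c_0)\cdot(c_0^{-1}c)$ with lengths adding up'' is circular: showing that $\ell(w)+\ell(w^{-1}c_0)=\ell(c_0)$ \emph{is} the statement $w\prec c_0$ you are trying to prove.
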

\begin{proof}
We endow the ambient space $V$ with an hermitian structure for which $W$ acts by unitary transformations,
that is $W \subset U(V)$.
We use the Brady-Watt order defined on the unitary group by $a \prec_{BW} b$ if $\dim \codim \Ker(a-1) +
\dim \codim \Ker (a^{-1}b-1) = \dim \codim \Ker(b-1)$, or equivalently $\Imm(a-1)\oplus \Imm(a^{-1}b-1)
= \Imm (b-1)$, defined in \cite{BRADYWATT}. We denote $[a,b]_{BW}$ the set of all unitary transformations $u$
such that $a \prec_{BW} u$ and $u\prec_{BW} b$.
 By Theorem 2 of \cite{BRADYWATT} (see also Note 1 in \cite{BRADYWATT})
we have that $[1,u]_{BW}$ is isomorphic as a poset to the set of subspaces of $V$ containing $\Ker(u-1)$
ordered by reverse inclusion, the
bijection being given by $u' \mapsto \Ker(u-1)$.

On the other hand, it is proved in \cite{LEWISMORALES} that $[1,c] = W \cap [1,c]_{BW}$
for every $W$ -- the real case being a classical result, the general case
being proved using the classification, see Corollary 6.6 of \cite{LEWISMORALES}.

Now, let $w_0 \in [1,c] \cap W_0$. Then $w_0 \in W_0$ implies that $\Ker(w_0-1)$ contains $\bigcap_{w \in W_0} \Ker(w-1)$, which is equal to $\Ker(c_0-1)$.
Since $w_0,c_0 \in [1,c]_{BW}$, by the theorem of Brady-Watt this implies $w_0 \prec_{BW} c_0$.
But then $w_0 \in W_0 \cap [1,c_0]_{W_0} = [1,c_0]$, and this proves the claim.
\end{proof}

Let $\mathcal{R}_0 = \mathcal{R} \cap W_0$. We have $\mathcal{R}_{c_0} = \mathcal{R}_0 \cap [1,c_0] = \mathcal{R}_0 \cap [1,c] =\mathcal{R} \cap W_0 \cap [1,c] =
\mathcal{R} \cap [1,c_0]$.
For $s,t \in \mathcal{R}_{c_0}$ the right lcm of $\mathbf{s},\mathbf{t}$ is equal
to $\mathbf{w}$ for $w$ the join of $s,t$ in the corresponding poset $[1,c_0]$.
Since it is also equal to the join of $s,t$ inside $[1,c]$ we get
that $\lcm(\varphi(\mathbf{s}),\varphi(\mathbf{t})) = \varphi(\lcm(\mathbf{s},\mathbf{t}))$.

\medskip

From this, general arguments (see \cite{CALMAR} Lemmas 5.1, 5.2) imply
that $\varphi$ is injective, so that $M(c_0)$ is identified with a submonoid
of $M(c)$. Moreover, if $m' \in M(c)$ divides (on the left) some $m \in M(c_0)$, then
$m' \in M(c_0)$ ; indeed, if $m = m'm''$, there should be a sequence of Hurwitz relations
applied on some decompositions of $m$ in atoms of $M(c_0)$ so that it is changed to some
decomposition involving some atoms of $M(c)$ which do not belong to $M(c_0)$. But for Hurwitz
relations $\mathbf{u}\mathbf{v} = \mathbf{v}\mathbf{w}$, if $u,v \in W_0$ then $v,w \in W_0$,
so this is not possible. This property then implies by general arguments (\cite{CALMAR} Lemma 5.3) the
following.

\begin{proposition}
\label{prop:dualinjectionmonoid}
Let $c_0 \in [1,c]$.
\begin{enumerate}
\item The map $\varphi : M(c_0) \to M(c)$ is injective and induces an injective group homomorphism $G(c_0) \to G(c)$, identifying $G(c_0)$ with a subgroup of $G(c)$.
\item For every $a,b \in M(c_0) \ \lcm(\varphi(a),\varphi(b)) = \varphi(\lcm(a,b))$.
\item The image of $M(c_0)$ inside $M(c) \subset G(c)$ is equal to $G(c_0) \cap M(c)$.
\end{enumerate}
\end{proposition}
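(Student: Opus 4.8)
The plan is to deduce all three statements from the two structural facts established just before the proposition: that $\varphi$ is a length-preserving morphism between the homogeneous atomic monoids $M(c_0)$ and $M(c)$ which is injective on atoms and satisfies $\lcm(\varphi(\mathbf s),\varphi(\mathbf t))=\varphi(\lcm(\mathbf s,\mathbf t))$ for all atoms $\mathbf s,\mathbf t$ of $M(c_0)$, and that the image $\varphi(M(c_0))$ is closed under left-division inside $M(c)$. These are precisely the hypotheses under which the general lemmas on submonoids of Garside monoids of \cite{CALMAR} (Lemmas 5.1, 5.2, 5.3) apply, so the proposition will follow essentially formally; below I indicate how.

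First I would prove (1). By \autoref{prop:dual_simple}(1) and \autoref{prop:dualmonoidpropbase2}, $M(c_0)$ and $M(c)$ are homogeneous atomic monoids in which right lcms of atoms exist, governed by the posets $[1,c_0]\subseteq[1,c]$; a morphism between such monoids that is injective on atoms and preserves their lcms is injective (this is \cite{CALMAR} Lemma 5.1, and can also be seen directly by lifting an equality $\varphi(m)=\varphi(m')$ to a chain of defining relations and noting that a Hurwitz relation $\mathbf u\mathbf v=\mathbf v\mathbf w$ with $u,v\in W_0$ has $w=v^{-1}uv\in W_0$, hence $w\in\mathcal R\cap W_0\cap[1,c]=\mathcal R_{c_0}$ by (\ref{eq:hypdual}), so no atom outside $\varphi(\mathcal R_{c_0})$ is ever produced). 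Hence $\varphi$ is injective. Since $M(c_0)$ and $M(c)$ are Garside, they satisfy the Ore condition and embed in $G(c_0)$, $G(c)$; the universal property of the group of fractions extends $\varphi$ to a homomorphism $\bar\varphi\colon G(c_0)\to G(c)$, and expressing an element of $\ker\bar\varphi$ as a left fraction $\varphi(b)^{-1}\varphi(a)$ with $a,b\in M(c_0)$ and invoking injectivity of $\varphi$ shows $\bar\varphi$ injective, proving (1).

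For (2), given $a,b\in M(c_0)$ I would take their right lcm $\ell$ in the Garside monoid $M(c_0)$; then $\varphi(\ell)$ is a common left multiple of $\varphi(a)$ and $\varphi(b)$, so $\lcm(\varphi(a),\varphi(b))$ left-divides $\varphi(\ell)\in\varphi(M(c_0))$ and hence, by left-division closedness, equals $\varphi(d)$ for some $d\in M(c_0)$; injectivity and cancellativity then force $d=\lcm(a,b)$. For (3), the inclusion $\varphi(M(c_0))\subseteq G(c_0)\cap M(c)$ is immediate, and for the reverse one I would run the left greedy normal form $g=\mathbf u_1\cdots\mathbf u_k$ of $g\in G(c_0)\cap M(c)$ inside $M(c)$, showing by induction on $k$ that each simple factor $\mathbf u_i$ lies in $\varphi(M(c_0))$; this is \cite{CALMAR} Lemma 5.3 applied to our situation. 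I expect the only genuinely delicate point to be this last induction, where one must pass between left- and right-divisibility: this is exactly where the balancedness of the simple elements of $M(c)$ (\autoref{prop:dual_simple}(2)) — a feature special to the dual monoids with $\mathcal R$ a set of reflections — does the work, yielding that $\varphi(M(c_0))$ is stable under right-division as well. Apart from that, there is no real obstacle: the content of the proposition is the standard formalism of parabolic submonoids of Garside monoids, and the substantive work was already done in verifying the two structural hypotheses above.
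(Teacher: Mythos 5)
Your proposal is correct and follows essentially the same route as the paper: the paper also derives all three parts from the lcm-preservation on atoms and the closure of $\varphi(M(c_0))$ under division in $M(c)$ (proved via the Hurwitz-relation argument), citing \cite{CALMAR} Lemmas 5.1--5.3 for the formal consequences. The only cosmetic difference is that in part (3) the closure-under-factors property obtained from the Hurwitz argument already handles both left and right divisors, so the appeal to balancedness of simples is not actually needed.
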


We then have the following.
\begin{proposition}\label{prop:dualstandardparabs} For $c_0 \in [1,c]$ the image of the injective homomorphism $G(c_0) \to G(c) = B$  is a parabolic subgroup of $B$, that we call a standard parabolic subgroup.
Moreover, every parabolic subgroup of $B$ is conjugate to such a standard parabolic subgroup.
\end{proposition}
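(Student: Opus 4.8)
The plan is first to prove that each $G(c_0)$, $c_0\in[1,c]$, is a parabolic subgroup of $B$, and then to deduce the conjugacy statement. For the deduction, let $B_0<B$ be any parabolic subgroup, with image $W_0':=\pi(B_0)<W$. By \autoref{prop:parabsreflwellgens}(1) the parabolic subgroup $W_0'$ is well-generated; by \autoref{prop:parabsreflwellgens}(2) it has a Coxeter element some $W$-conjugate $c_0$ of which lies in $[1,c]$. The image of $G(c_0)<B$ in $W$ is the pointwise stabilizer $W_0$ of $\Ker(c_0-\Id)$ — it is generated by $\mathcal{R}_{c_0}=\mathcal{R}\cap W_0$ — and $W_0$ is $W$-conjugate to $W_0'$; hence, once we know $G(c_0)$ is parabolic, \autoref{prop:conjparabs}(2) shows that $B_0$ is $B$-conjugate to $G(c_0)$, as wanted.

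To prove the first statement we may, by \autoref{prop:basicpropsparabs}(1), assume $W$ essential, so $\ell(c)=n$; we then induct on the corank $d=n-\ell(c_0)$. If $d=0$, then $c_0=c$ and $G(c)=B$: choose a straight normal ray $\eta$ from the basepoint to the origin, which belongs to every reflecting hyperplane of $W$; by part~(1) of the proposition following \autoref{prop:parabsbmr} its local fundamental group is all of $\pi_1(X/W)=B$, so the parabolic subgroup attached to $\eta$ is $B$ itself. If $d\ge 1$, let $s$ be the first letter of a reduced $\mathcal{R}$-word for $c_0^{-1}c$ and put $c_1=c_0 s\in[1,c]$, of corank $d-1$; by the inductive hypothesis $B_1:=\mathrm{im}(G(c_1)\to B)$ is a parabolic subgroup of $B$, with image $W_1:=W_{\Ker(c_1-\Id)}$. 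By \autoref{prop:basicpropsparabs}(3) and \autoref{prop:dualmonoidpropbase2}(4), $B_1$ is identified with the braid group of the well-generated $2$-reflection group $W_1$ (\autoref{prop:parabsreflwellgens}) equipped with the dual Garside structure $M_{W_1}(c_1)$, $c_1$ being a Coxeter element of $W_1$. Since $c_0\prec_{BW}c_1$ one has $c_0\in W_1$, hence $c_0\in W_1\cap[1,c_1]_{BW}=[1,c_1]_{W_1}$ by \cite{LEWISMORALES}, exactly as in the proof of \autoref{prop:prophypdual}. By \autoref{prop:dualinjectionmonoid} the canonical map $G(c_0)\to G(c)$ factors through $G(c_0)\to G(c_1)\to G(c)$, so by \autoref{prop:basicpropsparabs}(4) it suffices to prove that $\mathrm{im}(G(c_0)\to G(c_1))$ is a parabolic subgroup of $B_1$; and since $\ell(c_0)=\ell(c_1)-1$, this reduces the whole statement to the case where $W_0:=W_{\Ker(c_0-\Id)}$ is a \emph{maximal} parabolic subgroup.

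In that remaining case $L_0:=\Ker(c_0-\Id)$ is a line, $W_0=W_{L_0}$ is a well-generated $2$-reflection group with Coxeter element $c_0$ and reflection set $\mathcal{R}_0=\mathcal{R}\cap W_0=\mathcal{R}_{c_0}$, and $\bigcap\mathcal{A}_0=L_0$. Pick $v_0\in L_0\setminus\{0\}$ lying on no hyperplane of $\mathcal{A}\setminus\mathcal{A}_0$ — possible, since such a hyperplane cannot contain $L_0$ — together with a straight normal ray $\eta$ from the basepoint to $v_0$, whose image in $W$ is $W_0$. By \autoref{prop:basicpropsparabs}(3) the attached parabolic subgroup $B_0^{\mathrm{top}}<B$ is identified with the braid group $\pi_1(X_0/W_0)$ of $W_0$, which in turn carries the dual Garside structure $M_{W_0}(c_0)$ by \autoref{prop:dualmonoidpropbase2}(4). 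What remains is to check that this identification carries $B_0^{\mathrm{top}}$ onto $\mathrm{im}(G(c_0)\to B)$: by \autoref{prop:dualmonoidpropbase2}(2) both subgroups are generated by braided reflections indexed by $\mathcal{R}_0$ — the dual atoms of $M_{W_0}(c_0)$ on one side and the dual atoms $\mathbf{u}$, $u\in\mathcal{R}_0$, of $M_W(c)$ on the other — and \autoref{prop:dualinjectionmonoid} already identifies these two families algebraically through $\varphi$. The missing ingredient, and the main obstacle, is the \emph{geometric} fact that the dual atoms of $M_W(c)$ indexed by $\mathcal{R}_0$ can be realized inside $B=\pi_1(X/W)$ by loops supported in an arbitrarily small neighbourhood of $v_0$ meeting only the hyperplanes of $\mathcal{A}_0$, and that there they restrict to the dual atoms of $M_{W_0}(c_0)$. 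This is the compatibility of the dual braid monoid with parabolic restriction, established by unwinding Bessis's construction of the dual braid monoid from a regular element \cite{BESSIS} and checking that near the stratum $L_0$ the associated system of generating braided reflections degenerates to the corresponding one for $W_0$. The same coincidence of the standard and the topological parabolic subgroups is re-obtained, from the Garside side, in Section~5.
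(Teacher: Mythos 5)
Your treatment of the second assertion is correct and is the same argument the paper uses: project $B_0$ to a parabolic subgroup $W_0'$ of $W$, use \autoref{prop:parabsreflwellgens}(2) to conjugate a Coxeter element of $W_0'$ into $[1,c]$, and invoke \autoref{prop:conjparabs}(2). The problem is the first assertion, where your argument has a genuine gap at exactly the point where all the content lies. Both your inductive step and your final ``maximal parabolic'' case rest on an identification that you assert but do not prove: that the isomorphism $B_1\simeq\pi_1(X_1/W_1)$ coming from \autoref{prop:basicpropsparabs}(3) (a purely topological local-fundamental-group statement) matches, up to an automorphism preserving standard parabolics, the composite of the abstract identification $B_1=\mathrm{im}(G(c_1)\to B)\simeq G(c_1)$ with Bessis's isomorphism $G_{W_1}(c_1)\simeq\pi_1(X_1/W_1)$ of \autoref{prop:dualmonoidpropbase2}(4). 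Without this, \autoref{prop:basicpropsparabs}(4) cannot be applied to close the induction, and the same compatibility reappears undiminished in the corank-one case, where you name it yourself as ``the missing ingredient, and the main obstacle.'' So the induction does not reduce the difficulty; it only relocates it, and then the difficulty is dispatched with the phrase ``established by unwinding Bessis's construction \ldots and checking that near the stratum $L_0$ the associated system of generating braided reflections degenerates to the corresponding one for $W_0$.'' That check is not routine: Bessis's isomorphism is built through the Lyashko--Looijenga covering and the tunnel construction, and its compatibility with parabolic strata is a theorem, not an unwinding. It is precisely what the paper imports as Propositions 20 and 39 of \cite{THEO}, and your proposal neither proves it nor cites a precise statement that does.

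To repair the argument you would need to actually establish that the atoms $\mathbf{u}$, $u\in\mathcal{R}_{c_0}$, of $M(c)$ can be simultaneously represented by braided reflections supported in an arbitrarily small $W_0$-invariant ball around a generic point of $\Ker(c_0-\Id)$, in such a way that their images in $\pi_1(\Omega_\eps\cap X/W_0)\simeq\pi_1(X_0/W_0)$ are the dual atoms of $M_{W_0}(c_0)$ --- or cite \cite{THEO} for this, which is what the paper does. One further small remark: if you do have that compatibility, the corank induction becomes unnecessary, since the statement for arbitrary $c_0\in[1,c]$ follows directly; conversely, proving it only for corank one would not obviously be easier, so the induction buys you nothing here.
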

\begin{proof}
The first part of the proposition is a reformulation of  Proposition 20 and Proposition 39 of \cite{THEO}.
The second one then follows from the combination of
\autoref{prop:parabsreflwellgens} and \autoref{prop:conjparabs}.
\end{proof}

Of course, this concept of standard parabolic subgroup here depends on the choice of
the Coxeter element $c$, although we know that the concept of parabolic subgroup does not.

\subsection{Exceptional groups of rank 2}
\label{sect:descrparabsrank2}

Most exceptional groups of rank $2$ are Shephard groups, or isodiscriminantal to
a group of the form $G(de,e,n)$. The only ones which are not are $G_{12}$, $G_{13}$ and $G_{22}$.

If $W =G_{12},G_{22}$, we choose the presentation of $B$ given in \cite{BMR}, that is
$\langle s,t,u \ | \ stus=tust=ustu \rangle$ and $\langle s,t,u \ | \ stust=tustu=ustus \rangle$.
In both cases all reflections have order $2$ and they form a conjugacy class. Since the
generators $s,t,u$ are distinguished braided reflections this proves that every proper parabolic subgroup of $B$
is a conjugate, say, of $\langle s \rangle$.

For $W = G_{13}$, we use the presentation of $B$ obtained in \cite{BANNAI}. By the construction
of \cite{BANNAI}, the generators are distinguished braided reflections, as they are meridians with respect to irreducible components of the discriminant, which can be written as $z_1(z_1^2 - z_2^3)=0$. Since there are two such irreducible components, every distinguished braided reflection is conjugate to one of the generators.
These generators are $g_1,g_2,g_3$ with relations
$g_1g_2g_3g_1=g_3g_1g_2g_3, g_3g_1g_2g_3g_2=g_2g_3g_1g_2g_3$. Then, as noticed in \cite{BANNAI}, this group is isomorphic to the Artin group $\langle a,b \ | \ ababab=bababa \rangle$ of type $I_2(6)$, an isomorphism being given by $a = g_3g_1g_2g_3$, $ab = g_3g_1g_2$, that is $b = g_3^{-1}$, and its inverse by $g_3 = b^{-1}$, $g_1 = a^{-1}b^{-1} a$, $g_2 =\Delta a^{-2}$ with $\Delta = ababab$. Notice that $\Delta$ is central here. Applying the results of \autoref{sect:descrArtinShephard} we get that every parabolic subgroup of $B$ is a conjugate of
either $\langle b^{-1} \rangle$ or $\langle \Delta a^{-2}\rangle$.

\section{Parabolic closures in Garside groups}
\label{sect:parabclosuresgarsidegroups}
In this section we describe some basic properties of Garside groups, recall the definition of a parabolic subgroup in such a group, and show that, under certain conditions, an element in a Garside group admits a parabolic closure, that is, a unique minimal (by inclusion) parabolic subgroup containing it.

\subsection{Garside groups and normal forms}

A Garside group is a group $G$ which admits a so-called Garside structure $(G,G^+,\Delta)$, where $G^+$ is a submonoid of $G$ (the submonoid of positive elements) and $\Delta\in G^+$ is called the Garside element, satisfying some suitable properties~\cite{Dehornoy-Paris}. We recall that such a Garside structure determines two partial orders in $G$: we say that $a\preccurlyeq b$ ($a$ is a prefix of $b$) if  $a^{-1}b\in G^+$, and we say that $b\succcurlyeq c$ ($c$ is a suffix of $b$) if $bc^{-1}\in G^+$. Both are lattice orders in $G$, the former is invariant under left-multiplication and the latter is invariant under right-multiplication. We will denote $a\vee b$ and $a\wedge b$ the join and meet of $a$ and $b$, respectively, with respect to $\preccurlyeq$, and we will denote $a\vee^{\Lsh} b$ and $a\wedge^{\Lsh} b$ the join and meet of $a$ and $b$, respectively, with respect to $\succcurlyeq$. Notice that $\wedge$ and $\vee$ are invariant under left-multiplication: if $d=a\wedge b$ and $m=a\vee b$ then $cd=ca\wedge cb$ and $cm=ca\vee cb$ for every $a,b,c\in G$. Similarly,  $\wedge^{\Lsh}$ and $\vee^{\Lsh}$ are invariant under right-multiplication.

By definition, an element $a$ is {\em positive} if and only if $1\preccurlyeq a$ or, equivalently, $a\succcurlyeq 1$. We will say that an element $x$ is {\em negative} if $x^{-1}$ is positive. Then $x$ is negative if and only if $x\preccurlyeq 1$ or, equivalently, $1\succcurlyeq x$. We say that a positive element $\delta\in G^+$ is {\em balanced} if the set of its positive prefixes coincides with the set of its positive suffixes, and in this case we call it the set of divisors of $\delta$:
$$
 \operatorname{Div}(\delta)=\{a\in G;\ 1\preccurlyeq a \preccurlyeq \delta \}= \{a\in G;\ \delta\succcurlyeq a\succcurlyeq 1\}.
$$

We remark that, given a balanced element $\delta$ and positive elements $a,b,c\in G^+$ such that $\delta=abc$, one has $b\in \operatorname{Div}(\delta)$. Indeed, we have $ab\preccurlyeq \delta$, hence $ab\in \operatorname{Div}(\delta)$ which implies that $\delta \succcurlyeq ab$, and then $\delta \succcurlyeq b$, that is, $b\in \operatorname{Div}(\delta)$.

In a Garside structure, the Garside element $\Delta$ is balanced, and its divisors are called {\em simple elements}. We denote by $S$ the set of simple elements: $S=\operatorname{Div}(\Delta)$. We will assume that the set $S$ is finite (this is usually part of the definition of a Garside group). The nontrivial simple elements which do not admit proper prefixes are called {\em atoms}. It is required that the set $S$ of simple elements  (and hence the set $\mathcal A$ of atoms) generates $G$.

For every element $x\in G$ one has $\Delta^p \preccurlyeq x \preccurlyeq \Delta^q$ for some integers $p\leq q$. The maximal integer $p$ and the minimal integer $q$ satisfying this property are called the {\it infimum} and the {\it supremum} of $x$, respectively. Every element $x\in G$ admits a unique decomposition $x=\Delta^p x_1\cdots x_r$ (its {\it left normal form}), where $x_1,\ldots,x_r$ are proper simple elements (not trivial and not $\Delta$) such that $x_ix_{i+1}\wedge \Delta=x_i$ for $i=1,\ldots,r-1$. In this case $p=\inf(x)$ and $p+r=\sup(x)$.

If $x\in G^+$ then $\inf(x)=p\geq 0$, and we can write its left normal form as $x=\Delta \Delta \cdots \Delta x_1\cdots x_r$, where the first $p$ factors are equal to $\Delta$. This is called the {\it left-weighted factorization} of $x$. It is the only way to decompose $x$ as $x=s_1\cdots s_q$, where each $s_i$ is a nontrivial simple element and $s_is_{i+1}\wedge \Delta=s_i$ for every $i$.

The analogous definitions can be done with respect to $\succcurlyeq$, so every element admits a right normal form, and every positive element admits a right-weighted factorization. It turns out that the infimum and the supremum of an element with respect to $\preccurlyeq$ coincide with its infimum and its supremum with respect to $\succcurlyeq$.

Let us now see a decomposition which is valid for every element in $G$, and which will be the most important for our purposes.

\begin{definition}
Given $x\in G$, we say that $x=a^{-1}b$ is the {\em reduced left-fraction decomposition} of $x$ if $a,b\in G^+$ and $a\wedge b=1$. We will say that $a$ and $b$ are the {\em left-denominator} and the {\em left-numerator} of $x$, respectively: $a=D_L(x)$, $b=N_L(x)$.
\end{definition}

It is well-known that the reduced left-fraction decomposition of an element $x\in G$ exists and is unique. We will later use the following simple result.

\begin{proposition}\label{P:fraction_simplification}
Let $x\in G$ and suppose that $x=c^{-1}d$ for some $c,d\in G^+$. Let $\alpha=c\wedge d$ and write $c=\alpha a$ and $d=\alpha b$. Then $x=a^{-1}b$ is the reduced left-fraction decomposition of $x$.
\end{proposition}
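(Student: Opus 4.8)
The plan is to verify the three defining properties of a reduced left-fraction decomposition in turn: that $a$ and $b$ are positive, that $a^{-1}b = x$, and that $a \wedge b = 1$. All three are short consequences of the lattice structure and its left-invariance, so the proposition should follow with essentially no obstacle; the only point to keep straight is the left-invariance of $\wedge$ under multiplication, which is recorded in the text just after the definition of the Garside orders.

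First I would note that, since $\alpha = c \wedge d$, we have $\alpha \preccurlyeq c$ and $\alpha \preccurlyeq d$ by definition of the meet. Hence $a = \alpha^{-1}c \in G^+$ and $b = \alpha^{-1}d \in G^+$, so $a$ and $b$ are positive as required. Next, using $c = \alpha a$ and $d = \alpha b$, one computes
$$
a^{-1}b = (\alpha^{-1}c)^{-1}(\alpha^{-1}d) = c^{-1}\alpha\alpha^{-1}d = c^{-1}d = x,
$$
so $x = a^{-1}b$ is indeed a left-fraction expression with positive numerator $b$ and positive denominator $a$.

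It remains to check that this expression is reduced, i.e. $a \wedge b = 1$. Here I would invoke the invariance of $\wedge$ under left-multiplication: multiplying by $\alpha$ gives
$$
\alpha(a \wedge b) = (\alpha a) \wedge (\alpha b) = c \wedge d = \alpha.
$$
Cancelling $\alpha$ (valid since $G$ is a group) yields $a \wedge b = 1$, which is exactly the condition for $x = a^{-1}b$ to be the reduced left-fraction decomposition; uniqueness of that decomposition then also identifies $a = D_L(x)$ and $b = N_L(x)$.

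There is no real difficulty here: the argument is purely formal manipulation in the lattice-ordered group. The only thing one must be careful about is to use the correct invariance property — the meet $\wedge$ with respect to $\preccurlyeq$ is invariant under \emph{left}-multiplication (not right), which is precisely what is needed since $\alpha$ is factored off on the left of both $c$ and $d$. If anything, one might add a sentence recalling why $\alpha \preccurlyeq c$ and $\alpha \preccurlyeq d$ for readers less familiar with Garside lattices, but this is immediate from the definition of the meet.
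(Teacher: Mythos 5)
Your proof is correct and follows essentially the same route as the paper's: positivity of $a,b$ from $\alpha\preccurlyeq c$ and $\alpha\preccurlyeq d$, the algebraic cancellation $a^{-1}b=c^{-1}\alpha\alpha^{-1}d=x$, and left-invariance of $\wedge$ applied to $\alpha a\wedge\alpha b=\alpha$ to conclude $a\wedge b=1$. No gaps.
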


\begin{proof}
It is clear that $a$ and $b$ are positive (as $\alpha\preccurlyeq c$ and $\alpha\preccurlyeq d$). Also $x=c^{-1}d=a^{-1}\alpha^{-1}\alpha b =a^{-1}b$. On the other hand, $\alpha= c\wedge d = \alpha a \wedge \alpha b$. Left-multiplying by $\alpha^{-1}$ we get $a\wedge b=1$, so $a^{-1}b$ is the reduced left-fraction decomposition of $x$.
\end{proof}

Let us point out that $x$ is positive if and only if $D_L(x)=1$, and that $x$ is negative if and only if $N_L(x)=1$. Also, if $x=a^{-1}b$ is the reduced left-fraction decomposition of $x$, then $x^{-1}=b^{-1}a$ is the reduced left-fraction decomposition of $x^{-1}$.

In the same way as one defines the reduced left-fraction decomposition of an element $x\in G$, there is also a reduced right-fraction decomposition $x=uv^{-1}$, where $u,v\in G^+$ and $u\wedge^{\Lsh} v=1$. This decomposition is also unique, and can be obtained from a given decomposition $x=wy^{-1}$ with $w,y\in G^+$ by removing from $w$ and $y$ their greatest common suffix. We denote $u=N_R(x)$ and $v=D_R(x)$ the right-numerator and right-denominator of $x$, respectively.

Given a Garside structure $(G,G^+,\Delta)$, it is well known that $(G,G^+,\Delta^N)$ is also a Garside structure, for any $N\geq 1$. The positive elements and the prefix and suffix orders of both structures coincide, but the simple elements of the latter structure are those elements $x$ such that $1\preccurlyeq x \preccurlyeq \Delta^N$. It follows that, for every positive element $a\in G^+$, there is some $N$ big enough so that $a$ is a simple element with respect to $(G,G^+,\Delta^N)$.

We shall later need the following well known result:

\begin{proposition}\label{P:turn_into_positive}
Let $G$ be a Garside group. For every $x\in G$ there exists a central element $z\in Z(G)$ such that $zx\in G^+$.
\end{proposition}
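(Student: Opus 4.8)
The plan is to reduce everything to multiplication by a central power of the Garside element $\Delta$. The first step is to recall a standard fact about Garside structures: conjugation by $\Delta$, i.e. the map $\tau\colon G\to G$, $g\mapsto \Delta^{-1}g\Delta$, restricts to an automorphism of the monoid $G^+$, and therefore permutes the finite set $\mathcal A$ of atoms (an atom being characterized inside $G^+$ as a non-trivial element that is not a product of two non-trivial positive elements). Let $e\geq 1$ be the order of the permutation induced by $\tau$ on $\mathcal A$. Then $\Delta^e$ commutes with every atom, and since $\mathcal A$ generates $G$, we conclude that $\Delta^e\in Z(G)$.

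Now let $x\in G$ be arbitrary and set $p=\inf(x)$, so that by definition $\Delta^p\preccurlyeq x$; equivalently, writing the left normal form $x=\Delta^p x_1\cdots x_r$, we have $\Delta^{-p}x=x_1\cdots x_r\in G^+$. Choose any integer $N\geq 0$ with $Ne+p\geq 0$ (for instance $N=\max(0,-p)$ works, since $e\geq 1$), and put $z=\Delta^{Ne}=(\Delta^e)^N$. By the first step $z\in Z(G)$, and
$$
zx \;=\; \Delta^{Ne}x \;=\; \Delta^{Ne+p}\,(\Delta^{-p}x).
$$
Here $\Delta^{Ne+p}$ is a non-negative power of $\Delta$, hence lies in $G^+$, and $\Delta^{-p}x\in G^+$ by the choice of $p$; therefore $zx\in G^+$, which is what we want.

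There is essentially no obstacle in this argument: the only non-bookkeeping ingredient is the centrality of a suitable power of $\Delta$, which is a classical consequence of the finiteness of the set of simple elements (the automorphism $\tau$ permutes the finitely many atoms, so some power of it is trivial). Everything else is a direct application of the definition of the infimum and of the closure of $G^+$ under products. If one wished to avoid mentioning $\tau$ altogether, one could simply take for granted that $\Delta^e\in Z(G)$ for some $e\geq 1$ and run the second paragraph verbatim.
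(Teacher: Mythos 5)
Your proof is correct and follows essentially the same route as the paper: multiply by a sufficiently large central power of $\Delta$, whose centrality comes from the fact that conjugation by $\Delta$ permutes the finite set of atoms. You have merely spelled out the bookkeeping (via the infimum $p$ and the choice of $N$ with $Ne+p\geq 0$) that the paper leaves implicit.
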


\begin{proof}
  It suffices to multiply the left normal form of $x$ by a sufficiently large power of $\Delta$ to make it positive. Since conjugation by $\Delta$ permutes the atoms (which is a finite generating set), some power of $\Delta$ is central, so one can take a central power of $\Delta$ whose exponent is as big as needed.
\end{proof}

\subsection{Parabolic subgroups of a Garside group}

The main concept of this paper is that of parabolic subgroup. In the framework of Garside groups, parabolic subgroups have been defined by Godelle in \cite{GODELLE2007}.

\begin{definition}\label{D:support_balanced}
Let $(G,G^+,\Delta)$ be a Garside structure. The {\em support} of a balanced element $\delta\in G^+$ is the set of atoms which are divisors of $\delta$:
$$
  \operatorname{Supp}(\delta)=\operatorname{Div}(\delta)\cap \mathcal A.
$$
\end{definition}

The word {\em support} comes from the following property, which is clear from the previous arguments: An atom belongs to the support of $\delta$ if and only if it appears in some representative of $\delta$ as a product of atoms.

\begin{definition}\cite{GODELLE2007}
Let $(G,G^+,\Delta)$ be a Garside structure.
\begin{enumerate}
\item[(i)] Let $\delta$ be a balanced element of $\operatorname{Div}(\Delta)$, let $G_\delta$ be the subgroup of $G$ generated by $\operatorname{Supp}(\delta)$, and let $G_{\delta}^+=G_{\delta}\cap G^+$. We say that $G_\delta$ is a {\em standard parabolic subgroup} of $G$ {\em associated to $\delta$} if $\operatorname{Div}(\delta) = \operatorname{Div}(\Delta)\cap G_\delta^+$.

\item[(ii)] A {\em parabolic subgroup} of $G$ is a subgroup of $G$ which is conjugate to a standard parabolic subgroup (associated to some $\delta$).
\end{enumerate}
\end{definition}

Godelle checked that this definition indeed generalizes the concept of a standard parabolic subgroups for Artin groups as we defined it in \S \ref{sect:G(e,e,n)}. We will check in Section \ref{sect:parabclosurecbg} that is also coincides with the concepts of standard parabolic subgroups we defined in \S \ref{sect:G(e,e,n)} and \S \ref{sect:wellgen2refl}.

Godelle also shows in \cite{GODELLE2007} that standard parabolic subgroups in a Garside group are also Garside groups, and that both Garside structures are closely related. We enumerate here the main properties:

\begin{theorem}\cite{GODELLE2007}
Let $(G,G^+,\Delta)$ be a Garside structure. Let $\delta\in \operatorname{Div}(\Delta)$ be a balanced element such that $G_\delta$ is a standard parabolic subgroup of $G$. Then $(G_{\delta},G_{\delta}^+,\delta)$ is a Garside structure, where the lattice of $G_{\delta}$ is a sublattice of the lattice of $G$. Moreover, $G_{\delta}^+$ is closed under positive prefixes and positive suffixes in $G$. This implies that the gcd (lcm) of two elements in $G_\delta$ is the same seen in $G_\delta$ and seen in $G$. And the reduced left-fraction (right-fraction) decomposition of an element of $G_\delta$ is the same seen in $G_\delta$ and seen in $G$.
\end{theorem}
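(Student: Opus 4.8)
This statement is due to Godelle \cite{GODELLE2007}; I indicate the strategy one would follow to prove it. Write $H$ for the submonoid of $G^{+}$ generated by $\operatorname{Supp}(\delta)$, so that $H\subseteq G_{\delta}\cap G^{+}=G_{\delta}^{+}$ by construction. The plan is to prove that $H$ is closed under taking positive prefixes and positive suffixes inside $G^{+}$, that $H=G_{\delta}^{+}$, and then to read off every remaining assertion. The closure statement splits into a ``cheap'' part and an ``expensive'' part, the latter being the only place where the hypothesis $\operatorname{Div}(\delta)=\operatorname{Div}(\Delta)\cap G_{\delta}^{+}$ is genuinely needed.

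First, the cheap part. Since $\delta$ is balanced and $\delta\preccurlyeq\Delta$, the set $\operatorname{Div}(\delta)$ is closed under positive divisors and, being a set of common divisors of $\delta$, is closed under the join $\vee$ of $G$ (any two of its elements divide $\delta$, hence so does their join). I would then prove, by induction on the length of $c\in H$ (the norm provided by atomicity of $G^{+}$), that for every $u\in\operatorname{Div}(\delta)$ with $u\preccurlyeq c$ one has $u^{-1}c\in H$: writing $c=ac'$ with $a\in\operatorname{Supp}(\delta)$ and $c'\in H$ of smaller length, one has $a\wedge u\in\{1,a\}$; if $a\preccurlyeq u$ one passes directly to $c'$ (apply the induction hypothesis to $a^{-1}u\preccurlyeq c'$); if $a\wedge u=1$, one sets $w=a\vee u\in\operatorname{Div}(\delta)$, notes $w\preccurlyeq c$, applies the induction hypothesis to $a^{-1}w\preccurlyeq c'$ to get $w^{-1}c\in H$, and concludes $u^{-1}c=(u^{-1}w)(w^{-1}c)\in H$ because $u^{-1}w\in\operatorname{Div}(\delta)\subseteq H$. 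The right-handed version is symmetric.

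Next, the expensive part, which I expect to be the main obstacle. By induction on the length of $c$ one shows that $c\wedge\Delta\in\operatorname{Div}(\delta)$ for every $c\in H$ --- equivalently that every simple prefix of $c$ lies in $\operatorname{Div}(\delta)$, since such a prefix divides $c\wedge\Delta$. Writing again $c=ac'$ with $a\in\operatorname{Supp}(\delta)$ and $c'\in H$, one has $a\preccurlyeq c\wedge\Delta$ (since $a$ is simple and $a\preccurlyeq c$); writing $c\wedge\Delta=a\,s$, the element $s$ is a prefix of $a^{-1}\Delta$, hence simple, and is a simple prefix of $c'=a^{-1}c$, so by the induction hypothesis $s\in\operatorname{Div}(\delta)\subseteq G_{\delta}$; therefore $c\wedge\Delta=a\,s\in G_{\delta}$, while $c\wedge\Delta$ is simple and positive, whence $c\wedge\Delta\in\operatorname{Div}(\Delta)\cap G_{\delta}^{+}=\operatorname{Div}(\delta)$ by the standing hypothesis. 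The delicate points are that the hypothesis is invoked in exactly this non-formal way, and that the induction must be run over the ``syntactic'' monoid $H=\langle\operatorname{Supp}(\delta)\rangle^{+}$, not over $G_{\delta}\cap G^{+}$, to avoid circularity.

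Combining the two parts yields full closure: given a positive prefix $c'$ of $c\in H$ with $c'\neq 1$, peel off an atom $a\preccurlyeq c'$; it is a simple prefix of $c$, so $a\in\operatorname{Supp}(\delta)$ by the expensive part, $a^{-1}c\in H$ by the cheap part, and $a^{-1}c'\preccurlyeq a^{-1}c$ has smaller length, so one concludes by induction (suffixes are symmetric). With $H$ divisor-closed in $G^{+}$, and since every element of $H$ divides some power $\delta^{N}$ (a product of $k$ divisors of $\delta$ divides $\delta^{k}$), the pair $(H,\delta)$ satisfies the Garside axioms with the prefix and suffix orders restricted from $G$ and with lattice a sublattice of that of $G^{+}$; by the standard embedding criterion for divisor-closed submonoids (as in \cite{GODELLE2007}, or the arguments behind \autoref{prop:dualinjectionmonoid}) its group of fractions embeds into $G$ with image $G_{\delta}=\langle\operatorname{Supp}(\delta)\rangle$, so $(G_{\delta},H,\delta)$ is a Garside structure. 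Then $G_{\delta}^{+}=G_{\delta}\cap G^{+}=H$ follows from uniqueness of the reduced left-fraction decomposition in $G$: if $g\in G_{\delta}\cap G^{+}$ has reduced fraction $g=a^{-1}b$ in $G_{\delta}$, then $a\wedge b=1$ holds in $G$ as well since gcd's coincide, so $a=D_{L}(g)=1$ and $g=b\in H$. The same uniqueness shows that reduced left- and right-fraction decompositions of elements of $G_{\delta}$ agree in $G_{\delta}$ and in $G$; the coincidence of gcd's and lcm's is immediate from closure (for gcd's) and the $\delta^{N}$-bound (for lcm's); and the sublattice statement follows by left-translating any meet or join of elements of $G_{\delta}$ by a large power of $\delta$ to reduce to positive elements, where closure applies.
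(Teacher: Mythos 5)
The paper does not actually prove this theorem: it is quoted from Godelle \cite{GODELLE2007} as a black box, so there is no in-paper argument to compare your proposal against. Your outline is nevertheless a sound reconstruction along essentially Godelle's lines, and I see no gap in it. The decisive point, which you correctly isolate as the ``expensive part'', is that the defining identity $\operatorname{Div}(\delta)=\operatorname{Div}(\Delta)\cap G_\delta^+$ forces every simple prefix (and, symmetrically, suffix) of a product of atoms of $\operatorname{Supp}(\delta)$ back into $\operatorname{Div}(\delta)$; combined with your ``cheap part'' this gives prefix/suffix-closure of $H=\langle\operatorname{Supp}(\delta)\rangle^+$ in $G^+$, the identification $G_\delta^+=H$ via uniqueness of reduced fractions, and then the compatibility of meets, joins and fraction decompositions by routine translation arguments. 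Two small points deserve to be made explicit in a full write-up: (i) the inclusions $\operatorname{Div}(\delta)\subseteq H$ and the membership of the ``middle factors'' $a^{-1}u$, $a^{-1}w$, $u^{-1}w$ in $\operatorname{Div}(\delta)$ both rest on $\delta$ being balanced, via the remark in the paper that any middle factor of a balanced positive element is one of its divisors; and (ii) the claim that a product of $k$ divisors of $\delta$ divides $\delta^k$ --- needed so that the join of two elements of $H$ lands in $H$ --- is not automatic for a non-Garside $\delta$ and should be justified by noting that conjugation by $\delta$ permutes $\operatorname{Div}(\delta)$, again because $\delta$ is balanced. With those remarks inserted, the argument is complete.
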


\subsection{LCM-Garside structures}
In this paper, the Garside structures we will be interested in will satisfy a very convenient property. For every subset $X=\{x_1,\ldots,x_r\}\subset \mathcal A$, denote $\Delta_X=x_1\vee \cdots \vee x_r$, which is always a simple element. If $X=\varnothing$, we consider $\Delta_X=1$. If $\Delta_X$ is balanced, we will denote $G_X=G_{\Delta_X}$ the subgroup generated by $\operatorname{Div}(\Delta_X)$, that is, the subgroup generated by $\operatorname{Supp}(\Delta_X)$. The property we require to a Garside structure is the following:

\begin{definition}\label{D:LCM-Garside_structure}
Let $(G,G^+,\Delta)$ be a Garside structure. We say that it is an LCM-Garside structure if:
\begin{enumerate}
\item $\Delta = \Delta_{\mathcal A}$.

\item For every $X\subset \mathcal A$, the element $\Delta_X$ is balanced.

\item For every $X\subset \mathcal A$, the subgroup $G_X=G_{\Delta_X}$ is a standard parabolic subgroup
associated to $\Delta_X$.
\end{enumerate}
\end{definition}

\begin{lemma}
If $(G,G^+,\Delta)$ is an LCM-Garside structure, then the standard parabolic subgroups of $G$ are precisely the subgroups of the form $G_X$, for $X\subset \mathcal{A}$.
\end{lemma}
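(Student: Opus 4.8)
The plan is to prove the two inclusions of collections of subgroups separately, and essentially all the content lies in one of them. The inclusion $\{G_X : X\subset\mathcal{A}\}\subseteq\{\text{standard parabolic subgroups}\}$ is nothing but part (3) of the definition of an LCM-Garside structure (Definition~\ref{D:LCM-Garside_structure}), so there is nothing to prove there.

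For the reverse inclusion, let $G_\delta$ be an arbitrary standard parabolic subgroup, so that by definition $\delta$ is a balanced element of $\operatorname{Div}(\Delta)$ and $G_\delta$ is the subgroup generated by $\operatorname{Supp}(\delta)$. I will take $X:=\operatorname{Supp}(\delta)$, which is a subset of $\mathcal{A}$, and show that $G_\delta=G_X$. Since $G_X$ is by definition the subgroup generated by $\operatorname{Supp}(\Delta_X)$, it suffices to prove the equality of sets $\operatorname{Supp}(\Delta_X)=\operatorname{Supp}(\delta)=X$ (note that $\operatorname{Supp}(\Delta_X)$ is meaningful because $\Delta_X$ is balanced by part (2) of Definition~\ref{D:LCM-Garside_structure}). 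One inclusion is immediate: each $x\in X$ is an atom which is a prefix of $\Delta_X=\bigvee_{y\in X}y$ by the very definition of the join, hence $x\in\operatorname{Div}(\Delta_X)\cap\mathcal{A}=\operatorname{Supp}(\Delta_X)$. The key step for the other inclusion is the inequality $\Delta_X\preccurlyeq\delta$: every $x\in X=\operatorname{Supp}(\delta)\subseteq\operatorname{Div}(\delta)$ satisfies $x\preccurlyeq\delta$, and since $\Delta_X$ is the least upper bound of the $x$'s for the lattice order $\preccurlyeq$, we obtain $\Delta_X\preccurlyeq\delta$. Consequently, if $a\in\operatorname{Supp}(\Delta_X)$ then $a$ is an atom with $a\preccurlyeq\Delta_X\preccurlyeq\delta$, so $a\in\operatorname{Div}(\delta)\cap\mathcal{A}=\operatorname{Supp}(\delta)=X$. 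This gives $\operatorname{Supp}(\Delta_X)=X$, and therefore $G_X=\langle X\rangle=\langle\operatorname{Supp}(\delta)\rangle=G_\delta$.

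The argument uses only the definitions together with transitivity of $\preccurlyeq$ and the universal property of joins, so I do not anticipate any real obstacle. The single point that deserves a moment's attention is that every prefix, join and support appearing in the proof must be computed inside $G$; no comparison between the ambient Garside structure and its parabolic sub-structures is required, which keeps the reasoning self-contained.
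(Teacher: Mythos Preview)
Your proof is correct and follows essentially the same approach as the paper's own proof: take $X=\operatorname{Supp}(\delta)$, use the universal property of the join to get $\Delta_X\preccurlyeq\delta$, and deduce the chain $X\subset\operatorname{Supp}(\Delta_X)\subset\operatorname{Supp}(\delta)=X$, hence $G_X=G_\delta$.
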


\begin{proof}
If $(G,G^+,\Delta)$ is an LCM-Garside structure, all subgroups of the form $G_X$ are standard parabolic subgroups. Conversely, let $G_\delta$ be a standard parabolic subgroup, for some balanced element $\delta\in \operatorname{Div}(\Delta)$. Consider $X=\operatorname{Supp}(\delta)=\{x_1,\ldots,x_r\}\subset \mathcal A$. Since $x_i\preccurlyeq \delta$ for $i=1,\ldots,r$, it follows that $\Delta_X=x_1\vee \cdots \vee x_r\preccurlyeq \delta$. This implies that $\operatorname{Div}(\Delta_X)\subset \operatorname{Div}(\delta)$. Hence
$$
  X \subset \operatorname{Supp}(\Delta_X) =\operatorname{Div}(\Delta_X)\cap \mathcal A \subset \operatorname{Div}(\delta)\cap \mathcal A = \operatorname{Supp}(\delta)=X,
$$
which implies that $\operatorname{Supp}(\Delta_X)=X=\operatorname{Supp}(\delta)$, and then $G_X=G_{\Delta_X}=G_\delta$.
\end{proof}

We notice that, given $X\subset \mathcal A$, we do not necessarily have $\operatorname{Supp}(\Delta_X)=X$. Subsets of atoms satisfying this property will be important for us:

\begin{definition}
Let $(G,G^+,\Delta)$ be an LCM-Garside structure with set of atoms $\mathcal A$. For every nonempty subset $X=\{x_1,\ldots,x_r\}\subset \mathcal A$, let $\Delta_X=x_1\vee\cdots \vee x_r$. We define the {\it closure of $X$} as:
$$
      \overline{X}=\operatorname{Supp}(\Delta_X)=\operatorname{Div}(\Delta_X) \cap \mathcal A.
$$
We say that $X$ is {\em saturated} if $\overline{X}=X$.
\end{definition}

With the above definitions and results, the next result follows immediately:

\begin{proposition}
Let $(G,G^+,\Delta)$ be an LCM-Garside structure. A {\it standard parabolic subgroup} is a subgroup generated by a {\em saturated} set of atoms:
$$
             G_X=\left\langle x_1,\ldots,x_r \right\rangle, \qquad X=\{x_1,\ldots,x_r\}=\overline{X}.
$$

A {\it parabolic subgroup} is a subgroup $P$ of $G$ which is conjugate to a standard parabolic subgroup: $P=\left(G_X\right)^g=g^{-1}G_X g$ for some $g\in G$ and some saturated set of atoms $X$.
\end{proposition}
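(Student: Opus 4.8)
The plan is to read this statement off directly from the lemma immediately preceding it together with the definition of the closure operation, so the argument will be very short. First I would invoke that lemma: in an LCM-Garside structure the standard parabolic subgroups are exactly the subgroups $G_X$ with $X \subseteq \mathcal{A}$. It then only remains to see that one may always arrange the defining set of atoms to be \emph{saturated}, and that when it is, $G_X$ is literally the subgroup generated by the atoms of $X$.

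For the saturation step, fix $X \subseteq \mathcal{A}$. By definition $\overline{X} = \operatorname{Supp}(\Delta_X) = \operatorname{Div}(\Delta_X) \cap \mathcal{A}$, so every atom in $\overline{X}$ is a prefix of $\Delta_X$; taking the join over $\overline{X}$ gives $\Delta_{\overline{X}} \preccurlyeq \Delta_X$. On the other hand $X \subseteq \overline{X}$ forces $\Delta_X \preccurlyeq \Delta_{\overline{X}}$, whence $\Delta_X = \Delta_{\overline{X}}$ and therefore $G_X = G_{\Delta_X} = G_{\Delta_{\overline{X}}} = G_{\overline{X}}$. Moreover $\overline{\overline{X}} = \operatorname{Supp}(\Delta_{\overline{X}}) = \operatorname{Supp}(\Delta_X) = \overline{X}$, so $\overline{X}$ is saturated. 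Since by the very definition of $G_{\overline{X}} = G_{\Delta_{\overline{X}}}$ this group is generated by $\operatorname{Supp}(\Delta_{\overline{X}}) = \overline{X}$, we conclude that every standard parabolic subgroup has the form $G_X = \langle x_1, \dots, x_r \rangle$ for a saturated set $X = \{x_1, \dots, x_r\}$ of atoms. Conversely, the lemma already guarantees that every subgroup $G_X$ with $X \subseteq \mathcal{A}$ (in particular with $X$ saturated) is a standard parabolic subgroup, which gives the other direction.

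The assertion about parabolic subgroups is then immediate: by definition a parabolic subgroup is a conjugate of a standard parabolic subgroup, hence of the form $(G_X)^g = g^{-1} G_X g$ for some $g \in G$ and some standard parabolic $G_X$, which by the previous paragraph may be taken with $X$ saturated. I do not anticipate any genuine obstacle here; the only point needing a little care is to keep track of the fact that all the elements $\Delta_X$ and $\Delta_{\overline{X}}$ occurring above are balanced, so that the subgroups $G_X$ and $G_{\overline{X}}$ are even defined — but this is exactly what condition (2) in the definition of an LCM-Garside structure supplies.
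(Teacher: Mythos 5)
Your proposal is correct and matches the paper's intent: the paper gives no explicit proof, stating that the proposition "follows immediately" from the preceding lemma and the definition of the closure $\overline{X}$, and your argument is precisely that deduction spelled out (the identity $\Delta_X=\Delta_{\overline{X}}$ and the idempotence $\overline{\overline{X}}=\overline{X}$ are also implicit in the paper's proof of the lemma, where it is shown that $\operatorname{Supp}(\Delta_X)=X$ for $X=\operatorname{Supp}(\delta)$). No gaps.
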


\subsection{Swaps and recurrent elements}

In Garside groups, the conjugacy problem is solved by using special kinds of conjugations (cyclings, decyclings, cyclic slidings) in order to compute suitable finite sets (super summit sets, ultra summit sets, sets of sliding circuits), see~\cite{ELRIFAIMORTON,GEBHARDT,GEBHARDTGM}. Their definitions and computations are sometimes technical, although they are quite efficient in practice.

In this paper we will show that there is a much simpler procedure to treat problems related to conjugacy, centralizers and parabolic subgroups in a Garside group. The algorithms are not faster than the ones mentioned above, but it is much better when one needs to show theoretical results.

Since we have a theoretical goal (to prove the existence of parabolic closures in some Garside groups), we will use this new approach. We will just use one kind of conjugation (that we call {\it swap}), and one finite set of elements ({\it recurrent elements} for swap).

\begin{definition}
Let $G$ be a Garside group. We define the {\it left-swap} function (or just the {\em swap} function) to be the map $\phi$ which sends $x=a^{-1}b$ (written as a reduced left-fraction) to $\phi(x)=ba^{-1}$.
\end{definition}

Notice that $\phi(x)=axa^{-1} = bxb^{-1}$, so $\phi(x)$ is conjugate to $x$.

\begin{definition}
Given $x\in G$, we say that $x$ is {\em recurrent for swap} (or just {\em recurrent}) if $\phi^m(x)=x$ for some $m>0$.
\end{definition}

One could think of $\phi$ as a conjugation that simplifies elements, and think of recurrent elements as those which are as {\it simplified} as possible.  It is important to see that every element can be conjugated to a recurrent element using swaps:

\begin{proposition}\label{P:swap_to_get_recurrent}
For every $x\in G$, there are integers $0\leq m < n$ such that $\phi^m(x)=\phi^n(x)$. The elements in the set $\{\phi^m(x),\ldots,\phi^{n-1}(x)\}$ are all recurrent, and the set will be called a {\em circuit for swap}.
\end{proposition}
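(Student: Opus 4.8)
The plan is to show that the $\phi$-orbit of $x$ is contained in a \emph{finite} subset of $G$; two iterates must then coincide, and recurrence of the intermediate elements will follow formally.

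The engine of the argument is the following monotonicity property, which I would isolate as a separate lemma: for every $y\in G$ one has $\inf(\phi(y))\geq\inf(y)$ and $\sup(\phi(y))\leq\sup(y)$. (Recall also from the remark after the definition of $\phi$ that $\phi(y)$ is conjugate to $y$, although only the inf/sup bounds are needed here.) By an inversion symmetry it is enough to treat infima: if $y=a^{-1}b$ is the reduced left-fraction decomposition of $y$ then $y^{-1}=b^{-1}a$ is that of $y^{-1}$, whence $\phi(y^{-1})=\phi(y)^{-1}$, and since $\inf(y^{-1})=-\sup(y)$ and $\sup(y^{-1})=-\inf(y)$, the supremum bound for $y$ follows from the infimum bound applied to $y^{-1}$. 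To prove $\inf(\phi(y))\geq\inf(y)$, write $y=a^{-1}b$ reduced and set $p=\inf(y)$; if $p\geq 0$ then $a=1$, so $\phi(y)=y$ and there is nothing to prove, and we may assume $p=-m$ with $m\geq 1$. Writing $\tau(g)=\Delta g\Delta^{-1}$ and using $\Delta^{m}g=\tau^{m}(g)\Delta^{m}$, the hypothesis $\Delta^{-m}\preccurlyeq y$ unfolds into ``$\tau^{m}(a)$ is a prefix of $\Delta^{m}b$'', while the desired conclusion $\Delta^{-m}\preccurlyeq\phi(y)=ba^{-1}$ unfolds into ``$a$ is a suffix of $\Delta^{m}b$''. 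The content of the lemma is thus to pass from the first statement to the second, using that $\Delta^{m}$ is balanced, that $\tau$ is an automorphism of $G$ preserving $G^{+}$, $\Delta$, and both orders, and the reducedness $a\wedge b=1$. I expect this prefix-to-suffix passage to be the main obstacle. As a sanity check, the case $m=1$ is transparent: if $y=\Delta^{-1}y_{1}\cdots y_{r}$ is the left normal form (with $r\geq 1$, the case $r=0$ being trivial), then the reduced fraction is $y=(y_{1}^{-1}\Delta)^{-1}(y_{2}\cdots y_{r})$, so $\phi(y)=(y_{2}\cdots y_{r})\Delta^{-1}y_{1}=\Delta^{-1}\tau(y_{2})\cdots\tau(y_{r})\,y_{1}$, whose infimum is visibly at least $-1=\inf(y)$.

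Granting the lemma, iteration yields $\inf(x)\leq\inf(\phi^{k}(x))\leq\sup(\phi^{k}(x))\leq\sup(x)$ for every $k\geq 0$. Put $p=\inf(x)$ and $q=\sup(x)$. Any $y\in G$ with $p\leq\inf(y)\leq\sup(y)\leq q$ has a left normal form $\Delta^{\inf(y)}y_{1}\cdots y_{r}$ with $0\leq r=\sup(y)-\inf(y)\leq q-p$ and each $y_{i}$ one of the finitely many proper simple elements; since there are finitely many choices for $\inf(y)\in\{p,\ldots,q\}$, for $r$, and for each factor, the set of such $y$ is finite. Hence $\{\phi^{k}(x)\}_{k\geq 0}$ is finite, so $\phi^{m}(x)=\phi^{n}(x)$ for some integers $0\leq m<n$. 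Finally, for $m\leq j\leq n-1$ we have
$$
\phi^{\,n-m}\bigl(\phi^{j}(x)\bigr)=\phi^{\,j-m}\bigl(\phi^{\,n-m}(\phi^{m}(x))\bigr)=\phi^{\,j-m}\bigl(\phi^{n}(x)\bigr)=\phi^{\,j-m}\bigl(\phi^{m}(x)\bigr)=\phi^{j}(x),
$$
so $\phi^{j}(x)$ is recurrent; thus every element of $\{\phi^{m}(x),\ldots,\phi^{n-1}(x)\}$ is recurrent, which completes the proof.
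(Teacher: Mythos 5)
Your overall architecture is the right one --- show that the $\phi$-orbit of $x$ lies in a finite subset of $G$, deduce that two iterates coincide, and run the formal periodicity argument (which is correct as written, as is the reduction of the supremum bound to the infimum bound via $\phi(y^{-1})=\phi(y)^{-1}$). But the proof is not complete: everything rests on the monotonicity lemma $\inf(\phi(y))\geq\inf(y)$, and you only verify it when $\inf(y)=-1$, explicitly deferring the general case (``I expect this prefix-to-suffix passage to be the main obstacle''). That passage is precisely the mathematical content, and it is not a formality: unwinding it as you do, one has to show that $A\preccurlyeq B\Delta^{m}$ together with $A\wedge B=1$ (for $A=\tau^{m}(a)$, $B=\tau^{m}(b)$) forces $A\preccurlyeq\Delta^{m}$, which amounts to the identities $\inf(y)=-\sup(D_L(y))$ for non-positive $y$ and $\sup(D_L(\phi(y)))\leq\sup(D_L(y))$. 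These statements are true (they reflect the fact, exploited later in the paper, that swap is a twisted cycling), but each needs an argument of roughly the same weight as the proposition itself, so as it stands the proof has a genuine gap at its load-bearing step.

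The paper obtains finiteness of the orbit from a different, much cheaper invariant, which I would recommend as the repair. Fix $N$ large enough that $a=D_L(x)$ and $b=N_L(x)$ are simple for the rescaled structure $(G,G^{+},\Delta^{N})$; writing $aa'=bb'=\Delta^{N}$ one computes $\phi(x)=ba^{-1}=ba'\Delta^{-N}=\left(\tau^{-1}(b')\right)^{-1}\tau^{-1}(a')$ (with $\tau$ the conjugation by $\Delta^{N}$), so $\phi(x)$ is a left fraction of two $\Delta^{N}$-simple elements; by \autoref{P:fraction_simplification} its \emph{reduced} left fraction is obtained by cancelling a common prefix, so $D_L(\phi(x))$ and $N_L(\phi(x))$ are again $\Delta^{N}$-simple. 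The property ``numerator and denominator are $\Delta^{N}$-simple'' is therefore preserved by $\phi$, and since there are only finitely many $\Delta^{N}$-simple elements the orbit is finite --- no normal forms and no inf/sup estimates are needed. Your inf/sup bounds would localize the orbit a bit more sharply, but at the cost of a lemma whose proof essentially reconstructs the cycling theory that the rescaling trick lets you bypass.
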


\begin{proof}
The second sentence in the statement is trivial, so we just need to show the first one.

Let $x=a^{-1}b$ be the reduced left-fraction decomposition of $x$. This reduced left-fraction decomposition of $x$ depends only on $G^+$ (which determines the lattice order) and is independent of the Garside element we take. Hence, to simplify the arguments we will work with a Garside structure $(G,G^+,\Delta^N)$, with $N$ big enough so that $a$ and $b$ are simple elements. To avoid confusion, we will denote $\mathbf \Delta=\Delta^N$.

We have $aa'=\mathbf \Delta$ and $bb'=\mathbf \Delta$ for some simple elements $a'$ and $b'$. If we denote $\tau$ the conjugation by $\mathbf \Delta$, we have:
$$
    \phi(x)=ba^{-1}= b a' \mathbf \Delta^{-1} = \mathbf \Delta^{-1} \tau^{-1}(b) \tau^{-1}(a') = \left(\tau^{-1}(b')\right)^{-1} \tau^{-1}(a').
$$
Since conjugation by $\mathbf \Delta$ preserves the Garside structure, it follows that $\tau^{-1}(b')$ and $\tau^{-1}(a')$ are simple elements. In other words, there are simple elements $c$ and $d$ such that $\phi(x)=c^{-1}d$. By \autoref{P:fraction_simplification}, if $\alpha=c\wedge d$ and we write $c=\alpha c_0$ and $d=\alpha d_0$ for some simple elements $c_0, d_0$, then $\phi(x)=c_0^{-1}d_0$ is the reduced left-fraction decomposition of $\phi(x)$. Therefore the left-numerator and the left-denominator of $\phi(x)$ are simple elements. Since the number of simple elements is finite, the sequence $\{\phi^i(x)\}_{i\geq 0}$ must become periodic.
\end{proof}

It is worth noticing that if $x$ is positive then it is recurrent, as its left-denominator is trivial and then $\phi(x)=x$. The same happens if $x$ is negative, as in this case the left-numerator of $x$ is trivial and then $\phi(x)=x$.

The following result is just a simple observation:

\begin{proposition}\label{P:conjugate_to_recurrent_in_G_X}
Let $(G,G^+,\Delta)$ be a Garside structure, and let $G_{\delta}$ be the standard parabolic subgroup of $G$ associated to $\delta$. If $x\in G_{\delta}$, there is $\alpha\in G_{\delta}$ such that $x^\alpha$ is recurrent.
\end{proposition}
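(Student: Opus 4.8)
The plan is to run the swap procedure of \autoref{P:swap_to_get_recurrent} while staying inside $G_\delta$; the only thing to verify is that the map $\phi$ sends $G_\delta$ to itself and that the conjugating elements it produces lie in $G_\delta$, after which the statement is immediate.

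First I would take $x\in G_\delta$ and write its reduced left-fraction decomposition $x=a^{-1}b$ in $G$. By the theorem of Godelle recalled above, this coincides with the reduced left-fraction decomposition of $x$ computed inside the Garside group $(G_\delta,G_\delta^+,\delta)$; in particular $a=D_L(x)$ and $b=N_L(x)$ both lie in $G_\delta^+\subset G_\delta$. Since $\phi(x)=bxb^{-1}$, this exhibits $\phi(x)$ as the conjugate $x^{b^{-1}}$ of $x$ by an element of $G_\delta$, so $\phi(x)\in G_\delta$. Iterating, an easy induction on $m$ shows that $\phi^m(x)\in G_\delta$ for all $m$: at each step one applies the previous sentence to $\phi^m(x)\in G_\delta$, whose left-numerator $b_m\in G_\delta$ satisfies $\phi^{m+1}(x)=b_m\,\phi^m(x)\,b_m^{-1}$. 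Keeping track of these conjugations, one gets $\phi^m(x)=x^{\alpha_m}$ with $\alpha_m=b_0^{-1}b_1^{-1}\cdots b_{m-1}^{-1}\in G_\delta$ (and $\alpha_0=1$).

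To conclude, \autoref{P:swap_to_get_recurrent} provides an integer $m\geq 0$ such that $\phi^m(x)$ is recurrent; then $\alpha=\alpha_m\in G_\delta$ satisfies $x^\alpha=\phi^m(x)$, which is recurrent, as desired. There is no serious obstacle in this argument: the only input that must be invoked with care is the compatibility between the reduced left-fraction decomposition in $G$ and in $G_\delta$ (equivalently, that $G_\delta^+$ is closed under positive prefixes and suffixes in $G$), which is exactly what Godelle's theorem supplies; the rest is just unwinding the definition of $\phi$ and applying \autoref{P:swap_to_get_recurrent}.
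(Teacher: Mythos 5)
Your proof is correct and follows essentially the same route as the paper's: both rest on Godelle's theorem identifying the reduced left-fraction decomposition in $G_\delta$ with the one in $G$, so that the swap conjugators stay in $G_\delta$, and then invoke \autoref{P:swap_to_get_recurrent}. The only cosmetic difference is that you conjugate by the numerator $b$ where the paper uses the denominator $a$; since $\phi(x)=axa^{-1}=bxb^{-1}$, both work.
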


\begin{proof}
Let $x=a^{-1}b$ be the reduced left-fraction decomposition of $x$ in $G$. We know that $a^{-1}b$ is also the reduced left-fraction decomposition of $x$ in $G_{\delta}$, hence $a,b\in G_{\delta}^+$. But then $\phi(x)=axa^{-1}\in G_{\delta}$.

Applying the same reasoning to each iterated swap of $x$, it follows that all elements in the sequence $\{\phi^i(x)\}_{i\geq 0}$ belong to $G_{\delta}$, and one can take a conjugating element in $G_{\delta}$ when conjugating $\phi^i(x)$ to $\phi^{i+1}(x)$ for every $i\geq 0$. By \autoref{P:swap_to_get_recurrent} some element in the sequence is recurrent, so the result follows.
\end{proof}

Notice that, given a standard parabolic subgroup $G_{\delta}\subset G$, the definition of swap of an element in $G_{\delta}$ does not depend on the Garside structure (either $(G,G^+,\Delta)$ or$(G_{\delta},G_{\delta}^+,\delta)$) that one is considering, since the left-fraction decompositions in $G_{\delta}$ and in $G$ coincide. Hence, an element of $G_{\delta}$ is recurrent with respect to $(G,G^+,\Delta)$ if and only if it is recurrent with respect to $(G_{\delta},G_{\delta}^+,\delta)$. We can then just say that the element is recurrent, without mentioning any Garside structure.

We finish this section by relating recurrent elements with positive (or negative) elements in a conjugacy class.

\begin{lemma}\label{L:phi_conjugates_to_positive}
Let $x\in G$. If $x$ is conjugate to a positive element, then $\phi^m(x)\in G^+$ for some $m\geq 0$.
\end{lemma}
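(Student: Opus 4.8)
The plan is to use the following elementary observation: if $x$ is conjugate to a positive element, then one can write $x = p^{-1}yp$ with $y\in G^+$ and $p\in G^+$, and a single swap replaces the positive conjugator $p$ by the smaller positive element $p\wedge(yp)$. Iterating, the conjugators form a $\preccurlyeq$-descending chain of positive elements; it stabilizes, and at that point $x$ has become positive.

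First I would reduce to a positive conjugator. Given $g$ with $gxg^{-1}=y\in G^+$, choose by \autoref{P:turn_into_positive} a central $z$ with $p:=zg\in G^+$; then $pxp^{-1}=zyz^{-1}=y$ since $z$ is central, so $x=p^{-1}yp$ with $p,y\in G^+$. Next I would prove, by induction on $i$, the identity $\phi^i(x)=p_i^{-1}yp_i$ where $p_0=p$ and $p_{i+1}=p_i\wedge(yp_i)$. For the inductive step, $\phi^i(x)=p_i^{-1}(yp_i)$ is a quotient of the positive elements $p_i$ and $yp_i$; putting $\alpha_i=p_i\wedge(yp_i)$, $p_i=\alpha_ia_i$ and $yp_i=\alpha_ib_i$, \autoref{P:fraction_simplification} tells us that $a_i^{-1}b_i$ is the \emph{reduced} left-fraction decomposition of $\phi^i(x)$, so by the definition of $\phi$,
$$
\phi^{i+1}(x)=b_ia_i^{-1}=(\alpha_i^{-1}yp_i)(\alpha_i^{-1}p_i)^{-1}=\alpha_i^{-1}\,y\,\alpha_i=p_{i+1}^{-1}yp_{i+1},
$$
with $p_{i+1}=\alpha_i=p_i\wedge(yp_i)\preccurlyeq p_i$. (When $\phi^i(x)$ happens to be positive already this yields $p_{i+1}=p_i$, consistent with $\phi$ fixing positive elements.)

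Finally I would conclude by Noetherianity: the $p_i$ all lie among the positive left-divisors of $p_0$, of which there are only finitely many (as $\mathcal A$ is finite and $G^+$ is atomic), so the $\preccurlyeq$-descending chain $(p_i)_{i\geq 0}$ is eventually constant, say $p_{m+1}=p_m$. But $p_{m+1}=p_m\wedge(yp_m)=p_m$ means precisely $p_m\preccurlyeq yp_m$, i.e. $\phi^m(x)=p_m^{-1}yp_m\in G^+$, which is what we want.

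The step I expect to be the main obstacle is getting the inductive identity exactly right, and in particular making sure that $a_i^{-1}b_i$ is genuinely the \emph{reduced} left-fraction of $\phi^i(x)$ before one is allowed to read off $\phi^{i+1}(x)=b_ia_i^{-1}$ — this is precisely what \autoref{P:fraction_simplification} supplies. Once the clean formula $\phi(p^{-1}yp)=(p\wedge yp)^{-1}\,y\,(p\wedge yp)$ is established, everything else (staying conjugate to $y$ through positive elements, and termination) is immediate, and in particular no appeal to normal forms, cyclings, or to the periodicity of iterated swaps from \autoref{P:swap_to_get_recurrent} is needed.
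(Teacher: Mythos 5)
Your proof is correct, and its key step is genuinely different from the paper's. The paper also reduces to a positive conjugator and iterates, but its central lemma is the \emph{universal} statement that any positive $c$ with $cxc^{-1}$ positive must admit $a=D_L(x)$ as a suffix; proving this requires passing to the rescaled Garside element $\Delta^N$, invoking left-weightedness of $a'b$, and the ``$\Delta$ is a prefix iff it is a suffix'' argument. You sidestep all of that: you only need to know what $\phi$ does to the one (not necessarily reduced) left-fraction presentation $p^{-1}(yp)$ of $x$, and \autoref{P:fraction_simplification} hands you exactly the identity $\phi(p^{-1}yp)=(p\wedge yp)^{-1}\,y\,(p\wedge yp)$; the termination and the equivalence $p_m\wedge yp_m=p_m \Leftrightarrow \phi^m(x)\in G^+$ are then immediate. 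Your inductive computation is right (the base case, the use of \autoref{P:fraction_simplification} to certify reducedness before applying the definition of $\phi$, and the cancellation $b_ia_i^{-1}=\alpha_i^{-1}y\alpha_i$ all check out), and the Noetherian termination is the same finiteness of descending chains of positive prefixes that the paper uses. What your shorter route does \emph{not} give is the by-product recorded in \autoref{R:minimal_to_recurrent}: the paper's universal ``$c\succcurlyeq a$'' argument shows that the product of the swap denominators is the $\succcurlyeq$-minimal positive element conjugating $x$ to a positive element, a fact the authors reuse later (e.g.\ in the intersection theorem). So as a proof of the lemma as stated your argument is complete and cleaner, but one would still need the paper's extra work to recover that minimality statement.
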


\begin{proof}
For those familiar with Garside theory, we will parallel the usual proof showing that {\em iterated cycling} takes $x$ to a positive element~\cite{ELRIFAIMORTON,BIRMANKOLEE}.

Let $x=a^{-1}b$ be the reduced left-fraction decomposition of $x$. We will use the Garside structure $(G,G^+,\Delta^N)$ such that $a$ and $b$ are simple elements. Let us denote $\mathbf \Delta = \Delta^N$ to avoid confusion. Let $a'$ be the simple element such that $a'a=\mathbf \Delta$. We have $x=\mathbf \Delta^{-1}a'b$. Recall that $a\wedge b=1$, hence $\mathbf \Delta\wedge a'b= a'a\wedge a'b = a'(a\wedge b)=a'$. So $a'b$ is a left-weighted decomposition.

We are assuming that $x$ is conjugate to a positive element, so there exists $c\in G$ such that $cxc^{-1}\in G^+$. By \autoref{P:turn_into_positive} we can assume that $c$ is positive, multiplying it by a suitable central element if necessary. We then have $c,d\in G^+$ such that $cxc^{-1}=d$. Hence $c\mathbf \Delta^{-1}a'bc^{-1}=d$, and therefore $c \mathbf \Delta^{-1}a'b=dc\in G^+$.  If we denote $\tilde c=\mathbf \Delta c \mathbf \Delta^{-1}\in G^+$, we finally obtain
$$
   \mathbf \Delta^{-1}\tilde c a'b \in G^+.
$$
This means $1\preccurlyeq  \mathbf \Delta^{-1}\tilde c a'b$, hence $\mathbf \Delta\preccurlyeq \tilde c a'b$. Now let us see (a usual procedure in Garside theory) that we can remove $b$ from the above expression, as $a'b$ is left weighted: We have that $\mathbf \Delta \preccurlyeq \tilde c\mathbf \Delta$ (as $\mathbf \Delta^{-1}\tilde c\mathbf \Delta$ is positive). Hence
$$
\mathbf \Delta \preccurlyeq (\tilde c\mathbf \Delta) \wedge (\tilde c a' b) = \tilde c (\mathbf \Delta\wedge a'b) = \tilde c a',
$$
the last equality holding as $a'b$ is left-weighted. Now recall that $\mathbf \Delta$ is a prefix of an element if and only if it is a suffix of the same element (both assertions mean that the infimum of the element is at least 1). Hence $\tilde c a' \succcurlyeq \mathbf \Delta$. That is,
$$
\mathbf \Delta c = \tilde c \mathbf \Delta= \tilde c a'a \succcurlyeq \mathbf \Delta a.
$$
Conjugating by $\mathbf \Delta$ (which preserves the Garside structure), we get $c \mathbf \Delta\succcurlyeq a \mathbf \Delta$, and finally $c\succcurlyeq a$.

Therefore, we have shown that if $c$ is a positive element such that $cxc^{-1}$ is positive, then $c\succcurlyeq a$.  We can then decompose $c=c_1a$, and we obtain $cxc^{-1}=c_1a(a^{-1}b)a^{-1}c_1^{-1} = c_1(ba^{-1})c_1^{-1} = c_1 \phi(x) c_1^{-1}$, where $1\preccurlyeq c_1\prec c$.

If $\phi(x)$ is positive, we are done. Otherwise, we repeat the process and we will find $1\preccurlyeq c_2\prec c_1 \prec c$ such that $c_2 \phi^2(x) c_2^{-1}$ is positive, so we can repeat the process again with $\phi^2(x)$. This process must stop, as it cannot exist an infinite descending chain of positive elements in $G$, hence $\phi^m(x)\in G^+$ for some $m\geq 0$, as we wanted to show.
\end{proof}

\begin{remark}\label{R:minimal_to_recurrent} It is interesting to point out a consequence of the above proof: If $x$ is conjugate to a positive element, and $\phi^i(x)=a_i^{-1}b_i$ is the reduced left-fraction decomposition of $\phi^i(x)$ for each $i\geq 0$, then $c=a_{m-1}\cdots a_1a_0$ is the minimal positive element, with respect to $\succcurlyeq$ such that $cxc^{-1}$ is positive (provided $m$ is the smallest nonnegative integer such that $\phi^m(x)$ is positive). In other words, iterated swaps conjugate $x$ to a positive element in the fastest possible way (conjugating by positive elements on the left). If one prefers to use $\preccurlyeq$ and to conjugate by positive elements on the right, one can parallel the same arguments just by using reduced right-fractions and right-swaps, which are defined in the symmetric way.
\end{remark}

We can now characterize the sets of recurrent elements in a conjugacy class, in two particular cases. Given $x\in G$, let $\RC(x)$ be the set of recurrent elements conjugate to $x$, which coincides with the set of circuits for swap in the conjugacy class of $x$. Let $C^+(x)$ be the set of positive elements conjugate to $x$ and let $C^-(x)$ be the set of negative elements conjugate to $x$ (any of the two latter sets could be empty).

\begin{proposition}\label{P:recurrent=positive}
Let $(G,G^+,\Delta)$ be a Garside structure and let $x\in G$. One has:
\begin{enumerate}

 \item If $x$ is conjugate to a positive element, then $\RC(x)=C^+(x)$.

 \item If $x$ is conjugate to a negative element, then $\RC(x)=C^-(x)=\left(C^+(x^{-1})\right)^{-1}$.

\end{enumerate}
\end{proposition}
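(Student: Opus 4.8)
The plan is to prove the two statements by first settling both inclusions in (1) and then deducing (2) from (1) via inversion.

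For (1), the inclusion $C^+(x) \subseteq \RC(x)$ is immediate: a positive element $z$ has trivial left-denominator, so $\phi(z) = z$ and $z$ is recurrent; and any positive element conjugate to $x$ lies in $\RC(x)$ by definition. The substance is the reverse inclusion $\RC(x) \subseteq C^+(x)$. I would take $y \in \RC(x)$; since $y$ is conjugate to $x$, which by hypothesis is conjugate to a positive element, $y$ is itself conjugate to a positive element, so \autoref{L:phi_conjugates_to_positive} supplies an integer $m \geq 0$ with $\phi^m(y) \in G^+$. Now $\phi$ fixes positive elements, so once the sequence of iterated swaps of $y$ reaches $\phi^m(y)$ it stays there: $\phi^k(y) = \phi^m(y)$ for every $k \geq m$. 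On the other hand $y$ is recurrent, so $\phi^p(y) = y$ for some $p \geq 1$, whence $\phi^{kp}(y) = y$ for all $k$; choosing $k$ with $kp \geq m$ gives $y = \phi^{kp}(y) = \phi^m(y) \in G^+$. As $y$ is conjugate to $x$, this means $y \in C^+(x)$, which proves (1).

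For (2), the key observation is that the swap map commutes with inversion. Indeed, if $x = a^{-1}b$ is the reduced left-fraction decomposition of $x$, then (as recalled above) $x^{-1} = b^{-1}a$ is the reduced left-fraction decomposition of $x^{-1}$, so $\phi(x^{-1}) = ab^{-1} = (ba^{-1})^{-1} = \phi(x)^{-1}$, and by iteration $\phi^m(x^{-1}) = \left(\phi^m(x)\right)^{-1}$ for all $m$. Consequently $y$ is recurrent if and only if $y^{-1}$ is recurrent, and inversion therefore restricts to a bijection $\RC(x) \to \RC(x^{-1})$; it obviously also gives bijections between conjugacy classes and between $C^-(x)$ and $C^+(x^{-1})$. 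If $x$ is conjugate to a negative element, then $x^{-1}$ is conjugate to a positive element, so part (1) applied to $x^{-1}$ yields $\RC(x^{-1}) = C^+(x^{-1})$. Inverting both sides gives $\RC(x) = \left(C^+(x^{-1})\right)^{-1}$, and since $\left(C^+(x^{-1})\right)^{-1} = C^-(x)$ this is precisely the assertion of (2).

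I do not expect a serious obstacle: the only point needing a little care is the ``absorbing fixed point'' step in (1), where one must combine recurrence ($\phi^p(y)=y$) with the plain facts that $\phi$ is a well-defined map and that positive elements are its fixed points, rather than arguing through the combinatorics of swap circuits. Everything else is bookkeeping with \autoref{L:phi_conjugates_to_positive} and with the behaviour of reduced left-fraction decompositions under inversion.
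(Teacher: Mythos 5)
Your proof is correct and follows essentially the same route as the paper's: the inclusion $C^+(x)\subseteq\RC(x)$ from $\phi$ fixing positive elements, the reverse inclusion by combining \autoref{L:phi_conjugates_to_positive} with recurrence via the absorbing fixed point argument, and part (2) deduced from part (1) by observing that $\phi$ commutes with inversion of reduced left-fractions. Your handling of the exponents (choosing $k$ with $kp\geq m$) is in fact slightly more careful than the paper's.
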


\begin{proof}
\begin{enumerate}

\item Suppose that $x$ is conjugate to a positive element. If $y\in C^+(x)$ then $y$ is conjugate to $x$ and $\phi(y)=y$, hence $y\in \RC(x)$.  Conversely, let $y\in \RC(x)$. Since $y$ is conjugate to $x$, it is conjugate to a positive element. By \autoref{L:phi_conjugates_to_positive}, $\phi^m(y)$ is positive for some $m\geq 0$, and then $\phi^{m+k}(y)=\phi^m(y)$ for every $k\geq 0$. But $y$ is recurrent, so $\phi^n(y)=y$ for some $n>0$, and this implies that $y=\phi^{nm}(y)=\phi^m(y)$. Hence $y$ is positive, and the orbit of $y$ under $\phi$ consists of just one element, namely $y$.

\item If $x$ is conjugate to a negative element, $x^{-1}$ is conjugate to a positive element, hence $\RC(x^{-1})=C^+(x^{-1})$ by the previous property. Now recall that for every $z\in G$ with reduced left-fraction decomposition $z=c^{-1}d$, the reduced left-fraction decomposition of $z^{-1}$ is $d^{-1}c$. Hence $\phi(z^{-1})=cd^{-1}=(dc^{-1})^{-1}=\phi(z)^{-1}$. Therefore, taking inverses commutes with $\phi$ and also preserves conjugations and conjugating elements. It follows that $\RC(x^{-1})=\RC(x)^{-1}$, that $\left(C^-(x)\right)^{-1}=C^+(x^{-1})$, and then $\RC(x)=\left(\RC(x^{-1})\right)^{-1}=\left(C^+(x^{-1})\right)^{-1}=C^{-}(x)$.

\end{enumerate}
\end{proof}

\subsection{Transport for swap and convexity}

Let $(G,G^+,\Delta)$ be a Garside structure and let $x\in G$. In this section we will explain some important properties of $\RC(x)$. Unfortunately, we do not know whether the set $\RC(x)$ is always finite, even in the case in which it consists only of the positive conjugates of $x$. In this later case, the set is finite if $G$ has homogenous relations, like in braid groups or, more generally, in Artin-Tits groups of spherical type. But in a general Garside group this set could a priori be infinite.

In any case, we will see that $\RC(x)$ satisfies the same properties as similar sets, which are usually introduced to solve the conjugacy problem like super summit sets~\cite{ELRIFAIMORTON}, ultra summit sets~\cite{GEBHARDT}, or sets of sliding circuits~\cite{GEBHARDTGM,GEBHARDTGM2}. One of the basic properties is that their elements are connected through conjugations by simple elements. Although this can be shown by comparing the set $\RC(x)$ with other sets like the ultra summit set of $x$ (see~\cite{GEBHARDT}), we will proceed to show all details avoiding the use of Garside normal forms.

We start with a very basic property.

\begin{lemma}\label{L:Delta_preserves_recurrent}
Let $(G,G^+,\Delta)$ be a Garside structure and let $x\in G$. If $y\in \RC(x)$ then $y^\Delta\in \RC(x)$.
\end{lemma}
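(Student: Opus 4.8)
The plan is to deduce the statement from a single equivariance property: the swap map $\phi$ commutes with conjugation by $\Delta$. Write $\tau$ for the map $w \mapsto w^\Delta = \Delta^{-1}w\Delta$. Since conjugation by $\Delta$ permutes the atoms $\mathcal A$ (a standard feature of any Garside structure, already used in the proof of \autoref{P:turn_into_positive}) and $\mathcal A$ generates $G^+$ as a monoid, $\tau$ is a group automorphism of $G$ which restricts to an automorphism of $G^+$. Consequently $\tau$ preserves the prefix order $\preccurlyeq$, hence the meet $\wedge$, and it fixes $1$. First I would record these facts, which are immediate from the definition $a\preccurlyeq b \Leftrightarrow a^{-1}b\in G^+$.

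Next I would prove that $\phi(y^\Delta)=\phi(y)^\Delta$ for every $y\in G$. Let $y=a^{-1}b$ be the reduced left-fraction decomposition of $y$, so that $a,b\in G^+$ and $a\wedge b=1$. Applying $\tau$ gives $y^\Delta=(a^\Delta)^{-1}b^\Delta$ with $a^\Delta,b^\Delta\in G^+$, and $a^\Delta\wedge b^\Delta=(a\wedge b)^\Delta=1^\Delta=1$; by uniqueness of the reduced left-fraction decomposition, $(a^\Delta)^{-1}b^\Delta$ is therefore the reduced left-fraction decomposition of $y^\Delta$. Hence $\phi(y^\Delta)=b^\Delta(a^\Delta)^{-1}=(ba^{-1})^\Delta=\phi(y)^\Delta$, as desired. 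Iterating, $\phi^m(y^\Delta)=\bigl(\phi^m(y)\bigr)^\Delta$ for all $m\geq 0$.

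Finally, given $y\in\RC(x)$, the element $y^\Delta$ is conjugate to $y$, hence to $x$. Moreover $\phi^m(y)=y$ for some $m>0$, so $\phi^m(y^\Delta)=\bigl(\phi^m(y)\bigr)^\Delta=y^\Delta$, which shows $y^\Delta$ is recurrent; therefore $y^\Delta\in\RC(x)$. There is no real obstacle here: the only substantive point is the equivariance $\phi\circ\tau=\tau\circ\phi$, and this is forced by the fact that $\tau$ is an automorphism of the Garside structure while the reduced left-fraction decomposition is defined purely in terms of $G^+$ and $\wedge$.
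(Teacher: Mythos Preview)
Your proof is correct and follows essentially the same approach as the paper: both arguments establish that conjugation by $\Delta$ commutes with the swap map $\phi$ (because $\tau$ preserves the reduced left-fraction decomposition), and then deduce immediately that recurrence is preserved. The paper's version is slightly terser, but the logic is identical.
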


\begin{proof}
Since conjugation by $\Delta$ preserves the lattice structure of $G$, it follows that if the reduced left-fraction decomposition of some element $z$ is $a^{-1}b$ then the reduced left-fraction decomposition of $z^\Delta$ is $(a^\Delta)^{-1}(b^\Delta)$. Hence, $\phi(z^\Delta)=(b^\Delta)(a^\Delta)^{-1} = (ba^{-1})^\Delta=\phi(z)^\Delta$. That is, applying a left swap commutes with conjugation by $\Delta$.

Given $y\in \RC(x)$, there is some $m>0$ such that $\phi^m(y)=y$. By the argument in the previous paragraph, we have $\phi^m(y^\Delta)=(\phi^m(y))^\Delta = y^\Delta$. Therefore $y^\Delta \in \RC(x)$.
\end{proof}

We will consider positive conjugates joining elements of $G$. One important concept, taken from~\cite{GEBHARDT}, is the \emph{transport} of a conjugating element.

\begin{proposition}\label{P:transport_definition}
Let $(G,G^+,\Delta)$ be a Garside structure and let $y,z\in G$ which are conjugate by a positive element $u\in G^+$, that is, $y^u=z$. Consider the reduced left-fraction decompositions $y=a^{-1}b$ and $z=c^{-1}d$. Then we define the \emph{transport of $u$ at $y$} as $u^{(1)}=auc^{-1}$, and the following conditions hold:
\begin{enumerate}

\item $u^{(1)}=auc^{-1}=bud^{-1}= au\wedge bu$.

\item $\phi(y)^{\left(u^{(1)}\right)}=\phi(z)$.

\end{enumerate}
\end{proposition}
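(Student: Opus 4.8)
The plan is to verify the two claimed identities by direct algebraic manipulation with the reduced left-fraction decompositions, using the fact that swap is itself implemented by conjugation and that conjugation by simple elements interacts well with the lattice. Let me set up notation: we have $y = a^{-1}b$ and $z = c^{-1}d$ as reduced left-fractions, $y^u = z$ with $u \in G^+$, and we set $u^{(1)} = auc^{-1}$.

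First I would check that $auc^{-1} = bud^{-1}$. Since $y^u = z$ means $u^{-1}yu = z$, i.e. $u^{-1}a^{-1}bu = c^{-1}d$, we get $bu d^{-1} = a u c^{-1}$ after clearing denominators (multiply on the left by $au$ and on the right by $d$, or just rearrange $u^{-1}a^{-1}bu = c^{-1}d \Leftrightarrow a^{-1}bu = uc^{-1}d \Leftrightarrow buc^{-1} = \ldots$; the cleanest route is $c(a^{-1}bu)d^{-1} = cuc^{-1}c d^{-1}\cdot$, so I would just do the one-line computation $au c^{-1} = a(uc^{-1}d)d^{-1} = a(a^{-1}bu)d^{-1} = bud^{-1}$, using $uc^{-1}d = u z = yu = a^{-1}bu$). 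So the element $u^{(1)}$ is well-defined and equals both expressions. Next, positivity: I need $u^{(1)} \in G^+$. Here is where the key Garside input comes in. I would write $u^{(1)} = auc^{-1}$; since $a,u \in G^+$ we have $au \in G^+$, and $u^{(1)} = (au)\cdot c^{-1}$, so $u^{(1)} \in G^+$ iff $c \preccurlyeq au$. Symmetrically from $u^{(1)} = bud^{-1}$ we need $d \preccurlyeq bu$. To see $c \preccurlyeq au$: note $au \cdot z = a(uz) = a(yu) = a(a^{-1}bu) = bu \in G^+$, and since $z = c^{-1}d$, we have $au\cdot c^{-1} d \in G^+$, hence $c \preccurlyeq au\cdot c^{-1}d\cdot$ — wait, more directly: $au c^{-1}d = bu$ is positive, and $d$ is positive, but that alone doesn't give $auc^{-1}$ positive. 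The right argument is: $au \preccurlyeq$ something and use that $c \wedge d = 1$. Indeed $auc^{-1} = bud^{-1}$, so $(auc^{-1})d = bu$ and $(bud^{-1})c = au$ are both positive, meaning $auc^{-1}$ becomes positive after right-multiplication by $d$ and also after... hmm. Let me instead argue: $au$ and $bu$ are both positive, $auc^{-1} = bud^{-1} =: v$ (an element of $G$), with $vc = bu \in G^+$ and $vd = au \in G^+$. So $c \preccurlyeq$ ... no. Actually $vc \in G^+$ says $v \succcurlyeq $ nothing immediately since $v$ need not be positive; but $vc, vd \in G^+$ together with a lattice argument: we'd want $v(c \wedge^{\Lsh} d)$ or use that $c, d$ positive and $v^{-1} \preccurlyeq c, v^{-1}\preccurlyeq d$ would force $v^{-1} \preccurlyeq c\wedge d = 1$. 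And $v^{-1} \preccurlyeq c$ means $vc \in$... no, $v^{-1}\preccurlyeq c \Leftrightarrow (v^{-1})^{-1}c = vc \in G^+$: yes! So $vc \in G^+ \Rightarrow v^{-1}\preccurlyeq c$ and $vd \in G^+ \Rightarrow v^{-1}\preccurlyeq d$, hence $v^{-1}\preccurlyeq c\wedge d = 1$, i.e. $v^{-1}\preccurlyeq 1$, i.e. $v = (v^{-1})^{-1}$ with $v^{-1}$ negative, so $v \in G^+$. That proves $u^{(1)} = v \in G^+$. Then the identity $u^{(1)} = au\wedge bu$: we have $u^{(1)} \preccurlyeq au$ (since $au = u^{(1)}d$ with $d\in G^+$) and $u^{(1)}\preccurlyeq bu$ (since $bu = u^{(1)}c$); conversely if $w\preccurlyeq au$ and $w \preccurlyeq bu$, write $au = w d'$, $bu = wc'$ with $c', d'\in G^+$, and one checks $c\preccurlyeq d'$... actually I would argue $u^{(1)}{}^{-1}(au\wedge bu)$ is a common prefix of $u^{(1)}{}^{-1}au = d$ and $u^{(1)}{}^{-1}bu = c$, hence a prefix of $c\wedge d = 1$, so $au\wedge bu = u^{(1)}$; but I also need $u^{(1)}\preccurlyeq au \wedge bu$ which is the two divisibilities already shown.

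For part (2), I would unwind the definition of $\phi$ and transport. We have $\phi(y) = ba^{-1}$ and $\phi(z) = dc^{-1}$ (from the reduced left-fractions of $y, z$; recall that $\phi$ sends $a^{-1}b \mapsto ba^{-1}$, and by the remark after the swap definition $\phi(w)$ is conjugate to $w$ via its numerator or denominator). The claim is $\phi(y)^{u^{(1)}} = \phi(z)$, i.e. $(u^{(1)})^{-1}(ba^{-1})u^{(1)} = dc^{-1}$. Substituting $u^{(1)} = bud^{-1}$ on the left occurrence and $u^{(1)} = auc^{-1}$ on the right occurrence: $(u^{(1)})^{-1}b = (bud^{-1})^{-1}b = d u^{-1}b^{-1}b = du^{-1}$, and $a^{-1}u^{(1)} = a^{-1}auc^{-1} = uc^{-1}$, so $(u^{(1)})^{-1}(ba^{-1})u^{(1)} = (du^{-1})(uc^{-1}) = dc^{-1} = \phi(z)$, as desired. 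I would double-check that $ba^{-1}$ and $dc^{-1}$ are genuinely the reduced left-fractions of $\phi(y), \phi(z)$ — but actually for the identity $\phi(y)^{u^{(1)}} = \phi(z)$ to make sense as stated we only need them as elements, and the computation above is an identity in $G$ regardless; the reduced-fraction interpretation of $\phi$ is just $\phi(a^{-1}b) := ba^{-1}$ by definition, so no subtlety there.

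The main obstacle, and the only genuinely non-formal step, is establishing that $u^{(1)} = auc^{-1}$ is a positive element; everything else is bookkeeping with the fraction identities. The positivity hinges on the lattice property: from $vc \in G^+$ and $vd \in G^+$ (where $v = u^{(1)}$) one deduces $v^{-1} \preccurlyeq c$ and $v^{-1}\preccurlyeq d$, hence $v^{-1}\preccurlyeq c \wedge d = 1$ using that $c\wedge d = 1$ because $z = c^{-1}d$ is the \emph{reduced} left-fraction; this forces $v \in G^+$. I would present this as the crux of the proof and treat the rest as immediate verification. Finally, I should remark that part (1) also shows $u^{(1)}$ divides $u\cdot(\text{something})$ appropriately, and in particular that transport of a simple element is simple (a fact one uses elsewhere), but that is not part of the present statement.
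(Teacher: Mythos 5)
Your proof is correct and follows essentially the same route as the paper: the paper packages your lattice argument (positivity of $v=auc^{-1}$ from $v^{-1}\preccurlyeq c\wedge d=1$, and $v=au\wedge bu$ from left-invariance of $\wedge$) into a single appeal to the uniqueness of the reduced left-fraction decomposition of $z=(au)^{-1}(bu)$ via \autoref{P:fraction_simplification}, and your verification of part (2) is the identical substitution. The only blemish is a harmless label swap in your meet computation: from $u^{(1)}=auc^{-1}=bud^{-1}$ one gets $au=u^{(1)}c$ and $bu=u^{(1)}d$ (not $au=u^{(1)}d$), but since the roles of $c$ and $d$ are symmetric and $c\wedge d=1$, the argument is unaffected.
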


\begin{proof}
We know that $y^u=z$, hence $u^{-1}(a^{-1}b)u=c^{-1}d$, which is equivalent to $auc^{-1}=bud^{-1}$.

Now let us represent the positive elements $au$ and $bu$:
$$
\xymatrix@C=12mm@R=12mm{
*=0{} \ar[d]_{a} \ar[r]-0+0^{b} & *=0{} \ar[rd]-0+0^{u} \\
*=0{} \ar[rd]-0+0_{u} & *=0{}  & *=0{} \\
*=0{} & *=0{} & *=0{}
}
$$
Recall that $z$ is precisely $(au)^{-1}(bu)$. From \autoref{P:fraction_simplification}, cancelling the element $\alpha=au\wedge bu$ in the middle of the above expression, one obtains the reduced left-fraction decomposition of $z$, that is, $c^{-1}d$. In other words, $au=\alpha c$ and $bu=\alpha d$, which means that $au\wedge bu=\alpha=auc^{-1}=bud^{-1}= u^{(1)}$:
$$
\xymatrix@C=12mm@R=12mm{
*=0{} \ar[d]_{a} \ar[r]-0+0^{b} \ar[rd]|{\phantom{x}u^{(1)}} & *=0{} \ar[rd]-0+0^{u} \\
*=0{} \ar[rd]-0+0_{u} & *=0{} \ar[d]-0+0^{c} \ar[r]-0+0_{d} & *=0{} \\
*=0{} & *=0{} & *=0{}
}
$$
This shows the first condition.

The second condition is an easy observation. Since $u^{(1)}=auc^{-1}=bud^{-1}$, one has:
$$
\phi(y)^{\left(u^{(1)}\right)}=(ba^{-1})^{\left(u^{(1)}\right)}=(bud^{-1})^{-1}(ba^{-1})(auc^{-1})= dc^{-1} = \phi(z).
$$
Hence, if $u$ conjugates $y$ to $z$, then $u^{(1)}$ conjugates $\phi(y)$ to $\phi(z)$.
\end{proof}

The transport satisfies very useful properties:

\begin{proposition}\label{P:transport_properties}
Let $(G,G^+,\Delta)$ be a Garside structure and let $y\in G$. Suppose that $u,v\in G^+$ are positive elements and let $u^{(1)}$, $v^{(1)}$, $(u\wedge v)^{(1)}$ and $(\Delta^m)^{(1)}$ be the transports of $u$, $v$, $u\wedge v$ and $\Delta^m$ at $y$, respectively, for $m\geq 0$. Then one has:
\begin{enumerate}

\item $(\Delta^m)^{(1)}=\Delta^m$ for every $m\geq 0$.

\item $(u\wedge v)^{(1)}=u^{(1)}\wedge v^{(1)}$.

\item If $u\preccurlyeq v$ then $u^{(1)}\preccurlyeq v^{(1)}$.

\end{enumerate}
\end{proposition}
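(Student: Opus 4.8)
The plan is to reduce everything to the closed formula for the transport recorded in \autoref{P:transport_definition}: writing $y=a^{-1}b$ for the reduced left-fraction decomposition of $y$, one has $w^{(1)}=aw\wedge bw$ for every positive element $w\in G^+$. This formula involves only the fixed pair $(a,b)$ and the lattice operations, so once it is available all three statements become short computations, using only that $\wedge$ is invariant under left-multiplication, that $a\wedge b=1$ by definition of a reduced left-fraction, and that conjugation by $\Delta$ is an automorphism of the Garside structure — hence of the lattice — which preserves $G^+$ and fixes $1$ (as already used in the proof of \autoref{L:Delta_preserves_recurrent}).

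I would prove (2) first. Since $1\preccurlyeq u$ and $1\preccurlyeq v$ we have $u\wedge v\in G^+$, so the transport formula applies to it; invariance of $\wedge$ under left-multiplication gives $a(u\wedge v)=au\wedge av$ and $b(u\wedge v)=bu\wedge bv$, whence
$$(u\wedge v)^{(1)}=\big(a(u\wedge v)\big)\wedge\big(b(u\wedge v)\big)=(au\wedge av)\wedge(bu\wedge bv)=(au\wedge bu)\wedge(av\wedge bv)=u^{(1)}\wedge v^{(1)},$$
the middle step being only commutativity and associativity of the meet. Statement (3) then follows at once: if $u\preccurlyeq v$ then $u\wedge v=u$, so by (2) we get $u^{(1)}=(u\wedge v)^{(1)}=u^{(1)}\wedge v^{(1)}$, which is exactly $u^{(1)}\preccurlyeq v^{(1)}$.

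For (1), write $a\Delta^m=\Delta^m a'$ and $b\Delta^m=\Delta^m b'$ with $a'=\Delta^{-m}a\Delta^m$ and $b'=\Delta^{-m}b\Delta^m$, both in $G^+$ since conjugation by $\Delta$ preserves $G^+$. Invariance of $\wedge$ under left-multiplication then gives $(\Delta^m)^{(1)}=a\Delta^m\wedge b\Delta^m=\Delta^m(a'\wedge b')$, and since conjugation by $\Delta$ is a lattice automorphism fixing $1$ we have $a'\wedge b'=\Delta^{-m}(a\wedge b)\Delta^m=\Delta^{-m}\cdot 1\cdot\Delta^m=1$; hence $(\Delta^m)^{(1)}=\Delta^m$.

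I do not expect a genuine obstacle here: the whole content is the closed formula of \autoref{P:transport_definition} together with elementary lattice identities. The only points needing a little care are that each element to which the transport formula is applied — $u$, $v$, $u\wedge v$, and $\Delta^m$ for $m\geq 0$ — is genuinely positive, and that the lattice automorphism induced by $\Delta$ both fixes $1$ and stabilizes $G^+$; both are part of the basic Garside formalism.
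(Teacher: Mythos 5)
Your proof is correct and follows essentially the same route as the paper: both reduce everything to the formula $w^{(1)}=aw\wedge bw$, use left-invariance of $\wedge$ for (2), deduce (3) from (2) via $u\preccurlyeq v\Leftrightarrow u\wedge v=u$, and prove (1) by commuting $\Delta^m$ past $a,b$ and invoking $a\wedge b=1$. The only cosmetic difference is the order in which you treat the three parts.
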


\begin{proof}
Let $a^{-1}b$ be the reduced left-fraction decomposition of $y$. We know from \autoref{P:transport_definition} that for every positive element $w\in G^+$, its transport at $y$ is $w^{(1)}=aw\wedge bw$. Then the first condition is shown as follows:
$$
  (\Delta^m)^{(1)}=a\Delta^m \wedge b\Delta^m = \Delta^m a^{\Delta^m} \wedge \Delta^m b^{\Delta^m} = \Delta^m (a^{\Delta^m}\wedge b^{\Delta^m})=\Delta^m (a\wedge b)^{\Delta^m}=\Delta^m,
$$
where we have used that conjugation by $\Delta$ preserves the lattice structure, and that $a\wedge b=1$.

The second condition also follows easily from the definition of transport:
$$
 (u\wedge v)^{(1)}=a(u\wedge v)\wedge b(u\wedge v)= au \wedge av \wedge bu \wedge bv = (au\wedge bu) \wedge (av \wedge bv)= u^{(1)}\wedge v^{(1)}.
$$

Finally, $u\preccurlyeq v$ if and only if $u\wedge v=u$. If this is the case, $u^{(1)}\wedge v^{(1)} = (u\wedge v)^{(1)}=u^{(1)}$, which is equivalent to $u^{(1)}\preccurlyeq v^{(1)}$. This shows the third condition.
\end{proof}

Notice that one can iterate the transport map, provided each transport is performed successively at $y,\phi(y), \phi^2(y),\ldots$ As usual, one denotes $u^{(2)}=(u^{(1)})^{(1)}$ and, in general, $u^{(k)}=(u^{(k-1)})^{(1)}$ for $k\geq 2$. Let us see that the transport map behaves well when applied to conjugating elements between recurrent elements.

\begin{lemma}\label{L:transport_repeats}
Let $(G,G^+,\Delta)$ be a Garside structure and let $x\in G$. Given two elements $y,z\in \RC(x)$ and a positive element $u\in G^+$ such that $y^u=z$, then there exists $k>0$ such that $\phi^k(y)=y$ and $u^{(k)}=u$.
\end{lemma}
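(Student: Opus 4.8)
The plan is to combine the transport calculus of \autoref{P:transport_definition} and \autoref{P:transport_properties} with a finiteness argument. Since $y$ and $z$ are recurrent, fix $M>0$ with $\phi^M(y)=y$ and $\phi^M(z)=z$ (a common multiple of the two periods). Iterating \autoref{P:transport_definition}(2), the transport $u^{(k)}$ of $u$ is a positive element with $\left(\phi^k(y)\right)^{u^{(k)}}=\phi^k(z)$; in particular each $u^{(jM)}$ (for $j\geq 0$) is a positive element conjugating $y$ to $z$, and $u^{(jM+M)}$ is obtained from $u^{(jM)}$ by performing $M$ further transports, successively at $y,\phi(y),\ldots,\phi^{M-1}(y)$.

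The crucial observation is that all the elements $u^{(jM)}$ stay inside a fixed finite set. Indeed $u$ is positive, so $u\preccurlyeq \Delta^p$ for some $p\geq 0$; by \autoref{P:transport_properties}, transport fixes $\Delta^p$ and preserves $\preccurlyeq$, hence $u^{(k)}\preccurlyeq \Delta^p$ for every $k\geq 0$, so every $u^{(jM)}$ lies in $\operatorname{Div}(\Delta^p)$. This set is finite: a positive divisor of $\Delta^p$ has infimum $\geq 0$ and supremum $\leq p$, so its left normal form is a product of at most $p$ proper simple elements, of which there are only finitely many (equivalently, $\Delta^p$ is the Garside element of the Garside structure $(G,G^+,\Delta^p)$, whose set of simple elements is finite).

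Now let $T$ be the map sending a positive element $w$ with $y^w=z$ to its $M$-fold transport performed successively at $y,\phi(y),\ldots,\phi^{M-1}(y)$. Because $\phi^M(y)=y$ and $\phi^M(z)=z$, the element $T(w)$ is again positive and conjugates $y$ to $z$, so $T$ is a self-map of the set of positive conjugating elements from $y$ to $z$; moreover each individual transport step has the form $w\mapsto awc^{-1}$ with $a,c$ fixed (the denominator/numerator of $y$ and of $z$, and so on along the $\phi$-orbit), hence is injective, so $T$ is injective. By the previous paragraph $T$ restricts to a self-map of the finite set $\{u^{(jM)}:j\geq 0\}$, and an injective self-map of a finite set is a bijection; therefore $T^n(u)=u$ for some $n\geq 1$, i.e. $u^{(nM)}=u$. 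Taking $k=nM$ gives $\phi^k(y)=y$ and $u^{(k)}=u$, as desired.

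The only real subtlety is the finiteness of $\operatorname{Div}(\Delta^p)$ together with the fact that transports never escape it; everything else is bookkeeping with the already-established properties of $\phi$ and of transport, so I expect no serious obstacle.
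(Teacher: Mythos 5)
Your proof is correct and follows essentially the same route as the paper's: bound all iterated transports inside the finite set $\operatorname{Div}(\Delta^p)$ using \autoref{P:transport_properties}, pass to a common period $M$ of $y$ and $z$ under $\phi$ so that the $M$-fold transport becomes a self-map of the positive conjugators from $y$ to $z$, and use injectivity of transport to upgrade eventual periodicity to genuine periodicity. The only cosmetic difference is that you phrase the last step as "an injective self-map of a finite set is a bijection," whereas the paper takes a minimal repeated index and pulls back by injectivity; these are the same argument.
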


\begin{proof}
Let $m=\sup(u)$ and recall that we have $1\preccurlyeq u \preccurlyeq \Delta^m$. By \autoref{P:transport_properties} we have $1\preccurlyeq u^{(k)}\preccurlyeq \Delta^m$ for every $k>0$. This implies that the set of iterated transports of $u$ is finite.

Now let $p,q$ be positive integers such that $\phi^p(y)=y$ and $\phi^q(z)=z$. These integers exist as $y$ and $z$ are left recurrent elements. Taking $n=\textrm{lcm}(p,q)$ we have $\phi^n(y)=y$ and $\phi^n(z)=z$. Since the element $u^{(k)}$ conjugates $\phi^k(y)$ to $\phi^k(z)$ for every $z\geq 0$, it follows that $u^{(in)}$ conjugates $y$ to $z$, for every $i\geq 0$. Since the set $\{u^{(in)}\}_{i\geq 0}$ is finite, there are some $0\leq i_1<i_2$ such that $u^{(i_1n)}=u^{(i_2n)}$. We take $i_1$ as small as possible.

Suppose that $i_1>0$ and notice that, by definition, the transport at a given element is an injective map. This implies, taking the preimages of $u^{(i_1n)}=u^{(i_2n)}$ under transport (based at the elements in the orbit of $y$) $n$ times, that $u^{((i_1-1)n)}=u^{((i_2-1)n)}$. This contradicts the minimality of $i_1$. Hence $i_1=0$ and then $u=u^{(k)}$ where $k=i_2n>0$, as we wanted to show.
\end{proof}

We can then show a very important property, satisfied by all sets which are used in general to solve the conjugacy problem in Garside groups:

\begin{proposition}\label{P:convexity}
Let $(G,G^+,\Delta)$ be a Garside structure and let $x\in G$. Given $y\in \RC(x)$, if $\alpha,\beta\in G$ are such that $y^\alpha, y^\beta\in \RC(x)$, then $y^{\alpha\wedge \beta}\in \RC(x)$.
\end{proposition}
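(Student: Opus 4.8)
The plan is to reduce to the case of positive conjugating elements and then exploit the transport machinery, in particular the compatibility of transport with meets (\autoref{P:transport_properties}) together with the eventual periodicity of transport between recurrent elements (\autoref{L:transport_repeats}).

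First I would perform a reduction. Choose $N$ so that $\Delta^N$ is central and both $\Delta^N\alpha$ and $\Delta^N\beta$ are positive; this is possible since the infimum is bounded below on any finite set of elements and some power of $\Delta$ is central, exactly as in the proof of \autoref{P:turn_into_positive}. Since $\Delta^N$ is central, $y^{\Delta^N\alpha}=y^\alpha$ and $y^{\Delta^N\beta}=y^\beta$ still lie in $\RC(x)$, while $(\Delta^N\alpha)\wedge(\Delta^N\beta)=\Delta^N(\alpha\wedge\beta)$, so $y^{(\Delta^N\alpha)\wedge(\Delta^N\beta)}=y^{\alpha\wedge\beta}$. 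Hence we may assume $\alpha,\beta\in G^+$; then $\gamma:=\alpha\wedge\beta$ is positive as well (both $1$ and $\gamma$ are common prefixes of $\alpha$ and $\beta$, so $1\preccurlyeq\gamma$).

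Next I would apply \autoref{L:transport_repeats} to the positive elements $\alpha$ (conjugating $y$ to $y^\alpha\in\RC(x)$) and $\beta$ (conjugating $y$ to $y^\beta\in\RC(x)$), obtaining $k,l>0$ with $\phi^k(y)=y$, $\alpha^{(k)}=\alpha$, $\phi^l(y)=y$, $\beta^{(l)}=\beta$. Setting $K=\lcm(k,l)$ and noting that once the transport cycle of $\alpha$ returns to $y$ it simply repeats, one gets $\alpha^{(K)}=\alpha$, and likewise $\beta^{(K)}=\beta$ and $\phi^K(y)=y$. Then I would prove by induction on $j\geq 0$ that $\gamma^{(j)}=\alpha^{(j)}\wedge\beta^{(j)}$, where all the transports are the successive ones taken along $y,\phi(y),\phi^2(y),\dots$; the base case is trivial, and in the inductive step one applies \autoref{P:transport_properties}(2) at the base point $\phi^j(y)$ to the positive elements $\alpha^{(j)}$ and $\beta^{(j)}$ — their positivity being ensured inductively by \autoref{P:transport_definition}(1) — which yields $(\alpha^{(j)}\wedge\beta^{(j)})^{(1)}=\alpha^{(j+1)}\wedge\beta^{(j+1)}$, i.e. $\gamma^{(j+1)}=\alpha^{(j+1)}\wedge\beta^{(j+1)}$. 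In particular $\gamma^{(K)}=\alpha^{(K)}\wedge\beta^{(K)}=\alpha\wedge\beta=\gamma$.

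Finally, by the iterated form of \autoref{P:transport_definition}(2), $\gamma^{(K)}$ conjugates $\phi^K(y)$ to $\phi^K(y^\gamma)$; since $\phi^K(y)=y$ and $\gamma^{(K)}=\gamma$ conjugates $y$ to $y^\gamma$, we conclude $\phi^K(y^\gamma)=y^\gamma$, so $y^\gamma$ is recurrent. As $y^\gamma$ is conjugate to $y$, hence to $x$, this gives $y^{\alpha\wedge\beta}=y^\gamma\in\RC(x)$. The only genuinely delicate point is the bookkeeping in the inductive step: one must make sure the identity ``meet commutes with transport'' of \autoref{P:transport_properties}(2) is legitimately applied at each intermediate base point $\phi^j(y)$ to the iterated transports $\alpha^{(j)},\beta^{(j)}$, which is valid precisely because those iterated transports stay positive and conjugate the relevant iterates of $y$.
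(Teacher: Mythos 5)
Your proof is correct and follows essentially the same route as the paper's: reduce to positive conjugators via a central power of $\Delta$, invoke \autoref{L:transport_repeats} to find a common period $K$ with $\alpha^{(K)}=\alpha$, $\beta^{(K)}=\beta$ and $\phi^K(y)=y$, deduce $(\alpha\wedge\beta)^{(K)}=\alpha\wedge\beta$ from the compatibility of transport with meets, and conclude recurrence of $y^{\alpha\wedge\beta}$. The only difference is that you spell out the small induction needed to iterate \autoref{P:transport_properties}(2) along the orbit of $y$, which the paper leaves implicit.
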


\begin{proof}
Suppose first that $\alpha$ and $\beta$ are positive. By \autoref{L:transport_repeats} we know that there are $k,m\geq 0$ such that $\phi^k(y)=y$ and $\alpha^{(k)}=\alpha$, and $\phi^m(y)=y$ and $\beta^{(m)}=\beta$. Taking $N$ to be a multiple of $k$ and $m$, it follows that $\alpha^{(N)}=\alpha$ and $\beta^{(N)}=\beta$, and both are conjugating elements starting at $y$. By \autoref{P:transport_properties}, $(\alpha\wedge \beta)^{(N)}=\alpha^{(N)}\wedge \beta^{(N)}=\alpha\wedge \beta$. Since $(\alpha\wedge \beta)^{(N)}$ conjugates $y$ to $\phi^N(y^{\alpha\wedge \beta})$ by definition of transport, and $\alpha\wedge \beta$ obviously conjugates $y$ to $y^{\alpha\wedge \beta}$, it follows that $\phi^N(y^{\alpha\wedge \beta})=y^{\alpha\wedge \beta}$, that is, $y^{\alpha\wedge \beta}\in \RC(x)$, as we wanted to show.

Now, if $\alpha$ and $\beta$ are arbitrary elements, let $M>0$ such that $\Delta^M$ is central, and $\Delta^M \alpha$ and $\Delta^M \beta$ are positive. Then $y^{\Delta^M\alpha}=y^{\alpha}\in \RC(x)$ and $y^{\Delta^M\beta}=y^{\beta}\in \RC(x)$. Hence, by the above paragraph, $y^{\Delta^{M}\alpha\wedge \Delta^{M}\beta}\in \RC(x)$. The proof finishes by noticing that $y^{\Delta^{M}\alpha\wedge \Delta^{M}\beta} = y^{\Delta^{M}(\alpha\wedge \beta)}= y^{\alpha\wedge \beta}$.
\end{proof}

As usual, the above property allows us to show that the elements of $\RC(x)$ are connected by simple conjugating elements.

\begin{corollary}\label{C:connected_by_simple}
Let $(G,G^+,\Delta)$ be a Garside structure and let $x\in G$. For every pair of distinct elements $y,z\in \RC(x)$, there is a sequence of simple elements $u_1,\ldots,u_m\in S$ such that $y^{u_1\cdots u_k}\in \RC(x)$ for $k=1,\ldots,m$, and $y^{u_1\cdots u_m}=z$.
\end{corollary}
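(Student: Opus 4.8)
The plan is to derive this corollary from the convexity property just established in \autoref{P:convexity}, exactly along the lines of the classical argument for super summit sets (see e.g. \cite{ELRIFAIMORTON}). Given $y,z\in\RC(x)$, there is \emph{some} element $g\in G$ with $y^g=z$. After multiplying $g$ by a suitable central power of $\Delta$ (which does not change the conjugation, by \autoref{P:turn_into_positive}), we may assume $g\in G^+$. Now the idea is to ``peel off'' simple prefixes of $g$ one at a time, staying inside $\RC(x)$ at every stage.

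First I would establish the following key claim: for $y\in\RC(x)$ and $g\in G^+$ with $y^g\in\RC(x)$, if $g\neq 1$ then there is a nontrivial simple element $u\preccurlyeq g$ with $y^u\in\RC(x)$. To see this, consider the set $\mathcal{S}=\{\,s\in S : s\preccurlyeq g,\ y^s\in\RC(x)\,\}$. It is nonempty because $1\in\mathcal{S}$. By \autoref{P:convexity}, $\mathcal{S}$ is closed under $\wedge$; since it is finite and every element of $\mathcal{S}$ is a prefix of $g$, the set $\{t\in S : t\preccurlyeq g,\ y^t\notin\RC(x)\}$... rather, I argue directly: let $v=\bigvee_{s\in\mathcal S} s$. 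One checks $v\preccurlyeq g$ (as each $s\preccurlyeq g$ and joins in the prefix order of a Garside group of prefixes of a common element $g$ are again prefixes of $g$), and $v$ is simple (a join of simples bounded above by a positive element need not be simple in general, so instead I take $v=g\wedge\Delta$, the left-weighted first factor of $g$). The cleanest route: let $v = g\wedge\Delta$, the canonical first factor of $g$; then $v$ is simple, $v\neq 1$ since $g\neq 1$, and $v\preccurlyeq g$. It remains to show $y^v\in\RC(x)$. Write $g = v h$ with $h\in G^+$. Then $z=y^g=(y^v)^h$, so $y^v$ is conjugate, via the positive element $v$, to $y$, and via the positive element $h$, to $z$; but a priori $y^v$ need not be recurrent. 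Here is where convexity is used together with an induction on $\inf$ or length: if $\RC(x)$ has homogeneous relations, one inducts on word length; in general one needs a slightly different device.

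Given the subtlety above, the cleanest approach that works in full generality is an induction on $\sup(g)$ combined with \autoref{P:convexity} applied as follows. If $\sup(g)\le 0$ then $g=1$ and there is nothing to prove. Otherwise $g\in G^+\setminus\{1\}$; let $g_1$ be its left-weighted first factor, so $g=g_1 g'$ with $g_1$ a nontrivial simple element and $\sup(g')=\sup(g)-1$. We want $y^{g_1}\in\RC(x)$. Consider all simple $s$ with $y^s\in\RC(x)$; by \autoref{P:convexity} the meet of any two such lies in this family. The element $g$ itself conjugates $y$ into $\RC(x)$, and $g_1\preccurlyeq g$; I claim $y^{g_1}=y^{g\wedge\Delta\cdot(\text{something})}$... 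The precise statement we need and can prove is: \emph{the set $\{\,s\in S:\ y^s\in\RC(x)\,\}$ has a maximum element $s_{\max}$, and $g\preccurlyeq s_{\max}^{\,k}$ forces $g_1\preccurlyeq s_{\max}$.} Both facts follow from convexity and the lattice property: $s_{\max}=\bigvee\{s\in S: y^s\in\RC(x)\}$ exists because the family is finite and $\wedge$-closed hence in particular directed, and the final factorization claim is the standard ``if $g=g_1g'$ is left-weighted and $g\preccurlyeq s^k$ then $g_1\preccurlyeq s$'' lemma of Garside theory applied with $s=s_{\max}$, once one knows $y^g\in\RC(x)$ implies $g\preccurlyeq (s_{\max})^{\sup(g)}$ — which in turn follows by iterating convexity against the translates $\Delta^{i}\cdot s_{\max}$.

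I would then finish as follows: set $u_1=g_1$, which is simple, nontrivial, with $y^{u_1}\in\RC(x)$; replace $y$ by $y^{u_1}$ and $g$ by $g'=u_1^{-1}g$, which is still positive with strictly smaller supremum and satisfies $(y^{u_1})^{g'}=z$. By induction on $\sup(g)$ we obtain simple elements $u_2,\dots,u_m$ with $(y^{u_1})^{u_2\cdots u_k}\in\RC(x)$ for all $k$ and $(y^{u_1})^{u_2\cdots u_m}=z$; concatenating, $y^{u_1\cdots u_k}\in\RC(x)$ for $k=1,\dots,m$ and $y^{u_1\cdots u_m}=z$, as desired. The main obstacle is the middle step — showing that the first left-weighted factor $u_1=g\wedge\Delta$ (after suitable preprocessing) keeps us inside $\RC(x)$; this is exactly where \autoref{P:convexity} must be leveraged, via the observation that $\{s\in S : y^s\in\RC(x)\}$ is $\wedge$-closed and therefore has a well-defined maximum whose powers dominate any positive conjugator, and then the purely Garside-theoretic fact about left-weighted factors dividing powers of a simple element finishes it. Everything else is bookkeeping.
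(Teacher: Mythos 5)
Your overall strategy (make the conjugator positive, then repeatedly peel off the simple prefix $g\wedge\Delta$) is exactly the paper's, but the one step you yourself flag as the ``main obstacle'' --- showing $y^{g\wedge\Delta}\in\RC(x)$ --- is never actually established, and the workaround you sketch does not hold up. You invoke a ``maximum'' simple conjugator $s_{\max}=\bigvee\{s\in S:\ y^s\in\RC(x)\}$, but \autoref{P:convexity} only gives closure of this family under $\wedge$; closure under meets does not make the family upward directed, and nothing in the paper shows that the join of good simple conjugators is again a good conjugator, so there is no reason $y^{s_{\max}}\in\RC(x)$. The auxiliary claim that $g\preccurlyeq g'$ left-weighted and $g\preccurlyeq s^k$ force $g_1\preccurlyeq s$ is also false in general: for $s=\sigma_1\sigma_2$ in the $3$-strand braid monoid, $s^2\wedge\Delta\succcurlyeq\sigma_1\sigma_2\sigma_1\not\preccurlyeq s$. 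So the chain of reductions you propose does not close.

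The missing ingredient is much simpler and is already in the paper: by \autoref{L:Delta_preserves_recurrent}, $y^{\Delta}\in\RC(x)$, so applying \autoref{P:convexity} to the pair $\alpha=g$, $\beta=\Delta$ (both of which conjugate $y$ into $\RC(x)$) gives $y^{g\wedge\Delta}\in\RC(x)$ immediately. Since $g$ is a nontrivial positive element, some atom $a$ satisfies $a\preccurlyeq g$, and every atom is a prefix of $\Delta$, so $u_1:=g\wedge\Delta$ is a nontrivial simple element; writing $g=u_1v_1$ with $v_1\in G^+$ of strictly smaller length and iterating yields the conclusion, exactly as in your final paragraph. With that substitution your argument becomes correct and coincides with the paper's proof; everything built around $s_{\max}$ should be deleted.
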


\begin{proof}
Since $y,z\in \RC(x)$, they are conjugate. Let $u\in G^+$ be a nontrivial positive element such that $y^u=z$. By \autoref{L:Delta_preserves_recurrent}, $y^{\Delta}\in \RC(x)$. Hence, by \autoref{P:convexity}, $y^{u\wedge \Delta}\in \RC(x)$.

Let us denote $u_1=u\wedge \Delta$. Since $u$ is nontrivial and the set of atoms generate $G^+$ as a monoid, there must exist an atom $a\in \mathcal A$ such that $a\preccurlyeq u$. Since every atom is a prefix of $\Delta$, it follows that $a\preccurlyeq u_1$, hence $u_1$ is nontrivial. We can then write $u=u_1v_1$ for some $v_1\in G^+$. Then $v_1$ is a positive element which conjugates $y^{u_1}\in \RC(x)$ to $z\in \RC(x)$. If $v_1$ is nontrivial, we can apply the same reasoning to find a nontrivial simple element $u_2\preccurlyeq v_1$ such that $y^{u_1u_2}\in\RC(x)$. Then $u=u_1u_2v_2$ for some $v_2\in G_+$.

We can continue this process, but we can make at most $||u||$ steps before we obtain that $v_k$ is trivial, as $u$ cannot be decomposed in more than $||u||$ nontrivial positive elements. So there is some $m\leq ||u||$ such that $u=u_1\cdots u_m$, where $y^{u_1\cdots u_k}\in \RC(x)$ for $k=1,\ldots,m$ by construction, and $y^{u_1\cdots u_m}=y^u=z$.
\end{proof}

The above result can be used to compute elements in $\RC(x)$ starting with a single element $y\in \RC(x)$. One just needs to conjugate $y$ by all simple elements (a finite set), and keep the new conjugates of $y$ which belong to $\RC(x)$. \autoref{C:connected_by_simple} implies that every element in $\RC(x)$ can be obtained in this way. The problem is that we are not sure whether $\RC(x)$ is finite or not, so the process may never terminate.

To avoid the above problem, we can restrict our attention to finite subsets of $\RC(x)$, which also satisfy the property analogous to \autoref{P:convexity}, and therefore it will be possible to compute them completely using the above procedure.

\begin{definition}
Let $(G,G^+,\Delta)$ be a Garside structure and let $x\in G$. For every $m>0$, let $\RC^{m}(x)$ be the following subset of the conjugacy class:
$$
   \RC^{m}(x)=\{y\in \RC(x); \ sup(N_L(y))\leq m \textrm{ and } \sup(D_L(y))\leq m\}.
$$
\end{definition}

It is clear that $\RC^m(x)$ is finite, and that it is nonempty for some $m>0$. One can also show that $\RC^m(x)$ satisfies the property analogous to \autoref{P:convexity}, and then one can solve the conjugacy problem in $G$ using this set. But this is not the goal of this paper, and the solution is not computationally better than the one using sets of sliding circuits~\cite{GEBHARDTGM,GEBHARDTGM2}.

\subsection{Support of an element and parabolic closure}

In this section we will show that, if $(G,G^+,\Delta)$ is an LCM-Garside structure, and furthermore it is what we will call {\it support-preserving}, then every element $x\in G$ admits a parabolic closure, that is, a unique parabolic subgroup $\PC_G(x)$ which is minimal (with respect to inclusion) among all parabolic subgroups of $G$ containing $x$. We will sometimes write $\PC(x)$ instead of $\PC_G(x)$ if the group $G$ is clear by the context.

For this purpose we need to generalize the notion of {\it support} to the case of an arbitrary element of $G$, not necessarily a balanced, positive element.

\begin{definition}
Let $(G,G^+,\Delta)$ be an LCM-Garside structure. Let $x\in G$ whose reduced left-fraction decomposition is $x=a^{-1}b$. Write $a=x_1\cdots x_r$ and $b=x_{r+1}\cdots x_{m}$ as products of (not necessarily distinct) atoms. If we denote $X=\{x_1,\ldots,x_m\}$, we define the {\em support} of $x$ as $\textrm{Supp}(x)=\overline{X}$.
\end{definition}

Notice that, if $x$ is a positive balanced element, this definition coincides with the one given in \autoref{D:support_balanced}.

There are some Garside groups, for instance Artin groups of spherical type, in which all words representing a positive element involve the same set of atoms. In those cases, the support of $x=a^{-1}b$ (where this is its reduced left-fraction decomposition) is the set of atoms which appear in the word $v^{-1}w$, where $v$ is any representative of $a$ and $w$ is any representative of $b$. But some other Garside groups, like the braid group of $G(e,e,n)$, do not satisfy the mentioned property, as we will later see. This forces us to show the following:

\begin{proposition}\label{P:support_well_defined}
Let $(G,G^+,\Delta)$ be an LCM-Garside structure. The support of an element $x\in G$ is well defined.
\end{proposition}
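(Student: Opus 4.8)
The plan is to isolate the only genuine ambiguity in the definition. Since the reduced left-fraction decomposition $x=a^{-1}b$ of $x$ exists and is unique, the positive elements $a,b\in G^+$ are determined by $x$; what may vary is only the choice of the atom-words representing $a$ and $b$. Writing $X_a$ and $X_b$ for the sets of atoms occurring in chosen atom-words for $a$ and $b$, we have $\operatorname{Supp}(x)=\overline{X_a\cup X_b}$, so the statement reduces to the following purely positive assertion: for $p\in G^+$, the set of atoms of an atom-word representing $p$ has a closure (indeed, a join $\bigvee$) independent of the chosen word. Granting this, $\Delta_{X_a\cup X_b}=\Delta_{X_a}\vee\Delta_{X_b}$ depends only on $a$ and $b$, hence only on $x$, and therefore $\operatorname{Supp}(x)=\operatorname{Div}(\Delta_{X_a\cup X_b})\cap\mathcal A$ is well defined.

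For the reduction step I would first record two formal facts about the closure operator in an LCM-Garside structure: $\Delta_{\overline X}=\Delta_X$ for every $X\subset\mathcal A$ (one inclusion because $X\subset\overline X$, the other because, by definition of $\overline X$, every atom of $\overline X$ divides $\Delta_X$), and $\Delta_{X\cup Y}=\Delta_X\vee\Delta_Y$ by associativity and commutativity of $\vee$. These two facts combine into $\overline{X\cup Y}=\overline{\,\overline X\cup\overline Y\,}$, which is exactly what lets one replace atom-words by their closures in the last step above.

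The core of the argument is the positive case. Given two atom-words for $p\in G^+$ with atom sets $X$ and $X'$, I claim every atom occurring in $X$ lies in $\overline{X'}$; by symmetry this yields $\Delta_X\preccurlyeq\Delta_{\overline{X'}}=\Delta_{X'}\preccurlyeq\Delta_{\overline X}=\Delta_X$, whence $\Delta_X=\Delta_{X'}$ and in particular $\overline X=\overline{X'}$. To prove the claim I use that, since $(G,G^+,\Delta)$ is an LCM-Garside structure, $G_{\overline{X'}}=G_{\Delta_{X'}}$ is a standard parabolic subgroup; by Godelle's theorem $(G_{\overline{X'}},G_{\overline{X'}}^+,\Delta_{X'})$ is again a Garside structure, $G_{\overline{X'}}^+$ is closed under positive prefixes in $G$, and a short verification (using this closure to compare the two prefix orders) identifies the atoms of $G_{\overline{X'}}$ with $\operatorname{Supp}(\Delta_{X'})=\overline{X'}$. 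Now if $p=a_1\cdots a_k$ is the atom-word with atom set $X$ and $a_j$ is one of its letters, then $a_1\cdots a_{j-1}\preccurlyeq p\in G_{\overline{X'}}^+$ forces $a_1\cdots a_{j-1}\in G_{\overline{X'}}^+$, hence $(a_1\cdots a_{j-1})^{-1}p=a_j\cdots a_k\in G_{\overline{X'}}^+$, hence its prefix $a_j$ lies in $G_{\overline{X'}}^+$; being an atom of $G$ it is then an atom of $G_{\overline{X'}}$, i.e.\ $a_j\in\overline{X'}$.

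I expect the last paragraph to be the only delicate point: everything else is formal manipulation of $\vee$ and of the closure operator, whereas extracting an \emph{arbitrary} letter of an atom-word of $p$ as a prefix sitting inside the standard parabolic $G_{\overline{X'}}$, and concluding via the closure-under-prefixes property of Godelle's parabolic Garside structures, is where the hypotheses really get used. One should also check the degenerate cases $a=1$ or $b=1$ (empty atom-word, $\Delta_\varnothing=1$, $\overline\varnothing=\varnothing$), but these are immediate.
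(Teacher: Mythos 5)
Your proof is correct and takes essentially the same route as the paper's: you reduce to the positive case via the uniqueness of the reduced left-fraction decomposition together with the identities $\Delta_{\overline X}=\Delta_X$ and $\Delta_{X\cup Y}=\Delta_X\vee\Delta_Y$, and you settle the positive case by using that $G_{\overline{X'}}^+$ is closed under positive prefixes and suffixes (Godelle's theorem) to show that every letter of any atom-word for $p$ lies in $\overline{X'}$. The only cosmetic difference is that you extract each letter $a_j$ by two applications of prefix-closedness with a quotient in between, whereas the paper applies suffix-closedness once (to get $x_i\cdots x_r$) and then prefix-closedness once.
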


\begin{proof}
Consider first a positive element $a\in G^+$. We will show that if one can write $a=x_1\cdots x_r=y_1\cdots y_s$ with $X=\{x_1,\ldots,x_r\}\subset \mathcal A$ and $Y=\{y_1,\ldots,y_s\}\subset \mathcal A$, then $\overline{X}=\overline{Y}$.

Since $(G,G^+,\Delta)$ is an LCM-Garside structure, $a$ is also a positive element in the standard parabolic subgroups $G_X=G_{\overline{X}}$ and $G_Y=G_{\overline{Y}}$.  This means that $x_1\cdots x_r$ is positive in $G_{\overline{Y}}$. Since $G_{\overline{Y}}^+$ is closed under suffixes and prefixes, it follows that $x_i\cdots x_r\in G_{\overline{Y}}^+$, and then $x_i\in G_{\overline{Y}}^+$, for $i=1,\ldots,r$. Since the atoms in $G_{\overline{Y}}$ are precisely the elements in $\overline{Y}$, we have shown that $X\subset \overline{Y}$, and then $\overline{X}\subset \overline{Y}$. Exchanging the roles of $X$ and $Y$ we get $\overline{Y}\subset \overline{X}$, thus $\overline{X}=\overline{Y}$, as we wanted to show. So the support is well defined for positive elements.

Let now $x\in G$ be an arbitrary element whose reduced left-fraction decomposition is $x=a^{-1}b$. Write $a=x_1\cdots x_r$ and $b=x_{r+1}\cdots x_m$ as product of atoms, and let $X_1=\{x_1,\ldots,x_r\}$ and $X_2=\{x_{r+1},\ldots,x_m\}$. Suppose that we can also write $a=y_1\cdots y_k$ and $b=y_{k+1}\cdots y_l$, and denote $Y_1=\{y_1,\ldots,y_k\}$ and $Y_2=\{y_{k+1},\ldots,y_l\}$. We have already shown that $\overline{X_1}=\overline{Y_1}$, and that $\overline{X_2}=\overline{Y_2}$.

Notice that $\Delta_{Z}=\Delta_{\overline{Z}}$ for every set $Z$ of atoms. Also, by definition, $\Delta_{Z_1\cup Z_2}= \Delta_{Z_1}\vee \Delta_{Z_2}$ for all sets of atoms $Z_1$ and $Z_2$. Hence:
$$
  \Delta_{X_1\cup X_2}= \Delta_{X_1}\vee \Delta_{X_2} = \Delta_{\overline{X_1}}\vee \Delta_{\overline{X_2}} = \Delta_{\overline{Y_1}}\vee \Delta_{\overline{Y_2}} = \Delta_{Y_1}\vee \Delta_{Y_2} = \Delta_{Y_1\cup Y_2}
$$
Therefore:
$$
 \overline{X_1\cup X_2} = \operatorname{Div}(\Delta_{X_1\cup X_2})\cap \mathcal A  = \operatorname{Div}(\Delta_{Y_1\cup Y_2})\cap \mathcal A  = \overline{Y_1\cup Y_2}.
$$
So the support is well defined for every element in $G$.
\end{proof}

Now we introduce another property which we will require for a Garside group $G$ to satisfy.

\begin{definition}\label{D:support_preserved_under_conjugation}
Let $(G,G^+,\Delta)$ be an LCM-Garside structure. We say that this structure is {\em support-preserving} if, for every pair of conjugate positive elements $y,z\in G^+$, and every $\alpha\in G$ such that $y^\alpha=z$, one has:
$$
    \left(G_{\operatorname{Supp}(y)}\right)^\alpha=G_{\operatorname{Supp}(z)}.
$$
\end{definition}

\begin{proposition}\label{P:support_preserved_for_recurrent}
Let $(G,G^+,\Delta)$ be a support-preserving LCM-Garside structure, and let $x\in G$. For every $y,z\in \RC(x)$ and every $\alpha\in G$ such that $y^\alpha=z$, one has:
$$
    \left(G_{\operatorname{Supp}(y)}\right)^\alpha=G_{\operatorname{Supp}(z)}.
$$
\end{proposition}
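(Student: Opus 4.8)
The goal is to upgrade \autoref{D:support_preserved_under_conjugation}, which only covers conjugation between \emph{positive} elements, to arbitrary recurrent elements. Since by \autoref{P:recurrent=positive} every recurrent element is either positive or negative (in the two cases of interest; but here the hypothesis on $G$ being support-preserving has been stated for positive pairs, so I will first reduce the negative case to the positive one and then handle the general recurrent case via \autoref{C:connected_by_simple}), the natural strategy is: (i) handle negative $y,z$ by passing to inverses; (ii) reduce arbitrary $y,z\in\RC(x)$ to the case of $y,z$ connected by a single simple conjugating element using \autoref{C:connected_by_simple}; (iii) for each elementary step, use convexity (\autoref{P:convexity}) together with the transport machinery to reduce to the positive (or negative) case.

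\textbf{Step 1: the negative case.} Suppose $y,z\in\RC(x)$ are both negative. Then $y^{-1},z^{-1}$ are positive, and $(y^{-1})^\alpha=z^{-1}$. Note $\operatorname{Supp}(y)=\operatorname{Supp}(y^{-1})$: indeed, if $y=a^{-1}b$ is the reduced left-fraction decomposition then $y^{-1}=b^{-1}a$ is that of $y^{-1}$, and both supports are computed from the same set of atoms $X=\{x_1,\dots,x_m\}$ obtained by writing $a,b$ as products of atoms. Hence the support-preserving hypothesis applied to the positive pair $y^{-1},z^{-1}$ gives $\bigl(G_{\operatorname{Supp}(y)}\bigr)^\alpha=\bigl(G_{\operatorname{Supp}(y^{-1})}\bigr)^\alpha=G_{\operatorname{Supp}(z^{-1})}=G_{\operatorname{Supp}(z)}$.

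\textbf{Step 2: reduction to a single simple conjugator.} Given arbitrary $y,z\in\RC(x)$ with $y^\alpha=z$, \autoref{C:connected_by_simple} produces simple elements $u_1,\dots,u_m$ with $y_k := y^{u_1\cdots u_k}\in\RC(x)$ for all $k$ and $y_m=z$. If we can establish $\bigl(G_{\operatorname{Supp}(y_{k-1})}\bigr)^{u_k}=G_{\operatorname{Supp}(y_k)}$ for each $k$, then composing these identities yields $\bigl(G_{\operatorname{Supp}(y)}\bigr)^{u_1\cdots u_m}=G_{\operatorname{Supp}(z)}$; and since the right-hand side and the subgroup $G_{\operatorname{Supp}(y)}$ do not depend on the choice of conjugating element, and $\alpha$ and $u_1\cdots u_m$ are two conjugating elements between the \emph{same} pair $(y,z)$, they differ by an element of the normalizer, so the conclusion holds for $\alpha$ as well. (More precisely: $\alpha(u_1\cdots u_m)^{-1}$ normalizes $G_{\operatorname{Supp}(z)}$? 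No — rather, conjugation by $\alpha$ and by $u_1\cdots u_m$ both send $y$ to $z$, hence send $\langle y\rangle$-containing data the same way, but to get the statement about supports one argues: $\bigl(G_{\operatorname{Supp}(y)}\bigr)^\alpha$ and $\bigl(G_{\operatorname{Supp}(y)}\bigr)^{u_1\cdots u_m}$ are both parabolic subgroups, and one checks they coincide because $z\in\PC$ of each — this will require the minimality/uniqueness of parabolic closure, so it is cleaner to prove the full statement \emph{after} \autoref{T:parabolic_closure_exists}; alternatively one observes directly that $g:=(u_1\cdots u_m)\alpha^{-1}$ centralizes $z$ and so normalizes $G_{\operatorname{Supp}(z)}$ once that is known to be $\PC(z)$.) So in fact the cleanest route is: prove $\bigl(G_{\operatorname{Supp}(y)}\bigr)^\alpha\supseteq G_{\operatorname{Supp}(z)}$-type containments symmetrically, reducing to the one-simple-element step, which is genuinely the heart of the matter.

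\textbf{Step 3: the one-simple-element step, and the main obstacle.} This is where the work lies. Given $y\in\RC(x)$ and a simple $u$ with $z:=y^u\in\RC(x)$, I want $\bigl(G_{\operatorname{Supp}(y)}\bigr)^u=G_{\operatorname{Supp}(z)}$. Using \autoref{L:transport_repeats} there is $k>0$ with $\phi^k(y)=y$ and $u^{(k)}=u$, and $u^{(k)}$ conjugates $\phi^k(y)=y$ to $\phi^k(z)=z$ via the iterated transport. The idea is that iterating swap conjugates $y$ towards a positive (or negative) representative when one exists — but \emph{$\RC(x)$ need not contain a positive element} in general, since the two clean cases of \autoref{P:recurrent=positive} are exactly the hypotheses under which this whole subsection's main theorems get proved. \textbf{The main obstacle is therefore exactly this: making the one-simple-element step work without assuming $x$ is conjugate to a positive or negative element.} I expect the paper resolves this either (a) by only ever invoking \autoref{P:support_preserved_for_recurrent} in the good cases (where $\RC(x)=C^+(x)$ or $C^-(x)$, so every $y\in\RC(x)$ is positive or negative and Step 1 plus \autoref{D:support_preserved_under_conjugation} suffice directly, with Steps 2–3 unnecessary), or (b) by a transport argument: writing $u=a^{-1}\!\cdot$(stuff) is not available since $u$ is positive, but one can decompose the support of $z$ via the numerator/denominator of $z$, relate $N_L(z),D_L(z)$ to $N_L(y),D_L(y)$ through $u$ and $u^{(1)}$ using \autoref{P:transport_definition}(1) ($u^{(1)}=au\wedge bu$), and then check at the level of atoms that the set of atoms occurring in $c,d$ (where $z=c^{-1}d$) generates, after closure, the conjugate $\bigl(\overline{X}\bigr)^u$ of the closure of the atoms occurring in $a,b$. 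Concretely: from $y=a^{-1}b$, $z=c^{-1}d$, $au c^{-1}=bud^{-1}=u^{(1)}$, so $c$ divides $au$ on the left within $G^+$ and $d$ divides $bu$; since $G^+_{\operatorname{Supp}(au)}$ — note $\operatorname{Supp}(au)=\overline{\,\operatorname{Supp}(a)\cup\operatorname{Supp}(u)\,}$ — is closed under prefixes, the atoms of $c$ lie in $\overline{\operatorname{Supp}(a)\cup\operatorname{Supp}(u)}$, and symmetrically; combined with $\operatorname{Supp}(u)\subseteq\operatorname{Supp}(z)$ coming from $u=c\,u^{(1)}{}^{-1}d$ read the right way, one pins down $\operatorname{Supp}(z)=\overline{\operatorname{Supp}(a)\cup\operatorname{Supp}(b)\cup\operatorname{Supp}(u)}$ and then conjugating-invariance of the LCM operations plus the defining property of LCM-Garside structures converts this into $G_{\operatorname{Supp}(z)}=\bigl(G_{\operatorname{Supp}(y)}\bigr)^u$. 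I would expect that pinning down these support equalities cleanly — especially the reverse containment showing no \emph{extra} atoms appear — is the subtle point, and that the proof leans on the support-preserving hypothesis precisely to bridge a residual gap that the LCM axioms alone do not close.
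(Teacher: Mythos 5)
Your Step 1 (the negative case via inverses) matches the paper, and you have correctly located the crux: the case where $x$ is conjugate to neither a positive nor a negative element, where $\RC(x)$ contains no positive representative and \autoref{D:support_preserved_under_conjugation} cannot be invoked directly. But you do not resolve that case, and your two proposed escape routes both fail. Route (a) is not available: the proposition is genuinely needed for arbitrary $x$ (it feeds into \autoref{T:parabolic_closure_for_recurrent} for all recurrent elements, not just positive or negative ones). Route (b), as you sketch it, contains a false step: the claim $\operatorname{Supp}(u)\subseteq\operatorname{Supp}(z)$ and the resulting formula $\operatorname{Supp}(z)=\overline{\operatorname{Supp}(a)\cup\operatorname{Supp}(b)\cup\operatorname{Supp}(u)}$ are wrong in general — minimal conjugators between elements with different supports are typically ribbons involving atoms outside both supports (e.g.\ in $\mathcal B_3$, $u=\sigma_2\sigma_1$ conjugates $\sigma_1$ to $\sigma_2$, yet $\sigma_1\notin\operatorname{Supp}(\sigma_2)$). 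Your Step 2 also has a circularity: to pass from the chain of simple conjugators back to the given $\alpha$ you appeal to parabolic closures and normalizers, but those are only established \emph{after} (and using) this proposition.

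The idea you are missing is the following. Take $\alpha$ positive (after multiplying by a central power of $\Delta$), let $\phi^i(y)=a_i^{-1}b_i$ and $\phi^i(z)=c_i^{-1}d_i$, and use \autoref{L:transport_repeats} to find $k>0$ with $\phi^k(y)=y$, $\phi^k(z)=z$ and $\alpha^{(k)}=\alpha$. Chaining the transport squares around one full swap cycle gives $\alpha=(a_{k-1}\cdots a_0)\,\alpha\,(c_{k-1}\cdots c_0)^{-1}$ and likewise $\alpha=(b_{k-1}\cdots b_0)\,\alpha\,(d_{k-1}\cdots d_0)^{-1}$; composing these yields a pair of \emph{positive} elements $g=a_{k-1}\cdots a_0b_{k-1}\cdots b_0$ and $h=c_{k-1}\cdots c_0d_{k-1}\cdots d_0$ with $g^\alpha=h$. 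One then checks $\operatorname{Supp}(g)=\operatorname{Supp}(y)$ and $\operatorname{Supp}(h)=\operatorname{Supp}(z)$ (the containment $\supseteq$ because $a_0,b_0$ appear as factors, the containment $\subseteq$ because the whole swap orbit of $y$ stays in $G_{\operatorname{Supp}(y)}$ by \autoref{P:conjugate_to_recurrent_in_G_X}), and the support-preserving hypothesis applied to the positive pair $(g,h)$ — conjugate by the \emph{original} $\alpha$ — gives exactly the desired identity. Without this manufactured positive pair, your argument does not close.
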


\begin{proof}
If $x$ is conjugate to a positive element, then $y,z\in \RC(x)=C^+(x)$ by \autoref{P:recurrent=positive}. Hence the result holds as $(G,G^+,\Delta)$ is support-preserving.

If $x$ is conjugate to a negative element, then $y^{-1},z^{-1}\in \RC(x)^{-1}=C^+(x^{-1})$ by \autoref{P:recurrent=positive}. Hence $\left(G_{\operatorname{Supp}(y^{-1})}\right)^\alpha=G_{\operatorname{Supp}(z^{-1})}$. Now notice that $\operatorname{Supp}(\beta^{-1})=\operatorname{Supp}(\beta)$ for every $\beta\in G$, hence $\left(G_{\operatorname{Supp}(y)}\right)^\alpha=G_{\operatorname{Supp}(z)}$.

Finally, suppose that $x$ is conjugate to neither a positive nor a negative element. For every $i\geq 0$, let $\phi^i(y)=a_i^{-1}b_i$ be the reduced left-fraction decomposition of $\phi^i(y)$, and let $\phi^i(z)=c_i^{-1}d_i$ be the reduced left-fraction decomposition of $\phi^i(z)$.

We have $y^\alpha=z$. By \autoref{P:turn_into_positive} we can assume that $\alpha$ is positive, multiplying it by a central element if necessary. We then have
$$
(a_0\alpha)^{-1}(b_0\alpha)= \alpha^{-1}y\alpha = z = c_0^{-1}d_0.
$$
Consider the transport
$$
   \alpha^{(1)}= a_0\alpha \wedge b_0\alpha=a_0\alpha c_0^{-1} = b_0\alpha d_0^{-1}.
$$
Notice that we have the commutative diagrams of conjugations:
$$
\xymatrix@C=12mm@R=12mm{
z  \ar[r]^{c_0^{-1}}  & \phi(z)
\\
y \ar[u]^{\alpha} \ar[r]_{a_0^{-1}} & \phi(y) \ar[u]_{\alpha^{(1)}}
}
\hspace{2cm}
\xymatrix@C=12mm@R=12mm{
z  \ar[r]^{d_0^{-1}}  & \phi(z)
\\
y \ar[u]^{\alpha} \ar[r]_{b_0^{-1}} & \phi(y) \ar[u]_{\alpha^{(1)}}
}
$$
Now recall that both $y$ and $z$ are recurrent, so there exists some $k>0$ such that $\phi^{k}(y)=y$, $\phi^{k}(z)=z$ and $\alpha^{(k)}=\alpha$ by \autoref{L:transport_repeats}. We obtain the following commutative diagram of conjugations:
$$
\xymatrix@C=12mm@R=12mm{
z  \ar[r]^{c_0^{-1}}  & \phi(z) \ar[r]^{c_1^{-1}} & \phi^2(z) \ar@{.>}[r] & \phi^{k-1}(z) \ar[r]^{c_{k-1}^{-1}} & z
\\
y \ar[u]^{\alpha} \ar[r]_{a_0^{-1}} & \phi(y) \ar[u]_{\alpha^{(1)}} \ar[r]_{a_1^{-1}} & \phi^2(y) \ar[u]_{\alpha^{(2)}} \ar@{.>}[r] & \phi^{k-1}(y) \ar[u]_{\alpha^{(k-1)}} \ar[r]_{a_{k-1}^{-1}} & y \ar[u]_{\alpha^{(k)}=\alpha}
}
$$
Simplifying the diagram, we have:
$$
\xymatrix@C=12mm@R=12mm{
z  \ar[rr]^{(c_{k-1}\cdots c_0)^{-1}}  & & z
\\
y \ar[u]^{\alpha} \ar[rr]_{(a_{k-1}\cdots a_0)^{-1}} & & y \ar[u]_{\alpha}
}
$$
Let us denote $g_1=a_{k-1}\cdots a_0$ and $h_1=c_{k-1}\cdots c_0$. Both elements are positive, and we have $\alpha=g_1\alpha h_1^{-1}$. 

Now notice that we also had $\alpha^{(1)}=b_0\alpha d_0^{-1}$. Repeating the above arguments, if we define $g_2=b_{k-1}\cdots b_0$ and $h_2=d_{k-1}\cdots d_0$, we have $\alpha=\alpha^{(k)}=g_2 \alpha h_2^{-1}$. Therefore $\alpha= g_1g_2 \alpha h_2^{-1}h_1^{-1}$.
That is:
$$
    (g_1g_2)^\alpha = h_1h_2.
$$
Since $g_1g_2$ and $h_1h_2$ are positive and $(G,G^+,\Delta)$ is support-preserving, it follows that
$$
\left(G_{\operatorname{Supp}(g_1g_2)}\right)^\alpha = G_{\operatorname{Supp}(h_1h_2)}.
$$
The proof will then finish by showing that $\operatorname{Supp}(g_1g_2)=\operatorname{Supp}(y)$ and that $\operatorname{Supp}(h_1h_2)=\operatorname{Supp}(z)$.

Recall that $g_1g_2=a_{k-1}\cdots a_0b_{k-1}\cdots b_0$ where all factors in this expression are positive elements. Hence $\operatorname{Supp}(g_1g_2)\supset \overline{\operatorname{Supp}(a_0)\cup \operatorname{Supp}(b_0)}=\operatorname{Supp}(y)$. On the other hand, since $y\in G_{\operatorname{Supp}(y)}$ and $(G,G^+,\Delta)$ is an LCM-Garside structure, it follows from \autoref{P:conjugate_to_recurrent_in_G_X} that all elements in $\{\phi^i(y)\}_{i\geq 0}$ belong to $G_{\operatorname{Supp}(y)}$. Hence all positive elements $a_{k-1},\ldots, a_0,b_{k-1},\ldots, b_0$ belong to $G_{\operatorname{Supp}(y)}$. Therefore $\operatorname{Supp}(g_1g_2)\subset \operatorname{Supp}(y)$, and hence $\operatorname{Supp}(g_1g_2) = \operatorname{Supp}(y)$.

The same argument shows that $\operatorname{Supp}(h_1h_2) =\operatorname{Supp}(z)$, and this finally implies that $\left(G_{\operatorname{Supp}(y)}\right)^\alpha\linebreak[1] =G_{\operatorname{Supp}(z)}$, as we wanted to show.
\end{proof}

We recall that the parabolic closure of an element is the unique minimal (for inclusion) parabolic subgroup containing it. We want to show that such a parabolic closure exists for every element in any of the groups we are interested in. Let us first show this for recurrent elements.

\begin{theorem}\label{T:parabolic_closure_for_recurrent}
If $(G,G^+,\Delta)$ is a support-preserving LCM-Garside structure, then every recurrent element $x\in G$ admits a parabolic closure, namely $\PC(x)= G_{\operatorname{Supp}(x)}$.
\end{theorem}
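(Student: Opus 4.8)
The plan is to prove the two halves of the statement separately: first that $G_{\operatorname{Supp}(x)}$ is a parabolic subgroup of $G$ containing $x$, and then that it is contained in \emph{every} parabolic subgroup of $G$ containing $x$; together these say exactly that it is the parabolic closure $\PC(x)$. For the first half I would argue directly from the definition of support: writing the reduced left-fraction decomposition $x=a^{-1}b$ and $a,b$ as products of atoms belonging to a set $X$ with $\operatorname{Supp}(x)=\overline X$, both $a$ and $b$ lie in $G_{\operatorname{Supp}(x)}$ (which is generated by $\operatorname{Supp}(\Delta_{\overline X})=\overline X$, hence contains every atom of $\operatorname{Supp}(x)$, and $X\subset\overline X$), so $x=a^{-1}b\in G_{\operatorname{Supp}(x)}$. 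Since $\operatorname{Supp}(x)=\overline X$ is saturated, $G_{\operatorname{Supp}(x)}$ is a standard parabolic subgroup by the characterisation of standard parabolic subgroups in an LCM-Garside structure established above, so it is in particular a parabolic subgroup of $G$ containing $x$.

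For the minimality I would take an arbitrary parabolic subgroup $P$ of $G$ with $x\in P$, say $P=(G_Y)^g$ with $Y$ a saturated set of atoms and $g\in G$, so that $gxg^{-1}=x^{g^{-1}}\in G_Y$. This conjugate need not be recurrent, so I would first apply \autoref{P:conjugate_to_recurrent_in_G_X} inside $G_Y$ to produce $\alpha\in G_Y$ with $y:=\left(gxg^{-1}\right)^{\alpha}$ recurrent; setting $\beta=g^{-1}\alpha$ one has $y=x^{\beta}$, and since $x$ is recurrent by hypothesis while $y$ is recurrent and conjugate to $x$, we get $x,y\in\RC(x)$. At this point \autoref{P:support_preserved_for_recurrent} applies verbatim to $x,y\in\RC(x)$ and the conjugating element $\beta$, and gives $\left(G_{\operatorname{Supp}(x)}\right)^{\beta}=G_{\operatorname{Supp}(y)}$.

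The last step is to locate $G_{\operatorname{Supp}(y)}$ inside $G_Y$ and untwist the conjugation. Since $y\in G_Y$ and $G_Y$ is a standard parabolic subgroup, by Godelle's theorem the reduced left-fraction decomposition of $y$ seen in $G$ coincides with the one seen in $G_Y$, so its numerator and denominator lie in $G_Y^{+}$; writing them as products of atoms of the Garside structure $(G_Y,G_Y^{+},\Delta_Y)$ — which are exactly the elements of $Y$, because $G_Y^{+}$ is closed under positive prefixes (so an atom of $G$ lying in $G_Y$ is an atom of $G_Y$, and conversely) and therefore the set of atoms of $G_Y$ equals $\mathcal A\cap G_Y^{+}=\operatorname{Div}(\Delta_Y)\cap\mathcal A=\operatorname{Supp}(\Delta_Y)=\overline Y=Y$ — one obtains $\operatorname{Supp}(y)\subset\overline Y=Y$, hence $G_{\operatorname{Supp}(y)}\subset G_Y$. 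Combining with the previous paragraph, $\left(G_{\operatorname{Supp}(x)}\right)^{\beta}\subset G_Y$, i.e.\ $G_{\operatorname{Supp}(x)}\subset\beta G_Y\beta^{-1}$; and since $\beta=g^{-1}\alpha$ with $\alpha\in G_Y$, we have $\beta G_Y\beta^{-1}=g^{-1}\alpha G_Y\alpha^{-1}g=g^{-1}G_Y g=P$, so $G_{\operatorname{Supp}(x)}\subset P$, which is what is needed. I expect the genuine mathematical content to be carried entirely by \autoref{P:support_preserved_for_recurrent}; the main care needed in this proof is organizational — arranging the conjugating element $\beta$ so that its $G_Y$-part is absorbed when conjugating $G_Y$ back, and the (routine but easy-to-skip) identification of the atoms of a standard parabolic subgroup $G_Y$ with the saturated set $Y$ itself.
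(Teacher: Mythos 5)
Your proposal is correct and follows essentially the same route as the paper's proof: show $x\in G_{\operatorname{Supp}(x)}$ directly from the definition of support, then for an arbitrary parabolic $P$ containing $x$, conjugate into a standard $G_Y$, use \autoref{P:conjugate_to_recurrent_in_G_X} to reach a recurrent conjugate inside $G_Y$, apply \autoref{P:support_preserved_for_recurrent}, and untwist the conjugation using that the $G_Y$-part of the conjugator normalizes $G_Y$. The only cosmetic differences are that you parametrize $P$ as $(G_Y)^g$ rather than via $H^\alpha=G_Y$, and you do not need the paper's separate treatment of the case $P=G$ since $G=G_{\mathcal A}$ is itself standard.
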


\begin{proof}
Let $x$ be a recurrent element, and let $X=\operatorname{Supp}(x)$. We claim that $G_X$ is the parabolic closure of $x$. It is clear that $G_X$ is parabolic and that $x\in G_X$, hence we only need to show its minimality with respect to inclusion.

Let $H$ be a parabolic subgroup such that $x\in H$. We can assume that $H\neq G$, otherwise it is clear that $G_X\subset H$. Since $H$ is parabolic, it is conjugate by some element $\alpha\in G$ to a proper standard parabolic subgroup $G_Y$, where $\overline{Y}=Y$. It follows that $x^\alpha \in H^\alpha =G_Y$. Since $(G,G^+,\Delta)$ is an LCM-Garside structure, \autoref{P:conjugate_to_recurrent_in_G_X} tells us that $(x^\alpha)^\beta$ is recurrent for some $\beta\in G_Y$. Since $x^\alpha\in G_Y$ it follows that $x^{\alpha\beta}\in G_Y$. If its reduced left-fraction decomposition is $x^{\alpha\beta}=a^{-1}b$, we have that $a,b\in G_Y$, so one can write $a$ and $b$ using atoms from $Y$. Since $Y$ is a saturated set of atoms, it follows that $\operatorname{Supp}(x^{\alpha\beta})\subset Y$. Hence $G_{\operatorname{Supp}(x^{\alpha\beta})}\subset G_Y$.

Now recall that $(G,G^+,\Delta)$ is support-preserving, and that $x$ and $x^{\alpha\beta}$ are recurrent. Hence, by \autoref{P:support_preserved_for_recurrent}:
$$
 (G_X)^{\alpha\beta}=(G_{\operatorname{Supp}(x)})^{\alpha\beta} = G_{\operatorname{Supp}(x^{\alpha\beta})}\subset G_Y.
$$
Therefore:
$$
  G_X\subset (G_Y)^{\beta^{-1}\alpha^{-1}}=(G_Y)^{\alpha^{-1}}=H.
$$
This shows the minimality of $G_X$, hence $G_X=G_{\operatorname{Supp}(x)}$ is the parabolic closure of $x$.
\end{proof}

Now the existence of the parabolic closure of an arbitrary element will be a consequence of the following result:

\begin{lemma}\label{L:parabolic_closure_for_conjugates}
Let $x,c\in G$. If $x$ admits a parabolic closure, so does its conjugate $x^c$. Namely, $\PC(x^c)=\PC(x)^c$.
\end{lemma}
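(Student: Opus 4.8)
The statement is essentially a formal consequence of the fact that conjugation by a fixed $c\in G$ is an automorphism of $G$ that preserves the class of parabolic subgroups. The plan is to verify two things: that $\PC(x)^c$ actually contains $x^c$ and is parabolic, and that it is minimal among parabolic subgroups with that property. The first is immediate: since $x\in\PC(x)$ we have $x^c\in\PC(x)^c$, and since $\PC(x)$ is a parabolic subgroup, so is its conjugate $\PC(x)^c$ (a conjugate of a conjugate of a standard parabolic subgroup is again such a conjugate).

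For minimality, I would argue as follows. Let $H$ be any parabolic subgroup with $x^c\in H$. Then $H^{c^{-1}}$ is again a parabolic subgroup, and it contains $(x^c)^{c^{-1}}=x$. By the defining property of the parabolic closure $\PC(x)$ — namely that it is contained in every parabolic subgroup containing $x$ — we get $\PC(x)\subset H^{c^{-1}}$. Conjugating this inclusion by $c$ yields $\PC(x)^c\subset H$. Since $H$ was an arbitrary parabolic subgroup containing $x^c$, this shows $\PC(x)^c$ is contained in all of them, hence it is the (unique) minimal one, i.e. $\PC(x^c)=\PC(x)^c$. Uniqueness of the minimal element needs no separate argument: if two parabolic subgroups are each contained in every parabolic containing $x^c$, they are contained in each other and hence equal.

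There is no real obstacle here; the only point requiring a word of care is the hypothesis. The lemma only asserts that \emph{if} $x$ admits a parabolic closure \emph{then} so does $x^c$, so one must not invoke existence of $\PC(x)$ unconditionally — it is given. The argument above uses exactly the stated hypothesis: the existence and the universal (minimality) property of $\PC(x)$. The symmetric statement (replacing $c$ by $c^{-1}$) then gives the converse implication, so "admits a parabolic closure" is a conjugacy-invariant property, which is what makes it possible in the sequel to reduce the existence of $\PC(x)$ for arbitrary $x$ to the case of a recurrent element, already handled in \autoref{T:parabolic_closure_for_recurrent} via \autoref{P:swap_to_get_recurrent}.

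\begin{proof}
Suppose $x$ admits a parabolic closure $\PC(x)$, and let $c\in G$. The subgroup $\PC(x)^c$ is parabolic, being a conjugate of the parabolic subgroup $\PC(x)$, and it contains $x^c$ since $x\in\PC(x)$. It remains to check minimality. Let $H$ be any parabolic subgroup of $G$ with $x^c\in H$. Then $H^{c^{-1}}$ is a parabolic subgroup containing $(x^c)^{c^{-1}}=x$, so by minimality of $\PC(x)$ we have $\PC(x)\subset H^{c^{-1}}$, and conjugating by $c$ gives $\PC(x)^c\subset H$. Thus $\PC(x)^c$ is contained in every parabolic subgroup containing $x^c$; in particular it is the unique minimal one, so $\PC(x^c)$ exists and equals $\PC(x)^c$.
\end{proof}
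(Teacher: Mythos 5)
Your proof is correct and follows exactly the same route as the paper's: observe that $\PC(x)^c$ is a parabolic subgroup containing $x^c$, then for any parabolic $H\ni x^c$ pass to $H^{c^{-1}}\ni x$, invoke the minimality of $\PC(x)$, and conjugate back. Nothing to add.
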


\begin{proof}
Let $\PC(x)$ be the parabolic closure of $x$. We need to show that $\PC(x)^c$ is the parabolic closure of $x^c$.

First notice that $\PC(x)^c$ is parabolic, as it is the conjugate of a parabolic subgroup. Notice also that $x^c\in \PC(x)^c$, as $x\in \PC(x)$. Finally, suppose that $H$ is a parabolic subgroup containing $x^c$. Then $x^c\in H$ implies $x\in H^{c^{-1}}$, where $H^{c^{-1}}$ is a parabolic subgroup (being conjugate to $H$). Hence, by minimality of $\PC(x)$ we obtain $\PC(x)\subset H^{c^{-1}}$, and then $\PC(x)^c\subset H$. Therefore $\PC(x)^c$ is the parabolic closure of $x^c$, that is, $\PC(x^c)=\PC(x)^c$.
\end{proof}

\begin{theorem}\label{T:parabolic_closure_exists}
If $(G,G^+,\Delta)$ is a support-preserving LCM-Garside structure, then every element of $G$ admits a parabolic closure.
\end{theorem}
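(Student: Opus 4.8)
The plan is to reduce the general case to the case of recurrent elements, which is already handled by \autoref{T:parabolic_closure_for_recurrent}. The two remaining ingredients are \autoref{P:swap_to_get_recurrent}, which tells us that every element of $G$ becomes recurrent after finitely many swaps, and \autoref{L:parabolic_closure_for_conjugates}, which transports the existence of a parabolic closure along conjugacy.

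Explicitly, given $x \in G$, I would first invoke \autoref{P:swap_to_get_recurrent} to produce an integer $m \geq 0$ for which $y := \phi^m(x)$ is recurrent. Each swap is a conjugation: if $\phi^i(x) = a_i^{-1} b_i$ is the reduced left-fraction decomposition, then $\phi^{i+1}(x) = a_i \, \phi^i(x)\, a_i^{-1}$, so with $c := a_{m-1} \cdots a_1 a_0 \in G^+$ one has $y = c x c^{-1}$, i.e.\ $x = y^{c}$ in the conjugation notation $y^c = c^{-1} y c$. Since $y$ is recurrent, \autoref{T:parabolic_closure_for_recurrent} applies and gives that $y$ admits a parabolic closure, namely $\PC(y) = G_{\operatorname{Supp}(y)}$. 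Finally, applying \autoref{L:parabolic_closure_for_conjugates} to the conjugate $x = y^{c}$ shows that $x$ itself admits a parabolic closure, with the explicit description
\[
\PC(x) = \PC(y)^{c} = \bigl(G_{\operatorname{Supp}(\phi^m(x))}\bigr)^{c}.
\]

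No step here is a genuine obstacle: all the difficulty has been front-loaded into the standing hypothesis that the structure is a support-preserving LCM-Garside structure and into \autoref{T:parabolic_closure_for_recurrent} and \autoref{P:support_preserved_for_recurrent}. The only point worth a sentence of care is that the conjugating element $c$ must be tracked through the iterated swaps if one wants the closed formula above; otherwise \autoref{L:parabolic_closure_for_conjugates} can be quoted purely abstractly, since it only needs that $x$ is \emph{some} conjugate of the recurrent element $y$.
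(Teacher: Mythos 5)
Your proof is correct and is essentially identical to the paper's own argument: conjugate $x$ to a recurrent element by iterated swaps (\autoref{P:swap_to_get_recurrent}), apply \autoref{T:parabolic_closure_for_recurrent}, and transport back via \autoref{L:parabolic_closure_for_conjugates}. The explicit tracking of the conjugator $c$ and the closed formula $\PC(x)=\bigl(G_{\operatorname{Supp}(\phi^m(x))}\bigr)^{c}$ are a harmless (and correct) refinement of what the paper leaves implicit.
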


\begin{proof}
Let $x\in G$. Applying iterated swaps, one can conjugate $x$ to a recurrent element $y$. By \autoref{T:parabolic_closure_for_recurrent} $y$ admits a parabolic closure, and by \autoref{L:parabolic_closure_for_conjugates} so does $x$.
\end{proof}

The hypotheses of \autoref{T:parabolic_closure_exists} are satisfied by some well-known Garside groups, namely Artin groups of spherical type, but the existence of parabolic closures in these groups was already shown in~\cite{CGGW}.

We can also extend the results in~\cite{CGGW} concerning parabolic closures of powers of elements:

\begin{theorem}\label{T:parabolic_closure_of_powers}
If $(G,G^+,\Delta)$ is a support-preserving LCM-Garside structure, given $x\in G$ and $m$ a nonzero integer, the parabolic closures of $x$ and $x^m$ coincide.
\end{theorem}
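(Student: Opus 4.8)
The plan is to reduce the general statement to the case of recurrent elements, where \autoref{T:parabolic_closure_for_recurrent} identifies the parabolic closure with $G_{\operatorname{Supp}(\cdot)}$, and then to show that the support is insensitive to taking nonzero powers. First I would observe that by \autoref{L:parabolic_closure_for_conjugates}, the conclusion $\PC(x)=\PC(x^m)$ is invariant under conjugation: if $y=x^c$ then $y^m=(x^m)^c$, and $\PC(y)=\PC(x)^c$, $\PC(y^m)=\PC(x^m)^c$, so it suffices to prove the statement for one representative of the conjugacy class of $x$. Using iterated swaps (\autoref{P:swap_to_get_recurrent}), I may therefore assume $x$ is recurrent. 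The remaining point is then purely about supports: I must show $\operatorname{Supp}(x)=\operatorname{Supp}(x^m)$ for a recurrent element $x$, for then \autoref{T:parabolic_closure_for_recurrent} gives $\PC(x)=G_{\operatorname{Supp}(x)}=G_{\operatorname{Supp}(x^m)}=\PC(x^m)$, provided $x^m$ is also recurrent — and this last point needs a brief argument as well.

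The easy inclusion is $\operatorname{Supp}(x^m)\subseteq \operatorname{Supp}(x)$: since $x$ is recurrent, \autoref{T:parabolic_closure_for_recurrent} gives $x\in G_{\operatorname{Supp}(x)}$, hence $x^m\in G_{\operatorname{Supp}(x)}$, and because $G_{\operatorname{Supp}(x)}$ is a standard parabolic subgroup with saturated set of atoms $\operatorname{Supp}(x)$, the reduced left-fraction decomposition of $x^m$ lies inside $G_{\operatorname{Supp}(x)}$ (the theorem of Godelle quoted in \autoref{sect:parabclosuresgarsidegroups} ensures reduced left-fraction decompositions are inherited by standard parabolics), so all atoms appearing are in $\operatorname{Supp}(x)$ and hence $\operatorname{Supp}(x^m)\subseteq \overline{\operatorname{Supp}(x)}=\operatorname{Supp}(x)$. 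For the reverse inclusion I would argue via parabolic closures: by \autoref{T:parabolic_closure_exists}, $x^m$ admits a parabolic closure $\PC(x^m)$, and it is a parabolic subgroup containing $x^m$. The key claim is that $\PC(x^m)$ also contains $x$. This is the heart of the matter, and I would handle it exactly as in \cite{CGGW}: $x$ normalizes $\PC(x^m)$ (since $x$ commutes with $x^m$, conjugation by $x$ fixes $x^m$, hence sends the minimal parabolic over $x^m$ to itself — using \autoref{prop:basicpropsparabs}(2)/the Garside analogue that conjugates of parabolics are parabolic), so $x^{-1}\PC(x^m)x$ is a parabolic subgroup containing $x^m$ of the same "size", forcing equality by the minimality of $\PC(x^m)$. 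Then one invokes the structural fact that the normalizer of a parabolic subgroup $P$ in $G$, modulo $P$ itself, has a torsion-free-modulo-center behavior — more precisely, that if some nonzero power of $x$ lies in $P$ and $x$ normalizes $P$, then $x\in P$.

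The main obstacle, and the step I would be most careful about, is precisely this last implication: "$x$ normalizes $P$ and $x^m\in P$ $\Rightarrow$ $x\in P$". In \cite{CGGW} this rests on properties of Artin groups of spherical type (in particular that $N(P)/P$ acts freely enough, or that $P$ is "closed under roots" in an appropriate sense). In the present Garside-theoretic generality I would prove it using the recurrence/support machinery already developed: replace $x$ by a recurrent conjugate (preserving the hypothesis, since both "$x$ normalizes $P$" and "$x^m\in P$" are conjugation-stable after adjusting $P$), reduce $P$ to a standard parabolic $G_Y$, and then use \autoref{P:support_preserved_for_recurrent} to compare $\operatorname{Supp}(x)$ and $\operatorname{Supp}(x^m)$ directly: since $x^m\in G_Y$ we get $\operatorname{Supp}(x^m)\subseteq Y$, and the normalizing condition together with \autoref{T:parabolic_closure_for_recurrent} gives $G_{\operatorname{Supp}(x)}=\PC(x)\subseteq G_Y$, whence $\operatorname{Supp}(x)\subseteq Y$ and $x\in G_Y=P$. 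Assembling: $\PC(x^m)$ is a parabolic subgroup containing $x$, so by minimality $\PC(x)\subseteq\PC(x^m)$; combined with the trivial inclusion $\PC(x^m)\subseteq\PC(x)$ (from $x^m\in\PC(x)$ and minimality of $\PC(x^m)$) this yields $\PC(x)=\PC(x^m)$, which is the theorem. I would also remark that the case $m<0$ reduces to $m>0$ since $\PC(x^m)=\PC(x^{-m})$ trivially (a subgroup contains $g$ iff it contains $g^{-1}$).
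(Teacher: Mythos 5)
Your setup is fine: the reduction to a recurrent representative via \autoref{L:parabolic_closure_for_conjectures}\let\relax\relax \autoref{L:parabolic_closure_for_conjugates} and \autoref{P:swap_to_get_recurrent}, the easy inclusion $\PC(x^m)\subseteq\PC(x)$, and the observation that $x$ normalizes $P=\PC(x^m)$ are all correct. The gap is exactly at the step you flag as ``the heart of the matter'': the implication ``$x$ normalizes $P$ and $x^m\in P$ $\Rightarrow$ $x\in P$'' is never actually proved. Your justification asserts that ``the normalizing condition together with \autoref{T:parabolic_closure_for_recurrent} gives $G_{\operatorname{Supp}(x)}=\PC(x)\subseteq G_Y$'', but that containment \emph{is} the statement $x\in G_Y$ you are trying to establish; normalizing $G_Y$ does not force $\PC(x)\subseteq G_Y$ (a central element normalizes every standard parabolic, yet its parabolic closure is all of $G$). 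The appeal to \autoref{P:support_preserved_for_recurrent} also does not apply: that proposition compares the supports of two recurrent \emph{conjugates of the same element}, whereas $x$ and $x^m$ are not conjugate. Worse, the implication you need is precisely the corollary that the paper deduces \emph{from} this theorem (roots of an element of a parabolic subgroup lie in that parabolic subgroup), so routing the proof through it without an independent argument is circular.

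The paper closes this gap by a different mechanism. After arranging $x\in\RC(x)$, it conjugates the \emph{pair} $(x,x^m)$ by $x^m\wedge 1$, the swap conjugator of $x^m$: since both $1$ and $x^m$ conjugate $x$ to itself, \autoref{P:convexity} guarantees that $x^m\wedge 1$ keeps $x$ inside $\RC(x)$ while applying $\phi$ to $x^m$. Iterating, one may assume $x$ and $x^m$ are \emph{simultaneously} recurrent --- note that your parenthetical worry (``provided $x^m$ is also recurrent'') is real and is resolved only by this simultaneous conjugation, not by conjugating $x$ alone. Once both are recurrent, $\operatorname{Supp}(x)=\operatorname{Supp}(x^m)$ is immediate when $x$ is positive or negative, and in the remaining case one passes to the Garside structure $(G,G^+,\Delta^N)$ so that the numerator and denominator of $x$ are simple, identifies swap with twisted cycling (so $x$ lies in its ultra summit set), and follows the argument of Theorem~8.2 of \cite{CGGW}. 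You would need to supply an argument of this kind (or an independent proof of root-closedness of parabolic subgroups) for your proposal to go through.
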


\begin{proof}
The proof is basically the same as that of Theorem~8.2 in~\cite{CGGW}, replacing $RSSS_{\infty}(x)$ with $\RC(x)$.

The idea is to assume, up to conjugation, that $x\in \RC(x)$. Then consider the pair $(x,x^m)$. The conjugating element which applies a swap to $x^m$ is the inverse of its left-denominator, which is precisely $x^m\wedge 1$. Since both $x^m$ and $1$ conjugate $x$ to itself, it follows by \autoref{P:convexity} that $x^m\wedge 1$ conjugates $x$ to an element in $\RC(x)$. Hence we can conjugate the pair $(x,x^m)$ by $x^m\wedge 1$, to apply $\phi$ to the second coordinate while keeping the first coordinate inside $\RC(x)$. Iterating, we can assume that both $x$ and $x^m$ are recurrent elements.

Now, if $x$ is either positive or negative, one checks immediately that $G_{\operatorname{Supp}(x)}=G_{\operatorname{Supp}(x^m)}$ is the parabolic closure of both $x$ and $x^m$. If $x$ is neither positive nor negative, one considers the Garside structure $(G,G^+,\Delta^N)$, with $N$ big enough so that the left-numerator and the left-denominator of $x$ are simple. In this case, the swap operation is equivalent to the twisted cycling operation, implying that $x$ belongs to its ultra summit set. Then one can follow the arguments in~\cite[Theorem 8.2]{CGGW} to conclude that $G_{\operatorname{Supp}(x)}=G_{\operatorname{Supp}(x^m)}$ also in this case, hence the parabolic closures of $x$ and $x^m$ coincide.
\end{proof}

In the same way as it is done in~\cite{CGGW} for Artin-Tits groups of spherical type, we can also conclude that all roots of an element in a parabolic subgroup belong to the parabolic subgroup:

\begin{corollary}
Let $(G,G^+,\Delta)$ be a support-preserving LCM-Garside structure. If $y$ belongs to a parabolic subgroup $H$, and $x\in G$ is such that $x^m=y$ for some nonzero integer $m$, then $x\in H$.
\end{corollary}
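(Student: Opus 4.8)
The statement is a direct consequence of the two preceding results on parabolic closures, so the plan is simply to chain them together. First I would invoke \autoref{T:parabolic_closure_exists}: since $(G,G^+,\Delta)$ is a support-preserving LCM-Garside structure, every element of $G$ admits a parabolic closure; in particular $\PC(x)$ and $\PC(y)$ are well-defined, and by definition $x\in\PC(x)$ and $\PC(x)$ is the smallest parabolic subgroup containing $x$.

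Next I would use \autoref{T:parabolic_closure_of_powers}, which under the same hypotheses gives $\PC(x)=\PC(x^m)$ for the nonzero integer $m$. Since $x^m=y$, this reads $\PC(x)=\PC(y)$.

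Finally, $y$ lies in the parabolic subgroup $H$, so by the minimality characterizing the parabolic closure we have $\PC(y)\subseteq H$. Combining the three facts, $x\in\PC(x)=\PC(y)\subseteq H$, which is exactly the claim. The only point to check is that the hypothesis ``support-preserving LCM-Garside structure'' is indeed the running assumption needed to apply both \autoref{T:parabolic_closure_exists} and \autoref{T:parabolic_closure_of_powers}, which it is; so there is no real obstacle beyond assembling these ingredients.
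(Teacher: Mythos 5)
Your proof is correct and is essentially identical to the paper's argument: apply \autoref{T:parabolic_closure_of_powers} to get $\PC(x)=\PC(y)$, use minimality of $\PC(y)$ inside the parabolic subgroup $H$, and conclude $x\in\PC(x)=\PC(y)\subseteq H$. Nothing further is needed.
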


\begin{proof}
This is the same proof as in~\cite[Corollary 8.3]{CGGW}. By \autoref{T:parabolic_closure_of_powers}, we have $\PC(x)=\PC(y)$. Since $y\in H$ and $H$ is parabolic, it follows that $\PC(y)\subset H$. But then $x\in \PC(x)=\PC(y)\subset H$.
\end{proof}

Later we will show that other known Garside groups, apart from Artin-Tits groups of spherical type, also satisfy the hypotheses of \autoref{T:parabolic_closure_exists} and \autoref{T:parabolic_closure_of_powers}. Before that, in the next section, we will provide the technical tools that will allow us to achieve this goal.

\subsection{Checking properties of a Garside structure}\label{S:checking_Garside_structure}

In this section we will explain how one can check whether a Garside structure is LCM, and whether an LCM-Garside structure is support-preserving.

In order to show that a Garside structure is LCM, there are three properties to prove (recall \autoref{D:LCM-Garside_structure}). The first two properties have a finite number of checkings (check whether $\Delta=\Delta_{\mathcal A}$ and, on the other hand, compute the elements of the form $\Delta_X$ for $X\subset \mathcal A$ and check whether their prefixes and their suffixes coincide). The third property can be verified thanks to a result from~\cite{GODELLE2007}, which can be applied to the elements of the form $\delta=\Delta_X$ with $X\subset \mathcal A$.

\begin{proposition}\label{P:characterization_standard_parabolic}\cite[Proposition 1.16]{GODELLE2007}
Let $(G,G^+,\Delta)$ be a Garside structure, and let $\delta\in \operatorname{Div}(\Delta)$ be a balanced element. Then $G_{\delta}$ is a standard parabolic subgroup (hence $(G_{\delta},G_{\delta}^+,\delta)$ is a Garside structure) if and only if, for every $x,y\in \operatorname{Div}(\delta)$, one has:
$$
   xy\wedge \Delta \in \operatorname{Div}(\delta) \qquad \mbox{and} \qquad xy\wedge^{\Lsh} \Delta \in \operatorname{Div}(\delta).
$$
\end{proposition}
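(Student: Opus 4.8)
The statement is an equivalence, and the forward implication is the short one. Suppose $G_\delta$ is a standard parabolic subgroup. Then, by the structure theorem of \cite{GODELLE2007} recalled above, $(G_\delta,G_\delta^+,\delta)$ is a Garside structure and $G_\delta^+$ is closed under positive prefixes and positive suffixes in $G$. Given $x,y\in\operatorname{Div}(\delta)$, each of $x,y$ is a product of atoms all dividing $\delta$ (this follows from $\delta$ being balanced together with the remark on balanced elements recalled earlier), so $x,y\in G_\delta^+$ and hence $xy\in G_\delta^+$. The element $xy\wedge\Delta$ is at once a positive prefix of $xy$, hence in $G_\delta^+$, and a divisor of $\Delta$; therefore it lies in $\operatorname{Div}(\Delta)\cap G_\delta^+=\operatorname{Div}(\delta)$ by the definition of a standard parabolic subgroup. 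The argument for $xy\wedge^{\Lsh}\Delta$ is the mirror image, with right normal forms replacing left ones.

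For the converse, assume that $xy\wedge\Delta\in\operatorname{Div}(\delta)$ and $xy\wedge^{\Lsh}\Delta\in\operatorname{Div}(\delta)$ for all $x,y\in\operatorname{Div}(\delta)$; one must show $\operatorname{Div}(\delta)=\operatorname{Div}(\Delta)\cap G_\delta^+$. The inclusion $\subseteq$ is immediate, since $\operatorname{Div}(\delta)\subseteq\operatorname{Div}(\Delta)$ (because $\delta\preccurlyeq\Delta$) and $\operatorname{Div}(\delta)\subseteq G_\delta^+$ as above. Write $M$ for the submonoid of $G^+$ generated by $\operatorname{Supp}(\delta)$, equivalently by $\operatorname{Div}(\delta)$. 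The crux of the converse is the claim that for every $w\in M$ one has $w\wedge\Delta\in\operatorname{Div}(\delta)$ and, symmetrically, $w\wedge^{\Lsh}\Delta\in\operatorname{Div}(\delta)$.

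I would prove this claim by induction on the length of $w$ as a word in the atoms $\operatorname{Supp}(\delta)$, the length-zero case being trivial. Write $w=uw'$ with $u\in\operatorname{Supp}(\delta)$. The basic tool is the elementary identity $up\wedge\Delta=u(p\wedge u^{-1}\Delta)$, valid for any atom $u$ and any $p\in G^+$; both inclusions are checked directly, using that $u$ is simple in order to divide $u$ out on the left of $up\wedge\Delta$. By induction $w'\wedge\Delta\in\operatorname{Div}(\delta)$. Since $\Delta$ is balanced, $u^{-1}\Delta$ is a positive suffix of $\Delta$, hence also a prefix of $\Delta$, so $p':=w'\wedge u^{-1}\Delta$ is a common prefix of $w'$ and $\Delta$, thus a prefix of $w'\wedge\Delta$, and therefore lies in $\operatorname{Div}(\delta)$ (which is closed under prefixes). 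Applying the hypothesis to $u,p'\in\operatorname{Div}(\delta)$ gives $up'\wedge\Delta\in\operatorname{Div}(\delta)$; but by the identity $up'=w\wedge\Delta$, which already divides $\Delta$, so $up'\wedge\Delta=up'=w\wedge\Delta$, proving $w\wedge\Delta\in\operatorname{Div}(\delta)$. The right-handed statement is symmetric.

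Granting the claim, the proof would conclude as soon as one knows $G_\delta^+=M$: for then any $g\in\operatorname{Div}(\Delta)\cap G_\delta^+$ is a simple element lying in $M$, hence $g=g\wedge\Delta\in\operatorname{Div}(\delta)$. I expect $G_\delta^+=M$ to be the main obstacle — it is false for arbitrary subgroups of a Garside group, and it is precisely where the hypothesis gets used a second time. The route I would follow is: the claim together with its mirror says $M$ is stable under left and right gcd with $\Delta$; iterating this along left normal forms shows that $M$ is closed under positive prefixes and positive suffixes in $G^+$ and that the meet and join in $G$ of two elements of $M$ again lie in $M$. Hence $(M,\delta)$ is a Garside monoid whose lattice is the one induced from $G$, its group of fractions embeds into $G$ with image $G_\delta$, and $M$ coincides with the positive cone of that subgroup, i.e. $M=G^+\cap G_\delta=G_\delta^+$. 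This final step is the one that genuinely relies on the general theory of Garside submonoids from \cite{GODELLE2007}, rather than on the elementary divisibility manipulations used above.
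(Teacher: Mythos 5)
The paper offers no proof of this statement: it is quoted verbatim with a citation to \cite[Proposition 1.16]{GODELLE2007}, so there is no in-paper argument to compare yours against. What you have written is essentially a reconstruction of Godelle's proof. Your forward direction is correct (it legitimately uses the structure theorem quoted earlier, namely that $G_\delta^+$ is closed under positive prefixes and suffixes when $G_\delta$ is standard parabolic), and your inductive claim --- that $w\wedge\Delta\in\operatorname{Div}(\delta)$ for every $w$ in the submonoid $M$ generated by $\operatorname{Supp}(\delta)$ --- is proved correctly and cleanly; the identity $uw'\wedge\Delta=u(w'\wedge u^{-1}\Delta)$ is just left-invariance of $\wedge$ in $G$, and the rest of that induction checks out.

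Two points in the remainder need attention. First, ``iterating this along left normal forms'' to get prefix-closure of $M$ does not follow from the claim as stated: to extract the second normal-form factor of $w\in M$ you must first know that $(w\wedge\Delta)^{-1}w$ again lies in $M$, which is not part of what you proved. The fix is to strengthen the induction: with $w=uw'$ and $p'=w'\wedge u^{-1}\Delta$ one has $(w\wedge\Delta)^{-1}w=p'^{-1}w'=\bigl(p'^{-1}(w'\wedge\Delta)\bigr)\cdot\bigl((w'\wedge\Delta)^{-1}w'\bigr)$; the second factor is in $M$ by the strengthened inductive hypothesis, and the first is the middle factor of the decomposition $\delta=p'\cdot\bigl(p'^{-1}(w'\wedge\Delta)\bigr)\cdot\bigl((w'\wedge\Delta)^{-1}\delta\bigr)$ into three positive elements, hence lies in $\operatorname{Div}(\delta)$ by the remark on balanced elements. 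With that, prefix- and suffix-closure of $M$ follow by induction on the supremum, as you indicate. Second, the reduction of the converse to the equality $G_\delta^+= M$ is correct, and your Ore-localization sketch for it (every element of $G_\delta$ is a fraction $mn^{-1}$ with $m,n\in M$, and such a fraction, if positive, is a prefix of $m$ and hence in $M$ by prefix-closure) is viable; but you explicitly defer this to \cite{GODELLE2007}. Since the paper itself defers the entire proposition to that reference, this is acceptable, though it means your argument is not self-contained at precisely the point you identified as the crux.
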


Now let us assume that $(G,G^+,\Delta)$ is an LCM-Garside structure. We will explain how to determine whether the structure is support-preserving.

Given an element $x\in G$ which is conjugate to a positive element, one can compute the set $C^+(x)$ in two steps. First one finds one element $y\in C^+(x)$ by applying iterated swaps to $x$. Then, starting with $y$, one computes the directed graph $\mathcal G_{C^+(x)}$, defined as follows:
\begin{itemize}

\item The vertices of $\mathcal G_{C^+(x)}$ correspond to the elements of $C^+(x)$.

\item There is an arrow labeled $g$ with source $u$ and target $v$ if and only if $g$ is non-trivial and positive, $u^g=v$ and $u^h\notin C^+(x)$ whenever $1\prec h \prec g$.
\end{itemize}

The arrows in $\mathcal G_{C^+(x)}$ are called {\em minimal positive conjugators}. If the arrow starts at a vertex $u$, it is called a {\em minimal positive conjugator for $u$}. Since the graph $\mathcal G_{C^+(x)}$ is connected, one can compute the whole graph, starting with a single element $y\in C^+(x)$, provided that one knows how to compute the minimal positive conjugators for any given element.

A crucial property of these conjugating elements is that we just need to check that they are support-preserving, in order to show that an LCM-Garside structure is support-preserving:

\begin{proposition}\label{P:support_preserved_by_minimal_positive_conjugators}
An LCM-Garside structure $(G,G^+,\Delta)$ is support-preserving if and only if for every positive element $x\in G^+$, and every minimal positive conjugator $c$ for $x$ in $\mathcal G_{C^+(x)}$, one has:
$$
    \left(G_{\operatorname{Supp}(x)}\right)^c=G_{\operatorname{Supp}(x^c)}.
$$
\end{proposition}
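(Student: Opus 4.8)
The statement is an "if and only if," so the plan is to prove the two directions.

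\textbf{The easy direction.} Suppose $(G,G^+,\Delta)$ is support-preserving. Then by \autoref{D:support_preserved_under_conjugation}, for \emph{every} pair of conjugate positive elements $y,z\in G^+$ and every $\alpha$ with $y^\alpha=z$ we have $(G_{\operatorname{Supp}(y)})^\alpha=G_{\operatorname{Supp}(z)}$. In particular this applies when $z=x^c$ for a minimal positive conjugator $c$ for $x$: such a $c$ is positive and $x^c$ is positive (it lies in $C^+(x)$), so $(G_{\operatorname{Supp}(x)})^c=G_{\operatorname{Supp}(x^c)}$. This is immediate.

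\textbf{The hard direction.} Assume the condition holds for all minimal positive conjugators, and let $y,z\in G^+$ be conjugate with $y^\alpha=z$ for some $\alpha\in G$. First I would reduce to the case $\alpha\in G^+$: by \autoref{P:turn_into_positive} there is a central $\zeta\in Z(G)$ with $\zeta\alpha\in G^+$, and conjugation by a central element is trivial, so replacing $\alpha$ by $\zeta\alpha$ changes nothing in the conclusion. Now $\alpha$ is a positive conjugating element from $y$ to $z$, both vertices of the graph $\mathcal{G}_{C^+(x)}$ where $x=y$. The key structural fact (implicit in the definition of minimal positive conjugators) is that any positive conjugator can be factored as a product of minimal positive conjugators along a path in the graph: write $\alpha = c_1 c_2 \cdots c_k$ where $c_1$ is the minimal positive conjugator that is a prefix of $\alpha$ taking $y$ to some $y_1\in C^+(x)$, then $c_2$ is the minimal positive conjugator prefix of $c_2\cdots c_k$ taking $y_1$ to $y_2\in C^+(x)$, and so on, terminating because $\alpha$ has finite canonical length. (One needs to check that such a $c_1$ exists and is a prefix of $\alpha$: since $y^\alpha=z\in C^+(x)$, the element $\alpha$ conjugates $y$ into $C^+(x)$, and among nontrivial positive prefixes $h\preccurlyeq\alpha$ with $y^h\in C^+(x)$ one picks a $\preccurlyeq$-minimal one, which is then by definition a minimal positive conjugator; this is exactly the combinatorics underlying the connectedness of $\mathcal{G}_{C^+(x)}$ asserted in the text.) Having such a factorization, each step $y_{i-1}\xrightarrow{c_i} y_i$ is a minimal positive conjugator between positive elements, so by hypothesis $(G_{\operatorname{Supp}(y_{i-1})})^{c_i}=G_{\operatorname{Supp}(y_i)}$; composing these $k$ equalities gives $(G_{\operatorname{Supp}(y)})^\alpha=G_{\operatorname{Supp}(z)}$, as desired.

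\textbf{Main obstacle.} The technical heart is justifying the factorization of an arbitrary positive conjugator into minimal positive conjugators along a directed path. This is "well known" Garside-theoretic bookkeeping — it parallels the standard argument for connectedness of super summit / ultra summit graphs via minimal simple conjugators — but it does require that at each stage there genuinely is a minimal positive conjugator that is a \emph{prefix} of the remaining conjugator, and that the process terminates. Termination follows because canonical length (or word length in atoms) of the remaining positive factor strictly decreases, or simply because one cannot decompose a fixed positive element into an unbounded number of nontrivial positive factors. The existence at each stage is a consequence of the lattice property of $\preccurlyeq$: the set of positive prefixes $h$ of the current conjugator with $y_{i-1}^h\in C^+(x)$ is nonempty (it contains the whole conjugator) and closed under $\wedge$ when restricted appropriately — here one uses \autoref{P:convexity} applied to $C^+(x)=\RC(x)$, which is exactly the convexity tool established earlier — so it has a unique minimal element, which is the sought minimal positive conjugator.
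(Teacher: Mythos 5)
Your proof is correct and follows essentially the same route as the paper: reduce to a positive conjugator by multiplying with a central element, factor it into minimal positive conjugators by repeatedly extracting a $\preccurlyeq$-minimal nontrivial prefix that conjugates into $C^+(x)$ (terminating by the descending chain condition on positive elements), and compose the resulting equalities of supports. One minor caveat: your closing aside claiming a \emph{unique} minimal element via closure under $\wedge$ is not quite accurate, since the set of \emph{nontrivial} prefixes conjugating into $C^+(x)$ need not be closed under $\wedge$ (the meet of two such prefixes can be trivial); this is inessential, however, because the existence of \emph{some} minimal element — which is all the argument requires, and is what the paper uses — already follows from the chain condition you invoke.
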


\begin{proof}
The only if statement is trivial, so let us assume that the support in $G$ is preserved under conjugation by minimal positive conjugators. Let $x,y$ be a pair of conjugate positive elements and let $\alpha\in G$ such that $x^\alpha=y$.

If $\alpha$ is not positive, we know from \autoref{P:turn_into_positive} that $\beta\alpha$ is positive for some central element $\beta\in G$. Then
$$
\left(G_{\operatorname{Supp}(x)}\right)^\alpha =  \left(\left(G_{\operatorname{Supp}(x)}\right)^{\beta^{-1}}\right)^{\beta \alpha} = \left(G_{\operatorname{Supp}(x)}\right)^{\beta \alpha},
$$
and $x^{\beta \alpha}=x^\alpha=y$. Hence, replacing $\alpha$ with $\beta\alpha$ if necessary, we can assume that $\alpha$ is positive.

Now we claim that every positive element $\alpha$ such that $x^\alpha$ is positive can be decomposed as a product of minimal positive conjugators, meaning that there is a path in $\mathcal G_{C^+(x)}$, starting at $x$, corresponding to a sequence of minimal positive conjugators whose product is $\alpha$.

If $\alpha$ is either trivial or a minimal positive conjugator, the claim clearly holds. Otherwise, since $\alpha$ is not minimal, it can be decomposed as $\alpha=a_1b_1$, where $a_1$ and $b_1$ are positive (so $1\prec a_1\prec \alpha$) and $x^{a_1}\in C^+(x)$. If $a_1$ is not a minimal positive conjugator, we keep going and find $a_2$ such that $1\prec a_2\prec a_1 \prec \alpha$ and $x^{a_2}\in C^+(x)$. Since we cannot have an infinite descending chain of positive elements in a Garside group, this process must stop, and we will obtain a minimal positive conjugator $\alpha_1$ for $x$, with $\alpha=\alpha_1\beta_1$ and $\beta_1\in G^+$. If $\beta_1$ is not trivial, we apply the same reasoning to $\beta_1$ and find a minimal positive conjugator $\alpha_2$ for $x^{\alpha_1}$, such that $\alpha=\alpha_1\alpha_2\beta_2$ and $\beta_2\in G^+$. This process must also terminate, since a positive element cannot be decomposed as an arbitrarily large product of positive elements. So some $\beta_r$ will be trivial, and $\alpha=\alpha_1\cdots \alpha_r$ will be a product of minimal positive conjugators, starting at $x$. This shows the claim.

Finally, since we are assuming that the support is preserved under conjugation by minimal positive conjugators, it follows that the support will be preserved by $\alpha$:
$$
 \left(G_{\operatorname{Supp}(x)}\right)^\alpha = \left(G_{\operatorname{Supp}(x)}\right)^{\alpha_1\alpha_2\cdots \alpha_r} =
 \left(G_{\operatorname{Supp}(x^{\alpha_1})}\right)^{\alpha_2\cdots \alpha_r} = \cdots = G_{\operatorname{Supp}(x^{\alpha_1\cdots \alpha_r})} = G_{\operatorname{Supp}(y)}
$$
\end{proof}

We will therefore be interested in describing minimal simple elements in detail. Given $x\in G^+$, we want to compute the arrows of the graph $\mathcal G_{C^+(x)}$ starting at the vertex $x$. This is explained in~\cite{FRANCOGM}, and we will also describe here how to do it, as we will perform this computation in the following subsections.

Let us start by recalling the following result, which is a particular case of \autoref{P:convexity}, and also holds for other sets like super summit sets, ultra summit sets, sliding circuits set and the like, although we will only use it in this case.

\begin{lemma}\label{L:convexity}\cite[Proposition 4.8]{FRANCOGM} Let $G$ be a Garside group, let $x\in G^+$. For every $u,v\in G$, if $x^u,x^v\in C^+(x)$ then $x^{u\wedge v}\in C^+(x)$.
\end{lemma}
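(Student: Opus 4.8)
The statement to prove is \autoref{L:convexity}: for $x \in G^+$, if $x^u, x^v \in C^+(x)$ then $x^{u \wedge v} \in C^+(x)$. This is a special case of \autoref{P:convexity}, restricted to $x$ positive and to the subset $C^+(x) \subseteq \RC(x)$, so one is really just re-examining the earlier convexity argument in the positive setting (where matters simplify). My plan is to reduce to the case where $u,v$ are positive, then exploit that a positive conjugate of a positive element is reachable by transport, and finally apply the intersection-compatibility of transport.

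First I would reduce to the case $u, v \in G^+$. By \autoref{P:turn_into_positive}, choose a central $z$ with $zu$, $zv \in G^+$; since $z$ is central, $x^{zu} = x^u$ and $x^{zv} = x^v$, and moreover $zu \wedge zv = z(u \wedge v)$ by left-invariance of $\wedge$, so $x^{(zu)\wedge(zv)} = x^{z(u\wedge v)} = x^{u\wedge v}$. Hence it suffices to prove the claim when $u,v$ are positive. Next I would observe that, since $x \in G^+$ is positive, it is recurrent (its left-denominator is trivial, so $\phi(x)=x$), and more importantly the reduced left-fraction decomposition of $x$ is simply $x = 1^{-1}x$; thus the transport of a positive element $w$ at $x$ is, by \autoref{P:transport_definition}(1), $w^{(1)} = (1\cdot w) \wedge (x \cdot w) = w \wedge xw = w$ (as $w \preccurlyeq xw$ when $x$ is positive). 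In other words, for $x$ positive the transport map at $x$ is the identity on positive elements, and $w^{(1)}$ still conjugates $\phi(x) = x$ to $\phi(x^w) = x^w$ whenever $x^w$ is positive as well.

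Now I would run the transport argument exactly as in the proof of \autoref{P:convexity}. Given $u,v \in G^+$ with $x^u, x^v \in C^+(x)$, since $x^u$ and $x^v$ are positive (hence recurrent and fixed by $\phi$), and since transport at $x$ fixes $u$ and $v$, \autoref{L:transport_repeats} (or directly the computation above) gives $u^{(k)}=u$ and $v^{(k)}=v$ for suitable $k$; the point is that the hypotheses of \autoref{P:transport_properties} are in force. By \autoref{P:transport_properties}(2), $(u \wedge v)^{(1)} = u^{(1)} \wedge v^{(1)}$, which in our situation reads $(u \wedge v)^{(1)} = u \wedge v$. Since $(u\wedge v)^{(1)}$ conjugates $\phi(x) = x$ to $\phi(x^{u \wedge v})$, and $u \wedge v$ conjugates $x$ to $x^{u \wedge v}$, injectivity of transport (equivalently, just reading off the equality) yields $\phi(x^{u \wedge v}) = x^{u\wedge v}$, so $x^{u\wedge v} \in \RC(x)$. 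But $x^{u\wedge v}$ being a $\phi$-fixed recurrent element conjugate to the positive element $x$ forces it to be positive by \autoref{P:recurrent=positive}(1): indeed $\RC(x) = C^+(x)$ when $x$ is conjugate to a positive element. Therefore $x^{u \wedge v} \in C^+(x)$, as desired.

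The only genuine subtlety is the bookkeeping that lets one apply \autoref{L:transport_repeats} and \autoref{P:transport_properties} to these particular conjugating elements, namely checking that all the intermediate elements along the iterated transport stay recurrent and positive; but since we are conjugating the fixed positive element $x$ by positive elements whose transports at $x$ are themselves, this is immediate and no new work is needed beyond invoking \autoref{P:recurrent=positive}. So the proof is essentially a short specialization of \autoref{P:convexity}, and I do not expect any real obstacle — the reference to \cite[Proposition 4.8]{FRANCOGM} is provided precisely because the argument there is already set up in the positive-element context.
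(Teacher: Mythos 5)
Your argument is correct and matches the paper's treatment: the paper gives no proof of this lemma, simply citing \cite[Proposition 4.8]{FRANCOGM} and observing that it is the special case $y=x\in\RC(x)=C^+(x)$ of \autoref{P:convexity}, which is exactly the specialization you carry out via transport at the positive element $x$. One small slip worth correcting: the transport at $x=1^{-1}x$ of a positive $w$ is $w^{(1)}=w\wedge xw$, and this equals $w$ precisely when $x^w$ is positive (not for every positive $w$, as your parenthetical ``$w\preccurlyeq xw$ when $x$ is positive'' suggests) --- but since you only apply this to $u$, $v$ and $u\wedge v$, where positivity of the conjugate is either a hypothesis or is what $(u\wedge v)^{(1)}=u^{(1)}\wedge v^{(1)}=u\wedge v$ delivers, the proof goes through.
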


Now recall that $x\in G^+$, and that we want to compute the minimal positive conjugators for $x$. Take an atom $a\in G$, that is, a positive element which admits no nontrivial positive prefix. The set
$$
\mathcal C_a(x)=\{\alpha\in G;\ a\preccurlyeq \alpha \textrm{ and } x^\alpha\in C^+(x) \}=\{\alpha\in G;\ a\preccurlyeq \alpha \textrm{ and } x^\alpha\in G^+\}
$$
is nonempty ($\Delta\in \mathcal C_a(x)$), and is closed under $\wedge$ by \autoref{L:convexity}. Hence, as it is formed by positive elements, and every $\preccurlyeq$-chain of positive elements must have a minimal element, it follows that $\mathcal C_a(x)$ has a unique $\preccurlyeq$-minimal element, that we denote $\rho_{a}(x)$.

Every minimal positive conjugator starting at $x$ must have an atom as a prefix, so it must be equal to $\rho_a(x)$ for some atom $a$. Actually, the set of minimal positive conjugators starting at $x$ is precisely the set of $\preccurlyeq$-minimal elements in the set $\mathcal M(x)=\{\rho_a(x); \ a\in \mathcal A\}$, where $\mathcal A$ is the set of atoms of~$G$. Since the set $\mathcal M(x)$ is finite, we can compute the arrows starting at $x$ if we are able to compute $\rho_a(x)$ for every atom $a$. Let us see how we can do it.

We will make extensive use of diagrams of the following kind, that we will call LCM-diagrams:
$$
\xymatrix@C=12mm@R=12mm{
  \ar[d]_{\alpha} \ar[r]^{\beta}
& \ar[d]^{\beta'}
\\
  \ar[r]_{\alpha'}
&
}
$$
In this diagram, $\alpha$ and $\beta$ are positive elements, $\alpha \alpha'=\beta\beta'$, and this is precisely the least common multiple $\alpha\vee\beta$. Concatenating several LCM-diagrams produce a new LCM-diagram, if one only reads the product of the arrows in the perimeter of the diagram (we will see many examples below). The element $\alpha'$ is usually denoted $\alpha\backslash \beta$, and it is called the {\em right-complement of $\alpha$ in $\beta$}. So we have $\alpha \backslash \beta = \alpha^{-1}(\alpha\vee \beta)$ and, similarly, $\beta\backslash \alpha = \beta^{-1} (\beta\vee \alpha)$. We can then express every LCM-diagram as follows:
$$
\xymatrix@C=12mm@R=12mm{
  \ar[d]_{\alpha} \ar[r]^{\beta}
& \ar[d]^{\beta\backslash \alpha}
\\
  \ar[r]_{\alpha\backslash \beta}
&
}
$$

Let us then explain how to compute the element $\rho_a(x)$ for some atom $a$ and some positive element $x$. The element $\rho_a(x)$ is a positive element $c$ such that $x^c$ is positive, that is, $c^{-1}xc=y\in G^+$ or, equivalently, $xc=cy$ for some $y\in G^+$. Since $a\preccurlyeq c \preccurlyeq cy$, this implies that $a\preccurlyeq xc$. At the same time, $xa\preccurlyeq xc$ since $a\preccurlyeq c$. Therefore $a\vee xa \preccurlyeq xc$. Let us compute $a\vee xa$ and write it as $xc_1$ for some positive element $c_1$. We have $xc_1 = xa \vee a = xa \vee x \vee a$. Left-multiplying by $x^{-1}$ we obtain $c_1=a \vee x^{-1}(x\vee a)$, that is, $c_1= a \vee x\backslash a$. Notice that we have $xa\preccurlyeq xc_1 \preccurlyeq xc$, hence $a\preccurlyeq c_1\preccurlyeq c$. Then we can apply the same reasoning, computing $c_2=c_1\vee x\backslash c_1$ and obtaining $a\preccurlyeq c_1\preccurlyeq c_2\preccurlyeq c$. And so on.

In this way we get a sequence
$$
 a=c_0\preccurlyeq c_1\preccurlyeq c_2\preccurlyeq \cdots
$$
of positive prefixes of $c=\rho_a(x)$, where $c_{i+1}=c_i\vee x\backslash c_i$ for every $i\geq 0$. We will call them the {\em converging prefixes} of $\rho_a(x)$. The sequence of converging prefixes must stabilize, and it is shown in~\cite{FRANCOGM} that $\rho_a(x)=c_m$ for the smallest $m$ such that $c_m=c_{m+1}$, that is, we obtain $\rho_a(x)$ at the place in which the chain stabilizes.

Notice that all elements in the above chain are simple, since $\rho_a(x)\preccurlyeq \Delta$ (as $\Delta\in \mathcal C_a(x)$). Hence, in order to compute $\rho_a(x)$ we just need to know how to compute $s\vee \alpha$ for some simple element $s$ and some positive element $\alpha$. If we write $\alpha=a_1\cdots a_r$ as a product of atoms, this computation is performed by starting with the following diagram:
$$
\xymatrix@C=12mm@R=12mm{
  \ar[r]^{a_1} \ar[d]_{s}
& \ar[r]^{a_2}
& \ar@{.>}[r]
& \ar[r]^{a_r}
&
\\
& & &
&
}
$$
and filling the squares, from left to right, to obtain a concatenation of LCM-diagrams:
$$
\xymatrix@C=12mm@R=12mm{
  \ar[r]^{a_1} \ar[d]_{s=s_0}
& \ar[r]^{a_2} \ar[d]_{s_1}
& \ar@{.>}[r]  \ar[d]_{s_2}
& \ar[r]^{a_r} \ar[d]^{s_{r-1}}
& \ar[d]^{s_r}
\\
  \ar[r]_{b_1}
& \ar[r]_{b_2}
& \ar@{.>}[r]
& \ar[r]_{b_r}
&
}
$$

The top row represents the element $\alpha=a_1\cdots a_r$. Let us denote $\beta=b_1\cdots b_r$. Each square is an LCM-diagram, so $s_{i-1}\vee a_{i} = s_{i-1}b_i=a_is_i$ for $i=1,\ldots,r$. Since the concatenation of LCM-diagrams is an LCM-diagram, we obtain that $s\vee \alpha = s\beta = \alpha s_r$. In other words, $s_r=\alpha\backslash s$.

This means that in order to compute $s\vee \alpha$, we just need to know how to compute $t\vee a$ for any simple element $t$ and any atom $a$. This is something that we should know how to compute, when working with a Garside group $G$. Since the set of simple elements and the set of atoms are finite, we may even have that information precomputed and stored.

Therefore, in order to compute the element $\rho_a(x)$ as explained above, and assuming that $x=a_1\cdots a_r$ as a product of atoms, one starts with the initial converging prefix $c_0=a$ and, for every $j\geq 0$, one computes a sequence of right-complements, from $c_j$ to $x\backslash c_j$, by filling the following LCM-diagram:
$$
\xymatrix@C=12mm@R=12mm{
  \ar[r]^{a_1} \ar[d]_{c_j=s_{j,0}}
& \ar[r]^{a_2} \ar[d]_{s_{j,1}}
& \ar@{.>}[r]  \ar[d]_{s_{j,2}}
& \ar[r]^{a_r} \ar[d]^{s_{j,r-1}}
& \ar[d]^{s_{j,r}}
\\
  \ar[r]_{b_1}
& \ar[r]_{b_2}
& \ar@{.>}[r]
& \ar[r]_{b_r}
&
}
$$
By construction, for $i=1,\ldots,r$ we have $s_{j,i}=(a_1\cdots a_i)\backslash c_j$, so $s_{j,r}=x\backslash c_j$. Hence, since $c_{j+1}=c_j \vee x\backslash c_j = c_j\vee s_{j,r}$, one can add one more square to the above LCM-diagram, as follows:
$$
\xymatrix@C=12mm@R=12mm{
  \ar[r]^{a_1} \ar[d]_{c_j=s_{j,0}}
& \ar[r]^{a_2} \ar[d]_{s_{j,1}}
& \ar@{.>}[r]  \ar[d]_{s_{j,2}}
& \ar[r]^{a_r} \ar[d]^{s_{j,r-1}}
& \ar[r]^{c_j} \ar[d]^{s_{j,r}}
& \ar[d]^{c_j'}
\\
  \ar[r]_{b_1}
& \ar[r]_{b_2}
& \ar@{.>}[r]
& \ar[r]_{b_r}
& \ar[r]_{s_{j,r}'}
&
}
$$
Then we have that $c_{j+1}=c_j c_j'$, which is the product of the top and right arrows in the last squared diagram.

This is repeated until one finds $c_m=c_{m+1}$ and then $\rho_a(x)=c_m$. For every $j=0,\ldots,m-1$ and every $i=0,\ldots,r$, the elements $s_{j,i}=(a_1\cdots a_i)\backslash c_j$ will be called the {\em pre-minimal conjugators} for $a$ and $x$. Notice that the pre-minimal conjugators are not necessarily prefixes of $\rho_a(x)$, but the converging prefixes $c_0,c_1,\ldots,c_m$ are.

Therefore, by computing the pre-minimal conjugators and the converging prefixes, one can compute the element $\rho_a(x)$ for every atom $a$ and every positive element $x$.

In the forthcoming subsections we will be able to describe the elements $\rho_a(x)$, in some cases, by performing a detailed study of the pre-minimal conjugators and the converging prefixes which are computed during the process. In other cases we will show that the element $\rho_a(x)$ that one would obtain is not a minimal positive conjugator, so there is no need to compute it.

There is a situation which occurs with all the monoids under study. Let us see that, in order to show that an LCM-Garside structure is support-preserving, we just need to care about the elements $\rho_a(x)$, where $a\notin \operatorname{Supp}(x)$.

\begin{proposition}\label{P:atom_in_X_support-preserving}
Let $(G,G^+,\Delta)$ be an LCM-Garside structure. Suppose that for every $u\in G^+$ and every $b\notin \operatorname{Supp}(u)$ such that $\rho_b(u)$ is a minimal positive conjugator, $\#(\operatorname{Supp}(u^{\rho_b(u)}))\leq \#(\operatorname{Supp}(u))$. Then, for every $x\in G^+$ and every $a\in \operatorname{Supp}(x)$, one has $\operatorname{Supp}(x^{\rho_a(x)})=\operatorname{Supp}(x)$ and 
$$
\left(G_{\operatorname{Supp}(x)}\right)^{\rho_a(x)}=G_{\operatorname{Supp}(x)}
=G_{\operatorname{Supp}(x^{\rho_a(x)})}.
$$ 
\end{proposition}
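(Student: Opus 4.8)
The plan is to prove the two inclusions $\operatorname{Supp}(x^{\rho_a(x)})\subset\operatorname{Supp}(x)$ and $\operatorname{Supp}(x^{\rho_a(x)})\supset\operatorname{Supp}(x)$ separately, and to deduce the displayed equalities from them together with the fact that $\rho_a(x)$ lies in $G_{\operatorname{Supp}(x)}$.

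\emph{First inclusion and the parabolic statement.} Write $X=\operatorname{Supp}(x)$ and recall that, since $(G,G^+,\Delta)$ is an LCM-Garside structure, $G_X$ is a standard parabolic subgroup; in particular its positive cone is $G_X\cap G^+$, and the lcm of two elements of $G_X$, as well as the reduced left-fraction decomposition of an element of $G_X$, are the same computed in $G_X$ or in $G$ (Godelle). Now $x\in G_X^+$ and $a\in X$, so I would run the construction of the converging prefixes $c_0=a,\ c_{j+1}=c_j\vee(x\backslash c_j)$ of $\rho_a(x)$ (see \autoref{S:checking_Garside_structure}) and check by induction that every $c_j$ stays in $G_X^+$: this is clear for $c_0$, and if $c_j\in G_X^+$ then $x\vee c_j\in G_X^+$, hence $x\backslash c_j=x^{-1}(x\vee c_j)\in G_X^+$, hence $c_{j+1}\in G_X^+$. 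Since $\rho_a(x)$ is one of these prefixes, $\rho_a(x)\in G_X^+$. Therefore $x^{\rho_a(x)}\in G_X$, so the numerator and denominator of its reduced left-fraction lie in $G_X^+$ and are products of atoms of $X$; as $X$ is saturated this yields $\operatorname{Supp}(x^{\rho_a(x)})\subset X$. Moreover, conjugation by the element $\rho_a(x)\in G_X$ preserves $G_X$, so $\bigl(G_{\operatorname{Supp}(x)}\bigr)^{\rho_a(x)}=G_{\operatorname{Supp}(x)}$.

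\emph{Second inclusion.} This is where the hypothesis is used, via an induction on $\#\operatorname{Supp}(x)$. When $\#X\le 1$ one computes directly that $\rho_a(x)=a$ and $x^a=x$, so the inclusion holds. Assume $\#X=n\ge 2$ and that the proposition is known for every positive element whose support has fewer than $n$ atoms. Put $y=x^{\rho_a(x)}$ and suppose, for contradiction, that $\operatorname{Supp}(y)\subsetneq X$, so $\#\operatorname{Supp}(y)<n$. Using \autoref{P:turn_into_positive} choose a central $z$ with $c:=z\,\rho_a(x)^{-1}\in G^+$; then $y^{c}=x$. By the decomposition argument in the proof of \autoref{P:support_preserved_by_minimal_positive_conjugators}, $c$ factors as a product $\alpha_1\cdots\alpha_r$ of minimal positive conjugators along a path $y=y_0\to y_1\to\cdots\to y_r=x$ with $y_{j+1}=y_j^{\alpha_{j+1}}$ and each $y_j$ positive. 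Set $Y_j=\operatorname{Supp}(y_j)$, so that $\#Y_0<n$ while $\#Y_r=n$; let $j$ be the least index with $\#Y_{j+1}>\#Y_j$, so that $\#Y_j\le\#Y_0<n$. Since $\alpha_{j+1}$ is a minimal positive conjugator for $y_j$, one has $\alpha_{j+1}=\rho_b(y_j)$ for some atom $b$. If $b\notin\operatorname{Supp}(y_j)$, the hypothesis of the proposition gives $\#Y_{j+1}=\#\operatorname{Supp}\bigl(y_j^{\rho_b(y_j)}\bigr)\le\#Y_j$; if $b\in\operatorname{Supp}(y_j)$, then $\#\operatorname{Supp}(y_j)<n$ lets us apply the inductive hypothesis, giving $Y_{j+1}=\operatorname{Supp}\bigl(y_j^{\rho_b(y_j)}\bigr)=Y_j$. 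Either way $\#Y_{j+1}\le\#Y_j$, contradicting the choice of $j$.

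Hence $\operatorname{Supp}(x^{\rho_a(x)})=\operatorname{Supp}(x)=X$, and combining this with the first part gives $\bigl(G_{\operatorname{Supp}(x)}\bigr)^{\rho_a(x)}=G_{\operatorname{Supp}(x)}=G_{\operatorname{Supp}(x^{\rho_a(x)})}$, as required. I expect the main obstacle to be the second inclusion: the computation of $\rho_a(x)$ cannot be unravelled ``forward'' (its first atom is $a\in\operatorname{Supp}(x)$, which would be circular), so one is forced to transport $y$ back to $x$ by a positive element, cut it into minimal positive conjugators, and then play off the proposition's hypothesis --- which controls conjugators labelled by atoms \emph{outside} the current support --- against the inductive hypothesis --- which controls conjugators labelled by atoms \emph{inside} it, using that the support has already strictly dropped.
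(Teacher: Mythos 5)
Your proof is correct and follows essentially the same strategy as the paper: show $\rho_a(x)\in G_X^+$ (the paper gets this in one line by noting $\Delta_X\in\mathcal C_a(x)$, whence $\rho_a(x)\preccurlyeq\Delta_X$, whereas you track the converging prefixes through $G_X^+$), deduce $\operatorname{Supp}(x^{\rho_a(x)})\subset X$, and then transport back to $x$ by a positive element decomposed into minimal positive conjugators, none of which can increase the size of the support. Your induction on $\#\operatorname{Supp}(x)$ is unnecessary, though: your own first paragraph already shows, for \emph{every} positive $u$ and every $b\in\operatorname{Supp}(u)$, that $\operatorname{Supp}(u^{\rho_b(u)})\subset\operatorname{Supp}(u)$, which combined with the hypothesis (covering $b\notin\operatorname{Supp}(u)$) gives $\#\operatorname{Supp}(y_{j+1})\le\#\operatorname{Supp}(y_j)$ at every step of the path with no induction and no argument by contradiction --- this is exactly how the paper concludes $\#X\le\#Y$ and hence $X=Y$.
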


\begin{proof}
Let $X=\operatorname{Supp}(x)$ and let $y=x^{\rho_a(x)}$ and $Y=\operatorname{Supp}(y)$. We want to prove that $X=Y$ and that $(G_X)^{\rho_a(x)}=G_X=G_Y$. We first notice that $\Delta_X$ is a positive element which admits $a$ as a prefix, and which conjugates $x$ to a positive element. Hence $\Delta_X\in \mathcal C_a(x)$. Since $\rho_a(x)$ is the minimal element in this set, it follows that $\rho_a(x)\preccurlyeq \Delta_X$. Hence $\rho_a(x)\in G_X^+$. This implies that $y=x^{\rho_a(x)}\in G_X^+$, hence $Y=\operatorname{Supp}(y) \subset X$.

From the above paragraph and the hypothesis, it follows that for every $u\in G^+$ and every atom $b\in \mathcal A$ such that $\rho_b(u)$ is a minimal positive conjugator,  $\#(\operatorname{Supp}(u^{\rho_b(u)}))\leq \#(\operatorname{Supp}(u))$.

Let us go back to $x$ and $y$, such that $x^{\rho_a(x)}=y$ with $a\in X$. We know that there is some positive power $(\Delta_X)^e$ which is central in $G_X$. If we denote $\alpha=\rho_a(x)^{-1}(\Delta_X)^e$, we see that $\alpha$ is a positive element such that $y^{\alpha}=x$. Now $\alpha$ can be decomposed as a product of minimal positive conjugators, and the conjugation by each one cannot increase the number of elements in the corresponding support. This implies that $\#(X)\leq \#(Y)$. Since $X$ and $Y$ are finite sets, and $Y\subset X$, it follows that $X=Y$.

Finally, since $\rho_a(x)\in G_X^+$, conjugation by $\rho_a(x)$ is an inner automorphism of $G_X$, so it conjugates $G_X$ to itself. In other words, $\rho_a(x)$ preserves the support.
\end{proof}

We finish this subsection with some useful results for computing the set of minimal positive conjugators for some $y\in G^+$.

\begin{lemma}\label{L:pre-minimal_prefixes}
Let $x\in G^+$ and let $a$ be an atom. Write $x=a_1\cdots a_r$ as a product of atoms. If for every $j=0,\ldots,m-1$ and every $i=0,\ldots,r-1$ we have $s_{j,i}\preccurlyeq s_{j,i+1}$, that is, if every pre-minimal conjugator is a prefix of the next one, then $c_{j+1}=s_{j,r}$ for every $j=0,\ldots,m-1$, and $\rho_a(x)=x^{m}\backslash a$.
\end{lemma}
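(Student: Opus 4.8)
The plan is to separate the statement into its two halves: first the identification $c_{j+1}=s_{j,r}$, and then the closed form $\rho_a(x)=x^m\backslash a$. For the first half, I would recall from the construction of the converging prefixes that $s_{j,r}=x\backslash c_j$ and that $c_{j+1}=c_j\vee(x\backslash c_j)=c_j\vee s_{j,r}$. The hypothesis asserts that $c_j=s_{j,0}\preccurlyeq s_{j,1}\preccurlyeq\cdots\preccurlyeq s_{j,r}$, so in particular $c_j\preccurlyeq s_{j,r}$, and in a lattice this forces $c_j\vee s_{j,r}=s_{j,r}$. Hence $c_{j+1}=s_{j,r}=x\backslash c_j$ for every $j=0,\dots,m-1$, which is the first assertion; it also shows that, under the hypothesis, the converging prefixes obey the simple recursion $c_0=a$, $c_{j+1}=x\backslash c_j$.

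For the second half, I would prove by induction on $k$ that $c_k=x^k\backslash a$ for $0\leq k\leq m$, equivalently $x^k\vee a=x^kc_k$. The base case $k=0$ is just $a=c_0$. For the step, from $c_{k+1}=x\backslash c_k$ (valid when $k\leq m-1$ by the first half) one gets $x\vee c_k=xc_{k+1}$; then, using that $x^k\preccurlyeq x^{k+1}$, associativity of $\vee$, and left-multiplication invariance of $\vee$, one computes $x^{k+1}\vee a=x^{k+1}\vee(x^k\vee a)=x^{k+1}\vee x^kc_k=x^k(x\vee c_k)=x^k\cdot xc_{k+1}=x^{k+1}c_{k+1}$, so $c_{k+1}=x^{k+1}\backslash a$. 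In particular $c_m=x^m\backslash a$.

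To finish, I would invoke the characterization recalled earlier (following \cite{FRANCOGM}): $\rho_a(x)=c_m$, where $m$ is the smallest index with $c_m=c_{m+1}$. Together with $c_m=x^m\backslash a$ this gives $\rho_a(x)=x^m\backslash a$. All the joins used above are legitimate because the computation stays inside the finite lattice of simple elements: $c_0\preccurlyeq c_1\preccurlyeq\cdots\preccurlyeq c_m=\rho_a(x)\preccurlyeq\Delta$.

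I do not anticipate a genuinely hard step; the lemma is short once the machinery of the preceding section is available. The point needing attention is the index bookkeeping: the hypothesis is assumed only for $j\leq m-1$, and that is exactly enough to obtain $c_{j+1}=x\backslash c_j$ for those $j$ and thus to carry the induction up to $k=m$, whereas the stabilization $c_m=c_{m+1}$ and the equality $\rho_a(x)=c_m$ come from the general theory and do not use the hypothesis. The computational crux is the observation that iterating the right-complement operation $x\backslash(-)$ coincides with taking the right-complement by the power $x^k$, which is precisely the identity established in the induction.
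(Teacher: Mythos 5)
Your proof is correct and follows essentially the same route as the paper: first use $c_j\preccurlyeq s_{j,r}=x\backslash c_j$ to collapse the join $c_{j+1}=c_j\vee(x\backslash c_j)$ to $x\backslash c_j$, then iterate. Your explicit induction showing $c_k=x^k\backslash a$ is just the algebraic transcription of the paper's concatenation of LCM-diagrams, and the final appeal to $\rho_a(x)=c_m$ from \cite{FRANCOGM} matches the paper's conclusion.
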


\begin{proof}
By hypothesis, for every $j=0,\ldots,m-1$ we have $c_j=s_{j,0}\preccurlyeq s_{j,1} \preccurlyeq \cdots \preccurlyeq s_{j,r} = x\backslash c_j$. Hence, as $c_{j+1}=c_j\vee x\backslash c_j$, it follows that $c_{j+1}=x\backslash c_j=s_{j,r}$.

The above property implies that, in order to compute new pre-minimal conjugators, one does not need to compute $c_j\vee x\backslash c_j$. Hence one can concatenate LCM-diagrams as follows, to obtain all converging prefixes for $\rho_a(y)$:
$$
\xymatrix@C=12mm@R=12mm{
  \ar[d]_{a=c_0} \ar[r]^{x}
& \ar[d]_{c_1} \ar[r]^{x}
& \ar[d]_{c_2} \ar@{.>}[r]
& \ar[d]^{c_{m-1}} \ar[r]^{x}
& \ar[d]^{c_m=\rho_a(x)}
\\
  \ar[r]_{z_1}
& \ar[r]_{z_2}
& \ar@{.>}[r]
& \ar[r]_{z_m}
&
}
$$
Hence $\rho_a(x)=x^m\backslash a$, as we wanted to show.
\end{proof}

\begin{lemma}\label{L:unnecessary_arrow}
Let $x\in G^+$ and let $a$ and $b$ be atoms of $G$. Suppose that $a\not\preccurlyeq \rho_b(x)$. If there is some converging prefix $c_i$ for $\rho_a(x)$ such that $b\preccurlyeq c_i$, then $\rho_a(x)$ is not a minimal positive conjugator for $x$.
\end{lemma}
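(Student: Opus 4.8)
The plan is to argue by contradiction, leaning entirely on the characterization of minimal positive conjugators recalled just before the statement: the set of minimal positive conjugators starting at $x$ is precisely the set of $\preccurlyeq$-minimal elements of $\mathcal M(x) = \{\rho_c(x);\ c\in\mathcal A\}$. So I assume $\rho_a(x)$ \emph{is} a minimal positive conjugator for $x$ and derive a contradiction with the hypothesis $a\not\preccurlyeq\rho_b(x)$.

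\textbf{Key steps.} First I would record that the converging prefixes form a chain $a = c_0 \preccurlyeq c_1 \preccurlyeq \cdots \preccurlyeq c_m = \rho_a(x)$ of prefixes of $\rho_a(x)$; hence from the assumption $b\preccurlyeq c_i$ one gets $b\preccurlyeq\rho_a(x)$. Second, by construction $\rho_a(x)\in\mathcal C_a(x)$, so $x^{\rho_a(x)}\in G^+$; combined with $b\preccurlyeq\rho_a(x)$ this says exactly that $\rho_a(x)\in\mathcal C_b(x)$. Since $\rho_b(x)$ is the $\preccurlyeq$-minimal element of $\mathcal C_b(x)$ (which is nonempty, e.g. $\Delta\in\mathcal C_b(x)$), we conclude $\rho_b(x)\preccurlyeq\rho_a(x)$. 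Third, $\rho_b(x)$ and $\rho_a(x)$ cannot be equal: equality would give $a\preccurlyeq c_0 \preccurlyeq\rho_a(x) = \rho_b(x)$, contradicting the hypothesis $a\not\preccurlyeq\rho_b(x)$. Therefore $\rho_b(x)\precneqq\rho_a(x)$ strictly. But $\rho_b(x)\in\mathcal M(x)$, so $\rho_a(x)$ is not a $\preccurlyeq$-minimal element of $\mathcal M(x)$, i.e.\ not a minimal positive conjugator for $x$ — the desired contradiction.

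\textbf{Main obstacle.} There is no serious obstacle here; the statement is essentially a bookkeeping consequence of the definitions. The only points requiring a little care are (i) making explicit that every converging prefix is a prefix of $\rho_a(x)$ (immediate from $c_{j+1} = c_j\vee(x\backslash c_j)$ and the fact that $\rho_a(x)$ is the stabilized value $c_m$), and (ii) invoking the precise characterization of minimal positive conjugators as the minimal elements of $\mathcal M(x)$, so that producing one strictly smaller element $\rho_b(x)$ of $\mathcal M(x)$ genuinely suffices. Both are already established in the preceding text, so the proof will be short.
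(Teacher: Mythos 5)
Your proof is correct and follows essentially the same route as the paper's: deduce $b\preccurlyeq\rho_a(x)$ from the chain of converging prefixes, conclude $\rho_a(x)\in\mathcal C_b(x)$ so that $\rho_b(x)\preccurlyeq\rho_a(x)$, and rule out equality using $a\not\preccurlyeq\rho_b(x)$. The only cosmetic difference is that you invoke the characterization of minimal positive conjugators as the $\preccurlyeq$-minimal elements of $\mathcal M(x)$, whereas the paper concludes directly from the definition (the strictly smaller $\rho_b(x)$ is a nontrivial positive proper prefix conjugating $x$ into $G^+$); both are equivalent given the preceding discussion.
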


\begin{proof}
We know from~\cite{FRANCOGM} that $\rho_a(x)=c_m$, where $m$ is the place in which the chain of converging prefixes stabilizes. This means that $c_i\preccurlyeq c_m=\rho_a(x)$, and hence $b\preccurlyeq \rho_a(x)$.

Since $x^{\rho_a(x)}$ is positive by definition of $\rho_a(x)$, it follows that $\rho_a(x)\in \mathcal C_b(x)$. But $\rho_b(x)$ is the $\preccurlyeq$-minimal element in this set, so $\rho_b(x)\preccurlyeq \rho_a(x)$.

Now notice that $\rho_b(x)\neq \rho_a(x)$, since $a\preccurlyeq \rho_a(x)$ by definition and $a\not\preccurlyeq \rho_b(x)$ by hypothesis. Therefore $1\prec \rho_b(x)\prec \rho_a(x)$. Since $x^{\rho_b(x)}$ is positive, this implies that $\rho_a(x)$ is not a minimal positive conjugator for $x$.
\end{proof}

\begin{lemma}\label{L:unnecessary_pre-minimal_prefixes}
Let $x\in G^+$ and let $a$ and $b$ be atoms of $G$. Suppose that $a\not\preccurlyeq \rho_b(x)$. Write $x=a_1\cdots a_r$ as a product of atoms, and suppose that for every $j=0,\ldots,m-1$ and every $i=0,\ldots,r-1$ we have $s_{j,i}\preccurlyeq s_{j,i+1}$. If there is some pre-minimal conjugator $s_{j,i}$ for $a$ and $x$ such that $b\preccurlyeq s_{j,i}$, then $\rho_a(x)$ is not a minimal positive conjugator for $x$.
\end{lemma}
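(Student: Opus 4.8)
The plan is to deduce this statement directly from \autoref{L:pre-minimal_prefixes} and \autoref{L:unnecessary_arrow}, the point being that the standing hypothesis on the pre-minimal conjugators forces any divisibility $b\preccurlyeq s_{j,i}$ to propagate to an actual \emph{converging prefix} of $\rho_a(x)$, at which point \autoref{L:unnecessary_arrow} applies verbatim.

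First I would recall the bookkeeping: by construction $s_{j,0}=c_j$ and $s_{j,r}=(a_1\cdots a_r)\backslash c_j = x\backslash c_j$, while $c_{j+1}=c_j\vee (x\backslash c_j)$. Under the hypothesis $s_{j,i}\preccurlyeq s_{j,i+1}$ for all $j\in\{0,\ldots,m-1\}$ and $i\in\{0,\ldots,r-1\}$, \autoref{L:pre-minimal_prefixes} tells us that in fact $c_{j+1}=s_{j,r}=x\backslash c_j$. Hence, at each level $j$, the pre-minimal conjugators form an increasing chain $c_j=s_{j,0}\preccurlyeq s_{j,1}\preccurlyeq\cdots\preccurlyeq s_{j,r}=c_{j+1}$ running from one converging prefix of $\rho_a(x)$ to the next.

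Now suppose $b\preccurlyeq s_{j,i}$ for some such $j$ and $i$. Transitivity of $\preccurlyeq$ along the increasing chain above gives $b\preccurlyeq s_{j,r}=c_{j+1}$, so $b$ divides the converging prefix $c_{j+1}$ of $\rho_a(x)$. Since we are assuming $a\not\preccurlyeq\rho_b(x)$, \autoref{L:unnecessary_arrow} then applies with this converging prefix and yields that $\rho_a(x)$ is not a minimal positive conjugator for $x$, which is the claim.

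I do not expect a real obstacle here: the lemma is a short combinatorial consequence of the two preceding lemmas. The only thing requiring a line of care is the edge case $j=m-1$, where $c_{j+1}=c_m=\rho_a(x)$ itself; but \autoref{L:unnecessary_arrow} is stated (and proved) allowing the relevant converging prefix to be $c_m$, so no extra argument is needed. I would also check that the index $i$ in ``$b\preccurlyeq s_{j,i}$'' may be taken up to $r$; if one prefers the range $i\le r-1$, the argument is unchanged since one still has $s_{j,i}\preccurlyeq s_{j,r}$.
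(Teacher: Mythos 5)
Your proof is correct and follows essentially the same route as the paper: use the hypothesis to propagate $b\preccurlyeq s_{j,i}$ along the chain $s_{j,i}\preccurlyeq\cdots\preccurlyeq s_{j,r}$, identify $s_{j,r}$ with the converging prefix $c_{j+1}$ via \autoref{L:pre-minimal_prefixes}, and conclude with \autoref{L:unnecessary_arrow}. No gaps.
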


\begin{proof}
If $b\preccurlyeq s_{j,i}$ for some pre-minimal conjugator $s_{j,i}$, by hypothesis we know that $s_{j,i}\preccurlyeq s_{j,i+1}\preccurlyeq \cdots \preccurlyeq s_{j,r}$, so $b\preccurlyeq s_{j,r}$. Also, since every pre-minimal conjugator is a prefix of the next one, from \autoref{L:pre-minimal_prefixes} we know that $s_{j,r}=c_{j+1}$. Hence $b\preccurlyeq c_{j+1}$. Therefore, by \autoref{L:unnecessary_arrow}, $\rho_a(x)$ is not a minimal positive conjugator for $x$.
\end{proof}

\section{Parabolic closures for complex braid groups}
\label{sect:parabclosurecbg}

\subsection{The classical braid group}

We will start our study of particular complex braid groups by the case of the classical braid group $\mathcal{B}_{n+1}$
on $n+1$ strands. Since it is the same as the Artin group of type $A_n$,
the results are already known. We nevertheless expose our new
methods in detail for this case,
as it will serve as
a guide for the other groups, in which the same strategies will be used.

Let $G = \mathcal{B}_{n+1}$ endowed with classical Garside structure, for which the monoid of positive elements is given by the presentation with generators $\mathcal A=\{s_1,\ldots,s_n\}$ (which are the atoms of this structure) and relations
\begin{itemize}
  \item $s_is_js_i=s_js_is_j$ \quad if $|i-j|=1$,
  \item $s_is_j=s_js_i$ \qquad if $|i-j|>1$.
\end{itemize}

As usual, one can represent the braids in $G$ as a collection of $n+1$ disjoint strands, up to homotopy fixing the endpoints, and the generator $s_i$ corresponds to a braid in which the strands $i$ and $i+1$ cross once (positively), and there is no other crossing. The simple elements in this structure corresponds to braids in which every two strands cross (positively) at most once, and the Garside element $\Delta=\Delta_{\mathcal A}$ is the braid in which every two strands cross (possitively) exactly once.

For every $X\subset \mathcal A$, the subgroup generated by $X$ is a direct product of classical braid groups. The element $\Delta_X$ can be seen geometrically as well. Namely, if all elements in $X$ are consecutive generators, $X=\{s_i,s_{i+1},\ldots,s_{j}\}$, then $\Delta_X$ is the braid which every two strands in $\{i,i+1,\ldots,j+1\}$ cross (positively) exactly once:
$$
  \Delta_X=s_j (s_{j-1}s_j)\cdots (s_is_{i+1}\cdots s_j).
$$
Otherwise, if one considers that consecutive generators in $\{s_1,\ldots,s_n\}$ are connected, and one decomposes $X$ in connected components $X=X_1\cup\cdots \cup X_r$, where $X_k=\{s_{i_k},s_{i_k+1},\ldots,s_{j_k}\}$ for some $i_k\leq j_k$, then $\Delta_X=\Delta_{X_1}\cdots \Delta_{X_r}$, where the factors are pairwise commuting. It is well known that in every case $X\subset \mathcal A$ is saturated, as the set of atoms which are prefixes of $\Delta_X$ is precisely $X$. Moreover, $G_X$ is a standard parabolic subgroup of $G$. Therefore, the classical Garside structure of $G$ is an LCM-Garside structure.

Now we need to introduce some special elements, which are analogous to the \emph{elementary ribbons} defined by Godelle in~\cite{GODELLE2003}. We will do it for an arbitrary Garside group, as in~\cite{CGGW}.

\begin{definition}\label{D:ribbon}
Let $X\subset \mathcal A$ be a subset of atoms in a Garside group, and let $u\in \mathcal A$. Then we define $r_{X,u}=\Delta_X^{-1}\Delta_{X\cup \{u\}}$.
\end{definition}

\begin{lemma}\label{L:ribbons_are_simple}
Let $X\subset \mathcal A$ be a subset of atoms in a Garside group, and let $u\in \mathcal A$. If $u\in \overline{X}$ then $r_{X,u}=1$. Otherwise $r_{X,u}$ is a nontrivial simple element.
\end{lemma}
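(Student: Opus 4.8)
The statement has two halves: if $u \in \overline{X}$ then $r_{X,u} = 1$, and otherwise $r_{X,u}$ is a nontrivial simple element. The first half is immediate: by definition $\overline{X} = \operatorname{Supp}(\Delta_X)$, so if $u \in \overline{X}$ then $u \preccurlyeq \Delta_X$, and since every atom of $X$ is also a prefix of $\Delta_X$, we get $\Delta_{X \cup \{u\}} = \bigvee_{x \in X \cup \{u\}} x = \left(\bigvee_{x \in X} x\right) \vee u = \Delta_X \vee u = \Delta_X$. Hence $r_{X,u} = \Delta_X^{-1}\Delta_{X \cup \{u\}} = \Delta_X^{-1}\Delta_X = 1$.

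For the second half, suppose $u \notin \overline{X}$. First I would note that $r_{X,u}$ is simple: since $\Delta_X \preccurlyeq \Delta_{X \cup \{u\}}$ (because $\Delta_X$ is a join of a subset of the atoms joined to form $\Delta_{X \cup \{u\}}$), the element $r_{X,u} = \Delta_X^{-1}\Delta_{X \cup \{u\}}$ is positive; moreover $\Delta_{X \cup \{u\}} \preccurlyeq \Delta = \Delta_{\mathcal{A}}$, so $r_{X,u} \preccurlyeq \Delta_X^{-1}\Delta \preccurlyeq \Delta$ (using that $\Delta_X^{-1}\Delta$ is itself a simple element, being the right-complement of $\Delta_X$ in $\Delta$, i.e. a suffix of $\Delta$). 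Then I would show nontriviality: if $r_{X,u} = 1$ we would have $\Delta_{X \cup \{u\}} = \Delta_X$, hence $u \preccurlyeq \Delta_{X \cup \{u\}} = \Delta_X$, which forces $u \in \operatorname{Div}(\Delta_X) \cap \mathcal{A} = \operatorname{Supp}(\Delta_X) = \overline{X}$, contradicting our assumption. Therefore $r_{X,u} \neq 1$.

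The argument is essentially a formal manipulation of joins, prefixes and the definition of closure; the only slightly delicate point is the claim that $\Delta_X^{-1}\Delta$ is simple, so that $r_{X,u}$ — being a prefix of it — is simple too. This follows because $\Delta$ is balanced and $\Delta_X \preccurlyeq \Delta$, so $\Delta_X^{-1}\Delta$ is a positive suffix of $\Delta$, hence an element of $\operatorname{Div}(\Delta) = S$, i.e. simple; and prefixes of simple elements are simple. I do not anticipate a genuine obstacle here — the main care needed is simply to invoke the correct lattice identities (join of a subfamily is a prefix of the join of the whole family) and the definition $\overline{X} = \operatorname{Div}(\Delta_X) \cap \mathcal{A}$ at the right moments, together with the fact that $\Delta = \Delta_{\mathcal{A}}$ in an LCM-Garside structure so that all the $\Delta_X$ are simple to begin with.
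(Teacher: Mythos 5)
Your proposal is correct and follows essentially the same route as the paper's proof: both halves rest on $\Delta_X \preccurlyeq \Delta_{X\cup\{u\}} \preccurlyeq \Delta$, the balancedness of $\Delta$ to get simplicity of the quotient, and the identification $\overline{X}=\operatorname{Div}(\Delta_X)\cap\mathcal A$ to get (non)triviality. The only cosmetic difference is that you deduce simplicity by exhibiting $r_{X,u}$ as a prefix of the right-complement $\Delta_X^{-1}\Delta$, whereas the paper observes directly that $r_{X,u}$ is a suffix of $\Delta_{X\cup\{u\}}$ and hence of $\Delta$.
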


\begin{proof}
If $u\in \overline{X}$ then $\overline{X}=\overline{X\cup\{u\}}$. Hence $\Delta_{X\cup\{u\}}=\Delta_{\overline{X\cup\{u\}}}=\Delta_{\overline{X}}=\Delta_X$, and then $r_{X,u}=1$.

Let us suppose that $u\notin \overline{X}$. Recall that $\Delta_{X\cup \{u\}}$ is the join of all elements in $X\cup \{u\}$, hence all elements in $X$ are prefixes of $\Delta_{X\cup \{u\}}$, which implies that $\Delta_X\preccurlyeq \Delta_{X\cup \{u\}}$. Hence $r_{X,u}=\Delta_{X}^{-1}\Delta_{X\cup \{u\}}$ is a positive element. Since $r_{X,u}$ is a suffix of $\Delta_{X\cup \{u\}}$, it is a suffix of the Garside element $\Delta$, and hence it is a simple element. Now, since $\Delta_X=\Delta_{\overline{X}}$, an atom is a prefix of $\Delta_X$ if and only if it belongs to $\overline{X}$. Hence $u\not\preccurlyeq \Delta_X$. On the other hand $u\preccurlyeq \Delta_{X\cup \{u\}}$ by definition. Therefore $\Delta_X\neq \Delta_{X\cup\{u\}}$ and $r_{X,u}$ is nontrivial.
\end{proof}

We will show that if $x,y\in G^+$ are two positive braids such that $\operatorname{Supp}(x)=X\neq Y=\operatorname{Supp}(y)$, and that $x^\alpha=y$ for some minimal positive conjugator $\alpha$, then $\alpha$ is a ribbon, namely $\alpha=r_{X,a}$ for some atom $a$. This is actually shown in~\cite[Proposition 6.3]{CGGW} for every Artin group of spherical type, but we will show it here in a different way, which will be helpful in the following sections, when we will study other Garside groups.

Given $1\leq i\leq k\leq n$, we denote:
$$
   S_{i,k}=s_is_{i+1}\cdots s_k, \qquad S_{k,i}=s_ks_{k-1}\cdots s_i.
$$
The former braid can be drawn as the $i$-th strand crossing positively the strands $i+1,i+2,\ldots,k+1$ in that precise order. The latter, as the $(k+1)-$st strand crossing positively the strands $k,k-1,\ldots,i$.

Given $1\leq i\leq j\leq k\leq n$, we can define the braid $\sigma_{i,j,k}$ in which every strand from $\{i,\ldots, j\}$ crosses positively every strand in $\{j+1,\ldots,k+1\}$ exactly once, and there is no other crossing. Hence, it is a simple braid, and it is easy to see that:
$$
   \sigma_{i,j,k}=S_{j,k}S_{j-1,k-1}\cdots S_{i,k-j+i} = S_{j,i}S_{j+1,i+1}\cdots S_{k,i+k-j}.
$$
We will need the following:

\begin{lemma}\label{L:ribbon_prefixes_in_braids}
Given $1\leq i_1\leq i_0 \leq j \leq k_0 \leq k_1\leq n$, one has $\sigma_{i_0,j,k_0}\preccurlyeq \sigma_{i_1,j,k_1}$.
\end{lemma}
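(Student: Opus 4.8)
The plan is to split the two simultaneous "enlargements" $i_1\le i_0$ and $k_0\le k_1$ into two one-sided steps and chain them by transitivity of $\preccurlyeq$. Since $i_1\le i_0\le j\le k_0\le k_1$, the braid $\sigma_{i_0,j,k_1}$ is well defined, and it suffices to establish
\[
\sigma_{i_0,j,k_0}\ \preccurlyeq\ \sigma_{i_0,j,k_1}\ \preccurlyeq\ \sigma_{i_1,j,k_1}.
\]
Each of the two relations is read off directly from one of the two product expansions of $\sigma_{i,j,k}$ stated above, the point being to pick, in each case, the expansion in which the smaller braid appears as a genuine \emph{initial} segment.

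For the first relation I would use the "ascending" expansion $\sigma_{i,j,k}=S_{j,i}S_{j+1,i+1}\cdots S_{k,i+k-j}$ applied to $(i,j,k)=(i_0,j,k_1)$: its factors are indexed by a leading index $t$ running from $j$ up to $k_1$, the factor attached to $t$ being $S_{t,\,i_0+t-j}$. The product of the factors with $t\le k_0$ is, by the same expansion applied to $(i_0,j,k_0)$, exactly $\sigma_{i_0,j,k_0}$, while the remaining factors (for $k_0<t\le k_1$) are each a product of consecutive atoms, hence positive; so $\sigma_{i_0,j,k_1}=\sigma_{i_0,j,k_0}\cdot\gamma$ with $\gamma\in G^+$, i.e. $\sigma_{i_0,j,k_0}\preccurlyeq\sigma_{i_0,j,k_1}$.

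Symmetrically, for the second relation I would use the "descending" expansion $\sigma_{i,j,k}=S_{j,k}S_{j-1,k-1}\cdots S_{i,k-j+i}$ applied to $(i,j,k)=(i_1,j,k_1)$: its factors are indexed by a leading index $m$ running from $j$ down to $i_1$, the factor attached to $m$ being $S_{m,\,k_1-j+m}$. The product of the factors with $m\ge i_0$ is $\sigma_{i_0,j,k_1}$ by the same expansion applied to $(i_0,j,k_1)$, and the remaining factors ($i_1\le m<i_0$) are positive; hence $\sigma_{i_1,j,k_1}=\sigma_{i_0,j,k_1}\cdot\gamma'$ with $\gamma'\in G^+$, i.e. $\sigma_{i_0,j,k_1}\preccurlyeq\sigma_{i_1,j,k_1}$. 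Combining the two gives $\sigma_{i_0,j,k_0}\preccurlyeq\sigma_{i_1,j,k_1}$.

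\textbf{Main difficulty.} There is no real obstacle here: the argument is pure index bookkeeping, and all generators involved stay in range because $1\le i_1$ and $k_1\le n$. The only thing that genuinely requires attention is the matching of expansions to extensions — enlarging $k$ on the right is transparent only in the expansion that builds the braid up strand-by-strand from the lowest strand, whereas enlarging $i$ on the left is transparent only in the expansion that builds it down from the $j$-th strand. As a sanity check one may also argue geometrically: in all three braids the strands of $\{i,\dots,j\}$ pass positively over those of $\{j{+}1,\dots,k{+}1\}$ exactly once with no other crossing, so the set of crossing strand-pairs of $\sigma_{i_0,j,k_0}$ is contained in that of $\sigma_{i_1,j,k_1}$, which for simple braids is equivalent to the asserted prefix relation.
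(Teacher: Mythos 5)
Your proposal is correct, but it takes a genuinely different route from the paper. The paper's proof is a two-line appeal to the standard fact (cited from Epstein et al.) that for permutation braids $s_0\preccurlyeq s_1$ if and only if every pair of strands crossing in $s_0$ also crosses in $s_1$; since the crossing pairs of $\sigma_{i,j,k}$ are exactly $\{i,\dots,j\}\times\{j{+}1,\dots,k{+}1\}$, the containment of index ranges immediately gives the prefix relation. You instead work at the level of words: you interpolate through $\sigma_{i_0,j,k_1}$ and, for each of the two one-sided enlargements, choose the product expansion of $\sigma_{i,j,k}$ in which the smaller element sits as a literal initial segment, so that the quotient is visibly a product of atoms and hence positive; transitivity of $\preccurlyeq$ then finishes. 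Your index bookkeeping checks out (the factor attached to leading index $t$ in the ascending expansion is $S_{t,\,i_0+t-j}$ and the one attached to $m$ in the descending expansion is $S_{m,\,k_1-j+m}$, and truncating at $k_0$, respectively at $i_0$, recovers exactly the smaller $\sigma$). What each approach buys: yours is self-contained, using only the two expansions already displayed in the text and the definition $a\preccurlyeq b\Leftrightarrow a^{-1}b\in G^+$, with no external input; the paper's is shorter, handles both enlargements in one stroke, and (since the crossing-set criterion is left-right symmetric) gives the analogous suffix statement for free. Amusingly, your closing ``sanity check'' is precisely the paper's entire proof.
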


\begin{proof}
It is well known that a simple braid $s_0$ is a prefix of a simple braid $s_1$ if and only if the pairs of strands that cross in $s_0$ also cross in $s_1$ (see~\cite{EPSTEINETAL}). The result follows immediately, as every $\sigma_{i,j,k}$ is a simple braid, and we know exactly the pairs of strands that cross in such an element.
\end{proof}

Recall also that one has:
$$
  \Delta_{\{s_i,\ldots,s_{j-1}\}\cup \{s_{j+1},\ldots,s_k\}}= \Delta_{\{s_i,\ldots,s_{j-1}\}}\Delta_{\{s_{j+1},\ldots,s_k\}}.
$$
This also holds when either $i=j$ or $j=k$, that is, when one of the above factors is trivial.

Now, by checking the strands that cross in the corresponding braids, one can easily see that:
$$
 \Delta_{\{s_i,\ldots,s_{j-1}\}\cup \{s_{j+1},\ldots,s_k\}}\sigma_{i,j,k}= \Delta_{\{s_i,\ldots,s_k\}}.
$$
Therefore, we have shown the following:

\begin{lemma}\label{L:ribbons_in_braid_groups}
Let $1\leq i\leq j \leq k\leq n$, and let $X=\{s_i,\cdots s_{j-1}\}\cup \{s_{j+1},\ldots,s_k\}$. Then one has:
$$
   r_{X,s_{j}}=\sigma_{i,j,k}.
$$
\end{lemma}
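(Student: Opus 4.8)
The plan is to unwind \autoref{D:ribbon} and reduce the statement to the strand-crossing identity displayed just above it. First I would recall that, by definition, $r_{X,s_j}=\Delta_X^{-1}\Delta_{X\cup\{s_j\}}$. Since $X=\{s_i,\ldots,s_{j-1}\}\cup\{s_{j+1},\ldots,s_k\}$, adjoining the generator $s_j$ simply fills the gap between the two blocks, so $X\cup\{s_j\}=\{s_i,\ldots,s_k\}$ and hence $\Delta_{X\cup\{s_j\}}=\Delta_{\{s_i,\ldots,s_k\}}$. Thus it only remains to show that $\Delta_X^{-1}\Delta_{\{s_i,\ldots,s_k\}}=\sigma_{i,j,k}$, i.e. that $\Delta_X\,\sigma_{i,j,k}=\Delta_{\{s_i,\ldots,s_k\}}$.

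This last equality is exactly the formula displayed immediately before the statement, so strictly speaking the lemma is a repackaging of that formula together with the observation $X\cup\{s_j\}=\{s_i,\ldots,s_k\}$. For completeness I would include the short geometric justification of that formula: $\Delta_X$ is the braid crossing every pair of strands within $\{i,\ldots,j\}$ exactly once and every pair within $\{j+1,\ldots,k+1\}$ exactly once, and no other pair, while $\sigma_{i,j,k}$ is the (simple) braid crossing every strand of $\{i,\ldots,j\}$ with every strand of $\{j+1,\ldots,k+1\}$ exactly once and no other pair. Concatenating them therefore crosses every pair of strands in $\{i,\ldots,k+1\}$ exactly once, and the result is still positive and simple; since a simple braid is determined by the set of pairs of strands it crosses (the fact used in \autoref{L:ribbon_prefixes_in_braids}, cf.~\cite{EPSTEINETAL}), this braid is precisely $\Delta_{\{s_i,\ldots,s_k\}}$. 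Left-multiplying by $\Delta_X^{-1}$ then yields $\sigma_{i,j,k}=\Delta_X^{-1}\Delta_{\{s_i,\ldots,s_k\}}=\Delta_X^{-1}\Delta_{X\cup\{s_j\}}=r_{X,s_j}$, as claimed. The degenerate cases $i=j$ or $j=k$ are handled the same way, one of the two blocks of $X$ being empty and $\Delta_X$ reducing to the $\Delta$ of the other block (or to $1$).

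The only nontrivial ingredient is the bookkeeping of which pairs of strands are crossed by $\Delta_X$ and by $\sigma_{i,j,k}$, and this is entirely routine; no induction on $n$ and no case analysis are required. An alternative, purely algebraic route would be to expand $\sigma_{i,j,k}=S_{j,k}S_{j-1,k-1}\cdots S_{i,\,k-j+i}$ and $\Delta_X=\Delta_{\{s_i,\ldots,s_{j-1}\}}\Delta_{\{s_{j+1},\ldots,s_k\}}$ into the standard triangular products of the $S_{a,b}$ and rearrange them into the standard product for $\Delta_{\{s_i,\ldots,s_k\}}$ using the braid relations, but the pictorial argument above is shorter and transparent, so that is the one I would write up.
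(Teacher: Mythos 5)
Your proposal is correct and follows essentially the same route as the paper: the paper proves this lemma precisely by combining the definition $r_{X,s_j}=\Delta_X^{-1}\Delta_{X\cup\{s_j\}}$ with the displayed identities $\Delta_X=\Delta_{\{s_i,\ldots,s_{j-1}\}}\Delta_{\{s_{j+1},\ldots,s_k\}}$ and $\Delta_X\,\sigma_{i,j,k}=\Delta_{\{s_i,\ldots,s_k\}}$, the latter justified exactly by ``checking the strands that cross.'' Your write-up merely makes the strand-counting explicit, which the paper leaves to the reader.
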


From this, we have a complete description of nontrivial ribbon elements:

\begin{lemma}\label{L:ribbons_in_braid_groups_complete}
Let $X\subset \{s_1,\ldots,s_n\}$ and $s_j\notin X$. Decompose $X=X_0\sqcup X_1$, where $X_0\cup\{s_j\}=\{s_{i_0},\ldots,s_{k_0}\}$ is the connected component of $X\cup \{s_j\}$ containing $s_j$. Then
$$
   r_{X,s_j}=r_{X_0,s_j}=\sigma_{i_0,j,k_0}.
$$
\end{lemma}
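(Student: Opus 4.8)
The plan is to peel off, inside both $\Delta_X$ and $\Delta_{X\cup\{s_j\}}$, the contribution of the generators irrelevant to $s_j$, thereby reducing everything to the connected component of $X\cup\{s_j\}$ containing $s_j$, and then to quote \autoref{L:ribbons_in_braid_groups}. The only structural input, besides that lemma, is the fact recalled just above it: for a subset $Z\subseteq\{s_1,\dots,s_n\}$ with connected components $Z^{(1)},\dots,Z^{(m)}$ one has $\Delta_Z=\Delta_{Z^{(1)}}\cdots\Delta_{Z^{(m)}}$, the factors commuting pairwise because generators lying in different components are non-adjacent and hence commute.

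First I would pin down the combinatorics. Since $X_0\cup\{s_j\}=\{s_{i_0},\dots,s_{k_0}\}$ is, by hypothesis, the connected component of $X\cup\{s_j\}$ containing $s_j$, every other connected component of $X\cup\{s_j\}$ is contained in $X_1$ and is both disjoint from and non-adjacent to the interval $\{s_{i_0},\dots,s_{k_0}\}$; conversely $X_1$ is exactly the union of those components (indeed, if some $s_a\in X_1$ were adjacent to some $s_b\in X_0\cup\{s_j\}$, then $s_a$ would lie in the component of $X\cup\{s_j\}$ through $s_j$, contradicting $s_a\in X_1$). Consequently every generator occurring in $X_1$ commutes with every generator in $\{s_{i_0},\dots,s_{k_0}\}$, so $\Delta_{X_1}$ commutes with every element of the subgroup generated by $\{s_{i_0},\dots,s_{k_0}\}$ — in particular with $\Delta_{X_0}$ and with $\Delta_{X_0\cup\{s_j\}}$. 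Applying the component factorization to $X=X_0\sqcup X_1$ gives $\Delta_X=\Delta_{X_0}\Delta_{X_1}$, and applying it to $X\cup\{s_j\}=(X_0\cup\{s_j\})\sqcup X_1$ gives $\Delta_{X\cup\{s_j\}}=\Delta_{X_0\cup\{s_j\}}\Delta_{X_1}$, both with commuting factors.

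Substituting these into the definition $r_{X,s_j}=\Delta_X^{-1}\Delta_{X\cup\{s_j\}}$ and using that $\Delta_{X_1}$ commutes with $\Delta_{X_0}^{-1}\Delta_{X_0\cup\{s_j\}}$, one gets
$$
r_{X,s_j}=\Delta_X^{-1}\Delta_{X\cup\{s_j\}}=\Delta_{X_1}^{-1}\Delta_{X_0}^{-1}\Delta_{X_0\cup\{s_j\}}\Delta_{X_1}=\Delta_{X_0}^{-1}\Delta_{X_0\cup\{s_j\}}=r_{X_0,s_j}.
$$
Finally, since $X_0=\{s_{i_0},\dots,s_{k_0}\}\setminus\{s_j\}=\{s_{i_0},\dots,s_{j-1}\}\cup\{s_{j+1},\dots,s_{k_0}\}$ with $i_0\le j\le k_0$, \autoref{L:ribbons_in_braid_groups} (which allows either end-block to be empty) yields $r_{X_0,s_j}=\sigma_{i_0,j,k_0}$, completing the argument. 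I do not anticipate a genuine obstacle here; the one place requiring care is the first step, namely verifying that $\{s_{i_0},\dots,s_{k_0}\}$ is precisely one connected component of $X\cup\{s_j\}$ and that $X_1$ contributes only components non-adjacent to it, since this is exactly what legitimises the two uses of the component factorization and of the resulting commutations.
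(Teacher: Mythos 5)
Your argument is correct and is essentially the paper's own proof: both factor $\Delta_X$ and $\Delta_{X\cup\{s_j\}}$ along the connected-component decomposition, cancel the commuting $\Delta_{X_1}$ factor to get $r_{X,s_j}=r_{X_0,s_j}$, and then invoke \autoref{L:ribbons_in_braid_groups}. The only difference is that you spell out the (routine) verification that $X_1$ is non-adjacent to the component containing $s_j$, which the paper leaves implicit.
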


\begin{proof}
Since every atom of $X_0\cup\{s_j\}$ commutes with every atom in $X_1$, we have $\Delta_X=\Delta_{X_0}\Delta_{X_1}= \Delta_{X_1}\Delta_{X_0}$ and $\Delta_{X\cup \{s_j\}}=\Delta_{X_0\cup \{s_j\}}\Delta_{X_1} = \Delta_{X_1}\Delta_{X_0\cup \{s_j\}}$. Hence
$$
  r_{X,s_j}=\Delta_{X}^{-1}\Delta_{X\cup \{s_j\}} = \Delta_{X_0}^{-1}\Delta_{X_1}^{-1} \Delta_{X_1} \Delta_{X_0\cup\{s_j\}} = \Delta_{X_0}^{-1}\Delta_{X_0\cup\{s_j\}} = r_{X_0,s_j}.
$$
Now we can apply \autoref{L:ribbons_in_braid_groups} to $X_0$ and we get $r_{X_0,s_j}=\sigma_{i_0,j,k_0}$.
\end{proof}

We have then obtained an explicit description of the ribbon elements in $G = \mathcal{B}_{n+1}$  as products of atoms. Let us show some technical but important results, which are well-known for specialists in braid groups.

\begin{lemma}\label{L:Sjk_vee_atom}
Let $1\leq j\leq k\leq n$ and suppose that $i \notin \{j-1,\ldots,k+1\}$ and that $m\in \{j+1,\ldots,k\}$. The following are LCM-diagrams in
the classical monoid for $\mathcal{B}_{n+1}$:
$$
\xymatrix@C=12mm@R=12mm{
  \ar[r]^{s_i} \ar[d]_{S_{j,k}}
& \ar[d]^{S_{j,k}}
\\
  \ar[r]_{s_i}
&
}
\qquad
\xymatrix@C=12mm@R=12mm{
  \ar[r]^{s_m} \ar[d]_{S_{j,k}}
& \ar[d]^{S_{j,k}}
\\
  \ar[r]_{s_{m-1}}
&
}
$$
$$
\xymatrix@C=12mm@R=12mm{
  \ar[r]^{s_{j-1}} \ar[d]_{S_{j,k}}
& \ar[d]^{S_{j,k}S_{j-1,k-1}}
\\
  \ar[r]_{S_{j-1,k}}
&
}
\qquad
\xymatrix@C=12mm@R=12mm{
  \ar[r]^{s_{j}} \ar[d]_{S_{j,k}}
& \ar[d]^{S_{j+1,k}}
\\
  \ar[r]_{1}
&
}
\qquad
\xymatrix@C=12mm@R=12mm{
  \ar[r]^{s_{k+1}} \ar[d]_{S_{j,k}}
& \ar[d]^{S_{j,k+1}}
\\
  \ar[r]_{s_{k+1}s_k}
&
}
$$
\end{lemma}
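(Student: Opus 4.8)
The plan is to read each of the five squares as a computation of a join $s\vee S_{j,k}$ for a single atom $s$, and to verify it in two steps: first check the displayed word identity $\alpha\alpha'=\beta\beta'$ using only braid relations, which shows that the common value is a common multiple of the left label $S_{j,k}$ and the top label $s$; then check that $S_{j,k}\wedge s=1$ and argue that this common multiple is the \emph{least} common multiple. I will use three standing facts about the classical monoid $G^+$: among simple braids $a\preccurlyeq b$ is detected by inclusion of the sets of crossing pairs of strands (as in \autoref{L:ribbon_prefixes_in_braids}), a join of simple elements is again simple, and the word length $\|\cdot\|$ (number of atoms) is well defined on $G^+$. Note also that the five cases together exhaust all atoms: $s=s_i$ with $i\le j-2$ or $i\ge k+2$ (first square, the commuting case), $s=s_{j-1}$ (third), $s=s_j$ (fourth), $s=s_m$ with $j<m\le k$ (second), and $s=s_{k+1}$ (fifth); so the lemma records the full ``multiplication table'' of $S_{j,k}$ against the atoms, which is exactly what the $\rho_a$-computations of \autoref{S:checking_Garside_structure} will need.

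For the first, second and fourth squares this is quick. In the first square $s_i$ commutes with each of $s_j,\dots,s_k$ (as $|i-p|\ge 2$ for $j\le p\le k$), so $s_iS_{j,k}=S_{j,k}s_i$ is a common multiple of length $\|S_{j,k}\|+1$; since $i\ne j$ we have $s_i\not\preccurlyeq S_{j,k}$, hence $S_{j,k}\prec S_{j,k}\vee s_i\preccurlyeq s_iS_{j,k}$, which forces $S_{j,k}\vee s_i=s_iS_{j,k}=S_{j,k}s_i$. The second square is the identity $s_mS_{j,k}=S_{j,k}s_{m-1}$, which I would get by commuting $s_m$ to the right past $s_j\cdots s_{m-2}$, applying $s_ms_{m-1}s_m=s_{m-1}s_ms_{m-1}$, and commuting the trailing $s_{m-1}$ past $s_{m+1}\cdots s_k$; then the same length argument (again $m\ne j$, so $s_m\not\preccurlyeq S_{j,k}$) shows this common multiple is the join. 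The fourth square is just the factorization $S_{j,k}=s_jS_{j+1,k}$, and since $s_j\preccurlyeq S_{j,k}$ one has $s_j\vee S_{j,k}=S_{j,k}$.

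The third and fifth squares are the substantive ones, because there the join properly exceeds $sS_{j,k}$ in length, so the crude length argument no longer pins it down. Here I would compute $S_{j,k}\vee s$ by the standard iterative right-complement procedure recalled in \autoref{S:checking_Garside_structure} (following~\cite{FRANCOGM}): push $s$ successively through the atoms $s_j,s_{j+1},\dots,s_k$ of $S_{j,k}$, each elementary step being one of the defining braid relations, while tracking the running complement; correctness of this procedure gives minimality for free. A short induction on $k-j$ then identifies the running complements: for the third square one finds $s_{j-1}\vee S_{j,m}=s_{j-1}\,S_{j,m}S_{j-1,m-1}=S_{j,m}\,S_{j-1,m}$, upon recognizing $S_{j,m}S_{j-1,m-1}=\sigma_{j-1,j,m}$; for the fifth square the running complement collapses to a single atom and one gets $s_{k+1}\vee S_{j,k}=s_{k+1}S_{j,k+1}=S_{j,k}\,s_{k+1}s_k$, using $s_{k+1}S_{j,k+1}=S_{j,k+1}s_k$. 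Alternatively, one can prove the two word identities $s_{j-1}S_{j,k}S_{j-1,k-1}=S_{j,k}S_{j-1,k}$ and $S_{j,k}s_{k+1}s_k=s_{k+1}S_{j,k+1}$ directly by induction on $k-j$, and then certify minimality via the strand-crossing criterion plus the fact that the right-hand sides are simple braids.

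The hard part will be the bookkeeping in the third and fifth squares: one must be careful to carry the iterated right-complement with the correct running element at each stage and, above all, to conclude that the resulting braid is genuinely the least common multiple rather than merely some common multiple — invoking the correctness of the procedure of \autoref{S:checking_Garside_structure} handles the latter cleanly, while the former is a routine but slightly delicate induction. Everything else — the braid-relation manipulations and the length/commutation arguments for the remaining three squares — is mechanical.
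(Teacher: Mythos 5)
Your proposal is correct and follows essentially the same route as the paper: the commuting/length argument for the first two squares, the trivial factorization for the fourth, the concatenation of elementary LCM-squares (i.e.\ the iterated right-complement) for the fifth, and an induction on $k-j$ stacking the fifth diagram on top of the inductive instance for the third. The word identities and the identification of the resulting complements all check out.
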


\begin{proof}
The first diagram is clear, as $S_{j,k}$ cannot start with $s_i$, so the length of $S_{j,k}\vee s_i$ must be at least one letter bigger than $S_{j,k}$. Since $S_{j,k}s_i=s_i S_{j,k}$, this element admits $S_{j,k}$ and $s_i$ as prefixes and has the minimal possible length, so $s_i\vee S_{j,k}=S_{j,k}s_i=s_i S_{j,k}$, as stated in the diagram.

The same argument holds for the second diagram, taking into account that $s_m\not\preccurlyeq S_{j,k}$ (as the only possible initial letter of $S_{j,k}$ is $s_j$, since $\{j,j+1\}$ is the only pair of consecutive strands that cross in $S_{j,k}$). This time we have:
$$
 s_m S_{j,k}  = s_m S_{j,m-2} s_{m-1} s_m S_{m+1,k} = S_{j,m-2} s_m s_{m-1} s_m S_{m+1,k}
$$
$$
=  S_{j,m-2} s_{m-1} s_m s_{m-1} S_{m+1,k} = S_{j,m-2} s_{m-1} s_m S_{m+1,k} s_{m-1} = S_{j,k} s_{m-1}.
$$
So this element is $s_m\vee S_{j,k}$, as we wanted to show.

The fourth diagram is evident, as $S_{j,k}=s_j S_{j+1,k}$.

The fifth diagram can be shown by concatenating several known LCM-diagrams as follows:
$$
\xymatrix@C=12mm@R=12mm{
  \ar[r]^{s_j} \ar[d]_{s_{k+1}}
& \ar[r]^{s_{j+1}} \ar[d]^{s_{k+1}}
& \ar@{.>}[r]  \ar[d]^{s_{k+1}}
& \ar[r]^{s_k} \ar[d]^{s_{k+1}}
& \ar[d]^{s_{k+1}s_k}
\\
  \ar[r]_{s_j}
& \ar[r]_{s_{j+1}}
& \ar@{.>}[r]
& \ar[r]_{s_ks_{k+1}}
&
}
$$
The outermost paths in this diagram coincide with the elements of the fifth diagram in the statement (transposed).

It only remains to show that the third diagram is correct. If $j=k$ the result is clear, as $s_{j-1}\vee s_{j}= s_{j-1}s_js_{j-1}= s_js_{j-1}s_j$. So we assume that $j<k$ and that the claim holds for smaller values of $k$. Then we have, as $S_{j,k}=S_{j,k-1}s_{k}$, the following LCM-diagram:
$$
\xymatrix@C=16mm@R=12mm{
  \ar[r]^{s_{j-1}} \ar[d]_{S_{j,k-1}}
& \ar[d]^{S_{j,k-1}S_{j-1,k-2}}
\\
  \ar[r]^{S_{j-1,k-1}} \ar[d]_{s_{k}}
& \ar[d]^{s_{k}s_{k-1}}
\\
  \ar[r]_{S_{j-1,k}}
&
}
$$
The top square holds by induction hypothesis, and the bottom one comes from the fifth diagram in the statement. This finishes the proof, as the vertical path on the right is:
$$
 S_{j,k-1}S_{j-1,k-2}s_{k}s_{k-1}= S_{j,k-1}s_{k}S_{j-1,k-2}s_{k-1}= S_{j,k}S_{j-1,k-1}.
$$
\end{proof}

\begin{lemma}\label{L:Skj_vee_atom}
Let $1\leq j\leq k\leq n$ and suppose that $i \notin \{j-1,\ldots,k+1\}$ and that $m\in \{j,\ldots,k-1\}$. The following are LCM-diagrams in the classical monoid for $\mathcal{B}_{n+1}$:
$$
\xymatrix@C=12mm@R=12mm{
  \ar[r]^{s_i} \ar[d]_{S_{k,j}}
& \ar[d]^{S_{k,j}}
\\
  \ar[r]_{s_i}
&
}
\qquad
\xymatrix@C=12mm@R=12mm{
  \ar[r]^{s_m} \ar[d]_{S_{k,j}}
& \ar[d]^{S_{k,j}}
\\
  \ar[r]_{s_{m+1}}
&
}
$$
$$
\xymatrix@C=12mm@R=12mm{
  \ar[r]^{s_{k+1}} \ar[d]_{S_{k,j}}
& \ar[d]^{S_{k,j}S_{k+1,j+1}}
\\
  \ar[r]_{S_{k+1,j}}
&
}
\qquad
\xymatrix@C=12mm@R=12mm{
  \ar[r]^{s_{k}} \ar[d]_{S_{k,j}}
& \ar[d]^{S_{k-1,j}}
\\
  \ar[r]_{1}
&
}
\qquad
\xymatrix@C=12mm@R=12mm{
  \ar[r]^{s_{j-1}} \ar[d]_{S_{k,j}}
& \ar[d]^{S_{k,j-1}}
\\
  \ar[r]_{s_{j-1}s_j}
&
}
$$
\end{lemma}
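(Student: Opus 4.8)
The plan is to deduce \autoref{L:Skj_vee_atom} from \autoref{L:Sjk_vee_atom} by exploiting the flip symmetry of $\mathcal{B}_{n+1}$, so that no new computation is needed. Recall that the assignment $\psi\colon s_\ell \mapsto s_{n+1-\ell}$, for $1\le \ell\le n$, respects the braid relations: it sends a pair $\{s_a,s_b\}$ with $|a-b|=1$ to another such pair, and a commuting pair to a commuting pair. Hence $\psi$ extends to an automorphism of the classical positive monoid of $\mathcal{B}_{n+1}$, and then to an automorphism of $\mathcal{B}_{n+1}$ itself. Being a monoid automorphism, it preserves the prefix order (since $\psi(a^{-1}b)=\psi(a)^{-1}\psi(b)$, one has $a\preccurlyeq b\iff \psi(a)\preccurlyeq\psi(b)$), hence preserves joins, and therefore carries any LCM-diagram $\alpha\alpha'=\beta\beta'=\alpha\vee\beta$ to the LCM-diagram $\psi(\alpha)\psi(\alpha')=\psi(\beta)\psi(\beta')=\psi(\alpha)\vee\psi(\beta)$ of the same shape.

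Next I would record how $\psi$ acts on the relevant words. For $1\le j\le k\le n$ one has $\psi(S_{j,k})=s_{n+1-j}s_{n-j}\cdots s_{n+1-k}=S_{n+1-j,\,n+1-k}$, so writing $j'=n+1-k$ and $k'=n+1-j$ (which again satisfy $1\le j'\le k'\le n$, and every such pair arises this way) we get $\psi(S_{j,k})=S_{k',j'}$. Moreover $\psi$ sends the index $j-1$ to $k'+1$, the index $k+1$ to $j'-1$, the block $\{j,\dots,k\}$ to $\{j',\dots,k'\}$, and an index $i\notin\{j-1,\dots,k+1\}$ to $n+1-i\notin\{j'-1,\dots,k'+1\}$; it also sends $S_{j-1,k-1}$ to $S_{k'+1,j'+1}$, $S_{j-1,k}$ to $S_{k'+1,j'}$, $S_{j+1,k}$ to $S_{k'-1,j'}$, $S_{j,k+1}$ to $S_{k',j'-1}$, and $s_{k+1}s_k$ to $s_{j'-1}s_{j'}$.

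The argument is then simply to apply $\psi$ to each of the five LCM-diagrams of \autoref{L:Sjk_vee_atom} and read off, using the dictionary above, the five diagrams of \autoref{L:Skj_vee_atom}: the diagram for $s_i\vee S_{j,k}$ becomes the one for $s_{n+1-i}\vee S_{k',j'}$ (first diagram of \autoref{L:Skj_vee_atom}); the one for $s_m\vee S_{j,k}$ with $m\in\{j+1,\dots,k\}$ becomes the one for $s_{n+1-m}\vee S_{k',j'}$ with $n+1-m\in\{j',\dots,k'-1\}$ and output atom $s_{(n+1-m)+1}$ (second diagram); the one for $s_{j-1}\vee S_{j,k}$ becomes the one for $s_{k'+1}\vee S_{k',j'}$ with right side $S_{k',j'}S_{k'+1,j'+1}$ and bottom $S_{k'+1,j'}$ (third diagram); the one for $s_j\vee S_{j,k}$ becomes the one for $s_{k'}\vee S_{k',j'}$ with right side $S_{k'-1,j'}$ and bottom $1$ (fourth diagram); and the one for $s_{k+1}\vee S_{j,k}$ becomes the one for $s_{j'-1}\vee S_{k',j'}$ with right side $S_{k',j'-1}$ and bottom $s_{j'-1}s_{j'}$ (fifth diagram). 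This exhausts \autoref{L:Skj_vee_atom}.

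There is essentially no hard step; the only thing requiring care is the index bookkeeping, namely checking that the side conditions on $i$ and $m$ translate correctly under $\ell\mapsto n+1-\ell$, and that the diagrams of \autoref{L:Skj_vee_atom} involving $s_{k'+1}$ or $s_{j'-1}$ presuppose $k'<n$ respectively $j'>1$, which correspond exactly to the conditions $j>1$ respectively $k<n$ needed for the third and fifth diagrams of \autoref{L:Sjk_vee_atom}. If one preferred to avoid invoking the symmetry, the five diagrams could instead be verified directly by the same braid-relation manipulations used in the proof of \autoref{L:Sjk_vee_atom}, simply reading the words $S_{k,j}$ from the opposite end.
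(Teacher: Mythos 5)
Your proof is correct and is essentially the paper's own argument: the paper obtains these diagrams by conjugating those of \autoref{L:Sjk_vee_atom} by $\Delta$, which induces precisely the flip $s_\ell\mapsto s_{n+1-\ell}$ and preserves least common multiples. Realizing the flip as an abstract monoid automorphism rather than an inner one, and spelling out the index dictionary, changes nothing of substance.
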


\begin{proof}
These diagrams are obtained by conjugating those in \autoref{L:Sjk_vee_atom} by $\Delta$, an operation that preserves least common multiples, and sends $s_i$ to $s_{n-i}$ for every $i$.
\end{proof}

Using the above two results, we can derive the least common multiples of a ribbon element and a suitable atom:

\begin{lemma}\label{L:sigmaijk_vee_atom}
Given $1\leq i\leq j \leq k \leq n$ and $m\notin\{i-1,j,k+1\}$, the following are LCM-diagrams in the classical monoid for $\mathcal{B}_{n+1}$, whenever the elements involved are defined:
$$
\xymatrix@C=12mm@R=12mm{
  \ar[d]_{\sigma_{i,j,k}} \ar[r]^{s_{m}}
& \ar[d]^{\sigma_{i,j,k}}
\\
  \ar[r]_{s_t}
&
}
\qquad
\xymatrix@C=12mm@R=12mm{
  \ar[d]_{\sigma_{i,j,k}} \ar[r]^{s_{i-1}}
& \ar[d]^{\sigma_{i-1,j,k}}
\\
  \ar[r]_{S_{i-1,k+i-j}}
&
}
\qquad
\xymatrix@C=12mm@R=12mm{
  \ar[d]_{\sigma_{i,j,k}} \ar[r]^{s_{k+1}}
& \ar[d]^{\sigma_{i,j,k+1}}
\\
  \ar[r]_{S_{k+1,k+i-j}}
&
}
$$
In the first diagram, $s_t$ is an atom (conjugate of $s_m$ by $\sigma_{i,j,k}$).
\end{lemma}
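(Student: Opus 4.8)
The plan is to prove all three LCM-diagrams by decomposing $\sigma_{i,j,k}$ into the $S$-word factors of one of the two product expressions recalled just before the statement, and then transporting the atom through these factors one at a time, concatenating the elementary LCM-diagrams of \autoref{L:Sjk_vee_atom} and \autoref{L:Skj_vee_atom}. The engine is the standard fact, already used implicitly above, that the lattice operations are left-invariant and that for simple elements one has, whenever $s \vee P = P\,\tilde s$, the identity $s \vee (P\,Q) = P\,(\tilde s \vee Q)$; so if $\sigma_{i,j,k} = P_1 P_2 \cdots P_\ell$ with each $P_\alpha$ one of the words $S_{j',k'}$ or $S_{k',j'}$, then the join of $s_m$ with $\sigma_{i,j,k}$ is computed by pushing $s_m$ past $P_1$ (replacing it by $\tilde s_1$), then past $P_2$, and so on. I would organize this as an induction on the number of factors, using one of the four recursions $\sigma_{i,j,k}=S_{j,k}\,\sigma_{i,j-1,k-1}=\sigma_{i,j,k-1}\,S_{k,i+k-j}=S_{j,i}\,\sigma_{i+1,j+1,k}=\sigma_{i+1,j,k}\,S_{i,k-j+i}$, each of which peels off one $S$-factor and leaves a strictly smaller $\sigma$, to which the inductive hypothesis applies.

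For the first diagram (the case $m\notin\{i-1,j,k+1\}$) I would use $\sigma_{i,j,k}=S_{j,k}\,\sigma_{i,j-1,k-1}$ and push $s_m$ through $S_{j,k}$ via \autoref{L:Sjk_vee_atom}: if $m\le i-2$ or $m\ge k+2$ the atom commutes past every factor and $s_t=s_m$; if $j+1\le m\le k$ the atom is shifted down to $s_{m-1}$ and one recurses (with $j-1,k-1$ in place of $j,k$, and $m-1$ still in the admissible range); if $i\le m\le j-1$ the atom commutes past $S_{j,k}$ and one recurses on $\sigma_{i,j-1,k-1}$ (where $m$ now plays the role of an interior index again). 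One checks that the exclusion $m\notin\{i-1,j,k+1\}$ is exactly what is needed for the admissibility hypotheses of \autoref{L:Sjk_vee_atom}/\autoref{L:Skj_vee_atom} to hold at every stage, so that the transported element stays an atom $s_t$ and the residual bottom word reassembles into $\sigma_{i,j,k}$, giving $\sigma_{i,j,k}\,s_t = s_m\,\sigma_{i,j,k}$; this is automatically the least common multiple since both $\sigma_{i,j,k}$ and $s_m$ are visibly prefixes of it and it exceeds $\sigma_{i,j,k}$ in length by exactly one. For the second and third diagrams ($m=i-1$, resp. $m=k+1$) the atom now sits one step outside the range, and the transport through the first (resp. last) $S$-factor triggers the ``growing'' cases of \autoref{L:Sjk_vee_atom} (the diagrams with complements $S_{j-1,k}$, resp. $s_{k+1}s_k$); following the recursion, the accumulated extra letters telescope into $S_{i-1,k+i-j}$ (resp. $S_{k+1,k+i-j}$) while $\sigma_{i,j,k}$ is turned into $\sigma_{i-1,j,k}$ (resp. $\sigma_{i,j,k+1}$), which is the claimed diagram; the third diagram can alternatively be deduced from the second by conjugating by $\Delta$, as in \autoref{L:Skj_vee_atom}.

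A cleaner, if less self-contained, alternative is to argue entirely with permutation braids. Recall (the prefix criterion of \autoref{L:ribbon_prefixes_in_braids}, and the standard description of joins of permutation braids) that a simple braid is determined by its set of crossing pairs, that $u\preccurlyeq v$ iff $\mathrm{Cross}(u)\subseteq\mathrm{Cross}(v)$, and that the join of two simple braids is the simple braid whose crossing set is the transitive closure of the union. From the definition, $\mathrm{Cross}(\sigma_{i,j,k})=\{(p,q)\,:\,i\le p\le j<q\le k+1\}$ is a ``rectangle''. For $m\notin\{i-1,j,k+1\}$ one checks in a few lines that adjoining $(m,m+1)$ creates no new pair under transitive closure (both endpoints land on the same side of the rectangle, or outside it), so the join has length one more than $\sigma_{i,j,k}$ and hence equals $\sigma_{i,j,k}\,s_t=s_m\,\sigma_{i,j,k}$ with $s_t$ an atom. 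For $m=i-1$ the transitive closure additionally forces the pairs $(i-1,q)$ for $j+1\le q\le k+1$, enlarging the rectangle to $\mathrm{Cross}(\sigma_{i-1,j,k})$ and adding $k-j+2$ pairs in all; comparing lengths identifies the complement as $S_{i-1,k+i-j}$, and quotienting by $s_{i-1}$ leaves the rectangle $\mathrm{Cross}(\sigma_{i-1,j,k})$, i.e. $\sigma_{i-1,j,k}$. The case $m=k+1$ is symmetric.

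The main obstacle is bookkeeping rather than depth: in the transport argument one must track precisely which atom $s_t$ is produced and verify that $m\notin\{i-1,j,k+1\}$ keeps it (and every subsequent transported element) away from the degenerate indices of the successive factors, paying individual attention to boundary sub-cases such as $i=j$, $m=i$, or $m=j\pm1$; and in either approach one must be sure that all intermediate elements remain simple so that the concatenated diagrams genuinely compute least common multiples, which follows from the fact that the join of two simple elements is again simple. Identifying the complement words in the second and third diagrams as exactly $S_{i-1,k+i-j}$ and $S_{k+1,k+i-j}$ is the one place requiring a short direct computation (or the small induction via the recursions above), but it is routine.
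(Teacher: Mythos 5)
Your handling of the second and third diagrams is exactly the paper's: concatenate the elementary LCM-diagrams of \autoref{L:Sjk_vee_atom} along $\sigma_{i,j,k}=S_{j,k}S_{j-1,k-1}\cdots S_{i,k+i-j}$ (the atom $s_{i-1}$ commutes past every factor except the last, where the ``growing'' diagram contributes $S_{i-1,k+i-j}$ on the right and turns the bottom row into $\sigma_{i-1,j,k}$), and get the third diagram by conjugating by $\Delta$. For the first diagram, however, the paper is much slicker than either of your routes: since $j,j+1$ are the only consecutive strands crossing in $\sigma_{i,j,k}$, its unique atom prefix is $s_j$, so for $m\neq j$ the join is strictly longer than $\sigma_{i,j,k}$; and $s_m\sigma_{i,j,k}=s_m\Delta_{\{i,\dots,k\}\setminus\{j\}}^{-1}\Delta_{\{i,\dots,k\}}=\Delta_{\{i,\dots,k\}\setminus\{j\}}^{-1}\Delta_{\{i,\dots,k\}}s_t=\sigma_{i,j,k}s_t$ (each $\Delta_X$ conjugates atoms of $X$ to atoms of $X$ and commutes with atoms not adjacent to $X$) is a common multiple exactly one letter longer, hence the lcm. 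Your alternative crossing-set argument is a legitimate, essentially correct route in the spirit of \autoref{L:ribbon_prefixes_in_braids}; note only that for $m=i-1$ the closure is $\mathrm{Cross}(\sigma_{i-1,j,k})\cup\{(i-1,i)\}$ rather than $\mathrm{Cross}(\sigma_{i-1,j,k})$ itself, though your count of $k-j+2$ new pairs is right.

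The one genuine error is in the case analysis of your transport argument for the first diagram: you assert that for $i\le m\le j-1$ the atom $s_m$ ``commutes past $S_{j,k}$''. This fails for $m=j-1$ (which occurs whenever $i<j$, since $m=j-1$ is not excluded by the hypothesis $m\notin\{i-1,j,k+1\}$): $s_{j-1}$ does not commute with $S_{j,k}$; by the third diagram of \autoref{L:Sjk_vee_atom} one has $S_{j,k}\vee s_{j-1}=S_{j,k}S_{j-1,k}$, so the element transported to the next factor is $S_{j-1,k}$, not an atom, and the ``atom in, atom out'' recursion breaks (the statement is still true for $m=j-1$, but your argument does not prove it). The fix within your framework is to choose the factorization according to the side of $j$ on which $m$ lies: for $m>j$ use $S_{j,k}S_{j-1,k-1}\cdots S_{i,k+i-j}$ as you do, and for $m<j$ use the other expression $\sigma_{i,j,k}=S_{j,i}S_{j+1,i+1}\cdots S_{k,i+k-j}$ together with \autoref{L:Skj_vee_atom}, under which $s_{j-1}$ is shifted up to $s_j$, then $s_{j+1}$, and so on, ending at $s_t=s_k$. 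Alternatively, drop the transport argument for the first diagram altogether and use either the paper's one-line argument above or your crossing-set argument, both of which treat all admissible $m$ uniformly.
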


\begin{proof}
The first diagram holds since the only possible initial letter of $\sigma_{i,j,k}$ is $\sigma_j$ (as $j$ and $j+1$ are the only consecutive strands which cross in the simple element $\sigma_{i,j,k}$), hence the least common multiple of $s_m$ and $\sigma_{i,j,k}$ must be bigger than $\sigma_{i,j,k}$. There is a common multiple one letter bigger, namely:
$$
  s_m\sigma_{i,j,k} = s_m \Delta_{\{i,\ldots,k\}\setminus \{j\}}^{-1} \Delta_{\{i,\ldots,k\}} = \Delta_{\{i,\ldots,k\}\setminus \{j\}}^{-1} \Delta_{\{i,\ldots,k\}} s_t = \sigma_{i,j,k}s_t.
$$
Notice that $\Delta_X$ conjugates every atom in $X$ to another atom in $X$, and commutes with every atom not adjacent to $X$, so the central equality above holds for some atom $s_t$. This element is thus the searched least common multiple, and the first diagram holds.

The second diagram is obtained from the following concatenation of LCM-diagrams, coming from \autoref{L:Sjk_vee_atom}:
$$
\xymatrix@C=12mm@R=12mm{
  \ar[r]^{S_{j,k}} \ar[d]_{s_{i-1}}
& \ar[r]^{S_{j-1,k-1}} \ar[d]^{s_{i-1}}
& \ar@{.>}[r]  \ar[d]^{s_{i-1}}
& \ar[rr]^{S_{i,k+i-j}} \ar[d]^{s_{i-1}}
& & \ar[d]^{S_{i-1,k+i-j}}
\\
  \ar[r]_{S_{j,k}}
& \ar[r]_{S_{j-1,k-1}}
& \ar@{.>}[r]
& \ar[rr]_{S_{i,k+i-j}S_{i-1,k+i-j-1}}
& &
}
$$
The product of the arrows in the top row is $\sigma_{i,j,k}$, while the product of the arrows in the bottom row is $\sigma_{i-1,j,k}$. Hence the second diagram holds.

Finally, the third one is obtained from the second one after conjugation by $\Delta$.
\end{proof}

We can now determine the minimal positive conjugators in some suitable cases:

\begin{proposition}\label{P:braid_minimal_conjugators_are_ribbons}(see~\cite{CGGW})
Let $G = \mathcal{B}_{n+1}$ be the braid group on $n+1$ strands. Let $x\in G^+$ and let $X=\operatorname{supp}(x)$. For every $s_j\in \mathcal A \setminus X$ one has $\rho_{s_j}(x)=r_{X,s_j}$.
\end{proposition}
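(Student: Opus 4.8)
The plan is to prove the two inequalities $\rho_{s_j}(x)\preccurlyeq r_{X,s_j}$ and $r_{X,s_j}\preccurlyeq\rho_{s_j}(x)$ separately, after a reduction that lets us assume $X\cup\{s_j\}$ is connected. Write $X=X_0\sqcup X_1$, where $X_0\cup\{s_j\}=\{s_{i_0},\dots,s_{k_0}\}$ is the connected component of $X\cup\{s_j\}$ containing $s_j$. By \autoref{L:ribbons_in_braid_groups_complete} we have $r_{X,s_j}=r_{X_0,s_j}=\sigma_{i_0,j,k_0}$, and every atom of $X_1$ commutes with $s_j$, with every atom of $X_0$, and with $\sigma_{i_0,j,k_0}$. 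Using the product description of $G_X$ from Section~\ref{sect:subgroupsproducts}, write $x=x_0x_1$ with $x_0\in G_{X_0}^+$ and $x_1\in G_{X_1}^+$ commuting. Since $x_1$ commutes with any positive element $c$ supported on $X_0\cup\{s_j\}$, one checks that $x\vee c=x_1(x_0\vee c)$ and hence $x\backslash c=x_0\backslash c$; therefore the converging-prefix recursion of Section~\ref{S:checking_Garside_structure} (following \cite{FRANCOGM}) computing $\rho_{s_j}(x)$ agrees step by step with the one computing $\rho_{s_j}(x_0)$, so $\rho_{s_j}(x)=\rho_{s_j}(x_0)$. We may thus assume $X=X_0$, i.e. $X=\{s_{i_0},\dots,s_{k_0}\}\setminus\{s_j\}$ and $r_{X,s_j}=\sigma_{i_0,j,k_0}$; if $X=\varnothing$ (that is $i_0=j=k_0$), then $s_j$ commutes with $x$ and $\rho_{s_j}(x)=s_j=r_{X,s_j}$, so assume $X\neq\varnothing$.

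For the easy inequality, note that since $X=\operatorname{Supp}(x)$ and $G_X$ is a standard parabolic of the LCM-Garside structure, $x\in G_X^+$. From $\Delta_{X\cup\{s_j\}}=\Delta_X\, r_{X,s_j}$ and the fact that $\Delta_X$ and $\Delta_{X\cup\{s_j\}}$ are the balanced Garside elements of the nested standard parabolics $G_X\subseteq G_{X\cup\{s_j\}}$ (Godelle's theorem recalled after \autoref{D:support_balanced}), the computation
$$
x^{r_{X,s_j}}=\Delta_{X\cup\{s_j\}}^{-1}\bigl(\Delta_X x\Delta_X^{-1}\bigr)\Delta_{X\cup\{s_j\}}
$$
shows $x^{r_{X,s_j}}\in G_{X\cup\{s_j\}}^+\subseteq G^+$, because conjugation by $\Delta_X$ preserves $G_X^+$ and then conjugation by $\Delta_{X\cup\{s_j\}}$ preserves $G_{X\cup\{s_j\}}^+\supseteq G_X^+$. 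Since $r_{X,s_j}=\sigma_{i_0,j,k_0}$ is a nontrivial simple braid whose unique atom-prefix is $s_j$ (seen in the proof of \autoref{L:sigmaijk_vee_atom}), we get $s_j\preccurlyeq r_{X,s_j}$, so $r_{X,s_j}\in\mathcal C_{s_j}(x)$ and therefore $\rho_{s_j}(x)\preccurlyeq r_{X,s_j}$.

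The hard inequality is $\sigma_{i_0,j,k_0}\preccurlyeq\rho_{s_j}(x)$, and this is where the work lies. I would run the converging-prefix recursion $c_0=s_j$, $c_{m+1}=c_m\vee(x\backslash c_m)$, which stabilizes at $\rho_{s_j}(x)$, and prove by induction that each $c_m$ is a ribbon $\sigma_{i_m,j,k_m}$ with $i_0\le i_m\le j\le k_m\le k_0$, that $i_m$ is non-increasing and $k_m$ non-decreasing, and that the recursion strictly enlarges $[i_m,k_m]$ as long as $\{s_{i_m},\dots,s_{k_m}\}\setminus\{s_j\}\subsetneq X$. The point is that, since $\operatorname{Supp}(x)=X$, whenever $s_{i_m-1}$ or $s_{k_m+1}$ lies in $X$ it occurs in any representative of $x$, so the left-complement $x\backslash c_m=x^{-1}(x\vee c_m)$ — computed by pushing $x$ through $c_m=\sigma_{i_m,j,k_m}$ using the explicit LCM-diagrams of \autoref{L:Sjk_vee_atom}, \autoref{L:Skj_vee_atom} and \autoref{L:sigmaijk_vee_atom} — feeds $s_{i_m-1}$ (resp. $s_{k_m+1}$) back into the join, giving $c_{m+1}=\sigma_{i_m-1,j,k_m}$ (resp. $\sigma_{i_m,j,k_m+1}$), possibly on both sides at once; and when $\{s_{i_m},\dots,s_{k_m}\}\setminus\{s_j\}=X$, the same diagram computation gives $x\backslash c_m\preccurlyeq c_m$, so $c_{m+1}=c_m$ and the recursion has stabilized at $\sigma_{i_0,j,k_0}=r_{X,s_j}$. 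Together with the easy inequality this yields $\rho_{s_j}(x)=r_{X,s_j}$.

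The main obstacle is precisely the bookkeeping in this inductive step: one must verify that the left-complement $x\backslash c_m$ depends on the positive element $x$ only through its support $X$ in the relevant way — essentially, that it is governed by which pairs of strands must cross — so that, uniformly in $x$, the join $x\vee c_m$ absorbs exactly the crossings needed and the ribbon $c_m$ grows by precisely one layer on each side that still has an atom of $X$ available, and by no more. This is exactly what the LCM-diagram lemmas \autoref{L:Sjk_vee_atom}--\autoref{L:sigmaijk_vee_atom} were set up to handle, together with \autoref{L:pre-minimal_prefixes} to ensure that the pre-minimal conjugators form an increasing chain (so that $c_{m+1}=x\backslash c_m$ at each step); assembling these pieces into the clean "the prefixes are growing ribbons that stop exactly when they span the component" statement, independently of the chosen $x$, is the substantive content of the argument. (The statement is of course known from \cite{CGGW} for all spherical Artin groups, but the purpose here is to obtain it by the ribbon/$\rho_a$ computation that will transfer to the other complex braid groups.)
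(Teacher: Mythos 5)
Your proposal is correct, and its core — showing by induction, via the LCM-diagrams of \autoref{L:Sjk_vee_atom}--\autoref{L:sigmaijk_vee_atom} together with \autoref{L:ribbon_prefixes_in_braids} and \autoref{L:pre-minimal_prefixes}, that the pre-minimal conjugators are ribbons $\sigma_{i,j,k}$, each a prefix of the next, which grow exactly until they span the connected component of $X\cup\{s_j\}$ containing $s_j$ — is precisely the paper's proof. The two extra layers you add are harmless but redundant: the reduction to the connected case is unnecessary because the first diagram of \autoref{L:sigmaijk_vee_atom} already shows that atoms not adjacent to $\{i,\ldots,k\}$ leave the pre-minimal conjugator unchanged, and the ``easy inequality'' $\rho_{s_j}(x)\preccurlyeq r_{X,s_j}$ is superfluous once the recursion is shown to stabilize at $\sigma_{i_0,j,k_0}$, since the stabilized value \emph{is} $\rho_{s_j}(x)$ by the result of \cite{FRANCOGM} recalled in \autoref{S:checking_Garside_structure}.
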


\begin{proof}
Recall that in order to find $\rho_{s_j}(x)$ one just needs to compute the pre-minimal conjugators and converging prefixes for $s_j$ and $x$. The initial one is $c_0=s_j=\sigma_{j,j,j}$. Then, by \autoref{L:sigmaijk_vee_atom}, if some pre-minimal conjugator is $\sigma_{i,j,k}$, the next one will be either $\sigma_{i,j,k}$, or $\sigma_{i-1,j,k}$, or $\sigma_{i,j,k+1}$ (here we are using that $s_j\notin X$).  From \autoref{L:ribbon_prefixes_in_braids}, it follows that each pre-minimal conjugator is a prefix of the next one. Therefore, by \autoref{L:pre-minimal_prefixes}, $\rho_{s_j}(x)=x^m\backslash s_j$ for some $m\geq 0$. That is, in order to obtain $\rho_{s_j}(x)$, we just need to compute pre-minimal conjugators until the sequence stabilizes.

Notice also that a pre-minimal conjugator $\sigma_{i,j,k}$ is different to the next one if and only if the corresponding atom in the word representing $x$ is adjacent to $\{i,\ldots,k\}$. This means that the sequence of pre-minimal conjugators will stabilize exactly when it reaches $\sigma_{i_0,j,k_0}$, where $\{i_0,\ldots,k_0\}$ is the connected component of $X\cup\{s_j\}$ which contains $s_j$. Hence $\rho_{s_j}(x)=\sigma_{i_0,j,k_0}$ and, by \autoref{L:ribbons_in_braid_groups_complete}, $\sigma_{i_0,j,k_0}=r_{X,\sigma_j}$, as we wanted to show.
\end{proof}

The above result yields immediately that the classical Garside structure of $G$ is support-preserving. This is shown in~\cite[Corollary 6.5]{CGGW} for every Artin group of spherical type, but we state it here for completeness.

\begin{proposition}(see \cite[Corollary 6.5]{CGGW})\label{P:braid_support_preserving} The classical Garside structure for
the classical braid group
 is support-preserving.
\end{proposition}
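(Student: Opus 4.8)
The plan is to reduce, via \autoref{P:support_preserved_by_minimal_positive_conjugators}, to checking that support is preserved under conjugation by minimal positive conjugators, and then to exploit the explicit description of those conjugators obtained in \autoref{P:braid_minimal_conjugators_are_ribbons}. We have already seen that the classical Garside structure on $G=\mathcal{B}_{n+1}$ is an LCM-Garside structure, so by \autoref{P:support_preserved_by_minimal_positive_conjugators} it suffices to show, for every $x\in G^+$ with $X=\operatorname{Supp}(x)$ and every minimal positive conjugator $c$ for $x$, that $\left(G_X\right)^c=G_{\operatorname{Supp}(x^c)}$. Any such $c$ is of the form $\rho_a(x)$ for some atom $a$, so I would split into the cases $a\notin X$ and $a\in X$.

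For $a=s_j\notin X$, \autoref{P:braid_minimal_conjugators_are_ribbons} gives $\rho_{s_j}(x)=r_{X,s_j}$, and by \autoref{L:ribbons_in_braid_groups_complete} this ribbon equals $\sigma_{i_0,j,k_0}$, where $\{s_{i_0},\dots,s_{k_0}\}$ is the connected component of $X\cup\{s_j\}$ containing $s_j$. Geometrically $\sigma_{i_0,j,k_0}$ transposes the block of strands $\{i_0,\dots,j\}$ with the block $\{j+1,\dots,k_0+1\}$, so conjugation by it carries the standard parabolic $G_X$ isomorphically onto a standard parabolic $G_{X'}$, where $X'$ is obtained from $X$ by replacing that connected component with the corresponding transposed block of atoms; in particular $\#X'=\#X$, and the isomorphism takes the atoms of $G_X$ bijectively to the atoms of $G_{X'}$. (This is the ribbon property of \cite{GODELLE2003}; in the present case it can also be read off from the explicit factorisation of $\sigma_{i,j,k}$ as a product of atoms together with the action of $\Delta_X$ on atoms recorded in the lemmas above.) Since the isomorphism preserves atoms it preserves supports, whence $\operatorname{Supp}(x^{r_{X,s_j}})=X'$ and therefore $\left(G_X\right)^{r_{X,s_j}}=G_{X'}=G_{\operatorname{Supp}(x^{r_{X,s_j}})}$, which is the desired identity in this case; in particular $\#\operatorname{Supp}\bigl(x^{\rho_{s_j}(x)}\bigr)=\#\operatorname{Supp}(x)$, so certainly $\#\operatorname{Supp}\bigl(x^{\rho_{s_j}(x)}\bigr)\le\#\operatorname{Supp}(x)$.

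This last bound, being valid for all $x\in G^+$ and all $s_j\notin\operatorname{Supp}(x)$ with $\rho_{s_j}(x)$ a minimal positive conjugator, is exactly the hypothesis of \autoref{P:atom_in_X_support-preserving}; that proposition then yields, for every $a\in X$, that $\left(G_X\right)^{\rho_a(x)}=G_X=G_{\operatorname{Supp}(x^{\rho_a(x)})}$. Combined with the previous paragraph this establishes $\left(G_{\operatorname{Supp}(x)}\right)^c=G_{\operatorname{Supp}(x^c)}$ for every minimal positive conjugator $c$, and \autoref{P:support_preserved_by_minimal_positive_conjugators} finishes the argument. The only non-formal ingredient is the ribbon property invoked in the second paragraph, namely that conjugation by $r_{X,s_j}=\sigma_{i_0,j,k_0}$ sends the standard parabolic $G_X$ onto another standard parabolic of the same rank by an atom-preserving isomorphism; for the classical braid group this is essentially immediate from the strand picture, but it is precisely the substantive point that will require the detailed study of pre-minimal conjugators developed in \autoref{S:checking_Garside_structure} when the same scheme is run for the other complex braid groups.
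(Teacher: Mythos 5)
Your proposal is correct and follows essentially the same route as the paper's proof: identify the minimal positive conjugator $\rho_{s_j}(x)$ for $s_j\notin X$ with the ribbon $r_{X,s_j}$ via \autoref{P:braid_minimal_conjugators_are_ribbons}, observe that conjugation by this ribbon sends the atoms of $X$ bijectively to atoms (so it preserves supports and their cardinalities), and then invoke \autoref{P:atom_in_X_support-preserving} together with \autoref{P:support_preserved_by_minimal_positive_conjugators} to handle the case $s_j\in X$ and conclude. The only difference is cosmetic: you justify the atom-to-atom action of the ribbon by the strand picture, whereas the paper asserts it directly.
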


\begin{proof}
Let $x,y$ be positive braids which are connected by a minimal positive conjugator, that is, $x^\rho=y$ where $\rho=\rho_{s_j}(x)$ for some atom $s_j$. Let $X=\operatorname{supp}(x)$ and $Y=\operatorname{supp}(y)$. We need to show that $\rho$ conjugates $G_X$ to $G_Y$.

If $s_j\notin X$ then, by \autoref{P:braid_minimal_conjugators_are_ribbons}, $\rho=r_{X,s_j}$. But conjugation by $r_{X,s_j}$ sends the set of atoms $X$ to another set of atoms $Z$ of the same size. So the conjugation of $x$ by $\rho$ acts letter by letter, sending each letter of $x$ to the corresponding letter of $Z$. It follows that $Z$ is precisely the support of the obtained element $y$, so $Z=Y$ and conjugation by $\rho$ sends the atoms of $X$ to the atoms of $Y$, hence it sends $G_X$ to $G_Y$, as we wanted to show.

Since conjugation by a minimal positive conjugator $\rho_{s_j}(x)$ where $s_j\notin X$ preserves the sizes of the supports, we can apply \autoref{P:atom_in_X_support-preserving} to conclude that every $\rho_{s_i}(x)$ also preserves the support when $s_i\in X$. Therefore, the classical Garside structure if $B$ is support-preserving.
\end{proof}

In~\cite[Proposition 7.2]{CGGW}, the above result, together with some technical arguments involving a set denoted $RSSS_{\infty}(x)$, was used to show the existence of the parabolic closure of an element in an Artin group of spherical type. From the definition and properties of recurrent elements shown in this paper, we already have this result immediately from \autoref{T:parabolic_closure_exists}.

\begin{corollary}
Every element of the classical braid group
admits a parabolic closure.
\end{corollary}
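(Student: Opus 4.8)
The plan is to deduce this corollary directly from the general Garside-theoretic machinery of \autoref{sect:parabclosuresgarsidegroups}, since all the real work for $G=\mathcal{B}_{n+1}$ has already been done. First I would recall that the classical Garside structure on $G$ is an \emph{LCM-Garside structure}: one has $\Delta=\Delta_{\mathcal A}$, every $\Delta_X$ with $X\subset\mathcal A$ is balanced (it factors as a product of pairwise commuting $\Delta_{X_i}$ over the connected components $X_i$ of $X$, each of which is visibly balanced), every $X\subset\mathcal A$ is saturated, and each $G_X$ is a standard parabolic subgroup --- all of this being recorded just before \autoref{D:ribbon}.

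Next I would invoke \autoref{P:braid_support_preserving}, which asserts precisely that this LCM-Garside structure is \emph{support-preserving}. The substantive input there is \autoref{P:braid_minimal_conjugators_are_ribbons}, identifying the minimal positive conjugators $\rho_{s_j}(x)$ with $s_j\notin\operatorname{Supp}(x)$ as the ribbon elements $r_{\operatorname{Supp}(x),s_j}$ (whose conjugation action permutes atoms letter by letter, hence carries $G_X$ to $G_Y$), together with \autoref{P:atom_in_X_support-preserving} to treat the atoms already lying in the support. For the present corollary these may be taken as given.

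With both hypotheses --- LCM-Garside and support-preserving --- in hand, \autoref{T:parabolic_closure_exists} applies verbatim: given $x\in G$, conjugate it by iterated swaps to a recurrent element $y$ (\autoref{P:swap_to_get_recurrent}), note that $\PC(y)=G_{\operatorname{Supp}(y)}$ by \autoref{T:parabolic_closure_for_recurrent}, and transport this back along the conjugation by \autoref{L:parabolic_closure_for_conjugates} to obtain $\PC(x)$. There is no genuine obstacle: the theorem is a formal consequence of the verified properties, and the only point worth double-checking is that it is the \emph{classical} Garside structure that is in force --- so that the atoms are $s_1,\dots,s_n$ and the simples are the permutation braids --- which is exactly the structure fixed at the opening of this subsection.
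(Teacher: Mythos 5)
Your proposal is correct and follows exactly the paper's route: the classical Garside structure is LCM-Garside (as recorded just before \autoref{D:ribbon}), it is support-preserving by \autoref{P:braid_support_preserving}, and \autoref{T:parabolic_closure_exists} then yields the parabolic closure of every element. The extra unpacking of the swap-to-recurrent mechanism is accurate but not needed beyond citing the theorem.
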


Notice that, after~\cite[Proposition 7.2]{CGGW}, we can also deduce the following from \autoref{T:parabolic_closure_exists}:

\begin{corollary}
Every element in an Artin group of spherical type admits a parabolic closure.
\end{corollary}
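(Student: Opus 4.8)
The plan is to deduce the statement directly from \autoref{T:parabolic_closure_exists} by verifying that the standard Garside structure on a spherical-type Artin group is a support-preserving LCM-Garside structure. First I would reduce to the irreducible case: a spherical-type Artin group $A$ is a direct product $A_1\times\cdots\times A_k$ of irreducible ones, and its classical Garside structure is the product of the classical Garside structures on the factors. Since reduced left-fraction decompositions, meets, joins, and therefore supports all decompose coordinatewise, $A$ satisfies the hypotheses of \autoref{T:parabolic_closure_exists} as soon as each $A_i$ does (and parabolic subgroups of the product decompose accordingly). So it suffices to treat an irreducible spherical-type Artin group $A=A(W)$, with set of atoms $\mathcal A=\{\sigma_s:s\in S\}$ the standard generators and Garside element $\Delta$ the positive lift of the longest element $w_0$ of $W$.

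Next I would check that this structure is LCM, i.e. the three conditions of \autoref{D:LCM-Garside_structure}. For $X\subseteq S$, the element $\Delta_X$ (the join of the atoms $\sigma_s$, $s\in X$) is the positive lift of the longest element $w_{0,X}$ of the standard parabolic subgroup $W_X$; it is balanced, its support equals $X$ since the left descent set of $w_{0,X}$ is $X$ (so every subset of atoms is saturated), and the subgroup $G_X=\langle\sigma_s:s\in X\rangle$ — the standard parabolic Artin subgroup — is a standard parabolic subgroup of $A$ in the Garside sense. These are classical facts about Artin monoids (see \cite{GODELLE2007,GODELLE2003} and the references there; condition (3) can also be verified via \autoref{P:characterization_standard_parabolic}). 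Hence conditions (1)--(3) hold and the structure is LCM.

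Then I would check that it is support-preserving. By \autoref{P:support_preserved_by_minimal_positive_conjugators} it suffices to show that conjugation by every minimal positive conjugator preserves supports, and by \autoref{P:atom_in_X_support-preserving} I may restrict to conjugators $\rho_a(x)$ with $a\notin\operatorname{Supp}(x)$. The key input is that such a minimal positive conjugator is a ribbon, $\rho_a(x)=r_{X,a}=\Delta_X^{-1}\Delta_{X\cup\{a\}}$ with $X=\operatorname{Supp}(x)$; granting this, conjugation by $r_{X,a}$ sends the atom set $X$ bijectively onto another atom set $X'$ and acts letter by letter on positive words, so $X'=\operatorname{Supp}(x^{\rho_a(x)})$ and $(G_X)^{\rho_a(x)}=G_{X'}$ — exactly the argument written out above for $\mathcal B_{n+1}$ in \autoref{P:braid_support_preserving}, which goes through verbatim. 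With support-preservation in hand, \autoref{T:parabolic_closure_exists} yields a parabolic closure for every element of $A$.

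The main obstacle, and the only genuinely nontrivial ingredient, is the support-preserving property in arbitrary type — equivalently, the identification of minimal positive conjugators between positive elements as ribbons. For $\mathcal B_{n+1}$ this was obtained here through the explicit analysis of pre-minimal conjugators in \autoref{P:braid_minimal_conjugators_are_ribbons}, but for general $W$ one should simply invoke \cite[Proposition 6.3]{CGGW} (whence also \cite[Corollary 6.5]{CGGW}) rather than redo the type-by-type case analysis; everything else in the argument is formal.
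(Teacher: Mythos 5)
Your proposal is correct and follows essentially the same route as the paper: verify that the classical Garside structure on a spherical-type Artin group is a support-preserving LCM-Garside structure — the LCM part being classical, the support-preserving part being exactly \cite[Proposition 6.3 and Corollary 6.5]{CGGW} — and then apply \autoref{T:parabolic_closure_exists}. The paper states this in one line (citing \cite[Proposition 7.2]{CGGW} and the preceding discussion); your reduction to the irreducible case is harmless but unnecessary, since the Garside structure with $\Delta$ the lift of $w_0$ and the CGGW results apply to arbitrary finite Coxeter systems.
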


As we said, this result was already shown in~\cite{CGGW}, but we gave a complete proof here, using recurrent elements.

\subsection{The group $G(e,e,n)$}

In this section we will study the complex braid group $B$ of type $G(e,e,n)$, endowed with the standard monoid structure of \autoref{sect:G(e,e,n)} with set of atoms $\mathcal A=\{t_0,t_1,\ldots,t_{e-1},s_3,s_4,\ldots,s_n\}$, with $e\geq 1$ and $n\geq 2$.
We will show that the standard parabolic subgroups associated to the standard Garside structure are the `parabolic subgroups' introduced in \cite{CALMAR} \S 6.3 -- that we already called `standard parabolic subgroups' in Corollary \ref{cor:parabsCP} --
 and we will show that it is a support-preserving LCM-Garside structure, so that every element admits a parabolic closure.

We remark that we modified the order of the elements $t_i$ with respect to~\cite{CORPIC}, using the permutation $t_i\mapsto t_{e-1-i}$.  We now provide some important information about this Garside structure, taken from~\cite{CORPIC}:

If we denote $\tau=t_it_{i+1}$ (which is the same element for any $i$), we have $\tau=t_0\vee \cdots \vee t_{e-1}$, and also $\tau=t_i\vee t_j$ for every $i\neq j$.

For $k=2,\ldots,n$, let $\Lambda_k=s_ks_{k-1}\cdots s_3 \:\tau\: s_3\cdots s_{k-1}s_k$ (in particular, $\Lambda_2=\tau$). Then the Garside element $\Delta$ is the LCM of the atoms, $\Delta=\Delta_{\mathcal A}$, which we can write in four different ways as follows:
$$
   \Delta=\Lambda_2\Lambda_3\cdots \Lambda_n = \Lambda_n\Lambda_{n-1}\cdots \Lambda_2 = (\tau s_3\cdots s_n)^{n-1}=(s_n\cdots s_3\tau)^{n-1}
$$
One has, for every $i$:
$$
   \Delta\tau = \tau\Delta,\qquad  \Delta s_i=s_i\Delta, \qquad \Delta t_i = t_{i-n} \Delta,
$$
where the subindices of the $t_i$ are taken modulo $e$. This shows the permutation of the atoms induced by conjugation by $\Delta$. We can also see how conjugation by an element $\Lambda_k$ affects some atoms:
$$
   \Lambda_k s_i = s_i\Lambda_k\ (i\neq k,k+1),\qquad \Lambda_2 t_{i} = t_{i-2}\Lambda_2, \qquad \Lambda_k t_{i} = t_{i-1} \Lambda_k\ (k>2).
$$

Finally, the {\bf simple elements} in this structure are the elements of the form $p_2\cdots p_r$, where each $p_k$ is a prefix of $\Lambda_k$, taking into account that each $\Lambda_k$ only admits the evident prefixes: those which correspond to a prefix of the word $s_ks_{k-1}\cdots s_3 t_it_{i+1} s_3\cdots s_{k-1}s_k$ for some $i$~\cite[Theorem 3.7]{CORPIC}.

With the above information, we will show that $(B,B^+,\Delta)$ is a support-preserving LCM-Garside structure. Let us denote $T_e=\{t_0,\ldots,t_{e-1}\}$ and $S_n=\{s_3,\ldots,s_n\}$, so the set of atoms of $B$ is $\mathcal A=T_e\cup S_n$. We have:

\begin{proposition}\label{P:standard_set_of_atoms}
The standard Garside structure $(B,B^+,\Delta)$ for $G(e,e,n)$
is an LCM-Garside structure. Moreover, if $X\subset \mathcal A$ is a set of atoms, then $X$ is saturated if and only if $\#(X\cap T_e)\in\{0,1,e\}$.
\end{proposition}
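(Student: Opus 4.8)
The plan is to verify the three requirements of \autoref{D:LCM-Garside_structure}, together with the saturation criterion, organising everything around the integer $k=\#(X\cap T_e)$. Requirement~(1), $\Delta=\Delta_{\mathcal A}$, is exactly one of the identities recalled for the standard monoid. Everything else hinges on one elementary fact about the atoms in $T_e$: we have $\tau=t_0\vee\cdots\vee t_{e-1}=t_i\vee t_j$ for all $i\neq j$, and the only prefixes of $\Lambda_2=\tau$ are $1$, the $t_i$, and $\tau$, so $\operatorname{Div}(\tau)=\{1\}\cup T_e\cup\{\tau\}$ and $\operatorname{Supp}(\tau)=T_e$. Consequently, for every $X\subseteq\mathcal A$,
$$
\Delta_X \;=\; \Delta_{X'},\qquad X'=\begin{cases} X, & k\le 1,\\ (X\cap S_n)\cup T_e, & k\ge 2. \end{cases}
$$
In particular, when $2\le k\le e-1$ we get $\overline{X}=\operatorname{Supp}(\Delta_X)\supseteq\operatorname{Supp}(\tau)=T_e\supsetneq X\cap T_e$, so such an $X$ is not saturated. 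This already proves the ``only if'' direction of the saturation criterion, and it reduces the verification of requirements~(2) and~(3) to the cases $k\in\{0,1,e\}$, since for $k\ge 2$ one has $\Delta_X=\Delta_{X'}$ and $G_X=G_{X'}$ with $X'$ of type $k=e$.

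Next I would treat $k\in\{0,1,e\}$, in each case describing $\Delta_X$, showing it is balanced, and showing $\operatorname{Supp}(\Delta_X)=X$ (which gives the ``if'' direction). When $k\le 1$, restricting the defining relations of $B$ to the generators in $\{t_i\}\cup S_n$ (resp.\ in $S_n$) yields precisely the braid relations of the Artin monoid of type $A_{n-1}$ attached to the linear diagram $t_i-s_3-\cdots-s_n$ (resp.\ of type $A_{n-2}$); the group it presents is the braid group of the corresponding type-$A$ parabolic subgroup of $W=G(e,e,n)$ described in \S\ref{sect:G(e,e,n)}, and $\Delta_X$ is the classical half-twist on that block of strands. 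When $k=e$, writing $X=T_e\cup X'$ with $X'\subseteq S_n$ and splitting $X'$ into its blocks of consecutive generators, $\langle X\rangle$ is a direct product of pairwise commuting factors: the standard monoid of $G(e,e,m)$ carrying $T_e$ together with the lowest block $\{s_3,\dots,s_m\}$ of $X'$ (with $m=2$ if $s_3\notin X'$), whose Garside element is $\Lambda_2\Lambda_3\cdots\Lambda_m$, and classical braid monoids on the remaining blocks; thus $\Delta_X$ is a product of the Garside elements of these commuting factors. In all three cases $\Delta_X$ is balanced (a product of balanced elements of commuting factors) and has support exactly $X$; hence requirement~(2) holds for every $X$, and the ``if'' direction of the criterion follows.

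For requirement~(3) I would apply Godelle's criterion \autoref{P:characterization_standard_parabolic} to $\delta=\Delta_X$: once $\Delta_X$ is known to be balanced, it is enough to check that $xy\wedge\Delta\in\operatorname{Div}(\Delta_X)$ and $xy\wedge^{\Lsh}\Delta\in\operatorname{Div}(\Delta_X)$ for all $x,y\in\operatorname{Div}(\Delta_X)$. This is a finite check, which I would perform using the Corran--Picantin normal form of simple elements (products $p_2\cdots p_r$ with $p_k$ a prefix of $\Lambda_k$) together with the conjugation rules $\Delta s_i=s_i\Delta$, $\Delta t_i=t_{i-n}\Delta$, $\Lambda_k s_i=s_i\Lambda_k$ for $i\neq k,k+1$, $\Lambda_k t_i=t_{i-1}\Lambda_k$ for $k>2$, and $\Lambda_2 t_i=t_{i-2}\Lambda_2$, which govern how a prefix of some $\Lambda_k$ can be absorbed into a larger simple; equivalently, one shows that the submonoid attached to a saturated $X$ is closed under prefixes and suffixes inside $B^+$, so that divisors of $\Delta_X$ taken in $B^+$ and in the submonoid agree. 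Here the fact (from \autoref{theo:neaime}) that the standard monoid is an interval monoid, so that its simple elements are parametrised by an interval $[1,c]\subseteq W$, is what makes the divisibility relations combinatorially tractable.

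The main obstacle is precisely this last verification: ruling out ``unexpected'' divisors of $\Delta_X$ in the full monoid --- for instance showing that $\Delta_{\{s_3,\dots,s_n\}}$, though $\tau$-free, admits no $t_i$ as a divisor --- and establishing Godelle's criterion; this is where the hypothesis $k\in\{0,1,e\}$ (as opposed to an arbitrary intersection with $T_e$) is genuinely used, the obstruction for $2\le k\le e-1$ being exactly the appearance of $\tau$, hence of all of $T_e$, in $\overline{X}$. Once this book-keeping with the normal forms of \cite{CORPIC,NEAIME} is completed, $(B,B^+,\Delta)$ is an LCM-Garside structure and the saturated subsets of atoms are exactly those with $\#(X\cap T_e)\in\{0,1,e\}$, in agreement with the topological description of \autoref{prop:parabsCP}.
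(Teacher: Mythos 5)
Your argument for the ``only if'' direction of the saturation criterion --- that $t_i\vee t_j=\tau$ forces $T_e\subseteq\overline{X}$ whenever $1<\#(X\cap T_e)<e$, so such an $X$ cannot be saturated --- is exactly the paper's, and the remaining verifications you sketch (balancedness of $\Delta_X$, $\operatorname{Supp}(\Delta_X)=X$, and Godelle's criterion for $k\in\{0,1,e\}$ via the product decomposition into type-$A$ blocks and a $G(e,e,m)$ block) are in substance what the paper simply delegates to \cite{CALMAR} \S 6.3. The proposal is correct and takes essentially the same route; the only difference is that you outline the case-by-case check that the paper outsources to that reference.
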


\begin{proof}
We already know that $\Delta=\Delta_{\mathcal A}$, the least common multiple of all the atoms. We need to study the least common multiple $\Delta_X$ of subsets $X\subset \mathcal A$. The fact that $\#(X\cap T_e)\in\{0,1,e\}$
implies that $X$ is saturated is an immediate consequence of \cite{CALMAR} \S 6.3. If $1<\#(X\cap T_e)<e$, let $t_i,t_j\in X$ and let $t_k\notin X$. We have $t_i\vee t_j=\tau$, hence $\tau\preccurlyeq \Delta_X= x_1\vee \cdots \vee x_m$, where $X=\{x_1,\ldots,x_m\}$. But $t_k\preccurlyeq \tau$, hence $t_k\preccurlyeq \Delta_X$, implying that $X$ is not saturated. Hence, we only need to consider the elements $\Delta_X$ with $\#(X\cap T_e)\in\{0,1,e\}$, and it follows from~\cite{CALMAR} that such an element is balanced and that $G_{\Delta_X}$ is a standard parabolic subgroup.

\end{proof}
We have then shown that the standard parabolic subgroups of $B$ are those generated by some set $X\subset \mathcal A$ which either contains no $t_i$, or contains exactly one, or contains them all. From the proof of the above result it follows that if $X$ is not saturated (that is, if $1<\#(X\cap T_e)<e$), then $\overline{X}=X\cup T_e$. Recall that we always have $\Delta_X=\Delta_{\overline{X}}$.

The rest of this section is devoted to show that $(B,B^+,\Delta)$ is a support-preserving Garside structure for $B$. By \autoref{P:support_preserved_by_minimal_positive_conjugators}, we just need to show that for every positive element $x\in B^+$ and every minimal positive conjugator $\rho$ for $x$ such that $x^\rho=y$, then $(G_{X})^{\rho}=G_{Y}$, where $X=\operatorname{Supp}(x)$ and $Y=\operatorname{Supp}(y)$.

We will then need to study in detail the minimal positive conjugators for an element in $B^+$, and to check that conjugation by such an element `preserves the support' in the natural way.

Recall the concept of {\em ribbon element} given in \autoref{D:ribbon}. As we did with braid groups in the previous section, we will show that the minimal conjugating elements that connect positive conjugates
 with different supports, are all {\em ribbons} of the form $r_{X,u}$, for some $X$ and $u$.

When $\#(X\cap T_e)=0$ and $u$ is an atom, we already know that $X\cup \{u\}$ is contained into a submonoid of $B^+$ isomorphic to a braid monoid (with the same Garside structure), so we know exactly how the ribbon $r_{X,u}$ looks like. The same happens when $\#(X\cap T_e)=1$ and $u\in S_n$. Hence we only need to study ribbons in the remaining cases.

Given $i,k\in \{2,\ldots,n\}$, we denote:
$$
   \Lambda_{i,k}=s_is_{i-1}\cdots s_3 \:\tau\: s_3\cdots s_{k-1}s_k.
$$
We consider that $\Lambda_{2,k}=\tau\: s_3\cdots s_{k-1}s_k$, $\Lambda_{i,2}=s_is_{i-1}\cdots s_3 \:\tau$, and $\Lambda_{2,2}=\tau$. Notice that $\Lambda_{k,k}=\Lambda_k$ for every $k=2,\ldots,n$.

We will also denote $S_{i,k}=s_is_{i+1}\cdots s_k$ and $S_{k,i}=s_ks_{k-1}\cdots s_i$ whenever $3\leq i\leq k\leq n$. We will extend this notation to $i=2$, by denoting $s_2=t_a$ for some given $a\in\{0,\ldots,e-1\}$ which must be specified. Hence, we will denote $S_{2,k}^{(a)}=t_a s_3\cdots s_k$ and $S_{k,2}^{(a)}=s_k\cdots s_3 t_a$. We will sometimes use the superscript even if $i\neq 2$: in that case $S_{i,k}^{(a)}=S_{i,k}$ and $S_{k,i}^{(a)}=S_{k,i}$.

Let $X\subset \mathcal A$ such that $X\cap T_e$ has more than one element. We will use the concept of {\em adjacent generators} coming from Artin groups, as follows. We will say that $s_i$ and $s_j$ are adjacent if and only if $i$ and $j$ are consecutive numbers, that $s_3$ is adjacent to all atoms in $T_e$, and that all atoms in $T_e$ are adjacent to each other. We can imagine a graph whose vertices are the atoms in $\mathcal A$, where adjacent atoms are connected by an edge. In this way we can talk about {\em connected} subsets of $\mathcal A$, or about the {\em connected components} of a subset of $\mathcal A$.

We just need to determine the ribbon elements in two cases, given by the following two Lemmas.

\begin{lemma}\label{L:ribbons_in_B(e,e,n)}
Let $3\leq j\leq k\leq n$. Denote $X=\{t_{i_1},\ldots,t_{i_p}\}\cup \{s_3,\ldots,s_{j-1}\}\cup \{s_{j+1},\ldots,s_k\}$ for some $p>1$. Then one has:
$$
   r_{X,s_j}=\Lambda_{j,k}\Lambda_{j,k-1}\cdots \Lambda_{j,j} = \Lambda_{j,j}\Lambda_{j+1,j}\cdots \Lambda_{k,j}
$$
\end{lemma}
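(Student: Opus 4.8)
The plan is to evaluate $r_{X,s_j}=\Delta_X^{-1}\Delta_{X\cup\{s_j\}}$ by first identifying the two balanced elements $\Delta_X$ and $\Delta_{X\cup\{s_j\}}$ explicitly as products of the elements $\Lambda_m$, and then to extract the two displayed words by an induction on $k$.

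\emph{Step 1: identifying the two Garside elements.} Since $p>1$, \autoref{P:standard_set_of_atoms} together with the remark following it gives $\overline X=T_e\cup\{s_3,\dots,s_{j-1}\}\cup\{s_{j+1},\dots,s_k\}$, a saturated set of atoms whose connected components (for the adjacency where $s_3$ is adjacent to all of $T_e$) are $C_1=T_e\cup\{s_3,\dots,s_{j-1}\}$ and $C_2=\{s_{j+1},\dots,s_k\}$, and likewise $\overline{X\cup\{s_j\}}=T_e\cup\{s_3,\dots,s_k\}$. As $C_1$ and $C_2$ consist of disjoint, pairwise non-adjacent atoms, $\Delta_X=\Delta_{\overline X}=\Delta_{C_1}\Delta_{C_2}$ with the two factors commuting. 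Now $G_{T_e\cup\{s_3,\dots,s_k\}}$ and $G_{C_1}$ are the standard parabolic subgroups of $B$ which, by \autoref{prop:parabsCP} and the discussion in \autoref{sect:G(e,e,n)} (see also \cite{CALMAR}~\S6.3), are isomorphic \emph{as Garside groups} to the braid groups of $G(e,e,k)$ and $G(e,e,j-1)$ with matching atoms; hence, by Godelle's theorem (\cite{GODELLE2007}) and the Corran--Picantin formula for $\Delta$ recalled above (applied with $n$ replaced by $k$, resp. $j-1$), one has $\Delta_{X\cup\{s_j\}}=\Lambda_2\Lambda_3\cdots\Lambda_k$ and $\Delta_{C_1}=\Lambda_2\cdots\Lambda_{j-1}$. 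Cancelling the common left factor $\Lambda_2\cdots\Lambda_{j-1}$ then reduces the assertion $r_{X,s_j}=\Lambda_{j,k}\Lambda_{j,k-1}\cdots\Lambda_{j,j}$ to the monoid identity
\[
\Delta_{C_2}\cdot\Lambda_{j,k}\Lambda_{j,k-1}\cdots\Lambda_{j,j}\;=\;\Lambda_j\Lambda_{j+1}\cdots\Lambda_k .
\]

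\emph{Step 2: the induction.} I would prove this identity by induction on $k\geq j$, the case $k=j$ being immediate since then $C_2=\varnothing$ and $\Lambda_{j,j}=\Lambda_j$. The inductive step rests on three elementary computations inside the Corran--Picantin monoid: (a) $\Delta_{\{s_{j+1},\dots,s_k\}}=\Delta_{\{s_{j+1},\dots,s_{k-1}\}}\,S_{k,j+1}$, the standard factorization of a half-twist on consecutive strands (this is the braid ribbon $\sigma_{j+1,k,k}=S_{k,j+1}$ of \autoref{L:ribbons_in_braid_groups_complete}); (b) $S_{k,j+1}\,\Lambda_{j,k}=\Lambda_k$, a plain concatenation, since $\Lambda_{j,k}=(s_j\cdots s_3)\,\tau\,(s_3\cdots s_k)$ and $S_{k,j+1}(s_j\cdots s_3)=s_k\cdots s_3$; and (c) $\Lambda_k$ commutes with $\Lambda_{j,m}$ for every $m\leq k-1$, because the only atoms occurring in $\Lambda_{j,m}$ are $\tau$ and the $s_i$ with $3\leq i\leq\max(j,m)\leq k-1$, while $\Lambda_ks_i=s_i\Lambda_k$ for $i\neq k,k+1$ and $\Lambda_k\tau=\tau\Lambda_k$ (the latter from $\Lambda_kt_i=t_{i-1}\Lambda_k$ and $\tau=t_{i-1}t_i=t_it_{i+1}$, all relations being recalled above from \cite{CORPIC}). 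Applying (a), then (b), then (c) and finally the induction hypothesis transforms the left-hand side successively into $\Delta_{\{s_{j+1},\dots,s_{k-1}\}}\,\Lambda_k\,\Lambda_{j,k-1}\cdots\Lambda_{j,j}$, then $\Delta_{\{s_{j+1},\dots,s_{k-1}\}}\,(\Lambda_{j,k-1}\cdots\Lambda_{j,j})\,\Lambda_k$, and finally $(\Lambda_j\cdots\Lambda_{k-1})\Lambda_k=\Lambda_j\cdots\Lambda_k$, as desired.

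\emph{Step 3: the second form.} For the equality $r_{X,s_j}=\Lambda_{j,j}\Lambda_{j+1,j}\cdots\Lambda_{k,j}$ I would run the mirror-image induction, now using $\Lambda_{k,j}\,S_{j+1,k}=\Lambda_k$ in place of (b) together with the same commutations; equivalently, one proves by induction on $k$ the purely combinatorial identity $\Lambda_{j,k}\Lambda_{j,k-1}\cdots\Lambda_{j,j}=\Lambda_{j,j}\Lambda_{j+1,j}\cdots\Lambda_{k,j}$, whose inductive step amounts to checking that conjugating $\Lambda_{j,k}$ by $\Lambda_{j,j}\Lambda_{j+1,j}\cdots\Lambda_{k-1,j}$ returns $\Lambda_{k,j}$. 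I expect the only genuine difficulty to be the bookkeeping rather than any idea: one must keep careful track of which generators $s_i$ slide past a given $\Lambda_m$ and of the cyclic shift $t_i\mapsto t_{i-1}$ induced by each $\Lambda_m$, the crucial simplification being precisely that $\Lambda_m$ fixes $\tau$, so that in all the products above these shifts cancel and the braid-word manipulations collapse to the clean identities (a)--(c).
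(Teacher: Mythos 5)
Your proof is correct and follows essentially the same route as the paper's: an induction on $k$ that peels off the last strand, powered by the factorization $\Delta_{\{s_{j+1},\dots,s_k\}}=\Delta_{\{s_{j+1},\dots,s_{k-1}\}}(s_k\cdots s_{j+1})$, the concatenation $(s_k\cdots s_{j+1})\Lambda_{j,k}=\Lambda_k$, and the commutation $\Lambda_k\Lambda_{j,m}=\Lambda_{j,m}\Lambda_k$ for $m\le k-1$, with the second displayed form obtained by the same letter-sliding $s_{m}\Lambda_{m-1,j}=\Lambda_{m,j}$ that the paper uses. The only (cosmetic) difference is that you first identify $\Delta_{X\cup\{s_j\}}=\Lambda_2\cdots\Lambda_k$ and $\Delta_{C_1}=\Lambda_2\cdots\Lambda_{j-1}$ outright and reduce to a single monoid identity, whereas the paper runs the induction on the ribbon itself via $Z=X\setminus\{s_k\}$.
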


\begin{proof}
Let $Y=X\cup \{s_j\} = \{t_{i_1},\ldots,t_{i_p}\}\cup \{s_3,\ldots,s_{k}\}$. Notice that $Y$ is not necessarily saturated but, as it contains more than one element from $T_e$, we have that $\overline{Y}=T_e\cup \{s_3,\ldots,s_k\}$.

Suppose that $k-j=0$. Then $\overline{X}=T_e\cup \{s_3,\ldots,s_{j-1}\}$ and $\overline{X\cup \{s_j\}}=\overline{Y}=T_e\cup \{s_3,\ldots,s_{j}\}$. This implies that $\Delta_X=\Lambda_2\cdots \Lambda_{j-1}$ and that $\Delta_{X\cup \{s_j\}}=\Lambda_2\cdots \Lambda_{j-1}\Lambda_j$. Hence $r_{X,s_j}=\Lambda_j = \Lambda_{j,j}$, as we wanted to show.

Now suppose that $k-j>0$ and that the result holds for smaller values. Let $Z=X\setminus \{s_k\}$. Since $s_j\neq s_k$, we can apply the induction hypothesis to $Z$ and $s_j$, to obtain $r_{Z,s_j}=\Lambda_{j,k-1}\Lambda_{j,k-2}\cdots \Lambda_{j,j} = \Lambda_{j,j} \Lambda_{j+1,j}\cdots \Lambda_{k-1,j}$.

We need to find $r_{X,s_j}$ such that $\Delta_{X}r_{X,s_j}=\Delta_{X\cup \{s_j\}}$. We have:
$$
   \Delta_{X\cup\{s_j\}}=\Delta_{Z\cup\{s_j\}}\Lambda_{k}= \Delta_{Z}r_{Z,s_j} \Lambda_k = \Delta_Z \left(\prod_{m=1}^{k-j}{\Lambda_{j,k-m}}\right)\Lambda_k
$$
Now let $X_2=\{s_{j+1},\ldots,s_k\}$ and $X_1=X\setminus X_2$ be the two connected components of $X$, and let $Z_2=\{s_{j+1},\ldots,s_{k-1}\}$ and $Z_1=Z\setminus Z_2$ be the two connected components of $Z$. We have $X_1=Z_1$ and
$$
  \Delta_X= \Delta_{X_1}\Delta_{X_2} = \Delta_{Z_1}\Delta_{Z_2}r_{Z_2,s_k} = \Delta_{Z}r_{Z_2,s_k} = \Delta_Z (s_k s_{k-1}\cdots s_{j+1}),
$$
where the last equality comes from \autoref{L:ribbons_in_braid_groups}, since $Z_2\cup \{s_j\}\subset S_n$ and it is a connected set.

Now recall how conjugation by $\Lambda_k$ affects the atoms. It follows that $s_i\Lambda_k=\Lambda_k s_i$ for $i\neq k,k+1$, and that $\tau\Lambda_k = \Lambda_k\tau$ for $k\geq 2$. Hence $\Lambda_{j,p}\Lambda_k=\Lambda_k\Lambda_{j,p}$ whenever $j\leq p<k$. Therefore:
$$
   \Delta_{X\cup\{s_j\}} = \Delta_Z \left(\prod_{m=1}^{k-j}{\Lambda_{j,k-m}}\right)\Lambda_k =\Delta_Z \Lambda_k \left(\prod_{m=1}^{k-j}{\Lambda_{j,k-m}}\right)
$$
$$
   =\Delta_Z (s_k s_{k-1}\cdots s_{j+1}) \Lambda_{j,k} \left(\prod_{m=1}^{k-j}{\Lambda_{j,k-m}}\right)
   =\Delta_X \left(\prod_{m=0}^{k-j}{\Lambda_{j,k-m}}\right),
$$
showing that $r_{X,s_j}=\Lambda_{j,k}\Lambda_{j,k-1}\cdots \Lambda_{j,j}$.

Finally, we have:
$$
  r_{X,s_j}=\Lambda_{j,k}(\Lambda_{j,k-1}\cdots \Lambda_{j,j}) = \Lambda_{j,k}(\Lambda_{j,j}\cdots \Lambda_{k-1,j})=
$$
$$
 = \Lambda_{j,j}\sigma_{j+1}\cdots \sigma_{k} (\Lambda_{j,j}\cdots \Lambda_{k-1,j})
 = \Lambda_{j,j} (\sigma_{j+1}\Lambda_{j,j})\cdots (\sigma_k \Lambda_{k-1,j})
 = \Lambda_{j,j} \Lambda_{j+1,j}\cdots \Lambda_{k,j},
$$
where we have used the induction hypothesis and commutation relations only.
\end{proof}

\begin{lemma}\label{L:r_(X_k^((b-1)),t_b)}
For every $k\in \{2,\ldots,n\}$ and every $b\in \{0,\ldots,e-1\}$, let $X=\{t_{b-1},s_3,\ldots,s_k\}$. Then one has:
$$
  r_{X,t_b} = (t_bs_3\cdots s_k)(t_{b+1}s_3\cdots s_{k-1})\cdots (t_{b+k-3} s_3) t_{b+k-2}.
$$
\end{lemma}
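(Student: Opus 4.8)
The plan is to prove the identity by induction on $k$, the essential inputs being the Corran--Picantin description of the standard Garside structure recalled above: the factorisations $\Delta=\Lambda_2\cdots\Lambda_n$ and $\Lambda_k=(s_ks_{k-1}\cdots s_3)\,\tau\,(s_3\cdots s_{k-1}s_k)$, the commutation relations $\Lambda_k s_i=s_i\Lambda_k$ for $i\neq k,k+1$ and $\Lambda_k t_m=t_{m-1}\Lambda_k$ for $k>2$, and the classical factorisation $\Delta_{\mathcal B_k}=\Delta_{\mathcal B_{k-1}}\cdot(g_{k-1}g_{k-2}\cdots g_1)$ of the Garside element of a braid group on $k$ strands with Artin generators $g_1,\dots,g_{k-1}$ and $\mathcal B_{k-1}=\langle g_1,\dots,g_{k-2}\rangle$. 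All subscripts of the $t_m$ are read modulo $e$, and we use $\Delta_X=\Delta_{\overline X}$ throughout.

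First one identifies the two elements occurring in $r_{X,t_b}=\Delta_X^{-1}\Delta_{X\cup\{t_b\}}$. Since $X=\{t_{b-1},s_3,\dots,s_k\}$ contains exactly one element of $T_e$, it is saturated, so $G_X$ is a standard parabolic subgroup by \autoref{P:standard_set_of_atoms}; it is the Artin group on the linear diagram $t_{b-1}-s_3-\cdots-s_k$, i.e.\ the braid group $\mathcal B_k$ on $k$ strands, and $\Delta_X$ is its half-twist. On the other hand $X\cup\{t_b\}$ contains two elements of $T_e$, so $\overline{X\cup\{t_b\}}=T_e\cup\{s_3,\dots,s_k\}$ and $G_{X\cup\{t_b\}}=G_{\overline{X\cup\{t_b\}}}$ is the braid group of $G(e,e,k)$, whose Garside element is $\Delta_{X\cup\{t_b\}}=\Lambda_2\Lambda_3\cdots\Lambda_k$ by the Corran--Picantin description. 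For $k=2$ this gives $\Delta_X=t_{b-1}$ and $\Delta_{X\cup\{t_b\}}=\Lambda_2=\tau=t_{b-1}t_b$, whence $r_{X,t_b}=t_b$, which agrees with the right-hand side of the statement (an empty product of $\ ``t_\ast s_3\cdots s_\ast"$ factors followed by $t_{b+k-2}=t_b$); this is the base case.

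For the inductive step, fix $k\geq 3$ and put $X'=\{t_{b-1},s_3,\dots,s_{k-1}\}$. On one side $\Delta_{X\cup\{t_b\}}=(\Lambda_2\cdots\Lambda_{k-1})\Lambda_k=\Delta_{X'\cup\{t_b\}}\Lambda_k$. On the other side, applying the classical braid factorisation to $G_X$ with $g_1=t_{b-1},\,g_2=s_3,\dots,g_{k-1}=s_k$ (so that the leaf $g_{k-1}=s_k$ is deleted and $\langle g_1,\dots,g_{k-2}\rangle=G_{X'}$) gives $\Delta_X=\Delta_{X'}\cdot\theta_k$ with $\theta_k=s_ks_{k-1}\cdots s_3t_{b-1}$. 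Hence
$$
r_{X,t_b}=\Delta_X^{-1}\Delta_{X\cup\{t_b\}}=\theta_k^{-1}\,\Delta_{X'}^{-1}\Delta_{X'\cup\{t_b\}}\,\Lambda_k=\theta_k^{-1}\,r_{X',t_b}\,\Lambda_k .
$$
By the induction hypothesis $r_{X',t_b}=(t_bs_3\cdots s_{k-1})(t_{b+1}s_3\cdots s_{k-2})\cdots(t_{b+k-4}s_3)\,t_{b+k-3}$, a positive word in the atoms $t_b,\dots,t_{b+k-3},s_3,\dots,s_{k-1}$. Pulling $\Lambda_k$ leftward through this word, using $\Lambda_k s_i=s_i\Lambda_k$ for $3\leq i\leq k-1$ and $t_m\Lambda_k=\Lambda_k t_{m+1}$ (rewrite $\Lambda_k t_{m+1}=t_m\Lambda_k$, valid since $k>2$), we get $r_{X',t_b}\Lambda_k=\Lambda_k\,r''$ with $r''=(t_{b+1}s_3\cdots s_{k-1})(t_{b+2}s_3\cdots s_{k-2})\cdots(t_{b+k-3}s_3)\,t_{b+k-2}$, i.e.\ the same word with all $t$-indices raised by one. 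Finally, from $\Lambda_k=(s_ks_{k-1}\cdots s_3)\,\tau\,(s_3\cdots s_{k-1}s_k)$ with $\tau=t_{b-1}t_b$,
$$
\theta_k^{-1}\Lambda_k=t_{b-1}^{-1}(s_ks_{k-1}\cdots s_3)^{-1}(s_ks_{k-1}\cdots s_3)\,\tau\,(s_3\cdots s_k)=t_{b-1}^{-1}\tau\,(s_3\cdots s_k)=t_b\,s_3s_4\cdots s_k .
$$
Combining, $r_{X,t_b}=\theta_k^{-1}\Lambda_k\,r''=(t_bs_3\cdots s_k)\,r''$, which is precisely the claimed expression, completing the induction.

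The proof is essentially bookkeeping, and the only points requiring care are: (i) choosing the orientation of the classical braid factorisation $\Delta_X=\Delta_{X'}\,\theta_k$ so that the leftover $\theta_k^{-1}\Lambda_k$ cancels down to $t_bs_3\cdots s_k$; and (ii) getting the direction of the conjugation relation $\Lambda_k t_m=t_{m-1}\Lambda_k$ right when moving $\Lambda_k$ past $r_{X',t_b}$, so that the $t$-indices shift up by one and reassemble into $r''$. Both become routine once the Corran--Picantin data of \autoref{P:standard_set_of_atoms} and its surrounding discussion are in hand.
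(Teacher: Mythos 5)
Your proof is correct and follows essentially the same route as the paper's: induction on $k$, the braid-monoid factorisation $\Delta_X=\Delta_{X'}(s_k\cdots s_3t_{b-1})$, the identity $\Delta_{X\cup\{t_b\}}=\Lambda_2\cdots\Lambda_k$, and the conjugation action of $\Lambda_k$ on the atoms. The only cosmetic difference is that the paper invokes the induction hypothesis at parameter $b+1$ and packages the index shift as $\Lambda_k\,r_{X_{2,k-1}^{(b)},t_{b+1}}=r_{X_{2,k-1}^{(b-1)},t_b}\,\Lambda_k$, whereas you invoke it at $b$ and perform the same shift afterwards by pulling $\Lambda_k$ through the word letter by letter.
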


\begin{proof}
For every $j\in \{2,\ldots,n\}$ and every $a\in\{0,\ldots,e-1\}$, let us denote $X_{2,j}^{(a)}=\{t_a,s_3,\ldots,s_j\}$.

We will show the result by induction in $k$. When $k=2$, we need to show that $r_{X,t_b}=t_b$. But this is clear, since in this case $X=\{t_{b-1}\}$ and $X\cup \{t_b\}=\{t_{b-1},t_b\}$, hence
 $$
    \Delta_{\{t_{b-1}\}} t_b = t_{b-1} t_b = \tau = \Delta_{\{t_{b-1},t_b\}}.
 $$

Let us then assume that $k>2$, and that the result is true for smaller values of $k$. We know that
$$
   \Delta_{X_{2,k}^{(b-1)}} = \Delta_{X_{2,k-1}^{(b-1)}} (s_k\cdots s_3 t_{b-1})
$$
by \autoref{L:ribbons_in_braid_groups}, since this is a relation in a braid monoid and $X=X_{2,k}^{(b-1)}$ is connected. Also, by induction hypothesis we have:
$$
   (t_bs_3\cdots s_k)(t_{b+1}s_3\cdots s_{k-1})\cdots (t_{b+k-3} s_3) t_{b+k-2} = (t_bs_3\cdots s_k) r_{X_{2,k-1}^{(b)},t_{b+1}}.
$$
Finally, recalling how the element $\Lambda_k$ conjugates the atoms, we see that
$$
   \Lambda_k r_{X_{2,k-1}^{(b)},t_{b+1}} = r_{X_{2,k-1}^{(b-1)},t_{b}} \Lambda_k.
$$
Therefore:
\begin{eqnarray*}
& & \Delta_{X_{2,k}^{(b-1)}}  (t_bs_3\cdots s_k)(t_{b+1}s_3\cdots s_{k-1})\cdots (t_{b+k-3} s_3) t_{b+k-2}
\\
& = & \Delta_{X_{2,k-1}^{(b-1)}} (s_k\cdots s_3 t_{b-1}) (t_bs_3\cdots s_k) r_{X_{2,k-1}^{(b)},t_{b+1}}
\\
& = & \Delta_{X_{2,k-1}^{(b-1)}} \Lambda_k r_{X_{2,k-1}^{(b)},t_{b+1}}
\\
& = & \Delta_{X_{2,k-1}^{(b-1)}} r_{X_{2,k-1}^{(b-1)},t_{b}} \Lambda_k
\\
& = & \Delta_{X_{2,k-1}^{(b-1)}\cup \{t_b\}} \Lambda_k
\\
& = & \Lambda_2\cdots \Lambda_{k-1} \Lambda_k
\\
& = & \Delta_{X_{2,k}^{(b-1)}\cup \{t_b\}}
\end{eqnarray*}
This shows the result.
\end{proof}

Now that we have the explicit description of the elements $r_{X,u}$ which will be relevant for our purposes, we can describe the minimal conjugators joining positive elements. There is one case which is already clear, as it follows from the results in the previous section.

\begin{proposition}\label{P:B(e,e,n)_minimal_conjugators_are_ribbons}
Let $x\in B^+$ and let $X=\operatorname{supp}(x)$. Suppose that $X\cap T_e$ has at most one element. For every $u\in \mathcal A\setminus X$ such that $\#((X\cup \{u\})\cap T_e)\leq 1$ one has $\rho_{u}(x)=r_{X,u}$.
\end{proposition}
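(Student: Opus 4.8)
The plan is to reduce \autoref{P:B(e,e,n)_minimal_conjugators_are_ribbons} to the already-established case of the classical braid group, \autoref{P:braid_minimal_conjugators_are_ribbons}. The key observation is that, when $\#((X\cup\{u\})\cap T_e)\le 1$, the set $X\cup\{u\}$ of atoms lies inside a submonoid of $B^+$ which is isomorphic (as a Garside structure, with the same simple elements, left-divisibility, lcm's and support function) to the positive braid monoid on some number of strands. Indeed, if $X\cup\{u\}$ contains no $t_i$, it is contained in $\langle s_3,\dots,s_n\rangle^+$, which is literally a braid monoid of type $A$ by relations (4) and (5) of \autoref{sect:G(e,e,n)}; if $X\cup\{u\}$ contains exactly one $t_a$, then by relations (2) and (3) the submonoid generated by $\{t_a\}\cup S_n$ is again a braid monoid (with $t_a$ playing the role of a terminal Artin generator, as already used in \autoref{sect:G(e,e,n)} where for $e=1$ one recovers type $A_{n-1}$). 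So first I would make precise this identification, citing \cite{CORPIC} and the structure of simple elements recalled just before \autoref{P:standard_set_of_atoms}, and in particular noting that the standard parabolic subgroup $G_{X\cup\{u\}}$ of $B$ coincides with the analogous standard parabolic subgroup of this braid submonoid, and that the two Garside structures restrict to the same one.

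Next I would invoke \autoref{P:convexity} together with the definition of $\rho_u(x)$ as the $\preccurlyeq$-minimal element of $\mathcal C_u(x)=\{\alpha\in G;\ u\preccurlyeq\alpha,\ x^\alpha\in G^+\}$. The crucial point is that both the computation of $\rho_u(x)$ and the ribbon $r_{X,u}=\Delta_X^{-1}\Delta_{X\cup\{u\}}$ are entirely determined by the converging prefixes $c_0=u\preccurlyeq c_1\preccurlyeq\cdots$, where $c_{j+1}=c_j\vee x\backslash c_j$, together with the pre-minimal conjugators $s_{j,i}=(a_1\cdots a_i)\backslash c_j$; these depend only on the atoms occurring in a word for $x$ and on the lcm's of simple elements with atoms, all of which take place inside the braid submonoid once we know $\mathrm{Supp}(x)=X$ is contained in it. Since $x$, being positive with support $X$, lies in $G_X\subset G_{X\cup\{u\}}$, all of these intermediate elements stay inside the braid submonoid, so the computation there is the same as the computation in $B$. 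Therefore $\rho_u(x)$ computed in $B$ equals $\rho_u(x)$ computed in the braid submonoid, and the latter equals $r_{X,u}$ by \autoref{P:braid_minimal_conjugators_are_ribbons} applied to that braid group. One should be slightly careful that $\rho_u(x)$ is a \emph{minimal positive conjugator} (not merely the minimum of $\mathcal C_u(x)$) in the same way in both groups; but this too is read off from the converging-prefix computation, which agrees.

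The main obstacle I anticipate is justifying rigorously that the notion of support, and more importantly the closure operation $X\mapsto\overline X$, is compatible with passing to the braid submonoid. In $B$ itself $\overline X$ may differ from $\overline X$ computed in the submonoid a priori, because $\overline X=\mathrm{Div}(\Delta_X)\cap\mathcal A$ uses the \emph{full} atom set $\mathcal A$; however, under the hypothesis $\#(X\cap T_e)\le 1$ one has (as noted after \autoref{P:standard_set_of_atoms}) that $X$ with at most one $t_i$ is saturated precisely when it would be saturated as a subset of the braid submonoid's atoms, since the only way to acquire new $t_i$'s is to have at least two of them already, and no atom $s_k$ can divide $\Delta_X$ unless it is ``adjacent and connected'' in exactly the braid-theoretic sense. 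So $\mathrm{Supp}$ and $r_{X,u}$ genuinely agree. I would handle this by a short lemma: for $X\cup\{u\}$ with at most one $t_i$, the divisors of $\Delta_{X\cup\{u\}}$ in $B$ are exactly its divisors inside the braid submonoid, which follows from the last clause of Godelle's theorem (\cite{GODELLE2007}, quoted in this paper) that $G_{\delta}^+$ is closed under prefixes and suffixes in $G$, applied to $\delta=\Delta_{X\cup\{u\}}$. With that in hand, the rest is a direct appeal to \autoref{P:braid_minimal_conjugators_are_ribbons}.

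\begin{proof}
Write $Z=X\cup\{u\}$ and observe that, by hypothesis, $\#(Z\cap T_e)\le 1$. If $Z\cap T_e=\varnothing$ then $Z\subset S_n=\{s_3,\dots,s_n\}$, and the submonoid $\langle S_n\rangle^+$ is, by relations (4) and (5) of \autoref{sect:G(e,e,n)}, a classical positive braid monoid of type $A$. If $Z\cap T_e=\{t_a\}$ then, by relations (2) and (3) of \autoref{sect:G(e,e,n)}, the submonoid generated by $\{t_a\}\cup S_n$ is again a classical positive braid monoid, with $t_a$ playing the role of a terminal Artin generator. In either case call this braid submonoid $B^+_Z\subset B^+$, with corresponding braid group $B_Z\subset B$; the set of atoms of $B_Z$ is exactly $Z\cup(\text{the remaining }s_k\text{'s adjacent to }Z)$, and since $B_Z$ is a standard parabolic subgroup of $B$, by Godelle's theorem (\cite{GODELLE2007}, quoted above) $B_Z^+$ is closed under positive prefixes and suffixes in $B^+$, and the gcd and lcm of elements of $B_Z$ agree whether computed in $B_Z$ or in $B$.

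We first record that $\mathrm{Supp}$ and the ribbon $r_{X,u}$ are the same whether computed in $B$ or in $B_Z$. Indeed $\Delta_Z$, being the join of the atoms of $Z$, lies in $B_Z$, and equals the join of these atoms computed in $B_Z$ by the previous paragraph; hence $\mathrm{Div}_B(\Delta_Z)=\mathrm{Div}_{B_Z}(\Delta_Z)$ by the closure of $B_Z^+$ under divisors in $B^+$. The same applies to $\Delta_X$. In particular $\overline X$ and $r_{X,u}=\Delta_X^{-1}\Delta_Z$ are the same in both groups, and since the hypothesis forces $X$ to have at most one $t_i$, no new $t_i$ is introduced in $\overline X$, so $\overline X$ as computed in $B_Z$ really is the support of $X$ in $B$.

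Now $x$ is a positive element of $B$ with $\mathrm{Supp}(x)=X$, hence $x\in G_X\subset G_Z\subset B_Z^+$. By the description (in \autoref{S:checking_Garside_structure}) of the computation of $\rho_u(x)$ via the converging prefixes $c_0=u\preccurlyeq c_1\preccurlyeq\cdots$, with $c_{j+1}=c_j\vee(x\backslash c_j)$, and via the pre-minimal conjugators $s_{j,i}=(a_1\cdots a_i)\backslash c_j$ for a fixed atom-word $x=a_1\cdots a_r$, all these elements are obtained from $u$ and from $x$ by repeated lcm-with-atom and right-complement operations inside $B$. Since $x\in B_Z^+$ and $u$ is an atom of $B_Z$, and since $B_Z^+$ is closed under prefixes, suffixes, lcm's and right-complements in $B^+$, every $c_j$ and every $s_{j,i}$ lies in $B_Z^+$, and the computation carried out in $B$ coincides step by step with the same computation carried out in $B_Z$. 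Therefore $\rho_u(x)$, computed in $B$, equals $\rho_u(x)$ computed in $B_Z$; likewise the property that $\rho_u(x)$ is a minimal positive conjugator is read off from this same computation and so holds in $B$ iff it holds in $B_Z$.

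Finally, $B_Z$ is a classical braid group and $u\in\mathcal A(B_Z)\setminus X$, so by \autoref{P:braid_minimal_conjugators_are_ribbons} applied in $B_Z$ we get $\rho_u(x)=r_{X,u}$ computed in $B_Z$. Combining with the two previous paragraphs, $\rho_u(x)=r_{X,u}$ in $B$, as claimed.
\end{proof}
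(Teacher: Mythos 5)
Your proof is correct and takes essentially the same route as the paper, whose proof is a one-line reduction to \autoref{P:braid_minimal_conjugators_are_ribbons} via the observation that $X\cup\{u\}$ lies in a submonoid isomorphic to a braid monoid with the same Garside structure. You simply spell out the details the paper leaves implicit (that supports, ribbons, lcm's, right-complements and hence the converging-prefix computation of $\rho_u(x)$ all agree between $B$ and the standard parabolic submonoid, by Godelle's closure properties), which is a faithful and careful elaboration of the intended argument.
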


\begin{proof}
This follows immediately from \autoref{P:braid_minimal_conjugators_are_ribbons}, since in this case $X\cup \{u\}$ is contained in a monoid isomorphic to a braid monoid, with the same Garside structure.
\end{proof}

Now we need to see what happens when the atoms involved in the computations are not included into a braid monoid. We will need the following results:

\begin{lemma}\label{L:Two_squares}
Given $3\leq p\leq q \leq n$ and $a, b\in \{0,\ldots,e-1\}$ with $a\neq b$, the following are LCM-diagrams:
$$
\xymatrix@C=12mm@R=12mm{
  \ar[r]^{S_{q,p}} \ar[d]_{S_{2,p}^{(a)}}
&  \ar[d]^{S_{2,p+1}^{(a)}}
\\ \ar[r]_{S_{q,p-1}^{(a)}}
&
}
\qquad
\xymatrix@C=12mm@R=12mm{
  \ar[r]^{S_{q,2}^{(a)}} \ar[d]_{t_b}
&  \ar[d]^{{{{\scriptstyle t_{a+1} \atop \phantom{x}} \atop \scriptstyle s_3} \atop \phantom{x}} \atop \scriptstyle t_{b+1}}
\\ \ar[r]_{\Lambda_{q,3}}
&
}
$$
\end{lemma}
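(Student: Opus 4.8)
\textbf{Proof strategy for \autoref{L:Two_squares}.}
The plan is to verify each of the two LCM-diagrams by the standard method used throughout this section: exhibit a positive common multiple of the two arrows leaving the top-left corner, check that it is exactly one atom longer on each side than each of those arrows (so it has minimal length and is therefore the join), and then read off the complementary arrows. Since all elements involved are simple (products of prefixes of the $\Lambda_k$'s), length considerations are decisive: a common multiple of a positive element $u$ and an atom $a$ with $a\not\preccurlyeq u$ must have length $\ell(u)+1$ to be the join, and conversely any common multiple of that length is the join.

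For the first diagram, I would argue exactly as in \autoref{L:Sjk_vee_atom}: the element $S_{2,p}^{(a)}=t_a s_3\cdots s_p$ can only begin with $t_a$ (the only "consecutive" crossing available at the start), so $S_{q,p}\not\preccurlyeq S_{2,p}^{(a)}$, hence $S_{q,p}\vee S_{2,p}^{(a)}$ is strictly longer. Then I would push $S_{q,p}$ past $S_{2,p}^{(a)}$ using the braid relations among the $s_i$ together with the commutation $s_i\tau=\tau s_i$ and $s_i t_a = t_a s_i$ for $i\geq 4$ (here all indices involved are $\geq 3$, so no $t$-interaction beyond these occurs). The computation is the same shape as the one producing $s_m S_{j,k}=S_{j,k}s_{m-1}$ in \autoref{L:Sjk_vee_atom}, and it yields $S_{q,p}\,S_{2,p}^{(a)} = S_{2,p+1}^{(a)}\,S_{q,p-1}^{(a)}$ (reading the top-then-right path equals the left-then-bottom path), which has the minimal length $\ell(S_{2,p}^{(a)})+1$ and admits both $S_{q,p}$ and $S_{2,p}^{(a)}$ as prefixes; hence it is the join, and the diagram is correct.

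For the second diagram, the atom $t_b$ (with $b\neq a$) is not a prefix of $S_{q,2}^{(a)}=s_q\cdots s_3 t_a$, whose only possible initial letter is $s_q$ when $q>3$, or $s_3$ when $q=3$; in either case $t_b$ is not a valid prefix, so again the join is strictly longer than $S_{q,2}^{(a)}$. The key single-letter moves are $t_b s_3 = s_3$-side rewritings and, crucially, the relation $t_b t_a = t_{b-1}t_{a-1}=\cdots$ coming from relation (1) of the presentation of $G(e,e,n)$ together with $s_3 t_i s_3 = t_i s_3 t_i$ (relation (2)). I would compute $t_b\cdot S_{q,2}^{(a)} = t_b s_q\cdots s_4 s_3 t_a$, commute $t_b$ past $s_q,\ldots,s_4$, then handle the interaction of $t_b$ with $s_3 t_a$ via relation (2) to produce $s_3 t_{a+1}$ on one side and $t_{b+1}$ at the end, arriving at $t_b\,S_{q,2}^{(a)} = \Lambda_{q,3}\cdot (t_{b+1})(s_3 t_{a+1})$ — which is the claimed right-hand vertical arrow $t_{b+1}$ stacked appropriately (the displayed stacked notation in the lemma encodes the word $t_{b+1}\,s_3\,t_{a+1}$, i.e.\ a prefix of the relevant $\Lambda$). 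As this common multiple again has minimal possible length and both $t_b$ and $S_{q,2}^{(a)}$ as prefixes, it is the join, completing the second diagram.

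The main obstacle I anticipate is purely bookkeeping: correctly tracking the cyclic shift of the $t$-indices (modulo $e$) under the various commutations and under the relation $s_3 t_i s_3 = t_i s_3 t_i$, and confirming that the resulting word on the right is genuinely a simple element (a legal prefix of the appropriate $\Lambda_k$) rather than merely a positive word of the right length. I would double-check the index arithmetic against the conjugation rules $\Lambda_2 t_i = t_{i-2}\Lambda_2$ and $\Lambda_k t_i = t_{i-1}\Lambda_k$ ($k>2$) recalled just before this lemma, since those rules are exactly the "shadow" of the single-letter moves being used here, and any discrepancy would signal an error in the hand computation.
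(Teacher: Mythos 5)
Your overall plan (exhibit a common multiple, then certify minimality) is the right kind of idea, but the minimality criterion you rely on is false here, and this is a genuine gap rather than a bookkeeping issue. You assert that the join of $u$ and an atom $a$ with $a\not\preccurlyeq u$ has length $\ell(u)+1$, and that each of the two joins in the lemma is "one atom longer" than the corresponding factor. Neither join has that form. In the first diagram the join $S_{q,p}\vee S_{2,p}^{(a)}=S_{2,p}^{(a)}S_{q,p-1}^{(a)}=S_{q,p}S_{2,p+1}^{(a)}$ has length $q+1$, which exceeds $\ell(S_{2,p}^{(a)})=p-1$ by $q-p+2$; in the second, $t_b\vee S_{q,2}^{(a)}=t_b\Lambda_{q,3}$ has length $q+2$, which is \emph{three} atoms longer than $S_{q,2}^{(a)}$ (already $t_b\vee s_3=t_bs_3t_b$ and $t_b\vee t_a=\tau$ show that joins with atoms can grow by more than one letter in this monoid). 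Consequently your claimed identities do not even balance in length: $S_{q,p}S_{2,p}^{(a)}$ has length $q$ while $S_{2,p+1}^{(a)}S_{q,p-1}^{(a)}$ has length $q+2$, and $t_bS_{q,2}^{(a)}$ has length $q$ while $\Lambda_{q,3}t_{b+1}s_3t_{a+1}$ has length $q+4$. The sketch for the second square also has $t_b$ effectively commuting through $s_3t_a$, which it does not do (this is exactly where the join picks up its two extra letters), and the resulting right-hand arrow comes out in the wrong order ($t_{a+1}s_3t_{b+1}$, not $t_{b+1}s_3t_{a+1}$).

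For comparison, the paper's proof of the first square does not use length minimality at all: it observes that $t_a,s_3,\dots,s_q$ generate a submonoid isomorphic to a classical braid monoid on $q$ strands, identifies $S_{2,p}^{(a)}$ and $S_{q,p}$ with simple braids via their sets of crossing strand pairs, and uses the fact that the lcm of two simple braids is the simple braid whose crossing set is the closure of the union — here one must add exactly the crossing of strands $1$ and $q$. For the second square it decomposes $S_{q,2}^{(a)}=S_{q,4}\,s_3\,t_a$ and concatenates the elementary LCM-squares $t_b\vee s_i$ ($i\geq 4$, commutation), $t_b\vee s_3$ (from relation (2)), $t_b\vee t_a=\tau$ (from relation (1)) and $s_3\vee t_{b+1}$, reading off the bottom row as $S_{q,4}\cdot s_3t_b\cdot t_{b+1}s_3=\Lambda_{q,3}$. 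If you want to repair your argument, the concatenation-of-elementary-squares route is the one to follow; the "one atom longer" shortcut is only available when every elementary square involved is a pure commutation, which is not the case in either diagram.
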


\begin{proof}
The first calculation is done in the monoid $\langle t_a,s_3,\ldots,s_q\rangle$, which is isomorphic to a braid monoid on $q$ strands $\langle \sigma_1,\ldots,\sigma_{q-1}\rangle$. The element  $S_{2,p}^{(a)}$ corresponds to a braid in which the first strand crosses once the strands $2,3,\ldots,p$, and the element $S_{q,p}$ corresponds to the braid in which the $q$-th strand crosses once the strands $q-1, q-2, \ldots, p-1$. Both are simple braids, and their least common multiple will also be simple, thus determined by the crossings of its strands. The least common multiple must contain all the mentioned crossings (which are all different), but one readily sees that no simple element can contain exactly that set of crossings. Adding the crossing of strands $1$ and $q$ will produce a simple braid, which will then be the least common multiple. It corresponds to the element $S_{2,p}^{(a)} S_{q,p-1}^{(a)} = S_{q,p} S_{2,p+1}^{(a)}$.

For the second claim, we write $S_{q,2}^{(a)}$ as $S_{q,4} s_3 t_a$ (where the first factor would be trivial if $q=3$). Then we have the following LCM-diagram, formed joining known LCM-diagrams:
$$
\xymatrix@C=12mm@R=12mm{
   \ar[r]^{S_{q,4}} \ar[dd]_{t_b}
&  \ar[r]^{s_3} \ar[dd]_{t_b}
&  \ar[r]^{t_a} \ar[d]_{t_b}
&  \ar[d]^{t_{a+1}}
\\
&
&  \ar[r]_{t_{b+1}} \ar[d]_{s_3}
&  \ar[d]^{\scriptsize s_3 \atop \scriptsize t_{b+1}}
\\
   \ar[r]_{S_{q,4}}
&  \ar[r]_{s_3t_b}
&  \ar[r]_{t_{b+1}s_3}
&
}
$$
The product of the arrows in the bottom row is precisely $\Lambda_{q,3}$ (as $t_bt_{b+1}=\tau$), hence the result is shown.
\end{proof}

Now let us define the elements which will correspond to pre-minimal conjugators in one of the cases we need to study. Recall the elements $\sigma_{i,j,k}$ defined in the previous section. In the case of
standard Garside structure of $G(e,e,n)$,
 we can define the same elements for $2\leq i\leq j \leq k$, taking into account that if $i=2$, a superscript $(a)$ is needed, meaning that $s_2=t_a$. Hence:
$$
  \sigma_{2,j,k}^{(a)} = S_{j,k}S_{j-1,k-1}\cdots S_{2,k-j+2}^{(a)} = S_{j,2}^{(a)}S_{j+1,3}\cdots S_{k,k-j+2}.
$$
Also, given $3\leq i\leq j\leq k\leq n$, let
$$
    \overline{\Lambda}_{i,j,k} = \Lambda_{j,i}\Lambda_{j+1,i}\cdots \Lambda_{k,i}.
$$
And, for any $p\in \{2,\ldots, n\}$ and every $a,b\in \{0,\ldots,e-1\}$ with $a\neq b$, we define:
$$
   \Omega_{p}^{(a,b)} = S_{2,p}^{(a+1)}S_{2,p-1}^{(a+2)}\cdots S_{2,3}^{(a+p-2)} S_{2,2}^{(b+p-2)}.
$$
To avoid confusion, we notice that $\Omega_{2}^{(a,b)}=t_b$, that $\Omega_{3}^{(a,b)}=t_{a+1} s_3 t_{b+1}$, and that $\Omega_{4}^{(a,b)}=t_{a+1} s_3 s_4 t_{a+2} s_3 t_{b+2}$.

We already know, from \autoref{L:sigmaijk_vee_atom}, which is the least common multiple of $\sigma_{i,j,k}$ and an atom in a braid monoid. We need to show the only missing case:

\begin{lemma}~\label{L:sigma2jk_vee_tb}
Let $3\leq j \leq k\leq n$ and $a,b\in \{0,\ldots,e-1\}$ with $a\neq b$. The following is an LCM-diagram:
$$
\xymatrix@C=12mm@R=12mm{
  \ar[d]_{\sigma_{2,j,k}^{(a)}} \ar[r]^{t_b}
&  \ar[d]^{\overline{\Lambda}_{3,j,k}}
\\ \ar[r]_{\Omega_{k-j+3}^{(a,b)}}
&
}
$$
Moreover, $\sigma_{2,j,k}^{(a)}\preccurlyeq \overline{\Lambda}_{3,j,k}$.
\end{lemma}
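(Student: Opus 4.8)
The statement to prove is \autoref{L:sigma2jk_vee_tb}: for $3\le j\le k\le n$ and $a\ne b$, the square with top arrow $t_b$, left arrow $\sigma_{2,j,k}^{(a)}$, right arrow $\overline{\Lambda}_{3,j,k}$ and bottom arrow $\Omega_{k-j+3}^{(a,b)}$ is an LCM-diagram, and moreover $\sigma_{2,j,k}^{(a)}\preccurlyeq\overline{\Lambda}_{3,j,k}$. The natural approach is induction on $k-j\ge 0$, imitating exactly the inductive structure used in \autoref{L:sigmaijk_vee_atom} and \autoref{L:ribbons_in_B(e,e,n)}: write $\sigma_{2,j,k}^{(a)}$ as a product of the slanted strips $S_{j,k}S_{j-1,k-1}\cdots S_{2,k-j+2}^{(a)}$ (or, in the other reading, $S_{j,2}^{(a)}S_{j+1,3}\cdots S_{k,k-j+2}$), and build the LCM-square as a concatenation of the elementary LCM-diagrams already available. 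The key elementary facts to feed in are: the diagrams of \autoref{L:Sjk_vee_atom}/\autoref{L:Skj_vee_atom} for crossings of an $S_{\cdot,\cdot}$-strip with a single $s_i$; the two diagrams of \autoref{L:Two_squares} (the first handling how $S_{q,p}$ pushes past $S_{2,p}^{(a)}$, the second handling how a bottom strip $S_{q,2}^{(a)}$ meets $t_b$ and produces $\Lambda_{q,3}$); and the conjugation rules $\Lambda_k s_i = s_i\Lambda_k$ for $i\ne k,k{+}1$, $\Lambda_k t_i = t_{i-1}\Lambda_k$ for $k>2$, $\Lambda_2 t_i = t_{i-2}\Lambda_2$, together with $\tau\Lambda_k=\Lambda_k\tau$.

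The base case $k=j$: here $\sigma_{2,j,j}^{(a)}=S_{j,2}^{(a)}=s_j s_{j-1}\cdots s_3 t_a$, $\overline{\Lambda}_{3,j,j}=\Lambda_{j,3}$, $\Omega_{3}^{(a,b)}=t_{a+1}s_3 t_{b+1}$, and one checks directly $S_{j,2}^{(a)}\, t_b = \Lambda_{j,3}\, \Omega_3^{(a,b)}$ is the least common multiple. This is essentially the second diagram of \autoref{L:Two_squares} with $q=j$ once one notes $\Lambda_{j,3}\cdot(t_{a+1}s_3 t_{b+1}) = \Lambda_{j,3}\cdot t_{a+1}\cdot(s_3 t_{b+1})$ and matches it against the concatenated diagram drawn there; the minimality of the common multiple follows from the fact (as in the proof of \autoref{L:Two_squares}) that in the sub-monoid $\langle t_a,t_b,s_3,\dots,s_j\rangle$ no smaller simple element can contain the required set of crossings. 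For the inductive step I would peel off the outermost strip: write $\sigma_{2,j,k}^{(a)} = S_{j,k}\cdot\sigma_{2,j-1,k-1}^{(a)}$ in one reading and $\sigma_{2,j,k}^{(a)}=\sigma_{2,j,k-1}^{(a)}\cdot S_{k,k-j+2}$ in the other, apply the induction hypothesis to the inner $\sigma$, and then use the elementary strip-vs-atom diagrams plus the $\Lambda$-conjugation relations to absorb the extra strip and rearrange the result into $\overline{\Lambda}_{3,j,k}\cdot\Omega_{k-j+3}^{(a,b)}$. This is exactly the bookkeeping carried out for $r_{X,s_j}$ in \autoref{L:ribbons_in_B(e,e,n)}, and in fact one expects $\overline{\Lambda}_{3,j,k}=r_{X,s_j}$ for the appropriate $X$, so much of that lemma's computation can be reused verbatim. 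The moreover-clause $\sigma_{2,j,k}^{(a)}\preccurlyeq\overline{\Lambda}_{3,j,k}$ should drop out either as a byproduct of the concatenation (each intermediate left-complement is seen to be positive and of the right shape, exactly as \autoref{L:ribbon_prefixes_in_braids} gives $\sigma$-prefix inclusions for braids) or from an independent short argument: both are simple elements, and a direct comparison of which strands/atoms occur shows every generator-crossing in $\sigma_{2,j,k}^{(a)}$ already occurs in $\overline{\Lambda}_{3,j,k}$.

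The main obstacle I anticipate is \emph{not} the combinatorics of concatenating the LCM-diagrams — that is routine once the elementary pieces are in hand — but rather the careful tracking of the superscripts $(a)$ and $(b)$ and their shifts under conjugation by successive $\Lambda_k$'s, since each $\Lambda_k$ ($k>2$) shifts a $t$-index by $1$ while $\Lambda_2$ shifts by $2$, and the definition of $\Omega_p^{(a,b)}$ already encodes a delicate pattern of shifts ($S_{2,p}^{(a+1)}, S_{2,p-1}^{(a+2)},\dots, S_{2,2}^{(b+p-2)}$, with the final factor switching from $a$-shifts to a $b$-shift). Getting the indices to line up so that the right-hand side of the inductive identity is literally $\overline{\Lambda}_{3,j,k}\cdot\Omega_{k-j+3}^{(a,b)}$ and not some index-shifted variant is where the proof could go wrong, and where I would spend the most care; a secondary subtlety is verifying \emph{minimality} of the common multiple at each stage (i.e.\ that no cancellation shortens it), which I would handle by the same simple-element/crossing-count argument used in the proof of \autoref{L:Two_squares} rather than by normal-form manipulation.
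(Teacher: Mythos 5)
Your strategy for the LCM-diagram itself is essentially the paper's: the paper first proves, as an inner claim, that for $2\leq p<q\leq n$ the square with top $S_{q,p}^{(a)}$, left $\Omega_p^{(a,b)}$, right $\Omega_{p+1}^{(a,b)}$ and bottom $\Lambda_{q,3}$ is an LCM-diagram (by stacking the two diagrams of \autoref{L:Two_squares} together with commutation squares), and then concatenates these horizontally along the factorization $\sigma_{2,j,k}^{(a)}=S_{j,2}^{(a)}S_{j+1,3}\cdots S_{k,k-j+2}$, starting from $\Omega_2^{(a,b)}=t_b$. Your ``peel off the outermost strip'' induction on $k-j$ is the same computation read one column at a time, and your base case $k=j$ is indeed the second diagram of \autoref{L:Two_squares}. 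One small correction: the ribbon of \autoref{L:ribbons_in_B(e,e,n)} is $\overline{\Lambda}_{j,j,k}$, not $\overline{\Lambda}_{3,j,k}$ (these differ unless $j=3$), so that lemma's computation cannot be ``reused verbatim'' to produce the right-hand column here; the element $\overline{\Lambda}_{3,j,k}$ only arises as the bottom row $\Lambda_{j,3}\Lambda_{j+1,3}\cdots\Lambda_{k,3}$ of the concatenation.

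The genuine gap is the ``moreover'' clause $\sigma_{2,j,k}^{(a)}\preccurlyeq\overline{\Lambda}_{3,j,k}$. Neither of your two suggested routes works as stated. The LCM-diagram by itself only yields $\sigma_{2,j,k}^{(a)}\preccurlyeq t_b\,\overline{\Lambda}_{3,j,k}$, which is vacuous, so the claim is not a ``byproduct of the concatenation.'' And the fallback via ``both are simple elements, compare which crossings occur'' relies on the divisibility criterion for simple braids from \cite{EPSTEINETAL} (\autoref{L:ribbon_prefixes_in_braids}), which is a fact about the braid monoid; $\overline{\Lambda}_{3,j,k}$ contains several copies of $\tau=t_it_{i+1}$ and does not live in any braid submonoid $\langle t_c,s_3,\dots,s_n\rangle$, and no analogous crossing criterion for simple elements of the Corran--Picantin monoid is available in the paper. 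The correct short argument is algebraic cancellation: from the LCM identity $\sigma_{2,j,k}^{(a)}\,\Omega_{k-j+3}^{(a,b)}=t_b\,\overline{\Lambda}_{3,j,k}$, commute $t_b$ through using $t_b\Lambda_{m,3}=\Lambda_{m,3}t_{b+1}$ to get $t_b\,\overline{\Lambda}_{3,j,k}=\overline{\Lambda}_{3,j,k}\,t_{b+k-j+1}$, note that by definition $\Omega_{k-j+3}^{(a,b)}\succcurlyeq t_{b+k-j+1}$, write $\Omega_{k-j+3}^{(a,b)}=P\,t_{b+k-j+1}$, and cancel $t_{b+k-j+1}$ on the right to obtain $\sigma_{2,j,k}^{(a)}P=\overline{\Lambda}_{3,j,k}$. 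You should replace your two tentative suggestions with this argument (or something equivalent); as written, this part of the proof is missing.
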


\begin{proof}
First, we claim that the following is an LCM-diagram, where $2\leq p<q\leq n$:
$$
\xymatrix@C=12mm@R=12mm{
  \ar[r]^{S_{q,p}^{(a)}} \ar[d]_{\Omega_{p}^{(a,b)}}
&  \ar[d]^{\Omega_{p+1}^{(a,b)}}
\\ \ar[r]_{\Lambda_{q,3}}
&
}
$$
The claim is shown by the following concatenation of LCM-diagrams:
$$
\xymatrix@C=18mm@R=12mm{
  \ar[r]^{S_{q,p+2}} \ar[d]_{S_{2,p}^{(a+1)}}
& \ar[r]^{S_{p+1,p}} \ar[d]_{S_{2,p}^{(a+1)}}
& \ar[d]^{S_{2,p+1}^{(a+1)}}
\\  \ar[d]_{S_{2,p-1}^{(a+2)}}
& \ar[r]^{S_{p+1,p-1}} \ar[d]_{S_{2,p-1}^{(a+2)}}
& \ar[d]^{S_{2,p}^{(a+2)}}
\\  \ar@{.>}[d]
& \ar[r]^{S_{p+1,p-2}} \ar@{.>}[d]
& \ar@{.>}[d]
\\  \ar[d]_{S_{2,3}^{(a+p-2)}}
& \ar[r]^{S_{p+1,3}} \ar[d]_{S_{2,3}^{(a+p-2)}}
& \ar[d]^{S_{2,4}^{(a+p-2)}}
\\  \ar[d]_{t_{b+p-2}}
& \ar[r]^{S_{p+1,2}^{(a+p-2)}} \ar[d]_{t_{b+p-2}}
& \ar[d]^{{{{\scriptstyle t_{a+p-1} \atop \phantom{x}} \atop \scriptstyle s_3} \atop \phantom{x}} \atop \scriptstyle t_{b+p-1}}
\\ \ar[r]_{S_{q,p+2}}
& \ar[r]_{\Lambda_{p+1,3}}
&
}
$$
The first column holds by commutativity (since the involved atoms are not consecutive), and it only appears if $q\geq p+2$. The squares on the second column hold from \autoref{L:Two_squares}. Then one just needs to notice that the products of the arrows at each edge of this diagram are precisely the elements of our claim, so the claim holds.

Now we finish the proof thanks to the following LCM-diagram:
$$
\xymatrix@C=18mm@R=12mm{
   \ar[d]_{\Omega_2^{(a,b)}} \ar[r]^{S_{j,2}^{(a)}}
&  \ar[d]_{\Omega_3^{(a,b)}} \ar[r]^{S_{j+1,3}}
&  \ar[d]_{\Omega_4^{(a,b)}} \ar@{.>}[r]
&  \ar[d]_{\Omega_{k-j+2}^{(a,b)}} \ar[r]^{S_{k,k-j+2}}
&  \ar[d]^{\Omega_{k-j+3}^{(a,b)}}
\\ \ar[r]_{\Lambda_{j,3}}
&  \ar[r]_{\Lambda_{j+1,3}}
&  \ar@{.>}[r]
&  \ar[r]_{\Lambda_{k,3}}
&
}
$$
Each square is an instance of the above claim. Since product of the arrows in the top row is $\sigma_{2,j,k}^{(a)}$, and the leftmost vertical arrow is $\Omega_2^{(a,b)}=t_b$, this corresponds to the LCM-diagram of the statement.

Now let us show that $\sigma_{2,j,k}^{(a)}\preccurlyeq \overline{\Lambda}_{3,j,k}$. We just proved that the LCM-diagram of the statement holds, hence $\sigma_{2,j,k}^{(a)}\Omega_{k-j+3}^{(a,b)} = t_b \overline{\Lambda}_{3,j,k}$. Now, for every $m\in\{j,\ldots,k\}$, we know that $t_b\Lambda_{m,3}=\Lambda_{m,3}t_{b+1}$. Hence:
$$
t_b \overline{\Lambda}_{3,j,k} = t_b (\Lambda_{j,3}\Lambda_{j+1,3}\cdots \Lambda_{k,3}) =
(\Lambda_{j,3}\Lambda_{j+1,3}\cdots \Lambda_{k,3}) t_{b+k-j+1} = \overline{\Lambda}_{3,j,k} t_{b+k-j+1}.
$$
On the other hand, by definition: $\Omega_{k-j+3}^{(a,b)}\succcurlyeq t_{b+k-j+1}$, so $\Omega_{k-j+3}^{(a,b)}=P t_{b+k-j+1}$ for some positive element $P$. Then
$$
\sigma_{2,j,k}^{(a)}P t_{b+k-j+1}= \sigma_{2,j,k}^{(a)}\Omega_{k-j+3}^{(a,b)} = t_b \overline{\Lambda}_{3,j,k} = \overline{\Lambda}_{3,j,k} t_{b+k-j+1}.
$$
Cancelling $t_{b+k-j+1}$, we obtain that $\sigma_{2,j,k}^{(a)}\preccurlyeq \overline{\Lambda}_{3,j,k}$, as we wanted to show.
\end{proof}

The elements $\sigma_{i,j,k}$ and $\overline{\Lambda}_{i,j,k}$ will be the pre-minimal conjugators in the case we are about to study, so we need to know the least common multiples of the latter with suitable atoms:

\begin{lemma}\label{L:atom_vee_overline_Lambda}
Let $3\leq m\leq j \leq d\leq n$. The following are LCM-diagrams, where in the first one $m\neq j$, in the second one $d+1\neq j$, and in the third one $i\neq m,j,d+1$.
$$
\xymatrix@C=18mm@R=12mm{
  \ar[d]_{\overline{\Lambda}_{m,j,d}} \ar[r]^{s_m}
&  \ar[d]^{\overline{\Lambda}_{m+1,j,d}}
\\ \ar[r]_{S_{m+d-j+1,m}}
&
}
\qquad
\xymatrix@C=18mm@R=12mm{
  \ar[d]_{\overline{\Lambda}_{m,j,d}} \ar[r]^{s_{d+1}}
&  \ar[d]^{\overline{\Lambda}_{m,j,d+1}}
\\ \ar[r]_{\Lambda_{d+1,m+1}}
&
}
\qquad
\xymatrix@C=12mm@R=12mm{
  \ar[d]_{\overline{\Lambda}_{m,j,d}} \ar[r]^{s_i}
&  \ar[d]^{\overline{\Lambda}_{m,j,d}}
\\ \ar[r]_{s_c}
&
}
$$
In the last diagram, $s_c$ is some atom in $S_n$. Moreover, in each diagram the vertical arrow on the left is a prefix of the vertical arrow on the right.
\end{lemma}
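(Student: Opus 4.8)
The plan is to prove each of the three LCM-diagrams of Lemma~\ref{L:atom_vee_overline_Lambda} by an explicit concatenation of the elementary LCM-diagrams already established for the atoms $s_i$ and for the elements $S_{j,k}$, $S_{k,j}$ and $\Lambda_{p,q}$, exactly in the spirit of the proofs of Lemmas~\ref{L:Sjk_vee_atom}, \ref{L:Skj_vee_atom}, \ref{L:sigmaijk_vee_atom} and \ref{L:sigma2jk_vee_tb}. Recall that $\overline{\Lambda}_{m,j,d}=\Lambda_{j,m}\Lambda_{j+1,m}\cdots\Lambda_{d,m}$, and that each $\Lambda_{p,m}=s_ps_{p-1}\cdots s_3\,\tau\,s_3\cdots s_{m-1}s_m=S_{p,m}^{\phantom{(a)}}\!\!$ suitably read, so that the whole of $\overline{\Lambda}_{m,j,d}$ is a product of atoms $s_i$ ($i\geq 3$) together with copies of $\tau=t_at_{a+1}$. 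The only initial atoms of $\overline{\Lambda}_{m,j,d}$ are $s_j$ (the first letter of $\Lambda_{j,m}$ when $j>m$, which here is excluded in the first diagram) — more precisely one must determine which atoms are prefixes of $\overline{\Lambda}_{m,j,d}$, and conjugation by $\Lambda_p$'s tells us how $s_i$ for $i\notin\{p,p+1\}$ and $\tau$ commute past each $\Lambda_{p,q}$.

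The third diagram is the easiest: for $i\neq m,j,d+1$ one checks that $s_i$ is not a prefix of $\overline{\Lambda}_{m,j,d}$, so $s_i\vee\overline{\Lambda}_{m,j,d}$ has length one more than $\overline{\Lambda}_{m,j,d}$; since conjugation by each factor $\Lambda_{p,q}$ sends $s_i$ to some atom and the relevant indices are disjoint enough that $s_i$ commutes with or is transported through each factor (using the commutation/conjugation rules $\Lambda_k s_i=s_i\Lambda_k$ for $i\neq k,k+1$ together with the braid relations inside each $S_{p,m}$-block), one gets $s_i\,\overline{\Lambda}_{m,j,d}=\overline{\Lambda}_{m,j,d}\,s_c$ for a suitable atom $s_c\in S_n$; this is the searched LCM and $\overline{\Lambda}_{m,j,d}$ is a prefix of it by length count. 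For the first diagram I would peel off $\overline{\Lambda}_{m,j,d}$ factor by factor from the left: $s_m\vee\Lambda_{j,m}$ is computed by the third LCM-diagram of Lemma~\ref{L:Sjk_vee_atom} (applied with the appropriate $S_{j,k}$-block and $s_{j-1}\leftrightarrow s_m$), which introduces a trailing $S_{?,m}$-factor; then one pushes $s_m$ past the remaining factors $\Lambda_{j+1,m},\dots,\Lambda_{d,m}$ using commutation, collecting at the end the new factors into $\overline{\Lambda}_{m+1,j,d}$ on top and the tail $S_{m+d-j+1,m}$ at the bottom — concatenating these elementary squares exactly as in the proof of Lemma~\ref{L:sigmaijk_vee_atom}. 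The second diagram is obtained symmetrically, peeling off from the right: $s_{d+1}\vee\Lambda_{d,m}$ is given by the last LCM-diagram of Lemma~\ref{L:Sjk_vee_atom} (the $s_{k+1}$-case), which turns $\Lambda_{d,m}$ into $\Lambda_{d+1,m}$-type data and leaves a tail; pushing $s_{d+1}$ leftward past $\Lambda_{d-1,m},\dots,\Lambda_{j,m}$ by commutation collects everything into $\overline{\Lambda}_{m,j,d+1}$ on the right, with bottom arrow $\Lambda_{d+1,m+1}$. In each case the prefix claim "left vertical arrow $\preccurlyeq$ right vertical arrow" is immediate from the shape of the LCM-diagram, since the left arrow followed by the top arrow equals the right arrow preceded by a positive element, so $\overline{\Lambda}_{m,j,d}\preccurlyeq\overline{\Lambda}_{m',j,d'}$ (alternatively one invokes the criterion that $\Delta_X\preccurlyeq\Delta_Y$ whenever $X\subset Y$, together with $\overline{\Lambda}_{m,j,d}=\Delta_{\{s_m,\dots\}}^{-1}\Delta_{\{s_m,\dots\}\cup\cdots}$ read as a ribbon).

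The main obstacle I expect is purely bookkeeping: keeping track of exactly which atom (among the $s_i$ with shifted index, and among the $t_a$ with index shifted modulo $e$) each of $s_i$, $\tau$, and the $S_{p,q}$-blocks becomes after being transported through the successive factors $\Lambda_{p,m}$ — i.e.\ getting the subscripts in $S_{m+d-j+1,m}$, $\Lambda_{d+1,m+1}$ and $s_c$ right. This is the same kind of careful index-chasing already carried out in Lemmas~\ref{L:ribbons_in_B(e,e,n)} and \ref{L:sigma2jk_vee_tb}, and no genuinely new idea is needed; one must simply verify that at each elementary square the relevant indices land outside the "forbidden" set $\{m,j,d+1\}$ (or $\{k,k+1\}$ for the block currently being processed), which is exactly guaranteed by the hypotheses $m\neq j$, $d+1\neq j$ and $i\neq m,j,d+1$ in the three respective diagrams, so that all the commutation and conjugation rules quoted above apply.
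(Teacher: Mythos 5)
Your overall strategy -- concatenating elementary LCM-diagrams and chasing indices, exactly as in the proofs of \autoref{L:Sjk_vee_atom}--\autoref{L:sigma2jk_vee_tb} -- is the same as the paper's, and the final answers you predict are correct. However, two of your steps would fail as written. First, your justification of the prefix claim is not valid: an LCM-diagram with left arrow $\alpha$, top arrow $\beta$, bottom $\beta'$ and right $\alpha'$ only gives $\alpha\beta'=\beta\alpha'$, i.e.\ $\alpha\preccurlyeq\beta\alpha'$, which does not yield $\alpha\preccurlyeq\alpha'$. The paper has to work for this: for the first diagram it proves $s_m\overline{\Lambda}_{m+1,j,d}=\overline{\Lambda}_{m+1,j,d}s_m$ (using $s_m\Lambda_{k,m+1}=\Lambda_{k,m+1}s_m$ for $k\geq j>m$) together with $S_{m+d-j+1,m}=S_{m+d-j+1,m+1}s_m$, and then cancels $s_m$ on the right; analogous cancellation arguments are needed for the second diagram. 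Your fallback via ribbons does not apply either, since $\overline{\Lambda}_{m,j,d}$ is only identified as a ribbon when $m=j$ (\autoref{L:ribbons_in_B(e,e,n)}).

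Second, the computations do not reduce to the braid-monoid lemmas you cite, because the genuinely new interactions all involve $\tau$. The key ingredient $s_{d+1}\vee\Lambda_{d,m}$ is not an instance of the $s_{k+1}$-case of \autoref{L:Sjk_vee_atom}: it requires the new square $s_3\vee\tau=s_3\tau s_3\tau=\tau s_3\tau s_3$ (from the relation $s_3t_is_3=t_is_3t_i$), and the resulting complement of $s_{d+1}$ is the entire block $\Lambda_{d,m}\Lambda_{d+1,m}$, so the LCM has length $1+\ell(\Lambda_{d,m})+\ell(\Lambda_{d+1,m})$ -- the ``one letter longer'' picture is badly off here, and $s_{d+1}$ does not stay an atom while being pushed leftward. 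Relatedly, in the third diagram with $j<i\leq d$, the atom $s_i$ is \emph{not} transported through each factor: its LCM with the single factor $\Lambda_{i-1,m}$ already equals $\Lambda_{i-1,m}\Lambda_{i,m}s_{m+1}$, and $s_i$ only re-emerges as an atom after crossing the \emph{pair} $\Lambda_{i-1,m}\Lambda_{i,m}$ (Equation~(\ref{E:Lambdas}) in the paper, itself a byproduct of the $s_{d+1}\vee\Lambda_{d,m}$ computation). Similarly, in the first diagram the complement of $s_m$ grows at each step (the building block is $S_{p,m}\vee\Lambda_{q,m}$ with right complement $S_{p+1,m}$), rather than remaining an atom pushed by commutation. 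Your length-count conclusion for the third diagram is fine once these identities are available, but the identities are the actual content of the proof and are not supplied by the lemmas you invoke.
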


\begin{proof}
First we observe that the following is an LCM-diagram whenever $3\leq m\leq p < q$:
$$
\xymatrix@C=18mm@R=12mm{
   \ar[d]_{S_{p,m}} \ar[r]^{S_{q,3}}
&  \ar[d]^{S_{p+1,m+1}} \ar[r]^{\tau}
&  \ar[d]^{S_{p+1,m+1}} \ar[r]^{S_{3,m-1}}
&  \ar[d]^{S_{p+1,m+1}} \ar[r]^{s_m}
&  \ar[d]^{S_{p+1,m}}
\\ \ar[r]_{S_{q,3}}
&  \ar[r]_{\tau}
&  \ar[r]_{S_{3,m-1}}
&  \ar[r]_{s_m s_{m+1}}
&
}
$$
The top row represents $\Lambda_{p,m}$ and the bottom row represents $\Lambda_{p,m+1}$. Now we can concatenate the above diagram for different values of $p$ and $q$ (taking into account that $m<j$), to obtain:
$$
\xymatrix@C=20mm@R=12mm{
   \ar[d]_{S_{m,m}} \ar[r]^{\Lambda_{j,m}}
&  \ar[d]_{S_{m+1,m}} \ar[r]^{\Lambda_{j+1,m}}
&  \ar[d]_{S_{m+2,m}} \ar@{.>}[r]
&  \ar[d]_{S_{m+d-j,m}} \ar[r]^{\Lambda_{d,m}}
&  \ar[d]^{S_{m+d-j+1,m}}
\\ \ar[r]_{\Lambda_{j,m+1}}
&  \ar[r]_{\Lambda_{j+1,m+1}}
&  \ar@{.>}[r]
&  \ar[r]_{\Lambda_{d,m+1}}
&
}
$$
This shows the first diagram in the statement. Let us see that the vertical arrow on the left (of the first diagram in the statement) is a prefix of the vertical arrow on the right, that is, $\overline{\Lambda}_{m,j,d}\preccurlyeq \overline{\Lambda}_{m+1,j,d}$. From the diagram, we have that $\overline{\Lambda}_{m,j,d}S_{m+d-j+1,m} =  s_m\overline{\Lambda}_{m+1,j,d}$. Since $m<j$ in this case, we know that $s_m\Lambda_{k,m+1}=\Lambda_{k,m+1}s_m$ for every $k\in\{j,\ldots,d\}$. Then $s_m \overline{\Lambda}_{m+1,j,d} = \overline{\Lambda}_{m+1,j,d} s_m$. On the other hand, $S_{m+d-j+1,m}= S_{m+d-j+1,m+1}s_m$. Therefore $\overline{\Lambda}_{m,j,d}S_{m+d-j+1,m+1} s_m= \overline{\Lambda}_{m+1,j,d}s_m$ and then, cancelling $s_m$, we obtain that $\overline{\Lambda}_{m,j,d}\preccurlyeq \overline{\Lambda}_{m+1,j,d}$.

Now suppose that $d+1\neq j$. It is clear that for $p=j,\ldots,d-1$ one has $s_{d+1} \vee \Lambda_{p,m} = s_{d+1}\Lambda_{p,m} = \Lambda_{p,m} s_{d+1}$, since $s_{d+1}$ commutes with all atoms involving $\Lambda_{p,m}$. We then need to calculate the least common multiple of $s_{d+1}$ and $\Lambda_{d,m}$. This is done in the following LCM-diagram:
$$
  \xymatrix@C=18mm@R=12mm{
   \ar[dddd]_{s_{d+1}} \ar[r]^{S_{d,3}}
&  \ar[d]_{S_{d+1,4}} \ar[r]^{\tau}
&  \ar[d]^{S_{d+1,4}} \ar[r]^{S_{3,m}}
&  \ar[dd]^{S_{d+1,3}}
\\
&
   \ar[ddd]_{s_3} \ar[r]_{\tau}
&  \ar[d]^{s_3}
&
\\
&
& \ar[d]^{\tau} \ar[r]_{S_{4,m+1}}
& \ar[d]^{\tau}
\\
&
& \ar[d]^{s_3} \ar[r]_{S_{4,m+1}}
& \ar[d]^{S_{3,m+1}}
\\ \ar[r]_{S_{d,3}S_{d+1,4}}
& \ar[r]_{\tau s_3 \tau}
& \ar[r]_{S_{4,m+1}S_{3,m}}
&
}
$$
The squares in the above diagram either have already been considered, or they correspond to commuting atoms, or can be easily verified in the braid monoid, with the exception of the lower central one: $s_3\vee \tau = s_3\tau s_3 \tau = \tau s_3 \tau s_3$. These equalities can also be easily checked by decomposing them into suitable LCM-diagrams.

Notice that the lowest row corresponds to the following element:
\begin{eqnarray*}
S_{d,3} \underline{S_{d+1,4} \tau} s_3 \underline{\tau S_{4,m+1}} S_{3,m}
& = & S_{d,3} \tau \underline{S_{d+1,3} S_{4,m+1}} \tau S_{3,m}
\\ & = & S_{d,3} \tau S_{3,m} S_{d+1,3} \tau S_{3,m}
\\ & = & \Lambda_{d,m} \Lambda_{d+1,m}.
\end{eqnarray*}
Therefore, we have:
$$
\xymatrix@C=18mm@R=12mm{
   \ar[d]_{s_{d+1}} \ar[r]^{\Lambda_{j,m}}
&  \ar[d]_{s_{d+1}} \ar[r]^{\Lambda_{j+1,m}}
&  \ar[d]_{s_{d+1}} \ar@{.>}[r]
&  \ar[d]_{s_{d+1}} \ar[r]^{\Lambda_{d,m}}
&  \ar[d]^{\Lambda_{d+1,m+1}}
\\ \ar[r]_{\Lambda_{j,m}}
&  \ar[r]_{\Lambda_{j+1,m}}
&  \ar@{.>}[r]
&  \ar[r]_{\Lambda_{d,m}\Lambda_{d+1,m}}
&
}
$$
This shows the second LCM-diagram in the statement. In this case it is trivial that the vertical arrow on the left (of the second diagram in the statement) is a prefix of the vertical arrow on the right, since $\overline{\Lambda}_{m,j,d} \Lambda_{d+1,m}=\overline{\Lambda}_{m,j,d+1}$ by definition.

Notice that we also have shown that $s_{d+1}\vee \Lambda_{d,m} = \Lambda_{d,m} \Lambda_{d+1,m}s_{m+1}$. Since this element admits $\Lambda_{d,m} \Lambda_{d+1,m}$ as a prefix, it follows that:
\begin{equation}\label{E:Lambdas}
   s_{d+1}\vee \Lambda_{d,m} \Lambda_{d+1,m} = s_{d+1}\Lambda_{d,m} \Lambda_{d+1,m} = \Lambda_{d,m} \Lambda_{d+1,m}s_{m+1}
\end{equation}

We will now suppose that either $i<m$ or $i>d+1$. It is easily seen that, in this case, $s_i\vee \Lambda_{p,m}=s_i\Lambda_{p,m}=\Lambda_{p,m}s_i$ for every $p\in \{j,\ldots,d\}$. Therefore $s_i\vee \overline{\Lambda}_{m,j,d}=s_i\overline{\Lambda}_{m,j,d} =  \overline{\Lambda}_{m,j,d}s_i$. Hence, the third diagram in the statement holds in this case, with $c=i$

If $m<i<j$, one has that $s_i\vee \Lambda_{p,m}=s_i\Lambda_{p,m}=\Lambda_{p,m}s_{i+1}$ for every $p\in \{j,\ldots,d\}$. This implies that $s_i\vee \overline{\Lambda}_{m,j,d}=s_i\overline{\Lambda}_{m,j,d} =  \overline{\Lambda}_{m,j,d}s_{i+j-d+1}$. So this case also holds, with $c=i+j-d+1$.

Finally, we suppose that $m\leq j<i<d+1$. The statement in this case is shown by the following LCM-diagram:
$$
\xymatrix@C=12mm@R=12mm{
   \ar[d]_{s_i} \ar[r]^{\Lambda_{j,m}}
&  \ar[d]_{s_i} \ar[r]^{\Lambda_{j+1,m}}
&  \ar[d]_{s_i} \ar@{.>}[r]
&  \ar[d]_{s_i} \ar[r]^{\Lambda_{i-1,m}}
&  \ar[r]^{\Lambda_{i,m}}
&  \ar[d]^{s_{m+1}} \ar[r]^{\Lambda_{i+1,m}}
&  \ar[d]^{s_{m+2}} \ar@{.>}[r]
&  \ar[d]^{s_{m+d-i}} \ar[r]^{\Lambda_{d,m}}
&  \ar[d]^{s_{m+d-i+1}}
\\ \ar[r]_{\Lambda_{j,m}}
&  \ar[r]_{\Lambda_{j+1,m}}
&  \ar@{.>}[r]
&  \ar[r]_{\Lambda_{i-1,m}}
&  \ar[r]_{\Lambda_{i,m}}
&  \ar[r]_{\Lambda_{i+1,m}}
&  \ar@{.>}[r]
&  \ar[r]_{\Lambda_{d,m}}
&
}
$$
The central rectangle comes from Equation~(\ref{E:Lambdas}), and all other squares have already been considered. This shows the final case, with $c=m+d-i+1$.

Since the vertical arrows on the left and on the right of the third diagram in the statement coincide, the former is a prefix of the latter.
\end{proof}

The last technical result that we will need concerns the following elements, for $3\leq j \leq k\leq n$ and $b\in \{0,\ldots,e-1\}$:
$$
   \Phi_{k,j}^{(b)}=S_{2,k}^{(b)}S_{2,k-1}^{(b+1)}\cdots S_{2,j}^{(b+k-j)}.
$$
These elements will be the pre-minimal conjugators in the final case we will consider. Hence, we need to study their least common multiples with suitable atoms.

\begin{lemma}\label{L:Phi}
For $3\leq j \leq k\leq n$ and $b\in \{0,\ldots,e-1\}$, if we denote $s_2^{(b-1)}=t_{b-1}$ and $s_c^{(b-1)}=s_c$ for $c\geq 3$, the following are LCM-diagrams, where $i\neq k-j+2,k+1$:
$$
\xymatrix@C=18mm@R=12mm{
  \ar[d]_{\Phi_{k,j}^{(b)}} \ar[r]^{s_{k-j+2}^{(b-1)}}
&  \ar[d]^{\Phi_{k,j-1}^{(b)}}
\\ \ar[r]_{S_{2,j}^{(b+k-j+1)}}
&
}
\qquad
\xymatrix@C=18mm@R=12mm{
  \ar[d]_{\Phi_{k,j}^{(b)}} \ar[r]^{s_{k+1}^{(b-1)}}
&  \ar[d]^{\Phi_{k+1,j+1}^{(b)}}
\\ \ar[r]_{S_{k+1,j}}
&
}
\qquad
\xymatrix@C=12mm@R=12mm{
  \ar[d]_{\Phi_{k,j}^{(b)}} \ar[r]^{s_i^{(b-1)}}
&  \ar[d]^{\Phi_{k,j}^{(b)}}
\\ \ar[r]_{s_c}
&
}
$$
In the last diagram, $s_c$ is some atom in $S_n$. Moreover, in each diagram the vertical arrow on the left is a prefix of the vertical arrow on the right.
\end{lemma}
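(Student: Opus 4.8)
The plan is to proceed by induction on the number $k-j+1$ of hooks making up $\Phi_{k,j}^{(b)}$, in exact parallel with the proofs of \autoref{L:sigmaijk_vee_atom} and \autoref{L:atom_vee_overline_Lambda}: each of the three LCM-diagrams will be produced by concatenating, from left to right, a single-hook LCM-diagram with the inductive diagram for a shorter $\Phi$. The structural input is the factorization $\Phi_{k,j}^{(b)}=S_{2,k}^{(b)}\,\Phi_{k-1,j}^{(b+1)}$ obtained by peeling off the leftmost hook $S_{2,k}^{(b)}=t_b s_3\cdots s_k$ (together with the dual $\Phi_{k,j}^{(b)}=\Phi_{k,j+1}^{(b)}\,S_{2,j}^{(b+k-j)}$); when $j=k$ only one hook remains and the assertion becomes a single-hook computation.

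First I would record the single-hook LCM-diagrams, i.e.\ $S_{2,m}^{(c)}\vee a$ for an atom $a$, in the regimes relevant to the three cases. If $a=s_i$ is not adjacent to $\{t_c,s_3,\dots,s_m\}$ it commutes with $S_{2,m}^{(c)}$, and if it is adjacent but lies strictly inside, i.e.\ $3\le i\le m$, it passes through shifted one step down, $s_i S_{2,m}^{(c)}=S_{2,m}^{(c)}s_{i-1}$ for $4\le i\le m$ and $s_3 S_{2,m}^{(c)}=S_{2,m}^{(c)}t_c$; the former facts are the braid-submonoid computations of \autoref{L:Sjk_vee_atom} and the fact that conjugation by $\Delta_X$ permutes atoms, the relation $s_3 t_c s_3=t_c s_3 t_c$ handles the jump into the $t$'s, and \autoref{L:Two_squares} covers the remaining mixed $t$/$s$ squares. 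If $a=s_{m+1}$ it is absorbed, lengthening the hook, by the fifth diagram of \autoref{L:Sjk_vee_atom}. Finally, for $a=t_{c-1}$ one computes $t_{c-1}\vee S_{2,m}^{(c)}=S_{2,m}^{(c)}S_{2,m}^{(c+1)}$ using $t_{c-1}t_c=\tau$ and the relations between $\tau$, the $t_i$ and $s_3$. All of these are immediate consequences of the defining relations of the standard monoid recalled at the start of this section.

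Then I would assemble the three diagrams and close the induction. In the first diagram the atom $s_{k-j+2}^{(b-1)}$, after traversing the hook $S_{2,k}^{(b)}$, becomes exactly the atom $s_{(k-1)-j+2}^{((b+1)-1)}$ that the inductive first diagram for $\Phi_{k-1,j}^{(b+1)}$ expects (with $i=3$ producing $t_b$, and the degenerate case $i=2$, so $j=k$, being the $t_{c-1}$ single-hook computation above), so the recursion closes and the bottom arrow accumulates to $S_{2,j}^{(b+k-j+1)}$; the second and third diagrams are built in the same mechanical way, the exclusion $i\ne k-j+2,\,k+1$ in the third diagram being precisely the hypothesis under which the atom is merely commuted or relabelled at each hook and never absorbed. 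For the ``moreover'' clauses: in the first diagram $\Phi_{k,j}^{(b)}\preccurlyeq\Phi_{k,j-1}^{(b)}$ holds trivially because $\Phi_{k,j-1}^{(b)}=\Phi_{k,j}^{(b)}S_{2,j-1}^{(b+k-j+1)}$ by definition; in the third the two vertical arrows coincide; in the second one takes the identity $\Phi_{k,j}^{(b)}S_{k+1,j}=s_{k+1}\Phi_{k+1,j+1}^{(b)}$ furnished by the diagram, commutes $s_{k+1}$ rightward across $\Phi_{k+1,j+1}^{(b)}$ (all of whose hooks involve indices $\le k$) and cancels, exactly as in the proof of the prefix clause of \autoref{L:atom_vee_overline_Lambda}.

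The only real difficulty is bookkeeping: tracking which atom the pushed letter becomes after each successive hook, matching the superscript shifts $b,b+1,b+2,\dots$ against the subscript shifts, and handling the degenerate cases ($i=2$, so $s_2^{(b-1)}=t_{b-1}$ and $\Phi$ is a single hook; $i=3$, where the relabelling jumps from $s_3$ to a $t$). There is no conceptual obstacle once the single-hook LCM-diagrams are in hand; the concatenation is then entirely mechanical, as in every other lemma of this subsection.
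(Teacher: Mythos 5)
Your strategy is the same as the paper's: decompose $\Phi_{k,j}^{(b)}$ into its hooks $S_{2,k}^{(b)},S_{2,k-1}^{(b+1)},\dots$, push the atom through one hook at a time via single-square LCM-diagrams, and concatenate; the paper simply writes out the fully unrolled diagrams instead of phrasing the concatenation as an induction on the number of hooks, and it proves the prefix clauses exactly as you propose (trivial for the first and third diagrams, commute-and-cancel for the second). The first diagram, the superscript/subscript bookkeeping, and the degenerate cases $k-j+2\in\{2,3\}$ all work out as you describe.

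There are, however, two places where your guiding picture --- one atom in, one atom out per hook --- breaks down, so the inductive step as you state it does not close. First, in the second diagram the residue of $s_{k+1}$ after the first hook is $S_{k+1,k}=s_{k+1}s_k$, and it grows by one letter per hook; the inductive hypothesis (an LCM of a shorter $\Phi$ with an \emph{atom}) therefore never applies to $\Phi_{k-1,j}^{(b+1)}$, whichever end you peel from. What is needed is the LCM of a descending chain with a hook, namely $S_{k+1,m+1}\vee S_{2,m}^{(a)}=S_{k+1,m+1}S_{2,m+1}^{(a)}=S_{2,m}^{(a)}S_{k+1,m}$, which is absent from your single-hook catalogue; the paper proves precisely this auxiliary diagram before assembling the second square. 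Second, in the third diagram with $3\le i<k-j+2$ the residue is \emph{not} merely relabelled at every hook: once it has descended to $t_{b+i-3}$, the next hook $S_{2,k-i+2}^{(b+i-2)}$ begins with a different generator of $T_e$, and by your own computation $t_{c-1}\vee S_{2,m}^{(c)}=S_{2,m}^{(c)}S_{2,m}^{(c+1)}$ the residue blows up to the full hook $S_{2,k-i+2}^{(b+i-1)}$ before collapsing to the single atom $s_{k-i+2}$ against the following hook $S_{2,k-i+1}^{(b+i-1)}$, which is a prefix of it. So a two-hook block must be treated at once there, exactly as in the paper's proof, and the claim that the exclusion $i\ne k-j+2,k+1$ guarantees the atom is ``never absorbed'' is not correct as stated. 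Both repairs are routine and your catalogue already contains their first halves, but they are genuinely needed for the argument to compile.
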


\begin{proof}
Let us suppose first that $k-j+2=2$. In this case $k=j$, so $s_{k-j+2}^{(b-1)}=t_{b-1}$ and, if $j>3$, $\Phi_{k,j}^{(b)}=\Phi_{k,k}^{(b)}=S_{2,k}^{(b)}=t_b s_3 S_{4,k}$. Hence we have:
$$
\xymatrix@C=12mm@R=12mm{
   \ar[dd]_{t_{b-1}} \ar[r]^{t_b}
&  \ar[dd]_{t_{b+1}} \ar[r]^{s_3}
&  \ar[d]_{t_{b+1}} \ar[r]^{S_{4,k}}
&  \ar[d]^{t_{b+1}}
\\
& &  \ar[d]_{s_{3}} \ar[r]^{S_{4,k}}
& \ar[d]^{S_{3,k}}
\\
\ar[r]_{t_b}
&  \ar[r]_{s_3 t_{b+1}}
&  \ar[r]_{S_{4,k}S_{3,k-1}}
&
}
$$
We see that the bottom row is: $t_b s_3t_{b+1}S_{4,k}S_{3,k-1} = t_bs_3 S_{4,k} t_{b+1} S_{3,k-1} = S_{2,k}^{(b)}S_{2,k-1}^{(b+1)} = \Phi_{k,k-1}^{(b)}= \Phi_{k,j-1}^{(b)}$. And the rightmost column is $t_{b+1}S_{3,k}= S_{2,k}^{(b+1)}$. Hence, the first diagram in the statement holds in this case. If $j=k=3$ we only have the two leftmost rectangles, but the result also holds, as $t_bs_3t_{b+1}=S_{2,3}^{(b)}S_{2,2}^{(b-1)}=\Phi_{3,2}^{(b)}=\Phi_{k,j-1}^{(b)}$. And the rightmost column would be $t_{b+1}s_3=S_{2,3}^{(b+1)}$, as desired.

Now suppose that $k-j+2=3$. In this case we have
$$
\xymatrix@C=12mm@R=12mm{
   \ar[d]_{s_3} \ar[r]^{t_bs_3}
&  \ar[d]_{t_{b}} \ar[r]^{S_{4,k}}
&  \ar[d]_{t_{b}} \ar[rr]^{t_{b+1}S_{3,k-1}}
& & \ar[d]^{S_{2,k-1}^{(b+2)}}
\\
\ar[r]_{t_bs_3}
&  \ar[r]_{S_{4,k}}
&  \ar[rr]_{\Phi_{k-1,k-2}^{(b+1)}}
& &
}
$$
The square on the right hand side comes from the previous case, and the bottom row equals $\Phi_{k,k-2}^{(b)}$, so in this case the first diagram in the statement also holds.

Now, if $i=k-j+2>3$, we have:
$$
\xymatrix@C=18mm@R=12mm{
   \ar[d]_{s_i} \ar[r]^{S_{2,k}^{(b)}}
&  \ar[d]_{s_{i-1}} \ar[r]^{S_{2,k-1}^{(b+1)}}
&  \ar[d]_{s_{i-2}} \ar@{.>}[r]
&  \ar[d]_{s_3} \ar[r]^{S_{2,k-i+3}^{(b+i-3)}}
&  \ar[d]_{t_{b+i-3}} \ar[r]^{S_{2,k-i+2}^{(b+i-2)}}
&  \ar[d]^{S_{2,k-i+2}^{(b+i-1)}}
\\
   \ar[r]_{S_{2,k}^{(b)}}
&  \ar[r]_{S_{2,k-1}^{(b+1)}}
&  \ar@{.>}[r]
&  \ar[r]_{S_{2,k-i+3}^{(b+i-3)}}
&  \ar[r]_{\Phi_{k-i+2,k-i+1}^{(b+i-2)}}
&
}
$$
Notice that the top row is $\Phi_{k,j}^{(b)}$ since $k-i+2=j$. The bottom row is then $\Phi_{k,j-1}^{(b)}$, and the rightmost column is $S_{2,j}^{(b+k-j+1)}$.  Hence, the first diagram in the statement holds in every case.

We see that the vertical arrow on the left hand side of the first diagram is a prefix of the vertical arrow on the right since, by definition, $\Phi_{k,j}^{(b)} S_{2,j-1}^{(b+k-j+1)} = \Phi_{k,j-1}^{(b)}$.

When $i<k-j+2$, we have a diagram similar to the previous one, but where the vertical arrow becomes an element of $T_e$ before arriving to the last square:
$$
\xymatrix@C=12mm@R=12mm{
   \ar[d]_{s_i} \ar[r]^{S_{2,k}^{(b)}}
&  \ar[d]_{s_{i-1}} \ar[r]^{S_{2,k-1}^{(b+1)}}
&  \ar[d]_{s_{i-2}} \ar@{.>}[r]
&  \ar[d]_{s_3} \ar[r]^{S_{2,k-i+3}^{(b+i-3)}}
&  \ar[d]^{t_{b+i-3}} \ar[r]^{S_{2,k-i+2}^{(b+i-2)}}
&  \ar[r]^{S_{2,k-i+1}^{(b+i-1)}}
&  \ar[d]_{s_{k-i+2}} \ar@{.>}[r]
&  \ar[d]_{s_{k-i+2}} \ar[r]^{S_{2,j}^{(b+k-j)}}
&  \ar[d]^{s_{k-i+2}}
\\
   \ar[r]_{S_{2,k}^{(b)}}
&  \ar[r]_{S_{2,k-1}^{(b+1)}}
&  \ar@{.>}[r]
&  \ar[r]_{S_{2,k-i+3}^{(b+i-3)}}
&  \ar[rr]_{\Phi_{k-i+2,k-i+1}^{(b+i-2)}}
& & \ar@{.>}[r]
&  \ar[r]_{S_{2,j}^{(b+k-j)}}
&
}
$$
In this case, the top and the bottom rows are both equal to $\Phi_{k,j}^{(b)}$, so the third diagram in the statement holds, with $c=k-i+2$.

If $k-j+2<i<k+1$, the diagram is easier:
$$
\xymatrix@C=18mm@R=12mm{
   \ar[d]_{s_i} \ar[r]^{S_{2,k}^{(b)}}
&  \ar[d]_{s_{i-1}} \ar[r]^{S_{2,k-1}^{(b+1)}}
&  \ar[d]_{s_{i-2}} \ar@{.>}[r]
&  \ar[d]_{s_{i-k+j}} \ar[r]^{S_{2,j}^{(b+k-j)}}
&  \ar[d]^{s_{i-k+j-1}}
\\
   \ar[r]_{S_{2,k}^{(b)}}
&  \ar[r]_{S_{2,k-1}^{(b+1)}}
&  \ar@{.>}[r]
&  \ar[r]_{S_{2,j}^{(b+k-j)}}
&
}
$$
Hence, the third diagram in the statement holds for $c=i-k+j-1$, in this case.

If $i>k+1$ the element $s_i$ commutes with every atom in $\Phi_{k,j}^{(b)}$, hence $s_i\vee \Phi_{k,j}^{(b)} = s_i \Phi_{k,j}^{(b)}= \Phi_{k,j}^{(b)} s_i$, and the result is true also in this case, with $c=1$. Hence, the third diagram of the statement holds in every case, and it is trivial that the vertical arrow on the left is a prefix of the vertical arrow on the right, as they coincide.

Finally, suppose that $i=k+1$. We will first see that, for every $m=3,\ldots,k$ and every $a\in \{1,\ldots,e-1\}$, we have that $S_{k+1,m+1}\vee S_{2,m}^{(a)}=S_{k+1,m+1}S_{2,m+1}^{(a)}=S_{2,m}^{(a)}S_{k+1,m}$. This is shown in the following diagram:
$$
\xymatrix@C=18mm@R=12mm{
   \ar[d]_{S_{k+1,m+2}} \ar[r]^{S_{2,m-1}^{(a)}}
&  \ar[d]^{S_{k+1,m+2}} \ar[r]^{s_m}
&  \ar[d]^{S_{k+1,m+2}}
\\
   \ar[d]_{s_{m+1}} \ar[r]^{S_{2,m-1}^{(a)}}
&  \ar[d]_{s_{m+1}} \ar[r]_{s_m}
&  \ar[d]^{\scriptstyle s_{m+1}\atop \scriptstyle s_m}
\\
   \ar[r]_{S_{2,m-1}^{(a)}}
&  \ar[r]_{s_m s_{m+1}}
&
}
$$
Using this property, we finish the proof with the following:
$$
\xymatrix@C=18mm@R=12mm{
   \ar[d]_{s_{k+1}} \ar[r]^{S_{2,k}^{(b)}}
&  \ar[d]_{S_{k+1,k}} \ar[r]^{S_{2,k-1}^{(b+1)}}
&  \ar[d]_{S_{k+1,k-1}} \ar[r]^{S_{2,k-2}^{(b+2)}}
&  \ar[d]_{S_{k+1,k-2}} \ar@{.>}[r]
&  \ar[d]_{S_{k+1,j+1}} \ar[r]^{S_{2,j}^{(b+k-j)}}
&  \ar[d]^{S_{k+1,j}}
\\
   \ar[r]_{S_{2,k+1}^{(b)}}
&  \ar[r]_{S_{2,k}^{(b+1)}}
&  \ar[r]_{S_{2,k-1}^{(b+2)}}
&  \ar@{.>}[r]
&  \ar[r]_{S_{2,j+1}^{(b+k-j)}}
&
}
$$
The top row is $\Phi_{k,j}^{(b)}$, and the bottom row is $\Phi_{k+1,j+1}^{(b)}$, so the second diagram in the statement holds, as we wanted to show.

We finish by showing that the vertical arrow on the left of the second diagram in the statement is a prefix of the vertical arrow on the right. We have already shown that $\Phi_{k,j}^{(b)} S_{k+1,j} = s_{k+1} \Phi_{k+1,j+1}^{(b)}$. Now we notice that $s_t S_{2,t}^{(a)}=S_{2,t}^{(a)}s_{t+1}$ for every $t\in \{j+1,\ldots,k\}$ and every $a\in \{0,\ldots,e-1\}$. Hence
$$
  s_{k+1} \Phi_{k+1,j+1}^{(b)} = s_{k+1} (S_{2,k+1}^{(b)}S_{2,k}^{(b+1)}\cdots S_{2,j+1}^{b+k-j}) = (S_{2,k+1}^{(b)}S_{2,k}^{(b+1)}\cdots S_{2,j+1}^{b+k-j}) s_j = \Phi_{k+1,j+1}^{(b)} s_j.
$$
On the other hand, $S_{k+1,j}=S_{k+1,j+1}s_j$. Therefore
$$
\Phi_{k,j}^{(b)}S_{k+1,j+1}s_j = \Phi_{k,j}^{(b)} S_{k+1,j} = s_{k+1} \Phi_{k+1,j+1}^{(b)} = \Phi_{k+1,j+1}^{(b)} s_j.
$$
Cancelling $s_j$ we obtain that $\Phi_{k,j}^{(b)} \preccurlyeq \Phi_{k+1,j+1}^{(b)}$, as we wanted to show.

\end{proof}

We have already shown all technical lemmas, so we can finally prove the following:

\begin{proposition} The standard Garside structure of $B$ for $W=G(e,e,n)$ is support-preserving.
\end{proposition}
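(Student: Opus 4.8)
The plan is to apply the criterion of \autoref{P:support_preserved_by_minimal_positive_conjugators}: it suffices to check, for every $x\in B^+$ and every atom $u\in\mathcal A$, that conjugation by the minimal positive conjugator $\rho_u(x)$ carries $G_{\operatorname{Supp}(x)}$ onto $G_{\operatorname{Supp}(x^{\rho_u(x)})}$. I would organize the verification according to how $u$ meets $X:=\operatorname{Supp}(x)$, treating first the case $u\notin X$; the case $u\in X$ is then handled by \autoref{P:atom_in_X_support-preserving}, whose hypothesis (non-increase of the support size under conjugation by $\rho_b(u)$ for $b\notin\operatorname{Supp}(u)$) will be a byproduct of the $u\notin X$ analysis.

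For $u\notin X$ there are two regimes. If $\#((X\cup\{u\})\cap T_e)\leq 1$, then $X\cup\{u\}$ generates a submonoid of $B^+$ isomorphic, with its Garside structure, to a classical braid monoid (the unique element of $T_e$ present playing the role of $s_2$), so \autoref{P:B(e,e,n)_minimal_conjugators_are_ribbons} gives $\rho_u(x)=r_{X,u}$, with this ribbon described explicitly by \autoref{L:ribbons_in_braid_groups_complete}. Otherwise $u$ must ``merge into'' the block $T_e$: either $u=s_j$ with $T_e\subseteq X$, or $u=t_b$ with $X\cap T_e=\{t_a\}$ and $a\neq b$. In the first case I would run the converging-prefix computation from $c_0=s_j=\sigma_{j,j,j}$: while the pre-minimal conjugators stay in the $S_n$-part they evolve through the $\sigma_{i,j,k}$ of the braid case (controlled by \autoref{L:sigmaijk_vee_atom}), and the first time one reaches position $2$ and meets a letter $t_b$ of $x$, \autoref{L:sigma2jk_vee_tb} turns $\sigma_{2,j,k}^{(a)}$ into $\overline{\Lambda}_{3,j,k}$, after which \autoref{L:atom_vee_overline_Lambda} governs the evolution through the $\overline{\Lambda}_{m,j,d}$. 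In the second case I would start from $c_0=t_b$ and use \autoref{L:Phi} to follow the pre-minimal conjugators through the $\Phi_{k,j}^{(b)}$. In every regime each pre-minimal conjugator is a prefix of the next one (this is exactly the last assertion of each of those three lemmas), so \autoref{L:pre-minimal_prefixes} applies and $\rho_u(x)=x^m\backslash u$ for the appropriate $m$; the sequence stabilises precisely when the pre-minimal conjugator has absorbed the whole connected component of $X\cup\{u\}$ containing $u$, and by \autoref{L:ribbons_in_B(e,e,n)} (for $u=s_j$) and \autoref{L:r_(X_k^((b-1)),t_b)} (for $u=t_b$) that stable value is exactly the ribbon $r_{X,u}$. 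Along the way \autoref{L:unnecessary_arrow} and \autoref{L:unnecessary_pre-minimal_prefixes} let me discard those $u$ for which $\rho_u(x)$ turns out not to be a minimal positive conjugator, so no separate treatment is needed for them.

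It then remains to see that conjugation by a ribbon does the right thing. From the explicit products of atoms for $r_{X,u}$ in \autoref{L:ribbons_in_braid_groups_complete}, \autoref{L:ribbons_in_B(e,e,n)} and \autoref{L:r_(X_k^((b-1)),t_b)}, together with the conjugation actions of $\Delta$ and of the $\Lambda_k$ on the atoms recalled in \autoref{sect:G(e,e,n)}, one reads off that conjugation by $r_{X,u}$ sends $\operatorname{Div}(\Delta_X)$ bijectively onto $\operatorname{Div}(\Delta_{X'})$ where $X'=\operatorname{Supp}(x^{r_{X,u}})$; in particular it maps atoms to atoms and $\#X'=\#X$. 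Hence $x^{r_{X,u}}$, being obtained from a word in the atoms of $X$ by conjugating letter by letter, has support exactly the image $X'$ of $X$, and $\bigl(G_X\bigr)^{r_{X,u}}=G_{X'}=G_{\operatorname{Supp}(x^{r_{X,u}})}$ with $\#X'\leq\#X$, which is exactly what is needed to invoke \autoref{P:atom_in_X_support-preserving}. Together with the latter, this covers every atom $u$, hence every minimal positive conjugator, and \autoref{P:support_preserved_by_minimal_positive_conjugators} finishes the proof.

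I expect the main obstacle to be the bookkeeping in the middle paragraph: one must identify, at each step of the converging-prefix computation, which letter of a fixed word representing $x$ is being processed, match the resulting pre-minimal conjugator with one of the normal forms $\sigma_{i,j,k}^{(a)}$, $\overline{\Lambda}_{m,j,d}$, $\Phi_{k,j}^{(b)}$, and control the transition step where the algorithm leaves the braid regime for the $\Lambda$-regime and the shape of the conjugator changes abruptly. The LCM-diagrams of \autoref{L:Two_squares} through \autoref{L:Phi} are tailored to make each individual match routine, but assembling them into a clean induction, and verifying that the prefix condition of \autoref{L:pre-minimal_prefixes} survives every transition, is where the real work lies.
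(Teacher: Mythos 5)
Your proposal follows essentially the same route as the paper's own proof: reduce to minimal positive conjugators via \autoref{P:support_preserved_by_minimal_positive_conjugators}, track the pre-minimal conjugators through the normal forms $\sigma_{i,j,k}^{(a)}$, $\overline{\Lambda}_{i,j,k}$ and $\Phi_{k,j}^{(b)}$ to show that for $u\notin X$ the minimal positive conjugator is the ribbon $r_{X,u}$ (or is non-minimal and can be discarded by \autoref{L:unnecessary_arrow}), and then dispose of $u\in X$ by \autoref{P:atom_in_X_support-preserving}. The only cosmetic difference is that the paper organizes the case split around the connected component $X_1$ of $X\cup\{u\}$ containing $u$ and explicitly isolates the sub-case $u=t_c$ with $c$ neither the index in $X\cap T_e$ nor its successor, whereas you fold these into your general discarding remark; the substance is identical.
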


\begin{proof}
Let $x,y\in B^+$ be connected by a minimal positive conjugator, that is, $x^\rho=y$ where $\rho=\rho_{u}(x)$ for some atom $u$. Let $X=\operatorname{Supp}(x)$ and $Y=\operatorname{Supp}(y)$. We need to show that $\rho$ conjugates $G_X$ to $G_Y$.

Let us first suppose that $u\notin X$. The set $X\cup \{u\}$ may have several connected components, and we denote $X_1$ the subset of $X$ such that $X_1\cup \{u\}$ is the connected component containing $u$, and $X_2=X\setminus X_1$.

Notice that $\Delta_{X\cup \{u\}} = \Delta_{X_1\cup\{u\}}\Delta_{X_2}$, where the two factors commute. And that $\Delta_{X} = \Delta_{X_1}\Delta_{X_2}$, where the two factors also commute. Hence $r_{X,u}=\Delta_X^{-1}\Delta_{X\cup \{u\}}=\Delta_{X_1}^{-1}\Delta_{X_1\cup \{u\}}=r_{X_1,u}$. We will see that $\rho=r_{X_1,u}=r_{X,u}$ in all cases in which $\rho$ is a minimal simple conjugator.

Let us first suppose that $\#((X_1\cup \{u\})\cap T_e) \leq 1$. In this case we can apply \autoref{P:B(e,e,n)_minimal_conjugators_are_ribbons} to conclude that $\rho=r_{X_1,u}=r_{X,u}$. Hence, the same argument as in the proof of \autoref{P:braid_support_preserving} applies, and it follows that $\rho=r_{X,u}$ conjugates $X$ to $Y$, and then it conjugates $G_X$ to $G_Y$, as claimed.

Suppose now that $\#((X_1\cup \{u\})\cap T_e) > 1$.

We will first assume that $u=s_j\in S_n$, hence $\#(X_1\cap T_e)=\#(X\cap T_e) > 1$. Let $w=a_1a_2\cdots a_r$ be a word in $X$ representing $x$. Let $\{t_{i_1},\ldots,t_{i_p}\}=\{a_1,\ldots,a_r\}\cap T_e$. Notice that $p>1$, and that all atoms in $X\cap S_n$ must be present in $w$.

In order to compute $\rho_u(x)$, that is $\rho_{s_j}(x)$, one must compute the pre-minimal conjugators and the converging prefixes for $s_j$ and $x$. We will see that each pre-minimal conjugator is a prefix of the following one, hence the converging prefix $c_{i+1}$ will be equal to the pre-minimal conjugator $s_{i,r}$, for every $i\geq 0$, and we just need to care about the pre-minimal conjugators.

The first pre-minimal conjugator is $s_j=\sigma_{j,j,j}$. Using the relations in \autoref{L:sigmaijk_vee_atom}, we see that all pre-minimal conjugators have the form $\sigma_{i,j,k}$, and that each one is a prefix of the next one, until a pre-minimal conjugator is $\sigma_{3,j,k}$ and the next letter from $w$ is $t_a$ for some $a\in \{1,\ldots,e-1\}$. In that case, the following pre-minimal conjugator will be $\sigma_{2,j,k}^{(a)}$.

The following steps either leave the pre-minimal conjugator untouched, or increase the third index ($k$), or, when the pre-minimal conjugator has the form $\sigma_{2,j,k}^{(a)}$ and the next letter from $w$ is $t_b$ for some $b\neq a$, the following pre-minimal conjugator is $\overline{\Lambda}_{3,j,k}$ (by \autoref{L:sigma2jk_vee_tb}). Notice that, in all steps up to now, every pre-minimal conjugator is a prefix of the following one.

From this moment, we can apply \autoref{L:atom_vee_overline_Lambda}, to find that every new pre-minimal conjugator will have the form $\overline{\Lambda}_{i,j,k}$ for some $i\leq j\leq k$, each one being a prefix of the following one. Hence, all elements in the sequence $c_0\prec c_1\prec \cdots \prec c_m$ will be either $\sigma_{i,j,k}$ or $\overline{\Lambda}_{i,j,k}$ for some $i\leq j\leq k$.

It remains to notice that the pre-minimal conjugators will be modified from $c_i$ to $c_{i+1}$, unless $c_i=\overline{\Lambda}_{j,j,k}$ where $X_1\cup \{s_j\}=\{t_{i_1},\ldots,t_{i_p}\}\cup \{s_3,\ldots,s_k\}$, in which case all letters of $w$ will leave the pre-minimal conjugator untouched. Therefore $c_m=\overline{\Lambda}_{j,j,k}$, that is, $\rho_{s_j}(x)=\overline{\Lambda}_{j,j,k}$. Since $\overline{\Lambda}_{j,j,k}=r_{X_1,s_j}$ by
\autoref{L:ribbons_in_B(e,e,n)}, we have that $\rho=r_{X_1,s_j}=r_{X,s_j}$, and then $\rho$ conjugates $X$ to $Y$, so $G_X$ to $G_Y$, as we wanted to show.

Now suppose that $\#((X_1\cup \{u\})\cap T_e) > 1$ and $u=t_a\in T_e$. Notice that $X_1\cap T_e$ must have at least one element, but cannot have more than one, as in this case it has $e$ elements (as $X$ is saturated being the support of $x$), and we would have $u\in X$, a contradiction with our initial assumption. Therefore $X_1\cap T_e=\{t_{b-1}\}$ for some $b-1\in \{0,\ldots,e-1\}$. Since $X_1\cup \{u\}=X_1\cup\{t_a\}$ is connected, we must have $X_1=\{t_{b-1},s_3,\ldots,s_k\}$ for some $k\in \{2,\ldots,n\}$ (the case $k=2$ means $X_1=\{t_{b-1}\}$).

The interesting case happens when $t_a=t_{b}$. In this case we claim that $\rho=\rho_{t_b}(x) = r_{X_1,t_b}$. Hence we will have that $\rho=r_{X,t_b}$ conjugates $X$ to $Y$.

To show this claim, denote $X_j=\{s_3,\ldots,s_j\}$ and $X_j^{(b-1)}=\{t_{b-1},s_3,\cdots,s_j\}$ for $j=2,\ldots,k$. We will show that all pre-minimal conjugators for $t_b$ and $x$ have the form $\Phi_{j,i}^{(b)}$ for some $2\leq i\leq j \leq k$.

Indeed, the first pre-minimal conjugator is $t_b=\Phi_{2,2}^{(b)}$. The atoms which appear in any word representing $x$ are precisely those in $X$, which means that we can apply \autoref{L:Phi} at every step, so if some pre-minimal conjugator is $\Phi_{j,i}^{(b)}$, the following one will be either $\Phi_{j,i}^{(b)}$, or $\Phi_{j,i-1}^{(b)}$, or $\Phi_{j+1,i+1}^{(b)}$. Moreover, the pre-minimal conjugator will be modified after reading all letters from $x$, unless it is equal to $\Phi_{k,2}^{(b)}$. Since each pre-minimal conjugator is a prefix of the following one, the elements $c_0\prec c_1\prec \cdots \prec c_m$ will also have this form, and the last of these elements will be $\rho=c_m=\Phi_{k,2}^{(b)}$. Since $\Phi_{k,2}^{(b)}=r_{X_1,t_b}$ by \autoref{L:r_(X_k^((b-1)),t_b)}, it follows that $\rho=r_{X_1,t_b}=r_{X,t_b}$, so it conjugates $X$ to $Y$, as we wanted to show.

Still considering $u\notin X$, it only remains the case $X_1=\{t_{b-1},s_3,\ldots,s_k\}$ and $u=t_a$, with $a\neq b-1,b$. In this case we see that, when computing the pre-minimal conjugators as in the previous case, one obtains some pre-minimal conjugator of the form $\Phi_{2,p}^{(b)}$, and then some $c_i$ will admit $\Phi_{2,p}^{(b)}$ as a prefix. This implies that $c_i$ admits $t_{b}$ as a prefix. On the other hand, from the previous case we know that $\rho_{t_{b}}(x)=\Phi_{2,k}^{(b)}$, which does not admit $t_a$ as a prefix. Hence, it follows that $\rho_{t_a}(x)$ is not a minimal conjugating element by \autoref{L:unnecessary_arrow}.

We have then shown that, if $u\notin X$, the associated minimal positive conjugator sends the set $X$ to the set $Y$, so $\#(X)=\#(Y)$ in this case. By \autoref{P:atom_in_X_support-preserving}, when $u\in X$, the element $\rho_{u}(x)$ also sends $G_X$ to $G_Y$ (which in this case equals $G_X$). Therefore, the Garside structure is support-preserving.
\end{proof}

\subsection{Dual monoids for exceptional groups}

We fix a well-generated irreducible complex reflection group $W$ having all its reflections of order $2$,
and a Coxeter element $c \in W$. As in \autoref{sect:wellgen2refl} this defines a dual braid monoid $M(c)$ endowed with a Garside structure
with $\Delta = \mathbf{c}$. The first results we prove are valid for all these groups.

\begin{proposition}\label{P:Dual_LCM}
The dual braid monoid $M(c)$ is
LCM-Garside.
\end{proposition}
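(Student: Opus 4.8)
The plan is to verify the three defining conditions of an LCM-Garside structure (\autoref{D:LCM-Garside_structure}) for the dual braid monoid $M(c)$, using the characterization of standard parabolic subgroups via \autoref{P:characterization_standard_parabolic} together with the compatibility results on dual monoids of parabolic subgroups established in \autoref{sect:wellgen2refl}. Recall that the atoms of $M(c)$ are the elements $\mathbf{u}$, $u\in\mathcal R_c = \mathcal R\cap[1,c]$, and that $\Delta=\mathbf{c}$. Condition (1), namely $\Delta=\Delta_{\mathcal A}$, amounts to saying that $c$ is the join of the reflections in $[1,c]$ inside the lattice $[1,c]$; this is immediate from \autoref{prop:parabsreflwellgens} (3): any $(s_1,\dots,s_n)\in Red(c)$ generates $W$, so the subgroup generated by $\mathcal R_c$ is all of $W$, hence $\bigvee_{s\in\mathcal R_c}s$ is an element of $[1,c]$ whose pointwise stabilizer of its $\Ker(\cdot-\Id)$ is trivial, forcing it to equal $c$; on the monoid side this says $\Delta_{\mathcal A}=\mathbf c=\Delta$.

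For condition (2), fix $X\subset\mathcal A$, say $X=\{\mathbf{x}_1,\dots,\mathbf{x}_r\}$ with $x_i\in\mathcal R_c$, and let $w=x_1\vee\cdots\vee x_r$ be the join taken in the lattice $[1,c]$, so that $\Delta_X=\mathbf w$. I need to show $\mathbf w$ is balanced, i.e. its left and right divisors coincide. This is exactly \autoref{prop:dual_simple} (2): every element $\mathbf u$ with $u\in[1,c]$ is balanced in $M(c)$. So condition (2) holds for free. For condition (3), I need $G_X=G_{\Delta_X}$ to be a standard parabolic subgroup, and here I invoke \autoref{P:characterization_standard_parabolic}: it suffices to check that for every $a,b\in\operatorname{Div}(\Delta_X)=\{\,\mathbf u : u\in[1,w]\,\}$ one has $ab\wedge\mathbf c\in\operatorname{Div}(\Delta_X)$ and $ab\wedge^{\Lsh}\mathbf c\in\operatorname{Div}(\Delta_X)$. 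Now $w\in[1,c]$, and by \autoref{prop:parabsreflwellgens} (3) $w$ is a Coxeter element of the parabolic subgroup $W_X=\langle\mathcal R\cap[1,w]\rangle$, whose dual monoid $M(w)$ embeds in $M(c)$ by \autoref{prop:dualinjectionmonoid}. The key point is that $\operatorname{Div}(\Delta_X)=M(w)$'s set of divisors of its Garside element $\mathbf w$, which by \autoref{prop:dualmonoidpropbase2} (1) is in bijection with $[1,w]$; and by property (\ref{eq:hypdual}) (\autoref{prop:prophypdual}) we have $W_X\cap[1,c]=[1,w]$. So $\mathbf u\in\operatorname{Div}(\Delta_X)$ iff $u\in[1,c]$ and $u\in W_X$. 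Given $a=\mathbf u$, $b=\mathbf v$ with $u,v\in[1,w]\subset W_X$, the element $ab\wedge\mathbf c$ is a simple element (divisor of $\mathbf c$), hence equals $\mathbf t$ for a unique $t\in[1,c]$; moreover $ab=\mathbf u\mathbf v\in M(w)\subset G(w)=G_X$, so $ab\wedge\mathbf c\in G_X\cap M(c)$, which by \autoref{prop:dualinjectionmonoid} (3) equals the image of $M(w)$, so $t\in W_X\cap[1,c]=[1,w]$, i.e. $ab\wedge\mathbf c\in\operatorname{Div}(\Delta_X)$. The right-version is identical using $\wedge^{\Lsh}$ and the balancedness of $\mathbf c$ and of $\operatorname{Div}(\Delta_X)$.

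I would then conclude that all three conditions of \autoref{D:LCM-Garside_structure} hold, so $M(c)$ is LCM-Garside. The main obstacle is the delicate bookkeeping in condition (3): making sure that the meet $ab\wedge\mathbf c$ genuinely lands inside $G_X$, which rests on the fact (from \autoref{prop:dualinjectionmonoid} (3) and the closure-under-division property proved just before \autoref{prop:dualinjectionmonoid}) that $G_X\cap M(c)$ is precisely the image of $M(w)$, together with property (\ref{eq:hypdual}) identifying $W_X\cap[1,c]$ with $[1,w]$. Once these two ingredients are lined up, the verification is essentially formal. A minor subtlety worth spelling out is that $\operatorname{Supp}(\Delta_X)$ need not equal $X$ (some atoms of $M(w)$ may not lie in $X$), but this is harmless for the LCM-Garside axioms, since $\Delta_X=\Delta_{\overline X}$; the saturated subsets are exactly the $\mathcal R\cap[1,w]$ for $w\in[1,c]$, matching the standard parabolic subgroups $G(w)$.
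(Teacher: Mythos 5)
Your proof is correct, and for the substantive part --- condition (3) of \autoref{D:LCM-Garside_structure} --- it takes a genuinely different route from the paper. The paper verifies the definition of a standard parabolic subgroup directly: it identifies $G_\delta^+$ with the image of $M(c_0)$ via \autoref{prop:dualinjectionmonoid}~(3) and then proves $\operatorname{Div}_{M(c_0)}(\mathbf{c}_0)=M(c_0)\cap \operatorname{Div}_{M(c)}(\mathbf{c})$ by taking an arbitrary $\mathbf m$ in the right-hand side, pushing it to $W$ to land in $[1,c_0]$ via property~(\ref{eq:hypdual}), and then using the length compatibility of \autoref{prop:parabsreflwellgens}~(2) together with \autoref{lem:caractsimplesintervalles} to conclude that $\mathbf m$ divides $\mathbf{c}_0$. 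You instead invoke Godelle's gcd criterion (\autoref{P:characterization_standard_parabolic}) and check that $ab\wedge\mathbf c$ and $ab\wedge^{\Lsh}\mathbf c$ stay in $\operatorname{Div}(\Delta_X)$, which you get from the closure of the image of $M(w)$ under left (and, by symmetry of the Hurwitz argument or by combining left-closure with \autoref{prop:dualinjectionmonoid}~(3), right) division, plus (\ref{eq:hypdual}) and the lattice isomorphism $[1,c]\simeq\operatorname{Div}(\mathbf c)$. Both arguments hinge on the same two inputs --- the embedding $M(c_0)\hookrightarrow M(c)$ and property~(\ref{eq:hypdual}) --- but yours trades the length/interval characterization of simple elements for the divisibility-closure of the submonoid; the paper's version is slightly more self-contained in that it verifies the defining identity rather than an equivalent criterion, while yours localizes the work to gcds with $\Delta$. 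One presentational point: when you assert $ab\wedge\mathbf c\in G_X\cap M(c)$, the justification is really the closure of the image of $M(w)$ under left division (which you do name at the end), not merely $ab\in G_X$; and for $ab\wedge^{\Lsh}\mathbf c$ you should say explicitly that right-closure follows from left-closure together with \autoref{prop:dualinjectionmonoid}~(3) (or from the symmetry of the Hurwitz relations), since the paper only states the left version. Your arguments for conditions (1) and (2), which the paper dismisses as immediate, are correct as well.
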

\begin{proof}
Recall that the dual monoids $M(c)$ are known to satisfy condition (\ref{eq:hypdual}) by \autoref{prop:prophypdual}.
In order to prove that they are LCM-Garside, we need to check the conditions of \autoref{D:LCM-Garside_structure}. Conditions (1) and (2) are immediate, so we need to prove (3). Let $X \subset \mathcal{A}$. Then $\delta = \Delta_X$ is
a simple element, so that $\delta = \mathbf{c}_0$ for $c_0 \in [1,c]$. By \autoref{prop:parabsreflwellgens} (3) this is
a Coxeter element, attached to some parabolic subgroup $W_0$ of $W$. Then $G_{\delta}$ is the subgroup of $B$
generated by the atoms dividing $\delta$, that is the elements $\mathbf{s}$ for $s \in W$ a reflection such
that $\ell(s) + \ell(s^{-1} c_0) =
\ell(c_0)$. By \autoref{prop:parabsreflwellgens} (3) such reflections necessarily belong to $W_0$.

On the other hand, we have an injective morphism $M(c_0) \to M(c)$ which identifies $M(c_0)$ to a submonoid
of $M(c)$ and induces an injective group homomorphism $B_0 \to B$, where $B_0$ is the group of fractions of $M(c_0)$. Thus $B_0$ is identified with $G_{\delta}$. Moreover, we have $G_{\delta}^+ = G_{\delta} \cap G^+ = B_0 \cap M(c) = M(c_0) \subset M(c)$ by \autoref{prop:dualinjectionmonoid} (3).

Therefore we need to prove that, for any
$c_0 \in [1,c]$, $\Div_{M(c_0)}(\mathbf{c_0}) = M(c_0) \cap \Div_{M(c)}(\mathbf{c})$. Clearly $\Div_{M(c_0)}(\mathbf {c_0}) \subset M(c_0) \cap \Div_{M(c)}(\mathbf c)$. Let $\mathbf m \in M(c_0) \cap \Div_{M(c)}(\mathbf c)$. We need to prove that $\mathbf m$ divides $\mathbf c_0$. For this we introduce the natural monoid morphism $\pi : M(c) \to W$, which maps $M(c_0)$ onto $W_0$,
and $\mathbf{w}$ to $w$ for every $w \in [1,c]$. From this we get that $\pi(\mathbf m) = m \in W_0\cap [1,c] = [1,c_0]$
by property (\ref{eq:hypdual}). Now, clearly the length of $\mathbf m$ in $M(c_0)$ is the same as the length of $\mathbf m$ in $M(c)$,
that is the length of $m$ with respect to set $\mathcal{R}$ of reflections of $W$ (see Proposition 3.6 (3)).
Since $m \in W_0$ this length is equal to  the length with respect to set $\mathcal{R}_0 = \mathcal{R} \cap W_0$ of reflections of $W_0$ by \autoref{prop:parabsreflwellgens} (2). So, we have $m \in [1,c_0]$
and $\ell(m) = \ell(\mathbf m)$. By \autoref{lem:caractsimplesintervalles} this implies $\mathbf m \in \Div_{M(c_0)}(\mathbf{c}_0)$ and this concludes the proof.
\end{proof}

This implies in particular that the standard parabolic subgroups in the Garside sense are exactly the ones previously introduced in Proposition \ref{prop:dualstandardparabs}, so that the two concepts of parabolic subgroups coincide in that case, too.

We now want to check whether these Garside structures are support-preserving.

The following is an easy consequence of \autoref{prop:dual_simple}.
\begin{lemma}\label{L:dual_lcm}
  Let $\mathbf a,\mathbf b$ be two distinct atoms of $M(c)$. If $\mathbf{ba}$ is simple, then $\mathbf a\vee \mathbf b=\mathbf{ba}=\mathbf{as}=\mathbf{tb}$, for some atoms $\mathbf s\neq \mathbf a$ and $\mathbf t\neq \mathbf b$.
\end{lemma}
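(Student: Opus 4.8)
\textbf{Proof plan for \autoref{L:dual_lcm}.}
The plan is to exploit the fact that in a dual braid monoid the simple elements correspond bijectively to the interval $[1,c]$ and that the length function $\ell$ is an additive monoid morphism (\autoref{prop:dual_simple} (1)), so that divisibility among simples is completely controlled by lengths and by the lattice $[1,c]$. First I would observe that since $\mathbf a$ and $\mathbf b$ are distinct atoms, neither divides the other (distinct atoms are incomparable for $\preccurlyeq$), so $\mathbf a\vee\mathbf b$ is a common multiple strictly larger than both. Because $\mathbf a\vee\mathbf b$ divides any common multiple, and $\mathbf{ba}$ is by hypothesis a simple common multiple of $\mathbf a$ (as a suffix) and $\mathbf b$ (as a prefix), we get $\mathbf a\vee\mathbf b\preccurlyeq \mathbf{ba}$; conversely $\mathbf a\vee\mathbf b$ has $\mathbf b$ as a prefix, so $\mathbf a\vee\mathbf b=\mathbf b\mathbf m$ for some positive $\mathbf m$, and $\mathbf a\preccurlyeq\mathbf b\mathbf m$ forces, comparing lengths ($\ell(\mathbf a\vee\mathbf b)\geq\ell(\mathbf a)+1=2$ and $\ell(\mathbf a\vee\mathbf b)\leq\ell(\mathbf{ba})=2$), that $\ell(\mathbf a\vee\mathbf b)=2$, whence $\mathbf m$ is an atom and $\mathbf a\vee\mathbf b=\mathbf{ba}$.

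Next I would extract the two factorisations. Since $\mathbf a\preccurlyeq\mathbf a\vee\mathbf b=\mathbf{ba}$ and this element has length $2$, writing $\mathbf{ba}=\mathbf a\mathbf s$ exhibits $\mathbf s$ as a positive element of length $1$, i.e.\ an atom; similarly, since $\mathbf b$ is a prefix, $\mathbf{ba}=\mathbf t\mathbf b$ would be the wrong orientation, so instead I use the \emph{balanced} property of simple elements (\autoref{prop:dual_simple} (2)): $\mathbf{ba}$ is balanced, so the atom $\mathbf b$, being a prefix of $\mathbf{ba}$, is also a suffix, giving $\mathbf{ba}=\mathbf t\mathbf b$ for a unique positive $\mathbf t$ of length $1$, hence an atom. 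Finally $\mathbf s\neq\mathbf a$: if $\mathbf s=\mathbf a$ then $\mathbf{ba}=\mathbf a\mathbf a$ would contain the square $\mathbf a\mathbf a$, contradicting squarefreeness of simple elements (\autoref{prop:dual_simple} (3)); likewise $\mathbf t\neq\mathbf b$, since $\mathbf t=\mathbf b$ would make $\mathbf{ba}=\mathbf b\mathbf b$ a square.

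I expect the only subtle point to be justifying that $\mathbf a\vee\mathbf b$ really equals $\mathbf{ba}$ rather than some proper divisor of it — this is where one must combine the additivity of $\ell$ with the definition of $\vee$ as the least common multiple for $\preccurlyeq$, observing that any common multiple of the two length-one elements $\mathbf a,\mathbf b$ has length at least $2$, and $\mathbf{ba}$ realises that bound. Everything else is a direct application of the three items of \autoref{prop:dual_simple} (homogeneity, balancedness, squarefreeness), together with the elementary lattice-theoretic fact that distinct atoms are $\preccurlyeq$-incomparable. No serious obstacle is anticipated; the statement is essentially a bookkeeping consequence of the structural results already established.
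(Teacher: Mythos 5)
Your proof is correct and follows essentially the same route as the paper's: balancedness of the simple element $\mathbf{ba}$ to convert suffix divisibility into prefix divisibility, homogeneity of $\ell$ to identify the join with the length-two element $\mathbf{ba}$, and squarefreeness to rule out $\mathbf s=\mathbf a$ and $\mathbf t=\mathbf b$. The only point to tighten is that when you assert $\mathbf a\vee\mathbf b\preccurlyeq\mathbf{ba}$ you must already invoke balancedness at that step — a priori $\mathbf a$ is only a \emph{suffix} of $\mathbf{ba}$, while the join $\vee$ is taken for the prefix order $\preccurlyeq$, so you need $\mathbf{ba}$ balanced to know $\mathbf a$ is also a prefix; the paper does exactly this as its opening move.
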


\begin{proof}
Since $\mathbf{ba}$ is simple, it is balanced by \autoref{prop:dual_simple}~(2). It admits $\mathbf a$ as a suffix, so it also admits $\mathbf a$ as a prefix. Since $\ell(\mathbf{ba})=2$ by \autoref{prop:dual_simple}~(1), it follows that $\mathbf{ba}=\mathbf{as}$ for some atom $\mathbf s$. Recall that $\mathbf a$ and $\mathbf b$ are two distinct atoms, so $\mathbf a \vee \mathbf b$ cannot be a single atom, and its length must be greater than 1. Since we have a common multiple of $\mathbf a$ and $\mathbf b$ of length 2, it follows that $\mathbf a\vee \mathbf b=\mathbf{ba}=\mathbf{as}$. Moreover, $\mathbf s \neq \mathbf a$, since a simple element is square-free, by \autoref{prop:dual_simple}~(3).

In the same way, $\mathbf b$ is a prefix of $\mathbf{ba}$, hence it is also a suffix. By the above arguments, there exists an atom $\mathbf t\neq \mathbf b$ such that $\mathbf{ba}=\mathbf{tb}$.
\end{proof}

\begin{lemma}\label{L:dual_chain_of_subsets}
  Let $G$ be one of the 2-reflection groups $G_{24}$, $G_{27}$, $G_{29}$, $G_{33}$, $G_{34}$, equipped with the dual Garside structure $(G,M(c),\Delta)$, where $\pi(\Delta)=c$. Let $X\subsetneqq \mathcal A$ be a saturated set of atoms. Let $A_0=\{\mathbf a\in \mathcal A;\ \Delta_X \mathbf a\preccurlyeq \Delta \}$. Then, there is a sequence $A_0\subset A_1 \subset \cdots \subset A_r =\mathcal A\setminus X$ such that, for every $i>0$ and every atom $\mathbf a\in A_i$, there is a (possibly trivial) saturated subset $X'\subsetneqq X$ such that:
\begin{enumerate}
  \item For every $\mathbf b\in X'$, one has $\mathbf a\preccurlyeq \mathbf b\backslash \mathbf a$.

  \item For every $\mathbf b\in X\setminus X'$, there exists $\mathbf d\in A_{i-1}$ such that $\mathbf d\preccurlyeq \mathbf b\backslash \mathbf a$.
\end{enumerate}
\end{lemma}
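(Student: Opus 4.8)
The plan is to argue by a careful bookkeeping of which atoms can be ``reached'' from $X$ by iterating the right‑complement operation $\mathbf b\backslash(\cdot)$ inside the finite lattice of simple elements, and to extract the desired filtration of $\mathcal A\setminus X$ from this. Concretely, for a saturated set $X\subsetneqq\mathcal A$ define $A_0=\{\mathbf a\in\mathcal A\setminus X:\ \Delta_X\mathbf a\preccurlyeq\Delta\}$ (note $A_0\subset\mathcal A\setminus X$ since $X$ is saturated, so no atom of $X$ can satisfy this), and then inductively let $A_{i}$ be the set of atoms $\mathbf a\in\mathcal A\setminus X$ for which there exists a saturated proper subset $X'\subsetneqq X$ such that (1) every $\mathbf b\in X'$ satisfies $\mathbf a\preccurlyeq\mathbf b\backslash\mathbf a$, and (2) every $\mathbf b\in X\setminus X'$ has some $\mathbf d\in A_{i-1}$ with $\mathbf d\preccurlyeq\mathbf b\backslash\mathbf a$. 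The statement is then essentially a definition of the $A_i$ together with the two assertions $A_{i-1}\subset A_i$ and $A_r=\mathcal A\setminus X$ for $r$ large enough; I would prove exactly these two.

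First I would establish $A_{i-1}\subset A_i$. Given $\mathbf a\in A_{i-1}$ with witnessing subset $X'$, I want to produce a witnessing subset for membership in $A_i$; the natural candidate is $X'$ itself, and the only thing to check is condition (2) with $A_{i-1}$ replaced by... itself, which is immediate once we observe that for $\mathbf b\in X\setminus X'$ the element $\mathbf d\in A_{i-2}\subset A_{i-1}$ still works (using the trivial inclusion $A_{i-2}\subset A_{i-1}$, which is the inductive hypothesis). The base case $A_0\subset A_1$ needs: if $\Delta_X\mathbf a\preccurlyeq\Delta$ then for \emph{every} $\mathbf b\in X$ one has $\mathbf b\preccurlyeq\Delta_X\preccurlyeq\Delta_X\mathbf a$, so $\mathbf b$ and $\mathbf a$ both divide the simple element $\Delta_X\mathbf a$, hence $\mathbf b\vee\mathbf a\preccurlyeq\Delta_X\mathbf a$ and by \autoref{L:dual_lcm} (applied to the simple element $\mathbf b\vee\mathbf a$, whose length is $2$ unless $\mathbf a\in\overline{X}$, excluded) we get $\mathbf b\backslash\mathbf a=\mathbf b^{-1}(\mathbf b\vee\mathbf a)$ is an atom $\mathbf s$; one then checks $\mathbf a\preccurlyeq\mathbf s$ fails only if $\mathbf s=\mathbf a$... so the correct reading is that here $X'=\varnothing$ and condition (2) is what holds: $\mathbf b\backslash\mathbf a$ is an atom $\mathbf d$ which, by $\Delta_X\mathbf a\preccurlyeq\Delta$ and $\Delta_X\mathbf b\backslash\mathbf a$-type manipulations, lies in $A_0$. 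This is where I'd be most careful: sorting out, for $\mathbf a\in A_0$, whether each $\mathbf b\in X$ lands in the ``$X'$'' bucket or the ``$A_{i-1}$'' bucket — I expect all of them to land in the $A_0$ bucket, so $X'=\varnothing$ works and $A_1\supseteq A_0$ trivially, but verifying $\mathbf b\backslash\mathbf a\in A_0$ (i.e.\ that $\Delta_X(\mathbf b\backslash\mathbf a)$ is still simple) is the crux.

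Next, to see that the chain exhausts $\mathcal A\setminus X$ and stabilizes, I would argue that the sequence $(A_i)$ is a nondecreasing chain of subsets of the \emph{finite} set $\mathcal A\setminus X$, hence stabilizes at some $A_r$, and then show $A_r=\mathcal A\setminus X$ by contradiction: if some atom $\mathbf a\in\mathcal A\setminus X$ never enters the chain, pick such an $\mathbf a$ together with $\Delta_X\vee\mathbf a$, which is a simple element (join of two simples bounded by $\Delta$, since all of $\mathcal A$ has join $\Delta$) strictly above $\Delta_X$; writing $\Delta_X\vee\mathbf a=\Delta_X\cdot(\Delta_X\backslash\mathbf a)$ and analyzing the positive element $\Delta_X\backslash\mathbf a$ as a product of atoms, each such atom $\mathbf b\backslash\mathbf a'$ appearing should already be forced into some $A_{i}$, contradicting maximality — here one uses that $W$ is one of the five listed groups so that the poset $[1,c]$ is small enough for this reachability argument to close, and the key input is again \autoref{L:dual_lcm} to control complements of atoms by atoms. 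The main obstacle I anticipate is precisely this last step: making the ``reachability'' argument rigorous without a case-by-case computer check, i.e.\ showing that every atom outside $X$ is connected to $A_0$ through a chain of complement relations respecting the two conditions; for the five exceptional groups this is a finite verification, and I would be prepared to fall back on a direct computation in $[1,c]$ (using that all Coxeter elements are conjugate, so one representative $c$ suffices, exactly as in the discussion following equation~(\ref{eq:hypdual})) if a uniform argument proves elusive.
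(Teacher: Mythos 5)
Your overall strategy --- take $A_i$ to be the \emph{maximal} set of atoms admitting a witness $X'$ relative to $A_{i-1}$, check the chain is nondecreasing, and then verify that it exhausts $\mathcal A\setminus X$ --- is exactly the route the paper takes: its entire proof consists of an explicit greedy algorithm, run by computer on every saturated $X\subsetneqq\mathcal A$ for each of the five groups, which terminates with the answer \textsf{true}. So your ``fallback'' (a direct finite computation in $[1,c]$, one Coxeter element sufficing) is not a fallback at all; it is the proof. Two remarks on the parts you sketch. First, the base case $A_0\subset A_1$ is cleaner than you make it: for $\mathbf a\in A_0$ and $\mathbf b\in X$, the element $\mathbf{ba}$ is a suffix of the simple element $\Delta_X\mathbf a$, hence simple, so by \autoref{L:dual_lcm} one has $\mathbf b\vee\mathbf a=\mathbf{ba}$ and therefore $\mathbf b\backslash\mathbf a=\mathbf b^{-1}(\mathbf b\vee\mathbf a)=\mathbf a$ exactly; taking $X'=\varnothing$, condition (2) is witnessed by $\mathbf d=\mathbf a\in A_0$, and there is nothing to check about $\Delta_X(\mathbf b\backslash\mathbf a)$ being simple. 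The inclusion $A_{i-1}\subset A_i$ for $i\geq 2$ then follows by monotonicity of the conditions in $A_{i-1}$, as you say.

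The genuine gap is the exhaustion step. Your proposed uniform argument --- that an atom $\mathbf a$ never entering the chain leads to a contradiction by decomposing $\Delta_X\backslash\mathbf a$ into atoms, ``each of which should already be forced into some $A_i$'' --- does not work: membership in $A_i$ requires not merely that each $\mathbf b\backslash\mathbf a$ be controlled individually, but that the atoms $\mathbf b\in X$ for which condition (1) is invoked all lie in a \emph{common} saturated proper subset $X'$ of $X$, and there is no a priori reason the lattice $[1,c]$ arranges this. The statement is a group-specific combinatorial fact (the paper makes no claim of it for general well-generated groups), and the only known proof is the case-by-case verification. As written, your proposal therefore does not establish the lemma; it becomes a proof only once the finite computation is actually carried out, which is what the paper does.
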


\begin{proof}

For every such saturated set $X$,
one applies by computer the following algorithm, with the notation that $a \leftarrow b$
indicates that the variable $a$ receives the content of $b$, and where $\mathcal P_{sat}(\mathcal A)$ denotes de saturated subsets of $\mathcal A$.

\begin{enumerate}

\item Compute $\mathcal{A}_{X} = \{\mathbf{a} \in \mathcal{A}\ | \ \mathbf{a} \prec \Delta_X^{-1}\Delta \}$.

\item Set $\mathcal{B} = \mathcal{A} \setminus ( X \cup \mathcal{A}_{X})$, and
$\mathcal{D} = \{ (\mathbf a,\mathbf b,V)\in \mathcal{B}\times X \times \mathcal{P}_{sat}(\mathcal{A})  \ | \ \Delta_V= \mathbf b\backslash \mathbf a = \mathbf{b}^{-1} (\mathbf{a}\vee \mathbf{b}) \}$.

\item Set $\mathcal{N} = \emptyset$.

\item Repeat

\begin{enumerate}

\item $\mathcal{A}_{X} \leftarrow \mathcal{A}_{X} \cup \mathcal{N}$ 
       \hfill {\tt \# $\mathcal A_X$ will be $A_i$ for $i=0,1,2\ldots$}

\item $\mathcal{D} \leftarrow \{ (\mathbf a,\mathbf b,V) \in \mathcal{D} \ | \ \mathcal{A}_{X} \cap V = \emptyset \}$
    
     \hfill {\tt \# Remove those satisfying $\mathbf d \preccurlyeq \mathbf b \backslash \mathbf a$ for some $\mathbf d\in A_{i-1}$}

\item For every $\mathbf a \in \mathcal{A}$, do :

\begin{itemize}

\item If every $(\mathbf a,\mathbf b,V) \in \mathcal{D}$ satisfies $\mathbf a \in V$ (that is, $\mathbf a \preccurlyeq \mathbf b\backslash \mathbf a$) \emph{and}
    
    there is some saturated $X' \subsetneq X$
    such that every $(\mathbf a,\mathbf b,V)$ satisfies $\mathbf b \in X'$, then
    
    $\mathcal{D} \leftarrow \{ (\mathbf z,\mathbf b,V) \in \mathcal{D} \ | \ \mathbf z \neq \mathbf a \}$

    \hfill {\tt \# Remove if all the remaining cases satisfy condition (1).}

\end{itemize}

\item $\mathcal{N} \leftarrow \{ \mathbf z \in \mathcal{B}\setminus \mathcal{A}_{X}\ | \ \forall (\mathbf a,\mathbf b,V) \in \mathcal{D}\ \mathbf z \neq \mathbf a
\}$

   \hfill {\tt \# The new atoms to add to $\mathcal{A}_X$ are those completely removed  from $\mathcal{D}$.}

\end{enumerate}

\item until $\mathcal{N} = \emptyset$.

\item if $\mathcal{D} = \emptyset$ then return \textsf{true} else return \textsf{false}.

\end{enumerate}

The fact that this algorithm terminates in all these cases with the statement \textsf{true} proves the claim.
\end{proof}

\begin{proposition}
 Let $G$ be one of the 2-reflection groups $G_{24}$, $G_{27}$, $G_{29}$, $G_{33}$, $G_{34}$. The dual Garside structure $(G,M(c),\Delta)$ is support-preserving.
\end{proposition}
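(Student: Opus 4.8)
The plan is to verify the criterion of \autoref{P:support_preserved_by_minimal_positive_conjugators}: it suffices to show that for every positive element $x\in M(c)$ and every minimal positive conjugator $\rho_{\mathbf a}(x)$, conjugation by $\rho_{\mathbf a}(x)$ carries $G_{\operatorname{Supp}(x)}$ to $G_{\operatorname{Supp}(x^{\rho_{\mathbf a}(x)})}$. By \autoref{P:atom_in_X_support-preserving} it is enough to treat the case $\mathbf a\notin X:=\operatorname{Supp}(x)$, and to prove that in that case $\#\bigl(\operatorname{Supp}(x^{\rho_{\mathbf a}(x)})\bigr)\le \#(X)$, and more precisely that $\rho_{\mathbf a}(x)$ is a \emph{ribbon}-type element conjugating the set of atoms $X$ into another set of atoms of the same cardinality. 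The key input is the combinatorial description of the converging prefixes of $\rho_{\mathbf a}(x)$ coming from \autoref{S:checking_Garside_structure}, together with \autoref{L:dual_chain_of_subsets}, which was established by computer precisely for the groups $G_{24}, G_{27}, G_{29}, G_{33}, G_{34}$.

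First I would fix $X\subsetneq\mathcal A$ saturated and an atom $\mathbf a\notin X$, and recall that $\rho_{\mathbf a}(x)$ is obtained as the stabilized value $c_m$ of the chain $\mathbf a=c_0\preccurlyeq c_1\preccurlyeq\cdots$ with $c_{j+1}=c_j\vee(x\backslash c_j)$, all terms simple, and that each $c_{j+1}$ is computed from $c_j$ by successively forming right-complements against the atoms occurring in a fixed word for $x$ — every such atom lying in $X$. The crucial step is to show by induction along the chain that each converging prefix $c_j$ has the form $\Delta_{X_j}^{-1}\Delta_{X_j\cup\{\mathbf a\}}$ for an increasing sequence of saturated subsets $X_j\subseteq X$; equivalently that $c_j$ is the ribbon $r_{X_j,\mathbf a}$ in the sense of \autoref{D:ribbon}, and in particular that $\Delta_{X_j}\preccurlyeq c_j\Delta_{X_j}\cdot(\text{something})$ behaves compatibly. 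Using \autoref{L:dual_lcm} to control the individual right-complements $\mathbf b\backslash\mathbf a$ by atoms (or by $\mathbf a$ itself, when $\mathbf b$ commutes appropriately), the two bullet conditions in \autoref{L:dual_chain_of_subsets} say exactly that at each stage either forming the complement against $\mathbf b\in X_j$ leaves $c_j$ unchanged (condition (1): $\mathbf a\preccurlyeq \mathbf b\backslash\mathbf a$ so $\mathbf b$ contributes nothing new), or it enlarges $c_j$ by an atom already accounted for in a previous level $A_{i-1}$ (condition (2)), so the new $X_{j+1}$ is again saturated and the process climbs the chain $A_0\subset A_1\subset\cdots\subset A_r=\mathcal A\setminus X$. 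Hence the chain stabilizes at $c_m=r_{\overline X',\mathbf a}$ for some saturated $\overline X'\subseteq X$, and since $\mathbf a\notin X$ one has in fact $\overline X'=$ (the $X$-part of) the "connected component" of $\mathbf a$, so $c_m=r_{X,\mathbf a}$.

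Once $\rho_{\mathbf a}(x)=r_{X,\mathbf a}=\Delta_X^{-1}\Delta_{X\cup\{\mathbf a\}}$ is established, the conclusion is the same mechanism as in the proof of \autoref{P:braid_support_preserving}: the element $\Delta_{X\cup\{\mathbf a\}}$ is balanced and simple, so conjugation by it permutes the atoms dividing it; restricting, $r_{X,\mathbf a}$ conjugates the atom set $X$ onto a set $Z$ of atoms of the same cardinality, and since the conjugation acts letter-by-letter on any word for $x$, the set $Z$ is exactly $\operatorname{Supp}(x^{\rho_{\mathbf a}(x)})=Y$; therefore $(G_X)^{\rho_{\mathbf a}(x)}=G_Y$. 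Feeding this into \autoref{P:atom_in_X_support-preserving} disposes of the atoms $\mathbf a\in X$ as well, and \autoref{P:support_preserved_by_minimal_positive_conjugators} then yields that $(G,M(c),\Delta)$ is support-preserving.

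I expect the main obstacle to be the bookkeeping in the inductive step identifying every converging prefix $c_j$ with a ribbon $r_{X_j,\mathbf a}$ for a saturated $X_j$: one must check that forming the right-complement $c_j\vee(x\backslash c_j)$ never produces a prefix that fails to be of ribbon form, which is where the precise output of the computer verification in \autoref{L:dual_chain_of_subsets} is used — conditions (1) and (2) are designed to guarantee exactly that the "new atoms" appearing are governed by the earlier level $A_{i-1}$ and that the saturated-ness is preserved. A secondary subtlety is handling the degenerate cases where $X'$ is trivial (so $c_j$ is not enlarged at all) and where $\mathbf a$ is not adjacent to $X$ (so $\rho_{\mathbf a}(x)=\mathbf a$ and the statement is immediate), but these are exactly the base cases of the induction and require no extra work beyond remarking that $r_{X,\mathbf a}=r_{X_1,\mathbf a}$ when $X=X_1\sqcup X_2$ with $X_2$ commuting with $X_1\cup\{\mathbf a\}$, as in the $G(e,e,n)$ proof.
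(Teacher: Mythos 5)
There is a genuine gap: your central claim that for every atom $\mathbf a\notin X$ the minimal conjugator is the ribbon, $\rho_{\mathbf a}(x)=r_{X,\mathbf a}=\Delta_X^{-1}\Delta_{X\cup\{\mathbf a\}}$, is false in the dual braid monoids, and the inductive identification of the converging prefixes $c_j$ with ribbons $r_{X_j,\mathbf a}$ fails with it. Concretely, in $G_{24}$ take $\mathbf x=b_1^4$, so $X=\{b_1\}$ and $A_0=\{b_2,b_3,b_6,b_7\}$. For $\mathbf a=b_4$ one has $b_1\vee b_4=b_1b_2$, so $r_{X,b_4}=b_1^{-1}(b_1b_2)=b_2$, which does not even admit $b_4$ as a prefix — whereas $b_4=c_0\preccurlyeq\rho_{b_4}(\mathbf x)$ by construction. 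In fact the first converging prefix is $c_1=b_4\vee(b_1\backslash b_4)=b_4\vee b_2b_3=\Delta$, so $\rho_{b_4}(\mathbf x)$ contains $b_2$ strictly and is \emph{not} a minimal positive conjugator. You have transported the $G(e,e,n)$/classical-braid phenomenon ($\rho_{s_j}(x)=r_{X,s_j}$ for all $s_j\notin X$) into a setting where it does not hold.

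The correct dichotomy, which is what \autoref{L:dual_chain_of_subsets} is actually engineered to produce, is: (i) for $\mathbf a\in A_0$ (i.e. $\Delta_X\mathbf a$ simple) one gets $\rho_{\mathbf a}(\mathbf x)=\mathbf a$ outright, and \autoref{L:dual_lcm} shows conjugation by $\mathbf a$ carries $X$ bijectively onto a saturated set $Y$ with $\Delta_Y=\mathbf a^{-1}\Delta_X\mathbf a$ and $Y=\operatorname{Supp}(\mathbf x^{\mathbf a})$; (ii) for $\mathbf a\in A_i\setminus A_0$ with $i>0$, conditions (1)--(2) of \autoref{L:dual_chain_of_subsets} force some pre-minimal conjugator to acquire a prefix $\mathbf d\in A_{i-1}$, and an induction on $i$ combined with \autoref{L:unnecessary_arrow} shows that $\rho_{\mathbf a}(\mathbf x)$ is \emph{not} a minimal positive conjugator, so it never has to be shown to preserve the support. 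Your reading of condition (2) as "the chain is enlarged by an atom already accounted for, so $X_{j+1}$ is again saturated and the process converges to a ribbon" is the opposite of its intended use: it is a disqualification criterion, not a convergence criterion. Once the dichotomy is in place, \autoref{P:atom_in_X_support-preserving} handles $\mathbf a\in X$ exactly as you say, and the conclusion follows; but as written your argument would assert support-preservation for conjugators that are not minimal and are not ribbons, and the key inductive step has no valid justification.
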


\begin{proof}
Let $\mathbf x\in M(c)$ be a positive element, and let $X=\operatorname{Supp}(\mathbf x)$. If $X=\mathcal A$, the structure is support preserving by \autoref{P:atom_in_X_support-preserving}. Hence we can assume that $X\subsetneqq \mathcal A$.

Consider the chain of subsets $A_0\subset A_1\subset \cdots \subset A_r = \mathcal A\setminus X$ described in \autoref{L:dual_chain_of_subsets}.

Let $\mathbf a\in A_0$. By definition, $\Delta_X \mathbf a$ is a simple element. Given $\mathbf b\in X$, we know that $\mathbf b$ is a suffix of $\Delta_X$, hence $\mathbf{ba}$ is a suffix of the simple element $\Delta_X \mathbf a$, so $\mathbf{ba}$ is simple. By \autoref{L:dual_lcm}, $\mathbf a\vee \mathbf b=\mathbf{ba}=\mathbf{as}$ for some atom $\mathbf s$. This holds for every $\mathbf b\in X$, therefore conjugation by $\mathbf a$ sends $X$ to some subset $Y\subset \mathcal A$. It follows that $\mathbf a^{-1}\mathbf{xa}$ is a positive element $\mathbf y$, and this implies that $\rho_{\mathbf a}(\mathbf x)=\mathbf a$. So, in order to see that this minimal positive conjugator preserves the support, we need to show that $\operatorname{Supp}(\mathbf y)=Y$.

Consider a subset $X'=\{\mathbf x_1,\ldots,\mathbf x_m\}\subset X$ such that $\mathbf x_1\vee \cdots \vee \mathbf x_m =\Delta_X$. Let us see that $\mathbf x_1 \mathbf a \vee \cdots \vee \mathbf x_m \mathbf a=\Delta_X\mathbf a$. Since $\Delta_X\mathbf a$ is simple and each $\mathbf x_i$ is a suffix of $\Delta_X$, it follows that $\mathbf x_i \mathbf a$ is a suffix, hence a prefix of $\Delta_X\mathbf a$, for $i=1,\ldots,m$. Therefore $\mathbf x_1 \mathbf a \vee \cdots \vee \mathbf x_m \mathbf a \preccurlyeq \Delta_X \mathbf a$. But then we have that $\Delta_X = \mathbf x_1 \vee \cdots \vee \mathbf x_m \preccurlyeq \mathbf x_1 \mathbf a \vee \cdots \vee \mathbf x_m \mathbf a \preccurlyeq \Delta_X\mathbf a$. Since $\ell(\Delta_X\mathbf a)=\ell(\Delta_X)+1$ (because the Garside structure is homogeneous), if an element $\alpha$  satisfies $\Delta_X\preccurlyeq \alpha \preccurlyeq \Delta_X\mathbf a$, we must have either $\ell(\alpha)=\ell(\Delta_X)$ or $\ell(\alpha)=\ell(\Delta_X)+1$. In the former case $\alpha=\Delta_X$, and in the latter $\alpha=\Delta_X\mathbf a$, since two positive elements of the same length, one being prefix of the other, must be equal. But in our case $\mathbf x_1\mathbf a$ cannot be a prefix of $\Delta_X$, since $\mathbf a \notin X$ and $X$ is saturated. Therefore $\mathbf x_1 \mathbf a \vee \cdots \vee \mathbf x_m \mathbf a \neq \Delta_X$, and then $\mathbf x_1 \mathbf a \vee \cdots \vee \mathbf x_m \mathbf a=\Delta_X\mathbf a$, as claimed.

We keep considering $X'=\{\mathbf x_1,\ldots,\mathbf x_m\}\subset X$ such that $\mathbf x_1\vee \cdots \vee \mathbf x_m =\Delta_X$. For $i=1,\ldots,m$, let $\mathbf y_i=\mathbf a^{-1}\mathbf x_i \mathbf a \in Y$, and denote $Y'=\{\mathbf y_1,\ldots,\mathbf y_m\}\subset Y$. Then we have:
$$
  \mathbf y_1\vee \cdots \vee \mathbf y_m
= \mathbf a^{-1}\mathbf x_1 \mathbf a \vee \cdots \vee \mathbf a^{-1}\mathbf x_m \mathbf a
= \mathbf a^{-1}(\mathbf x_1 \mathbf a \vee \cdots \vee \mathbf x_m \mathbf a) =\mathbf a^{-1}\Delta_X \mathbf a.
$$
Notice that the final result of the equality does not depend on $X'$, but on $X$. If $X'=X$ we have $Y'=Y$, by construction. Hence the above equality shows that $\Delta_Y=\mathbf a^{-1}\Delta_X\mathbf a$. But then, for every other subset $X'$ we obtain $\mathbf y_1\vee \cdots \vee \mathbf y_m = \Delta_Y$ as well.

Let us see that $Y$ is a saturated set of atoms.  Let $\mathbf s$ be an atom such that $\mathbf s\preccurlyeq \Delta_Y$. We need to show that $\mathbf s\in Y$. From $\mathbf s\preccurlyeq \Delta_Y$ it follows that $\mathbf a \mathbf s \preccurlyeq \mathbf a \Delta_Y = \Delta_X \mathbf a$. Since the latter is a simple element, it follows that $\mathbf a\mathbf s$ is simple. Hence, by \autoref{L:dual_lcm}, there exists an atom $\mathbf t$ such that $\mathbf{as}=\mathbf{ta}$. Now $\mathbf{ta}$ is a prefix, hence a suffix of $\Delta_X \mathbf a$, and this implies that $\mathbf t$ is a suffix of $\Delta_X$. Since $X$ is saturated, $\mathbf t\in X$. Therefore $\mathbf s = \mathbf a^{-1}\mathbf{ta} \in \mathbf a^{-1}X\mathbf a = Y$. This shows that $Y$ is saturated.

Now let $\mathbf u_1\cdots \mathbf u_k$ be a decomposition of $\mathbf x$ as a product of atoms (notice that $\mathbf u_i\in X$ for every $i$). For $i=1,\ldots,k$, let $\mathbf v_i=\mathbf a^{-1}\mathbf u_i \mathbf a$. Then $\mathbf y=\mathbf v_1\cdots \mathbf v_k$. We know that $\operatorname{Supp}(\mathbf x)=X$, that is, $\Delta_X=\mathbf u_1 \vee \cdots \vee \mathbf u_k$. From the above arguments, taking $X'=\{\mathbf u_1,\ldots,\mathbf u_k\}$ and $Y'=\{\mathbf v_1,\ldots,\mathbf v_k\}$ (it does not matter if some atoms are repeated), we have $\mathbf v_1 \vee \cdots \vee \mathbf v_k=\Delta_Y$, where $Y$ is saturated. Hence $\operatorname{Supp}(\mathbf y)=Y$.

We know that $\rho_{\mathbf a}(\mathbf x)=\mathbf a$ conjugates $X$ to $Y$, so it conjugates $G_X$ to $G_Y$. Hence $\rho_{\mathbf a}(\mathbf x)$ preserves the support if $\mathbf a\in A_0$.

Now suppose that $\mathbf a \in A_i\setminus A_0$ for some $i>0$. We claim that $\rho_{\mathbf a}(\mathbf x)$ is not a minimal positive conjugator. We show this claim by induction on $i$.

Suppose that $i=1$, and let $X'$ be the subset of $X$ described in \autoref{L:dual_chain_of_subsets}. Write $\mathbf x=\mathbf u_1\cdots \mathbf u_k$ as a product of atoms. Notice that not all these atoms can belong to $X'$, otherwise $\operatorname{Supp}(\mathbf x)\subset X'\subsetneqq X$, which is not possible. Let $j$ be the first index such that $\mathbf u_j\in X\setminus X'$. We proceed to compute the pre-minimal conjugators, starting with $\mathbf s_{0,0}=\mathbf a$. If $\mathbf a\preccurlyeq \mathbf s_{0,t}$ for some $t<j$, then $\mathbf a \preccurlyeq \mathbf u_t\backslash \mathbf a \preccurlyeq \mathbf u_t \backslash \mathbf s_{0,t} = \mathbf s_{0,t+1}$. This shows that $\mathbf a\preccurlyeq \mathbf s_{0,j-1}$. Then, as $\mathbf u_j\in X\setminus X'$, there exists $\mathbf d\in A_0$ such that $\mathbf d \preccurlyeq \mathbf u_j \backslash \mathbf a \preccurlyeq \mathbf u_j \backslash \mathbf s_{0,j-1} = \mathbf s_{0,j}$. So $\mathbf d$ is a prefix of a pre-minimal conjugator. But then we know that all subsequent pre-minimal conjugators will admit $\mathbf d$ as a prefix, since $\mathbf u \backslash \mathbf d = \mathbf d$ for every $\mathbf u\in X$ and every $\mathbf d\in A_0$. Therefore, we will have $\mathbf d \preccurlyeq \mathbf s_{0,k}$, which implies that $\mathbf d$ is a prefix of the converging prefix $c_1$. We already know from the previous case that $\rho_{\mathbf d}(\mathbf x)=\mathbf d$, so it does not admit $\mathbf a$ as a prefix. Therefore, by \autoref{L:unnecessary_arrow}, $\rho_{\mathbf a}(\mathbf x)$ is not a minimal positive conjugator for $\mathbf x$.

Now suppose that $i>1$ and that the claim holds for smaller values of $i$. Let $\mathbf a\in A_i\setminus A_0$ and, as above, let $X'$ be the subset of $X$ described in \autoref{L:dual_chain_of_subsets}. Using the above argument, we know that some pre-minimal conjugator for $\mathbf a$ will admit an atom $\mathbf b\in A_{i-1}$ as a prefix. Applying \autoref{L:dual_chain_of_subsets} again, all subsequent pre-minimal conjugators will admit an atom from $A_{i-1}$ as a prefix. Hence, a converging prefix for $\mathbf a$ and $\mathbf x$ admits some atom $\mathbf s\in A_{i-1}$ as a prefix. It follows that $\mathbf s \preccurlyeq \rho_{\mathbf a}(\mathbf x)$. If $\mathbf s\in A_0$, the argument in the previous paragraph shows that $\rho_{\mathbf a}(\mathbf x)$ is not a minimal positive conjugator. If, on the contrary, $\mathbf s\in A_{i-1}\setminus A_0$, from $\mathbf s \preccurlyeq \rho_{\mathbf a}(\mathbf x)$ we obtain $\rho_{\mathbf s}(\mathbf x)\preccurlyeq \rho_{\mathbf a}(\mathbf x)$. But we know that $\rho_{\mathbf s}(\mathbf x)$ is not a minimal positive conjugator, by induction hypothesis. Therefore $\rho_{\mathbf a}(\mathbf x)$ is not a minimal positive conjugator, and the claim is shown.

Since we have $A_r=\mathcal A \setminus X$ by \autoref{L:dual_chain_of_subsets}, we have shown that for every atom $\mathbf a \in \mathcal A \setminus (A_0\cup X)$ the element $\rho_{\mathbf a}(\mathbf x)$ is not a minimal positive conjugator, while for every $\mathbf a\in A_0$ the element $\rho_{\mathbf a}(\mathbf x)=\mathbf a$ is a positive minimal conjugator and preserves the support, sending the set $X$ to a set $Y$ of the same size. By \autoref{P:atom_in_X_support-preserving}, the atoms in $X$ also preserve the support, so the Garside structure is support-preserving.
\end{proof}

In order to better understand the above result, we will use $G_{24}$ as an example. See~\cite{MARINPFEIFFER} for a detailed explanation of this Garside structure of $G_{24}$.

There are 14 atoms in the mentioned Garside structure of $G_{24}$, named $b_1,\ldots,b_{14}$, and the Garside element is $c=b_1b_2b_3$. A presentation of the Garside monoid (and of the Garside group), using these atoms is given in \autoref{F:G24_relations} (note that these are {\bf not} LCM diagrams).

\begin{figure}[ht]\caption{Defining relations of the dual braid monoid in type $G_{24}$.}\label{F:G24_relations}
$$
\xymatrix@C=6mm@R=6mm{
   b_{1} \ar[r] & b_{2} \ar[dl]
\\ b_{4} \ar[u]
}
\quad
\xymatrix@C=6mm@R=6mm{
   b_{8} \ar[r] & b_{5} \ar[dl]
\\ b_{7} \ar[u]
}
\quad
\xymatrix@C=6mm@R=6mm{
   b_{12} \ar[r] & b_{6} \ar[dl]
\\ b_{14} \ar[u]
}
\quad
\xymatrix@C=6mm@R=6mm{
   b_{2} \ar[r] & b_{11} \ar[dl]
\\ b_{10} \ar[u]
}
\quad
\xymatrix@C=6mm@R=6mm{
   b_{5} \ar[r] & b_{1} \ar[dl]
\\ b_{3} \ar[u]
}
\quad
\xymatrix@C=6mm@R=6mm{
   b_{6} \ar[r] & b_{8} \ar[dl]
\\ b_{9} \ar[u]
}
\quad
\xymatrix@C=6mm@R=6mm{
   b_{11} \ar[r] & b_{12} \ar[dl]
\\ b_{13} \ar[u]
}
$$
$$
\xymatrix@C=6mm@R=6mm{
   b_{6} \ar[r] & b_{13} \ar[d]
\\ b_{1} \ar[u] & b_{10} \ar[l]
}
\quad
\xymatrix@C=6mm@R=6mm{
   b_{11} \ar[r] & b_{4} \ar[d]
\\ b_{8} \ar[u] & b_{3} \ar[l]
}
\quad
\xymatrix@C=6mm@R=6mm{
   b_{1} \ar[r] & b_{7} \ar[d]
\\ b_{12} \ar[u] & b_{9} \ar[l]
}
\quad
\xymatrix@C=6mm@R=6mm{
   b_{8} \ar[r] & b_{14} \ar[d]
\\ b_{2} \ar[u] & b_{13} \ar[l]
}
\quad
\xymatrix@C=6mm@R=6mm{
   b_{12} \ar[r] & b_{10} \ar[d]
\\ b_{5} \ar[u] & b_{4} \ar[l]
}
\quad
\xymatrix@C=6mm@R=6mm{
   b_{2} \ar[r] & b_{3} \ar[d]
\\ b_{6} \ar[u] & b_{7} \ar[l]
}
\quad
\xymatrix@C=6mm@R=6mm{
   b_{5} \ar[r] & b_{9} \ar[d]
\\ b_{11} \ar[u] & b_{14} \ar[l]
}
\quad
$$
\end{figure}
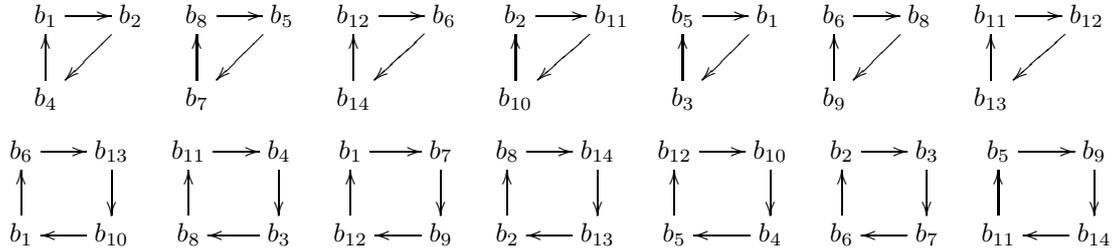

The relations are defined as follows. In every diagram of \autoref{F:G24_relations}, all possible products of two consecutive elements (following the arrows) determine the same element in the monoid (and in the group). For instance, the first diagram produces the relations $b_1b_2=b_2b_4=b_4b_1$.

Notice that all relations are homogeneous (with elements of length 2). Hence, as the Garside element ($b_1b_2b_3$) has length 3, all proper simple elements have length at most 2. If two different atoms lie in the same diagram, their least common multiple has length 2 (for example, $b_6\vee b_{10}=b_6b_{13}=b_{10}b_1$). On the other hand, if there is no diagram containing two given atoms, their least common multiple is $c$ (this happens in $G_{24}$, but it does not in other groups considered in this section).

It is worth mentioning that we have ordered the diagrams in \autoref{F:G24_relations} in such a way that, if we conjugate an element in \autoref{F:G24_relations} by $c$, we obtain the corresponding element in the diagram on its right (if the element appears in the rightmost diagram, the corresponding element is in the leftmost one). So conjugation by $c$ permutes the atoms as follows:
$$
  b_{1} \rightarrow b_{8} \rightarrow b_{12} \rightarrow b_{2} \rightarrow b_{5} \rightarrow b_{6} \rightarrow b_{11} \rightarrow b_{1},
$$
$$
  b_{4} \rightarrow b_{7} \rightarrow b_{14} \rightarrow b_{10} \rightarrow b_{3} \rightarrow b_{9} \rightarrow b_{13} \rightarrow b_{4}.
$$

Now notice that the nontrivial standard parabolic subgroups of $G_{24}$, with respect to this structure, are the following ones: cyclic subgroups generated by one atom; subgroups generated by the atoms in a diagram of \autoref{F:G24_relations}; the whole group $G_{24}$.

\begin{example}
Let $\mathbf x = b_1^4$. We have $X=\operatorname{Supp}(x)=\{b_1\}$. Then $A_0=\{b_2,b_3,b_6,b_7\}$. If $\mathbf a\in A_0$, for instance if $\mathbf a=b_2$, the computation of the pre-minimal conjugators for $\mathbf a$ and $\mathbf x$ is the following:
$$
\xymatrix@C=12mm@R=12mm{
   \ar[d]_{\mathbf a= b_2} \ar[r]^{b_1}
&  \ar[d]^{b_2} \ar[r]^{b_1}
&  \ar[d]^{b_2} \ar[r]^{b_1}
&  \ar[d]^{b_2} \ar[r]^{b_1}
&  \ar[d]^{b_2}
\\ \ar[r]_{b_4}
&  \ar[r]_{b_4}
&  \ar[r]_{b_4}
&  \ar[r]_{b_4}
&
}
$$
Hence, the only converging prefix is $\mathbf a$, and $\rho_{\mathbf a}=\mathbf a=b_2$. In this case, one has $\mathbf x^{\mathbf a}=(b_1^4)^{\mathbf a}=b_4^4= \mathbf y$, and $\mathbf a$ conjugates $X=\{b_1\}$ to $Y=\{b_4\}$.

On the other hand, if $\mathbf a\notin X\cup A_0$, for instance if $\mathbf a=b_4$, we have the following:
$$
\xymatrix@C=12mm@R=12mm{
   \ar[d]_{\mathbf a= b_4} \ar[r]^{b_1}
&  \ar[d]^{b_2b_3} \ar[r]^{b_1}
&  \ar[d]^{b_2b_3} \ar[r]^{b_1}
&  \ar[d]^{b_2b_3} \ar[r]^{b_1}
&  \ar[d]^{b_2b_3}
\\ \ar[r]_{b_1b_3}
&  \ar[r]_{b_8}
&  \ar[r]_{b_8}
&  \ar[r]_{b_8}
&
}
$$
From this diagram we see that $b_2\in A_0$ is a prefix of a converging prefix for $\mathbf a$ (the converging prefix $c_1=b_4\vee b_2b_3 = \Delta$), hence $b_1=\rho_{b_2}(\mathbf x)\preccurlyeq \rho_{\mathbf a}(\mathbf x)$. Hence $\rho_{\mathbf a}(x)$ is not minimal.
\end{example}

\begin{example}
Let $\mathbf x = b_1b_1b_6b_{10}$. In this case $X=\{b_1,b_6,b_{10},b_{13}\}$, and $A_0=\{b_2\}$.
If $\mathbf a\in A_0$, that is if $\mathbf a=b_2$, the computation of the pre-minimal conjugators for $\mathbf a$ and $\mathbf x$ is:
$$
\xymatrix@C=12mm@R=12mm{
   \ar[d]_{\mathbf a= b_2} \ar[r]^{b_1}
&  \ar[d]^{b_2} \ar[r]^{b_1}
&  \ar[d]^{b_2} \ar[r]^{b_6}
&  \ar[d]^{b_2} \ar[r]^{b_{10}}
&  \ar[d]^{b_2}
\\ \ar[r]_{b_4}
&  \ar[r]_{b_4}
&  \ar[r]_{b_3}
&  \ar[r]_{b_{11}}
&
}
$$
We see that $\rho_{\mathbf a}(\mathbf x)=\mathbf a=b_2$, and that $\mathbf a$ conjugates $X=\{b_1,b_6,b_{10},b_{13}\}$ to $Y=\{b_4,b_3,b_{11}, b_8\}$. Hence it conjugates $\mathbf x$ to $\mathbf y=b_4b_4b_{3}b_{11}$, whose support is $Y$.

On the other hand, if $\mathbf a\notin X\cup A_0$, for instance if $\mathbf a=b_3$, we have the following:
$$
\xymatrix@C=12mm@R=12mm{
   \ar[d]_{\mathbf a= b_3} \ar[r]^{b_1}
&  \ar[d]^{b_3} \ar[r]^{b_1}
&  \ar[d]^{b_3} \ar[r]^{b_6}
&  \ar[d]^{b_2} \ar[r]^{b_{10}}
&  \ar[d]^{b_2}
\\ \ar[r]_{b_5}
&  \ar[r]_{b_5}
&  \ar[r]_{b_7}
&  \ar[r]_{b_{11}}
&
}
$$
In this case we see that $b_3$ appears as a pre-minimal conjugator as long as the letters of $\mathbf x$ equal $b_1$, but it changes to a pre-minimal conjugator starting with $b_2$ when one encounters the letter $b_6$, which does not belong to $A_0$. From that moment on, the forthcoming pre-minimal conjugators start with $b_2$, so the converging prefix $c_1=b_3\vee b_2$ admits $b_2$ as a prefix. Therefore, $\rho_{\mathbf a}(\mathbf x)$ is not a minimal positive conjugator in this case.
\end{example}

We have then shown that the dual Garside structures of the groups $G_{24}$, $G_{27}$, $G_{29}$, $G_{33}$ and $G_{34}$ are support-preserving LCM-structures. Therefore, by \autoref{T:parabolic_closure_exists}, every element admits a parabolic closure.

\section{Intersection of parabolic subgroups}
\label{sect:intersectparabs}

In this section we will adapt the arguments in~\cite{CGGW} to show that, in any irreducible complex braid group
 (except possibly in $G_{31}$), the intersection of parabolic subgroups is a parabolic subgroup.

\subsection{The main Garside-theoretic argument}\label{Garside_argument}

We will consider first the complex braid groups which admit a support-preserving LCM-Garside structure $(B,B^+,\Delta)$, as shown in the previous section. We also notice that, in all those Garside structures, we have the following two properties:
\begin{enumerate}
 \item {\em Relations are homogeneous}. This implies that all positive words representing an element $x\in B^+$ have the same length, and we can use that quantity as the length of $x$.

 \item {\em Simple elements are square-free}. This means that, for every atom $a\in \mathcal A$, the element $aa$ is not simple. This implies that $aa$ can never be a factor of a simple element. It also implies that, given two simple elements $\alpha$ and $\beta$, if the set $\operatorname{Suff}(\alpha)$ of atoms which are suffixes of $\alpha$ contains the set $\operatorname{Pref}(\beta)$ of atoms which are prefixes of $\beta$, then the product  $\alpha\beta$ is in left normal form.

\end{enumerate}

The first property is clear, because the presentations providing the Garside structures are
homogeneous. The second one is, by \autoref{prop:intervalissquarefree}, a consequence of the fact that all the Garside structures involved here can be defined as interval monoids associated to a generating set for $W$ made of involutions. This covers the case of the monoids attached to real reflection groups, to the $G(e,e,n)$, and to
the well-generated exceptional 2-reflection groups.

In this section, $B$ will be one of the mentioned complex braid groups, and $(B,B^+,\Delta)$ will denote the studied Garside structure, which is a homogeneous, support-preserving LCM-Garside structure. Moreover, the parabolic subgroups for these Garside structures correspond to the
parabolic subgroups of $B$ which were defined independently on the choice of a Garside structure, so
there is no ambiguity here when using this term. We will not use the square-free property in this section (although it was used in the previous section to show that the Garside structure is support-preserving).

Given such a complex braid group $B$ endowed with the corresponding Garside structure, we start by defining, for every parabolic subgroup $H$, a special element $z_H\in H$. In the case of an irreducible parabolic subgroup, we shall see that it coincides with our previous definition.
We start by defining it for standard parabolic subgroups. Notice that this definition coincides with the ones in \cite{GODELLE2003} and \cite{CGGW} for Artin--Tits groups of spherical type.

\begin{definition}
Let $B_0=G_X$ be a standard parabolic subgroup of $B$. We define $z_{B_0}=(\Delta_X)^e$, where $e$ is the smallest positive integer such that $(\Delta_X)^e$ is central in $B_0$.
\end{definition}

\begin{proposition}\label{P:conjugate standard_H_and_z_H}
Let $B_1=G_X$ and $B_2=G_Y$ be two standard parabolic subgroups of $B$ which are conjugate. Then $\Delta_X$ is conjugate to $\Delta_Y$. Moreover, if $z_{B_1}=(\Delta_X)^e$ then $z_{B_2}=(\Delta_Y)^e$, so $z_{B_1}$ is conjugate to $z_{B_2}$.
\end{proposition}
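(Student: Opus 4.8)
The plan is to deduce the ``moreover'' part from the first assertion, and to prove the first assertion by tracking how $\Delta_X$ behaves under a conjugation that relates $B_1$ and $B_2$.

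\emph{Reduction of the second statement.} Since $X$ is saturated, $\operatorname{Supp}(\Delta_X)=X$, and since $\Delta_X\in B^{+}$ is positive it is recurrent; hence \autoref{T:parabolic_closure_for_recurrent} gives $\PC(\Delta_X)=G_{\operatorname{Supp}(\Delta_X)}=G_X=B_1$, and likewise $\PC(\Delta_Y)=B_2$. Suppose it is already known that $\Delta_Y=k\Delta_X k^{-1}$ for some $k\in B$. By \autoref{L:parabolic_closure_for_conjugates}, $B_2=\PC(\Delta_Y)=k\,\PC(\Delta_X)\,k^{-1}=kB_1k^{-1}$, so conjugation by $k$ is a group isomorphism $B_1\to B_2$ carrying $\Delta_X$ to $\Delta_Y$. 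An isomorphism sends the centre to the centre, so for every $j\geq 1$ one has $\Delta_X^{\,j}\in Z(B_1)$ if and only if $\Delta_Y^{\,j}\in Z(B_2)$. Thus the minimal $e$ with $z_{B_1}=\Delta_X^{\,e}$ is also minimal with $\Delta_Y^{\,e}$ central in $B_2$, i.e. $z_{B_2}=\Delta_Y^{\,e}=k\Delta_X^{\,e}k^{-1}=kz_{B_1}k^{-1}$, which both gives the stated formula and exhibits the conjugacy. So it remains to prove that $\Delta_X$ is conjugate to $\Delta_Y$.

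\emph{Conjugacy of $\Delta_X$ and $\Delta_Y$.} Write $B_2=B_1^{g}$. Then $\Delta_X^{\,g}\in B_2$ is conjugate to the positive element $\Delta_X$, so by \autoref{P:conjugate_to_recurrent_in_G_X} and \autoref{P:recurrent=positive} there is $\beta\in B_2$ with $v:=\Delta_X^{\,g\beta}\in B_2^{+}$ recurrent; since $\beta\in B_2$ we get $\PC(v)=B_1^{\,g\beta}=B_2$, whence (by \autoref{T:parabolic_closure_for_recurrent}, $v$ being recurrent) $\operatorname{Supp}(v)=Y$. What must still be shown is that $v$ is $B_2$-conjugate to $\Delta_Y$ itself, and not merely to some positive full-support element of $B_2$. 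Here I would invoke Godelle's analysis of ribbons: conjugate standard parabolic subgroups of an LCM-Garside group are linked by a chain of elementary ribbon conjugations $r_{Z,u}=\Delta_Z^{-1}\Delta_{Z\cup\{u\}}$; and a one-line computation gives $r_{Z,u}^{-1}\Delta_Z r_{Z,u}=\Delta_{Z\cup\{u\}}^{-1}\Delta_Z\Delta_{Z\cup\{u\}}$, which is positive and recurrent, hence (by \autoref{T:parabolic_closure_for_recurrent} and \autoref{P:support_preserved_for_recurrent}) equals $\Delta_{Z'}$ for the saturated set $Z'$ defining the standard parabolic $G_Z^{\,r_{Z,u}}$. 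Following such a chain from $X$ to $Y$ then carries $\Delta_X$ to $\Delta_Y$. An alternative, more computational route keeps the element $v$: passing to $W$, the element $\pi(\Delta_X)$ is the distinguished ``Coxeter-type'' element of $W_1$ attached to the monoid in use (a Coxeter element in the dual case, the longest element in the real/Shephard case, as in \autoref{sect:descrArtinShephard} and the following subsections); one identifies $\pi(v)$ with the analogous element of $W_2$ up to $W_2$-conjugacy, notes that $v^{e}=z_{B_1}^{\,g\beta}$ is a positive generator of $Z(B_2)$ so equals $z_{B_2}=\Delta_Y^{\,e}$, and then uses the known classification of periodic elements of a complex braid group (those with a fixed power equal to the central generator form one conjugacy class) to conclude $v\sim_{B_2}\Delta_Y$; the reducible case is reduced to this factor by factor via the product decomposition of \autoref{sect:subgroupsproducts}.

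The genuinely nontrivial point is the one just isolated: realizing the subgroup conjugacy $B_1\sim B_2$ by an element that actually sends $\Delta_X$ to $\Delta_Y$ — equivalently, showing that the positive element $v$ produced above is $B_2$-conjugate to the Garside element $\Delta_Y$ of $B_2$ rather than to some other positive element with support $Y$. Everything else is formal given the support-preserving LCM structure and the parabolic-closure machinery, so the real work is the ribbon-connectedness of conjugate standard parabolic subgroups (or, on the alternative route, the classification input for periodic elements).
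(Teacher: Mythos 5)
Your reduction of the ``moreover'' part is correct and clean: since $\Delta_X$ and $\Delta_Y$ are positive with saturated supports $X$ and $Y$, \autoref{T:parabolic_closure_for_recurrent} gives $\PC(\Delta_X)=G_X$ and $\PC(\Delta_Y)=G_Y$, so by \autoref{L:parabolic_closure_for_conjugates} any element conjugating $\Delta_X$ to $\Delta_Y$ automatically conjugates $G_X$ to $G_Y$ and hence matches their centres; the equality of the exponents $e$ follows. The problem is the first assertion, which you correctly isolate as ``the real work'' but then do not actually prove. Route 1 rests on the claim that conjugate standard parabolic subgroups are linked by a chain of elementary ribbon conjugations each sending $\Delta_Z$ to $\Delta_{Z'}$; this is nowhere established in the paper for general support-preserving LCM-Garside structures (the ribbon computations in Section 5 concern minimal positive conjugators of arbitrary positive elements, not a connectedness statement for conjugate standard parabolics), and it is not a one-line consequence of \cite{GODELLE2007} either. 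Route 2 is worse on two counts: the step ``$v^e=z_{B_1}^{\,g\beta}$ is a positive generator of $Z(B_2)$ so equals $z_{B_2}$'' quietly uses that conjugation of parabolics matches their $z$-elements, which is \autoref{P:conjugate_H_and_z_H} --- a statement the paper \emph{deduces from} the present proposition; and the ``classification of periodic elements'' (roots of the central generator forming a single conjugacy class) is a deep theorem depending on the $K(\pi,1)$ property, far outside the elementary Garside toolkit this section is built on.

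The missing step has a short self-contained proof that you did not find. Having produced $v=(\Delta_X)^{c_1c_2}\in B_2^+$ by iterated swaps with all conjugators in $B_2$, observe that (assuming $B_1\neq B$) $\inf(\Delta_X)=0$ and $\sup(\Delta_X)=1$, and the infimum cannot be increased by conjugation; now apply iterated \emph{decycling} to $v$, which again uses only conjugators in $B_2=G_Y$ and lowers the supremum to its conjugacy-minimal value $1$. The result is a nontrivial simple element of $G_Y$, i.e.\ a divisor of $\Delta_Y$, so $1\preccurlyeq (\Delta_X)^{c_1c_2c_3}\preccurlyeq\Delta_Y$ with $c_3\in B_2$. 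Homogeneity gives $|\Delta_X|\leq|\Delta_Y|$, the symmetric argument gives the reverse inequality, and a positive prefix of $\Delta_Y$ of the same length as $\Delta_Y$ must equal $\Delta_Y$. This is the argument you need in place of the ribbon-connectedness or periodic-element appeals.
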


\begin{proof}
Let $c_1\in B$ be such that $(B_1)^{c_1}=B_2$, and consider $(\Delta_X)^{c_1}\in B_2$. Since $B_2$ is standard, we can conjugate $(\Delta_X)^{c_1}$ by an element $c_2\in B_2$ (a conjugating element for iterated swaps), so that $(\Delta_X)^{c_1c_2}$ is recurrent. But $\RC(\Delta_X)=C_+(\Delta_X)$, hence $(\Delta_X)^{c_1c_2}$ is positive.

We can now assume that $B_1$ is not equal to $B$, otherwise $B_1=B_2=B$ and the result is trivial. It follows that $\inf(\Delta_X)=0$ and $\sup(\Delta_X)=1$, and that one cannot conjugate $\Delta_X$ to an element of bigger infimum. Hence $\inf((\Delta_X)^{c_1c_2})=0$. We can now apply iterated {\it decycling}, to decrease the supremum of $(\Delta_X)^{c_1c_2}$ to its minimal possible value (which is 1). All the conjugating elements will belong to $B_2$, hence there is $c_3\in B_2$ such that $(\Delta_X)^{c_1c_2c_3}$ is a simple element in $B_2=G_Y$. Therefore $1\preccurlyeq (\Delta_X)^{c_1c_2c_3}\preccurlyeq \Delta_Y$.

Since relations are homogeneous in $B$, it follows that $|\Delta_X|=|(\Delta_X)^{c_1c_2c_3}| \leq |\Delta_Y|$. Now, inverting the roles of $B_1$ and $B_2$, the same proof yields $|\Delta_Y|\leq |\Delta_X|$. Therefore $|\Delta_X|=|\Delta_Y|$.

Recall that $(\Delta_X)^{c_1c_2c_3}\preccurlyeq \Delta_Y$. Since both positive elements have the same length, they are equal. This shows that $\Delta_X$ is conjugate to $\Delta_Y$.

Now notice that $(G_X)^{c_1c_2c_3}=(G_Y)^{c_2c_3}=G_Y$. Hence the center of $G_X$ is conjugated by $c_1c_2c_3$ to the center of $G_Y$. Since $((\Delta_X)^k)^{c_1c_2c_3}=(\Delta_Y)^k$ for every $k>0$, it follows that if $z_{B_1}=(\Delta_X)^e$ then $z_{B_2}=(\Delta_Y)^e$. Clearly this implies that $z_{B_1}$is conjugate (by $c_1c_2c_3$) to $z_{B_2}$.
\end{proof}

Now we can define $z_{B_0}$ for an arbitrary parabolic subgroup $B_0$:

\begin{definition}
Let $B_0$ be a parabolic subgroup of $B$. Let $c\in B$ be such that $B_0^c$ is standard, say $B_0^c=G_X$. Then we define $z_{B_0}=(z_{G_X})^{c^{-1}}$.
\end{definition}

\begin{proposition}
Under the above conditions, the element $z_{B_0}$ is well defined.
\end{proposition}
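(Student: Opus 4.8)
The plan is to reduce the claim to a single fact about standard parabolic subgroups that is already, in essence, contained in the proof of \autoref{P:conjugate standard_H_and_z_H}: namely, that if $G_X^{\,g}=G_Y$ for two standard parabolic subgroups $G_X,G_Y$ of $B$ and some $g\in B$, then not merely are $z_{G_X}$ and $z_{G_Y}$ conjugate, but in fact $(z_{G_X})^{g}=z_{G_Y}$. Granting this, well-definedness is immediate.

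First I would set up the two competing choices. Suppose $c_1,c_2\in B$ satisfy $B_0^{c_1}=G_X$ and $B_0^{c_2}=G_Y$ with $G_X,G_Y$ standard parabolic subgroups; we must show $(z_{G_X})^{c_1^{-1}}=(z_{G_Y})^{c_2^{-1}}$. Put $g:=c_1^{-1}c_2$; then $G_X^{\,g}=B_0^{c_2}=G_Y$, and the equality to be proved is equivalent to $(z_{G_X})^{g}=z_{G_Y}$. The degenerate cases are trivial: if $B_0=B$ then $G_X=G_Y=B$ and $z_B\in Z(B)$, and if $B_0=\{1\}$ all the elements in sight equal $1$; so we may assume $G_X$ and $G_Y$ are proper.

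Next comes the key step. Running the argument in the proof of \autoref{P:conjugate standard_H_and_z_H} with its conjugating element taken to be our $g$, one obtains elements $c',c''\in G_Y$ — a conjugator carrying $(\Delta_X)^{g}$ to a recurrent, hence positive, element (this is where \autoref{P:conjugate_to_recurrent_in_G_X} guarantees $c'\in G_Y$), followed by a conjugator realizing iterated decycling (which likewise lies in $G_Y$, as observed there) — such that $(\Delta_X)^{gc'c''}=\Delta_Y$. Since conjugation by $gc'c''$ is an isomorphism $G_X\to G_Y$ sending $\Delta_X$ to $\Delta_Y$, it carries $Z(G_X)$ onto $Z(G_Y)$, so the least positive exponent $e$ with $\Delta_X^{\,e}$ central in $G_X$ equals the least one with $\Delta_Y^{\,e}$ central in $G_Y$; hence $(z_{G_X})^{gc'c''}=(\Delta_X^{\,e})^{gc'c''}=\Delta_Y^{\,e}=z_{G_Y}$. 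Finally, because $c',c''\in G_Y$ and $z_{G_Y}\in Z(G_Y)$, conjugating back by $(c'c'')^{-1}$ leaves $z_{G_Y}$ unchanged, so one may strip off $c'c''$ and conclude $(z_{G_X})^{g}=z_{G_Y}$. Combining this with $g=c_1^{-1}c_2$ gives $(z_{G_X})^{c_1^{-1}c_2}=z_{G_Y}$, i.e. $(z_{G_X})^{c_1^{-1}}=(z_{G_Y})^{c_2^{-1}}$, which is exactly the asserted independence of the choice of $c$.

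The only point that needs care — and the nearest thing to an obstacle — is verifying that the auxiliary conjugators produced by the conjugacy algorithm (the one enforcing recurrence and the one enforcing decycling) may genuinely be chosen inside $G_Y$, so that they commute with the central element $z_{G_Y}$; this is precisely what turns the "$z$ conjugate to $z$" of \autoref{P:conjugate standard_H_and_z_H} into the stronger "$z$ sent to $z$" that the present proof requires. It rests on \autoref{P:conjugate_to_recurrent_in_G_X} and on the fact that $G_Y^{+}$ is closed under positive prefixes and suffixes, both already available; everything else is bookkeeping with the identity $z_{G_X}=\Delta_X^{\,e}$ and its analogue for $G_Y$.
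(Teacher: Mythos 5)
Your proof is correct and follows essentially the same route as the paper: reduce to showing that the single conjugator $g=c_1^{-1}c_2$ carrying $G_X$ to $G_Y$ actually sends $z_{G_X}$ to $z_{G_Y}$, which is extracted from the argument proving \autoref{P:conjugate standard_H_and_z_H}. You are in fact slightly more careful than the paper, which cites only the statement of that proposition (asserting mere conjugacy of $z_{G_X}$ and $z_{G_Y}$); your explicit observation that the auxiliary conjugators lie in $G_Y$ and hence commute with the central element $z_{G_Y}$ is exactly what upgrades "conjugate" to "conjugate by the given $g$".
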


\begin{proof}
Suppose that $c_1,c_2\in B$ are such that $B_0^{c_1}=G_X$ and $B_0^{c_2}=G_Y$. We need to show that $(z_{G_X})^{c_1^{-1}}=(z_{G_Y})^{c_2^{-1}}$. But this follows from \autoref{P:conjugate standard_H_and_z_H}, since we have $(G_X)^{c_1^{-1}c_2}=G_Y$, and this implies that $(z_{G_X})^{c_1^{-1}c_2}=z_{G_Y}$.
\end{proof}

\begin{proposition} \label{prop:zB0welldefined} If $B_0$ is an irreducible parabolic subgroup of $B$, then $z_{B_0}$ is indeed the canonical
element defined in the introduction. In particular, it is independent on the choice of a Garside structure.
\end{proposition}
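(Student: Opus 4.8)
The plan is to reduce to the case of a standard parabolic subgroup and then recognize $(\Delta_X)^{e}$ as the canonical central element. First I would record that both elements at stake are equivariant under conjugation: for the Garside-theoretic $z_{B_0}$ this is built into its definition via \autoref{P:conjugate standard_H_and_z_H} (if $B_0^{g}=B_1$ then $z_{B_0}^{g}=z_{B_1}$), and for the element of the introduction it follows because conjugation by $g\in B$ carries $Z(B_0)$ isomorphically onto $Z(B_0^{g})$ while the morphism $B\to\Z$ of \autoref{sect:defcurvegraph} is a homomorphism, hence takes equal values on $x$ and $gxg^{-1}$, so it sends the positive generator of $Z(B_0)$ to the positive generator of $Z(B_0^{g})$. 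Consequently it suffices to treat a standard parabolic $B_0=G_X$, where $z_{B_0}=(\Delta_X)^{e}$ with $e$ minimal such that $(\Delta_X)^{e}$ lies in $Z(B_0)$.

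Next I would check, for $(\Delta_X)^{e}$, the three properties characterizing the canonical element of the introduction. Centrality in $B_0$ is the choice of $e$. For positivity, I would use that $\Delta_X\in B^{+}$ is a nonempty product of atoms ($B_0$ being nontrivial), and that in each of the Garside structures under consideration the atoms are braided reflections: this is \autoref{prop:dualmonoidpropbase2}(4) for the dual braid monoids, and is recorded in Section~3 for the Artin structures of real reflection groups and Shephard groups and for the standard monoids of the $G(e,e,n)$. Since a braided reflection is sent to a strictly positive integer by $B\to\Z$ (being a positively oriented meridian around the discriminant hypersurface), so is the nonempty product $(\Delta_X)^{e}$. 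The remaining and genuinely delicate point is that $(\Delta_X)^{e}$ \emph{generates} $Z(B_0)$, not merely lies in it; here I would invoke that $B_0=G_X$, being an irreducible parabolic subgroup, is the braid group of an irreducible complex reflection group $W_0$, identified by Sections~3 and~5 with the distinguished Garside structure attached to $W_0$, and cite the classical description of the centre of such braid groups (see \cite{BMR}, \cite{BESSIS} and \cite{DMM}): $Z(B_0)\simeq\Z$, and its canonical positive generator is precisely the smallest central power of the Garside element. One must be slightly careful that it is the \emph{minimal} central power $(\Delta_X)^{e}$ which is this generator, rather than a proper power of it (in Coxeter type, for instance, the full twist $\Delta^{2}$ is a proper power of the centre generator whenever $-1\in W_0$, while the minimal central power coincides with the generator), and this is exactly what the cited references supply.

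Finally, the image of $Z(B_0)\simeq\Z$ under the injective restriction of $B\to\Z$ is a nontrivial subgroup of $\Z$, which has a unique generator with positive image; since $(\Delta_X)^{e}$ is a generator of $Z(B_0)$ with positive image, it is that generator, i.e.\ it coincides with the element $z_{B_0}$ of the introduction. The last sentence of the statement is then immediate, since that element was defined with no reference to a Garside structure. I expect the only real obstacle to be the centre-generation step above; the rest is formal once one uses the identifications of Garside-standard parabolic subgroups with topological parabolic subgroups, and of their Garside elements with the distinguished ones, already established in Sections~3 and~5.
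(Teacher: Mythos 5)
Your proposal is correct and follows essentially the same route as the paper: reduce to a standard parabolic subgroup via conjugation-equivariance of both definitions, then verify that the minimal central power $(\Delta_X)^e$ of the Garside element is the canonical positive generator of $Z(B_0)$ using the known descriptions of these centres from \cite{BMR}, \cite{BESSIS} and \cite{DMM}. The paper's own proof is just a two-line remark to this effect, so your write-up simply supplies the details (equivariance, positivity via the map $B\to\Z$ on braided reflections, and the centre-generation step) that the paper leaves implicit.
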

\begin{proof} We only need to check this for a standard parabolic subgroup. The proof is then easy in all cases,
the expression of the canonical element $z_{B_0}$ being known in all cases.
\end{proof}

Notice that the above statement is specific to the case where $B_0$ is irreducible. For instance, if $B = \mathcal{B}_5$ with generators $\sigma_1,\sigma_2,\sigma_3,\sigma_4$, and $B_0 = \langle \sigma_1 \rangle \times
\langle \sigma_3,\sigma_4 \rangle$, then the element `$z_{B_0}$' considered in this section would be $\sigma_1^2 (\sigma_3\sigma_4\sigma_3)^2$
when considering the classical Garside structure, while the one associated to the dual braid monoid would
yield $z_{B_0} = \sigma_1^3 (\sigma_3\sigma_4)^3 = \sigma_1^3 (\sigma_3\sigma_4\sigma_3)^2$.

\smallskip

The following are three important properties of $z_{B_0}$:

\begin{proposition}\label{P:PC(z_H)}
Let $B_0$ be a parabolic subgroup of $B$. Then $\PC(z_{B_0})=B_0$.
\end{proposition}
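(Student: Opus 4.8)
The statement $\PC(z_{B_0}) = B_0$ is the natural counterpart, for the distinguished central element, of \autoref{T:parabolic_closure_for_recurrent}: since $z_{B_0}$ is a power of a positive balanced element (up to conjugation), it is recurrent, and its parabolic closure should be exactly the parabolic subgroup whose support it generates. The plan is to reduce to the standard case, then use \autoref{T:parabolic_closure_for_recurrent} together with the fact that $z_{G_X}$ has support exactly $X$.

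First I would invoke \autoref{L:parabolic_closure_for_conjugates}: if $B_0^c = G_X$ is standard, then $\PC(z_{B_0}) = \PC((z_{G_X})^{c^{-1}}) = \PC(z_{G_X})^{c^{-1}}$, so it suffices to prove $\PC(z_{G_X}) = G_X$ for a standard parabolic subgroup $G_X$ with $X = \overline{X}$ saturated. Now $z_{G_X} = (\Delta_X)^e$ is a positive element, hence recurrent (its left-denominator is trivial, so $\phi(z_{G_X}) = z_{G_X}$), so \autoref{T:parabolic_closure_for_recurrent} applies and gives $\PC(z_{G_X}) = G_{\operatorname{Supp}(z_{G_X})}$. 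The remaining point is to compute $\operatorname{Supp}(z_{G_X}) = \operatorname{Supp}((\Delta_X)^e)$ and to check it equals $X$. Since $z_{G_X} = (\Delta_X)^e$ is positive, its support is computed from any expression as a product of atoms; writing $\Delta_X$ as a product of atoms from $\operatorname{Supp}(\Delta_X)$ and concatenating $e$ copies shows $\operatorname{Supp}((\Delta_X)^e) \subseteq \operatorname{Supp}(\Delta_X)$. Conversely every atom of $\operatorname{Supp}(\Delta_X)$ divides $\Delta_X$, hence divides $(\Delta_X)^e$, so it lies in $\operatorname{Div}((\Delta_X)^e)$; more concretely, since $\Delta_X \preccurlyeq (\Delta_X)^e$ we get $\operatorname{Div}(\Delta_X) \subseteq \operatorname{Div}((\Delta_X)^e)$, whence $\operatorname{Supp}(\Delta_X) \subseteq \operatorname{Supp}((\Delta_X)^e)$. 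Therefore $\operatorname{Supp}((\Delta_X)^e) = \operatorname{Supp}(\Delta_X) = \overline{X} = X$, the last equality because $X$ is saturated. This yields $\PC(z_{G_X}) = G_X$, and conjugating back by $c^{-1}$ finishes the proof.

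The main subtlety — not really an obstacle, but the one point that needs care — is making sure the ``support'' one computes for $(\Delta_X)^e$ is genuinely $X$ and not merely $\overline{X}$: this is exactly why the reduction must be to a \emph{saturated} generating set, which is guaranteed by the fact that $\PC$ is computed via standard parabolic subgroups $G_Y$ with $Y = \overline{Y}$ (see the proof of \autoref{T:parabolic_closure_for_recurrent}). One also needs $\PC$ to be defined at all on $z_{B_0}$, which is automatic since $z_{B_0}$ is conjugate to a positive (hence recurrent) element and the Garside structure is a support-preserving LCM-Garside structure, so \autoref{T:parabolic_closure_exists} applies. A minor additional remark: one should note that $B_0 = G$ itself is allowed, in which case $z_{B_0} = \Delta^e$ for suitable $e$, $\operatorname{Supp}(\Delta^e) = \operatorname{Supp}(\Delta) = \mathcal A$, and $\PC(z_{B_0}) = G = B_0$ trivially; the argument above covers this case without modification.
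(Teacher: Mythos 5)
Your proof is correct and follows essentially the same route as the paper: reduce to the standard case $B_0 = G_X$ via \autoref{L:parabolic_closure_for_conjugates}, then use positivity (hence recurrence) of the relevant element together with \autoref{T:parabolic_closure_for_recurrent}. The only difference is that the paper first gets $\PC(\Delta_X)=G_X$ and then invokes \autoref{T:parabolic_closure_of_powers} to pass to $(\Delta_X)^e$, whereas you compute $\operatorname{Supp}((\Delta_X)^e)=\overline{X}=X$ directly and apply \autoref{T:parabolic_closure_for_recurrent} to the power itself — a slightly more self-contained variant that avoids the powers theorem, but not a genuinely different argument.
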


\begin{proof}
If $B_0$ is standard, say $B_0=G_X$ (for a saturated $X$), then $\Delta_X$ is a positive element whose support is $X$. Hence $\PC(\Delta_X)=G_X$. By \autoref{T:parabolic_closure_of_powers}, $\PC(z_{B_0})=\PC((\Delta_X)^e)=G_X=B_0$.

If $B_0$ is not standard, let $c$ be such that $B_0^c$ is standard, so $\PC(z_{B_0^c})=B_0^c$. By definition, $z_{B_0}=(z_{B_0^c})^{c^{-1}}$. Then, by \autoref{L:parabolic_closure_for_conjugates} and by the standard case, $\PC(z_{B_0})=\PC((z_{B_0^c})^{c^{-1}})= \PC(z_{B_0^c})^{c^{-1}}=(B_0^c)^{c^{-1}}=B_0$.
\end{proof}

\begin{proposition}\label{P:conjugate_H_and_z_H}
Let $B_1$ and $B_2$ be two parabolic subgroups of $B$. For every $c\in B$, one has $(B_1)^c=B_2$ if and only if $(z_{B_1})^c=z_{B_2}$.
\end{proposition}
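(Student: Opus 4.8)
The plan is to deduce both implications directly from the characterisation of the parabolic closure and from the way $z_{B_0}$ was constructed, so that no new Garside-theoretic work is needed. The key inputs are \autoref{P:PC(z_H)}, which says $\PC(z_{B_0}) = B_0$ for every parabolic subgroup $B_0$, and \autoref{L:parabolic_closure_for_conjugates}, which says $\PC(x^c) = \PC(x)^c$ whenever $x$ admits a parabolic closure (and all elements do, by \autoref{T:parabolic_closure_exists}).

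For the implication $(B_1)^c = B_2 \Rightarrow (z_{B_1})^c = z_{B_2}$, the idea is to pass to a common standard conjugate. First I would pick $g \in B$ such that $(B_1)^g = G_X$ is standard, so that by definition $z_{B_1} = (z_{G_X})^{g^{-1}}$. Since $(B_1)^c = B_2$ gives $(B_2)^{c^{-1}g} = (B_1)^g = G_X$, the same standard subgroup $G_X$ is a conjugate of $B_2$ via $c^{-1}g$, and hence, again by the (well-defined) definition of $z_{B_2}$, we get $z_{B_2} = (z_{G_X})^{(c^{-1}g)^{-1}} = (z_{G_X})^{g^{-1}c}$. Therefore $(z_{B_1})^c = (z_{G_X})^{g^{-1}c} = z_{B_2}$, as wanted.

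For the converse, suppose $(z_{B_1})^c = z_{B_2}$. Applying $\PC$ to both sides and using \autoref{L:parabolic_closure_for_conjugates} gives $\PC(z_{B_1})^c = \PC\bigl((z_{B_1})^c\bigr) = \PC(z_{B_2})$. By \autoref{P:PC(z_H)} the left-hand side equals $(B_1)^c$ and the right-hand side equals $B_2$, so $(B_1)^c = B_2$.

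Both steps are essentially formal once the earlier results are in place; the only point requiring a little care is that $z_{B_2}$ in the first implication must be computed using whatever conjugating element makes $B_2$ standard, which is legitimate precisely because $z_{B_2}$ was shown to be independent of that choice. I do not expect any genuine obstacle here, as the substance of the argument is carried by \autoref{P:PC(z_H)} — hence by \autoref{T:parabolic_closure_of_powers} and the support-preserving LCM-Garside structure — together with the definition of $z_{B_0}$.
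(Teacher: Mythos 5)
Your proof is correct and follows essentially the same route as the paper's: the forward implication is obtained by expressing both $z_{B_1}$ and $z_{B_2}$ in terms of a common standard conjugate $G_X$ (using that $z_{B_0}$ is independent of the conjugating element), and the converse by applying $\PC$ to both sides and invoking \autoref{P:PC(z_H)} together with \autoref{L:parabolic_closure_for_conjugates}.
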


\begin{proof}
Suppose that $(B_1)^c=B_2$. Let $d\in B$ be such that $B_1^{d}=G_X$. Then $c^{-1}d$ is such that $B_2^{c^{-1}d}=G_X$ and, by definition, $z_{B_1}=(z_{G_X})^{d^{-1}}$ and $z_{B_2}=(z_{G_X})^{d^{-1}c}$. Therefore $(z_{B_1})^c=z_{B_2}$.

Conversely, suppose that $(z_{B_1})^c=z_{B_2}$. Then, by \autoref{L:parabolic_closure_for_conjugates}, $\PC(z_{B_1})^c=\PC(z_{B_2})$ and, by \autoref{P:PC(z_H)}, $(B_1)^c=B_2$.
\end{proof}

\begin{proposition}\label{P:z_positive_is_standard}
Let $B_0$ be a parabolic subgroup of $B$. Then $B_0$ is standard if and only if $z_{B_0}$ is positive.
\end{proposition}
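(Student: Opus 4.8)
The plan is to prove the two implications separately; both turn out to be short consequences of results already established in this section and the previous one.

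For the forward implication, suppose $B_0$ is standard, so that $B_0 = G_X$ for some saturated set of atoms $X$. By definition $z_{B_0} = (\Delta_X)^e$ for the least $e \geq 1$ making $(\Delta_X)^e$ central in $G_X$. Since $\Delta_X = x_1 \vee \cdots \vee x_r$ is a simple element it lies in $B^+$, hence so does any positive power of it; thus $z_{B_0}$ is positive. This direction requires no further work.

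For the converse, suppose $z_{B_0} \in B^+$. Then the left-denominator $D_L(z_{B_0})$ is trivial, so $\phi(z_{B_0}) = z_{B_0}$ and $z_{B_0}$ is recurrent (as already noted just after \autoref{P:swap_to_get_recurrent}). Now I would invoke \autoref{T:parabolic_closure_for_recurrent}: since $(B,B^+,\Delta)$ is a support-preserving LCM-Garside structure, the parabolic closure of the recurrent element $z_{B_0}$ is exactly $G_{\operatorname{Supp}(z_{B_0})}$, which is a standard parabolic subgroup by construction, because the support of any element of $B$ is a saturated set of atoms (recall \autoref{P:support_well_defined} and the definition of $\operatorname{Supp}$). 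On the other hand, \autoref{P:PC(z_H)} gives $\PC(z_{B_0}) = B_0$. Combining the two identities yields $B_0 = G_{\operatorname{Supp}(z_{B_0})}$, which is standard, as desired.

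I do not expect a serious obstacle here; the only points to keep in mind are that a positive element is automatically recurrent for swap, so that \autoref{T:parabolic_closure_for_recurrent} is applicable, and that the standing hypotheses of this subsection — namely that $B$ carries a support-preserving LCM-Garside structure whose associated parabolic subgroups coincide with the topologically defined ones — are precisely what \autoref{T:parabolic_closure_for_recurrent} and \autoref{P:PC(z_H)} require. Everything else is bookkeeping.
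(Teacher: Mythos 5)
Your proposal is correct and follows essentially the same route as the paper: the forward direction is immediate from the definition of $z_{G_X}$ as a positive power of $\Delta_X$, and the converse combines the observation that a positive element is recurrent with \autoref{T:parabolic_closure_for_recurrent} and \autoref{P:PC(z_H)} to conclude $B_0=\PC(z_{B_0})=G_{\operatorname{Supp}(z_{B_0})}$. No gaps.
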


\begin{proof}
If $B_0$ is standard, say $B_0=G_X$, we know that $z_{B_0}$ is a positive power of $\Delta_X$, so it is positive.

Conversely, suppose that $z_{B_0}$ is positive. Then it is recurrent and, by \autoref{T:parabolic_closure_for_recurrent}, $PC(z_{B_0})=G_{X}$, where $X=\operatorname{Supp}(z_{B_0})$. On the other hand, by \autoref{P:PC(z_H)} we know that $PC(z_{B_0})=B_0$. Therefore $B_0=G_X$ is standard.
\end{proof}

Let us then prove that the intersection of parabolic subgroups is a parabolic subgroup. We need the following definition, taken from~\cite{CGGW}:

\begin{definition}{\rm (See \cite[Definition 9.3]{CGGW})}
For every element $\gamma\in B$ we define an integer $\varphi(\gamma)$ as follows: Conjugate $\gamma$ to $\gamma'\in \RC(\gamma)$. Let $U=\operatorname{Supp}(\gamma')$. Then let $\varphi(\gamma)=|\Delta_U|$, the length of the positive element $\Delta_U$ as a word in the atoms.
\end{definition}

Notice that we used $\RC(\gamma)$, instead of the set $RSSS_{\infty}(\gamma)$ which is used in~\cite{CGGW}. This is because it is theoretically much simpler, and satisfies all the needed properties.

\begin{proposition}\label{P:varphi(n)}\cite[Proposition 9.4]{CGGW}
The integer $\varphi(\gamma)$ is well defined. Moreover, if $\gamma$ is conjugate to a positive element, then $\varphi(\gamma)=|\Delta_X|$, where $X=\operatorname{Supp}(\beta)$ for any positive element $\beta$ conjugate to~$\gamma$.
\end{proposition}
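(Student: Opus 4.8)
The plan is to show first that the integer $|\Delta_U|$ occurring in the definition is independent of the chosen recurrent conjugate $\gamma'$, and then to deduce the second assertion by observing that a positive conjugate of $\gamma$ is automatically recurrent, so it can always serve as such a $\gamma'$.

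For well-definedness I would argue as follows. First, $\RC(\gamma)\neq\varnothing$ by \autoref{P:swap_to_get_recurrent}, so the quantity is defined for at least one choice. Now let $\gamma_1,\gamma_2\in\RC(\gamma)$, with $U_1=\operatorname{Supp}(\gamma_1)$ and $U_2=\operatorname{Supp}(\gamma_2)$, and pick $\alpha\in B$ with $\gamma_1^\alpha=\gamma_2$. Since $(B,B^+,\Delta)$ is a support-preserving LCM-Garside structure, \autoref{P:support_preserved_for_recurrent} gives $(G_{U_1})^\alpha=G_{U_2}$, so that $G_{U_1}$ and $G_{U_2}$ are conjugate standard parabolic subgroups. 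By \autoref{P:conjugate standard_H_and_z_H} this forces $\Delta_{U_1}$ to be conjugate to $\Delta_{U_2}$ in $B$, and its proof shows in particular that $|\Delta_{U_1}|=|\Delta_{U_2}|$; equivalently, since the presentation giving the Garside structure is homogeneous, the abelianization $\deg\colon B\to\Z$ sending every atom to $1$ is well defined and restricts on positive elements to the length, so conjugate positive elements have equal length. In either formulation $|\Delta_{U_1}|=|\Delta_{U_2}|$, which is exactly the claim that $\varphi(\gamma)$ does not depend on the choice of $\gamma'$.

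For the \emph{moreover} part, suppose $\beta\in B^+$ is conjugate to $\gamma$. A positive element has trivial left-denominator, hence $\phi(\beta)=\beta$, so $\beta$ is recurrent and therefore $\beta\in\RC(\gamma)$. Taking $\gamma'=\beta$ in the definition yields $\varphi(\gamma)=|\Delta_{\operatorname{Supp}(\beta)}|$, and by the well-definedness just established this value is the same for every positive element $\beta$ conjugate to $\gamma$. The argument contains no genuine obstacle: all the weight is carried by \autoref{P:support_preserved_for_recurrent} together with \autoref{P:conjugate standard_H_and_z_H} and the homogeneity of the relations, which together upgrade ``conjugate'' to ``of equal length''. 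The only mildly delicate point is recalling why positive elements are recurrent, so that the second statement is indeed a special case of the general definition.
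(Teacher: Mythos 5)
Your proof is correct and follows essentially the same route as the paper: both reduce well-definedness to the fact that the supports of two recurrent conjugates generate conjugate standard parabolic subgroups (you invoke \autoref{P:support_preserved_for_recurrent} directly, the paper phrases it via $\PC(\gamma')=G_{\operatorname{Supp}(\gamma')}$, which rests on the same proposition), and then conclude $|\Delta_{U_1}|=|\Delta_{U_2}|$ from \autoref{P:conjugate standard_H_and_z_H} together with homogeneity of the relations. The ``moreover'' part is likewise handled identically, by noting that positive elements are recurrent so a positive conjugate already lies in $\RC(\gamma)$.
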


\begin{proof}
We adapt the proof in~\cite{CGGW}. Suppose that $\gamma', \gamma''\in \RC(\gamma)$, and let $U=\operatorname{Supp}(\gamma')$ and $V=\operatorname{Supp}(\gamma'')$. Then $\PC(\gamma')=G_{U}$ and $\PC(\gamma'')=G_{V}$. Since $\gamma'$ and $\gamma''$ are conjugate, so are its parabolic closures $G_U$ and $G_V$. Hence, by \autoref{P:conjugate standard_H_and_z_H}, the elements $\Delta_{U}$ and $\Delta_{V}$ are also conjugate. The homogeneous relations of $B$ imply that $|\Delta_{U}|=|\Delta_{V}|$. This shows that $\varphi(\gamma)$ is well defined.

If $\gamma$ is conjugate to a positive element, the result follows from \autoref{P:recurrent=positive}, as in that case $\RC(\gamma)=C^+(\gamma)$.
\end{proof}

Finally, we can adapt the main result in~\cite{CGGW} to our case, but simplifying considerably the proof. We will not need results analogous to Lemma~9.1 and Lemma~9.2 in~\cite{CGGW}, and the final argument is much simpler.

\begin{theorem}
Let $B_1$ and $B_2$ be two parabolic subgroups of $B$ with Garside structure $(B,B^+,\Delta)$ as above. Then $B_1\cap B_2$ is also a parabolic subgroup.
\end{theorem}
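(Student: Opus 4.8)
The plan is to reduce the statement, via conjugation (using \autoref{P:conjugate_H_and_z_H} and the fact that parabolic subgroups are closed under conjugation), to the situation where both $B_1$ and $B_2$ are standard parabolic subgroups, and then to exhibit a concrete element whose parabolic closure is $B_1 \cap B_2$. Specifically, first I would conjugate so that $B_1 = G_X$ is standard. Then, among the parabolic subgroups conjugate to $B_2$ inside $B$, I would like to pick a representative which is ``as positive as possible'' relative to $G_X$; the right tool is $z_{B_2}$ together with the swap machinery. Conjugating $B_2$ by a suitable element so that $z_{B_2}^{\text{conj}}$ becomes recurrent, hence (being conjugate to the positive element $z_{G_Y}$ for $B_2^{\text{conj}} = G_Y$) becomes positive, and then decycling as in the proof of \autoref{P:conjugate standard_H_and_z_H}, I can arrange that $z_{B_2}$ is a \emph{simple} element; by \autoref{P:z_positive_is_standard} this forces $B_2$ itself to be standard, say $B_2 = G_Y$. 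The delicate point is that conjugating $B_2$ moves $B_1$ as well, so what I really get is that $B_1 \cap B_2$ is conjugate to an intersection $G_X \cap G_Y^{g}$; I should instead fix $B_1 = G_X$ and run the argument to make $B_2$ standard \emph{while keeping $B_1$ standard}, which is possible because the conjugating elements produced by swap/decycling in the proof of \autoref{P:conjugate standard_H_and_z_H} lie in $B_2^{\text{conj}}$ at each stage only after the first step; so some care is needed here, and this is where I expect the main subtlety.

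Once both $B_1 = G_X$ and $B_2 = G_Y$ are standard, I would consider the element $\gamma = z_{B_1} z_{B_2} = (\Delta_X)^{e_1} (\Delta_Y)^{e_2}$, or more robustly the positive element $\gamma = \Delta_X \Delta_Y \in B^+$. Its support $\operatorname{Supp}(\gamma) = \overline{X \cup Y} = \Delta_{X \cup Y}$-closure contains both $X$ and $Y$, so $\PC(\gamma) = G_{\overline{X \cup Y}}$ is the \emph{join} of $B_1$ and $B_2$, not the meet — so this is the wrong element. Instead, following \cite{CGGW}, the correct approach is to take $\gamma$ with $\PC(\gamma) = B_1 \cap B_2$ directly: the key is that $B_1 \cap B_2$ is again a parabolic subgroup of the \emph{Garside} kind, and one shows this by descending induction on $\varphi$. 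Concretely, if $B_1 \subseteq B_2$ there is nothing to prove, so assume neither contains the other; then I would find an element $\gamma \in B_1 \cap B_2$ whose parabolic closure $\PC(\gamma)$ (which exists by \autoref{T:parabolic_closure_exists}) is contained in $B_1 \cap B_2$ and is a \emph{proper} parabolic subgroup of both $B_1$ and $B_2$. Using \autoref{prop:parabsherite} and \autoref{cor:B0inclusB1egal}, $\PC(\gamma)$ is a parabolic subgroup of $B_1$ of strictly smaller rank; iterating / using the induction hypothesis inside $B_1$ (which is itself a Garside group of the same type by Godelle's theorem and the results of Section 5) one reduces the corank, and the process terminates.

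The heart of the matter, then, is the existence of such a $\gamma$: given $x \in B_1 \cap B_2$ with $\PC(x)$ not contained in $B_1 \cap B_2$, I need to produce another element of $B_1 \cap B_2$ with smaller $\varphi$-value, or directly to produce the required $\gamma$. Here I would mimic \cite[proof of the intersection theorem]{CGGW}: conjugate the whole configuration so that $x$ becomes recurrent (hence, in the positive/negative cases, positive or negative), apply \autoref{P:support_preserved_for_recurrent} to control how the supports of $B_1, B_2$ transform, and use \autoref{P:convexity} together with \autoref{P:transport_properties} to intersect the ``positive shadows'' of $B_1$ and $B_2$: the meet (with respect to $\preccurlyeq$) of the relevant simple elements $\Delta_X$ and $\Delta_Y$, transported along the swap orbit, will be a balanced simple element whose associated standard parabolic is contained in $G_X \cap G_Y$ and contains the parabolic closure of $x$. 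The square-freeness and homogeneity of the Garside structure are exactly what make the ``meet of two standard parabolics is standard'' statement work: $\Delta_X \wedge \Delta_Y = \Delta_{X \cap Y}$ when $X,Y$ are saturated, since a prefix of both $\Delta_X$ and $\Delta_Y$ must be a product of atoms lying in both $X$ and $Y$. The main obstacle I anticipate is the bookkeeping in the case where $x$ is conjugate to neither a positive nor a negative element: there one must, as in \autoref{P:support_preserved_for_recurrent}, pass to the swap orbit, build the products $g_1 g_2$ and $h_1 h_2$ of numerators and denominators around a full cycle, and verify that the meet operation is compatible with all the transports; this is routine but lengthy, and I would lean on \autoref{L:transport_repeats} and \autoref{P:convexity} to keep it under control, exactly as the proof promises to do ``simplifying considerably'' the argument of \cite{CGGW}.
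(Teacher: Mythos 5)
Your proposal does not reach the paper's actual argument, and two of its main steps are problematic. First, the plan to conjugate so that \emph{both} $B_1$ and $B_2$ become simultaneously standard cannot work in general: two parabolic subgroups of $B$ are not simultaneously conjugate to standard ones (the paper only achieves this in the adjacency proposition, under the extra hypothesis $[z_{B_1},z_{B_2}]=1$, which is unavailable here). You flag this as ``the main subtlety,'' but your suggested fix --- tracking where the swap/decycling conjugators live --- does not resolve it, and the paper never attempts it. What the paper conjugates is not the pair $(B_1,B_2)$ but a single well-chosen element $\alpha\in B_1\cap B_2$, made recurrent by one global conjugation of the whole configuration.

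Second, and more importantly, you never supply the argument for the hard inclusion. The paper chooses $\alpha\in B_1\cap B_2\setminus\{1\}$ with $\varphi(\alpha)$ \emph{maximal}, sets $X=\operatorname{Supp}(\alpha)$ after making $\alpha$ recurrent, and proves $B_1\cap B_2=G_X=\PC(\alpha)$. The inclusion $\PC(\alpha)\subset B_1\cap B_2$ is automatic; the substance is the converse, proved by taking an arbitrary $w\in B_1\cap B_2$, forming $\beta_m=w(\Delta_X)^m\in B_1\cap B_2$, showing $\beta_m$ is conjugate to a positive element for large $m$ (the bound $\varphi(\beta_m)\le\varphi(\alpha)$ caps the lengths of the simple factors, so the number of ``short'' factors in the normal form is bounded independently of $m$), and then a pigeonhole argument on the finitely many possible tails $R_m$, supports $U_m$ and conjugators $c_m$ to force $(\Delta_U^{et})^c=\Delta_X^{et}$, hence $\PC(\beta_{em_1})=G_X$ and $w\in G_X$. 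Your ``descending induction on $\varphi$'' runs in the opposite direction: producing elements of $B_1\cap B_2$ with ever smaller parabolic closures yields a chain of parabolic subgroups \emph{inside} the intersection, with no mechanism for showing that one of them exhausts it. Likewise, the identity $\Delta_X\wedge\Delta_Y=\Delta_{X\cap Y}$ for saturated $X,Y$, even if granted, would only address the (unreachable) standard--standard case and still would not give $G_X\cap G_Y=G_{X\cap Y}$ without further argument. The missing idea is precisely the maximality of $\varphi(\alpha)$ combined with the stabilization analysis of the normal forms of $w(\Delta_X)^m$.
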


\begin{proof}
First, we can assume that $B_1\cap B_2\neq \{1\}$, otherwise the result is trivial. Then we consider a nontrivial element $\alpha\in B_1\cap B_2$ such that $\varphi(\alpha)$ is maximal, and we will show that $B_1\cap B_2 = \PC(\alpha)$, so $B_1\cap B_2$ is a parabolic subgroup.

Up to conjugation of $\alpha$, $B_1$ and $B_2$ by the same element, we can assume that $\alpha$ is a recurrent element. Hence, if $X=\operatorname{Supp(\alpha)}$, we have that $\PC(\alpha)=G_X$, and we need to show that $G_X=B_1\cap B_2$.

Since $\alpha$ belongs to the parabolic subgroup $B_i$, for $i=1,2$, it follows that $\PC(\alpha)\subset B_i$ for $i=1,2$. Hence $G_X=\PC(\alpha)\subset B_1\cap B_2$, so we have one inclusion. Notice that this also yields $\Delta_X\in B_1\cap B_2$.

Let us then show the inclusion $B_1\cap B_2\subset G_X$. Let $w\in B_1\cap B_2$. In order to show that $w\in G_X$, we will consider the sequence of elements $\beta_m=w(\Delta_X)^m\in B_1\cap B_2$, for $m>0$. We will first show, using an argument taken from~\cite{CGGW}, that $\beta_m$ is conjugate to a positive element, for $m$ big enough.

Consider the reduced left-fraction decomposition $w=a^{-1}b$, where $a$ is a product of $r$ simple elements and $b$ is a product of $s$ simple elements. We have that $\beta_m=a^{-1}b(\Delta_X)^m$, so the denominator $D_L(\beta_m)$ is the product of at most $r$ simple elements, and the numerator $N_L(\beta_m)$ is the product of at most $s+m$ simple elements. Notice that these numbers could decrease, but never increase, if one applies iterated swaps to $\beta_m$.

We can now conjugate $\beta_m\in B_1\cap B_2$ to a recurrent element $\widetilde\beta_m$ by iterated swaps. Let $U_m=\operatorname{Supp}(\widetilde\beta_m)$, and denote $n=|\Delta_X|=\varphi(\alpha)$. By definition, $|\Delta_{U_m}|=\varphi(\beta_m)\leq \varphi(\alpha)=n$.

As we pointed out, if we consider the reduced left-fraction decomposition $\widetilde \beta_m=x_m^{-1}y_m$, then $x_m$ is the product of at most $r$ simple elements, and $y_m$ is the product of at most $s+m$ simple elements. Since $\widetilde\beta_m$ is recurrent, $x_m,y_m\in G_{U_m}^+$. It follows that the simple factors in the normal forms of $x_m$ and $y_m$ have length at most $n$. Let $N_m$ be the number of simple factors, in $y_m$, whose length is smaller than $n$. Since $y_m$ has at most $s+m$ simple elements, we have that $|y_m|\leq (s+m)n-N_m$. It follows that the exponent sum of $\widetilde \beta_m$ written as a product of atoms and their inverses,
that is $\ell(\widetilde{\beta}_m)$ for $\ell : B \onto \Z$ the natural homomorphism,
satisfies $\ell(\widetilde\beta_m)\leq (s+m)n-N_m$. But since $\widetilde\beta_m$ is a conjugate of
$\beta_m$ we have $\ell(\widetilde \beta_m)=\ell(\beta_m)=\ell(w)+\ell((\Delta_X)^m)= \ell(w)+nm$. We then have $\ell(w)+nm\leq (s+m)n-N_m$, so $N_m\leq sn-\ell(w)$, where the right-hand side of the inequality does not depend on $m$.
Therefore, the number of `small' factors in $y_m$ is bounded by a number which is independent of $m$. This implies, in particular, that there exists $M>0$ such that, for every $m>M$, some factor in the left normal form of $y_m$ has length $n$. Since the simple elements in $G_{U_m}$ have length at most $n$, it follows that some factor in the left normal form of $y_m$ equals $\Delta_{U_m}$, and that $|\Delta_{U_m}|=n$. But $x_m\in G_{U_m}^+$, which implies that $x_m=1$ since otherwise there would be cancellation between $x_m$ and $y_m$. Hence, for $m$ big enough ($m>M$), we have $\widetilde \beta_m=y_m\in B^+$.

Notice that, for every $m>M$, the left normal form of $\widetilde \beta_m$ is $\Delta_{U_m}^{m-N_m} s_1\cdots s_{N_m}$. Let us denote $R_m=s_1\cdots s_{N_m}$ the non-Delta part of the left normal form of $\widetilde \beta_m$. Since $N_m$ is bounded above by a number independent of $m$, it follows that the sequence $\{R_m\}_{m\geq 1}$ can take finitely many possible values.

Also, for every $m>M$, let $c_m$ be the minimal positive element such that $c_m \beta_m c_m^{-1}$ is positive. We know from Remark~\autoref{R:minimal_to_recurrent} that $c_m$ is precisely the conjugating element for iterated swaps, hence $c_m \beta_m c_m^{-1}=\widetilde \beta_m$. It is well known from \cite{BIRMANKOLEE} that, if $\beta_m$ is not positive (in which case $c_m=1$), there exists a conjugating element from $\beta_m$ to a positive element, whose length is bounded above by $|\inf(\beta_m)|\cdot |\Delta|$. Since $|\inf(\beta_m)|\leq r$ for every $m$ for which $\beta_m$ is not positive, it follows that the length of the positive element $c_m$ is bounded above by an integer not depending on $m$. That is, the sequence $\{c_m\}_{m\geq 1}$ can take a finite number of possible values.

Let $e>0$ be such that $(\Delta_V)^e$ is central in $G_V$ for every subset $V$ of atoms. Since the sequences $\{R_{em}\}_{m\geq 1}$ and $\{c_{em}\}_{m\geq 1}$ can take finitely many possible values, and there are only a finite number of subsets of atoms, there exist integers $m_1$ and $m_2$, with $M < m_1 < m_2$, such that $c_{em_1}=c_{em_2}$, $R_{em_1}=R_{em_2}$ and $U_{em_1}=U_{em_2}$. Let us denote $c=c_{em_1}=c_{em_2}$, $R=R_{em_1}=R_{em_2}$, $U=U_{em_1}=U_{em_2}$ and $t=m_2-m_1$. We have $\widetilde \beta_{em_1}=c\beta_{em_1}c^{-1}$ and:
$$
   \widetilde \beta_{em_2} = c\beta_{em_2}c^{-1} = c\beta_{em_1}\Delta_{X}^{et} c^{-1} = (c \beta_{em_1}c^{-1})(c\Delta_X^{et} c^{-1}) = \widetilde \beta_{em_1}(c\Delta_X^{et} c^{-1}).
$$
On the other hand, since $R=R_{em_1}=R_{em_2}$, it follows that $N:=N_{em_1}=N_{em_2}$. Hence:
$$
  \widetilde \beta_{em_2} = \Delta_{U}^{em_2-N}R = \Delta_U^{em_2-em_1}\Delta_U^{em_1-N}R = \Delta_U^{et} \widetilde \beta_{em_1} = \widetilde \beta_{em_1} \Delta_U^{et}.
$$
Therefore, $c\Delta_X^{et} c^{-1} = \Delta_U^{et}$. That is, $(\Delta_U^{et})^c = \Delta_X^{et}$.

Now notice that $\PC(\Delta_U^{et})=G_U$ and $\PC(\Delta_X^{et})=G_X$. Hence, by \autoref{L:parabolic_closure_for_conjugates}, $(G_U)^{c}=\PC(\Delta_U^{et})^{c} = \PC((\Delta_U^{et})^{c}) = \PC(\Delta_X^{et}) = G_X$. On the other hand, we know that $\PC(\widetilde \beta_{em_1})=G_U$. Hence, again by \autoref{L:parabolic_closure_for_conjugates}, $\PC(\beta_{em_1})=\PC((\widetilde \beta_{em_1})^c)= \PC(\widetilde \beta_{em_1})^c = (G_U)^c = G_X$. This implies that $\beta_{em_1}\in G_X$. Hence, as $w = \beta_{em_1} \Delta_X^{-em_1}$, we finally obtain that $w\in G_X$, as we wanted to show.
\end{proof}

\subsection{Characterization of adjacency for the Garside groups}

In the introduction we defined the {\em curve graph} $\Gamma$, as the graph whose vertices are irreducible parabolic subgroups, and where two such subgroups $B_1$ and $B_2$ are adjacent if either $B_1\subset B_2$, or $B_2\subset B_1$, or $B_1\cap B_2=[B_1,B_2]=\{1\}$.

We will see in this section that this notion of adjacency, which is very natural from the point of view of curves in a surface, can be characterized very easily in terms of the elements $z_{B_1}$ and $z_{B_2}$.

As we did in \autoref{Garside_argument}, we consider $B$ to be one of the irreducible complex braid groups whose Garside structure $(B,B^+,\Delta)$ has been studied in this paper, so it is a homogeneous, support-preserving LCM-Garside structure.

\begin{proposition}
 Two irreducible parabolic subgroups $B_1,B_2\subset B$ with Garside structure $(B,B^+,\Delta)$ are adjacent in $\Gamma$ if and only if $z_{B_1}$ and $z_{B_2}$ commute.
\end{proposition}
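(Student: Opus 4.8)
The plan is the following. The forward implication is immediate and uses nothing beyond the definitions: if $B_1\subseteq B_2$ then $z_{B_1}\in B_1\subseteq B_2$ while $z_{B_2}$ is central in $B_2$, so $z_{B_1}z_{B_2}=z_{B_2}z_{B_1}$; the case $B_2\subseteq B_1$ is symmetric; and if $B_1\cap B_2=[B_1,B_2]=\{1\}$ then every element of $B_1$ commutes with every element of $B_2$, in particular $z_{B_1}$ with $z_{B_2}$.

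For the converse I would assume $[z_{B_1},z_{B_2}]=1$ and proceed in two stages. First, taking $B_1=B_2=B_0$ in \autoref{P:conjugate_H_and_z_H} gives $N_B(B_0)=C_B(z_{B_0})$ for every parabolic subgroup $B_0$, so $z_{B_1}$ normalizes $B_2$ and $z_{B_2}$ normalizes $B_1$. Second, I would descend to the reflection group: all the complex braid groups under consideration here are braid groups of irreducible complex $2$-reflection groups $W$ (real reflection groups, the groups $G(e,e,n)$, and the well-generated exceptional $2$-reflection groups), so \autoref{theo:zzcommW} applies to the images $W_i=\pi(B_i)$ and yields that either $W_1\subseteq W_2$, or $W_2\subseteq W_1$, or $W_1\cap W_2=[W_1,W_2]=\{1\}$. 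The remaining task, promoting each alternative to $B$, I would carry out inside the parabolic subgroup $\Pi$ defined as the intersection of all parabolic subgroups of $B$ containing both $z_{B_1}$ and $z_{B_2}$ — parabolic by \autoref{maintheo:intersectparabolics}. By minimality $\Pi$ contains $B_1=\PC(z_{B_1})$ and $B_2=\PC(z_{B_2})$ (\autoref{P:PC(z_H)}), which are then parabolic subgroups of $\Pi$ by \autoref{prop:parabsherite}, and by \autoref{prop:basicpropsparabs}(3) $\Pi$ is the braid group of a parabolic subgroup $\hat W=\pi(\Pi)$ of $W$, with $W_1,W_2\subseteq\hat W$.

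If $\hat W$ is reducible, I would write $\hat W=\hat W^{(1)}\times\cdots\times\hat W^{(k)}$ into irreducible factors, so that $\Pi=\Pi^{(1)}\times\cdots\times\Pi^{(k)}$ and every parabolic subgroup of $\Pi$ is a product of parabolic subgroups of the factors, as in Subsection~\ref{sect:subgroupsproducts}; since $W_1$ and $W_2$ are irreducible and a parabolic subgroup with trivial image in $W$ is trivial, one gets $B_1\subseteq\Pi^{(i)}$ and $B_2\subseteq\Pi^{(j)}$ for some $i,j$. If $i=j$ then $z_{B_1},z_{B_2}\in\Pi^{(i)}$, contradicting the minimality of $\Pi$ when $k\geq 2$; hence $i\neq j$, so $B_1\cap B_2=[B_1,B_2]=\{1\}$ and the pair is adjacent. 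If instead $\hat W$ is irreducible, I would first rule out the orthogonal alternative of \autoref{theo:zzcommW}: the point — which is the step requiring real work — is that $\hat W$ in fact equals the parabolic join of $W_1$ and $W_2$ in $W$ (equivalently, that there is a single parabolic subgroup of $B$ with that image containing both $z_{B_1}$ and $z_{B_2}$), so that in the orthogonal case $\hat W$ would be the nontrivial direct product $W_1\times W_2$, contradicting irreducibility. Hence, exchanging $B_1$ and $B_2$ if necessary, $W_1\subseteq W_2$; then $W_1$ is a parabolic subgroup of $W_2$, there is a parabolic subgroup $\tilde B_1$ of $B_2$ with image $W_1$ — parabolic in $B$ by \autoref{prop:basicpropsparabs}(4) — and $B_1=(\tilde B_1)^{p}$ for some $p\in\Ker(B\onto W)$ by \autoref{prop:conjparabs}(1). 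The last step would be to adjust $p$ so that it lies in $N_B(B_2)=C_B(z_{B_2})$, using $z_{B_1}\in C_B(z_{B_2})$ and the fact that the parabolic subgroups of $B_2$ with image $W_1$ form a single orbit under the pure braid group of $B_2$; once $p$ normalizes $B_2$ one has $B_1=(\tilde B_1)^p\subseteq(B_2)^p=B_2$, which \autoref{cor:B0inclusB1egal} upgrades to $B_1=B_2$ when $W_1=W_2$. In every case $B_1$ and $B_2$ come out adjacent.

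I expect the two genuinely delicate points to be, in the irreducible case, the identification of $\hat W$ with the parabolic join of $W_1$ and $W_2$ — which is what makes the orthogonal alternative incompatible with irreducibility of $\hat W$ — and, in the inclusion case, the control of the pure braid conjugator $p$ so that it normalizes $B_2$. Both are the technical core of the argument, and I would carry them out by adapting the corresponding steps of \cite{CGGW}.
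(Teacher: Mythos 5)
Your forward implication and your use of \autoref{theo:zzcommW} match the paper, but the core of your converse is not carried out, and one of its inferences is wrong as stated. The crux of the whole statement is precisely the claim you defer: that there exists a \emph{single} parabolic subgroup of $B$ containing both $z_{B_1}$ and $z_{B_2}$ whose image in $W$ is the parabolic join of $W_1$ and $W_2$ (equivalently, that $\hat W=\pi(\Pi)$ is that join). Nothing in the results you cite gives this: $\Pi$ certainly surjects onto a parabolic subgroup containing $\langle W_1,W_2\rangle$, but to get equality you would need to exhibit, among the whole $P$-orbit of parabolic subgroups of $B$ lying over the join, one that contains both central elements -- and that is essentially the statement being proved. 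Moreover, even granting that identification, your exclusion of the orthogonal alternative in the irreducible case rests on the claim that the parabolic join of two commuting, trivially intersecting irreducible parabolics of $W$ is their direct product. This is false in general: in $W=I_2(4)$ the reflections $s$ and $tst$ commute and generate a rank-$2$ reflection subgroup that is \emph{not} parabolic, so the parabolic join of $\langle s\rangle$ and $\langle tst\rangle$ is the irreducible group $I_2(4)$ itself. Hence an irreducible $\hat W$ does not rule out the orthogonal alternative, and in that situation your case analysis yields no conclusion (and would wrongly push you toward proving an inclusion $W_1\subset W_2$ that need not hold). The final step, adjusting the pure-braid conjugator $p$ to lie in $N_B(B_2)=C_B(z_{B_2})$, is likewise only announced, and it is not a routine adaptation.

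The paper avoids all of this by a different mechanism, which is worth internalizing: it first conjugates so that $B_1$ is standard (so $z_{B_1}$ is positive), then observes that since $z_{B_1}^{1}$ and $z_{B_1}^{z_{B_2}}$ are both positive, the convexity property of recurrent sets (\autoref{P:convexity}) shows that conjugating by the left-denominator of $z_{B_2}$ keeps $z_{B_1}$ positive while applying a swap to $z_{B_2}$; iterating makes both $z_{B_1}$ and $z_{B_2}$ simultaneously positive, hence both subgroups simultaneously standard by \autoref{P:z_positive_is_standard}. Only then does it invoke \autoref{theo:zzcommW}, and the three alternatives are settled directly on the saturated atom sets $X$ and $Y$ (inclusion of atom sets, or $X\cap Y=\varnothing$ together with the order-$2$ commutation relations in the interval monoid), with no need for parabolic joins or conjugator adjustments. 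I would recommend reworking your converse along these lines rather than trying to repair the $\Pi$-based case analysis.
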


\begin{proof}
Suppose that $B_1$ and $B_2$ are adjacent. If $B_1\subset B_2$, since $z_{B_2}$ is central in $B_2$ it follows that $[z_{B_1},z_{B_2}]=1$. If $B_2\subset B_1$ the argument is the same. Finally, if $B_1\cap B_2=[B_1,B_2]=1$, every element of $B_1$ commutes with every element of $B_2$, hence $z_{B_1}$ commutes with $z_{B_2}$ also in this case.

Conversely, suppose that $z_{B_1}$ commutes with $z_{B_2}$. The first part of the proof follows the arguments of \cite[Theorem 2.2]{CGGW}: we will simultaneously conjugate $B_1$ and $B_2$ to standard parabolic subgroups.

Since $B_1$ is a parabolic subgroup, it is conjugate to a standard parabolic subgroup $G_T$ for some subset of atoms $T\subset \mathcal A$. We can simultaneously conjugate $B_1$ and $B_2$ to assume that $B_1=G_T$ is already standard. Hence $z_{B_1}$ is a positive element.

Now consider the reduced left-fraction decomposition of $z_{B_2}$, say $z_{B_2}=a^{-1}b$. By definition of reduced left-fraction decomposition, $a\wedge b=1$. Left-multiplying this equality by $a^{-1}$ we obtain $1\wedge a^{-1}b = a^{-1}$, that is, $1\wedge z_{B_2} = a^{-1}$.

It is clear that, as $z_{B_1}$ is positive (hence recurrent), $z_{B_1}$ conjugated by 1 is positive (hence recurrent). And, since $z_{B_1}$ and $z_{B_2}$ commute, $z_{B_1}$ conjugated by $z_{B_2}$ is also positive (hence recurrent). Therefore, by \autoref{P:convexity}, $(z_{B_1})^{a^{-1}}= (z_{B_1})^{1\wedge z_{B_2}}$ is recurrent (hence positive, as the recurrent conjugates of $z_{B_1}$ are precisely the positive conjugates of $z_{B_1}$).

It follows that, if we simultaneously conjugate $z_{B_1}$ and $z_{B_2}$ by $a^{-1}$, we replace $z_{B_1}$ by a positive conjugate, and we replace $z_{B_2}$ by $\phi(z_{B_2})$.

The interested reader may see that all the requirements of \cite[Lemma~9 and Theorem~3]{CUMP} are fulfilled (interchanging the prefix and the suffix orders) in the groups we are working with, and this implies that $\phi(z_{B_2})$ is already a positive element (the theorem says that $a$ is the shortest positive element such that $a z_{B_2} a^{-1}$ is positive).

But we do not need to use the results in~\cite{CUMP} to finish this proof. We have that the obtained conjugates of $z_{B_1}$ and $z_{B_2}$ satisfy the same initial hypotheses: they commute and the first one is positive. Hence we can simultaneously conjugate both elements by the left denominator of the second one, obtaining a new pair of elements satisfying the same hypotheses. Iterating the process, since we know that $z_{B_2}$ is conjugate to a positive element and that such a conjugate can be obtained by iterated swaps, we finally obtain simultaneous conjugates of $z_{B_1}$ and $z_{B_2}$ which are both positive (and commute).

We denote $z_1$ the positive conjugate of $z_{B_1}$ and $z_2$ the positive conjugate of $z_{B_2}$. Being conjugates of some $z_{B_i}$, we have that $z_i=z_{B_i'}$ for some parabolic subgroup $B_i'$, for $i=1,2$. Since $z_1$ and $z_2$ are positive, it follows from \autoref{P:z_positive_is_standard} that $B_1'$ and $B_2'$ are both standard parabolic subgroups, as we wanted to show.

We can then assume, up to a simultaneous conjugation, that $B_1=G_X$ and $B_2=G_Y$ are standard parabolic subgroups, where $X$ and $Y$ are saturated subsets of atoms.

Let us denote $X'$ and $Y'$ the images of $X$ and $Y$, respectively, on the reflection group $W$. Let $W_X$ and $W_Y$ be the subgroups of $W$ generated by $X'$ and $Y'$, respectively. These are irreducible parabolic subgroups of $W$, by definition.

Now, by \autoref{theo:zzcommW}, we have three possibilities. The first one is that $W_X\subset W_Y$, which implies that the set $X'$ is contained in $Y'$. Since the map from $B$ to $W$ is injective on the atoms, this implies that $X\subset Y$, hence $G_X\subset G_Y$.

The second possibility is that $W_Y\subset W_X$. By the same argument, we obtain that $Y\subset X$, and then $G_Y\subset G_X$.

The third and final possibility is that $W_X\cap W_Y=1$. In this case $X'\cap Y'=\varnothing$, hence $X\cap Y=\varnothing$. This implies that $G_X\cap G_Y=1$, since for every element $x\in G_X\cap G_Y$, its reduced left-fraction decomposition $a^{-1}b$ is such that $a,b\in G_X^+\cap G_Y^+$ and then, as $X$ and $Y$ are saturated, every representative of $a$ (and also of $b$) is a word in $X$ and also in $Y$. This is only possible if $a=b=1$. Hence $x=1$.

It remains to show that, in this third case, $[G_X,G_Y]=1$. This is equivalent to say that every element of $X$ commutes with every element of $Y$. Let $x\in X$ and $y\in Y$, and let $x'$ and $y'$ be their images in $W$. We know that $W_X$ and $W_Y$ commute, hence $x'y'=y'x'$. Since $x'$ and $y'$ are different (as $W_X\cap W_Y=1$) and have order 2, the element $x'y'=y'x'$ has order 2, and then $xy=yx$ is a relation in the considered interval monoid (associated to either an Artin group, $G(e,e,n)$, or an exceptional group). Therefore $G_X$ and $G_Y$ commute.
\end{proof}

\subsection{General complex braid groups}
\label{sect:intersectgeneralCBG}

We can then prove in general \autoref{maintheo:parabolicclosure}, \autoref{maintheo:intersectparabolics}
and finally \autoref{maintheo:curvecomplex} of the
introduction. We start with \autoref{maintheo:parabolicclosure} and \autoref{maintheo:intersectparabolics}. If $W$ is isodiscriminantal to some reflection group
$W'$ to which the methods of the previous sections can be applied, we have already proved \autoref{maintheo:parabolicclosure} and \autoref{maintheo:intersectparabolics} in
this case. Indeed, the case of an arbitrary collection of parabolics $(B_i)_{i \in I}$ is
easily deduced from the case $|I| = 2$, first for the case where $I$ is finite, and then by noticing
that, the $B_1 \cap B_2 \subsetneq B_1$ is possible only if the rank of $B_1 \cap B_2$ is smaller
than the rank of $B_1$, so that $\bigcap_{i \in I} B_i = \bigcap_{i \in J} B_i$ for some finite $J \subset I$.

This covers all well-generated reflection groups, so we now consider the other ones which are not
$G_{31}$.

If $W$ belongs to the general series $G(de,e,n)$ for $d > 1$ (or is isodiscriminantal to one
of these groups), by the description of the parabolics in \autoref{prop:parabsGdeen},
\autoref{maintheo:intersectparabolics} follows immediately from the case of $G(de,1,n)$.
But then, the existence of a parabolic closure in \autoref{maintheo:parabolicclosure} is an immediate
consequence of \autoref{maintheo:intersectparabolics} (consider the intersection of all parabolic subgroups
containing the given element). Moreover, if $x \in B$ is such that $x^m$ belongs to some parabolic subgroup
$B_0$, writing $B_0 = B \cap \hat{B}_0$ with $\hat{B}_0$ some parabolic subgroup of the braid group
$\hat{B}$ of type $G(de,1,n)$, from $x^m \in \hat{B}_0$ we get readily $x \in \hat{B}_0$
hence $x \in \hat{B}_0 \cap B = B_0$, and we get \autoref{maintheo:parabolicclosure}.

Therefore both theorems are proved for the infinite series. We now prove the third one. Let $B_0$ be
an irreducible parabolic subgroup of $B$, that we write as $\hat{B}_0 \cap B$ for $\hat{B}_0$ an irreducible
parabolic subgroup of $\hat{B}$ by \autoref{prop:parabsGdeen}.

First notice that such a parabolic $\hat{B}_0$ is \emph{unique}. Indeed, if we had another one $\hat{B}'_0$, then $\hat{B}_0 \cap \hat{B}'_0$
would be another parabolic of $\hat{B}$ containing $B_0$, and therefore having the same rank as $\hat{B}_0$. By \autoref{cor:B0inclusB1egal} this
proves $\hat{B}_0 = \hat{B}_0 \cap \hat{B}'_0 = \hat{B}'_0$ and the uniqueness of such a $\hat{B}_0$.

Setting $m_0 = |Z(\hat{W}_0)|/|Z(W_0)|$
we have $z_{B_0} = z_{\hat{B}_0}^{m_0}$ by \autoref{prop:parabsGdeen}. Denoting $\PC$ and $\widehat{\PC}$ the parabolic closures taken inside $B$ and $\hat{B}$, respectively, we have
$$
\PC(z_{B_0}) = B \cap \widehat{\PC}(z_{B_0}) = B \cap \widehat{\PC}(z_{\hat{B}_0}^{m_0})
 = B \cap \widehat{\PC}(z_{\hat{B}_0}) = B \cap \hat{B}_0 = B_0.
$$
Finally, if $B_1,B_2$ are two irreducible parabolic subgroups of $B$, for each $i=1,2$ we have $B_i = B \cap \hat{B}_i$
for some uniquely defined irreducible parabolic subgroup $\hat{B}_i$, and we have $z_{B_i} = z_{\hat{B}_i}^{m_i}$, for some $m_i \geq 1$. First assume we have $g \in B$ such that $B_1^g = B_2$. Then $\hat{B}_1^g$ is a parabolic
subgroup of $\hat{B}$ such that $\hat{B}_1^g \cap B = (\hat{B}_1 \cap B)^g = B_1^g = B_2 = \hat{B}_2 \cap B$,
whence $\hat{B}_1^g = \hat{B}_2$ by uniqueness of the parabolic subgroup of $\hat{B}$ corresponding to $B_2$.
By \autoref{maintheo:curvecomplex} for $\hat{B}$ it follows that $z_{\hat{B}_1}^g = z_{\hat{B}_2}$.
Now notice that $m_1=m_2$, as the images of $\hat{B}_i$ and of the $B_i$ inside $W$ are also conjugates
under the image of $g$. Therefore $z_{B_1}^g = z_{B_2}$.

 Assume now that $[z_{B_1},z_{B_2}]=1$. This means $[z_{\hat{B}_1}^{m_1},z_{\hat{B}_2}^{m_2}]=1$.
But then,
$$
\widehat{\PC}(z_{\hat{B}_1})^{z_{\hat{B}_2}^{m_2}}
=
\widehat{\PC}(z_{\hat{B}_1}^{m_1})^{z_{\hat{B}_2}^{m_2}}
=
\widehat{\PC}((z_{\hat{B}_1}^{m_1})^{z_{\hat{B}_2}^{m_2}})
=
\widehat{\PC}(z_{\hat{B}_1}^{m_1})
=
\widehat{\PC}(z_{\hat{B}_1})
$$
hence, by
\autoref{P:conjugate_H_and_z_H}, we get $[z_{\hat{B}_1},z_{\hat{B}_2}^{m_2}]=1$. Then,
$$
\widehat{\PC}(z_{\hat{B}_2})^{z_{\hat{B}_1}}
=
\widehat{\PC}(z_{\hat{B}_2}^{m_2})^{z_{\hat{B}_1}}
=
\widehat{\PC}((z_{\hat{B}_2}^{m_2})^{z_{\hat{B}_1}})
=
\widehat{\PC}(z_{\hat{B}_2}^{m_2})
=
\widehat{\PC}(z_{\hat{B}_2})
$$
and applying again \autoref{P:conjugate_H_and_z_H}, we get $[z_{\hat{B}_1},z_{\hat{B}_2}]=1$.
Then \autoref{maintheo:curvecomplex} applied to $\hat{B}$ implies that, either $\hat{B}_1 \subset \hat{B}_2$,
or $\hat{B}_2 \subset \hat{B}_1$, or $\hat{B}_1 \cap \hat{B}_2 = [\hat{B}_1,\hat{B}_2] = \{ 1 \}$.
In the first two cases, $\hat{B}_i \subset \hat{B}_j$ implies immediately $B_i = \hat{B}_i \cap B \subset
\hat{B}_j \cap B = B_j$, while in the third case we get $B_1 \cap B_2 \subset \hat{B}_1 \cap \hat{B}_2 = \{ 1 \}$
and $[B_1,B_2] \subset [\hat{B}_1,\hat{B}_2] = \{ 1 \}$. This completes the proof of \autoref{maintheo:curvecomplex} for the infinite series.

\subsection{Special cases in rank 2}

It remains to consider the exceptional groups $G_{12}$, $G_{13}$ and $G_{22}$. Since they have rank $2$,
we only need to prove \autoref{maintheo:parabolicclosure}. Indeed, assuming it to hold, let us consider
two parabolic subgroups $B_1,B_2$. Then, either one of them is $\{1 \}$ or $B$, in which case $B_1 \cap B_2 \in \{ B_1,B_2\}$
is obviously a parabolic subgroup, or they have both rank $1$. Therefore, either $B_1 \cap B_2 = \{ 1\}$,
or $B_1 \cap B_2$ contains a nontrivial element $x$, and we have $\{ 1 \} \subsetneq \PC(x) \subset B_i$
for $i=1,2$. But since $\PC(x)$ has rank $1$, by \autoref{cor:B0inclusB1egal} this implies $B_1 = \PC(x) = B_2$ and thus $B_1\cap B_2 = B_1=B_2$
is a parabolic subgroup. From this one gets \autoref{maintheo:intersectparabolics}, so we only need
to prove \autoref{maintheo:parabolicclosure} in order to get both theorems.

For $G_{12}$ and $G_{22}$ we use the description of \autoref{sect:descrparabsrank2}, and the fact that the presentations given there provide a Garside structure, with $\Delta = stus$ and $\Delta = stust$, respectively. It is obvious that these structures are LCM-Garside and it can be checked that they are support-preserving, in order to apply \autoref{T:parabolic_closure_exists}. However, it is quicker (and somewhat simpler) to apply the following property, actually shared by all the Garside monoids used in this paper.

\begin{proposition} \label{prop:dmmcrit} (\cite{DMM}, Proposition 2.2) Such monoids $M$ satisfy that, for any atom $r$ and
$x,y \in M$, if $r^n x = xy$ for some $n > 0$, then
$y = t^n$ for some atom $t$ such that $r x = xt$.
\end{proposition}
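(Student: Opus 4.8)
The plan is to deduce this from the theory of parabolic closures of Section~\ref{sect:parabclosuresgarsidegroups}, rather than from a direct analysis of least common multiples in $M$ (which, in the case $r\wedge x=1$, becomes combinatorially heavy). Working inside the group of fractions $G$ of $M$, I would first record that
$$
y=x^{-1}r^{n}x=(x^{-1}rx)^{n}
$$
is a positive element — hence recurrent, since positive elements are fixed by the swap map $\phi$ — and that it is conjugate to $r^{n}$, a power of the atom $r$. Note also that $y\neq1$, because $\ell(y)=n>0$.

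Next I would compute the parabolic closure $\PC(y)$ in two ways. Since $y$ is recurrent, \autoref{T:parabolic_closure_for_recurrent} gives $\PC(y)=G_{\operatorname{Supp}(y)}$, which is a \emph{standard} parabolic subgroup. On the other hand $\PC(r)=G_{\{r\}}=\langle r\rangle$ (the singleton $\{r\}$ is saturated because $r$ is an atom), so $\PC(r^{n})=\langle r\rangle$ by \autoref{T:parabolic_closure_of_powers}, and then \autoref{L:parabolic_closure_for_conjugates} applied with conjugator $x$ yields
$$
\PC(y)=\PC\bigl((r^{n})^{x}\bigr)=\PC(r^{n})^{x}=\langle r\rangle^{x}=\langle x^{-1}rx\rangle .
$$
Comparing the two computations, $\langle x^{-1}rx\rangle$ is a standard parabolic subgroup; being conjugate to $\langle r\rangle$ it has rank $1$, hence is irreducible, and a standard parabolic subgroup of rank $1$ has the form $G_{\{t\}}=\langle t\rangle$ for a single atom $t$, which is its unique atom.

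Then I would pin down the sign. By \autoref{P:z_positive_is_standard}, the element $z_{G_{\{t\}}}$ attached to the standard parabolic $G_{\{t\}}$ is positive, and from its definition it equals $\Delta_{\{t\}}=t$. On the other hand $\langle x^{-1}rx\rangle$ is an irreducible parabolic subgroup of rank $1$, so its canonical element (which, by \autoref{prop:zB0welldefined}, coincides with the one just used) is the generator of its centre whose image under $\ell:G\to\Z$ is positive; since $\ell(x^{-1}rx)=\ell(r)=1>0$, that generator is $x^{-1}rx$ itself. Hence $x^{-1}rx=t$, i.e.\ $rx=xt$ with $t$ an atom, and finally $y=(x^{-1}rx)^{n}=t^{n}$, as required.

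I expect the only real subtlety to be bookkeeping around the hypotheses: the argument rests on $(G,G^{+},\Delta)$ being a support-preserving LCM-Garside structure, so that \autoref{T:parabolic_closure_for_recurrent} applies — which is exactly what has been established for the monoids to which the statement refers (the Artin monoids, the standard monoid of $G(e,e,n)$, the dual monoids of the well-generated exceptional $2$-reflection groups, and the rank-$2$ monoids for $G_{12}$ and $G_{22}$). Granting that, the remaining verifications — that $\langle r\rangle\cong\Z$, that a conjugate of a rank-$1$ parabolic is again of rank $1$, and that the two descriptions of $z_{G_{\{t\}}}$ agree — are routine.
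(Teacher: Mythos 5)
The paper does not actually prove this statement: it is imported verbatim from \cite{DMM}, Proposition 2.2, where it is established by a short direct argument inside the Garside monoid, with no reference to parabolic subgroups or to support-preservation. Your derivation from the parabolic-closure machinery is therefore a genuinely different route. Where the hypotheses of \autoref{T:parabolic_closure_for_recurrent} and \autoref{T:parabolic_closure_of_powers} are available, your steps are sound: $y$ is positive hence recurrent, so $\PC(y)=G_{\operatorname{Supp}(y)}$ is standard; on the other hand $\PC(y)=\PC\bigl((r^n)^x\bigr)=\PC(r^n)^x=\langle x^{-1}rx\rangle$; and a standard parabolic subgroup that is infinite cyclic must be $G_{\{t\}}=\langle t\rangle$ for a single atom $t$ (any two atoms of a saturated generating set would both have $\ell$-value $1$ in $\langle x^{-1}rx\rangle\simeq\Z$, hence coincide), whence $t=x^{-1}rx$ by comparing $\ell$-values directly — the detour through $z_{G_{\{t\}}}$ and \autoref{prop:zB0welldefined} is correct but unnecessary.

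The genuine gap is one of scope, and it matters. ``Such monoids'' means all the Garside monoids used in the paper, and the proposition is invoked precisely for the rank-$2$ monoids of $G_{12}$ and $G_{22}$ \emph{as a substitute} for verifying that those Garside structures are support-preserving: the paper only asserts that this ``can be checked'' and deliberately takes the \cite{DMM} route instead, so the check is never carried out. Your closing claim that support-preservation ``has been established\dots for the rank-$2$ monoids for $G_{12}$ and $G_{22}$'' is therefore not accurate, and without that verification your argument fails to cover exactly the monoids for which the proposition is needed; as written it is circular relative to the paper's architecture. To repair this you would either have to supply the support-preservation proof for those two monoids (at which point the proposition becomes redundant there, since \autoref{T:parabolic_closure_exists} would apply directly), or prove the statement for them by the elementary monoid-theoretic argument of \cite{DMM}. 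For the Artin monoids (including $I_2(6)$ for $G_{13}$), the standard monoids of $G(e,e,n)$, and the dual monoids of the exceptional $2$-reflection groups, your proof is a valid, if much heavier, alternative.
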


This property has the immediate consequence that, if $r$ is an atom of $M$, and $x \in M$ centralizes some $r^n$ for $n > 0$, then $r^n = q^n$ for some atom $q$. But here $\lcm(r,q)= \Delta$ if $r\neq q$, hence $r^n = q^n$ implies $r = q$, and then $rx = xr$. This proves that the centralizer of $r^n$ is equal to the centralizer of $r$.

Let $x \in B$ being nontrivial. We want to prove that it is contained inside a unique minimal parabolic. Since we are in rank $2$, the only proper parabolics containing $x$ are cyclic subgroups generated by distinguished braided reflections, and since all such distinguished braided reflections are conjugates, we can assume that
$x$ is a conjugate of some nontrivial power $s^k$ of $s$. Up to exchanging $x$ with $x^{-1}$ we can assume $k > 0$. Therefore we can assume $x = s^k$ for some $k$.

Then, if $x = s^k$ were also contained in
some other parabolic
$\{c s^n c^{-1} ; n \in \Z \}$ we would have $x = cs^nc^{-1}$ for some $n$.
But since $n$ is equal to $\ell(x) = k$ we get $n= k$.
Moreover, up to
multiplying $c$ with some central power of $\Delta$ we can assume that $c \in B^+$. Hence $c$ centralizes $s^n$ inside $B^+$, which implies that $c$ centralizes $s$. This proves that the parabolic closure of $x$ is well-defined, and equal to $\langle s \rangle$. Since this is independent of $k > 0$, this is also the
parabolic closure of every $x^m$ for $m \geq 1$, whence $\PC(x^m) = \PC(x)$.

For $G_{13}$, we proved in \autoref{sect:descrparabsrank2} that, inside its braid group $B$, which can be identified
with the Artin group of type $I_2(6)$, $B = \langle a,b \ | \ (ab)^3 = (ba)^3 \rangle$,
with Garside structure $B^+ = \langle a,b \ | \ (ab)^3 = (ba)^3 \rangle^+$, $\Delta = (ab)^3$,
the proper parabolic subgroups of $B$ are conjugates of either $\langle b^{-1} \rangle$ or to
$\langle \Delta a^{-2} \rangle$. Obviously, the second one is not a parabolic subgroup for the Garside structure,
but we shall nevertheless be able to use this structure in order to deal with this case.

Let $x \in B$ being non trivial. We want to prove that it is contained inside a unique minimal parabolic. Since we are in rank $2$, we can assume that $x$ is conjugate to a power of either $b$ or
 $\Delta. a^{-2}$.

 We first prove that it cannot belong to two such parabolics of different type, that is one conjugate of $\langle b \rangle$ and one conjugate of
 $\langle \Delta. a^{-2} \rangle$. The reason for this is that the abelianization map $B \to \Z^2$
mapping $a$ to $(1,0)$ and $b$ to $(0,1)$ is injective on such cyclic subgroups and maps subgroups of the first kind to $\Z (1,0)$ and subgroups of the second kind to $\Z(1,3)$. Since these two subgroups of $\Z^2$ have trivial intersection this proves this claim.

Now assume that $x$ belongs to two subgroups conjugates of $\langle b \rangle$. Without loss of generality we can assume $x = b^k$ for some $k > 0$ and $x = c b^k c^{-1}$ for some $c \in B$. Up to
multiplying $c$ with some central power of $\Delta$ we can assume that $c \in B^+$. Then $c b^k = b^k c$ hence by \autoref{prop:dmmcrit} we have $cb=bc$ and the two subgroups are the same. Also, $\PC(x^m) = \PC(x)$ for
every $m \geq 1$.

Finally assume that $x$ belongs to two
subgroups conjugates of $\langle \Delta a^{-2} \rangle$. Without loss of generality we can assume $x = (\Delta.a^{-2})^k = \Delta^k. a^{-2k}$, and
$x = c \Delta^k.a^{-2k} c^{-1}$ for some $c \in B$, with $k > 0$. Dividing by $\Delta^k \in Z(B)$
and taking the inverse we get $y = (\Delta^{-k} x)^{-1} =
a^{2k}$ and $y = c a^{2k} c^{-1}$, so as before
we get that $c$ commutes with $a$ and we get $\PC(x) = \langle \Delta a^{-2} \rangle$. Since we also
have $\PC(x^m) = \langle \Delta a^{-2} \rangle$ for every $m \geq 1$ this yields the conclusion in that
case, too.

We then prove \autoref{maintheo:curvecomplex} in the case of the groups of rank $2$. Let $B_0$ be an irreducible parabolic subgroup.
We have $\PC(z_B) = B$, for otherwise $z_B$ would be the power of some distinguished braided reflection $\sigma$,
so that $z_B$ and $\sigma$ would have some nontrivial common power inside the pure braid group $P$,
and considering the image inside
$P^{ab} = H_1(X,\Z) \simeq \Z \mathcal{A}$ immediately yields a contradiction, as the hyperplane arrangement $\mathcal{A}$ contains at least
$2$ hyperplanes.
If $B_0$ has rank $1$, clearly $B_0 = \langle z_{B_0} \rangle$, so that $\PC(z_{B_0}) = B_0$. This proves the
first part of the statement. Let $B_1,B_2$ be two irreducible parabolic subgroups.
Without loss of generality we can assume that they have rank $1$, for otherwise the statements are trivially true.
If $B_1^g = B_2$ for some $g \in B$, since $B_i = \langle z_{B_i} \rangle$ and $z_{B_i}$ is the unique
positive generator of $B_i$ (in the sense of \autoref{sect:defcurvegraph})
 we have necessarily $z_{B_1}^g = z_{B_2}$.

Then, $z_{B_1} z_{B_2} = z_{B_2} z_{B_1}$ trivially implies $[B_1,B_2]= 1$, as $B_i = \langle z_{B_i} \rangle$.
Moreover, if $B_1 \cap B_2$ contains some nontrivial element, there exists $a,b \neq 0$
such that $z_{B_1}^a = z_{B_2}^b$ whence
$$
B_1 = \PC(z_{B_1}) = \PC(z_{B_1}^a) = \PC(z_{B_2}^b) = \PC(z_{B_2}) = B_2
$$
and $B_1 = B_2$, which completes the proof of \autoref{maintheo:curvecomplex}.

\section{Hyperbolicity results}

\label{sect:proofhyperbolic}

In this section we prove Theorem \ref{maintheo:hyperbolic}.
According to Theorem \ref{maintheo:curvecomplex} we can define the curve graph attached to
our irreducible complex reflection group $W$ by using as vertices
the proper irreducible parabolic subgroups and joining $B_1$ and $B_2$ when
$z_{B_1}$ and $z_{B_2}$ commute.

We have the following useful tool.

\begin{proposition}\label{prop:commpuissanceszB} $z_{B_1}$ and $z_{B_2}$ commute if and only if there exist $n_1,n_2 \neq 0$ such that $z_{B_1}^{n_1}$ and $z_{B_2}^{n_2}$ commute.
\end{proposition}
\begin{proof} One implication is obvious. Set for short $z_i = z_{B_i}$. If $z_1^{n_1}$ and $z_2^{n_2}$ commute, we can first assume $n_1,n_2 > 0$.
Then, by Theorem \ref{maintheo:parabolicclosure} we get $B_1 = PC(z_1)= PC(z_1^{n_1})
= PC((z_1^{n_1})^{z_2^{n_2}})
= PC((z_1^{n_1}))^{z_2^{n_2}}
= PC(z_1)^{z_2^{n_2}}
= B_1^{z_2^{n_2}}$
and for instance by Theorem \ref{maintheo:hyperbolic} we get that
$z_1$ commutes with $z_2^{n_2}$.
Exchanging the roles of $z_1$ and $z_2$ one gets with the same argument that $z_1$ and $z_2$ commute.
\end{proof}

Consider the case $W = G(de,e,n)$ with $r = de$ and $d,n > 1$
and denote $B$ its braid group. The curve graph
$\mathcal{G}$ associated to $\hat{W} = G(r,1,n)$ is known to be hyperbolic,
by work of \cite{CALVEZCISNEROS}. Let us denote $\mathcal{G}(e)$ the curve
graph of $B$, which is a subgroup of the braid group $\hat{B}$ of $\hat{W}$.
We know from Proposition \ref{prop:parabsGdeen} -- and the easy fact that intersecting a parabolic subgroup of $\hat{W}$ with $W$ preserves irreducibility -- that the map $\hat{B}_0 \mapsto B_0 = \hat{B}_0   \cap B$ provides
a surjective map $h : \mathcal{G} \to \mathcal{G}(e)$. We saw in \S \ref{sect:intersectgeneralCBG}
that $h$ is injective, hence a bijection. Combining Proposition \ref{prop:parabsGdeen} with Proposition \ref{prop:commpuissanceszB} above we get that the adjacency relations are the same --
since,  with the previous notations, each $z_{B_0}$ is a power of $z_{\hat{B}_0}$ --
so that $h$ is a graph isomorphism, and in particular an isometry
for the natural metric. This proves that the curve complex is hyperbolic for such $W$ as well.

Now consider the case where $W$ is irreducible of rank $2$ and denote $\mathcal{G}$ its curve graph. The vertices correspond to parabolic subgroups of rank $1$, so the vertex set of $\mathcal{G}$ can be identified with
the set $V$ of all braided reflections of $B$. Then $\sigma_1,\sigma_2 \in V$
are connected in $\mathcal{G}$ if and only if they commute,
which is the same as asking for $\sigma_1^{|W|}$
and $\sigma_2^{|W|}$ to commute. Therefore one can take for vertices
the set $V' = \{ \sigma^{|W|}; \sigma \in V \}$, and $\mathcal{G}$ is the commutation graph on this set. But $V' \subset P$, and $P \simeq \Z \times F_{N-1}$
where $N$ is the number of reflecting hyperplanes for $W$ and $F_r$ denotes the free group of rank $r$.
Since $W$ is irreducible, we have $N \geq 3$. 
Also notice that,
since two elements $\sigma_1^{|W|},\sigma_2^{|W|}$ of $V'$ have for parabolic closures two parabolic subgroups of the same rank,
if they are connected and are distinct the corresponding subgroups
need to intersect trivially. Moreover, the elements of $V'$ have nontrivial
projection onto $F_{N-1}$, as they cannot be central. But inside a free group,
every two commuting elements are powers of some common element. This means that, under the
isomorphism $P \simeq \Z \times F_{N-1}$, there is $x_0 \in F_{N-1}$ such that each $\sigma_i^{|W|}$ is mapped to $(n_i,x_0^{r_i})$ for
some $n_i,r_i \in \Z$ with $r_1,r_2 \neq 0$. But then $\sigma_1^{r_2 |W|}\sigma_2^{-r_1 |W|}$ belongs to $Z(P) = \langle z_P \rangle$,
 so that $\sigma_1^{r_2 |W|}\sigma_2^{-r_1 |W|} = z_P^m$ for some $m \in \Z$. Now recall that $P^{ab} = P/[P,P]$ is a free abelian group on generators $a_H,H \in \mathcal{A}$ for $\mathcal{A}$ the reflecting arrangement of $W$.
If $H_1,H_2$ are the hyperplanes corresponding to $\sigma_1,\sigma_2$, the image of $\sigma_1^{r_2 |W|}\sigma_2^{-r_1 |W|}$ under the abelianization map is a linear combination of $a_{H_1}$ and $a_{H_2}$. On the other hand,
the image of $z_P$ is $\sum_{H \in \mathcal{A}} a_H$. Comparing both sides, since $N \geq 3$ we get $m = 0$
so that $\sigma_1^{r_2 |W|}=\sigma_2^{r_1 |W|}$, and this contradicts the fact that
the two parabolic subgroups intersect trivially. Therefore, this proves that the graph
is a disjoint union of points, and  generalizes
what happens for the more usual curve complexes in small rank.

\end{document}